\numberwithin{equation}{section}
\theoremstyle{plain} 
\newtheorem{theorem}{Theorem}[section]
\newtheorem{lemma}[theorem]{Lemma}
\newtheorem{corollary}[theorem]{Corollary} 
\newtheorem{proposition}[theorem]{Proposition} 
\newtheorem{definition}[theorem]{Definition}
\newtheorem{assumption}[theorem]{Assumption}
\newtheorem*{theorem*}{Theorem}
\newtheorem*{lemma*}{Lemma}
\newtheorem*{corollary*}{Corollary}
\newtheorem*{proposition*}{Proposition}
\newtheorem*{claim*}{Claim}
\newtheorem*{definition*}{Definition}
\theoremstyle{remark}
\newtheorem{remark}{Remark}
\newtheorem*{remark*}{Remark}
\renewcommand{\Re}{\mathrm{Re}\,}
\renewcommand{\Im}{\mathrm{Im}\,}
\newcommand{\im}{\mathrm{Im}\,}
\newcommand{\E}{{\mathbb E }}
\newcommand{\R}{{\mathbb R }}
\renewcommand{\P}{{\mathbb P}}
\newcommand{\ii}{\mathrm{i}}
\newcommand{\Tr}{{\mathrm{Tr}}}
\newcommand{\bs}{\boldsymbol}
\renewcommand{\mathbf}[1]{\bs{#1}}
\newcommand{\prob}{\mathbb{P}}
\newcommand{\eps}{\epsilon}
\newcommand{\indic}{\mathds{1}}
\newcommand{\paren}[1]{\left( #1 \right)}
\newcommand{\imag}{\textnormal{Im}}
\newcommand{\vareps}{\varepsilon}
\begin{document}

\begin{center}
\Large\bf
Phase transition for the smallest eigenvalue of covariance matrices
\end{center}

\vspace{0.5cm}
\renewcommand{\thefootnote}{\fnsymbol{footnote}}
\hspace{5ex}	
\begin{center}
 \begin{minipage}[t]{0.3\textwidth}
\begin{center}
Zhigang Bao\footnotemark[1]  \\
\footnotesize {HKUST}\\
{\it mazgbao@ust.hk}
\end{center}
\end{minipage}
\begin{minipage}[t]{0.3\textwidth}
\begin{center}
Jaehun Lee\footnotemark[2]  \\ 
\footnotesize {HKUST}\\
{\it jaehun.lee@ust.hk}
\end{center}
\end{minipage}
\begin{minipage}[t]{0.3\textwidth}
\begin{center}
Xiaocong Xu\footnotemark[3]  \\
\footnotesize {HKUST}\\
{\it xxuay@connect.ust.hk}
\end{center}
\end{minipage}
\end{center}

\footnotetext[1]{Supported by  Hong Kong RGC grant GRF 16303922  and NSFC22SC01}
\footnotetext[2]{Supported by  Hong Kong RGC grant GRF 16305421}
\footnotetext[3]{Supported by  Hong Kong RGC grant  GRF 16305421}

\vspace{2ex}
\begin{center}
 \begin{minipage}{0.9\textwidth}\footnotesize{
In this paper, we study the smallest non-zero eigenvalue of the  sample covariance matrices $\mathcal{S}(Y)=YY^*$, where $Y=(y_{ij})$ is an $M\times N$ matrix with iid mean $0$ variance $N^{-1}$ entries. We consider the regime $M=M(N)$ and $M/N\to c_\infty\in \mathbb{R}\setminus \{1\}$ as $N\to \infty$. It is known that for the extreme eigenvalues of Wigner matrices and the largest eigenvalue of $\mathcal{S}(Y)$,  a weak 4th moment condition is necessary and sufficient for the Tracy-Widom law \cite{LY14, DY1}. In this paper, we show that the Tracy-Widom law is more robust for the smallest eigenvalue of $\mathcal{S}(Y)$,  by discovering a phase transition induced by the fatness of the tail of $y_{ij}$'s. More specifically, we assume that $y_{ij}$ is symmetrically distributed with tail probability $\mathbb{P}(|\sqrt{N}y_{ij}|\geq x)\sim x^{-\alpha}$ when $x\to \infty$, for some $\alpha\in (2,4)$.  We show the following conclusions: (i). When $\alpha>\frac83$, the smallest eigenvalue follows the Tracy-Widom law on scale $N^{-\frac23}$; (ii). When $2<\alpha<\frac83$, the smallest eigenvalue follows the Gaussian law on scale $N^{-\frac{\alpha}{4}}$; (iii). When $\alpha=\frac83$, the distribution is given by an interpolation between Tracy-Widom and Gaussian; (iv). In case $\alpha\leq \frac{10}{3}$, in addition to the left edge of the MP law, a deterministic shift of order   $N^{1-\frac{\alpha}{2}}$ shall be subtracted from the smallest eigenvalue, in both the Tracy-Widom law and the Gaussian law. Overall speaking, our proof strategy is inspired by \cite{ALY} which is originally done for the bulk regime of the L\'{e}vy Wigner matrices. In addition to various technical complications arising from the bulk-to-edge extension, two ingredients are needed for our derivation: an intermediate left edge local law  based on a simple but effective matrix minor argument, and a mesoscopic CLT for the linear spectral statistic with asymptotic expansion for its expectation.  
}
\end{minipage}
\end{center}

\thispagestyle{headings}

\section{Introduction}\label{sec1}

\subsection{Main results}
As one of the most classic models in random matrix theory, the  sample covariance matrices have been widely studied. When considering the high-dimensional setting  it is well-known that the empirical spectral distribution converges to Marchenko-Pastur law (MP law). Inspired by problems such as PCA, the  extreme eigenvalue has also been extensively studied. Among the most well-known results in this direction are probably the Bai-Yin law \cite{BY} on the first order limit and the Tracy-Widom law \cite{Johansson, Johnstone} on the second order fluctuation of the extreme eigenvalues.
More specifically, let $Y=(y_{ij})\in \mathbb{R}^{M\times N}$ be a random matrix with i.i.d.~mean 0 and variance $N^{-1}$ entries, and assume that $\sqrt{N}y_{ij}$'s are i.i.d.~ copies of an random variable $\Theta$ which is independent of $N$. The covariance matrix with the data matrix $Y$ is defined as $\mathcal{S}(Y)=YY^*$. Let $\lambda_1(\mathcal{S}(Y))\geq\ldots\geq \lambda_M(\mathcal{S}(Y))$ be the ordered eigenvalues of $\mathcal{S}(Y)$. We denote by $\mu_N=\frac{1}{M}\sum_{i=1}^M \delta_{\lambda_i}$ the empirical spectral distribution. In the regime $M=M(N)$, $c_N\coloneqq M/N\to c_\infty\in (0,\infty)$ as $N\to \infty$,  it is well known since \cite{MP} that $\mu_N$ is weakly approximated  by the MP law 
\begin{align}
\rho^{\mathsf{mp}}({\rm d}x)=\frac{1}{2\pi c_N x}\sqrt{[(\lambda_+^{\mathsf{mp}}-x)(x-\lambda_-^{\mathsf{mp}})]_+} {\rm d} x+(1-\frac{1}{c_N})_+\delta_0 (x), \qquad \lambda_\pm^{\mathsf{mp}}=(1\pm \sqrt{c_N})^2.  
\label{eq:MPdensity}\end{align}
The Stieltjes transform of $\rho^{\mathsf{mp}}$ is denoted as $\mathsf{m}_{\mathsf{mp}}(z)$, which satisfies the following equation:
\begin{align}
	zc_N\mathsf{m}_{\mathsf{mp}}^2(z) +\big( z-(1-c_N) \big)\mathsf{m}_{\mathsf{mp}}(z) + 1 = 0.
\label{eq:MPSTfeq}\end{align} 
Equivalently, 
\begin{align}
\mathsf{m}_{\mathsf{mp}}(z)=\frac{1-c_N-z+\ii \sqrt{(\lambda_{+}^{\mathsf{mp}}-z)(z-\lambda_{-}^{\mathsf{mp}})}}{2zc_N}, \label{081420}
\end{align}
where the square root is taken with a branch cut on the negative real axis.

Throughout the paper, we will be interested in the regime $c_\infty\neq 1$. In this case, both $\lambda_\pm^{\mathsf{mp}}$ are called soft edges of the spectrum. Regarding the extreme eigenvalues, Bai-Yin law \cite{BY} states that 
\begin{align*}
\lambda_1(\mathcal{S}(Y))-\lambda_+^{\mathsf{mp}}\stackrel{a.s.}\longrightarrow 0, \qquad \lambda_{M\wedge N}(\mathcal{S}(Y))-\lambda_-^{\mathsf{mp}} \stackrel{a.s} \longrightarrow  0,
\end{align*}
as long as $\mathbb{E}|\sqrt{N}y_{ij}|^4<\infty$ is additionally assumed. It is also shown in \cite{BY} that $\mathbb{E}|\sqrt{N}y_{ij}|^4<\infty$ is necessary and sufficient for the convergence of $\lambda_1(\mathcal{S}(Y))$ to $\lambda_+^{\mathsf{mp}}$. It had been widely believed that the convergence of the smallest eigenvalue $\lambda_{M\wedge N}(\mathcal{S}(Y))$ to $\lambda_-^{\mathsf{mp}}$ requires a weaker moment condition, and indeed it was shown in \cite{Tikhomirov} that the condition of mean 0 and variance 1 for $\sqrt{N}y_{ij}$'s is already sufficient. On the level of the second order fluctuation, as an extension of the seminal work on Wigner matrix \cite{LY14},  it was shown in \cite{DY1} that the sufficient and necessary condition for the Tracy-Widom law of $\lambda_1(\mathcal{S}(Y))$ is the existence of a weak 4-th moment 
\begin{align}
\lim_{s\to \infty} s^4 \mathbb{P}(|\sqrt{N}y_{11}|\geq s)=0.  \label{230601}
\end{align}
Similarly to the first order result in \cite{Tikhomirov}, it has been believed that the Tracy-Widom law shall hold for the smallest eigenvalue $\lambda_{M\wedge N}(\mathcal{S}(Y))$ under a weaker condition. In this work, we are going to show that the smallest eigenvalue counterpart of (\ref{230601}) is 
\begin{align*}
\lim_{s\to \infty} s^{\frac{8}{3}} \mathbb{P}(|\sqrt{N}y_{11}|\geq s)=0, 
\end{align*}
under Assumption \ref{main assum} below. 
Moreover, when the tail $\mathbb{P}(|\sqrt{N}y_{11}|\geq s)$ becomes heavier, the distribution of $\lambda_{M\wedge N}(\mathcal{S}(Y))$ exhibits a phase transition from Tracy-Widom to Gaussian. For technical reason,  we make the following assumptions on $\mathcal{S}(Y)$. 

\begin{assumption} \label{main assum} We make the following assumptions on the covariance matrix $\mathcal{S}(Y)$.

(i). (\it On matrix entries) We suppose that $\sqrt{N}y_{ij}$'s are all iid copies of a random variable $\Theta$ which is independent of $N$. Suppose that $\mathbb{E}\Theta=0$ and $\mathbb{E}\Theta^2=1$. We further  assume that $\Theta$ is symmetrically distributed, absolutely continuous with a positive density at 0 and  as $s\to \infty$, 
\begin{align*}
\left|\mathbb{P}(\Theta>s) + \frac{\mathsf{c}}{\Gamma(1-\alpha/2)}s^{-\alpha}\right| 
\lesssim  s^{-(\alpha + \varrho)}
\end{align*}
for some $\alpha\in (2,4)$, some constant $\mathsf{c}>0$ and some small $\varrho > 0$,

(ii). {\it (On dimension)} We assume that $M\coloneqq M(N)$ and as $N\to \infty$
\begin{align*}
c_N\coloneqq \frac{M}{N}\to c_\infty\in (0, \infty)\setminus \{1\}. 
\end{align*}
\end{assumption}

Our results are collected in the following main theorem.
For brevity, we assume $M<N$ throughout this paper.
Analogous results can be easily obtained by switching the role of $M$ and $N$ when $M>N$.

\begin{theorem}\label{main.thm}
Suppose that Assumption \ref{main assum} holds. There exists a random variable $\mathcal{X}_\alpha$, such that the following statements hold when $N\to \infty$. 

\begin{itemize}
	\item[(i):] $$
\frac{-M^{\frac23}}{\sqrt{c_N}(1-\sqrt{c_N})^{4/3}}\big( \lambda_{M\wedge N} (\mathcal{S}(Y))-\lambda_{-}^{\mathsf{mp}}-\mathcal{X}_\alpha\big)\Rightarrow \mathrm{TW}_1.
$$
	\item[(ii):] \begin{align*}
\frac{N^{\frac{\alpha}{4}}\big(\mathcal{X}_\alpha-\mathbb{E}\mathcal{X}_\alpha\big)}{\sigma_\alpha}\Rightarrow N(0, 1), \qquad
 \sigma_\alpha^2 = \frac{\mathsf{c}c_N^{(4-\alpha)/4}(1-\sqrt{c_N})^4(\alpha-2)}{2} \Gamma\Big(\frac{\alpha}{2} +1 \Big).
\end{align*}
	\item[(iii):] \begin{align*}
	\mathbb{E}\mathcal{X}_\alpha=-N^{1-\frac{\alpha}{2}} \frac{\mathsf{c}(1-\sqrt{c_N})^2}{c_N^{(\alpha-2)/4}} \Gamma\Big(\frac{\alpha}{2} +1\Big)+ \mathfrak{o}(N^{1-\frac{\alpha}{2}}),
	\end{align*}
\item[(iv):] In case  $\alpha = 8/3$, the following convergence holds:
\begin{align*}
\frac{-M^{\frac{2}{3}}}{\sqrt{c_N}(1-\sqrt{c_N})^{\frac{4}{3}}}\left(\lambda_{M\wedge N} (\mathcal{S}(Y))-\lambda_{-}^{\mathsf{mp}}-\mathbb{E}\mathcal{X}_\alpha\right) \Rightarrow \mathrm{TW}_1 + \mathcal{N}(0, \tilde{\sigma}^2), \quad
 \tilde{\sigma}^2 = \frac{\mathsf{c}c_\infty^{\frac{2}{3}}(1-\sqrt{c_{\infty}})^{\frac{4}{3}}}{3}\Gamma\left(\frac{7}{3}\right).
\end{align*}
where $\mathrm{TW}_1$ and $\mathcal{N}(0,\tilde{\sigma})$ in the RHS of the above convergence are independent.
\end{itemize}
\end{theorem}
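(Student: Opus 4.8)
The plan is to adapt the conditioning strategy of \cite{ALY} from the bulk of L\'evy Wigner matrices to the left edge of $\mathcal{S}(Y)$. Write $\sqrt N y_{ij}=\theta^{<}_{ij}+\theta^{>}_{ij}$, where $\theta^{>}$ records the entries with $|\theta_{ij}|>N^{\delta}$ for a small fixed $\delta$, and let $\mathcal G$ be the $\sigma$-algebra generated by the positions and values of these large entries. Conditionally on $\mathcal G$, the matrix $Y$ is the sum of an i.i.d.\ bounded-entry matrix (entries of variance $N^{-1}(1-\varepsilon_N)$, with $\varepsilon_N=\mathbb{E}[\Theta^2\indic(|\Theta|>N^\delta)]\asymp N^{(2-\alpha)\delta}$) and a sparse $\mathcal G$-measurable matrix. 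Let $\lambda_-^{\mathsf{mp}}+\mathcal X_\alpha$ denote the left edge of the associated (random, $\mathcal G$-measurable) deformed-MP deterministic equivalent; this \emph{defines} $\mathcal X_\alpha$. The first task is an intermediate left edge local law for this conditional model: starting from the first-order inputs of \cite{BY,Tikhomirov} as a crude a priori bound on $\lambda_{M\wedge N}$, and using a matrix-minor argument — delete a row or column of $Y$, relate $\lambda_{M\wedge N}$ across the resulting change of aspect ratio via Schur complement and interlacing, and feed the bound into the self-consistent equation — one bootstraps to optimal control of the resolvent down to scale $N^{-2/3+\epsilon'}$ near $\lambda_-^{\mathsf{mp}}+\mathcal X_\alpha$. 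Inserting this into the edge-universality machinery (Green function comparison, or a short-time Dyson Brownian motion argument) conditionally on $\mathcal G$, with the unperturbed comparison provided by \cite{DY1}, and observing that the limiting law does not depend on the realization of $\mathcal G$, yields (i); the same conditioning shows that, conditionally on $\mathcal G$, the $\mathrm{TW}_1$ fluctuation in (i) is asymptotically independent of $\mathcal X_\alpha$.

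The second task is to analyze $\mathcal X_\alpha$ itself. Linearizing the defining random self-consistent equation around $\lambda_-^{\mathsf{mp}}$ expresses $\mathcal X_\alpha$, up to errors negligible on the relevant scale, as (a constant multiple of $\lambda_-^{\mathsf{mp}}$ times) a mesoscopic linear spectral statistic localized near the left edge — equivalently a smooth functional of the large-entry data $\mathcal G$. It is important that $\mathcal X_\alpha$ is \emph{not} the naive truncation shift $-\varepsilon_N\lambda_-^{\mathsf{mp}}$, which is $\delta$-dependent; it arises from a near-cancellation between this deficit and the contribution of the large entries, leaving a $\delta$-independent answer. Now apply the second main ingredient: a mesoscopic CLT for this linear spectral statistic together with an asymptotic expansion of its expectation. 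Everything is driven by two elementary estimates, $\mathbb{E}[\Theta^2\indic(|\Theta|>s)]\asymp s^{2-\alpha}$ and $\mathbb{E}[\Theta^4\indic(|\Theta|\le s)]\asymp s^{4-\alpha}$ with explicit constants; integrating the resulting heavy-tail correction against the explicit MP density at $\lambda_-^{\mathsf{mp}}$ produces, after a direct computation, the factor $\Gamma(\alpha/2+1)c_N^{-(\alpha-2)/4}$ and hence the expectation of (iii) (the exponent $\varrho$ of Assumption \ref{main assum} is used precisely to make the remainder $\mathfrak o(N^{1-\alpha/2})$), while the CLT gives the Gaussian limit of (ii) with variance $\sigma_\alpha^2$. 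The limit is Gaussian rather than $(\alpha/2)$-stable because the mesoscopic averaging effectively truncates the tail at a self-averaging scale well below the scale of the largest entries, whose genuinely stable contribution is of lower order at the edge.

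For (iv), at $\alpha=8/3$ one has $N^{-\alpha/4}=N^{-2/3}$, so the two fluctuations live on the same scale and should simply add. Decompose
\begin{align*}
\frac{-M^{2/3}}{\sqrt{c_N}(1-\sqrt{c_N})^{4/3}}\bigl(\lambda_{M\wedge N}-\lambda_-^{\mathsf{mp}}-\mathbb{E}\mathcal{X}_\alpha\bigr)
={}&\frac{-M^{2/3}}{\sqrt{c_N}(1-\sqrt{c_N})^{4/3}}\bigl(\lambda_{M\wedge N}-\lambda_-^{\mathsf{mp}}-\mathcal{X}_\alpha\bigr)\\
&{}+\frac{-M^{2/3}}{\sqrt{c_N}(1-\sqrt{c_N})^{4/3}}\bigl(\mathcal{X}_\alpha-\mathbb{E}\mathcal{X}_\alpha\bigr).
\end{align*}
By (i) the first term converges to $\mathrm{TW}_1$. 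Using $M=c_N N$, the prefactor of the second term is $c_N^{1/6}N^{2/3}(1-\sqrt{c_N})^{-4/3}$, so by (ii) (with $\alpha/4=2/3$) the second term converges to a centered Gaussian with variance $c_N^{1/3}(1-\sqrt{c_N})^{-8/3}\sigma_\alpha^2$, which tends to $\tfrac13\mathsf c\, c_\infty^{2/3}(1-\sqrt{c_\infty})^{4/3}\Gamma(7/3)=\tilde{\sigma}^2$. Since the first term is a functional of the light part, asymptotically independent of the $\mathcal G$-measurable $\mathcal X_\alpha-\mathbb{E}\mathcal X_\alpha$ (by the last remark of the first paragraph), the joint law converges to the independent sum $\mathrm{TW}_1+\mathcal N(0,\tilde{\sigma}^2)$, which is (iv).

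The two ingredients are also the two main obstacles. Establishing the \emph{optimal} local law at a genuine (square-root) left edge for the conditional model — carrying a sparse, random, deterministic-once-conditioned perturbation — is delicate, and the minor argument must be run uniformly over $\mathcal G$; pushing edge universality through conditionally forces the same uniformity in the comparison/DBM step, including a check that the resulting law is $\mathcal G$-independent. The mesoscopic CLT is the other bottleneck: one must pin down exactly which heavy-tail moments enter the self-consistent equation near the edge, prove Gaussianity in a regime sitting just below the stable threshold, and extract the expectation to precision $\mathfrak o(N^{1-\alpha/2})$ — this is where the bulk-to-edge technical overhead relative to \cite{ALY} is heaviest. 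The asymptotic independence used in (iv) also requires care, since $\mathcal X_\alpha$ and the Tracy-Widom fluctuation are built from the same matrix and are decoupled only through the conditioning on $\mathcal G$.
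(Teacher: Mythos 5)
Your outline captures several of the paper's real ingredients: conditioning on the locations of the large entries, a minor/interlacing argument for the intermediate left-edge law, defining $\mathcal{X}_\alpha$ as the shift of a random deterministic equivalent, a mesoscopic CLT plus an expectation expansion for parts (ii)--(iii), and the decomposition-plus-asymptotic-independence argument for (iv), which is essentially the paper's proof of (iv). But there is a genuine gap at the central step. You propose to prove the \emph{optimal} local law at scale $N^{-2/3+\epsilon'}$, and then edge universality, \emph{directly} for the conditional model (bounded-entry matrix plus a sparse $\mathcal{G}$-measurable perturbation) by "bootstrapping the self-consistent equation," citing \cite{DY1} as the comparison input. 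This would fail, or at least is entirely unsupported: for heavy-tailed entries in the regime $2<\alpha<4$ the stability of the self-consistent equation deteriorates at the edge (the paper says this explicitly, and even the bulk-entry bounds of \cite{Amol} do not extend to the edge without extra structure), the minor argument only yields a \emph{weak} $\eta_\ast$-regularity at scale $\eta_\ast=N^{-\epsilon_b}\gg N^{-2/3}$, and \cite{DY1} concerns the largest eigenvalue under a weak fourth-moment condition, which is exactly what fails here. The paper's entire architecture exists to avoid this step: it uses a \emph{three}-part decomposition $Y=\mathsf{A}+\mathsf{B}+\mathsf{C}$, replaces the small part $\mathsf{A}$ (which carries almost all the variance, $t=N\mathbb{E}|\mathsf{A}_{ij}|^2\to 1$) by $\sqrt{t}W$ to form the Gaussian divisible model $V_t=\sqrt{t}W+X$, invokes the rectangular DBM edge universality of \cite{LY,DY2,DY} to upgrade the weak $\eta_\ast$-regularity of $\mathcal{S}(X)$ to Tracy--Widom for $\mathcal{S}(V_t)$ around the \emph{random} edge $\lambda_{-,t}$, and then runs three successive Green function comparisons (entrywise bounds, rigidity, and the edge observable) to transfer back to $\mathcal{S}(Y)$. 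Your two-part cut at threshold $N^{\delta}$ provides no Gaussian component at all, so none of this machinery is available.

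Two smaller inaccuracies: $\mathcal{X}_\alpha$ in the paper is $\lambda_{-,t}-\lambda_-^{\mathsf{mp}}$, a functional of the full heavy part $X=\mathsf{B}+\mathsf{C}$ evaluated at a random mesoscopic spectral parameter $\zeta_{-,t}$ at distance $\sim t^2$ from $\lambda_M(\mathcal{S}(X))$ --- not merely a functional of the very large entries $\mathcal{G}$; the CLT is driven by the intermediate-scale entries as well, and handling the randomness of $\zeta_{-,t}$ (expanding around a deterministic $\zeta_{\mathsf{e}}$ and controlling all derivatives $m_X^{(k)}$) is a substantive part of the argument that your sketch does not address. Also, the randomness of the centering $\lambda_{-,t}$ forces the Green function comparison to track derivatives of $\lambda_{-,t}$ with respect to matrix entries, which has no analogue in a comparison at a deterministic energy.
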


\vspace{2ex}
\begin{remark} From the above theorem, we can see that a phase transition occurs at $\alpha=8/3$. When $\alpha> 8/3$, the fluctuation of $ \lambda_{M\wedge N} (\mathcal{S}(Y))$ is governed by $\mathrm{TW}_1$ on scale $N^{-2/3}$. When $2<\alpha<8/3$, the fluctuation is dominated by that of $\mathcal{X}_\alpha$, and thus it is Gaussian on scale $N^{-\alpha/4}$. In the case $\alpha=8/3$, the limiting distribution is given by the convolution of a Tracy-Widom and Gaussian. When $\alpha\leq 10/3$, a shift of order $N^{1-{\alpha}/{2}}$ is created by $\mathbb{E}\mathcal{X}_\alpha$. We remark here that a natural further direction is to exploit the expansion of $\mathbb{E}\mathcal{X}_\alpha$ up to an order smaller than the fluctuation. But due to technical reason, we do not pursue this direction in the current paper. 
\end{remark}

\subsection{Related References} 

The Tracy-Widom distribution in random matrices was first obtained for GOE and GUE in \cite{TW1, TW2} and was later extended to Wishart matrices in \cite{Johansson} and \cite{Johnstone}. In the past few decades, the universality of the Tracy-Widom law has been extensively studied. The extreme eigenvalues of many random matrices with general distributions and structures have been proven to follow the Tracy-Widom distribution. We refer to the following literature \cite{Soshnikov, TV, FS, Peche, PY, Sodin, EYY12, LY, KY17, BPZ, LY14, LS16, LS15, AEK20, SX22, DY} for related developments. Although the Tracy-Widom distribution is very robust, some phase transitions may occur when considering heavy-tailed matrices or sparse matrices. For example, for sparse Erd\H{o}s-R\'{e}nyi graphs $G(N,p)$, it is known from \cite{HLY} that a phase transition from Tracy-Widom to Gaussian will occur when $p$ crosses $N^{-2/3}$. We also refer to \cite{EKYY, LS18, HY22, HK21, Lee} for related study.   For heavy-tailed Wigner matrices or sample covariance matrices, as we mentioned, according to \cite{LY14} and \cite{DY1}, the largest eigenvalue follows the Tracy Widom distribution if and only if a weak $4$-th moment condition is satisfied. From \cite{Soshnikov04, ABP, Diaconu}, we also know the distribution of the largest eigenvalue when the matrix entries have heavier tail. We would also like to mention the recent research on the mobility edge of L\'{e}vy matrix with $\alpha<1$ in \cite{ABL}. On the other hand, if we focus on bulk statistics, universality will be very robust. For any $\alpha>0$, it is  proved in  \cite{ALY, Amol} that the bulk universality is valid. An extension of \cite{ALY} to the hard edge of the covariance matrix in case $M=N$ is considered in \cite{Louvaris}.
 In our current work, we focus on the regime $\alpha\in (2,4)$ for the left edge of the covariance matrices. According to \cite{BDG}, even the global law will no longer be MP law in case $\alpha<2$, and thus we expect a significantly different analysis is needed in this regime. Regarding other works on the behaviour of the spectrum for heavy-tailed matrices, we refer to \cite{AG, BG, BG13, BG17, BGC, HY, HM2018, JungL} for instance.

\subsection{Proof strategy}
Our starting point is a decomposition of $Y$, or more precisely a resampling of $Y$,  from the work \cite{ALY}. 
Consider the Bernoulli $0-1$ random variables $\psi_{ij}$ and $\chi_{ij}$ defined by
\begin{align}
	\P [\psi_{ij} = 1] = \P[|y_{ij}| \ge N^{-\epsilon_b} ], \quad  \P [\chi_{ij} = 1] =  \frac{\P [|y_{ij}| \in [N^{-1/2-\epsilon_a}, N^{-\epsilon_b}]]}{\P [|y_{ij}| < N^{-\epsilon_b}]}
	\label{def:definofpsichi}\end{align}
	for some small positive constants $\epsilon_a, \epsilon_b$. In the sequel, we shall first choose $\epsilon_b$ and then choose $\epsilon_a= \epsilon_a(\epsilon_b, \alpha)$ to be sufficiently small. Specifically, throughout the discussion, we can make the following choice
	\begin{align}
	0<\epsilon_b<(\alpha-2)/10\alpha,\qquad 0<\epsilon_a< \min\{\epsilon_b, 4-\alpha\}/10000.  \label{081401}
	\end{align}
Let $a, b$, and $c$ be random variables such that
\begin{align*}
	&\P [a_{ij} \in I] = \frac{\P [ y_{ij} \in (-N^{-1/2-\epsilon_a}, N^{-1/2-\epsilon_a}) \cap I  ]}{\P [ |y_{ij}| \le  N^{-1/2-\epsilon_a}]},\\
	&\P [b_{ij} \in I] = \frac{\P [ y_{ij} \in \big( (-N^{-\epsilon_b}, -N^{-1/2-\epsilon_a}] \cup  [N^{-1/2-\epsilon_a},N^{-\epsilon_b}) \big) \cap I  ]}{\P [ |y_{ij}| \in   [N^{-1/2-\epsilon_a},N^{-\epsilon_b})]},\\
	&\P [c_{ij} \in I] = \frac{\P [ y_{ij} \in \big( (-\infty, -N^{-\epsilon_b}) \cup  (N^{-\epsilon_b}, \infty) \big) \cap I  ]}{\P [ |y_{ij}| \ge    N^{-\epsilon_b}]}.
\end{align*}
 For each $(i,j) \in [M]\times [N]$, we set
\begin{align*}
	\mathsf{A}_{ij} = (1-\psi_{ij})(1-\chi_{ij})a_{ij}, \quad \mathsf{B}_{ij} = (1-\psi_{ij})\chi_{ij}b_{ij},\quad \mathsf{C}_{ij} = \psi_{ij}c_{ij}
\end{align*}
where $a,b,c,\psi, \chi$-variables are all mutually independent. 
Sample $Y$ and $X$ by setting 
\begin{align}
Y = \mathsf{A} + \mathsf{B}+\mathsf{C}, \qquad X = \mathsf{B} + \mathsf{C}.
\label{def:definofYX}\end{align}
 The dependence among $\mathsf{A}, \mathsf{B}$ and $\mathsf{C}$ is then governed by the $\psi$ and $\chi$ variables. 

The purpose of the above decomposition, especially the separation of part $\mathsf{A}$, is to view our model as a deformed model. We hope that the light-tailed  part  $\mathsf{A}$ can regularize the spectrum of the heavy-tailed  part $X=\mathsf{B}+\mathsf{C}$, leading to the emergence of the edge universality. This idea is rooted in the dynamic approach developed in the last decade. We refer to the monograph \cite{EY} for a detailed introduction of this powerful approach, and also refer to \cite{LY, LY17, ES, BEYY16, LSY, HL19, AH20, EY15, AH23} for instance.  On a more specific level, our proof strategy is inspired by  \cite{ALY} where the authors consider the bulk statistics of  the L\'{e}vy Wigner matrices in the regime $\alpha\in (0,2)$, which we will denote by $H$ in the sequel. In \cite{ALY}, the main idea to prove the bulk universality of the local statistics is to compare the L\'{e}vy Wigner matrix $H=\mathsf{A}_H+\mathsf{B}_H+\mathsf{C}_H$ with the Gaussian divisible model $H_t=\sqrt{t}W_H+\mathsf{B}_H+\mathsf{C}_H$, where $\mathsf{A}_H, \mathsf{B}_H$ and $\mathsf{C}_H$ are defined similarly to $\mathsf{A},\mathsf{B},\mathsf{C}$ above,  and $W_H$ is a  GOE independent of $H$. Here $t$ is chosen in such a way that $\sqrt{t}(W_H)_{ij}$ matches $(\mathsf{A}_H)_{ij}$ up to the third moment, conditioning on $(\psi_{H})_{ij}=0$, where $\psi_{H}$ is defined similarly to $\psi$. Roughly speaking, the proof strategy of \cite{ALY} is as follows. First, one needs to prove that the spectrum of $\mathsf{B}_H + \mathsf{C}_H$ satisfies an intermediate local law, which shows that the spectral density of $\mathsf{B}_H + \mathsf{C}_H$ is bounded below and above at a scale $\eta_{\ast} \leq N^{-\delta} t$. This control of the spectral density is also called $\eta_{\ast}$-regularity. Next, with the $\eta_{\ast}$-regularity established, one can use the results from \cite{LY} to prove that the $\sqrt{t} W_H$ component can improve the spectral regularity to the optimal (bulk) scale $\eta\geq N^{-1+\delta}$, and further obtain the bulk universality of $H_t$.  Finally, one can prove that the bulk local eigenvalue statistics of $H$ and $H_t$ have the same asymptotic distribution by comparing the Green functions of $H$ and $H_t$. However, the main difficulty here is that, unlike in $H_t$, the small part $\mathsf{A}_H$ and the major part $\mathsf{B}_H + \mathsf{C}_H$ in $H$ are not independent. They are coupled by the $\psi$ and $\chi$ variables. Despite this dependence being explicit, great effort has been made to carry out the comparison in \cite{ALY}. 

At a high level, our proof strategy involves adapting the approach from \cite{ALY} for the bulk regime to the left edge of the covariance matrices. However, this adaptation is far from being straightforward. We summarize some major ideas as follows. 

1. ({\it Intermediate local law}) Similar to many previous DBM works, if we want to initiate the analysis, we need an intermediate local law for the $X=\mathsf{B}+\mathsf{C}$ part. More precisely, we require an $\eta_{\ast}$-regularity of the eigenvalue density for $\mathcal{S}(X)=XX^*$ at the left edge of the MP law, for some $\eta_{\ast}\ll 1$. According to \cite{ABP}, such a regularity cannot be true at the right edge of the spectrum. In order to  explain heuristically the difference between the largest and smallest eigenvalues under the heavy-tailed assumption,  we recall the variational definition of the smallest and largest singular values of $X$, which are also the square roots of the corresponding eigenvalues of $\mathcal{S}(X)$,  
\begin{align}
\sigma_M(X)=\inf_{v\in S^{M-1}}\left\|X^* v \right\|_2, \qquad \sigma_1(X)=\sup_{v\in S^{M-1}} \left\|X^* v\right\|_2. \label{070601}
\end{align}
Denote by $v_M$ and $v_1$ the right singular vectors of  $X^*$ corresponding to $\sigma_M(X)$ and $\sigma_1(X)$, respectively. From the variational representation, it is clear that $v_1$ favors the large entry of $X^*$, and thus $\sigma_1({X})$ will be large as long as there is a big entry in ${X}$.  This is indeed the case when the weak $4$-th moment condition is not satisfied. In contrast, in (\ref{070601}), since $v_M$ is the minimizer, it tries to avoid the big entries of ${X}^*$, i.e., it tends to live in the null space of $\mathsf{C}^*$. Hence, heuristically, we can believe that removing the $\mathsf{C}$ entries will not significantly change the smallest singular value, as long as the null space of $\mathsf{C}$ is sufficiently big. This will be true if $\text{rank}(\mathsf{C})=o(N)$, which indeed holds when $\alpha>2$. This simple heuristic explains why the first order behaviour of the smallest singular value of $X$, is more robust under the weak moment condition, in contrast to the largest singular value.  It also indicates the following strategy for obtaining an intermediate local law for ${X}$.  Let $\Psi=(\psi_{ij})$. We define the index sets
\begin{align}
\mathcal{D}_r\coloneqq \mathcal{D}_r(\Psi)\coloneqq \Big\{i\in [M]: \sum_{j=1}^N\psi_{ij}\geq 1\Big\}, \qquad  \mathcal{D}_c\coloneqq\mathcal{D}_c(\Psi)\coloneqq\Big\{j\in [N]: \sum_{i=1}^M \psi_{ij}\geq 1\Big\} \label{0818100}
\end{align}
which are the index set of rows/columns in which one can find at least one nonzero $\psi_{ij}$. For any matrix $A\in \mathbb{C}^{M\times N}$, let $A^{(\mathcal{D}_r)}$ and $A^{[\mathcal{D}_c]}$ be the minors of $A$ with the $\mathcal{D}_r$ rows and $\mathcal{D}_c$ columns removed, respectively, and we also use $\mathcal{S}(\mathcal{B})=\mathcal{B}\mathcal{B}^*$ for any rectangle matrix $\mathcal{B}$ in the sequel. By Cauchy interlacing, we can easily see that 
\begin{align*}
\lambda_{M}(\mathcal{S}({X}^{[\mathcal{D}_c]}))\leq \lambda_M(\mathcal{S}({X}))\leq \lambda_{M-|\mathcal{D}_r|}(\mathcal{S}({X}^{(\mathcal{D}_r)}))
\end{align*}
Further notice that ${X}^{(\mathcal{D}_r)}=\mathsf{B}^{(\mathcal{D}_r)}$ and ${X}^{[\mathcal{D}_c]}=\mathsf{B}^{[\mathcal{D}_c]}$, and thus we have
\begin{align}
\lambda_{M}(\mathcal{S}({\mathsf{B}}^{[\mathcal{D}_c]}))\leq \lambda_M(\mathcal{S}({X}))\leq \lambda_{M-|\mathcal{D}_r|}(\mathcal{S}({\mathsf{B}}^{(\mathcal{D}_r)})).  \label{071001}
\end{align}
Conditioning on the matrix $\Psi$, we notice that both 
$\mathcal{S}({\mathsf{B}}^{[\mathcal{D}_c]})$ and $\mathcal{S}({\mathsf{B}}^{(\mathcal{D}_r)})$ are random matrices with bounded support, since $|b_{ij}|\leq N^{-\epsilon_b}$. For such matrices, one has a local law with precision $N^{-2\epsilon_b}$; see \cite{HLS}. This local law together with (\ref{071001}) will give a rigidity estimate of $\lambda_M(\mathcal{S}({X}))$ on scale $\eta_{\ast}=N^{-\epsilon_b}$ according to our choice in (\ref{081401}). Similarly applying the above row and column minor argument, one can derive an intermediate local law for ${X}$, which implies that ${X}$ satisfies the $\eta_{\ast}$-regularity at the left edge. We remark here that in our regime $\alpha\in(2,4)$, a weak intermediate local law, or alternatively, a weak regularity with $\eta_{\ast}\sim N^{-\varepsilon}$ for some small $\varepsilon>0$ would be sufficient. This is always possible if we choose a suitable $\epsilon_b$.  In contrast, in the work \cite{ALY}, in the regime $\alpha\in (0,2)$, a stronger regularity with a more carefully chosen $\eta_{\ast}$ is actually needed. 

2. ({\it Gaussian divisible ensemble}) We then consider the Gaussian divisible model 
\begin{align}
V_t\coloneqq\sqrt{t} W+\mathsf{B}+\mathsf{C}=\sqrt{t}W+{X}, \qquad \mathcal{S}(V_t)=V_tV_t^*, \label{071801}
\end{align} 
where $W=(w_{ij})\in \mathbb{R}^{M\times N}$ is a Gaussian matrix with iid $N(0,N^{-1})$ entries, and $t=N\mathbb{E}|\mathsf{A}_{ij}|^2$ (slightly different from the choice in \cite{ALY} for convenience). 
With the $\eta_{\ast}$-regularity of $\mathcal{S}(X)$, we then choose $1\gg t\gg \sqrt{\eta_{\ast}}$. Actually, our $t$ would be order $N^{-2\epsilon_a}$. By choosing $\epsilon_a$ sufficiently small in light of (\ref{081401}), our $t$ can be sufficiently close to $1$. By conditioning on the matrix ${X}$, the following edge universality can be achieved for the Gaussian divisible model $\mathcal{S}(V_t)$ by extending the result in \cite{LY} and \cite{DY} to the left edge of the sample covariance matrices
\begin{align}
N^{\frac{2}{3}}\gamma \big((\lambda_M(\mathcal{S}(V_t))-\lambda_{-, t}\big)\Rightarrow \mathrm{TW}_1, \label{071101}
\end{align} 
for some constant $\gamma$, 
where $\lambda_{-,t}$ can be approximated by a mesoscopic statistic of the spectrum of $\mathcal{S}(X)$. Specifically,
\begin{align}
\lambda_{-,t}=(1-c_Ntm_X(\zeta_{-,t}))^2\zeta_{-,t}+(1-c_N)t(1-c_Ntm_X(\zeta_{-,t})), 
\label{eq:defoflambdaminust}\end{align}
where $m_X$ is the Stieltjes transform of the spectral distribution of $\mathcal{S}(X)$, and $\zeta_{-,t}$ is a random parameter defined through (\ref{eq:zeta}). We remark here that even though $\zeta_{-,t}$ is random, it can be proven that with a high probability, $\lambda_M(\mathcal{S}(X))-\zeta_{-,t}\sim t^2$. Hence, regarding the Stieltjes transform $m_X(\zeta_{-,t})$, we are at a (random) mesoscopic energy scale of order $t^2$. From the work  \cite{BM, Malysheva}, one already knows that the global statistic $m_X(z)-\mathbb{E}m_X(z)$ follows a CLT on scale $N^{-\alpha/4}$ for a fixed $z$ with $\Im z>0$. Due to the randomness of our parameter $\zeta_{-,t}$, a further expansion of it around a deterministic parameter $\zeta_{\mathsf{e}}$ will be needed to adapt the argument  in \cite{BM, Malysheva}. Consequently, after the expansion, we will need to control the fluctuations of $m_X^{(k)}(\zeta_{\mathsf{e}})$ for $k=0, \ldots, K$ with a sufficiently large $K$. Studying the fluctuations of these mesoscopic statistics eventually leads to a CLT
\begin{align*}
N^{\frac{\alpha}{4}}(\lambda_{-, t}-\mathbb{E}\lambda_{-,t})\Rightarrow N(0, \sigma_\alpha^2).
\end{align*}
 In addition to the above CLT, we need one more step to study the expansion of $\mathbb{E}\lambda_{-,t}$. It turns out that 
\begin{align*}
\mathbb{E}\lambda_{-, t}=\lambda_{-}^{\mathsf{mp}} -N^{1-\frac{\alpha}{2}}s_\alpha+\mathfrak{o}(N^{1-\frac{\alpha}{2}}).
\end{align*}

3. ({\it Green function comparison}) 

Finally, we shall extend the result (\ref{071101}) from the Gaussian divisible model to our original matrix $\mathcal{S}(Y)$, using a Green function comparison inspired by \cite{ALY}.
 It is now well-understood that one can compare certain functionals of the Green functions of two matrices instead of their eigenvalue distributions. Recall $Y_t$ from (\ref{071801}), and we define the interpolations
 \begin{align}
&Y^\gamma=\gamma \mathsf{A}+t^{1/2}(1-\gamma^2)^{1/2}W+\mathsf{B}+\mathsf{C}, \qquad S^\gamma=Y^\gamma (Y^\gamma)^*, \notag\\
& G^\gamma(z)=(S^\gamma-z)^{-1}, \qquad \mathcal{G}^\gamma(z)=((Y^\gamma)^*Y^\gamma-z)^{-1} \qquad m^\gamma(z)=\frac{1}{M} \text{Tr} G^\gamma(z),  \label{081537}
\end{align} 
In order to extend (\ref{071101}) from $S^0=\mathcal{S}(V_t)$ to $S^1=\mathcal{S}(Y)$, from \cite{PY} for instance, we know that it suffices to establish the following result for some smooth bounded $F:\mathbb{R}\to \mathbb{R}$ with bounded derivatives
\begin{align}
\Big|\mathbb{E} F\Big( N \int_{E_1}^{E_2} {\rm d} E \;\Im m^1(\lambda_{-,t}+E+\mathrm{i}\eta_0)\Big)- \mathbb{E} F\Big( N \int_{E_1}^{E_2} {\rm d} E \;\Im m^0(\lambda_{-,t}+E+\mathrm{i}\eta_0)\Big)\Big|\leq N^{-\delta}, \label{071102}
\end{align}
where $E_1<E_2$, and $|E_i|\leq N^{-\frac23+\varepsilon}$ for $i=1,2$, and $\eta_0=N^{-\frac23-\varepsilon}$, if we have 
the rigidity estimate
\begin{align}
|\lambda_M(S^a)-\lambda_{-,t}|\prec N^{-\frac23}, \qquad a=0, 1\label{071105}
\end{align} 
The estimate is easily available for the case $a=0$ (Gaussian divisible model) by a straightforward extension of  \cite{LY} and  \cite{DY}. This rigidity estimate for case $a=0$ is actually a technical input of getting (\ref{071101}). Hence, before the comparison in (\ref{071102}), we shall first prove (\ref{071105}) for $a=1$,  again by a Green function comparison. We claim that it suffices to show for all $z_{-,t}=\lambda_{-,t}+\kappa+\ii\eta$, with $|\kappa| \in N^{-\epsilon_b/2}$, and $\eta\in [N^{-\frac23-\varepsilon}, N^{-\varepsilon}]$ with some small $\varepsilon>0$, 
\begin{align}
\mathbb{E}\Big|N\eta (\Im m^1 (z_{-,t})-\Im \tilde{m}^0(z_{-,t}))\Big|^{2k}\leq (1+o(1))\mathbb{E}\Big|N\eta (\Im m^0(z_{-,t})-\Im \tilde{m}^0(z_{-,t}))\Big|^{2k}+ N^{-\delta k }.  \label{071110}
\end{align} 
Similar estimate also holds when one replaces $\Im$ to $\Re$. 
Here we introduced a copy of $ m^0(z)$
\begin{align*}
\tilde{m}^0(z)=\frac{1}{M}\text{Tr}(\sqrt{t}\tilde{W}+{X}-z)^{-1},
\end{align*}
 and $\tilde{W}$ is an iid copy of $W$. Actually, for the Gaussian divisible model, conditioning on $X$ and extending the Theorem 3 in \cite{DY2} on the deformed rectangle matrices from the right edge to the left edge, one can actually get the estimate 
 \begin{align}
 \big|\Im m^0(z_{-,t})-\Im m_{t}(z_{-,t})\big|\prec \left\{
 \begin{array}{ll}
 \frac{1}{N\eta}, &\text{ if } \kappa\geq 0, \\\\
 \frac{1}{N(|\kappa|+\eta)}+\frac{1}{(N\eta)^2\sqrt{|\kappa|+\eta}}, &\text{ if } \kappa\leq 0,
 \end{array}
 \right.
 \label{eq:averagelocallawGDM}\end{align}
 where $m_{t}$ is defined in (\ref{eq:fpe}).  Apparently, the above estimates also hold with $m^0$ replaced by $\tilde{m}^0$. Combining these estimates with (\ref{071110})  leads to the bounds $|\Im m^1 (z_{-,t})-\Im m_{t}(z_{-,t})|\prec 1/(N\eta)$ when $\kappa> -N^{-\frac23+\varepsilon}$ and $|\Im m^1 (z_{-,t})-\Im m_{t}(z_{-,t})|\ll 1/(N\eta)$ (w.h.p) when $\kappa\leq -N^{-\frac23+\varepsilon}$. Such estimates together with the real part analogue of the former will finally lead to the rigidity estimate in (\ref{071105}).

The proofs of (\ref{071102}) and (\ref{071110}) are similar.  We can turn to bound 
\begin{align}
{\rm d}\; \mathbb{E} F\Big( N \int_{E_1}^{E_2} \Im m^\gamma(z_{-,t}^0) {\rm d} E\Big)/{\rm d} \gamma  \label{071106}
\end{align}
for $z_{-,t}^0\coloneqq\lambda_{-,t}+E+\mathrm{i}\eta_0$ with $\eta_0=N^{-\frac23-\varepsilon}$, and 
\begin{align}
{\rm d}\; \mathbb{E}|N\eta (\Im m^\gamma (z_{-,t})-\Im \tilde{m}^0(z_{-,t}))|^{2k}/{\rm d} \gamma
\label{071201}
\end{align}
for $ z_{-,t}=\lambda_{-,t}+E+\mathrm{i}\eta$, where  $E\in (-N^{-\epsilon_b/2}, N^{-\frac23+\epsilon})$ and $ \eta= N^{-\frac23}$.
Actually, we shall first condition on $\Psi$, and then first estimate $\mathbb{E}_{\Psi}$ and then use a law of total expectation to estimate the full expectation. When one try to take the derivatives in (\ref{071106})-(\ref{071201}) and estimate the resulting terms, we will  need a priori bounds for the Green function entries 
\begin{align}
G^\gamma_{ij}(z), \quad \mathcal{G}^\gamma_{uv}(z), \quad ((Y^\gamma)^*G^\gamma(z))_{u i}  \label{071111}
\end{align}
in the domain
\begin{align}
\mathsf{D}= \mathsf{D}(\varepsilon_1,\varepsilon_2, \varepsilon_3)\coloneqq\{z=\lambda_{-}^{\mathsf{mp}}+E+\ii\eta: |E|\leq N^{-\varepsilon_1}, \eta\in [N^{-\frac{2}{3}-\varepsilon_2}, \varepsilon_3]\} 
\label{eq:domainD}\end{align}
with appropriately chosen small constants $\varepsilon_1, \varepsilon_2, \varepsilon_3$. 
We shall show that most of these entries are stochastically dominated by $1$ while a small amount of them are stochastically dominated by $1/t^2$. 
These bounds are not even known for the Gaussian divisible case, i.e., $\gamma=0$,  at the edge. The idea is to first prove the desired bounds of the quantities in (\ref{071111}) for $\gamma=0$, and then prove another comparison result for the Green functions
\begin{align}
\Big|\mathbb{E} |G^\gamma_{ij}(z)|^{2k}-\mathbb{E} |G^0_{ij}(z)|^{2k}\Big|\leq N^{-\delta} \label{071112}
\end{align}
for all $z\in \mathsf{D}$. Here we refer to \cite{ALY} and \cite{KY17} for similar strategy of using comparison to prove Green function bounds on local scale. 
Hence, based on the above discussion, the proof route is as  following

\vspace{2ex}
\begin{center}
  \begin{tikzpicture}[node distance=1cm, auto,]
    \node[draw, rectangle, align=center] (box1) {bounds of (\ref{071111}) for $\gamma=0$};
    \node[draw, rectangle, align=center, right=of box1] (box2) {(\ref{071112})};
    \node[draw, rectangle, align=center, right=of box2] (box3) {(\ref{071110})};
    \node[draw, rectangle, align=center, right=of box3] (box4) {(\ref{071102})};

   \draw[->, thick, shorten <=2pt, shorten >=2pt] (box1.east) -- (box2.west);
    \draw[->, thick, shorten <=2pt, shorten >=2pt] (box2.east) -- (box3.west);
    \draw[->, thick, shorten <=2pt, shorten >=2pt] (box3.east) -- (box4.west);
  \end{tikzpicture} 
\end{center}
which requires a three steps of Green function comparison with different observables. In contrast, in  \cite{ALY}, one Green function comparison for the observable $F(\Im G_{a_1b_1}(z), \cdots,\Im G_{a_m b_m}(z))$ (and its real part analogoue) with a deterministic parameter $z$ in the bulk regime would be sufficient. Also notice that our parameter $z_{-,t}$ in (\ref{071102}) is random, which further complicates the comparison. Specifically, when we do expansions of the Green function entries w.r.t. the matrix entries, we shall also keep tracking the derivatives of $\lambda_{-,t}$ w.r.t to these entries. The estimates of these derivatives involve delicate analysis of the subordination equations. 

Regarding the bounds of (\ref{071111}) for $\gamma=0$, here we shall explain the argument for $G_{ij}$ only for simplicity. The other two kinds of entries in (\ref{071111})  can be handled similarly.  For the Gaussian divisible model, conditioning on $X$, by extending the Theorem 3 in \cite{DY2} on the deformed rectangle matrices from the right edge to the left edge, with the $\eta_{\ast}$-regularity of the spectrum of $\mathcal{S}(X)$, we have  for $z\in \mathsf{D}$
 \begin{align*}
 |G^0_{ij}(z)-\Pi_{ij}(z)|\prec \Big(t\Big(\sqrt{\frac{\Im m_{t}(z)}{N\eta}}+\frac{1}{N\eta}\Big)+\frac{t^{1/2}}{N^{1/2}}\Big)\varpi^{-2}(z), 
 \end{align*}
 where  $\varpi$ is of order $t^2+\eta$, and 
 \begin{align*}
 \Pi_{ij}= (1+c_Nt m_{t})\big({X}{X}^*- \zeta_{t}(z))\big)^{-1}_{ij}=: (1+c_Nt m_{t})G_{ij}(X,\zeta_{t}(z)) .
 \end{align*}
 which is simply a multiple of a Green function entries of $\mathcal{S}(X)$, but evaluated at a random parameter $\zeta_t(z)$. 
 By the facts $t\sim N^{-2\epsilon_a}$,  $\eta\gtrsim N^{-\frac23-\varepsilon_2}$ and $|m_{t}(z)|\leq (ct|z|)^{-1/2}$ (cf. Lemma \ref{existuniq} (iv)), one can easily get $|G^0_{ij}(z)-\Pi_{ij}(z)|\prec 1$. Hence, what remains is to bound $\Pi_{ij}$, i.e., to bound $G_{ij}(X,\zeta_{t}(z))$, the Green function entry of the heavy-tailed covariance matrix $\mathcal{S}(X)$,  in the regime $2<\alpha< 4$.  We notice that such a bound has been obtained in \cite{Amol} for the heavy-tailed Wigner matrices in the same regime of $\alpha$, but in the bulk. Extending such a bound to edge could be difficult due to the deterioration of the  stability of self-consistent equation of the Stieltjes transform. However, we notice that with the $\eta_{\ast}$-regularity  of the left edge of  $\mathcal{S}(X)$ spectrum, one can show that the parameter $\zeta_{t}(z)$ is away from the left edge of the $\mathcal{S}(X)$ spectrum by a distance of order $t^2$. Hence, we are away from the edge by a mesoscopic distance, which allow us the conduct the argument similarly to the bulk case in  \cite{Amol} to get the desired bound for $\Pi_{ij}(z)$. 
 
\subsection{ Organization}  The rest of the paper will be organized as follows. In Section \ref{s.gdm}, we will state the main results for the Gaussian divisible model, whose proofs will be stated in Section \ref{s.pgdm}. Section \ref{s.general} is devoted to the statements of the Green function comparisons and prove our main theorem based on the comparisons. In Section \ref{s. proof-general}, we prove these comparison results. Some technical estimates are stated in the appendix.

\subsection{Notation} Throughout this paper, we regard $N$ as our fundamental large parameter. Any quantities that are not explicit constant or fixed may depend on $N$; we almost always omit the argument $N$ from our notation. We use $\|u\|_\alpha$ to denote the $\ell^\alpha$-norm of a vector $u$. We further use   $\|A\|$ for the operator norm of a matrix $A$.  We use $C$ to denote some generic (large) positive constant. The notation $a\sim b$ means $C^{-1}b \leq |a| \leq Cb$ for some positive constant $C$. Similarly, we use $a\lesssim b$ to denote the relation $|a|\leq Cb$ for some positive constant $C$. $\mathcal{O}$ and $\mathfrak{o}$ denote the usual big and small O notation, and $\mathcal{O}_{p}$ and $\mathfrak{o}_{p}$ denote the big and small O notation in probability. When we write $a \ll b$ and $a \gg b$ for possibly $N$-dependent quantities $a= a(N)$ and $b= b(N)$, we mean $|a|/b \to 0$ and $|a|/b \to \infty$ when $N\to \infty$, respectively.  For any positive integer $n$, let $[n]=[1:n]$ denote the set $\{1,\ldots,n\}$. For $a,b \in \R$, $a\vee b = \max\{a,b\}$ and $a\wedge b=\min\{a,b\}$. For a square matrix $A \in \mathbb{R}^{n \times n}$, we let $A_{\mathrm{diag}} =(A_{ij}\delta_{ij})\in \mathbb{R}^{n \times n}$.  We  adopt the following Green function notation for any rectangle matrix $A\in \mathbb{R}^{m \times n}$, $G(A,z)=(AA^{\top}-z)^{-1}$.

\section{Gaussian divisible model} \label{s.gdm}
 In this section, we state the main results for a Gaussian divisible model, and leave the detailed proofs to the next section. 
\subsection{Some definitions} 
Recall that $W=(w_{ij})\in \mathbb{R}^{M\times N}$ is a Gaussian matrix with iid $N(0,N^{-1})$ entries, and $t=N\mathbb{E}|\mathsf{A}_{ij}|^2$.
Consider the standard signal-plus-noise model
\begin{align}
	V_t \coloneqq X + \sqrt{t}W.
\label{def:GDM}\end{align}
In this section, we will establish several spectral properties of $\mathcal{S}(V_t)$ that will be extended to $\mathcal{S}(Y)$ later. For most of the discussion in this part, we will condition on $X$ and regard it as given, and work with the randomness of $W$. In light of this, we introduce the asymptotic eigenvalue density of $\mathcal{S}(V_t)$, denoted by $\rho_t$, through its corresponding Stieltjes transform $m_{t} \coloneqq m_{t}(z)$. For any $t > 0$, $m_{t}$ is known to be the unique solution to the following equation:
\begin{align}\label{eq:fpe}
m_{t} = \frac{1}{M}\sum_{i=1}^M \frac{1+c_Ntm_{t}}{\lambda_i(\mathcal{S}(X)) - \zeta_t},
\end{align}
subject to the condition that $\Im m_{t} > 0$ for any $z \in \mathbb{C}_{+}$. Here
\begin{align}
	\zeta_t \coloneqq \zeta_t(z) \coloneqq (1+c_Ntm_{t}(z))^2z-t(1-c_N)(1+c_Ntm_{t}(z)).
\label{eq:zeta}
\end{align}
In the context of free probability theory, $\rho_t$ corresponds to the rectangular free convolution of the spectral distribution of $\mathcal{S}(X)$ with the MP law on scale $t$, and $\zeta_t$ is the so-called subordination function for the rectangular free convolution. The following lemma provides a precise description of the existence and uniqueness of the asymptotic density. The following result holds for any realization of $X$. 
\begin{lemma}[Existence and uniqueness of asymptotic density, Lemma 2 of \cite{DY2}] \label{existuniq}
For any $t>0$, the following properties hold.
\begin{itemize}
\item[(i)] There exists a unique solution $m_{t}$ to equation (\ref{eq:fpe}) satisfying that $\Im m_{t}(z)> 0$ and $\Im z m_{t}(z)> 0$ if $z\in \mathbb{C}^+ $.

\item[(ii)] For all $E \in \mathbb{R}\setminus\{0\},$ $\lim_{\eta  \downarrow 0} m_{t}(E+\mathrm{i} \eta)$ exits, and we denote it as $m_{t}(E).$ The function $m_{t}$ is continuous on $\mathbb{R}\setminus \{0\}$, 
and $\rho_t(E)\coloneqq\pi^{-1} \Im m_{t}(E)$ is a continuous probability density function on $\mathbb{R}^+\coloneqq\{E\in \mathbb{R}:E>0\}$. Moreover, $m_{t}$ is the Stieltjes transform of $\rho_t$. Finally, $m_{t}(E)$ is a solution to (\ref{eq:fpe}) for $z=E$.  
\item[(iii)] For all $E \in \mathbb{R}\setminus\{0\},$ $\lim_{\eta  \downarrow 0} \zeta_t(E+\mathrm{i} \eta)$ exits, and we denote it as $\zeta_t(E).$ Moreover, we have $\Im \zeta_t(z)>0$ if $z\in \mathbb{C}^+ $. 

\item[(iv)] We have $\Re (1 + c_Ntm_{t}(z))>0$ for all $z\in \mathbb{C}^+$ and $|m_{t}(z)|\le (c_Nt|z|)^{-1/2}$.
\end{itemize}
\end{lemma}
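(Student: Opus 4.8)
This is Lemma~2 of \cite{DY2}; I outline how one would prove such a statement. My starting point is to read (\ref{eq:fpe})--(\ref{eq:zeta}) as the self-consistent (Silverstein-type) equation of the information-plus-noise matrix $\mathcal S(V_t)=V_tV_t^*$ with $V_t=X+\sqrt tW$. Writing $w\coloneqq1+c_Ntm_{t}$ and letting $m_X(\zeta)\coloneqq M^{-1}\sum_{i=1}^M(\lambda_i(\mathcal S(X))-\zeta)^{-1}$ denote the Stieltjes transform of the (deterministic, compactly supported, $[0,\infty)$-valued) spectral measure $\mu_X$ of $\mathcal S(X)$, equation (\ref{eq:fpe}) becomes $m_{t}=w\,m_X(\zeta_t)$; solving this together with $w=1+c_Ntm_{t}$ for $w$ and $m_{t}$ in terms of $m_X(\zeta_t)$ and substituting into (\ref{eq:zeta}) collapses the whole system to a single fixed-point equation for $\zeta_t$,
\begin{align*}
\zeta_t=\Phi_z(\zeta_t),\qquad\Phi_z(\zeta)\coloneqq\frac{z}{(1-c_Ntm_X(\zeta))^{2}}-\frac{t(1-c_N)}{1-c_Ntm_X(\zeta)},\qquad m_{t}=\frac{m_X(\zeta_t)}{1-c_Ntm_X(\zeta_t)}.
\end{align*}
The plan is then: (a) produce a solution by a fixed-point argument on a suitable domain; (b) deduce uniqueness from a stability estimate; (c) extract the boundary behaviour (ii)--(iii) from Nevanlinna theory together with the equation; (d) read off (iv) from the equation and the sign conditions.

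For existence and analyticity I fix $z\in\mathbb C^+$. Since $\Im\zeta>0$ forces $\Im m_X(\zeta)>0$, the factor $1-c_Ntm_X(\zeta)$ has strictly negative imaginary part, so $\Phi_z$ is well defined on $\mathbb C^+$, and $m_X$ is bounded on $\{\Im\zeta\ge\delta\}$ for any $\delta>0$. The first task is to choose a bounded subdomain $\Omega_z\subset\mathbb C^+$ (with size depending quantitatively on $\Im z$ and on $\mu_X$) on which $\Phi_z$ maps $\Omega_z$ into a compact subset of itself. This is where the structure of $\Phi_z$ must be exploited: $\Im\Phi_z(\zeta)>0$ is not automatic from $\Im\zeta>0$ alone, but follows once one also propagates the sign condition $\Re w>0$, i.e. $\Re\big((1-c_Ntm_X(\zeta))^{-1}\big)>0$ --- so the condition $\Re(1+c_Ntm_{t})>0$ from (iv) is really built into the domain. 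Granting such $\Omega_z$, the Earle--Hamilton fixed-point theorem (equivalently, strict contraction in the hyperbolic metric of $\Omega_z$, as $\Phi_z$ is holomorphic and not onto) produces a unique fixed point $\zeta_t(z)\in\Omega_z$; analyticity of $z\mapsto\zeta_t(z)$, hence of $m_{t}(z)$, comes from the implicit function theorem, the required non-degeneracy $\partial_\zeta(\zeta-\Phi_z(\zeta))\neq0$ being precisely the contraction estimate. This delivers $m_{t}(z)$ with $\Im m_{t}(z)>0$, while $\Im(zm_{t}(z))>0$ follows either from the construction of $\Omega_z$ or a posteriori once $m_{t}$ is identified in (ii) as a Stieltjes transform of a measure on $[0,\infty)$; and $\Im\zeta_t(z)>0$ comes along the way, which is part of (iii).

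For uniqueness, suppose $m_1,m_2\in\mathbb C^+$ both satisfy (\ref{eq:fpe})--(\ref{eq:zeta}) with $\Im(zm_j)>0$, set $w_j=1+c_Ntm_j$ and let $\zeta_j$ be the corresponding values of $\zeta_t$. Using $w_1-w_2=c_Nt(m_1-m_2)$, the identity $\zeta_1-\zeta_2=c_Nt(m_1-m_2)\big((w_1+w_2)z-t(1-c_N)\big)$, and the divided difference $m_X(\zeta_1)-m_X(\zeta_2)=(\zeta_1-\zeta_2)\,M^{-1}\sum_i(\lambda_i-\zeta_1)^{-1}(\lambda_i-\zeta_2)^{-1}$ (with $\lambda_i=\lambda_i(\mathcal S(X))$), one subtracts the two copies of $m_j=w_jm_X(\zeta_j)$ and factors out $m_1-m_2$ to get $(m_1-m_2)(1-B)=0$, where
\begin{align*}
B\coloneqq c_Ntm_X(\zeta_1)+c_Ntw_2\big((w_1+w_2)z-t(1-c_N)\big)\,\frac1M\sum_{i=1}^M\frac{1}{(\lambda_i-\zeta_1)(\lambda_i-\zeta_2)}.
\end{align*}
It then suffices to prove $|B|<1$. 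Cauchy--Schwarz bounds the last sum by $\prod_{j=1,2}\big(M^{-1}\sum_i|\lambda_i-\zeta_j|^{-2}\big)^{1/2}=\prod_{j=1,2}\big(\Im m_X(\zeta_j)/\Im\zeta_j\big)^{1/2}$, and taking imaginary parts in $m_j=w_jm_X(\zeta_j)$ and $\zeta_j=w_j^2z-t(1-c_N)w_j$ yields Ward-type identities relating $\Im m_j$, $\Im\zeta_j$ and $\Im m_X(\zeta_j)$; feeding these into $B$ and using $\Re w_j>0$ together with $\Im(zm_j)>0$ makes the inequality strict, forcing $m_1=m_2$. (Taking $m_1=m_2$ in this computation also gives the non-degeneracy used for analyticity above.)

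For the boundary behaviour, from the above $m_{t}$ is analytic $\mathbb C^+\to\mathbb C^+$ and $-zm_{t}(z)\to1$ as $z\to\infty$ non-tangentially (read off from (\ref{eq:fpe})), so $m_{t}$ is the Stieltjes transform of a probability measure $\rho_t$ on $[0,\infty)$. For $E\in\mathbb R\setminus\{0\}$ the limit $\lim_{\eta\downarrow0}m_{t}(E+\ii\eta)$ exists: where $\partial_\zeta(\zeta-\Phi_z(\zeta))\neq0$ this is the implicit function theorem, and the exceptional set is dealt with by the standard removable-singularity/monotonicity argument for Silverstein-type equations, which simultaneously shows $\zeta_t$ extends continuously to $\mathbb R\setminus\{0\}$ with values in the closed upper half-plane and that $\Im m_{t}$ is continuous with $\Im m_{t}\equiv0$ off the support, giving (ii); then (iii) follows because $\zeta_t=w^2z-t(1-c_N)w$ is an explicit continuous function of $(m_{t},z)$. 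Finally, in (iv) the inequality $\Re(1+c_Ntm_{t}(z))>0$ is exactly the domain-preservation used in the existence step, and $|m_{t}(z)|\le(c_Nt|z|)^{-1/2}$ follows from $m_{t}=w\,m_X(\zeta_t)$, the Cauchy--Schwarz bound $|m_X(\zeta_t)|^{2}\le\Im m_X(\zeta_t)/\Im\zeta_t$, and the imaginary-part relation between $\Im\zeta_t$ and $\Im m_{t}$ coming from (\ref{eq:fpe})--(\ref{eq:zeta}), which after elementary algebra gives $c_Nt|z|\,|m_{t}(z)|^{2}\le1$. The main obstacle will be the combined domain-preservation and strict-contraction claim of the existence step --- equivalently the estimate $|B|<1$ --- since it is what singles out the physically relevant branch and forces uniqueness, and all three sign conditions ($\Im m_{t}>0$, $\Im(zm_{t})>0$, $\Re(1+c_Ntm_{t})>0$) must be used together there; once it is in place the remaining points, including the boundary regularity at the edges of $\supp\rho_t$ and near $0$, are routine.
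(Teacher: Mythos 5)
The paper gives no proof of this lemma: it is quoted verbatim from \cite{DY2} (Lemma 2 there), whose argument in turn rests on the classical Dozier--Silverstein analysis of the information-plus-noise self-consistent equation. Your outline --- collapsing (\ref{eq:fpe})--(\ref{eq:zeta}) into a single fixed-point equation for the subordination function $\zeta_t$, existence/uniqueness via a holomorphic contraction together with a stability estimate of the form $|B|<1$, Stieltjes inversion and the implicit function theorem for (ii)--(iii), and imaginary-part (Ward-type) identities for (iv) --- is exactly the strategy of the cited proof and is correct as a sketch, with the understanding that the two load-bearing steps you yourself flag (construction of the invariant domain $\Omega_z$ with the sign condition $\Re(1+c_Ntm_t)>0$ built in, and the algebra behind $|m_t(z)|\le(c_Nt|z|)^{-1/2}$) are asserted rather than carried out.
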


For a realization of $X$, we can check if it satisfies the following  regularity condition on $m_X(z)$. Such a condition is crucial for the edge universality of DBM; see \cite{LY, DY2} for instance. 
\begin{definition}[$\eta_\ast$- regularity] \label{081551}
Let $\eta_{\ast}$ be a parameter satisfying $\eta_{\ast} \coloneqq N^{-\tau_{\ast}}$ for some constant $0 < \tau_{\ast} \le 2/3$.  For an $M\times N$ matrix $H$, we say $\mathcal{S}(H)$ is $\eta_{\ast}$-regular around the left edge $\lambda_-=\lambda_M(\mathcal{S}(H))$ if there exist constants $c_{H} > 0$ and $C_{H} > 1$ such that the following properties hold:
\begin{itemize}
	\item[(i)] For $z = E+\ii\eta$ with $\lambda_- \le E \le \lambda_- + c_{H}$ and $\eta_* + \sqrt{\eta_\ast | E - \lambda_-|} \le \eta \le 10$, we have
	\begin{align*}
		\frac{1}{C_H} \sqrt{| E - \lambda_-| + \eta} \le \Im m_H(E + \mathrm{i}\eta) \le C_H \sqrt{| E - \lambda_-| + \eta}.
	\end{align*}
	For $z = E+\mathrm{i}\eta$ with $\lambda_- - c_H \le E \le \lambda_-$ and $\eta_\ast \le  \eta \le 10$, we have
	\begin{align*}
		\frac{1}{C_H} \frac{\eta}{\sqrt{| E - \lambda_-| + \eta}} \le \Im m_H(E + \mathrm{i}\eta) \le C_H \frac{\eta}{\sqrt{| E -\lambda_-| + \eta}}.
	\end{align*}
	\item[(ii)] We have $c_H/2 \le  \lambda_- \le 2C_H$.
	\item[(iii)] We have $\| \mathcal{S}(H)\| \le N^{C_H}$.
\end{itemize}
\label{def:etastar}\end{definition}

The following lemma is a direct implication of $\eta_\ast$- regularity.
\begin{lemma}[Lemma 6 of \cite{DY2}]
	Suppose (a realization of) $\mathcal{S}(X)$ is $\eta_\ast$-regular in the sense of Definition \ref{def:etastar}. Let $\mu_{X}$ be the measure associated with $m_X(z)$.  For any fixed integer $k \ge 2$, and any $z \in \mathcal{D}$ with 
	\begin{align*}
		\mathcal{D} &\coloneqq  \big\{ z = E + \mathrm{i}\eta : \lambda_{-} - \frac{3}{4}\tilde{c} \le  E \le \lambda_{-}  ,2\eta_\ast \le \eta \le  10 \big\}\\
		&\quad \cup \big\{ z = E + \mathrm{i}\eta : \lambda_{-}  \le  E \le \lambda_{-} + \frac{3}{4}\tilde{c} , \eta_\ast + \sqrt{\eta_\ast(E-\lambda_-)} \le \eta \le  10 \big\} \\
		&\quad \cup \big\{ z = E + \mathrm{i}\eta : \lambda_{-} -\frac{3}{4}\tilde{c}  \le  E \le \lambda_{-} -2\eta_\ast  , 0 \le \eta \le  10 \big\}.
	\end{align*}
	Then we have
	\begin{align*}
		\int \frac{\mathrm{d}\mu_X(x)}{|x-E-\mathrm{i}\eta|^k} \sim \frac{\sqrt{|E-\lambda_-|+\eta}}{\eta^{k-1}}\mathbf{1}_{E \ge \lambda_{-}} +   \frac{1}{(|E-\lambda_-|+\eta)^{k-3/2}}\mathbf{1}_{E < \lambda_{-}}.
	\end{align*}
\label{lem:esthighoderi}\end{lemma}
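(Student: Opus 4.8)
The plan is to obtain the two-sided estimate from a dyadic decomposition of $\mu_X$ into shells centred at $E$, with the mass of each shell controlled by the $\eta_\ast$-regularity of $\Im m_X$. Throughout write $\kappa\coloneqq|E-\lambda_-|$. Since $|x-E-\ii\eta|^2=(x-E)^2+\eta^2\asymp(|x-E|+\eta)^2$, it is enough to estimate $J_k\coloneqq\int(|x-E|+\eta)^{-k}\,\mathrm{d}\mu_X(x)$. For $k=2$ this is immediate from the exact identity $\int|x-E-\ii\eta|^{-2}\,\mathrm{d}\mu_X(x)=\eta^{-1}\Im m_X(E+\ii\eta)$ combined with Definition \ref{def:etastar}; the argument below also covers $k=2$, so one may keep a general fixed $k\ge2$ in mind.

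The key auxiliary input is a two-sided bound on the local mass $\mu_X([E_0-r,E_0+r])$ for base points $E_0$ near $\lambda_-$ and scales $r$ inside the range permitted by Definition \ref{def:etastar}. The upper bound $\mu_X([E_0-r,E_0+r])\le 2r\,\Im m_X(E_0+\ii r)$ follows at once from $\Im m_X(E_0+\ii r)=\int r[(x-E_0)^2+r^2]^{-1}\,\mathrm{d}\mu_X(x)$. For the matching lower bound I would run a scale-comparison argument: splitting this integral at $|x-E_0|=Kr$ and using $r[(x-E_0)^2+r^2]^{-1}\le r^{-1}$ on the inner part and $r[(x-E_0)^2+r^2]^{-1}\le 2K^{-1}Kr[(x-E_0)^2+K^2r^2]^{-1}$ on the outer part gives
\begin{align*}
\Im m_X(E_0+\ii r)\le\frac{\mu_X([E_0-Kr,E_0+Kr])}{r}+\frac{2}{K}\,\Im m_X(E_0+\ii Kr).
\end{align*}
If $E_0\ge\lambda_-$ and both $r,Kr$ are in the regular range, Definition \ref{def:etastar} yields $\Im m_X(E_0+\ii r)\ge C_X^{-1}\sqrt{|E_0-\lambda_-|+r}$ and $\Im m_X(E_0+\ii Kr)\le C_X\sqrt{K}\sqrt{|E_0-\lambda_-|+r}$, so a large enough choice $K=K(C_X)$ forces $\mu_X([E_0-Kr,E_0+Kr])\gtrsim r\sqrt{|E_0-\lambda_-|+r}$. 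Since $\lambda_-=\lambda_M(\mathcal{S}(X))$ is the smallest eigenvalue, $\mu_X((-\infty,\lambda_-))=0$, so in the case $E<\lambda_-$ all the relevant mass sits to the right of the edge, and there this lower bound will be used with base point $\lambda_-$ (respectively a point at distance of order $\kappa\vee\eta$ to its right), never with a base point strictly to the left of $\lambda_-$.

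Granting these mass bounds, the upper bound on $J_k$ is a routine summation. Decompose $\mu_X$ into the far part, supported where $|x-E|\gtrsim\tilde{c}$ and contributing $\mathcal{O}(1)$ (negligible against the claimed right-hand side in the regime $\kappa+\eta\ll1$, the regime $\kappa+\eta\gtrsim1$ being trivial), plus the shells $A_0=\{|x-E|\le\eta\}$ and $A_j=\{2^{j-1}\eta<|x-E|\le2^j\eta\}$ for $j\ge1$, on which $|x-E|+\eta\asymp2^j\eta$. When $E\ge\lambda_-$, the upper mass bound plus regularity give $\mu_X(A_j)\lesssim 2^j\eta\sqrt{\kappa+2^j\eta}$, hence $J_k\lesssim\sum_{j\ge0}(2^j\eta)^{-(k-1)}\sqrt{\kappa+2^j\eta}\lesssim\eta^{1-k}\sqrt{\kappa+\eta}$, the summand decaying geometrically in $j$ for $k\ge2$. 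When $E<\lambda_-$, the shells with $2^j\eta<\kappa$ lie strictly to the left of $\lambda_-$ and carry no mass, while the remaining ones satisfy $\mu_X(A_j)\lesssim(2^j\eta)^{3/2}$, giving $J_k\lesssim\sum_{j:\,2^j\eta\ge\kappa}(2^j\eta)^{3/2-k}\lesssim(\kappa+\eta)^{3/2-k}$. The matching lower bound comes from keeping a single well-chosen shell: for $E\ge\lambda_-$ restrict $J_k$ to $\{|x-E|\le K\eta\}$, of mass $\gtrsim\eta\sqrt{\kappa+\eta}$ and with integrand $\gtrsim(K\eta)^{-k}$; for $E<\lambda_-$ restrict to $\{\lambda_-\le x\le\lambda_-+(\kappa\vee\eta)\}$, of mass $\gtrsim(\kappa\vee\eta)^{3/2}$ and with $|x-E|+\eta\asymp\kappa+\eta$ there.

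I expect the main obstacle to be precisely this local mass lower bound, that is, converting the pointwise two-sided control of $\Im m_X$ supplied by $\eta_\ast$-regularity into genuine mass estimates for $\mu_X$ on short intervals, since a priori the mass could concentrate at a scale different from the one suggested by $\Im m_X$; the scale-comparison inequality above — used only at square-root-type base points, which is why the $E<\lambda_-$ case must be reduced to a base point at the edge — is designed to settle this. The remaining difficulties are bookkeeping: one must check that all base points and scales invoked above stay inside the ranges permitted by Definition \ref{def:etastar} for $z\in\mathcal{D}$, and one uses $\mu_X((-\infty,\lambda_-))=0$ at the $E<\lambda_-$, small-$\eta$ interface to avoid a spurious logarithmic loss in the summation.
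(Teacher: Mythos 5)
The paper does not prove this lemma; it is imported verbatim as Lemma 6 of \cite{DY2}, so there is no in-paper argument to compare against. Your proposal is a correct and essentially self-contained proof, and it follows the route one would expect: convert the two-sided control of $\Im m_X$ from $\eta_\ast$-regularity into two-sided local mass bounds for $\mu_X$ (the upper bound from $\mu_X([E_0-r,E_0+r])\le 2r\,\Im m_X(E_0+\ii r)$, the lower bound from your scale-comparison inequality with $K=K(C_X)$ large), then sum dyadic shells for the upper bound and keep one shell for the lower bound; your use of $\mu_X((-\infty,\lambda_-))=0$ to kill the shells left of the edge is exactly the right move and is what produces the exponent $3/2$ rather than a logarithm when $E<\lambda_-$.

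Two small bookkeeping corrections, neither of which is a gap. First, in the lower bound for $E<\lambda_-$ you should apply the mass lower bound at base point $\lambda_-$ with scale $r=\kappa\vee\eta$ (which satisfies $r\ge 2\eta_\ast$, hence lies in the admissible range), and accept the conclusion on the $K$-enlarged interval $[\lambda_-,\lambda_-+K(\kappa\vee\eta)]$, on which the integrand is still $\gtrsim_K(\kappa+\eta)^{-k}$; taking $r=(\kappa\vee\eta)/K$ so as to land in the interval of width exactly $\kappa\vee\eta$ would push $r$ below $\eta_\ast$ and out of the range where regularity applies. Second, the third branch of $\mathcal{D}$ allows $\eta=0$, where shells in powers of $\eta$ degenerate; for $E<\lambda_-$ one should decompose in powers of $\kappa+\eta$ instead, which changes nothing in the estimates since the first nonempty scale is $\asymp\kappa\vee\eta\ge 2\eta_\ast$.
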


The following notion of stochastic domination which originated from \cite{erdHos2013averaging} will be used throughout the paper.
\begin{definition}[Stochastic domination] \label{def.sd}
 	Let 
 	$
 		\mathsf{X}=(\mathsf{X}^{(N)}(u): N \in \mathbb{N}, u \in \mathrm{U}^{(N)}), \mathrm{Y}=(\mathsf{Y}^{(N)}(u): N \in \mathbb{N}, u \in \mathsf{U}^{(N)})
 	$
 	be two families of random variables, where $\mathsf{Y}$ is nonnegative, and $\mathsf{U}^{
(N)}$ is a possibly $N$-dependent parameter set. We say that $\mathrm{X}$ is stochastically dominated by $\mathsf{Y}$, uniformly in $u$, if for all small $\epsilon > 0$  and large $D > 0$,
$$
\sup _{u \in \mathsf{U}^{(N)}} \mathbb{P}\left(\left|\mathsf{X}^{(N)}(u)\right|>N^{\varepsilon} \mathsf{Y}^{(N)}(u)\right) \leqslant N^{-D}
$$
for large enough $N > N_0(\epsilon, D)$. If $\mathsf{X}$ is stochastically dominated by $\mathsf{Y}$, uniformly in $u$, we use the notation
$\mathsf{X} \prec \mathsf{Y}$ , or equivalently $\mathsf{X} = O_{\prec}(\mathsf{Y})$. Note that in the special case when $\mathsf{X}$ and $\mathsf{Y}$ are deterministic, $\mathsf{X} \prec \mathsf{Y}$ means that for any given $\epsilon > 0$, $|\mathsf{X}^{(N)}(u)| \le N^{\epsilon}\mathsf{Y}^{(N)}(u)$ uniformly in $u$, for all sufficiently large $N \ge N_0(\epsilon)$.
\end{definition}


\subsection{$\eta_\ast$- regularity of $\mathcal{S}(X)$: A matrix minor argument}
In this subsection, we state that with high probability $\eta^*$-regularity holds for $\mathcal{S}(X)$ with $\eta^*=N^{-\epsilon_b}$.  Recall that $X$ defined in (\ref{def:definofYX}). Let us recall $\Psi=(\psi_{ij}) \in \mathbb{R}^{M \times N}$, a random matrix with entries $\psi_{ij}$ as defined in (\ref{def:definofpsichi}). By setting 
\begin{align}
\epsilon_\alpha = (\alpha-2)/5\alpha, \label{081533}
\end{align} 
we call a $\Psi$  \textit{good} if it has at most $N^{1-\epsilon_\alpha}$ entries equal to $1$. The following lemma indicates that $\Psi$ is, indeed, good with high probability.

\noindent

\begin{lemma}
For any large $D > 0$, we have
	$
		\mathbb{P}(\Omega_\Psi = \{\Psi \text{ is good}  \} ) \ge 1 - N^{-D}.
	$
\label{lem:goodpsi}\end{lemma}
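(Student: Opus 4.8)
The plan is to bound the number $\#\{(i,j): \psi_{ij}=1\} = \sum_{ij}\psi_{ij}$ by a direct first-moment (expectation) estimate combined with a concentration/large-deviation bound, exploiting that the $\psi_{ij}$ are i.i.d.\ Bernoulli with an explicitly small success probability. First I would compute $p\coloneqq \P[\psi_{ij}=1] = \P[|y_{ij}|\ge N^{-\epsilon_b}] = \P[|\Theta|\ge N^{1/2-\epsilon_b}]$. By the tail assumption in Assumption \ref{main assum}(i), $\P[|\Theta|\ge s] = 2\P[\Theta> s] \sim \mathrm{const}\cdot s^{-\alpha}$, so
\begin{align*}
p \;\asymp\; N^{-\alpha(1/2-\epsilon_b)} \;=\; N^{-\alpha/2 + \alpha\epsilon_b}.
\end{align*}
Hence $\E\big[\sum_{ij}\psi_{ij}\big] = MN\,p \asymp N^2 \cdot N^{-\alpha/2+\alpha\epsilon_b} = N^{2-\alpha/2+\alpha\epsilon_b}$ (using $M\asymp N$). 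Since $\alpha\in(2,4)$ we have $2-\alpha/2 < 1$, and the constraint $\epsilon_b<(\alpha-2)/(10\alpha)$ from \eqref{081401} forces $\alpha\epsilon_b < (\alpha-2)/10$, so
\begin{align*}
2-\tfrac{\alpha}{2}+\alpha\epsilon_b \;<\; 2-\tfrac{\alpha}{2}+\tfrac{\alpha-2}{10} \;=\; 1 - \tfrac{\alpha-2}{2}+\tfrac{\alpha-2}{10} \;=\; 1 - \tfrac{2(\alpha-2)}{5} \;=\; 1 - 2\epsilon_\alpha,
\end{align*}
recalling $\epsilon_\alpha = (\alpha-2)/(5\alpha)\cdot\alpha/1$... more precisely one checks $2(\alpha-2)/5 > \epsilon_\alpha = (\alpha-2)/(5\alpha)$ since $\alpha>1$, so in any case the exponent of the mean is strictly below $1-\epsilon_\alpha$ with room to spare. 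Thus $\E[\sum\psi_{ij}] \le N^{1-2\epsilon_\alpha} \ll N^{1-\epsilon_\alpha}$ for large $N$.

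Next I would upgrade this to a high-probability statement. Since $\sum_{ij}\psi_{ij}$ is a sum of $MN$ i.i.d.\ Bernoulli$(p)$ variables with mean $\mu \le N^{1-2\epsilon_\alpha}$, and the target threshold $N^{1-\epsilon_\alpha}$ exceeds $\mu$ by a multiplicative factor $\ge N^{\epsilon_\alpha}\to\infty$, a standard Chernoff bound for binomial upper tails gives, for any fixed $D>0$ and $N$ large,
\begin{align*}
\P\Big(\sum_{ij}\psi_{ij} \ge N^{1-\epsilon_\alpha}\Big) \;\le\; \exp\!\big(-c\,N^{1-\epsilon_\alpha}\log(N^{\epsilon_\alpha})\big) \;\le\; N^{-D}.
\end{align*}
(One may use the form $\P[\mathrm{Bin}(n,p)\ge a] \le (enp/a)^{a}$ with $n=MN$, $a=N^{1-\epsilon_\alpha}$; then $enp/a \le eN^{-\epsilon_\alpha}$, so the bound is at most $(eN^{-\epsilon_\alpha})^{N^{1-\epsilon_\alpha}}$, which is super-polynomially small.) This is exactly the claimed estimate $\P(\Omega_\Psi)\ge 1-N^{-D}$.

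The only genuinely delicate point is verifying the exponent inequality, i.e.\ that the arithmetic of $\alpha$, $\epsilon_b$, and $\epsilon_\alpha$ leaves the mean $MNp$ strictly polynomially below $N^{1-\epsilon_\alpha}$; this is where the specific choices in \eqref{081401} and \eqref{081533} are used, and it should be presented carefully rather than by hand-waving. Beyond that, I would also note that the error term $\lesssim s^{-(\alpha+\varrho)}$ in the tail assumption only affects the implied constant in $p\asymp N^{-\alpha/2+\alpha\epsilon_b}$ and is harmless. No concentration machinery beyond the elementary binomial Chernoff bound is needed, so I do not anticipate any real obstacle here — the lemma is essentially a bookkeeping check that the truncation scale $N^{-\epsilon_b}$ was chosen low enough.
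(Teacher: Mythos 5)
Your proposal is correct and follows essentially the same route as the paper: both reduce the claim to an elementary binomial upper-tail estimate for $\#\{(i,j):\psi_{ij}=1\}$ with success probability $p\asymp N^{-\alpha(1/2-\epsilon_b)}$, the paper via $\sum_{j\ge N^{1-\epsilon_\alpha}}\binom{N^2}{j}p^j$ and you via the equivalent Chernoff form $(enp/a)^a$, and in both cases the conclusion rests on the same exponent arithmetic $2-\alpha/2+\alpha\epsilon_b<1-\epsilon_\alpha$ guaranteed by (\ref{081401}) and (\ref{081533}). Your explicit verification of that inequality is in fact slightly more careful than the paper's displayed bound, so no changes are needed.
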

\begin{proof}Observe that
	$
		\mathbb{P} (\Omega_\Psi = \{\Psi \text{ is good}  \} )  = 1 - \mathbb{P}( \#\{ (i,j) : x_{ij} > N^{-\epsilon_b}  \}    >  N^{1- \epsilon_\alpha} ) .
	$
By Assumption \ref{main assum} (i), we have 
	\begin{align*}
		\mathbb{P} ( \#\{ (i,j) : x_{ij} > N^{-\epsilon_b}  \}    >  N^{1- \epsilon_\alpha} ) \lesssim \sum_{j = N^{1-\epsilon_\alpha}}^{N^2} \binom{N^2}{j} N^{-\alpha(1/2 -\epsilon_b)j} \lesssim \sum_{j = N^{1-\epsilon_\alpha}}^{N^2}  N^{ -(\alpha-2)j/2 } \lesssim N^{-D}.
	\end{align*}
	The claim now follows by possibly adjusting the constants.
\end{proof}

Given any $\Psi$ is good, the following proposition shows that $\mathcal{S}(X)$ is $\eta_*$-regular 
with 
$\eta_* = N^{-\tau_{*}}$
for some $\tau_{*}>0$.
Actually, we shall work with a truncation of $X$, $X^{\mathcal{C}}\coloneqq(x_{ij}\mathbf{1}_{|x_{ij}|\leq N^{100}})_{i\in[M],j\in[N]}$, in order to guarantee Definition \ref{081551} (iii).
Apparently,  $\|\mathcal{S}(X^{\mathcal{C}})\|\leq N^{102}$ and $\mathbb{P}(X=X^{\mathcal{C}})=1-\mathfrak{o}(1)$.
\begin{proposition}[$\eta_\ast$- regularity of $\mathcal{S}(X)$]\label{prop:etastarRegular} 
Suppose that $\Psi$ is good. Let $\eta_\ast=N^{-\epsilon_b}$. Then $\mathcal{S}(X)$ is $\eta_{\ast}$-regular around its smallest eigenvalue $\lambda_{M}(\mathcal{S}(X))$ in the sense of Definition \ref{def:etastar} with high probability.
\end{proposition}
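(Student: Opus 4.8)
The plan is to establish $\eta_\ast$-regularity of $\mathcal{S}(X)$ by sandwiching its smallest eigenvalue (and more generally its Stieltjes transform near the left edge) between quantities built from the two minors $\mathsf{B}^{(\mathcal{D}_r)}$ and $\mathsf{B}^{[\mathcal{D}_c]}$ appearing in \eqref{071001}. Recall that conditionally on a good $\Psi$, both $|\mathcal{D}_r|$ and $|\mathcal{D}_c|$ are at most $N^{1-\epsilon_\alpha}$, and both minors are matrices with independent, mean-zero entries bounded by $N^{-\epsilon_b}$. For such matrices the local MP law of \cite{HLS} applies with spectral parameter resolution down to scale $N^{-2\epsilon_b}$: more precisely, writing $m_{\mathsf{B}^{(\mathcal{D}_r)}}$ and $m_{\mathsf{B}^{[\mathcal{D}_c]}}$ for the Stieltjes transforms of $\mathcal{S}(\mathsf{B}^{(\mathcal{D}_r)})$ and $\mathcal{S}(\mathsf{B}^{[\mathcal{D}_c]})$, each is close to the corresponding MP-type law $m_{\mathsf{mp}}^{(\cdot)}$ (whose aspect ratio is perturbed by $O(N^{-\epsilon_\alpha})$ because of the removed rows/columns, and whose variance profile is $\mathbb{E}|b_{ij}|^2 = 1/N + O(N^{-1-\varrho'})$, also a negligible perturbation) with error $O_\prec\bigl(N^{-\epsilon_b/2}\bigr)$ on the domain $\{\Im z \ge N^{-\epsilon_b}\}$ near the left edge. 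I will choose $\tau_\ast = \epsilon_b$ so that $\eta_\ast = N^{-\epsilon_b}$ sits safely above the resolution scale of all three objects.

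The key steps, in order. (1) Translate \eqref{071001}, which is a statement about the bottom eigenvalue, into the inputs needed for Definition \ref{def:etastar}: both minor MP laws have left edges within $O(N^{-\epsilon_\alpha})$ of $\lambda_-^{\mathsf{mp}}$, since removing $O(N^{1-\epsilon_\alpha})$ rows or columns changes the aspect ratio $c$ by that much and $\lambda_-^{\mathsf{mp}} = (1-\sqrt c)^2$ is Lipschitz in $c$ away from $c=1$; here Assumption \ref{main assum}(ii) guaranteeing $c_\infty\neq 1$ is used. (2) From the local laws for the two minors, obtain two-sided bounds of the square-root type for $\Im m_{\mathsf{B}^{(\mathcal{D}_r)}}$ and $\Im m_{\mathsf{B}^{[\mathcal{D}_c]}}$ on the relevant domain (the standard behaviour $\Im m \sim \sqrt{\kappa + \eta}$ inside the bulk/right of the edge, $\Im m \sim \eta/\sqrt{\kappa+\eta}$ to the left), valid down to $\eta \ge N^{-\epsilon_b}$ with the constants $c_H, C_H$ absolute. (3) Use the interlacing \eqref{071001} together with the Cauchy-interlacing–based comparison of Stieltjes transforms: since $\mathcal{S}(X)$'s spectrum is squeezed between the spectra of the two minors (up to the $O(N^{1-\epsilon_\alpha})$-dimensional discrepancy in counting functions, which on scale $\eta \ge N^{-\epsilon_b}\gg N^{-\epsilon_\alpha}$ contributes only $o(1)$ to $N\eta\,\Im m$), the two-sided bounds transfer to $\Im m_X(E+\mathrm{i}\eta)$, verifying Definition \ref{def:etastar}(i). (4) Verify (ii) from step (1) and the eigenvalue-rigidity consequence of \eqref{071001}, and (iii) from the truncation $X = X^{\mathcal{C}}$ which forces $\|\mathcal{S}(X^{\mathcal{C}})\| \le N^{102}$. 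Finally, all "with high probability" statements are combined with Lemma \ref{lem:goodpsi}.

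The main obstacle I anticipate is step (3): carefully controlling how the $|\mathcal{D}_r|, |\mathcal{D}_c| \lesssim N^{1-\epsilon_\alpha}$ eigenvalue discrepancies between $\mathcal{S}(X)$ and its two minors propagate into the Stieltjes transform estimate near the edge. Interlacing gives $|m_X(z) - m_{\mathsf{B}^{(\mathcal{D}_r)}}(z)| \le |\mathcal{D}_r|/(M\eta)$ and similarly for the column minor, so one needs $N^{1-\epsilon_\alpha}/(N\eta) = N^{-\epsilon_\alpha}/\eta \ll \sqrt{\kappa+\eta}$ on the whole regularity domain; with $\eta \ge N^{-\epsilon_b}$ this holds precisely because the constraint \eqref{081401} forces $\epsilon_b < (\alpha-2)/10\alpha < \epsilon_\alpha/2$, so $N^{-\epsilon_\alpha}/\eta \le N^{\epsilon_b - \epsilon_\alpha} \to 0$. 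The bookkeeping that must be done with care is ensuring this comparison is uniform over the three pieces of the domain in Definition \ref{def:etastar}(i) (in particular near $E$ slightly below $\lambda_-$, where $\Im m_X$ itself is as small as $\eta/\sqrt{\kappa+\eta}$ and the interlacing error must be shown to be a genuinely lower-order correction), and checking that the perturbation of the aspect ratio and of the second moment of $b_{ij}$ truly only shifts the reference edge by $o(\eta_\ast)$ rather than by something comparable to the scale on which we test regularity. Once these perturbative estimates are in place, the proof is a routine assembly of the local law of \cite{HLS}, Cauchy interlacing, and Lemma \ref{lem:goodpsi}.
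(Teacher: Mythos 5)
Your proposal is correct and follows essentially the same route as the paper: condition on a good $\Psi$, invoke the bounded-support local law of \cite{HLS} for the light part, control the perturbation caused by the heavy entries by an error of order $N^{-\epsilon_\alpha}/\eta$ (negligible against $\sqrt{|E-\lambda_-|+\eta}$ on scale $\eta\ge N^{-\epsilon_b}$ since $\epsilon_\alpha>2\epsilon_b$), and locate $\lambda_M(\mathcal{S}(X))$ by the row/column-minor interlacing. The only cosmetic difference is that for the Stieltjes-transform comparison the paper applies the rank inequality directly to $X$ versus the full matrix $\mathsf{B}$ (using $\mathrm{rank}(\mathsf{C})\le N^{1-\epsilon_\alpha}$), whereas you compare $m_X$ to the minors $\mathsf{B}^{(\mathcal{D}_r)}$, $\mathsf{B}^{[\mathcal{D}_c]}$ via interlacing; both yield the same $N^{-\epsilon_\alpha}\eta^{-1}$ error.
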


The proof of Proposition \ref{prop:etastarRegular} is based on the following two lemmas. For notational simplicity, we define $\mathsf{m}_{\mathsf{mp}}^{(t)}(z)\coloneqq (1-t)^{-1} \mathsf{m}_{\mathsf{mp}}(z/(1-t))$  for any $t > 0$.
\begin{lemma}\label{lem: local law for BB^T}
Fix $C>0$. Let us consider $z \in \{ E+i\eta : C^{-1}\lambda_{-}^{\mathsf{mp}} \le E \le \lambda_{+}^{\mathsf{mp}}+1, 0<\eta<3 \}.$
We have
\begin{equation}\label{eq: local law BB^T}
	|m_{\mathsf{B}}(z)- \mathsf{m}_{\mathsf{mp}}^{(t)}(z)| \prec N^{-\eps_{b}} + (N\eta)^{-1}.
\end{equation}
In addition, 
\begin{equation}\label{eq: left edge rigidity BB^T}
	|\lambda_{M}(\mathcal{S}(\mathsf{B})) - (1-t)\lambda_{-}^{\mathsf{mp}}| \prec N^{-2\eps_{b}} + N^{-2/3}.
\end{equation}
\end{lemma}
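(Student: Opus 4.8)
The key point is that, with no further conditioning, $\mathsf{B}$ is already an iid sample covariance matrix obeying a bounded-support condition with parameter $q=N^{\epsilon_b}$, so both conclusions will reduce to the local law and edge rigidity for such matrices. First I would record the structural facts. Since $(\psi_{ij})$, $(\chi_{ij})$, $(b_{ij})$ are each iid in $(i,j)$ and mutually independent, the entries $\mathsf{B}_{ij}=(1-\psi_{ij})\chi_{ij}b_{ij}$ are iid; they have mean $0$ (symmetry of $b$) and $|\mathsf{B}_{ij}|\le N^{-\epsilon_b}$. From $\mathbb{E}\mathsf{A}_{ij}^2+\mathbb{E}\mathsf{B}_{ij}^2+\mathbb{E}\mathsf{C}_{ij}^2=\mathbb{E}y_{ij}^2=N^{-1}$ and Assumption \ref{main assum}(i),
\begin{align*}
N\mathbb{E}\mathsf{C}_{ij}^2=\mathbb{E}\big[\Theta^2\mathbf{1}_{|\Theta|\ge N^{1/2-\epsilon_b}}\big]\lesssim N^{-(1/2-\epsilon_b)(\alpha-2)}=:N^{-\delta_0},
\end{align*}
so $N\mathbb{E}\mathsf{B}_{ij}^2=(1-t)-N\mathbb{E}\mathsf{C}_{ij}^2=(1-t)(1+O(N^{-\delta_0}))$, with $1-t\sim1$ since $t=\mathbb{E}[\Theta^2\mathbf{1}_{|\Theta|<N^{-\epsilon_a}}]\sim N^{-3\epsilon_a}$ (positive density of $\Theta$ at $0$). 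Combining the entrywise bound with the variance, $\mathbb{E}|\mathsf{B}_{ij}|^k\le N^{-\epsilon_b(k-2)}\mathbb{E}\mathsf{B}_{ij}^2\lesssim (Nq^{k-2})^{-1}$ for $k\ge3$ with $q=N^{\epsilon_b}$. Hence, after dividing $\mathsf{B}$ by $(N\mathbb{E}\mathsf{B}_{ij}^2)^{1/2}$ so that its variance is exactly $N^{-1}$, the rescaled matrix $\widehat{\mathsf{B}}$ is an iid sample covariance matrix with sparsity parameter $q=N^{\epsilon_b}$ and dimension ratio $c_N\to c_\infty\in(0,\infty)\setminus\{1\}$.

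I would then feed this into the local law and edge rigidity for sparse sample covariance matrices (\cite{HLS}), applied on the spectral domain of the statement, which is bounded away from $0$ and contains a neighbourhood of both soft edges, hence lies in the regime of validity. For $\widehat{\mathsf{B}}$ this gives $|m_{\widehat{\mathsf{B}}}(z)-\mathsf{m}_{\mathsf{mp}}(z)|\prec q^{-1}+(N\eta)^{-1}=N^{-\epsilon_b}+(N\eta)^{-1}$ and $|\lambda_M(\mathcal{S}(\widehat{\mathsf{B}}))-\lambda_{-}^{\mathsf{mp}}|\prec q^{-2}+N^{-2/3}=N^{-2\epsilon_b}+N^{-2/3}$, the $q^{-2}$ accounting for the sparsity-induced shift of the soft edge. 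Undoing the rescaling $\mathcal{S}(\mathsf{B})=(N\mathbb{E}\mathsf{B}_{ij}^2)\,\mathcal{S}(\widehat{\mathsf{B}})$ replaces $\mathsf{m}_{\mathsf{mp}}$ by $\mathsf{m}_{\mathsf{mp}}^{(t)}$ and $\lambda_{-}^{\mathsf{mp}}$ by $(1-t)\lambda_{-}^{\mathsf{mp}}$, up to the error of replacing the true variance $(1-t)(1+O(N^{-\delta_0}))/N$ by $(1-t)/N$; by the stability of the MP Stieltjes transform away from $0$ (with at most a square-root loss as $\eta\downarrow0$ near the edges) this costs $O(N^{-\delta_0}+N^{-\delta_0}\eta^{-1/2})$ in (\ref{eq: local law BB^T}) and $O(N^{-\delta_0})$ in (\ref{eq: left edge rigidity BB^T}). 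Since $\delta_0=(1/2-\epsilon_b)(\alpha-2)>2\epsilon_b$ by (\ref{081401}), these are $\lesssim N^{-\epsilon_b}$ on $\eta\ge N^{-\epsilon_b}$ (and are absorbed into $(N\eta)^{-1}$, or handled by the monotonicity of $\eta\mapsto\eta\Im m_{\mathsf{B}}(E+\ii\eta)$, for smaller $\eta$) and $\lesssim N^{-2\epsilon_b}$, respectively, which yields the claimed bounds.

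The only genuine work is to ensure that the sparse-sample-covariance inputs hold in exactly the stated form: an averaged local law of precision $q^{-1}+(N\eta)^{-1}$ and a rigidity estimate of precision $q^{-2}+N^{-2/3}$ at the \emph{left} soft edge $\lambda_{-}^{\mathsf{mp}}>0$ --- which, because $M<N$ and $c_\infty\neq1$, is a soft edge separated from $0$ and behaves exactly as the right edge --- uniformly on a domain reaching from a constant fraction of $\lambda_{-}^{\mathsf{mp}}$ up past $\lambda_{+}^{\mathsf{mp}}$. If one prefers a self-contained derivation, this is just the standard weak-local-law package (large-deviation bounds for resolvent entries, the self-consistent equation for $m_{\mathsf{B}}$, and its stability away from $0$), with the precision $q^{-1}$ dictated by the support bound $N^{-\epsilon_b}$ on the entries, and I would run that argument directly. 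The remaining bookkeeping --- the variance mismatch and the dimension ratio --- is harmless thanks to the slack built into (\ref{081401}).
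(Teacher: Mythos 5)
Your proposal follows essentially the same route as the paper: both reduce the lemma to the local law and edge rigidity for bounded-support (sparse) sample covariance matrices from \cite{HLS}, the only cosmetic difference being that you rescale $\mathsf{B}$ to exact variance $N^{-1}$ whereas the paper introduces $\tilde t=1-N\mathbb{E}|\mathsf{B}_{ij}|^2$ and compares $\mathsf{m}_{\mathsf{mp}}^{(t)}$ with $\mathsf{m}_{\mathsf{mp}}^{(\tilde t)}$. Your bookkeeping of the variance mismatch is in fact the more careful of the two: the relevant quantity is $N\mathbb{E}\mathsf{C}_{ij}^2\sim N^{-(1/2-\epsilon_b)(\alpha-2)}$, exactly as you compute, and the checkable inequality is $(1/2-\epsilon_b)(\alpha-2)>2\epsilon_b$ from (\ref{081401}); for the Stieltjes-transform comparison the clean statement is the H\"older-$1/2$ continuity of $\mathsf{m}_{\mathsf{mp}}$ near the soft edges, which gives a uniform error $O(|t-\tilde t|^{1/2})\ll N^{-\epsilon_b}$ and avoids the case analysis in $\eta$ you sketch.

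The one real ingredient you omit concerns the rigidity (\ref{eq: left edge rigidity BB^T}). The local law on the stated domain $\{C^{-1}\lambda_-^{\mathsf{mp}}\le E\le\lambda_+^{\mathsf{mp}}+1\}$ controls the eigenvalue counting function only there; it does not by itself exclude eigenvalues of $\mathcal{S}(\mathsf{B})$ lying in $(0,C^{-1}\lambda_-^{\mathsf{mp}})$, so one cannot yet conclude that $\lambda_M(\mathcal{S}(\mathsf{B}))$ sits near $(1-t)\lambda_-^{\mathsf{mp}}$ rather than far below it. This is the asymmetry between the left and right soft edges (the right edge is handled by an operator-norm bound), and it is why the paper additionally invokes the crude lower bound on the smallest singular value from \cite{Tik16} when adapting the rigidity of \cite{HLS} from the right edge to the left. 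You should add this a priori lower bound (or some substitute excluding small outliers) to make the left-edge rigidity argument complete; everything else in your sketch goes through.
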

\begin{proof}
We further denote by $\tilde{t}:=1 - N\mathbb{E} |\mathsf{B}_{ij}|^2$. It is easy to show that $|\tilde{t}-t|=\mathfrak{o}(N^{-1})$, and thus we have $|\mathsf{m}_{\mathsf{mp}}^{(t)}(z)-\mathsf{m}_{\mathsf{mp}}^{(\tilde{t})}(z)|\leq (N\eta)^{-1}$.
Hence, it suffices to show the following estimates
\begin{align}
	|m_{\mathsf{B}}(z)- \mathsf{m}_{\mathsf{mp}}^{(\tilde{t})}(z)| \prec N^{-\eps_{b}} + (N\eta)^{-1},\qquad 
	|\lambda_{M}(\mathcal{S}(\mathsf{B})) - (1-\tilde{t})\lambda_{-}^{\mathsf{mp}}| \prec N^{-2\eps_{b}} + N^{-2/3}. \label{eq: left edge rigidity BB^T1}
\end{align}
Notice that $\mathsf{B}$ is a so-called random matrix with bounded support. The first estimate in \eqref{eq: left edge rigidity BB^T1} is given by \cite[Theorem 2.7]{HLS}. We can show the second estimate in \eqref{eq: left edge rigidity BB^T1} adapting the proof of \cite[Theorem 2.9]{HLS} from the right edge to the left edge, in a straightforward way, given a crude lower bound of $\lambda_{M}(\mathcal{S}(\mathsf{B}))$ which is guaranteed by \cite{Tik16}. We omit the details.
\end{proof}

\begin{lemma}\label{lem: left edge rigidity XX^T}
Suppose $\Psi$ is good. Then, we have
$
	|\lambda_{M}(\mathcal{S}(X))-(1-t)\lambda_{-}^{\mathsf{mp}}| \prec N^{-2\epsilon_b}.
$
\end{lemma}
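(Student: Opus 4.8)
The goal is a one-sided rigidity estimate for the smallest eigenvalue of $\mathcal{S}(X)$ at the (shifted) left edge, good to precision $N^{-2\epsilon_b}$, given that $\Psi$ is good. The plan is to combine the Cauchy interlacing sandwich \eqref{071001} with the bounded-support rigidity of $\mathcal{S}(\mathsf{B})$ from Lemma \ref{lem: local law for BB^T}, controlling the two minor corrections using the hypothesis $|\mathcal{D}_r|, |\mathcal{D}_c| \le |\Psi| \le N^{1-\epsilon_\alpha}$. Recall from the proof-strategy discussion that $X^{(\mathcal{D}_r)} = \mathsf{B}^{(\mathcal{D}_r)}$ and $X^{[\mathcal{D}_c]} = \mathsf{B}^{[\mathcal{D}_c]}$, so \eqref{071001} reads
\begin{align*}
\lambda_{M}(\mathcal{S}(\mathsf{B}^{[\mathcal{D}_c]})) \le \lambda_{M}(\mathcal{S}(X)) \le \lambda_{M-|\mathcal{D}_r|}(\mathcal{S}(\mathsf{B}^{(\mathcal{D}_r)})).
\end{align*}
It therefore suffices to show that both $\lambda_{M}(\mathcal{S}(\mathsf{B}^{[\mathcal{D}_c]}))$ and $\lambda_{M-|\mathcal{D}_r|}(\mathcal{S}(\mathsf{B}^{(\mathcal{D}_r)}))$ are within $N^{-2\epsilon_b}$ (up to $\prec$) of $(1-t)\lambda_{-}^{\mathsf{mp}}$.

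\textbf{Upper bound via the row minor.} Condition on $\Psi$ good and apply Lemma \ref{lem: local law for BB^T} directly to the matrix $\mathsf{B}^{(\mathcal{D}_r)}$, which is an $(M - |\mathcal{D}_r|) \times N$ matrix with bounded support ($|b_{ij}| \le N^{-\epsilon_b}$) and iid-type entries. Its aspect ratio $(M - |\mathcal{D}_r|)/N$ differs from $c_N$ by at most $|\mathcal{D}_r|/N \le N^{-\epsilon_\alpha}$, and one checks that $|\mathcal{D}_r| = o(N)$ keeps the edge $\lambda_-^{\mathsf{mp}}$ bounded away from $0$ (since $c_\infty \ne 1$), so the hypotheses of Lemma \ref{lem: local law for BB^T} (or rather its analogue for the perturbed aspect ratio) still apply; the shift in $\lambda_-^{\mathsf{mp}}$ caused by replacing $c_N$ by $(M-|\mathcal{D}_r|)/N$ is $O(N^{-\epsilon_\alpha})$, which is $\le N^{-2\epsilon_b}$ by our choice \eqref{081401}, \eqref{081533} of constants (recall $\epsilon_b < (\alpha-2)/10\alpha$ and $\epsilon_\alpha = (\alpha-2)/5\alpha = 2\epsilon_b^{\max}$, so $\epsilon_\alpha > 2\epsilon_b$). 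Hence $\lambda_{M - |\mathcal{D}_r|}(\mathcal{S}(\mathsf{B}^{(\mathcal{D}_r)})) = (1-t)\lambda_-^{\mathsf{mp}} + O_\prec(N^{-2\epsilon_b})$. The same argument applied to $\mathsf{B}^{[\mathcal{D}_c]}$, whose aspect ratio is $M/(N - |\mathcal{D}_c|)$, gives the matching lower bound $\lambda_M(\mathcal{S}(\mathsf{B}^{[\mathcal{D}_c]})) = (1-t)\lambda_-^{\mathsf{mp}} + O_\prec(N^{-2\epsilon_b})$. Squeezing between these two completes the proof.

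\textbf{Main obstacle.} The delicate point is not the interlacing but making sure the quantitative bounds of Lemma \ref{lem: local law for BB^T} survive when the aspect ratio is perturbed by an amount $|\mathcal{D}_r|/N$ or $|\mathcal{D}_c|/N$ that is only $N^{-\epsilon_\alpha}$ — in particular checking that the implicit constants $c_H, C_H$ and the stability of the self-consistent equation near the left edge are uniform over all such perturbations, and that the edge location moves by $O(N^{-\epsilon_\alpha})$ rather than something larger. This is where the inequality $\epsilon_\alpha > 2\epsilon_b$ built into \eqref{081401}, \eqref{081533} is used crucially: it guarantees the edge-displacement error is absorbed into the claimed $N^{-2\epsilon_b}$ precision. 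A secondary technical nuisance is that the crude lower bound on $\lambda_M$ needed to anchor the adaptation of \cite[Theorem 2.9]{HLS} must now be invoked for the minors $\mathsf{B}^{(\mathcal{D}_r)}, \mathsf{B}^{[\mathcal{D}_c]}$ rather than for $\mathsf{B}$ itself; since these are again bounded-support matrices with non-degenerate aspect ratio, the bound of \cite{Tik16} still applies, but one should note the uniformity over the (random, $\Psi$-measurable) choice of minor. All of this is routine once the constant bookkeeping is set up, so I expect the write-up to be short.
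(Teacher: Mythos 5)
Your proposal is correct and follows essentially the same route as the paper: Cauchy interlacing between the column-minor and row-minor of $X$ (which coincide with the corresponding minors of $\mathsf{B}$), the bounded-support rigidity of Lemma \ref{lem: local law for BB^T} applied to those minors with the modified aspect ratios $M/(N-|\mathcal{D}_c|)$ and $(M-|\mathcal{D}_r|)/N$, and the inequality $\epsilon_\alpha>2\epsilon_b$ to absorb the $O(N^{-\epsilon_\alpha})$ displacement of the edge. The paper's proof is exactly this argument, stated more tersely.
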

\begin{proof}
Denote by $\mathfrak{N}(\mathsf{C})$ the number of nonzero columns of $\mathsf{C}$. Since $\Psi$ is good,
$
	|\mathfrak{N}(\mathsf{C})| \le N^{1-\eps_{\alpha}},
$
with high probability.  By Cauchy interlacing, we can easily see that 
\begin{align*}
\lambda_{M}(\mathcal{S}({X}^{[\mathcal{D}_c]}))\leq \lambda_M(\mathcal{S}({X}))\leq \lambda_{M-|\mathcal{D}_r|}(\mathcal{S}({X}^{(\mathcal{D}_r)}))
\end{align*}
Further notice that ${X}^{(\mathcal{D}_r)}=\mathsf{B}^{(\mathcal{D}_r)}$ and ${X}^{[\mathcal{D}_c]}=\mathsf{B}^{[\mathcal{D}_c]}$, and thus we have
\begin{align*}
\lambda_{M}(\mathcal{S}({\mathsf{B}}^{[\mathcal{D}_c]}))\leq \lambda_M(\mathcal{S}({X}))\leq \lambda_{M-|\mathcal{D}_r|}(\mathcal{S}({\mathsf{B}}^{(\mathcal{D}_r)})).  
\end{align*}
Applying (\ref{eq: left edge rigidity BB^T}) to $\mathcal{S}({\mathsf{B}}^{[\mathcal{D}_c]})$ and $\mathcal{S}({\mathsf{B}}^{(\mathcal{D}_r)})$ with the modified parameter $c_N$, i.e., $M/(N-|\mathcal{D}_c|)$ and $(M-|\mathcal{D}_r|)/N$ respectively, in the definition of $\lambda_{-}^{\mathsf{mp}}$, we can prove  the conclusion with the fact $\epsilon_\alpha>2\epsilon_b$. 
\end{proof}

Now we show the proof of Proposition \ref{prop:etastarRegular}.
\begin{proof}[Proof of Proposition \ref{prop:etastarRegular}]
We shall show three properties (i), (ii) and (iii) (as in Definition \ref{def:etastar}) holds with high probability. Suppose that $\Psi$ is good. 

\noindent
(i). Let $\mu_X$ and $\mu_\mathsf{B}$ be the empirical spectral distributions of $\mathcal{S}(X)$ and $\mathcal{S}(\mathsf{B})$, respectively. By the rank inequality \cite[Theorem A.44]{BS10},
$
	|\mu_{X}-\mu_{\mathsf{B}}|\le 2\text{rank}(\mathsf{C})/N.
$
Then,
\begin{equation*}
	\left| \Im m_{X}(z) - \Im m_{\mathsf{B}}(z) \right|
	\le \int \Big|\frac{\eta}{(\lambda-E)^{2}+\eta^{2}} (\mu_{X}-\mu_{\mathsf{B}})(d\lambda)\Big|.
\end{equation*}
It follows from  $\eta ((\lambda-E)^{2}+\eta^{2})^{-1}  \le \eta^{-1}$ that 
$
	| \Im m_{X}(z) - \Im m_{\mathsf{B}}(z) | \lesssim  \text{rank}(\mathsf{C})/(N\eta)	= N^{-\epsilon_{\alpha}}\eta^{-1},
$
where we use the assumption that $\Psi$ is good. This together with Lemma \ref{lem: local law for BB^T} give
\begin{equation*}
	|\Im m_{X}(z) - \Im \mathsf{m}_{\mathsf{mp}}^{(t)}(z)|
	\prec N^{-\epsilon_{\alpha}}\eta^{-1}+N^{-\epsilon_{b}} + (N\eta)^{-1}.
\end{equation*}

For $E\in[\lambda_{M}(\mathcal{S}(X)),\lambda_{M}(\mathcal{S}(X))+\eta_{\ast}]$, by Lemma \ref{lem: left edge rigidity XX^T}, we have with high probability that,
\begin{equation*}
	|E-(1-t)\lambda_{-}^{\mathsf{mp}}| \le |E-\lambda_{M}(\mathcal{S}(X))| + |\lambda_{M}(\mathcal{S}(X))-(1-t)\lambda_{-}^{\mathsf{mp}}| \le 2\eta_{\ast}.
\end{equation*}
Thus, for $\eta\geq \eta_\ast$, we have
$
	\Im \mathsf{m}_{\mathsf{mp}}^{(t)}(z) \sim \sqrt{\eta},
$
which  implies that
$
	\Im m_{X}(z) \sim \sqrt{|E-\lambda_{M}(\mathcal{S}(X))|+\eta}.
$
Similarly, for $E\in[\lambda_{M}(\mathcal{S}(X))-\eta_{\ast},\lambda_{M}(\mathcal{S}(X))]$ and $\eta\geq \eta_\ast$, we can show that
$
	\Im \,m_{X}(z) \sim {\eta}/{\sqrt{|E-\lambda_{M}(\mathcal{S}(X))|+\eta}}.
$
If $E\ge\lambda_{M}(\mathcal{S}(X))+\eta_{\ast}$, we can use the fact $|\lambda_{M}(\mathcal{S}(X))-(1-t)\lambda_{-}^{\mathsf{mp}}|\ll\eta_{\ast}$ to obtain that $E\ge (1-t)\lambda_{-}^{\mathsf{mp}}$ and
$$
	\sqrt{|E-\lambda_{M}(\mathcal{S}(X))|+\eta} \sim \sqrt{|E-(1-t)\lambda_{-}^{\mathsf{mp}}|+\eta}.
$$
Similarly, if $E\le\lambda_{M}(\mathcal{S}(X))-\eta_{\ast}$, we obatin $E\le (1-t)\lambda_{-}^{\mathsf{mp}}$ and
\begin{equation*}
	\frac{\eta}{\sqrt{|E-\lambda_{M}(\mathcal{S}(X))|+\eta}} \sim \frac{\eta}{\sqrt{|E-(1-t)\lambda_{-}^{\mathsf{mp}}|+\eta}}.
\end{equation*}
\noindent
(ii). It holds with high probability by Lemma \ref{lem: local law for BB^T}.
\noindent
(iii). See Remark \ref{081821} below. Therefore, we conclude the proof. 
\end{proof}

\begin{remark}\label{081821}Rigorously speaking, in order to have the above proposition, we shall work with $X^{\mathcal{C}}$ instead of $X$. Since these two matrices are identical with probability $1-\mathfrak{o}(1)$, any spectral statistics of these two matrices are identical with probability $1-\mathfrak{o}(1)$. For our main theorem, it would be sufficient to work with $X^{\mathcal{C}}$ instead of $X$ in the sequel. However, for convenience, we will still work with $X$ as if the above proposition is also true for $\mathcal{S}(X)$.   In this case, the reader may simply assume that the entries of $X$ are bounded by $N^{100}$ (say). We can anyway recover the result without this additional boundedness assumption by comparing the matrix with its truncated version. 
\end{remark}

\vspace{1ex}
Let $\lambda_{-,t}$ be the left edge of $\rho_t$. The Gaussian part in model (\ref{def:GDM}) can further improve the scale of the square root behavior of $\rho_t$ around $\lambda_{-,t}$ on the event that  $\mathcal{S}(X)$ satisfies certain $\eta_*$- regularity. The following theorem makes this precise.
\begin{theorem}[Lemma 1 of \cite{DY2}]On $\Omega_\Psi$,  we have
 	\begin{align*}
 		\rho_t\sim \sqrt{(E -\lambda_{-,t})_+} \quad \text{for} \quad \lambda_{-,t} - \frac{3}{4}\tilde{c} \le  E  \le \lambda_{-,t} + \frac{3}{4}\tilde{c},
 	\end{align*}
 	and for $z = E + \mathrm{i}\eta \in \mathbb{C}^+$,
 	\begin{equation}\label{eq:Immc}
		\Im m_{t}(z) \sim
		\begin{dcases}
			\sqrt{|E-\lambda_{-,t}|+\eta}, & \lambda_{-,t}  \le  E\leq \lambda_{-,t} + \frac{3}{4}\tilde{c}\\
			\frac{\eta}{\sqrt{|E-\lambda_{-,t}|+\eta}}, &\lambda_{-,t} - \frac{3}{4}\tilde{c}\le  E  \le \lambda_{-,t} 
		\end{dcases}.
	\end{equation} 
\label{thm:srbofmxt}\end{theorem}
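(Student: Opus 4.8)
The statement to prove is Theorem~\ref{thm:srbofmxt}, which asserts the square-root edge behaviour of the rectangular free convolution density $\rho_t$ and the corresponding Stieltjes-transform bounds \eqref{eq:Immc}, on the event $\Omega_\Psi$. Since this result is quoted as "Lemma 1 of \cite{DY2}", the plan is to verify that the hypotheses of that lemma are met in our setting and then invoke it verbatim. The key input required by \cite{DY2} is precisely that $\mathcal{S}(X)$ (conditioned on $X$) is $\eta_\ast$-regular around its left edge in the sense of Definition~\ref{def:etastar}, together with the fact that $t\gg\eta_\ast$. The first ingredient is supplied by Proposition~\ref{prop:etastarRegular}, which holds with high probability on $\Omega_\Psi$ with $\eta_\ast=N^{-\epsilon_b}$; the second follows from our parameter choices \eqref{081401}, since $t\sim N^{-2\epsilon_a}$ and $\epsilon_a\ll\epsilon_b$, so indeed $\sqrt{\eta_\ast}\ll t\ll 1$.

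The structure of the argument I would spell out is as follows. First, on $\Omega_\Psi$, fix a realization of $X$ for which $\mathcal{S}(X)$ is $\eta_\ast$-regular (this is the generic case by Proposition~\ref{prop:etastarRegular}). Working with this deterministic $X$, recall that $\rho_t$ is the rectangular free convolution of the empirical spectral distribution $\mu_X$ with the MP law on scale $t$, with subordination function $\zeta_t$ from \eqref{eq:zeta}, whose existence and basic analytic properties are guaranteed by Lemma~\ref{existuniq}. Second, I would use Lemma~\ref{lem:esthighoderi}, which converts the $\eta_\ast$-regularity of $\mu_X$ into sharp two-sided bounds on the moments $\int |x-z|^{-k}\,\dd\mu_X(x)$ for $z$ in the relevant domain $\mathcal{D}$; these moment bounds are exactly what one differentiates the self-consistent equation \eqref{eq:fpe} against to control the stability of the subordination equation near the left edge. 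Third, a standard bootstrap/continuity argument in $t$ (starting from small $t$, where $\rho_t$ is close to $\mu_X$ smoothed on scale $t$, and increasing up to our target $t\sim N^{-2\epsilon_a}$) shows that the edge $\lambda_{-,t}$ is well-defined, that $\lambda_M(\mathcal{S}(X))-\zeta_t(\lambda_{-,t})\sim t^2$, and that the square-root expansion $\rho_t(E)\sim\sqrt{(E-\lambda_{-,t})_+}$ holds in a neighbourhood of size $\tilde c$; the bound \eqref{eq:Immc} then follows by the usual dichotomy (inside vs.\ outside the support) from the square-root behaviour of the density together with the Stieltjes inversion. All of these steps are carried out in detail in \cite{DY2} for the left edge of deformed rectangular matrices, so the write-up amounts to checking that the constants $c_H, C_H$ produced by Proposition~\ref{prop:etastarRegular} feed correctly into the constant $\tilde c$ and the implicit constants in \eqref{eq:Immc}.

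The main obstacle — though it is a cited rather than a new difficulty — is the loss of stability of the self-consistent equation \eqref{eq:fpe} as one approaches the edge $E\to\lambda_{-,t}$: the linearized operator degenerates there, and one must quantify how $t$ (the amount of MP-smoothing) regularizes this, i.e., that the resolvent of the stability operator blows up only like $(t^2+\eta)^{-1}$ rather than faster. This is where the precise power $\eta_\ast=N^{-\epsilon_b}$ with $\sqrt{\eta_\ast}\ll t$ is used: it guarantees $\zeta_t$ stays at mesoscopic distance $\sim t^2$ from the edge of $\mu_X$, so the moment bounds of Lemma~\ref{lem:esthighoderi} remain under control uniformly. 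In our regime $\alpha\in(2,4)$ this is comfortable because we only need a weak regularity ($\eta_\ast=N^{-\epsilon_b}$ for a small but fixed $\epsilon_b$), in contrast to the more delicate tuning of $\eta_\ast$ required in \cite{ALY}; consequently the verification of \cite[Lemma~1]{DY2}'s hypotheses is routine here, and I would present this theorem essentially as a corollary of Proposition~\ref{prop:etastarRegular} and Lemma~\ref{lem:esthighoderi}.
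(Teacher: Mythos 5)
Your proposal is correct and matches what the paper does: the theorem is stated as a direct citation of Lemma~1 of \cite{DY2}, with no proof given in the paper, the only thing to check being that its hypothesis — the $\eta_\ast$-regularity of $\mathcal{S}(X)$ on $\Omega_\Psi$ together with $\sqrt{\eta_\ast}\ll t$ — is supplied by Proposition~\ref{prop:etastarRegular} and the parameter choices in \eqref{081401}. Your additional sketch of the internal mechanism of \cite{DY2}'s argument is consistent with how the paper uses the result elsewhere (e.g.\ via Lemma~\ref{lem:esthighoderi} and Lemma~\ref{lem:preEst}), but is not required.
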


Next, we recall the definition in (\ref{eq:domainD}). The following theorem provide bounds on the Green function entries for the Gaussian divisible model. Further recall the notation in (\ref{0818100}), we set $\mathcal{T}_r\coloneqq[M]\setminus \mathcal{D}_r$, $\mathcal{T}_c\coloneqq[N]\setminus \mathcal{D}_c$.

\begin{theorem}\label{thm: resolvent entry size V_t}
Suppose that $\Psi$ is good. Let $z\in \mathsf{D}(\varepsilon_1, \varepsilon_2, \varepsilon_3)$ with $10\epsilon_a \le  \varepsilon_1 \le \epsilon_b/500$ and sufficiently small $\varepsilon_2, \varepsilon_3$. The following estimates hold w.r.t.  the probability measure $\mathbb{P}_{\Psi}$.
\begin{itemize}
	\item[(i)] $$|G_{ij}(V_{t}, z)| \prec \mathbf{1}_{i \in \mathcal{T}_{r} \text{ or } j\in\mathcal{T}_{r}} + t^{-2}(1 -  \mathbf{1}_{i \in \mathcal{T}_{r} \text{ or } j\in\mathcal{T}_{r}}), $$
	\item[(ii)] $$|G_{uv}(V_{t}^\top, z)| \prec \mathbf{1}_{u \in \mathcal{T}_{c} \text{ or } v\in\mathcal{T}_{c}} + t^{-2}(1 -  \mathbf{1}_{u \in \mathcal{T}_{c} \text{ or } v\in\mathcal{T}_{c}}), $$
	\item[(iii)] $$|[G(V_{t}, z)V_{t}]_{iu}| \prec N^{-\epsilon_{b}/2}\mathbf{1}_{i \in \mathcal{T}_{r} \text{ or } u\in\mathcal{T}_{c}} + t^{-2}(1 -  \mathbf{1}_{i \in \mathcal{T}_{r} \text{ or } u\in\mathcal{T}_{c}}).$$
\end{itemize}
\end{theorem}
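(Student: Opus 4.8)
\textbf{Proof proposal for Theorem \ref{thm: resolvent entry size V_t}.}

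The plan is to obtain the bounds by conditioning on $X$ (with $\Psi$ good), treating $V_t=X+\sqrt t\,W$ as a deformed rectangular matrix whose randomness comes only from the Gaussian $W$, and then combining two ingredients: (a) an entrywise local law for $G(V_t,z)$ comparing $G_{ij}(V_t,z)$ to the deterministic-equivalent object $\Pi_{ij}(z)=(1+c_N t m_t(z))\,G_{ij}(X,\zeta_t(z))$ (and analogous objects for $G(V_t^\top,z)$ and $[G(V_t,z)V_t]_{iu}$), obtained by extending Theorem 3 of \cite{DY2} from the right edge to the left edge; and (b) a bound on the surviving main term, i.e.\ on the Green function entries of the heavy-tailed matrix $\mathcal S(X)$ evaluated at the random mesoscopic spectral parameter $\zeta_t(z)$. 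For step (a), the key quantitative input is the $\eta_\ast$-regularity of $\mathcal S(X)$ from Proposition \ref{prop:etastarRegular}, which feeds into the deformed local law to yield, for $z\in\mathsf D$, an error of the schematic form $\big(t(\sqrt{\Im m_t(z)/(N\eta)}+1/(N\eta))+t^{1/2}N^{-1/2}\big)\varpi^{-2}(z)$ with $\varpi\sim t^2+\eta$; using $t\sim N^{-2\epsilon_a}$, $\eta\ge N^{-2/3-\varepsilon_2}$, the bound $|m_t(z)|\le(c_Nt|z|)^{-1/2}$ from Lemma \ref{existuniq}(iv), and $\varepsilon_1,\varepsilon_2,\varepsilon_3$ chosen small relative to $\epsilon_a,\epsilon_b$, this error is $\prec1$. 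So the three assertions reduce to bounding $\Pi_{ij}$, the analogous deformed equivalents for $V_t^\top$, and for $[G(V_t,z)V_t]_{iu}$.

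Next I would analyze $\zeta_t(z)$. Using $\eta_\ast$-regularity of $\mathcal S(X)$ together with the fixed-point equation \eqref{eq:fpe}--\eqref{eq:zeta} and Theorem \ref{thm:srbofmxt}, one shows that for $z\in\mathsf D$ the parameter $\zeta_t(z)$ stays at distance $\gtrsim t^2$ from the left edge $\lambda_M(\mathcal S(X))$ of the $\mathcal S(X)$-spectrum and, more precisely, $\Im\zeta_t(z)\gtrsim t^2+\eta$ with $|\Re\zeta_t(z)-\lambda_M(\mathcal S(X))|$ controlled; this is exactly the ``away from the edge by a mesoscopic distance $t^2$'' phenomenon. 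On this mesoscopic domain I would then bound $G_{ij}(X,\zeta_t(z))$ for $2<\alpha<4$ by adapting the argument of \cite{Amol} (which handles heavy-tailed Green function entries in the bulk) — the point being that at imaginary part of order $t^2\gg\eta_\ast$ the self-consistent equation for $m_X$ is as stable as in the bulk. The crucial structural fact is that removing the $\mathsf C$-entries does not affect the smallest singular value much, so the relevant ``bad'' rows/columns are exactly those indexed by $\mathcal D_r,\mathcal D_c$: for $i,j\in\mathcal T_r$ (no large $\psi$ entry) the heavy-tail mechanism is absent and one gets $|G_{ij}(X,\zeta_t(z))|\prec1$, hence $|\Pi_{ij}|\prec1$ after using $|1+c_Ntm_t|\lesssim1$ which follows from Lemma \ref{existuniq}(iv); for the remaining $O(N^{1-\epsilon_\alpha})$ indices one only has the deterministic bound $|G_{ij}(X,\zeta_t(z))|\le(\Im\zeta_t(z))^{-1}\lesssim t^{-2}$. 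Combining gives part (i). Parts (ii) and (iii) follow by the same scheme applied to $G(V_t^\top,z)$ (with $\mathcal T_c$ in place of $\mathcal T_r$) and to $[G(V_t,z)V_t]_{iu}=\sqrt t\,[G(V_t,z)W]_{iu}+[G(V_t,z)X]_{iu}$, where the deterministic equivalent of $[G(V_t,z)X]_{iu}$ is again a multiple of $[G(X,\zeta_t(z))X]_{iu}$; here the extra factor $N^{-\epsilon_b/2}$ on the ``good'' set comes from the smallness of $X=\mathsf B+\mathsf C$ restricted to $\mathcal T_r\times\mathcal T_c$, where only $\mathsf B$-entries of size $\le N^{-\epsilon_b}$ survive, combined with the off-diagonal decay of $G(X,\zeta_t(z))$ on the mesoscopic scale.

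The main obstacle will be step (b): proving the $O_\prec(1)$ bound for $G_{ij}(X,\zeta_t(z))$ at the left edge of a heavy-tailed covariance matrix in the regime $\alpha\in(2,4)$. Although \cite{Amol} provides the template, two complications must be handled carefully. First, $\zeta_t(z)$ is \emph{random} (it depends on the realization of $X$ through $m_X$), so one needs the entrywise bound for $\mathcal S(X)$ to hold uniformly over the random mesoscopic region that $\zeta_t(z)$ can occupy — this requires a net argument combined with the Lipschitz continuity of $z\mapsto G_{ij}(X,z)$ away from the spectrum, using $\|\mathcal S(X)\|\le N^{C}$ from the truncation. Second, the passage from the right edge (where \cite{ABP} shows such regularity \emph{fails}) to the left edge is genuinely model-specific: the heuristic via \eqref{070601} says the minimizing singular vector $v_M$ avoids the large $\mathsf C$-entries, which is what makes $\zeta_t$ stay off the edge; turning this into a quantitative self-consistent-equation stability estimate, uniformly in the deterministic shift of order $N^{1-\alpha/2}$, is the technical heart. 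Everything else — the deformed local law (a) and the bookkeeping of the index sets $\mathcal D_r,\mathcal D_c,\mathcal T_r,\mathcal T_c$ — is a fairly direct, if lengthy, adaptation of \cite{DY2} and \cite{ALY}.
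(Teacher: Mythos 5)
Your proposal matches the paper's argument in all essential respects: the paper proves exactly the two ingredients you describe, namely a left-edge extension of the deformed local law of \cite{DY2} comparing $G(V_t,z)$ to $b_t\,G(X,\zeta_t(z))$ (Theorem \ref{thm: local law Gaussian divisible model}), a localization of $\zeta_t(z)$ in a domain at distance $\gtrsim t^2$ from the edge of $\mathcal{S}(X)$ (Lemma \ref{lem: domain of parameter}), and a self-consistent-equation bound on $G_{ij}(X,\zeta)$ in that domain via heavy-tailed large-deviation estimates from \cite{Amol} with the same $\mathcal{T}_r/\mathcal{D}_r$ dichotomy (Proposition \ref{prop: resolvent entry size for X=B+C}, with Proposition \ref{prop: bound the form GX} supplying the $N^{-\epsilon_b/2}$ gain for part (iii)). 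The uniformity over the random parameter is handled, as you anticipate, by proving the bound on a deterministic mesoscopic domain containing $\zeta_t(z)$ with high probability and a lattice argument.
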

The proof of Theorem \ref{thm: resolvent entry size V_t} is based on the following results.
\begin{lemma}\label{lem: domain of parameter}
Suppose that the assumptions in Theorem \ref{thm: resolvent entry size V_t} hold.
There exist constants $c,C>0$ such that for the domain $\mathsf{D}_\zeta= \mathsf{D}_{\zeta}(c,C)\subset\mathbb{C}_{+}$ defined by
\begin{equation}\label{eq: domain of zeta}
	\mathsf{D}_\zeta\coloneqq \mathsf{D}_{1}\cup\mathsf{D}_{2},
\end{equation}
where
\begin{align*}
	\mathsf{D}_{1} &\coloneqq \{ \zeta=E+i\eta: E \le (1-t)\lambda_{-}^{\mathsf{mp}} - ct^{2}, \eta\ge c t N^{-2/3-\varepsilon_{2}} \} \\
	\mathsf{D}_{2} &\coloneqq \{ \zeta=E+i\eta: \eta\ge c (\log N)^{-C} t^{2} \}
\end{align*}
we have $\zeta_{t}(z) \in \mathsf{D}_\zeta$ with high probability.
\end{lemma}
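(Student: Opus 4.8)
The plan is to locate the subordination parameter $\zeta_t(z)$ by combining the definition~\eqref{eq:zeta} with the a priori control on $m_t$ coming from Lemma~\ref{existuniq}(iv) and Theorem~\ref{thm:srbofmxt}. First I would record the elementary bound $|m_t(z)| \le (c_N t |z|)^{-1/2}$, which already shows $c_N t m_t(z) = O(t^{1/2})$ since $|z|$ is bounded below on $\mathsf{D}$. Hence $1 + c_N t m_t(z) = 1 + O(t^{1/2})$, and in particular its real part is positive and it is bounded away from $0$ and $\infty$. Plugging this into~\eqref{eq:zeta} gives $\zeta_t(z) = (1+O(t^{1/2}))^2 z - t(1-c_N)(1+O(t^{1/2}))$, so $\zeta_t(z)$ is a small multiplicative-plus-additive perturbation (of relative size $O(t^{1/2})$ in the leading factor and $O(t)$ in the shift) of $z$ itself. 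Since $t \sim N^{-2\epsilon_a}$ and $\epsilon_a$ is chosen tiny compared to $\varepsilon_1, \varepsilon_2$ in~\eqref{081401}, this perturbation is negligible on the scales that define $\mathsf{D}$.

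Next I would split according to which region of $\mathsf{D}$ the spectral parameter $z = \lambda_-^{\mathsf{mp}} + E + \ii\eta$ lies in. For $z$ with $\eta$ not too small — say $\eta \gtrsim t^2$ up to logarithmic corrections — the cheap bound $\Im \zeta_t(z) > 0$ from Lemma~\ref{existuniq}(iii) together with a quantitative lower bound $\Im \zeta_t(z) \gtrsim \eta$ (obtained by taking imaginary parts in~\eqref{eq:zeta} and using that $\Im(c_N t m_t)$ and $\Re(1+c_N t m_t)$ are controlled) places $\zeta_t(z)$ in $\mathsf{D}_2$. The more delicate region is $\eta \le t^2$, where one needs $\zeta_t(z)$ to land in $\mathsf{D}_1$, i.e.\ one must show $\Re \zeta_t(z) \le (1-t)\lambda_-^{\mathsf{mp}} - c t^2$. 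Here the key input is that $z$ is, up to a shift of size $\le N^{-\varepsilon_1}$, at the left edge $\lambda_-^{\mathsf{mp}}$, while the effect of convolving with $\sqrt{t}W$ is to push the left edge of $\rho_t$ down to $\lambda_{-,t} = (1-t)\lambda_-^{\mathsf{mp}}(1+o(1))$ and, crucially (by the square-root behavior in Theorem~\ref{thm:srbofmxt} and the $\eta_\ast$-regularity of $\mathcal{S}(X)$ from Proposition~\ref{prop:etastarRegular}), the corresponding subordination map contracts the edge region by a factor reflecting the $t^2$ gap between $\lambda_M(\mathcal{S}(X))$ and $\zeta_{-,t}$ that was already flagged in the proof-strategy discussion. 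Concretely I would differentiate~\eqref{eq:zeta} in $z$ to see that near the edge $\zeta_t$ is Lipschitz with a controlled (bounded below and above) derivative, then evaluate at the real edge point $z = \lambda_{-,t}$, where $\zeta_t(\lambda_{-,t}) = \zeta_{-,t}$ satisfies $\lambda_M(\mathcal{S}(X)) - \zeta_{-,t} \sim t^2$, and finally propagate this to all $z \in \mathsf{D}$ with $|E| \le N^{-\varepsilon_1} \ll t^2$ using the Lipschitz estimate. Throughout, "with high probability" enters only through Proposition~\ref{prop:etastarRegular} and Lemma~\ref{lem: left edge rigidity XX^T}, which pin down $\lambda_M(\mathcal{S}(X))$ near $(1-t)\lambda_-^{\mathsf{mp}}$ and give the square-root profile of $m_X$; on that event everything above is deterministic.

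The main obstacle I expect is the quantitative edge analysis in the small-$\eta$ regime: proving the $t^2$-gap $\lambda_M(\mathcal{S}(X)) - \Re\zeta_t(z) \gtrsim t^2$ uniformly for $z \in \mathsf{D}$ requires a careful stability analysis of the coupled system~\eqref{eq:fpe}--\eqref{eq:zeta} at the soft edge, where the self-consistent equation degenerates (square-root vanishing of $\Im m_t$). One has to show that the map $z \mapsto \zeta_t(z)$ is a genuine homeomorphism near the edge with derivative bounded away from $0$, so that the edge $\lambda_{-,t}$ of $\rho_t$ corresponds under $\zeta_t$ to a point at distance exactly $\asymp t^2$ inside the gap to the left of $\lambda_M(\mathcal{S}(X))$; this is precisely the rectangular-free-convolution analogue of the edge-regularization mechanism, and it is where the hypotheses $t \gg \sqrt{\eta_\ast}$ and $\varepsilon_1 \le \epsilon_b/500$ are used. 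The rest — the imaginary-part lower bound giving $\mathsf{D}_2$, and the bookkeeping of the $O(t^{1/2})$ perturbations — is routine given Lemma~\ref{existuniq} and Theorem~\ref{thm:srbofmxt}.
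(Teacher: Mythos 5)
Your overall architecture matches the paper's: recenter $z$ at the random edge $\lambda_{-,t}$ via rigidity, control $\zeta_t(z)-\zeta_{-,t}$, invoke $\lambda_M(\mathcal{S}(X))-\zeta_{-,t}\sim t^2$ from Lemma \ref{lem:preEst}(i) for the real part, and read off the imaginary part from the square-root profile. However, the mechanism you propose for the key step is based on a false premise. You claim that near the edge $z\mapsto \zeta_t(z)$ is ``Lipschitz with a controlled (bounded below and above) derivative'' and that one can propagate the estimate at $z=\lambda_{-,t}$ by this Lipschitz bound. In fact $\Phi_t(\zeta_t(z))=z$, so $\zeta_t'(z)=1/\Phi_t'(\zeta_t(z))$, and $\Phi_t'(\zeta_{-,t})=0$ by Lemma \ref{lem:Phicharacterization}; since $\Phi_t''(\zeta_{-,t})\sim t^{-2}$, one has $\zeta_t(z)-\zeta_{-,t}\asymp t\sqrt{z-\lambda_{-,t}}$ and $|\zeta_t'(z)|\asymp t/\sqrt{|z-\lambda_{-,t}|}$, which blows up as $z\to\lambda_{-,t}$ (equivalently, $t\,m_t'(z)\sim t(|\kappa|+\eta)^{-1/2}\gg 1$ in the regime $|\kappa|+\eta\ll t^2$). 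So the derivative is bounded \emph{below}, as you say in your last paragraph, but not above, and a naive Lipschitz propagation as described does not go through. The conclusion is rescuable because the singularity is integrable --- integrating $t/\sqrt{s}$ from $0$ to $|z-\lambda_{-,t}|\lesssim N^{-\varepsilon_1}\ll t^2$ still yields $|\zeta_t(z)-\zeta_{-,t}|\lesssim t\sqrt{|z-\lambda_{-,t}|}\ll t^2$ --- but you have not identified this, and without the correct singular rate the argument as written fails.

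The paper sidesteps the derivative entirely: it bounds the increment $m_t(z)-m_t(\lambda_{-,t})$ directly by integrating against $\rho_t$ and using the square-root behavior of Theorem \ref{thm:srbofmxt}, splitting into the cases $|\kappa|\le 2\eta$, $\kappa>2\eta$, $\kappa<-2\eta$ to obtain $|m_t(z)-m_t(\lambda_{-,t})|\lesssim\sqrt{|\kappa|+\eta}\,\log(|\kappa|/\eta)$; combined with $|b_t|=\mathcal{O}(1)$ this gives $|\zeta_t(z)-\zeta_{-,t}|\ll t^2$ precisely when $|\kappa|+\eta\le t^2(\log N)^{-2C}$, which is where the logarithmic factors in the definition of $\mathsf{D}_\zeta$ come from. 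Two smaller points: (i) your $\mathcal{O}(t^{1/2})$ perturbative bound on $1+c_Ntm_t$ resolves $\zeta_t(z)$ only to accuracy $t^{1/2}$, which is useless for the $t^2$-precision needed in $\mathsf{D}_1$ (you acknowledge this, but it means the first paragraph carries no weight); (ii) you do not verify the lower bound $\Im\zeta_t(z)\ge ctN^{-2/3-\varepsilon_2}$ required for membership in $\mathsf{D}_1$, which in the paper follows from $\Im\zeta_t(z)\sim\eta+t\Phi(\kappa,\eta)$.
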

\begin{proof}
	The proof relies on the definition of $\zeta_t(z)$ as well as the square root behaviour of $\rho_t$ as stated in Theorem \ref{thm:srbofmxt};  see Appendix \ref{2416} for the detailed proof. 
\end{proof}

\begin{proposition}\label{prop: resolvent entry size for X=B+C}
Let $\mathsf{D}_\zeta$ be as in \eqref{eq: domain of zeta}. Consider $\zeta\in\mathsf{D}_\zeta$.
Suppose that $\Psi$ is good. The following estimates hold w.r.t.  the probability measure $\prob_{\Psi}$. There exists a constant $c=c(\eps_a,\eps_{\alpha},\eps_{b})>0$ such that
\begin{align*}
	&|G_{ij}(X, \zeta)-\delta_{ij}\mathsf{m}_{\mathsf{mp}}^{(t)}(\zeta)| \prec N^{-c}\mathbf{1}_{i,j \in \mathcal{T}_r} + t^{-2}(1-\mathbf{1}_{i,j \in \mathcal{T}_r}), \\
	&|G_{uv}(X^\top, \zeta)-\delta_{uv}\underline{\mathsf{m}}_{\mathsf{mp}}^{(t)}(\zeta)| \prec N^{-c}\mathbf{1}_{u,v \in \mathcal{T}_c} + t^{-2}(1-\mathbf{1}_{u,v \in \mathcal{T}_c}),
\end{align*}
where $\underline{\mathsf{m}}_{\mathsf{mp}}^{(t)}(\zeta)=c_{N}\mathsf{m}_{\mathsf{mp}}^{(t)}(\zeta)-(1-c_{N})/{\zeta}$.
\end{proposition}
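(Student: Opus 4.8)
The plan is to reduce the estimate for $\mathcal{S}(X)=XX^{*}$ to the (already available) local law for the bounded‑support part $\mathsf{B}$, exploiting that, conditionally on $\Psi$, the heavy rows $\mathcal{D}_{r}$ are disjoint from the good rows $\mathcal{T}_{r}$, so that $X$ agrees with $\mathsf{B}$ on every row outside $\mathcal{D}_{r}$; in particular $X^{(\mathcal{D}_{r})}=\mathsf{B}^{(\mathcal{D}_{r})}$ and the top‑left block $(XX^{*})_{\mathcal{T}_{r}\mathcal{T}_{r}}$ equals $\mathcal{S}(\mathsf{B}^{(\mathcal{D}_{r})})$. The bound for $G_{uv}(X^{\top},\zeta)$ will follow from the symmetric argument with rows/columns, $\mathcal{D}_{r}/\mathcal{D}_{c}$, and $\mathsf{m}_{\mathsf{mp}}^{(t)}/\underline{\mathsf{m}}_{\mathsf{mp}}^{(t)}$ exchanged, so I only discuss $G_{ij}(X,\zeta)$. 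We work conditionally on $\Psi$ good throughout, and the fact that drives everything is that for $\zeta\in\mathsf{D}_{\zeta}$ we sit at a mesoscopic distance $\gtrsim t^{2}(\log N)^{-C}$ from the left edge: by Lemma~\ref{lem: left edge rigidity XX^T} and Proposition~\ref{prop:etastarRegular} the edge of $\mathcal{S}(X)$ is at $(1-t)\lambda_{-}^{\mathsf{mp}}+O(N^{-2\epsilon_{b}})$, while $t^{2}=N^{-4\epsilon_{a}}\gg N^{-2\epsilon_{b}}$ by (\ref{081401}). Consequently $\|G(X,\zeta)\|=\dist(\zeta,\mathrm{spec}\,\mathcal{S}(X))^{-1}\prec t^{-2}$ (and likewise for $G(X^{\top},\zeta)$), which already establishes the claimed bound whenever at least one of $i,j$ lies in $\mathcal{D}_{r}$.

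For $i,j\in\mathcal{T}_{r}$ I would proceed in three steps. \emph{(a)} An isotropic/entrywise local law on the minor: since $|\mathsf{B}_{ij}|\le N^{-\epsilon_{b}}$, $|\mathcal{D}_{r}|/N=O(N^{-\epsilon_{\alpha}})$ (so the aspect ratio changes only by $O(N^{-\epsilon_{\alpha}})$, perturbing $\mathsf{m}_{\mathsf{mp}}^{(t)}$ by $O(N^{-\epsilon_{\alpha}})$), and $\zeta$ lies at mesoscopic distance from the edge, the bounded‑support local law (e.g.\ \cite{HLS}) together with the rigidity $|\lambda_{M}(\mathcal{S}(\mathsf{B}))-(1-t)\lambda_{-}^{\mathsf{mp}}|\prec N^{-2\epsilon_{b}}+N^{-2/3}$ of Lemma~\ref{lem: local law for BB^T} gives, uniformly in $\zeta\in\mathsf{D}_{\zeta}$ and for some $c=c(\epsilon_{a},\epsilon_{b},\epsilon_{\alpha})>0$,
\begin{align*}
|G_{ij}(\mathsf{B}^{(\mathcal{D}_{r})},\zeta)-\delta_{ij}\mathsf{m}_{\mathsf{mp}}^{(t)}(\zeta)|\prec N^{-c},\qquad |[G(\mathsf{B}^{(\mathcal{D}_{r})},\zeta)\mathsf{B}^{(\mathcal{D}_{r})}]_{iv}|\prec N^{-c}.
\end{align*}
\emph{(b)} A Schur complement along $[M]=\mathcal{T}_{r}\sqcup\mathcal{D}_{r}$ for $A=XX^{*}$:
\begin{align*}
[G(X,\zeta)]_{\mathcal{T}_{r}\mathcal{T}_{r}}=G(\mathsf{B}^{(\mathcal{D}_{r})},\zeta)+G(\mathsf{B}^{(\mathcal{D}_{r})},\zeta)\,Q\,[G(X,\zeta)]_{\mathcal{T}_{r}\mathcal{T}_{r}},\qquad Q=A_{\mathcal{T}_{r}\mathcal{D}_{r}}(A_{\mathcal{D}_{r}\mathcal{D}_{r}}-\zeta)^{-1}A_{\mathcal{D}_{r}\mathcal{T}_{r}},
\end{align*}
with $\mathrm{rank}\,Q\le|\mathcal{D}_{r}|$; and since $|\mathcal{D}_{r}|=o(N)$ and $c_{\infty}\in(0,1)$, interlacing against $\mathsf{B}$ restricted to $\mathcal{D}_{r}\times\mathcal{T}_{c}$ together with a Bai--Yin bound (\cite{BY,Tik16}) forces $\lambda_{\min}(A_{\mathcal{D}_{r}\mathcal{D}_{r}})\ge 1-t-o(1)$, whence $\|(A_{\mathcal{D}_{r}\mathcal{D}_{r}}-\zeta)^{-1}\|=O(1)$ for $\zeta$ near the left edge. \emph{(c)} Showing $[G(\mathsf{B}^{(\mathcal{D}_{r})},\zeta)\,Q\,[G(X,\zeta)]_{\mathcal{T}_{r}\mathcal{T}_{r}}]_{ij}\prec N^{-c}$ and closing a self‑consistent loop with the first display.

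Step \emph{(c)} is the main obstacle: the heavy entries of $\mathsf{C}$ make $A_{\mathcal{T}_{r}\mathcal{D}_{r}}=\mathsf{B}^{(\mathcal{D}_{r})}(X_{\mathcal{D}_{r}})^{*}$, hence $Q$, polynomially large in operator norm, so crude bounds fail and one must run the fluctuation‑averaging scheme of \cite{Amol}. I would condition also on the heavy block $X_{\mathcal{D}_{r}}$ (so that $\mathsf{B}^{(\mathcal{D}_{r})}$ becomes independent of it and $A_{\mathcal{T}_{r}\mathcal{D}_{r}}=\mathsf{B}^{(\mathcal{D}_{r})}U$ with $U=(X_{\mathcal{D}_{r}})^{*}$ deterministic, columns $U_{\cdot k}=\overline{\mathrm{row}_{k}(\mathsf{B})}+\overline{\mathrm{row}_{k}(\mathsf{C})}$), and combine: the isotropic law of \emph{(a)} tested against the unit‑order, independent vectors $\overline{\mathrm{row}_{k}(\mathsf{B})}$; the sparsity of $\Psi$ — with high probability each of its rows and columns carries at most $m_{0}=O(1)$ ones, and after a per‑row truncation $|c_{ij}|\prec N^{-\epsilon_{b}}$ — to handle the $\overline{\mathrm{row}_{k}(\mathsf{C})}$ contribution; the Ward identity $\sum_{k}|[G(X,\zeta)]_{kj}|^{2}=\Im[G(X,\zeta)]_{jj}/\eta$ to bound the column norms of $[G(X,\zeta)]_{\mathcal{T}_{r}\mathcal{T}_{r}}$ by the very quantity $\Lambda:=\max_{i,j\in\mathcal{T}_{r}}|[G(X,\zeta)]_{ij}-\delta_{ij}\mathsf{m}_{\mathsf{mp}}^{(t)}(\zeta)|$ being estimated; and the projection gain $\prec\sqrt{|\mathcal{D}_{r}|/N}=N^{-\epsilon_{\alpha}/2}$ obtained when the ($\mathcal{T}_{r}$‑indexed) rows of $\mathsf{B}$ are projected onto the $\le|\mathcal{D}_{r}|$‑dimensional row space $V$ of $X_{\mathcal{D}_{r}}$, which enters through the rank‑$|\mathcal{D}_{r}|$ structure of $Q$. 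The mesoscopic gap $t^{2}$ keeps the defining equation of $\mathsf{m}_{\mathsf{mp}}^{(t)}$ stable, so the self‑consistent inequality for $\Lambda$ can be bootstrapped in $\eta$ from $\eta\sim 1$ down to all of $\mathsf{D}_{\zeta}$, giving $\Lambda\prec N^{-c}$; the bound for $G(X^{\top},\zeta)$ follows by the mirror argument with $\mathcal{D}_{c}$ in place of $\mathcal{D}_{r}$.
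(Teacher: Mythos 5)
Your treatment of the atypical indices is the same as the paper's: on $\mathsf{D}_\zeta$ the parameter $\zeta$ stays at distance $\gtrsim t^2(\log N)^{-C}$ from $\mathrm{spec}\,\mathcal{S}(X)$ (by the rigidity of Lemma \ref{lem: left edge rigidity XX^T}), so $\|G(X,\zeta)\|\prec t^{-2}$ and the $t^{-2}$ bound for $i$ or $j$ in $\mathcal{D}_r$ is immediate. For $i,j\in\mathcal{T}_r$, however, your route diverges from the paper's and has a genuine gap precisely at the step you yourself label the main obstacle. The paper does \emph{not} perform a block Schur complement on $\mathcal{T}_r\sqcup\mathcal{D}_r$; it runs the standard row-by-row self-consistent equation of \cite{PY}, in which for $i\in\mathcal{T}_r$ the Schur complement only ever involves the quadratic form of the $i$-th row of $X$ (which consists solely of bounded-support entries $\mathsf{B}_{ij}$) against the minor's Green function. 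That quadratic form is controlled by the heavy-tailed large-deviation estimate of \cite[Corollary 25]{Amol}, producing the error parameter $\Phi\sim\sqrt{(\Im\mathsf{m}_{\mathsf{mp}}+\Lambda)/(N\eta)}+t^{-2}N^{-\epsilon_\alpha/2}+t^{-2}N^{-\epsilon_b}$, and the heavy rows enter only through their $N^{-\epsilon_\alpha}$ density in averaged quantities. In particular the paper never needs to invert $A_{\mathcal{D}_r\mathcal{D}_r}-\zeta$ or to bound the off-diagonal blocks $A_{\mathcal{T}_r\mathcal{D}_r}$, which is exactly where your heavy entries live.

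Your step \emph{(c)} is therefore not a reduction to known estimates but the entire content of the proposition, and it is not carried out: $Q=A_{\mathcal{T}_r\mathcal{D}_r}(A_{\mathcal{D}_r\mathcal{D}_r}-\zeta)^{-1}A_{\mathcal{D}_r\mathcal{T}_r}$ has polynomially large norm (a single heavy entry can be of size $N^{2/\alpha-1/2}$, nearly $N^{1/2}$ for $\alpha$ close to $2$, while the gain $\sqrt{|\mathcal{D}_r|/N}=N^{-\epsilon_\alpha/2}$ you invoke is only $N^{-(\alpha-2)/(10\alpha)}$), so whether the listed ingredients actually close the self-consistent loop requires a power counting that you do not perform and that is far from obviously favourable. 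Two smaller but real problems: (i) your claim $\|(A_{\mathcal{D}_r\mathcal{D}_r}-\zeta)^{-1}\|=O(1)$ via Bai--Yin for $\mathsf{B}$ restricted to the $\mathcal{D}_r$ rows ignores the cross terms $\mathsf{B}_{\mathcal{D}_r}\mathsf{C}_{\mathcal{D}_r}^{*}+\mathsf{C}_{\mathcal{D}_r}\mathsf{B}_{\mathcal{D}_r}^{*}$ in the Gram matrix, which are not sign-definite; Cauchy interlacing inside $XX^{*}$ gives only $\lambda_{\min}(A_{\mathcal{D}_r\mathcal{D}_r})\ge\lambda_M(\mathcal{S}(X))$, hence $\|(A_{\mathcal{D}_r\mathcal{D}_r}-\zeta)^{-1}\|\lesssim t^{-2}(\log N)^{C}$ rather than $O(1)$, which further worsens the budget in \emph{(c)}. (ii) The bootstrap in $\eta$ you mention at the end needs a dichotomy argument (the paper's events $\Omega(\omega,K)$, $\mathbf{B}(\omega)$ and the stability analysis of $D(m_H(\omega),\omega)$ in Lemmas \ref{lem:981}--\ref{lem: lem 6.12 PY14}) that your block decomposition does not naturally supply. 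To repair the proof you should either carry out the quantitative estimate of the correction term in \emph{(c)} in full, or abandon the block Schur complement in favour of the row-wise self-consistent scheme with the large-deviation input of \cite{Amol}, as the paper does.
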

\begin{proof}
	The proof of Proposition \ref{prop: resolvent entry size for X=B+C} is similar to the light-tailed case proved in \cite{PY}, but here we shall apply large deviation formula for heavy-tailed random variables; see Appendix \ref{pf: resolvent entry size for X=B+C} for the detailed proof. 
\end{proof}

\begin{proof}[Proof of Theorem \ref{thm: resolvent entry size V_t}] 
Given the previous results, the proof strategy for this theorem is briefly introduced in the last paragraph of the Introduction, Section \ref{sec1}, with the detailed proof found in Appendix \ref{sec: resolvent estimates V_t}.
\end{proof}

The above theorems provide strong evidence supporting the validity of the Tracy-Widom law for  $\lambda_{M}(\mathcal{S}(V_t))$ around $\lambda_{-,t}$. In fact, we are able to establish the following theorem regarding the convergence of the distribution. Before stating the result, we define the function
\begin{align}\label{eq:Phi}
	\Phi_t(\zeta) \coloneqq (1-c_Nt m_X(\zeta))^2\zeta + (1-c_N)t(1-c_Ntm_X(\zeta)),
\end{align} 
and the scaling parameter
	\begin{align}\label{eq:gamma}
		\gamma_N \coloneqq \gamma_N(t)  \coloneqq -\Big( \frac{1}{2}\big[4\lambda_{-,t}\zeta_t(\lambda_{-,t}) + (1-c_N)^2t^2 \big] c_N^2t^2\Phi_t''(\zeta_t(\lambda_{-,t})) \Big)^{-1/3}.
	\end{align}
\begin{theorem} \label{thm:081601}
	 Let $f:\mathbb{R}\to \mathbb{R}$ be a test function satisfying $\|f \|_\infty \le C$ and $\|\nabla f \|_\infty \le C$
	for a constant $C$. Then we have for any $X$ whose corresponding $\Psi$ is good,
	\begin{align}
	\lim_{N\to \infty} \E \big[ f\big(\gamma_N M^{2/3}(\lambda_M(\mathcal{S}(V_t)) - \lambda_{-,t} )\big)|X \big] = \lim_{N\to \infty} \E  \big[ f\big(M^{2/3}(\mu_M^{\mathsf{GOE}} + 2 )\big) \big].  \label{092401}
	\end{align}
	This further implies that if $\Psi$ is good, 
	\begin{align}
		\lim_{N\to \infty}\E_\Psi \big[ f\big(\gamma_N M^{2/3}(\lambda_M(\mathcal{S}(V_t)) - \lambda_{-,t} )\big) \big] = \lim_{N\to \infty}\E  \big[ f\big(M^{2/3}(\mu_M^{\mathsf{GOE}} + 2 )\big) \big],
	\end{align}
where $\mu_M^{\mathsf{GOE}}$ denotes the least eigenvalue of a $M$ by $M$ Gaussian Orthogonal Ensemble (GOE) with $N(0, M^{-1})$ off-diagonal entries. 
\end{theorem}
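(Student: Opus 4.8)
The plan is to prove Theorem~\ref{thm:081601} by conditioning on $X$ (equivalently on the matrix $\Psi$, which we assume is good) and reducing the statement to a known edge universality result for deformed i.i.d.\ rectangular matrices. First I would invoke Proposition~\ref{prop:etastarRegular}: since $\Psi$ is good, $\mathcal{S}(X)$ is $\eta_\ast$-regular around $\lambda_M(\mathcal{S}(X))$ with $\eta_\ast=N^{-\epsilon_b}$, with high probability, so I may work on the event that this regularity holds together with the quantitative rigidity of Lemma~\ref{lem: left edge rigidity XX^T}. The matrix $V_t = X + \sqrt{t}\,W$ is then a deformation of a fixed (conditioned) matrix $X$ by a Gaussian i.i.d.\ part $\sqrt t W$, with $t\sim N^{-2\epsilon_a}$ and, crucially, $t \gg \sqrt{\eta_\ast}$, i.e.\ $2\epsilon_a < \epsilon_b/2$, which is exactly guaranteed by the choice \eqref{081401}. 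This puts us precisely in the setting of the Dyson Brownian motion / local relaxation flow arguments for the left edge.

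Next I would cite the edge universality theorem for such Gaussian-divisible rectangular ensembles. The relevant statement is the extension of the results of \cite{LY} and \cite{DY2} from the right (hard/soft) edge to the left soft edge: once $\mathcal{S}(X)$ is $\eta_\ast$-regular around $\lambda_M(\mathcal{S}(X))$ and $t\gg\sqrt{\eta_\ast}$, the smallest eigenvalue of $\mathcal{S}(V_t)$, after centering by the left edge $\lambda_{-,t}$ of the free convolution density $\rho_t$ and rescaling by $\gamma_N M^{2/3}$, converges to the GOE Tracy--Widom law $\mathrm{TW}_1$, i.e.\ to the law of $M^{2/3}(\mu_M^{\mathsf{GOE}}+2)$. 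The precise form of the scaling constant $\gamma_N$ in \eqref{eq:gamma} is dictated by the square-root behavior of $\rho_t$ near $\lambda_{-,t}$ from Theorem~\ref{thm:srbofmxt} together with the subordination identity $\Phi_t(\zeta_t(z))$ from \eqref{eq:Phi}: $\gamma_N$ is read off from the second-order expansion of the edge map, which is why $\Phi_t''(\zeta_t(\lambda_{-,t}))$ appears. I would verify that the hypotheses of the cited theorem are met --- $\eta_\ast$-regularity (Proposition~\ref{prop:etastarRegular}), the scale separation $t\gg\sqrt{\eta_\ast}$, the boundedness $\|\mathcal{S}(X)\|\le N^C$ (Remark~\ref{081821}) --- and also that the edge location $\lambda_{-,t}$ and the constant $\gamma_N$ in the cited theorem coincide with those defined in \eqref{eq:defoflambdaminust}--\eqref{eq:gamma} here; this is a matter of matching conventions in the subordination functions.

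Having established the conditional statement \eqref{092401} --- which holds for every realization of $X$ with good $\Psi$ --- the second, unconditional assertion follows by the bounded convergence theorem (a.k.a.\ the law of total expectation). Indeed $f$ is bounded, so $\E[f(\gamma_N M^{2/3}(\lambda_M(\mathcal{S}(V_t))-\lambda_{-,t}))\mid X]$ is a bounded random variable that converges pointwise (on the good event, which has $\mathbb{P}_\Psi$-probability $1$ since we have already conditioned on a good $\Psi$) to the deterministic limit $\lim_N \E[f(M^{2/3}(\mu_M^{\mathsf{GOE}}+2))]$; taking $\E_\Psi$ and passing the limit inside gives the claim. One subtlety to address carefully: the cited DBM result typically requires the test function and its derivatives to be bounded, matching the hypothesis $\|f\|_\infty, \|\nabla f\|_\infty \le C$, so no extra truncation of $f$ is needed; but if the cited result is stated in terms of the cumulative distribution function rather than smooth test functions, I would first prove \eqref{092401} for smooth $f$ and then note that $\lambda_M(\mathcal{S}(V_t))$ has no atoms in the limit, so the convergence upgrades to the distribution function.

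The main obstacle is not the probabilistic core --- that is essentially a black-box application of the local relaxation flow / edge universality machinery for deformed rectangular ensembles --- but rather the bookkeeping required to \emph{transfer} that machinery from the right edge (where \cite{LY, DY2} are stated) to the left soft edge of $\mathcal{S}(V_t)$, and to check that all the structural inputs (square-root edge behavior from Theorem~\ref{thm:srbofmxt}, the explicit subordination relations, the admissible range of $\eta_\ast$ versus $t$, and the precise identification of $\gamma_N$ via $\Phi_t''$) are consistent. The left edge is genuinely different when $M<N$ because $\lambda_{-}^{\mathsf{mp}}=(1-\sqrt{c_N})^2>0$ is a soft edge bounded away from $0$, so the analysis is structurally the same as the right edge and the extension is expected to be routine but notationally heavy; this is precisely the ``various technical complications arising from the bulk-to-edge extension'' alluded to in the introduction, and the detailed argument is deferred to Section~\ref{s.pgdm}.
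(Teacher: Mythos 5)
Your proposal is correct and follows essentially the same route as the paper: the authors likewise treat this theorem as an adaptation of the edge universality for (rectangular) Dyson Brownian motion from \cite{LY, DY2, DY}, conditioned on $X$ with the $\eta_\ast$-regularity of Proposition \ref{prop:etastarRegular} and the scale separation $t\gg\sqrt{\eta_\ast}$ as the key inputs, transferring the right-edge analysis to the left soft edge and deferring details to Appendix \ref{sec:081601} (where the comparison is implemented by matching $\mathcal{S}(V_{t_0})$ with a deformed Wishart ensemble near the edge and running the rectangular DBM on both). The deduction of the unconditional statement from the conditional one via boundedness of $f$ is also exactly as in the paper.
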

\begin{remark}
The proof of the above theorem is essentially an adapt of the edge universality for the DBM in \cite{LY} and the analogue for the rectangle DBM in \cite{DY2, DY}. More specifically, we shall extend the analysis in  \cite{DY2, DY} from the right edge of the covariance type matrix to the left edge. Based on the $\eta_*$-regularity, the proof is nearly the same as \cite{DY2, DY}, and thus we do not reproduce the details and only provide some remarks in the Appendix \ref{sec:081601}. 
\end{remark}

\subsection{Distribution of $\lambda_{-,t}$}

\begin{theorem}\label{thm: lambda -,t asymp}
There exists a deterministic quantity $\lambda_{\mathsf{shift}}>0$ depending on $N$ such that the following two properties hold.
\begin{itemize}
	\item[(i)] 
	\begin{align*}
		\frac{N^{\alpha/4}(\lambda_{-,t} - \lambda_{\mathsf{shift}} )}{\sigma_\alpha } \Rightarrow \mathcal{N}(0,1),\quad \sigma_\alpha^2 = \frac{\mathsf{c}c_N^{(4-\alpha)/4}(1-\sqrt{c_N})^4(\alpha-2)}{2} \Gamma\Big(\frac{\alpha}{2} +1 \Big).
	\end{align*}
	\item[(ii)]
	\begin{equation*}
		 \lambda_{\mathsf{shift}} = \lambda_{-}^{\mathsf{mp}} - 
		 \frac{\mathsf{c}N^{1-\alpha/2}(1-\sqrt{c_N})^2}{c_N^{(\alpha-2)/4}} \Gamma\Big(\frac{\alpha}{2} +1\Big)+
		 \mathfrak{o}(N^{1-\alpha/2}).
	\end{equation*}
\end{itemize}
\end{theorem}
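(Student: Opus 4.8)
The plan is to analyze the defining relation \eqref{eq:defoflambdaminust} for $\lambda_{-,t}$ by expanding everything around a deterministic parameter $\zeta_{\mathsf e}$ (the deterministic analogue of $\zeta_{-,t}$), so that the randomness of $\lambda_{-,t}$ is transferred to the randomness of the Stieltjes transform $m_X$ and its derivatives evaluated at $\zeta_{\mathsf e}$. Concretely, since with high probability $\lambda_M(\mathcal S(X))-\zeta_{-,t}\sim t^2$ and $t\sim N^{-2\epsilon_a}$ with $\epsilon_a$ arbitrarily small, the point $\zeta_{\mathsf e}$ sits at a \emph{mesoscopic} distance of order $t^2\gg N^{-c}$ from the left MP edge; on that scale the self-consistent equation for $m_t$ is stable and $\zeta_t$ is a smooth function of $z$ with controlled derivatives (Lemmas~\ref{existuniq}, \ref{lem: domain of parameter} and Theorem~\ref{thm:srbofmxt}). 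First I would solve $\Phi_t'(\zeta)=0$ (recall \eqref{eq:Phi}) perturbatively: writing $\zeta_t(\lambda_{-,t})=\zeta_{\mathsf e}+\Delta$, the implicit function theorem plus the mesoscopic stability gives $\Delta$ as an explicit (convergent) series in the fluctuations $m_X^{(k)}(\zeta_{\mathsf e})-\mathbb E m_X^{(k)}(\zeta_{\mathsf e})$, $k=0,\dots,K$, and then feed this back into \eqref{eq:defoflambdaminust} to obtain
\begin{align*}
\lambda_{-,t}=\Phi_t(\zeta_t(\lambda_{-,t}))=\mathbb E[\,\cdot\,]+\sum_{k=0}^{K}\mathfrak a_k\,\big(m_X^{(k)}(\zeta_{\mathsf e})-\mathbb E m_X^{(k)}(\zeta_{\mathsf e})\big)+(\text{higher order}),
\end{align*}
with deterministic coefficients $\mathfrak a_k$ of size powers of $t$.

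The second ingredient is the mesoscopic CLT for the linear spectral statistics $m_X^{(k)}(\zeta_{\mathsf e})$ of the heavy-tailed covariance matrix $\mathcal S(X)$. I would adapt the computations of \cite{BM, Malysheva}, which already give a CLT for $m_X(z)-\mathbb E m_X(z)$ at a \emph{fixed} spectral parameter with Gaussian fluctuations on scale $N^{-\alpha/4}$; the new point is that one needs the joint convergence of the vector $(m_X^{(0)}(\zeta_{\mathsf e}),\dots,m_X^{(K)}(\zeta_{\mathsf e}))$ at a parameter that scales with $N$ (distance $\sim t^2$ from the edge), and one needs the covariance structure explicitly. The dominant contribution to the fluctuation comes, as in the heavy-tailed Wigner case, from the few large entries of $\mathsf C$ (equivalently the rows/columns indexed by $\mathcal D_r,\mathcal D_c$): their number is Poisson-like of mean $\sim N^{1-\alpha/2}$ and each contributes an independent rank-one type perturbation, so a Lindeberg/characteristic-function computation yields a Gaussian limit with variance that can be computed from the tail exponent $\alpha$ and the constant $\mathsf c$. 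Carrying the $\alpha$-dependent combinatorics through the chain rule (differentiating $\zeta_t$, $m_t$, $\Phi_t$) and collecting all $\mathfrak a_k$-weighted terms must reproduce exactly $\sigma_\alpha^2=\tfrac{\mathsf c c_N^{(4-\alpha)/4}(1-\sqrt{c_N})^4(\alpha-2)}{2}\Gamma(\tfrac\alpha2+1)$; this is where the explicit $\Gamma$-factors in Assumption~\ref{main assum}(i) get used. That gives part~(i) with $\lambda_{\mathsf{shift}}\coloneqq\mathbb E\lambda_{-,t}$.

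For part~(ii), the point is an \emph{asymptotic expansion of the expectation} $\mathbb E\lambda_{-,t}$. Here I would expand $\mathbb E m_X^{(k)}(\zeta_{\mathsf e})$ around the MP-law quantities: the leading correction to $\mathbb E m_X$ relative to $\mathsf m_{\mathsf{mp}}$ comes again from the heavy tail, and a careful large-deviation / truncation computation (splitting $X=\mathsf B+\mathsf C$, treating $\mathsf C$ as a small-rank perturbation and doing a resolvent expansion, exactly as in the proof of Proposition~\ref{prop: resolvent entry size for X=B+C}) shows $\mathbb E m_X(z)=\mathsf m_{\mathsf{mp}}(z)+N^{1-\alpha/2}\,(\text{explicit})\,+o(N^{1-\alpha/2})$ uniformly for $z$ at mesoscopic distance from the edge. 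Plugging this into $\Phi_t$ and using $t\to 1^-$ in the sense that $\epsilon_a$ is tiny (so $t$-dependence is lower order against $N^{1-\alpha/2}$), together with the precise value $\Phi_t(\zeta_t(\lambda_{-}^{\mathsf{mp}}))=\lambda_{-}^{\mathsf{mp}}$ at the deterministic level, yields
\begin{align*}
\lambda_{\mathsf{shift}}=\lambda_{-}^{\mathsf{mp}}-\frac{\mathsf c N^{1-\alpha/2}(1-\sqrt{c_N})^2}{c_N^{(\alpha-2)/4}}\Gamma\Big(\frac\alpha2+1\Big)+\mathfrak o(N^{1-\alpha/2}).
\end{align*}

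The main obstacle I anticipate is twofold and concentrated in the CLT step: (a) justifying that the series for $\Delta$ (the shift of the subordination point) converges and that its tail beyond order $N^{-\alpha/4}$ is genuinely negligible — this requires uniform control of \emph{all} the derivatives $m_X^{(k)}(\zeta_{\mathsf e})$ up to order $K$, i.e.\ bounds of the form $m_X^{(k)}(\zeta_{\mathsf e})\prec t^{-2k}\cdot(\text{stuff})$, which in turn rely on Lemma~\ref{lem:esthighoderi} and the $\eta_\ast$-regularity and must be pushed to heavy-tailed $X$; and (b) getting the \emph{exact} constant in $\sigma_\alpha^2$ rather than just Gaussianity on the right scale, which forces one to track the precise $\Gamma$-function normalizations through the stable-law computation of the large-entry contributions and through every derivative of the subordination functions. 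Everything else — the stability of the self-consistent equation at mesoscopic distance, the smoothness of $\zeta_t$, the law of total expectation over $\Psi$ — is routine given the results already established in Section~\ref{s.gdm}.
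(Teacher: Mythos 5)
Your plan follows the same skeleton as the paper's proof: expand $\lambda_{-,t}=\Phi_t(\zeta_{-,t})$ around a deterministic reference point $\zeta_{\mathsf e}$ (the paper takes $\zeta_{\mathsf e}=\E(\zeta_{-,t}\mathbf{1}_{\Omega_\Psi})$), exploit the stationarity $\Phi_t'(\zeta_{-,t})=0$ together with a priori variance bounds to show that the $\Delta_\zeta$-term and all higher derivatives $m_X^{(k)}$, $k\ge 1$, are subleading, so that only the $k=0$ fluctuation $t(m_X(\hat\zeta_{\mathsf e})-\E m_X(\hat\zeta_{\mathsf e}))$ survives; then prove a mesoscopic CLT for that quantity, and separately expand $\E m_X$ to order $N^{1-\alpha/2}$ for part (ii). Two of your proposed mechanisms differ from the paper's, and one is too weak as stated. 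For the CLT, the paper does not run a Lindeberg computation over the large entries of $\mathsf C$; it uses a martingale-difference decomposition over the columns of a truncated matrix combined with the identity $w^{-1}=\int_0^\infty e^{-sw}\,\mathrm{d}s$ and the expansion $\E e^{-\mathrm{i}\lambda|x_{ij}|^2}=1-\mathrm{i}(1-t)\lambda/N+\mathsf c(\mathrm{i}\lambda)^{\alpha/2}/N^{\alpha/2}+\cdots$ (Lemma \ref{lem:truncationprop}), which is where the $\Gamma$-factors in $\sigma_\alpha^2$ actually come from; your Poisson/rank-one heuristic is morally right but needs this exact characteristic-function input to yield the constant. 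More seriously, for part (ii) you propose treating $\mathsf C$ as a small-rank perturbation: the rank inequality only gives $|m_X-m_{\mathsf B}|\lesssim \mathrm{rank}(\mathsf C)/(N\eta)\sim N^{-\epsilon_\alpha}t^{-2}$ with $\epsilon_\alpha=(\alpha-2)/5\alpha$, which is far larger than $N^{1-\alpha/2}$, so this route cannot resolve the shift constant. The paper instead derives a quadratic self-consistent equation for $\E m_X$ whose heavy-tail correction $\mathfrak p(z)$ again comes from the characteristic-function expansion, and solves it against the MP equation (Proposition \ref{prop:expectationexpan}); you would need to replace your rank-perturbation step by something of this kind.
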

\begin{remark}
Note that the leading order of $\lambda_{\mathsf{shift}}$ only depends on $\alpha$. The size of the fluctuation of $\lambda_{-,t}$ is also determined by $\alpha$.
\end{remark}

\vspace{5mm}

The proof of Theorem \ref{thm: lambda -,t asymp} is given in the next section.

\vspace{5mm}



\section{Proofs for Gaussian divisible model} \label{s.pgdm}
\subsection{Preliminary estimates}
 Before providing the preliminary estimates for the expansion of the least eigenvalue of $\mathcal{S}(V_t)$, we first state the following lemma, which characterizes the support of $\rho_t$ and its edges using the local extrema of $\Phi_t(\zeta)$ on $\mathbb{R}$.
\begin{lemma}[Proposition 3 of \cite{VPL2012}]\label{lem:Phicharacterization} 
Fix any $t>0$. The function $\Phi_t(x)$ on $\mathbb{R} \setminus \{0\}$ admits $2q$ positive local extrema counting multiplicities for some integer $q \geq 1$. The preminages of these extrema are denoted by $ 0< \zeta_{1,-}(t)\le \zeta_{1,+}(t) \leq \zeta_{2,-}(t) \leq \zeta_{2,+}(t) \leq \cdots \leq \zeta_{q,-}(t) \leq \zeta_{q,+}(t),$  and they belong to  the set $\{\zeta \in \mathbb{R}: 1-c_Ntm_X(\zeta_t)>0 \}.$ Moreover, $\lambda_{-,t}=\Phi_t(\zeta_{1,-}(t))$,  and $\zeta_{1,-}(t) < \lambda_M(\mathcal{S}(X)) < \zeta_{1,+}(t)$. 
\end{lemma}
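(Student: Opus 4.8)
The plan is to follow the standard subordination argument that identifies the edges of a deformed spectral measure with the critical points of an explicit inversion functional; for us this reduces to a real-analysis statement about the rational function $\Phi_t$, which is exactly \cite[Proposition 3]{VPL2012}, and I would carry out its argument in our notation. The starting point is the algebraic identity that $\Phi_t$ inverts the subordination function $\zeta_t$. Rewriting the fixed-point equation \eqref{eq:fpe} as $m_t(z)=(1+c_Ntm_t(z))\,m_X(\zeta_t(z))$ and solving for $u(z):=1+c_Ntm_t(z)$ gives $u(z)\bigl(1-c_Ntm_X(\zeta_t(z))\bigr)=1$; substituting $1-c_Ntm_X(\zeta_t(z))=u(z)^{-1}$ into \eqref{eq:zeta} and \eqref{eq:Phi} and simplifying yields the identity $\Phi_t(\zeta_t(z))=z$ for all $z\in\mathbb{C}_+$. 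By Lemma \ref{existuniq}(ii)--(iii) the functions $m_t,\zeta_t$ extend continuously to $\mathbb{R}\setminus\{0\}$, and on the complement of $\mathrm{supp}\,\rho_t$ the boundary value $m_t(E)$ is real, so $\zeta_t(E)\in\mathbb{R}$ and $E=\Phi_t(\zeta_t(E))$ there; moreover $\Re u>0$ by Lemma \ref{existuniq}(iv), and on the real gaps $u(E)=\bigl(1-c_Ntm_X(\zeta_t(E))\bigr)^{-1}>0$, so the relevant real points $\zeta_t(E)$ all lie in $\mathcal{A}:=\{\zeta\in\mathbb{R}:1-c_Ntm_X(\zeta)>0\}$.

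Next I would show that the edges of $\mathrm{supp}\,\rho_t$ are precisely the values $\Phi_t(\zeta)$ at the critical points $\zeta\in\mathcal{A}$ of $\Phi_t$. On each gap of $\mathrm{supp}\,\rho_t$ the function $m_t$ is real-analytic and strictly increasing, since $m_t'(E)=\int(x-E)^{-2}\,\rho_t(\mathrm{d}x)>0$; hence $\zeta_t$ is strictly increasing there, $\Phi_t=\zeta_t^{-1}$ on the corresponding subinterval of $\mathcal{A}$, and $\Phi_t'\ne0$ inside the gap. By the square-root edge behaviour of $\rho_t$ (Theorem \ref{thm:srbofmxt}), $\zeta_t'(E)\to\infty$ as $E$ approaches an edge from the gap side, so $\Phi_t'$ vanishes at the image point; conversely any real critical point of $\Phi_t$ in $\mathcal{A}$ is a local extremum whose value is a gap endpoint, i.e.\ a spectral edge. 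Counting the real zeros of $\Phi_t'$ in $\mathcal{A}$ then gives the number of edges: writing $m_X(\zeta)=M^{-1}\sum_i(\lambda_i(\mathcal{S}(X))-\zeta)^{-1}$, the function $\Phi_t$ is rational on each connected component of $\mathcal{A}$, and tracking its boundary limits ($\Phi_t(\zeta)\to-\infty$ as $\zeta\to-\infty$; $\Phi_t(\zeta)\to0^+$ as $\zeta$ tends to a finite zero of $1-c_Ntm_X$; $\Phi_t(\zeta)\to+\infty$ as $\zeta$ tends to a pole of $m_X$ from inside a component) one finds that these critical points are alternately local maxima (left edges) and local minima (right edges), $2q$ in total for some integer $q\ge1$, which we order $\zeta_{1,-}(t)\le\zeta_{1,+}(t)\le\cdots\le\zeta_{q,-}(t)\le\zeta_{q,+}(t)$. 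Since $\Phi_t$ is increasing on $(-\infty,\zeta_{1,-}(t))$, the image of the leftmost gap $(-\infty,\lambda_{-,t})$ under $\zeta_t$, the point $\zeta_{1,-}(t)$ is the first (local maximum) critical point and $\Phi_t(\zeta_{1,-}(t))=\lambda_{-,t}$.

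For the location of $\lambda_M(\mathcal{S}(X))$: on $(-\infty,\lambda_M(\mathcal{S}(X)))$ the map $m_X$ is smooth and increases from $0^+$ to $+\infty$, so $1-c_Ntm_X$ decreases from $1$ to $-\infty$ and the leftmost component of $\mathcal{A}$ is $(-\infty,\zeta_\star)$ with $1-c_Ntm_X(\zeta_\star)=0$ and $\zeta_\star<\lambda_M(\mathcal{S}(X))$; as $\zeta_{1,-}(t)\in\mathcal{A}$ lies to the left of $\lambda_M(\mathcal{S}(X))$, it lies in this component, giving $\zeta_{1,-}(t)<\zeta_\star<\lambda_M(\mathcal{S}(X))$. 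Just to the right of the pole $\lambda_M(\mathcal{S}(X))$ one has $\Phi_t\to+\infty$, and $\Phi_t$ then decreases to $0^+$ at the next zero of $1-c_Ntm_X$; since the right edge of the first band exceeds $\lambda_{-,t}>0$, this component must contain a critical point of $\Phi_t$, and matching the monotonicity of $\zeta_t$ on the gap just above the first band identifies it as $\zeta_{1,+}(t)$, whence $\lambda_M(\mathcal{S}(X))<\zeta_{1,+}(t)$.

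I expect the main obstacle to be the counting step: rigorously establishing the alternation of maxima and minima and ruling out spurious real critical points of $\Phi_t$ (ones not corresponding to genuine spectral edges) requires the careful component-by-component analysis of the rational function $\Phi_t$ performed in \cite{VPL2012}, in the spirit of the classical analyses of Marchenko--Pastur-type self-consistent equations. Everything else — the inversion identity $\Phi_t\circ\zeta_t=\mathrm{id}$ and the correspondence between square-root edges and critical points — is routine once Theorem \ref{thm:srbofmxt} and Lemma \ref{existuniq} are available.
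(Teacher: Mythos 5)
The paper does not actually prove this lemma: it is imported wholesale from \cite{VPL2012}, with only a one-sentence remark noting that the model there has a macroscopic kernel (forcing $\zeta_{1,-}(t)<0$) whereas here one gets $0<\zeta_{1,-}(t)$. Measured against that, your sketch is a faithful reconstruction of the cited argument: the inversion identity $\Phi_t\circ\zeta_t=\mathrm{id}$ computed from \eqref{eq:fpe} and \eqref{eq:zeta} is correct, the identification of spectral edges of $\rho_t$ with critical points of $\Phi_t$ on $\{\zeta:1-c_Ntm_X(\zeta)>0\}$ via monotonicity of $\zeta_t$ on gaps and the square-root vanishing at edges is the standard Silverstein--Choi-type analysis, and the localization $\zeta_{1,-}(t)<\lambda_M(\mathcal{S}(X))<\zeta_{1,+}(t)$ from the pole of $m_X$ at $\lambda_M(\mathcal{S}(X))$ is right. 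You also correctly identify the component-by-component counting/alternation step as the part that genuinely requires the careful bookkeeping of \cite{VPL2012}.

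The one assertion of the lemma your argument never addresses is $0<\zeta_{1,-}(t)$, and this is exactly the point the authors' remark singles out as the place where the adaptation of \cite{VPL2012} is not automatic. Your component analysis only yields the upper bound $\zeta_{1,-}(t)<\zeta_\star<\lambda_M(\mathcal{S}(X))$; it does not rule out a critical point of $\Phi_t$ in $(-\infty,0]$. This is not cosmetic: the positivity (together with uniqueness of the extremum on $(0,\lambda_M(\mathcal{S}(X)))$) is what the proof of Lemma \ref{lem:preEst}(i) invokes. Since $\mathcal{S}(X)$ is a.s.\ of full rank $M<N$, $m_X$ is analytic, positive and increasing on $(-\infty,\lambda_M(\mathcal{S}(X)))$, and one still has to check that $\Phi_t'>0$ there up to the unique zero $\zeta_{1,-}(t)$, which lands in $(0,\lambda_M(\mathcal{S}(X)))$; in \cite{VPL2012} the pole of $m_X$ at $0$ changes this sign analysis and pushes the extremum to the negative axis. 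So the route is the right one, but to match the full statement you need to supply this last sign argument rather than leave it implicit.
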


\vspace{1ex}
\begin{remark}Here we remark that the model considered in  \cite{VPL2012} is slightly different in the sense that the model therein contains many $0$ eigenvalues, which will force $\zeta_{1,-}(t)$ to be negative. In our case, going through the same analysis as \cite{VPL2012}  will simply give $0< \zeta_{1,-}(t)$. 
\end{remark}

\vspace{1ex}
Next, we shall introduce the deterministic counterpart of $\zeta_{-,t}$ (to be denoted by $\bar{\zeta}_{-,t}$). First, we notice that the MP law holds for both the matrix $V_t$ and $X$, but with slightly different scaling factors. Specifically, we have
$
	m_{V_t}(z) -\mathsf{m}_{\mathsf{mp}}(z) = \mathfrak{o}_p(1)$ and $ m_X(z) -\mathsf{m}_{\mathsf{mp}}^{(t)}(z) = \mathfrak{o}_p(1).
$
Recall the definitions of $\zeta_t(z)$ in (\ref{eq:zeta}) and $\Phi_t(\zeta)$ in (\ref{eq:Phi}). It is important to note that these two quantities are random, and we can also define their deterministic counterparts using the Stieltjes transform of the MP Law. We denote them as follows:
\begin{align}
&\bar{\zeta}_t(z) \coloneqq (1+c_Nt\mathsf{m}_{\mathsf{mp}}(z))^2z-t(1-c_N)(1+c_Nt\mathsf{m}_{\mathsf{mp}}(z)),\label{eq:barzetat} \\
&\bar{\Phi}_t(\zeta) \coloneqq (1-c_Nt \mathsf{m}_{\mathsf{mp}}^{(t)}(\zeta))^2\zeta + (1-c_N)t(1-c_Nt\mathsf{m}_{\mathsf{mp}}^{(t)}(\zeta)) \label{eq:barPHIt}.
\end{align}
To further simplify the notation, we let $\zeta_{-,t} = \zeta_t(\lambda_{-,t})$ and $\bar{\zeta}_{-,t} = \bar{\zeta}_t(\lambda^{\mathsf{mp}}_{-})$. Let $\beta = (\alpha-2)/24$.
\begin{lemma}\label{lem:preEst}
The following preliminary estimates hold:
\begin{itemize}
	\item[(i)] $\zeta_{-,t} - \lambda_M(\mathcal{S}(X)) \le 0$, and $\lambda_M(\mathcal{S}(X)) - \zeta_{-,t}  \sim t^2$ holds on $\Omega_\Psi$.
	\item[(ii)] There exist some sufficiently small constant $\tau > 0$, such that for any $z \in \mathbb{C}^+$ satisfying $|z - \zeta_{-,t}|\le \tau t^2$,  we have on $\Omega_\Psi$ that
	\begin{align*}
		m_X(z) - \mathsf{m}_{\mathsf{mp}}^{(t)}(z) \prec N^{- \beta}, \quad |m_X^{(k)}(\zeta)| \lesssim t^{-2k+1}, \quad m_X^{(k)}(\zeta_{-,t}) \sim t^{-2k+1}, \quad k \ge 1.
	\end{align*}
	\item[(iii)] $\bar{\zeta}_{-,t} - \zeta_{-,t} \prec N^{-\beta}t$.
\end{itemize}
\end{lemma}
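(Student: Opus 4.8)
\textbf{Proof proposal for Lemma \ref{lem:preEst}.}

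The plan is to prove the three estimates in sequence, since (ii) and (iii) rely on (i), and the whole lemma is a perturbative analysis around the known square-root edge behaviour of $\mathsf{m}_{\mathsf{mp}}^{(t)}$ combined with the intermediate local law for $m_X$. Throughout we work on $\Omega_\Psi$ (so Proposition \ref{prop:etastarRegular} gives $\eta_\ast$-regularity of $\mathcal{S}(X)$ with $\eta_\ast = N^{-\epsilon_b}$) and recall that $t \sim N^{-2\epsilon_a}$ with $\epsilon_a \ll \epsilon_b$, so $t^2 \gg \eta_\ast$.

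For part (i), I would start from Lemma \ref{lem:Phicharacterization}, which identifies $\zeta_{-,t} = \zeta_{1,-}(t)$ as the smallest preimage of a positive local extremum of $\Phi_t$, and gives $\zeta_{1,-}(t) < \lambda_M(\mathcal{S}(X)) < \zeta_{1,+}(t)$; this immediately yields $\zeta_{-,t} - \lambda_M(\mathcal{S}(X)) < 0$. For the quantitative bound $\lambda_M(\mathcal{S}(X)) - \zeta_{-,t} \sim t^2$, the idea is that $\zeta_{-,t}$ is the point where $\Phi_t'$ vanishes just to the left of $\lambda_M(\mathcal{S}(X))$, and $\Phi_t'(\zeta) = 1 - c_N t m_X(\zeta) - \zeta \cdot (\text{something involving } m_X'(\zeta)) + \cdots$; near the left edge of $\mathcal{S}(X)$, Lemma \ref{lem:esthighoderi} shows $m_X'(\zeta) \sim (\lambda_M(\mathcal{S}(X)) - \zeta)^{-1/2}$ and $m_X''(\zeta) \sim (\lambda_M(\mathcal{S}(X))-\zeta)^{-3/2}$ for $\zeta$ below the edge. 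Plugging these into the stationarity condition $\Phi_t'(\zeta_{-,t}) = 0$ and balancing the $t$-dependent prefactor (which is $\sim c_N^2 t^2$ after differentiating $\Phi_t$ twice in $\zeta$, using the chain through $m_X$) against the square-root blow-up forces $\lambda_M(\mathcal{S}(X)) - \zeta_{-,t} \sim t^2$. This is the analogue of the standard computation relating the subordination edge to the free-convolution scale, and I expect it to be the technical heart of part (i); one must be careful that $\eta_\ast$-regularity is only an upper/lower bound on $\Im m_X$ rather than an exact asymptotic, but since $t^2 \gg \eta_\ast$ the required precision is comfortably within reach.

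For part (ii): once $|z - \zeta_{-,t}| \le \tau t^2$ with $\tau$ small, part (i) guarantees $\dist(z, \supp \mu_X) \gtrsim t^2$, so $z$ sits at a mesoscopic distance from the left edge. The estimate $m_X(z) - \mathsf{m}_{\mathsf{mp}}^{(t)}(z) \prec N^{-\beta}$ then follows from the intermediate local law of Lemma \ref{lem: local law for BB^T} (rank perturbation from $\mathsf{C}$ contributes $N^{-\epsilon_\alpha}\eta^{-1} \lesssim N^{-\epsilon_\alpha} t^{-2}$, which is $\ll N^{-\beta}$ by the choice of exponents, since $\epsilon_\alpha \sim \alpha - 2$, $\beta = (\alpha-2)/24$, $\epsilon_a$ tiny), combined with $|m_{\mathsf{B}}(z) - \mathsf{m}_{\mathsf{mp}}^{(t)}(z)| \prec N^{-\epsilon_b} + (N\eta)^{-1}$. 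The derivative bounds come from the Cauchy integral representation $m_X^{(k)}(z) = \frac{k!}{2\pi\ii}\oint (w-z)^{-(k+1)} m_X(w)\, \dd w$ over a circle of radius $\asymp t^2$ around $z$, giving $|m_X^{(k)}(z)| \lesssim t^{-2k} \sup|m_X| \lesssim t^{-2k} \cdot t^{-1}$ using Lemma \ref{existuniq}(iv)'s bound $|m_X| \lesssim (c_N t |z|)^{-1/2} \sim t^{-1}$ (noting $|z| \sim \lambda_-^{\mathsf{mp}}$ is order one). Hmm — wait, that gives $t^{-2k-1}$ but the claim is $t^{-2k+1}$; the improvement comes from integrating $m_X(w) - \mathsf{m}_{\mathsf{mp}}^{(t)}(w)$ against the kernel (the deterministic part $\mathsf{m}_{\mathsf{mp}}^{(t)}$ is analytic inside and its derivatives are $O(t^{-2k+1})$ directly from the square-root structure $\mathsf{m}_{\mathsf{mp}}^{(t)}(\zeta) - \mathsf{m}_{\mathsf{mp}}^{(t)}(\bar\zeta_{-,t}) \sim (\bar\zeta_{-,t} - \zeta)^{1/2}$ and $\bar\zeta_{-,t} - \zeta_{-,t}$ being small), so I would split $m_X = \mathsf{m}_{\mathsf{mp}}^{(t)} + (m_X - \mathsf{m}_{\mathsf{mp}}^{(t)})$, bound derivatives of the first term directly from the MP square-root asymptotics, and bound derivatives of the second term by $t^{-2k} N^{-\beta} \ll t^{-2k+1}$. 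The matching lower bound $m_X^{(k)}(\zeta_{-,t}) \sim t^{-2k+1}$ requires that the square-root term genuinely dominates at the real point $\zeta_{-,t}$, which again uses part (i) to locate $\zeta_{-,t}$ at distance $\sim t^2$ from the edge together with the two-sided bound in Lemma \ref{lem:esthighoderi}.

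For part (iii): this is a stability estimate for the fixed-point/subordination equations. Both $\zeta_{-,t} = \zeta_t(\lambda_{-,t})$ and $\bar\zeta_{-,t} = \bar\zeta_t(\lambda_-^{\mathsf{mp}})$ solve the same structural equation (stationarity of $\Phi_t$, resp.\ $\bar\Phi_t$) but with $m_X$ replaced by $\mathsf{m}_{\mathsf{mp}}^{(t)}$ and $\lambda_{-,t}$ replaced by $\lambda_-^{\mathsf{mp}}$. I would write the difference using the defining relations, expand $\zeta_t - \bar\zeta_t$ in terms of $m_X - \mathsf{m}_{\mathsf{mp}}$ (controlled by part (ii), size $N^{-\beta}$, times the prefactor $c_N t z \sim t$) plus $\lambda_{-,t} - \lambda_-^{\mathsf{mp}}$ (which will itself be shown $\prec N^{-\beta} t^2$ or so via Theorem \ref{thm: lambda -,t asymp}/the expansion of $\lambda_{-,t}$, or bootstrapped here directly), and invert the linearized map, whose stability constant near the stationary point degrades like the second derivative, i.e.\ $\sim t^2 m_X''(\zeta_{-,t}) \sim t^{-1}$ — and here I must check that this degradation does not destroy the bound, which works precisely because the perturbation $m_X - \mathsf{m}_{\mathsf{mp}}^{(t)}$ carries a gain of $N^{-\beta}$ that beats any $t^{-O(1)} = N^{O(\epsilon_a)}$ loss once $\epsilon_a$ is chosen small relative to $\beta \sim (\alpha-2)/24$. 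I expect the bookkeeping of these competing powers of $t = N^{-2\epsilon_a}$ against $N^{-\beta}$ and $N^{-\epsilon_\alpha}$ to be the main nuisance of the whole lemma, but no new idea is needed beyond the $\eta_\ast$-regularity, Lemma \ref{lem:esthighoderi}, the intermediate local law, and Lemma \ref{existuniq}(iv).
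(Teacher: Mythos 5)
Your treatment of parts (i) and (iii) and of the derivative bounds in (ii) follows essentially the same route as the paper: (i) is the sign statement from Lemma \ref{lem:Phicharacterization} plus the stationarity equation $\Phi_t'(\zeta_{-,t})=0$, which (after noting $c_Ntm_X(\zeta_{-,t})=\mathcal{O}(t^{1/2})$ from Lemma \ref{existuniq}(iv) and \eqref{eq:fpe}) forces $m_X'(\zeta_{-,t})\sim t^{-1}$ and hence, via the two-sided bound of Lemma \ref{lem:esthighoderi}, $\lambda_M(\mathcal{S}(X))-\zeta_{-,t}\sim t^2$; and (iii) is a linearization of the two critical-point equations, which the paper imports from \cite{DY2} in the form $|\bar\zeta_{-,t}-\zeta_{-,t}|\lesssim t^3|m_X'(\zeta_{-,t})-(\mathsf{m}_{\mathsf{mp}}^{(t)})'(\zeta_{-,t})|$ and closes with a Cauchy integral on a contour of radius $\tau t^2$. (Your parenthetical about the prefactor ``$c_N^2t^2$ after differentiating $\Phi_t$ twice'' is off — only the first-derivative condition is used, and the relevant coefficient of $m_X'$ is of order $t$ — but the balancing you describe is the right one. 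For (iii), do not route through $\lambda_{-,t}-\lambda_-^{\mathsf{mp}}$ and Theorem \ref{thm: lambda -,t asymp}, which would be circular; compare the critical points of $\Phi_t$ and $\bar\Phi_t$ directly.)

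The genuine gap is in the first estimate of (ii). You propose to obtain $m_X(z)-\mathsf{m}_{\mathsf{mp}}^{(t)}(z)\prec N^{-\beta}$ from Lemma \ref{lem: local law for BB^T} plus a rank perturbation removing $\mathsf{C}$. But Lemma \ref{lem: local law for BB^T} only yields $|m_{\mathsf{B}}(z)-\mathsf{m}_{\mathsf{mp}}^{(t)}(z)|\prec N^{-\epsilon_b}+(N\eta)^{-1}$, and the error $N^{-\epsilon_b}$ is tied to the support level $N^{-\epsilon_b}$ of the $\mathsf{B}$-entries. Since \eqref{081401} forces $\epsilon_b<(\alpha-2)/(10\alpha)$, which for every $\alpha>2.4$ is strictly smaller than $\beta=(\alpha-2)/24$ (and for $\alpha\le 2.4$ may still be chosen smaller), the bound $N^{-\epsilon_b}$ you inherit is strictly weaker than the claimed $N^{-\beta}$; this precision is not cosmetic, as it propagates into Lemma \ref{lem:hpbandvb} and the CLT error analysis. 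The missing idea is a \emph{second, coarser truncation}: the paper introduces $\bar X=(x_{ij}\mathbf{1}_{x_{ij}<N^{-\beta}})$, applies the bounded-support local law of \cite{HLS} to $\bar X$ (whose error is then $N^{-\beta}+(N\eta)^{-1}$, extended to small $\eta$ by a Lipschitz/derivative argument), and compares $m_{\bar X}$ with $m_X$ by a rank inequality, using that the number of entries exceeding $N^{-\beta}$ is $\prec N^{1-(\alpha-2-2\alpha\beta)/4}$ and that $z$ is at distance $\gtrsim t^2$ from both spectra by part (i). Relatedly, your intermediate bound ``$\mathrm{rank}(\mathsf{C})/(N\eta)\lesssim N^{-\epsilon_\alpha}t^{-2}$'' is only valid if you use the operator-norm (distance-to-spectrum) form of the rank inequality rather than the $\eta^{-1}$ form, since $\eta=\Im z$ may be far smaller than $t^2$ in the stated range of $z$; with that form the rank term is indeed harmless, but the $N^{-\epsilon_b}$ term remains the obstruction.
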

\begin{proof}
	See the Appendix \ref{3793}.
\end{proof}

We also compute the following limits.
\begin{lemma}\label{lem:mmpcalcualtion}For any $t = \mathfrak{o}(1)$, we have the following approximations:
\begin{itemize}
	\item[(i)] $
	\mathsf{m}_{\mathsf{mp}}^{(t)}(\bar{\zeta}_{-,t}) = (\sqrt{c_N} - c_N)^{-1} - t c_N^{-1/2}(1-\sqrt{c_N})^{-2}  + \mathcal{O}(t^{3/2})$.
	\item[(ii)] $t (\mathsf{m}_{\mathsf{mp}}^{(t)}(\bar{\zeta}_{-,t}))' = c_N^{-1}(1-\sqrt{c_N})^{-2}/2 + \mathcal{O}(t^{1/2})$.
	\item[(iii)] $t^3 (\mathsf{m}_{\mathsf{mp}}^{(t)}(\bar{\zeta}_{-,t}))'' =c_N^{-3/2}(1-\sqrt{c_N})^{-2} /4 + \mathcal{O}(t^{1/2})$.
	\item[(iv)] $\gamma_N - c_N^{-1/2}(1-\sqrt{c_N} )^{-4/3}  = \mathfrak{o}_p(1)$.
\end{itemize}
\end{lemma}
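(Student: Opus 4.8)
The plan is to handle (i)--(iii) as an exact computation with the explicit Marchenko--Pastur Stieltjes transform, exploiting its square-root behaviour at the left edge, and then to bootstrap (iv) from (iii) together with the quantitative estimates of Lemma~\ref{lem:preEst}. \emph{Step 1 (locating $\bar\zeta_{-,t}$).} Since the square root in \eqref{081420} vanishes at $z=\lambda_-^{\mathsf{mp}}$, one has $\mathsf{m}_{\mathsf{mp}}(\lambda_-^{\mathsf{mp}})=(1-c_N-\lambda_-^{\mathsf{mp}})/(2\lambda_-^{\mathsf{mp}}c_N)=(\sqrt{c_N}-c_N)^{-1}$. Substituting this into \eqref{eq:barzetat} at $z=\lambda_-^{\mathsf{mp}}$ and simplifying with $\lambda_-^{\mathsf{mp}}=(1-\sqrt{c_N})^2$ and $1-c_N=(1-\sqrt{c_N})(1+\sqrt{c_N})$ (the algebra collapses to a product of two linear factors) yields the closed form
\begin{align*}
\bar\zeta_{-,t}=(1-\sqrt{c_N})^2(1-t)-\sqrt{c_N}\,t^2,\qquad\text{hence}\qquad \frac{\bar\zeta_{-,t}}{1-t}=\lambda_-^{\mathsf{mp}}-\frac{\sqrt{c_N}\,t^2}{1-t}.
\end{align*}
Thus $\bar\zeta_{-,t}$ sits at distance of exact order $t^2$ to the \emph{left} of $(1-t)\lambda_-^{\mathsf{mp}}$, the left edge of the measure whose Stieltjes transform is $\mathsf{m}_{\mathsf{mp}}^{(t)}$.

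\emph{Step 2 (proof of (i)--(iii)).} From \eqref{081420}, or equivalently by the implicit function theorem applied to \eqref{eq:MPSTfeq} whose discriminant has a simple zero at $\lambda_-^{\mathsf{mp}}$, one obtains, for $s\downarrow 0$,
\begin{align*}
\mathsf{m}_{\mathsf{mp}}(\lambda_-^{\mathsf{mp}}-s)=(\sqrt{c_N}-c_N)^{-1}-\kappa\sqrt{s}+\mathcal{O}(s),\quad \mathsf{m}_{\mathsf{mp}}'(\lambda_-^{\mathsf{mp}}-s)=\tfrac{\kappa}{2}\,s^{-1/2}+\mathcal{O}(1),\quad \mathsf{m}_{\mathsf{mp}}''(\lambda_-^{\mathsf{mp}}-s)=\tfrac{\kappa}{4}\,s^{-3/2}+\mathcal{O}(s^{-1/2}),
\end{align*}
with $\kappa>0$ an explicit constant read off from the quadratic at the edge. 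Statements (i)--(iii) now follow by substituting $s=\sqrt{c_N}t^2/(1-t)$ from Step~1, using the chain rule $(\mathsf{m}_{\mathsf{mp}}^{(t)})^{(k)}(z)=(1-t)^{-(k+1)}\mathsf{m}_{\mathsf{mp}}^{(k)}(z/(1-t))$ for $k=0,1,2$, and collecting powers of $t$; the error orders $\mathcal{O}(t^{3/2}),\mathcal{O}(t^{1/2}),\mathcal{O}(t^{1/2})$ are dictated by the next $\sqrt{s}$-type correction in each expansion.

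\emph{Step 3 (proof of (iv)).} Here I would first replace the random $\gamma_N$ by its deterministic limit. On $\Omega_\Psi$, Lemma~\ref{lem:preEst}(iii) and Step~1 give $\zeta_{-,t}=\bar\zeta_{-,t}+O_\prec(N^{-\beta}t)\to\lambda_-^{\mathsf{mp}}$, and $\lambda_{-,t}=\Phi_t(\zeta_{-,t})\to\lambda_-^{\mathsf{mp}}$ as well, since $1-c_Ntm_X(\zeta_{-,t})\to1$ (as $m_X(\zeta_{-,t})=\mathcal{O}(1)$ by Lemma~\ref{lem:preEst}(ii) and item (i)). Differentiating \eqref{eq:Phi} twice (Leibniz rule), $\Phi_t''$ is a polynomial in $m_X,m_X',m_X''$ at $\zeta_{-,t}$ whose dominant contribution is $-2c_N\lambda_-^{\mathsf{mp}}\,t\,m_X''(\zeta_{-,t})(1+o(1))$. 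I would then replace each $m_X^{(k)}(\zeta_{-,t})$ by $(\mathsf{m}_{\mathsf{mp}}^{(t)})^{(k)}(\bar\zeta_{-,t})$: Lemma~\ref{lem:preEst}(ii) handles $k=0$ directly, and for $k=1,2$ a Cauchy integral over a circle of radius $\sim t^2$ about $\zeta_{-,t}$ transfers the bound at the cost of a factor $t^{-2}$ per derivative, so the relative error is $\sim N^{-\beta}t^{-1}=\mathfrak{o}(1)$ because $t\sim N^{-2\epsilon_a}\gg N^{-\beta}$. Inserting (iii) gives $c_N^2t^2\Phi_t''(\zeta_{-,t})\to-c_N^{3/2}/2$; since also $4\lambda_{-,t}\zeta_{-,t}+(1-c_N)^2t^2\to4(\lambda_-^{\mathsf{mp}})^2$, the argument of the $(-1/3)$-power in \eqref{eq:gamma} tends to $-c_N^{3/2}(1-\sqrt{c_N})^4<0$, and taking real cube roots yields $\gamma_N\to c_N^{-1/2}(1-\sqrt{c_N})^{-4/3}$.

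\emph{Main obstacle.} Steps~1--2 are elementary but bookkeeping-heavy calculus with the explicit MP Stieltjes transform. The one genuinely delicate point is the derivative transfer in Step~3: $\zeta_{-,t}$ lies only at the mesoscopic distance $\sim t^2$ from the edge of the spectrum of $\mathcal{S}(X)$, so $m_X^{(k)}(\zeta_{-,t})$ is as large as $t^{-2k+1}$, and one must verify that the fluctuation bound pushed through the Cauchy integral from Lemma~\ref{lem:preEst}(ii) stays strictly subdominant --- which is exactly why $t$ is taken polynomially larger than $N^{-\beta}$.
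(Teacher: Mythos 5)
Your proposal is correct and follows essentially the same route as the paper: the paper's proof consists precisely of solving $\bar\zeta_{-,t}=(1-t)\lambda_-^{\mathsf{mp}}-\sqrt{c_N}\,t^2$ from \eqref{eq:barzetat} and \eqref{081420} and then performing the elementary square-root expansion of the explicit MP Stieltjes transform at distance $s=\sqrt{c_N}t^2/(1-t)$ to the left of the edge, exactly as in your Steps 1--2. Your Step 3 supplies the deterministic-replacement argument for (iv) (via Lemma \ref{lem:preEst} and the Cauchy-integral transfer of derivative bounds on a contour of radius $\sim t^2$, with relative error $N^{-\beta}t^{-1}=\mathfrak{o}(1)$) that the paper leaves implicit, and that is indeed the right mechanism.
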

\begin{proof}
	It is easy to solve
	$
		\bar{\zeta}_{-,t} = (1 - t)\lambda_-^{\mathsf{mp}} - \sqrt{c_N}t^2
	$ from (\ref{eq:barzetat}) and (\ref{081420}). 
	The calculation is then elementary by the explicit formula (\ref{081420}). 
\end{proof}

\subsection{Proof of Theorem \ref{thm: lambda -,t asymp}}
Before giving the proof, we need the following pre-process. First, note that we have the following deterministic upper bound when $\Psi$ is good:
\begin{align*}
\zeta_{-,t}\cdot \mathbf{1}_{\Omega_{\Psi}} \le \lambda_M(\mathcal{S}(X))\cdot \mathbf{1}_{ \Omega_{\Psi}} \le \lambda_{M - |\mathcal{D}_r|}(\mathcal{S}(\mathsf{B}^{(\mathcal{D}r)}))\cdot \mathbf{1}_{\Omega_{\Psi}}\le N^{2-2\epsilon_b}.
\end{align*}
This indicates that $\E ( \zeta_{-,t}\cdot \mathbf{1}_{ \Omega_{\Psi}})$ is well-defined. We define
\begin{equation}\label{081433}
	\zeta_{\mathsf{e}} \coloneqq \E ( \zeta_{-,t}\cdot \mathbf{1}_{ \Omega_{\Psi}}), \quad
    \Delta_\zeta \coloneqq \zeta_{-,t} - \zeta_{\mathsf{e}}.
\end{equation}
We also write for $z \in \mathbb{C}^+$ and an integer $k \ge 0$, $\Delta_m(z) \coloneqq m_X(z) - \E m_X(z)$ and $\Delta^{(k)}_m(z) \coloneqq m^{(k)}_X(z) - \E m^{(k)}_X(z)$,
where we remark that $\Delta_m(z)= \Delta^{(0)}_m(z)$. It is noteworthy that $\E m_X(z)$ is well-defined when $z$ possesses a non-zero imaginary part. To ensure that the expectation of $m_X(\zeta_{\mathsf{e}})$ exist, we add a small imaginary part to $\zeta_{\mathsf{e}}$,
and define for any $K_\zeta > 0$, $\hat{\zeta}_{\mathsf{e}} = \hat{\zeta}_{\mathsf{e}}(K_\zeta)  \coloneqq \zeta_{\mathsf{e}} + \mathrm{i}N^{-100K_\zeta}.$

We will begin by stating some preliminary bounds useful to estimate $\E\lambda_{-,t}$.
\begin{lemma} Recall that $\beta = (\alpha-2)/24$.
	There exists some small $\tau>0$, such that for any $z \in \mathbb{C}^+$ satisfies $|z - \zeta_{\mathsf{e}}|\le \tau t^2$ and $\Im z\geq N^{-100K_\zeta}$, the following a priori high probability bounds:
	\begin{align}
		\Delta^{(k)}_m(z) \prec N^{-\beta}t^{-2k} ,\quad \text{and} \quad \Delta_\zeta \prec N^{-\beta/2}t^{2}  \label{081441}
	\end{align}
	Furthermore, we have the following a priori variance bounds:
	\begin{align}
		\mathrm{Var}(\Delta^{(k)}_m(z)  ) \le N^{-1+\epsilon}t^{-2k-4}, \quad \text{and} \quad \mathrm{Var}(\Delta_\zeta \mathbf{1}_{\Omega_\Psi}  ) \le N^{-1+\epsilon}.
	\end{align}
\label{lem:hpbandvb}\end{lemma}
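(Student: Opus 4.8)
The plan is to establish the a priori bounds by a bootstrap argument that feeds the crude estimates of Lemma \ref{lem:preEst} into the fluctuation analysis, using the known CLT-type control of $m_X$ and its derivatives from \cite{BM, Malysheva} adapted to the present (mesoscopic, random) energy scale. First I would handle the high-probability bound on $\Delta_m^{(k)}(z)$: by Lemma \ref{lem:preEst}(ii) we already have $|m_X^{(k)}(z)| \lesssim t^{-2k+1}$ on $\Omega_\Psi$ for $z$ within $\tau t^2$ of $\zeta_{-,t}$, hence also (after shrinking $\tau$ and using $|\zeta_{\mathsf{e}} - \zeta_{-,t}| \prec N^{-\beta}t$ from Lemma \ref{lem:preEst}(iii)) for $z$ within $\tau t^2$ of $\zeta_{\mathsf{e}}$. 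The point is that the \emph{centered} quantity $\Delta_m^{(k)}$ is much smaller than the typical size $t^{-2k+1}$. This is exactly the content of the mesoscopic CLT for linear spectral statistics of heavy-tailed covariance matrices: at a spectral scale $\eta \sim t^2$, the fluctuation of $m_X(z)$ is of order $N^{-\alpha/4}$ up to logarithmic/scale-dependent factors, which after $k$ differentiations (each costing a factor $t^{-2}$) gives $N^{-\alpha/4} t^{-2k}$, comfortably inside $N^{-\beta}t^{-2k}$ since $\beta = (\alpha-2)/24 < \alpha/4$. Concretely I would differentiate the self-consistent / subordination equation for $m_X$ $k$ times, isolate $m_X^{(k)}$, and estimate the fluctuation of the right-hand side using the large deviation bounds for heavy-tailed entries (as in Proposition \ref{prop: resolvent entry size for X=B+C}) together with the stability of the equation away from the edge — recall $\zeta_{-,t}$ is at mesoscopic distance $\sim t^2$ from $\lambda_M(\mathcal{S}(X))$ by Lemma \ref{lem:preEst}(i), so the stability constant is only polynomially degenerate in $t$.

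Next I would transfer the bound on $\Delta_m$ to a bound on $\Delta_\zeta = \zeta_{-,t} - \zeta_{\mathsf{e}}$. Here I write $\zeta_{-,t} = \zeta_t(\lambda_{-,t})$ and expand: $\zeta_{-,t} - \bar{\zeta}_{-,t}$ is controlled by the fluctuation of $m_X$ at $\bar\zeta_{-,t}$ (Lemma \ref{lem:preEst}(iii) already gives $\prec N^{-\beta}t$), and then $\zeta_{\mathsf{e}} = \E(\zeta_{-,t}\mathbf{1}_{\Omega_\Psi})$ differs from $\bar\zeta_{-,t}$ by at most the expectation of that same fluctuation plus the negligible contribution of $\Omega_\Psi^c$ (which has probability $\le N^{-D}$ and $\zeta_{-,t}\mathbf{1}_{\Omega_\Psi} \le N^{2-2\epsilon_b}$ deterministically). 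One must be slightly careful that $\lambda_{-,t}$ is itself random; I would use that $\lambda_{-,t} = \Phi_t(\zeta_{1,-}(t))$ with $\zeta_{1,-}(t)$ a critical point of $\Phi_t$, so $\zeta_t(\lambda_{-,t}) = \zeta_{1,-}(t)$ solves a fixed-point equation whose only randomness enters through $m_X$, and then linearize. This yields $\Delta_\zeta \prec N^{-\beta}t^2 \cdot (\text{something} \ll N^{\beta/2})$; the slightly weaker exponent $\beta/2$ in the statement absorbs the loss from composing the linearization with the $\prec N^{-\beta}$ bounds and the extra $t$-powers.

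Finally, for the variance bounds I would not use the $\prec$ estimates but instead run a direct second-moment computation. For $\mathrm{Var}(\Delta_m^{(k)}(z))$ I differentiate the resolvent identity and use the standard martingale (or cumulant) expansion for the variance of $\frac{1}{M}\Tr G(X,\cdot)^{(k+1)}$-type quantities; the dominant contribution from the heavy tails is the $\alpha$-stable one, but at the second-moment level it contributes $\lesssim N^{-1}$ times a scale factor, and the $k$-th derivative inflates the scale factor by $t^{-2k-4}$ (the extra $t^{-4}$ compared to naively $t^{-2k}$ comes from the two resolvents in the variance and the mesoscopic scale $\eta \sim t^2$ — this matches the $\sigma_\alpha^2$ in Theorem \ref{thm: lambda -,t asymp} having the relevant $c_N$-powers). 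For $\mathrm{Var}(\Delta_\zeta\mathbf{1}_{\Omega_\Psi})$ I again linearize $\zeta$ in $m_X$ and reduce to $\mathrm{Var}(\Delta_m(\bar\zeta_{-,t}))$ times a bounded factor, getting $\le N^{-1+\epsilon}$. \textbf{The main obstacle} I anticipate is precisely the control of the fluctuations of the \emph{higher-order} derivatives $m_X^{(k)}(\zeta)$ uniformly in $k \le K$ at the random mesoscopic point $\zeta_{\mathsf{e}}$: one must show the derivative loss is exactly $t^{-2}$ per derivative and no worse, which requires a careful Cauchy-integral / self-consistent-equation argument exploiting that the relevant disk of radius $\sim t^2$ stays at distance $\sim t^2$ from the support of $\mu_X$, and that the heavy-tailed large-deviation estimates remain uniform there. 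Getting the power of $t$ sharp (not off by extra factors) in both the high-probability and variance bounds is the delicate point, since these exponents are exactly what later feeds the expansion of $\E\lambda_{-,t}$ and the CLT.
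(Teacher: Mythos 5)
Your overall architecture (local law input, Cauchy integrals for derivatives, Efron--Stein/martingale for the variance of $\Delta_m$, linearization of the edge equation for $\Delta_\zeta$) matches the paper, but two points deserve comment.

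First, for the high-probability bound on $\Delta_m^{(k)}$ you reach for the mesoscopic CLT and a differentiated self-consistent equation. The paper does something much lighter: since Lemma \ref{lem:preEst}(ii) gives $m_X(z)-\mathsf{m}_{\mathsf{mp}}^{(t)}(z)\prec N^{-\beta}$ on the relevant disk, one simply writes $\Delta_m(z)=(m_X(z)-\mathsf{m}_{\mathsf{mp}}^{(t)}(z))-\E(m_X(z)-\mathsf{m}_{\mathsf{mp}}^{(t)}(z))$ and the bound $N^{-\beta}$ follows from the triangle inequality; derivatives then come from Cauchy's formula on a contour of radius $\sim t^2$ (with a cutoff of the contour near the real axis so that the expectations are well defined). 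Invoking the CLT here is not only unnecessary, it is logically uncomfortable, since the a priori bounds of this lemma are precisely what feeds the subsequent CLT analysis of $\lambda_{-,t}$.

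Second, and this is the genuine gap: for $\mathrm{Var}(\Delta_\zeta\mathbf{1}_{\Omega_\Psi})$ you propose to ``linearize $\zeta$ in $m_X$ and reduce to $\mathrm{Var}(\Delta_m(\bar\zeta_{-,t}))$ times a bounded factor.'' That cannot give the stated bound. Your own variance estimate is $\mathrm{Var}(\Delta_m)\le N^{-1+\epsilon}t^{-4}$, so a bounded linearization factor yields only $N^{-1+\epsilon}t^{-4}$, which is far larger than the target $N^{-1+\epsilon}$ (recall $t=N^{-2\epsilon_a}\ll 1$). The correct argument starts from the criticality equation $\Phi_t'(\zeta_{-,t})=0$, expands it around $(\zeta_{\mathsf{e}},\E m_X^{(k)}(\hat\zeta_{\mathsf{e}}))$, and observes that the coefficient of $\Delta_\zeta$ is $\sim 2\mathsf{f}_m\zeta_{\mathsf{e}}\E m_X^{(2)}(\hat\zeta_{\mathsf{e}})\sim t^{-2}$ while the coefficients of $\Delta_m$ and $\Delta_m^{(1)}$ are $\mathcal{O}(t)$; after dividing one obtains $\Delta_\zeta=C_2(t)\Delta_m+C_3(t)\Delta_m^{(1)}+\cdots$ with $C_2(t),C_3(t)=\mathcal{O}(t^3)$, and it is exactly the cancellation $t^6\cdot N^{-1+\epsilon}t^{-6}=N^{-1+\epsilon}$ against $\mathrm{Var}(\Delta_m^{(1)})$ that produces the clean bound. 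Moreover the higher-order polynomial $\mathcal{P}_\zeta$ contains $\Delta_\zeta$ itself, so one does not get a closed formula but a self-improving inequality of the form $\mathrm{Var}(\Delta_\zeta\mathbf{1}_{\Omega_\Psi})\lesssim N^{-1+\epsilon}+N^{-1/2-\beta/4}\sqrt{\mathrm{Var}(\Delta_\zeta\mathbf{1}_{\Omega_\Psi})}$, which must then be solved. Without tracking these $t$-powers and closing the self-consistent inequality, the claimed bound $\mathrm{Var}(\Delta_\zeta\mathbf{1}_{\Omega_\Psi})\le N^{-1+\epsilon}$ does not follow.
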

We postpone the proof of Lemma \ref{lem:hpbandvb} to the end of this subsection. Let us prove Theorem \ref{thm: lambda -,t asymp} equipped with Lemma \ref{lem:hpbandvb}.

\begin{proof}[Proof of Theorem \ref{thm: lambda -,t asymp}]
Recall the expression of $\lambda_{-,t}$ in (\ref{eq:defoflambdaminust}).
We shall switch $\zeta_{-,t}$ and $m_{X}(\zeta_{-,t})$ with $\zeta_{\mathsf{e}}$ and $\E m_{X}(\hat{\zeta}_{\mathsf{e}})$ respectively. First, expanding $m_{X}(\zeta_{-,t})$ around $m_{X}(\zeta_{\mathsf{e}})$, we have for sufficiently large $s > 0$,
\begin{align*}
	\lambda_{-,t} =  \zeta_{-,t} \Big(1  - \sum_{k=0}^s \frac{c_Nt}{k!} m^{(k)}_{X}(\zeta_{\mathsf{e}}) \Delta_\zeta^k \Big)^{2}+ (1-c_{N}) t \Big(1 - \sum_{k=0}^s \frac{c_Nt}{k!} m^{(k)}_{X}(\zeta_{\mathsf{e}}) \Delta_\zeta^k  \Big) + \mathcal{O}_\prec(N^{-\alpha/4-\epsilon}).
\end{align*}
Note that for any integer $k \ge 0$, it can be easily verified that w.h.p.,
$
	|m^{(k)}_X(\zeta_{\mathsf{e}}) - m^{(k)}_X(\hat{\zeta}_{\mathsf{e}})|  \le N^{-50s},
$
by chooinsg $K_{\zeta}>0$ large enough.
This means that we can replace $m^{(k)}_X(\zeta_{\mathsf{e}})$ with $m^{(k)}_X(\hat{\zeta}_{\mathsf{e}})$.
Through an elementary calculation, we have
\begin{multline*}
	\lambda_{-,t} = \lambda_\mathsf{shift} - \paren{ 2c_{N}t \big( 1 - c_{N}t\E m_X(\hat{\zeta}_{\mathsf{e}}) \big) \zeta_{\mathsf{e}} - c_{N}t^{2}(1-c_{N}) }\Delta_{m}(\hat{\zeta}_{\mathsf{e}})
	+ \mathsf{ZOT}_\zeta   \Delta_{\zeta} + \mathcal{P}(\Delta_\zeta,\{\Delta^{(k)}_m(\hat{\zeta}_{\mathsf{e}})\}_{k\ge 0}).
\end{multline*}
where 
$
	\lambda_\mathsf{shift} \coloneqq \big(1-c_Nt \E m_X(\hat{\zeta}_{\mathsf{e}}) \big)^2\zeta_{\mathsf{e}} +(1-c_N)t \big(1-c_Nt \E  m_X(\hat{\zeta}_{\mathsf{e}}) \big)
$, and we denote by $\mathsf{ZOT}_\zeta$ the collection of zero-th order terms, i.e.,
\begin{align}\label{eq: def of ZOT}
	\mathsf{ZOT}_\zeta \coloneqq \big(1-c_N t\E m_X(\hat{\zeta}_{\mathsf{e}})\big)\big(1 - c_Nt\E m_X(\hat{\zeta}_{\mathsf{e}}) -2c_Nt \zeta_{\mathsf{e}} \E m_X'(\hat{\zeta}_{\mathsf{e}})     \big)- c_N(1-c_N)t^2\E m_{X}'(\hat{\zeta}_{\mathsf{e}}),
\end{align}
and $\mathcal{P}(\Delta_\zeta,\{\Delta^{(k)}_m(\hat{\zeta}_{\mathsf{e}})\}_{k\ge 1})$ collects all the high order terms.
We need to bound the last two terms. It can be easily obtained by prior bounds in Lemma \ref{lem:hpbandvb} that $\mathcal{P}(\Delta_\zeta,\{\Delta^{(k)}_m(\hat{\zeta}_{\mathsf{e}})\}_{k\ge 0}) = \mathcal{O}_p( N^{-\alpha/4-(4-\alpha)/8})$. 
Moreover, due to Remark \ref{rmk: bound ZOT} below, we find that $\mathsf{ZOT}_{\zeta}=\mathcal{O}( N^{-\alpha/4-(4-\alpha)/8})$.

The following two propositions complete the proof.

\begin{proposition}\label{prop:lambdaminustdis}
Let $\sigma_{\alpha}$ be as in Theorem \ref{thm: lambda -,t asymp}. We have
\begin{equation*}
	 2c_{N} \big( 1 - c_{N}t\E m_X(\hat{\zeta}_{\mathsf{e}}) \big) \zeta_{\mathsf{e}} \cdot \paren{ \frac{ t \Delta_{m}(\hat{\zeta}_{\mathsf{e}}) }{\sigma_\alpha } } \Rightarrow \mathcal{N}(0,1).
\end{equation*}
\end{proposition}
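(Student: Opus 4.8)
The goal of Proposition \ref{prop:lambdaminustdis} is a central limit theorem for the mesoscopic linear statistic $t\,\Delta_m(\hat\zeta_{\mathsf e}) = t\,(m_X(\hat\zeta_{\mathsf e}) - \E m_X(\hat\zeta_{\mathsf e}))$, after which the deterministic prefactor $2c_N(1-c_Nt\E m_X(\hat\zeta_{\mathsf e}))\zeta_{\mathsf e}$ only rescales the variance; matching this with $\sigma_\alpha^2$ is an elementary computation using Lemma \ref{lem:mmpcalcualtion} and the identity $\zeta_{\mathsf e} = (1-t)\lambda_-^{\mathsf{mp}} + \mathcal{O}_\prec(\cdots)$, so the real content is the CLT for $m_X(\hat\zeta_{\mathsf e})$. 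The plan is to follow the strategy of \cite{BM, Malysheva}, where a CLT for the Stieltjes transform of a heavy-tailed covariance matrix at a \emph{fixed} $z$ with $\Im z > 0$ is established; the new feature here is that our evaluation point $\hat\zeta_{\mathsf e}$ sits at a mesoscopic distance of order $t^2 \sim N^{-4\epsilon_a}$ from the left edge of $\mathcal{S}(X)$, rather than at an $\mathcal{O}(1)$ distance. First I would record that, since $\hat\zeta_{\mathsf e}$ is deterministic (it is an expectation plus a tiny imaginary regularizer) and lies in the mesoscopic window guaranteed by Lemma \ref{lem:preEst} and Lemma \ref{lem:hpbandvb}, the whole analysis may be carried out with $X$ conditioned on $\Psi$ good, and the event $\Omega_\Psi$ has overwhelming probability so it does not affect the limit.

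The main device is the same as in \cite{ALY, BM, Malysheva}: decompose $\sqrt N\,\Delta_m(\hat\zeta_{\mathsf e})$ into a sum of martingale differences with respect to the filtration generated by successive columns (or entries) of $X = \mathsf B + \mathsf C$, and apply the martingale CLT. For the light part $\mathsf B$ the contribution to the variance is $o(1)$ on this scale because $|b_{ij}|\le N^{-\epsilon_b}$ and local-law fluctuations are $\mathcal{O}_\prec((N\eta)^{-1})$ with $\eta\sim t^2$; the fluctuation is therefore entirely driven by the sparse heavy part $\mathsf C$, whose columns carry $\Theta$-distributed entries of size $\sim N^{-\epsilon_b}$ to $\sim 1$. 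Concretely, removing one heavy column $j$ changes $m_X$ by a rank-one update, and the resolvent identity plus Lemma \ref{lem:esthighoderi} (applied at $E - \lambda_- \sim t^2$, so $\int |x - \hat\zeta_{\mathsf e}|^{-k}\,\mathrm{d}\mu_X \sim (t^2)^{-(k-3/2)}$) gives a precise expansion of each martingale increment in terms of the large column entries. Summing the conditional variances of these increments, using that the number of heavy columns is $\Theta(N^{1-\alpha\epsilon_b/?})$... rather, using $\P(|\Theta|\ge x)\sim \mathsf c x^{-\alpha}/\Gamma(1-\alpha/2)$ together with the tail expansion in Assumption \ref{main assum}(i), the sum of conditional variances concentrates (by a second-moment computation, Lemma \ref{lem:hpbandvb}) around a deterministic limit. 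Carrying out the resulting integral against the $\alpha$-tail — essentially $\int_0^\infty x^2 \cdot (\text{resolvent profile at scale } t^2) \cdot \alpha\mathsf c x^{-\alpha-1}\,\mathrm{d}x$, with the profile behaving like $(t^2 + x^2/t^?)$-type expressions — produces a Gamma function $\Gamma(\alpha/2+1)$ and the explicit power of $t$ that, after multiplication by the prefactor and the normalization $N^{\alpha/4}$, collapses to the stated $\sigma_\alpha^2$. The Lindeberg/Lyapunov condition for the martingale CLT follows because each increment is $\mathcal{O}_\prec(N^{-1/2}\cdot N^{\text{tiny}})$ after the heavy-tail truncation at scale $N^{1/\alpha}$ (entries larger than that are negligibly rare), and one controls the fourth moment of increments by the same resolvent expansion.

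The hard part will be the precise bookkeeping of the martingale-increment expansion at the mesoscopic scale: unlike at a fixed $z$, the resolvent entries $G_{ij}(X,\hat\zeta_{\mathsf e})$ are of size $t^{-2}$ for indices touching the heavy rows/columns (Proposition \ref{prop: resolvent entry size for X=B+C}), so one must track which terms in the expansion pick up these $t^{-2}$ factors and verify that, after summing over the $o(N)$ heavy columns and multiplying by the $t$ in $t\Delta_m$, the dangerous $t^{-2}$ enhancements are exactly compensated and do not spoil either the convergence of the variance or the negligibility of higher-order terms. This is where the careful choice $t\sim N^{-2\epsilon_a}$ with $\epsilon_a$ small compared to $\epsilon_b$ and $\epsilon_\alpha$ (cf.\ \eqref{081401}, \eqref{081533}) is used repeatedly. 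A secondary technical point is justifying the replacement of $m_X^{(k)}(\hat\zeta_{\mathsf e})$-type quantities and the interchange of the conditional-variance sum with its deterministic limit uniformly; this is exactly what the variance bounds $\mathrm{Var}(\Delta_m^{(k)}(z)) \le N^{-1+\epsilon}t^{-2k-4}$ of Lemma \ref{lem:hpbandvb} are designed to supply, so the argument reduces to plugging those in. Once the variance identity and the Lyapunov bound are in place, the martingale CLT yields the claim and the matching with $\sigma_\alpha$ is the routine computation mentioned at the outset.
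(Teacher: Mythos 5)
Your reduction is the same as the paper's: Proposition \ref{prop:lambdaminustdis} is obtained from a CLT for $t\Delta_m(\hat\zeta_{\mathsf e})$ (Theorem \ref{thm:flucation}) plus the elementary variance matching via Lemma \ref{lem:mmpcalcualtion}, and the CLT itself is proved by a column-wise martingale decomposition and the martingale CLT of \cite{BM}, evaluated at the mesoscopic point $\hat\zeta_{\mathsf e}$ at distance $\sim t^2$ from the edge. Up to that point you are on the paper's track.

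The gap is that the heart of the argument --- the identification of the limiting conditional variance, which is where $\Gamma(\alpha/2+1)$ and the precise power of $t$ in $\sigma_\alpha^2$ come from --- is not actually carried out, and the mechanism you sketch for it is not the one that works. You propose to restrict attention to the $o(N)$ heavy columns (those meeting $\mathcal{D}_c$), treat each as a rank-one update, and integrate the single-entry tail directly; the unresolved placeholders ("$N^{1-\alpha\epsilon_b/?}$", "$x^2/t^?$") signal that this computation has not been done. The paper instead keeps \emph{all} columns of a truncated matrix $\tilde X$ (truncation at $\sqrt N|x_{ij}|\le N^{1/4+1/\alpha+\epsilon_\vartheta}$, not at $N^{1/\alpha}$ as you suggest --- the threshold is chosen so that the truncation does not alter the $N^{-\alpha/4}$ fluctuation, cf.\ Lemma \ref{lem:truncation}, while still giving the moment bounds of Lemma \ref{lem:truncationprop}), and extracts the variance by (i) removing the off-diagonal resolvent terms from the quadratic forms $\tilde x_k^\top G(\tilde X^{(k)},\cdot)\tilde x_k$ (Proposition \ref{prop:removing off-diagonal}, a step absent from your outline), and (ii) linearizing the denominator via $w^{-1}=\int_0^\infty e^{-sw}\,\mathrm{d}s$ and using the expansion of the characteristic function $\phi_N(\lambda)=\E e^{-\mathrm{i}\lambda|x_{ij}|^2}=1-\mathrm{i}(1-t)\lambda/N+c(\mathrm{i}\lambda)^{\alpha/2}/N^{\alpha/2}+\varepsilon_N(\lambda)$, whose fractional term, summed over the entries of each column and over all $N$ columns, produces the kernel $\mathcal{K}(z,z')$ and hence $\sigma_m^2$ (Proposition \ref{prop:computaionofkernel}). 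Every column contributes to the $N^{1-\alpha/2}$ variance through this fractional moment; one cannot condition on the heavy-entry positions and sum only over $\mathcal{D}_c$ without separately justifying that the remaining columns are negligible, which is false at this order. Your a~priori bounds (Lemma \ref{lem:hpbandvb}, Proposition \ref{prop: resolvent entry size for X=B+C}) and the $t^{-2}$ bookkeeping for atypical indices are indeed the right supporting inputs, but without the kernel computation the stated value of $\sigma_\alpha^2$ is not derived. A minor slip: the normalization is $N^{\alpha/4}\Delta_m$, not $\sqrt N\,\Delta_m$.
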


\begin{proposition}\label{prop:lambdashiftexpans}
We have
\begin{align*}
		\lambda_{\mathsf{shift}} = \lambda_{-}^{\mathsf{mp}} - 
		 \frac{\mathsf{c}N^{1-\alpha/2}(1-\sqrt{c_N})^2}{c_N^{(\alpha-2)/4}} \Gamma\Big(\frac{\alpha}{2} +1\Big) + \mathfrak{o}(N^{1-\alpha/2}).
\end{align*}
\end{proposition}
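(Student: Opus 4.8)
The plan is to view $\lambda_{\mathsf{shift}}$ as the value of the smooth function $g(\zeta,m):=(1-c_N t m)^2\zeta+(1-c_N)t(1-c_N t m)$ at $(\zeta_{\mathsf{e}},\E m_X(\hat\zeta_{\mathsf{e}}))$, and to compare it with $\lambda_{-}^{\mathsf{mp}}$, which is the value of the same $g$ at the MP reference point $(\bar\zeta_{-,t},\mathsf{m}_{\mathsf{mp}}^{(t)}(\bar\zeta_{-,t}))$. Indeed $\bar\Phi_t(\zeta)=g(\zeta,\mathsf{m}_{\mathsf{mp}}^{(t)}(\zeta))$, and exactly as for the random pair $(\Phi_t,\zeta_t)$ the deterministic pair $(\bar\Phi_t,\bar\zeta_t)$ satisfies $\bar\Phi_t\circ\bar\zeta_t=\mathrm{id}$ — a direct consequence of the MP self-consistent equation \eqref{eq:MPSTfeq} — so that $\bar\Phi_t(\bar\zeta_{-,t})=\bar\Phi_t(\bar\zeta_t(\lambda_{-}^{\mathsf{mp}}))=\lambda_{-}^{\mathsf{mp}}$; differentiating this identity and using that $\mathsf{m}_{\mathsf{mp}}'(z)\sim(\lambda_{-}^{\mathsf{mp}}-z)^{-1/2}$ blows up at the left edge also yields the critical-point relation $\bar\Phi_t'(\bar\zeta_{-,t})=0$, i.e.\ $\partial_\zeta g+\partial_m g\cdot(\mathsf{m}_{\mathsf{mp}}^{(t)})'=0$ at the reference point. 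Writing $\Delta_1:=\zeta_{\mathsf{e}}-\bar\zeta_{-,t}$ and $\delta_m:=\E m_X(\hat\zeta_{\mathsf{e}})-\mathsf{m}_{\mathsf{mp}}^{(t)}(\hat\zeta_{\mathsf{e}})$, and Taylor expanding $g$ around the reference point (with $\mathsf{m}_{\mathsf{mp}}^{(t)}(\hat\zeta_{\mathsf{e}})=\mathsf{m}_{\mathsf{mp}}^{(t)}(\bar\zeta_{-,t})+(\mathsf{m}_{\mathsf{mp}}^{(t)})'(\bar\zeta_{-,t})\Delta_1+\mathcal{O}((\mathsf{m}_{\mathsf{mp}}^{(t)})''\Delta_1^2)+\mathcal{O}(N^{-100K_\zeta})$), the contribution linear in $\Delta_1$ collapses to $\bar\Phi_t'(\bar\zeta_{-,t})\Delta_1=0$, the coefficient of $\Delta_1^2$ is $\tfrac12\bar\Phi_t''(\bar\zeta_{-,t})$, and one is left with
\[
\lambda_{\mathsf{shift}}-\lambda_{-}^{\mathsf{mp}}=\partial_m g(\bar\zeta_{-,t},\mathsf{m}_{\mathsf{mp}}^{(t)}(\bar\zeta_{-,t}))\,\delta_m+\tfrac12\bar\Phi_t''(\bar\zeta_{-,t})\Delta_1^2+\mathcal{O}(\Delta_1\delta_m+\delta_m^2)+(\text{h.o.t.}).
\]
By \eqref{eq:gamma} and Lemma \ref{lem:mmpcalcualtion}(iv), $\bar\Phi_t''(\bar\zeta_{-,t})\sim t^{-2}$, and by Lemma \ref{lem:mmpcalcualtion}(i) together with $\bar\zeta_{-,t}=(1-t)\lambda_{-}^{\mathsf{mp}}-\sqrt{c_N}t^2$ one computes $\partial_m g(\bar\zeta_{-,t},\mathsf{m}_{\mathsf{mp}}^{(t)}(\bar\zeta_{-,t}))=-2c_N t(1-\sqrt{c_N})^2+\mathcal{O}(t^2)$.

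The only delicate point in making the error terms negligible is the size of $\Delta_1$. The crude bound $\Delta_1\prec N^{-\beta}t$ from Lemma \ref{lem:preEst}(iii) (passed to the expectation using that on $\Omega_\Psi^c$, which has probability $\le N^{-D}$, $m_X(\hat\zeta_{\mathsf{e}})$ and $\zeta_{-,t}$ are bounded by a fixed power of $N$) only gives $\bar\Phi_t''\Delta_1^2\sim N^{-2\beta}$, which is \emph{not} $\mathfrak{o}(N^{1-\alpha/2})$. To sharpen it I would linearize the random critical-point equation $\Phi_t'(\zeta_{-,t})=0$ around $\bar\Phi_t'(\bar\zeta_{-,t})=0$: by the implicit function theorem $\zeta_{-,t}-\bar\zeta_{-,t}=-\Phi_t'(\bar\zeta_{-,t})/\Phi_t''(\bar\zeta_{-,t})+(\text{quadratic})$, where $\Phi_t''(\bar\zeta_{-,t})$ is essentially deterministic ($\sim t^{-2}$, by the variance bounds of Lemma \ref{lem:hpbandvb}) and $\Phi_t'(\bar\zeta_{-,t})=\frac{d}{d\zeta}\big[g(\zeta,m_X(\zeta))-g(\zeta,\mathsf{m}_{\mathsf{mp}}^{(t)}(\zeta))\big]\big|_{\bar\zeta_{-,t}}$ is a functional of $m_X-\mathsf{m}_{\mathsf{mp}}^{(t)}$ and its derivative; taking $\E[\,\cdot\,\mathbf{1}_{\Omega_\Psi}]$ turns the linear part into the bias $\delta_m,\delta_m'$ (of sizes $N^{1-\alpha/2}/t$ and $N^{1-\alpha/2}/t^3$ from the expansion below, differentiated by a Cauchy estimate) and the quadratic part is controlled by the variance bounds of Lemma \ref{lem:hpbandvb}, yielding $\Delta_1=\mathcal{O}(N^{1-\alpha/2})+\mathcal{O}(N^{-1+\epsilon})$. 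Feeding $\bar\Phi_t''\sim t^{-2}$, $\partial_m g\sim t$, $\partial_{mm}g\sim t^2$, $t\sim N^{-2\epsilon_a}$, and the sizes of $\Delta_1,\delta_m,\delta_m'$ into the display above, and using that $\epsilon_a$ may be taken small relative to $\alpha-2$ as in \eqref{081401}, each of $\bar\Phi_t''\Delta_1^2$, $\Delta_1\delta_m$, $\delta_m^2$ and the higher-order terms is $\mathfrak{o}(N^{1-\alpha/2})$, and for the same reason $\delta_m(\hat\zeta_{\mathsf{e}})$ may be replaced by $\delta_m(\bar\zeta_{-,t})$; hence $\lambda_{\mathsf{shift}}-\lambda_{-}^{\mathsf{mp}}=-2c_N t(1-\sqrt{c_N})^2\,\delta_m(\bar\zeta_{-,t})+\mathfrak{o}(N^{1-\alpha/2})$.

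The heart of the argument — and the step I expect to be the main obstacle — is then the sharp asymptotic of the heavy-tailed bias of the Stieltjes transform of $\mathcal{S}(X)$ at a mesoscopic distance $\sim t^2$ below its left edge, namely
\[
\E m_X(\hat\zeta_{\mathsf{e}})=\mathsf{m}_{\mathsf{mp}}^{(t)}(\hat\zeta_{\mathsf{e}})+\frac{\mathsf{c}\,\Gamma(\tfrac{\alpha}{2}+1)}{2\,c_N^{(\alpha+2)/4}}\cdot\frac{N^{1-\alpha/2}}{t}+\mathfrak{o}\!\Big(\frac{N^{1-\alpha/2}}{t}\Big)
\]
(together with its $z$-derivative, valid in a disc of radius $\gtrsim t^2$). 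This is proved by the self-consistent-equation analysis for heavy-tailed sample covariance matrices in the spirit of \cite{BM, Malysheva}, but carried out at the random mesoscopic point $\hat\zeta_{\mathsf{e}}$ rather than at a fixed spectral parameter with order-one imaginary part; the inputs making this possible are the $\eta_\ast$-regularity of $\mathcal{S}(X)$ (Proposition \ref{prop:etastarRegular}) and Lemma \ref{lem:esthighoderi}, which control the resolvent moments $\int|x-\hat\zeta_{\mathsf{e}}|^{-k}\,\mathrm{d}\mu_X(x)\sim t^{-2k+3}$ entering the error terms, while the leading correction is produced by a large-deviation computation for the heavy entries of $X$ (the $\mathsf{B}$ and $\mathsf{C}$ parts): after the change of variables sending an entry of size $a$ to $u=a^2/N$, the tail $\P(|\Theta|\ge s)\sim\mathsf{c}\,\Gamma(1-\tfrac{\alpha}{2})^{-1}s^{-\alpha}$ integrates against weights of the form $u^{\alpha/2-1}e^{-(\cdots)u}$, producing both the scale $N^{1-\alpha/2}$ and the factor $\Gamma(\tfrac{\alpha}{2}+1)$, with the $\epsilon_b$-dependence cancelling between the $\mathsf{B}$- and $\mathsf{C}$-contributions. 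Substituting this into the last display of the previous paragraph gives
\begin{align*}
\lambda_{\mathsf{shift}}-\lambda_{-}^{\mathsf{mp}}&=-2c_N t(1-\sqrt{c_N})^2\cdot\frac{\mathsf{c}\,\Gamma(\tfrac{\alpha}{2}+1)}{2\,c_N^{(\alpha+2)/4}}\cdot\frac{N^{1-\alpha/2}}{t}+\mathfrak{o}(N^{1-\alpha/2})\\
&=-\frac{\mathsf{c}\,N^{1-\alpha/2}(1-\sqrt{c_N})^2}{c_N^{(\alpha-2)/4}}\,\Gamma\!\Big(\tfrac{\alpha}{2}+1\Big)+\mathfrak{o}(N^{1-\alpha/2}),
\end{align*}
which is the assertion; tracking enough precision in the error terms of the mesoscopic heavy-tail expansion, together with its interaction with the randomness of $\zeta_{-,t}$, is where the bulk of the work lies.
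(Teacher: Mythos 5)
Your proposal is correct and follows essentially the same route as the paper: Taylor expansion of $\bar\Phi_t$ around $\bar\zeta_{-,t}$ using $\bar\Phi_t'(\bar\zeta_{-,t})=0$, a sharpened bound $\zeta_{\mathsf{e}}-\bar\zeta_{-,t}=\mathcal{O}(N^{-1/2+\epsilon/2}+N^{1-\alpha/2})$ obtained by linearizing the critical-point equation at the deterministic reference (the paper does this by re-running the $\mathsf{ZOT}_\zeta/\mathsf{FOT}_\zeta$ expansion of Lemma \ref{lem:hpbandvb}, which is your implicit-function-theorem step), and finally the mesoscopic bias expansion of $\E m_X$, which is exactly Proposition \ref{prop:expectationexpan}. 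Your leading constant $\mathsf{c}\,\Gamma(\alpha/2+1)/(2c_N^{(\alpha+2)/4})\cdot N^{1-\alpha/2}/t$ for the bias agrees with the paper's $tm_{\mathsf{shift}}$ after evaluating the Gamma integral at $\mathsf{m}_{\mathsf{mp}}(\lambda_-^{\mathsf{mp}})=(\sqrt{c_N}-c_N)^{-1}$.
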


\noindent
We shall prove the above propositions in the next subsections.
\end{proof}

\begin{proof}[Proof of Lemma \ref{lem:hpbandvb}]
Using Lemma \ref{lem:preEst} (ii), we can obtain that
$
		\Delta_m(z) = m_X(z) - \mathsf{m}_{\mathsf{mp}}^{(t)}(z) + \E (\mathsf{m}_{\mathsf{mp}}^{(t)}(z) - m_X(z))  \prec N^{-\beta}.
$
The bound for $\Delta^{(k)}_m(z)$ follows by a simple application of Cauchy integral formula.
	
In order to bound $\Delta_{\zeta}$, we first observe that
\begin{align}\label{eq:zetaminusbarzeta}
		\zeta_{\mathsf{e}} - \bar{\zeta}_{-,t} = \E \big[ (\zeta_{-,t} - \bar{\zeta}_{-,t} ) \cdot \mathbf{1}_{\Omega_\Psi} \big]  - \bar{\zeta}_{-,t} \cdot \mathbb{P}(\Omega_\Psi^c) \le N^{-\beta/2}t^{2}.
\end{align}
where the last step follows from Lemmas \ref{lem:goodpsi} and  \ref{lem:preEst} (iii). Therefore, by Lemma \ref{lem:preEst} (iii) again, we can get the desired bound for $\Delta_{\zeta}$.

Next we consider $\mathrm{Var}(\Delta_m(z))$. We first let $\mathcal{F}_k$ be the $\sigma$-field generated by the first $k$ columns of $X$. Then we define $D_k^+ := \E \big[M^{-1} (\Tr G(X,z) - \Tr G(X^{(k)},z) )\big| \mathcal{F}_{k} \big]$, $D_k^- := \E \big[M^{-1} (\Tr G(X^{(k)},z) -\Tr G(X,z) )\big| \mathcal{F}_{k-1} \big]$, and $D_k \coloneqq D_k^+ + D_k^{-}$.
By the Efron-Stein inequality, we have
\begin{align*}
	\mathrm{Var}(m_X(z))= \sum_{i = 1}^N \E (|D_i|^2) \le 2 \sum_{i = 1}^N \E \big(|D_i^+|^2\big) +  \E \big(|D_i^-|^2\big) .
\end{align*}
Using the resolvent expansion, we can obtain
\begin{equation*}
	\E \big( |D_k^+|^2\big)
	 {\le} \frac{1}{M^2}\E  \Big[ \Big|\frac{x_k^\top  G^2(X^{(k)},z) x_k}{1 + x_k^\top  G(X^{(k)},z) x_k} \Big|^2  \cdot \mathbf{1}_{|z - \lambda_M(\mathcal{S}(X^{(k)}) )| \ge ct^2}\Big] + N^{-D}
	\lesssim \frac{N^{\epsilon}}{N^2t^4},
\end{equation*}
where in the first step, we used Lemma \ref{lem:preEst} (i)  to derive, with high probability,  that  for $|z - \zeta_{\mathsf{e}}| \le \tau t^2$ with sufficiently small $\tau > 0$, there exists some sufficiently small $c > 0$,
\begin{align}
	|z - \lambda_M(\mathcal{S}(X^{(k)}) )| &\ge |\zeta_{-,t} - \lambda_M(\mathcal{S}(X)) | - |z- \zeta_{\mathsf{e}}| - |\Delta_\zeta| \notag\\
	&\quad- |\lambda_M(\mathcal{S}(X))-(1-t)\lambda_{-}^{\mathsf{mp}} |-|\lambda_M(\mathcal{S}(X^{(k)}))-(1-t)\lambda_{-}^{\mathsf{mp}} | \ge ct^2,
\label{eq:zmlsxk}\end{align}
which gives $\mathbb{P}( |z - \lambda_M(\mathcal{S}(X^{(k)}) )|  \ge ct^2) < N^{-D}$ for arbitrary large $D > 0$, and $z$ has non-zero imaginary part which yields deterministic upper bound for the random variable.
Similarly, we have $\E \big( |D_k^-|^2\big) \lesssim N^{\epsilon} /(N^2t^4)$. This establishes the bound for $\mathrm{Var}(\Delta_m(z))$.

The bound for $\mathrm{Var}(\Delta^{(k)}_m(z))$ follows by an application of Cauchy integral formula. Note that, since the contour of the Cauchy integral will cross real line, the integrand may not be well defined deterministically due to the possible singularity (although with tiny probability) of the Green function. Hence, we will need to cut off the part of the integral when the imaginary part of the variable is small. To elucidate the procedure, we will outline how to do the cutoff for the Cauchy integral representation of $\E (m_X^{(k)}(z))$ only. The one for variance can be done similarly.  
Consider $z$ that satisfies $|z - \zeta_{\mathsf{e}}|\le \tau t^2/2$ and $\Im z\geq N^{-100K_\zeta}$, we first define $\Omega_z \coloneqq \{ |z - \lambda_M(\mathcal{S}(X))| \ge ct^2 \}$. A similar argument as (\ref{eq:zmlsxk}) leads to $\mathbb{P}(\Omega_z^c) \le N^{-D}$ for arbitrary large $D > 0$. Then we may choose a contour $\omega_z \coloneqq \{z': |z' - z| = \tau t^2/10  \}$ with sufficiently small $\tau$, and set $\mathfrak{w} \coloneqq \{z': |\Im z'| \ge N^{-100K_\zeta} \}$. Then we obtain 
\begin{align}
	&\E (m_X^{(k)}(z)) = \E (m_X^{(k)}(z)\cdot \mathbf{1}_{\Omega_z} ) +  \E (m_X^{(k)}(z)\cdot \mathbf{1}_{\Omega_z^c} )= \frac{k!}{2\pi \mathrm{i}}\E\Big[\oint_{\omega} \frac{m_X(a)}{(a - z)^{k+1}} \mathrm{d}a\cdot \mathbf{1}_{\Omega_z}\Big] + N^{-D} \notag\\
	&= \frac{k!}{2\pi \mathrm{i}}\Big( \E\Big[\oint_{\omega\cap \mathfrak{w} }\frac{m_X(a)}{(a - z)^{k+1}} \mathrm{d}a\cdot \mathbf{1}_{\Omega_z}\Big] +\E\Big[ \oint_{\omega\cap \mathfrak{w}^c } \frac{m_X(a)}{(a - z)^{k+1}} \mathrm{d}a \cdot \mathbf{1}_{\Omega_z}\Big] \Big)+ N^{-D} \notag\\
	&=\frac{k!}{2\pi \mathrm{i}}\Big( \E\Big[\oint_{\omega\cap \mathfrak{w} }\frac{m_X(a)}{(a - z)^{k+1}} \mathrm{d}a\Big] +\E\Big[ \oint_{\omega\cap \mathfrak{w}^c } \frac{m_X(a)}{(a - z)^{k+1}} \mathrm{d}a \cdot \mathbf{1}_{\Omega_z}\Big] \Big)+ N^{-D} \notag\\
	&=  \frac{k!}{2\pi \mathrm{i}}\E\Big[\oint_{\omega\cap \mathfrak{w} }\frac{m_X(a)}{(a - z)^{k+1}} \mathrm{d}a\Big]  +  \mathcal{O}(N^{-50K_\zeta})+ N^{-D}. \label{081501}
\end{align}
For the remaining term, the effective imaginary part of $a$ within $\omega \cap \mathfrak{w}$ allows us to interchange $\E$ with the contour integral. Then, the upper bound for $\E (m_X(a))$ can be directly applied to estimate this term. 
Using the same cutoff of the contours, the bound for $\mathrm{Var}(\Delta^{(k)}_m(z))$ is obtained through a double integral representation together with the Cauchy-Schwarz inequality. We omit further details for brevity.

Lastly, we shall bound $\mathrm{Var}(\Delta_\zeta)$.
Since $(\lambda_M(\mathcal{S}(X)) - \zeta_{-,t})\cdot \mathbf{1}_{\Omega_\Psi} \sim t^2\cdot \mathbf{1}_{\Omega_\Psi} $ and $\Delta_\zeta \prec N^{-\beta/2}t^2$, on the event $\Omega_\Psi$,
$\lambda_M(\mathcal{S}(X)) - \zeta_{\mathsf{e}} =  \lambda_M(\mathcal{S}(X)) - \zeta_{-,t} +\Delta_\zeta  \sim t^2$ with high probability.
Using Lemma \ref{lem:esthighoderi}, the bound in the above display also implies that on the event $\Omega_\Psi$,
	\begin{align}
	m^{(k)}_{X}(\zeta_{\mathsf{e}}) \sim t^{-2k+1},\quad k \ge 1.
\label{eq:mkbound}\end{align}
Recall that $\Phi_t'(\zeta_{-,t}) = 0$, which reads
\begin{align}
(1-c_Nt m_{X}(\zeta_{-,t}))^2 - 2 c_Nt m_{X}'(\zeta_{-,t}) \cdot \zeta_{-,t} \left( 1-c_Nt m_{X}(\zeta_{-,t})\right) - c_N(1-c_N)t^2m_{X}'(\zeta_{-,t})=0.
\label{eq:zetaequation}\end{align}
Replacing $\zeta_{-,t}$ and $m_{X}(\zeta_{-,t})$ with $\zeta_{\mathsf{e}}$ and $\E[m_{X}(\hat{\zeta}_{\mathsf{e}})]$, as in the proof of Theorem \ref{thm: lambda -,t asymp}, it follows from \eqref{eq:zetaequation} that
\begin{align}
	\mathsf{ZOT}_\zeta + \mathsf{FOT}_\zeta + \mathcal{P}_\zeta(\Delta_\zeta,\{\Delta^{(k)}_m\}_{k\ge 0}) = 0,
\label{eq:zotfot}\end{align}
where the term $\mathsf{ZOT}_\zeta$ is defined as in \eqref{eq: def of ZOT},
\begin{align*}
&\mathsf{FOT}_\zeta  \coloneqq\big(   2c_N^2t^2\zeta_{\mathsf{e}} \E m'_X(\hat{\zeta}_{\mathsf{e}})  -2\mathsf{f}_m  \big) \Delta_m(\hat{\zeta}_{\mathsf{e}}) -\big( c_N(1-c_N)t^2 +  2\mathsf{f}_m \zeta_{\mathsf{e}} \big) \Delta_m^{(1)}(\hat{\zeta}_{\mathsf{e}})\\
	&\quad\quad\quad  -\big( 4\mathsf{f}_m\E m'_X(\hat{\zeta}_{\mathsf{e}})  + c_N(1-c_N)t^2\E m^{(2)}_{X}(\hat{\zeta}_{\mathsf{e}}) + 2c_N^2t^2\zeta_{\mathsf{e}}(\E m'_X(\hat{\zeta}_{\mathsf{e}}) )^2 +2\mathsf{f}_m\zeta_{\mathsf{e}} \E m^{(2)}_{X}(\hat{\zeta}_{\mathsf{e}})  \big) \Delta_\zeta
\end{align*}
with $\mathsf{f}_m \coloneqq c_Nt\big(1-c_Nt\E m_X(\hat{\zeta}_{\mathsf{e}}) \big)$, and $\mathcal{P}_\zeta(\Delta_\zeta,\Delta^{(k)}_m)$ is the collection of high order terms. Note that $\mathsf{f}_m \sim t$ and $\mathcal{P}_\zeta(\Delta_\zeta,\Delta^{(k)}_m)$ is a polynomial in $\Delta_\zeta$ and $\Delta_m^{(k)}$'s, containing monomials of order no smaller than $2$.

Hence, by Cauchy Schwarz and bounds in (\ref{081441}), one can get the following bounds
\begin{align}
	&\mathrm{Var}\Big(\mathcal{P}(\Delta_\zeta,\Delta^{(k)}_m(\hat{\zeta}_{\mathsf{e}})) \mathbf{1}_{\Omega_\Psi} \Big)
	\lesssim N^{-1/2-\beta/4} \sqrt{\mathrm{Var}(\Delta_\zeta \mathbf{1}_{\Omega_\Psi} )} + N^{-\beta/4}\mathrm{Var}(\Delta_\zeta \mathbf{1}_{\Omega_\Psi} ) +N^{-D}, \label{eq:varm} \\
	&\E \Big(\mathcal{P}(\Delta_\zeta,\Delta^{(k)}_m(\hat{\zeta}_{\mathsf{e}})) \mathbf{1}_{\Omega_\Psi} \Big) 
	 \lesssim  N^{-1/2+\epsilon/2}t^{-3} \sqrt{\mathrm{Var}(\Delta_\zeta \mathbf{1}_{\Omega_\Psi} )}+ t^{-2}\mathrm{Var}(\Delta_\zeta \mathbf{1}_{\Omega_\Psi} )+N^{-D}.\label{eq:expem}
\end{align}
Using (\ref{eq:mkbound}), we can see that the leading order term of the coefficient of $\Delta_\zeta$ in $\mathsf{FOT}_\zeta$  is 
$-2\mathsf{f}_m\zeta_{\mathsf{e}} \E(m^{(2)}_{X}(\hat{\zeta}_{\mathsf{e}}))  \sim t^{-2}.$
Therefore, we can derive from \eqref{eq:zotfot} that
\begin{align}
	C_1(t) \Delta_\zeta &= C_2(t) \Delta_m(\hat{\zeta}_{\mathsf{e}})+ C_3(t) \Delta_m^{(1)}(\hat{\zeta}_{\mathsf{e}}) + \frac{\mathsf{ZOT}_\zeta+\mathcal{P}_\zeta(\Delta_\zeta,\Delta^{(k)}_m(\hat{\zeta}_{\mathsf{e}}))}{2\mathsf{f}_m\zeta_{\mathsf{e}} \E m^{(2)}_{X}(\hat{\zeta}_{\mathsf{e}}) },
\label{eq:dzetaexpan}\end{align}
where $C_i(t), i =1,2,3$ are deterministic quantities satisfying $C_1(t) = 1 + \mathcal{O}(t)$, $C_2(t) = \mathcal{O}(t^3)$, and $C_3(t)= \mathcal{O}(t^3)$.
Multiplying $\mathbf{1}_{\Omega_\Psi}$ at both sides and then compute the variance:
\begin{align}
	\mathrm{Var}( \Delta_\zeta \mathbf{1}_{\Omega_\Psi}) &\lesssim  t^{6}\mathrm{Var}(\Delta_m(\hat{\zeta}_{\mathsf{e}}) ) + t^{6} \mathrm{Var}(\Delta^{(1)}_m(\hat{\zeta}_{\mathsf{e}}) ) + t^{4}\mathrm{Var}\big(\mathcal{P}_\zeta(\Delta_\zeta,\Delta^{(k)}_m(\hat{\zeta}_{\mathsf{e}}))\big) \notag\\
	&\lesssim N^{-1+\epsilon} + N^{-1/2-\beta/4}\sqrt{\mathrm{Var}(\Delta_\zeta \mathbf{1}_{\Omega_\Psi} )},
\label{eq:selfineVarm}\end{align}
Solving the above inequality for $\mathrm{Var}( \Delta_\zeta \mathbf{1}_{\Omega_\Psi})$ gives
$
	\mathrm{Var}( \Delta_\zeta \mathbf{1}_{\Omega_\Psi}) \lesssim N^{-1+\epsilon},
$
which completes the proof of Lemma \ref{lem:hpbandvb}.
\end{proof}

\begin{remark}[Bound $\mathsf{ZOT}_\zeta$]\label{rmk: bound ZOT}
We start with (\ref{eq:dzetaexpan}). 
Multiplying $\mathbf{1}_{\Omega_\Psi}$ at both sides and then taking expectation, we have
$ (\mathsf{ZOT}_\zeta+\E[\mathcal{P}_\zeta(\Delta_\zeta,\Delta^{(k)}_m(\hat{\zeta}_{\mathsf{e}}))\cdot \mathbf{1}_{\Omega_\Psi}])/(2\mathsf{f}_m\zeta_{\mathsf{e}} \E m^{(2)}_{X}(\hat{\zeta}_{\mathsf{e}}) ) + \mathcal{O}(N^{-D}) = 0$.
	Using (\ref{eq:expem}) together with the variance bound for $\Delta_\zeta \mathbf{1}_{\Omega_\Psi}$ in Lemma \ref{lem:hpbandvb}, we can obtain that $\mathsf{ZOT}_\zeta = \mathcal{O}(N^{-1+\eps}t^{-3})$.
	By the fact $t \gg N^{(\alpha - 4)/32}$, it follows that $\mathsf{ZOT}_\zeta = \mathcal{O}(N^{-\alpha/4-(4-\alpha)/8})$.
\end{remark}

\subsection{Proof of Proposition \ref{prop:lambdaminustdis}}

Proposition \ref{prop:lambdaminustdis} follows from Lemma \ref{lem:mmpcalcualtion} and the following theorem together with some simple algebraic calculation. Recall that $\Delta_m(\hat{\zeta}_{\mathsf{e}}) = m_X(\hat{\zeta}_{\mathsf{e}}) - \E (m_X(\hat{\zeta}_{\mathsf{e}}))$.
\begin{theorem}[CLT of the linear eigenvalue statistics of $\mathcal{S}(X)$]\label{thm:flucation}
For any $2 < \alpha < 4$, 
\begin{align*}
		\frac{ N^{\alpha/4}t\Delta_m(\hat{\zeta}_{\mathsf{e}})}{\sigma_m} \Rightarrow\mathcal{N}(0,1),	
\end{align*}
where
\begin{align*}
	\sigma_m^2 &\coloneqq \mathsf{c}t^2 c_N\int_0^\infty \int_0^\infty  \partial_z \partial_{z'} \Big\{  \frac{e^{-s -s' - sc_N\mathsf{m}_{\mathsf{mp}}^{(t)}(z) - s'c_N\mathsf{m}_{\mathsf{mp}}^{(t)}(z') }}{ss'}\\
		&\quad\times \Big(\big(s \mathsf{m}_{\mathsf{mp}}^{(t)}(z)+s' \mathsf{m}_{\mathsf{mp}}^{(t)}(z')\big)^{\alpha/2} - \big(s \mathsf{m}_{\mathsf{mp}}^{(t)}(z)\big)^{\alpha/2}-\big(s' \mathsf{m}_{\mathsf{mp}}^{(t)}(z')\big)^{\alpha/2} \Big) \Big\} \Big|_{z=z' = \hat{\zeta}_{\mathsf{e}}} \mathrm{d}s \mathrm{d}s'.
\end{align*}
\end{theorem}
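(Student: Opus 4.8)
The plan is to prove Theorem~\ref{thm:flucation} by a martingale--difference expansion that isolates the fluctuations carried by the large entries of $X$, adapting the fixed-$z$ CLT of \cite{BM, Malysheva} to the near-edge location $\hat{\zeta}_{\mathsf{e}}$ and to the decomposition $X=\mathsf{B}+\mathsf{C}$. We work on the good event $\Omega_\Psi$, so that $\mathsf{C}=\sum_{(i,j)\in\mathcal{E}}c_{ij}e_ie_j^\top$ with $|\mathcal{E}|\le N^{1-\epsilon_\alpha}$, and we use Lemma~\ref{lem:preEst} to record that $\hat{\zeta}_{\mathsf{e}}$ lies just below $\lambda_M(\mathcal{S}(X))$ at distance $\sim t^2$; in particular $\hat{\zeta}_{\mathsf{e}}$ is real up to $O(N^{-100K_\zeta})$ and $m_X^{(k)}(\hat{\zeta}_{\mathsf{e}})\sim t^{-2k+1}$, so the claim is effectively a real CLT at a spectral point only polynomially in $N^{-\epsilon_a}$ away from macroscopic.

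The first step is a sequence of reductions. Using $\mathrm{rank}(\mathsf{C})\le|\mathcal{E}|=o(N)$ together with the local law and $\eta_\ast$-regularity for $\mathsf{B}$ (Lemma~\ref{lem: local law for BB^T}, Proposition~\ref{prop:etastarRegular}), one checks that the fluctuation of $m_{\mathsf{B}}(\hat{\zeta}_{\mathsf{e}})$ is $\prec (Nt^2)^{-1}$, hence $\ll N^{-\alpha/4}/t$ by~\eqref{081401}; since the deterministic biases cancel, $t\,\Delta_m(\hat{\zeta}_{\mathsf{e}})$ therefore agrees up to $O_{\prec}(N^{-\alpha/4-\epsilon_a})$ with $t\bigl[(m_X-m_{\mathsf{B}})(\hat{\zeta}_{\mathsf{e}})-\E(m_X-m_{\mathsf{B}})(\hat{\zeta}_{\mathsf{e}})\bigr]$, so that only the randomness of the $\psi$'s and of the $c$'s is relevant. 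I would also truncate the $c_{ij}$'s at a level $N^{C}$ — the largest entries produce outlier eigenvalues escaping to $+\infty$ and thus contribute negligibly to $m_X(\hat{\zeta}_{\mathsf{e}})$ — and verify that the medium part $\mathsf{B}$ enters the per-column increments below only at lower order.

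The core step is the CLT itself. Revealing $X$ column by column with $\mathcal{F}_k=\sigma(\text{columns }1,\dots,k)$ and $D_k\coloneqq(\E[\,\cdot\mid\mathcal{F}_k]-\E[\,\cdot\mid\mathcal{F}_{k-1}])\,m_X(\hat{\zeta}_{\mathsf{e}})$ gives $\Delta_m(\hat{\zeta}_{\mathsf{e}})=\sum_k D_k$, and the rank-one update $\mathcal{S}(X)=\mathcal{S}(X^{[k]})+x_kx_k^\top$ shows that $MD_k$ equals, up to sign, $(\E[\,\cdot\mid\mathcal{F}_k]-\E[\,\cdot\mid\mathcal{F}_{k-1}])\,\partial_z\log\bigl(1+x_k^\top G(X^{[k]},\hat{\zeta}_{\mathsf{e}})x_k\bigr)$; only the $\sim|\mathcal{E}|$ columns carrying a large entry contribute at leading order, and for such a column (large entry of magnitude $c$ in row $i$), replacing $G(X^{[k]},\cdot)$ by its deterministic equivalent $\mathsf{m}_{\mathsf{mp}}^{(t)}(\cdot)\,\mathrm{Id}$ (Proposition~\ref{prop: resolvent entry size for X=B+C}) gives $D_k\approx -\frac1M\bigl[\partial_z\log(A_0+c^2\mathsf{m})-\partial_z\log A_0\bigr]$ with $\mathsf{m}=\mathsf{m}_{\mathsf{mp}}^{(t)}(\hat{\zeta}_{\mathsf{e}})$ and $A_0=1+(1-t)c_N\mathsf{m}$. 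I would then run a martingale CLT: (a) verify the Lyapunov/Lindeberg bound $\sum_k\E|D_k|^{3}=\mathfrak{o}\bigl((\sum_k\E D_k^{2})^{3/2}\bigr)$ using the tail bound on the $c_{ij}$'s and the near-edge size bounds on $G(X^{[k]},\hat{\zeta}_{\mathsf{e}})$; and (b) show that $\sum_k\E[D_k^{2}\mid\mathcal{F}_{k-1}]$ concentrates around a deterministic limit, controlling the replacement errors on the scale $N^{-\alpha/2}t^{-2}$. This is the mesoscopic analogue of the computation in \cite{BM, Malysheva} and yields $N^{\alpha/4}t\,\Delta_m(\hat{\zeta}_{\mathsf{e}})/\sigma_m\Rightarrow\mathcal{N}(0,1)$.

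It remains to identify the limiting constant. Averaging the per-column contribution over the row of the large entry and over $c$ with the tail asymptotics $\P(\sqrt N\,y_{11}>s)\sim\tfrac{\mathsf{c}}{\Gamma(1-\alpha/2)}s^{-\alpha}$ of Assumption~\ref{main assum}(i) turns $\sum_k\E[D_k^2\mid\mathcal{F}_{k-1}]$ into an integral against the intensity $\propto N^{-\alpha/2}u^{-1-\alpha/2}\,\mathrm{d}u$ of the squared magnitudes $u=c^2$; writing $\partial_z\log(A_0+u\mathsf{m})-\partial_z\log A_0=\partial_z\int_0^\infty s^{-1}e^{-sA_0}(1-e^{-su\mathsf{m}})\,\mathrm{d}s$ and using the Gamma identity $\int_0^\infty u^{-1-\alpha/2}\bigl[(1-e^{-au})+(1-e^{-bu})-(1-e^{-(a+b)u})\bigr]\mathrm{d}u=-\tfrac{2}{\alpha}\Gamma(1-\alpha/2)\bigl[(a+b)^{\alpha/2}-a^{\alpha/2}-b^{\alpha/2}\bigr]$ — which supplies both the exponent $\alpha/2$ (the squared-entry tail) and the subtractions needed for convergence at small $u$ — recasts the result in the stated closed form, the two spectral arguments $z,z'$ and the operators $\partial_z\partial_{z'}$ coming from evaluating $\mathrm{Cov}(\Delta_m(z),\Delta_m(z'))$ at $z=z'=\hat{\zeta}_{\mathsf{e}}$. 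The main obstacle is precisely this combination of step~(b) and the closed-form identification: tracking every lower-order and cross term at the near-edge energy $\hat{\zeta}_{\mathsf{e}}$, where the local laws for $\mathsf{B}$ and $X$ are only polynomially-in-$N^{-\epsilon_a}$ away from macroscopic, while getting the Gamma-function constant exactly right — all for heavy-tailed entries, a single one of which has infinite variance before truncation, which is what forces the martingale/truncation route instead of a naive second-moment computation.
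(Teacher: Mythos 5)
Your overall skeleton --- truncate, reveal $X$ column by column, apply the martingale CLT, and identify the limit via a stable-law Gamma identity --- matches the paper's, and your final identity is the right one: since $(1-e^{-au})+(1-e^{-bu})-(1-e^{-(a+b)u})=(1-e^{-au})(1-e^{-bu})$, your Gamma computation is exactly the Poisson covariance integral and does reproduce the $(a+b)^{\alpha/2}-a^{\alpha/2}-b^{\alpha/2}$ structure of $\sigma_m^2$. Where you genuinely diverge is in how the per-column conditional variance is produced. The paper never isolates the $\mathsf{C}$-entries: it truncates only at the much higher level $\sqrt{N}|x_{ij}|\le N^{\vartheta}$ with $\vartheta=1/4+1/\alpha+\epsilon_\vartheta$ (Lemma \ref{lem:truncation}), writes $\E_{\tilde x_k}f_k$ via $w^{-1}=\int_0^\infty e^{-sw}\,\mathrm{d}s$ as a product of characteristic functions $\prod_j\phi_N(-\mathrm{i}s[G_k]_{jj})$ of the squared entries, and reads the $\alpha/2$-power directly off the expansion $\phi_N(\lambda)=1-\mathrm{i}(1-t)\lambda/N+\mathsf{c}(\mathrm{i}\lambda)^{\alpha/2}/N^{\alpha/2}+\cdots$ (Lemma \ref{lem:truncationprop}, Proposition \ref{prop:computaionofkernel}), treating all entry scales on the same footing after the off-diagonal quadratic forms are removed in Proposition \ref{prop:removing off-diagonal}. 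Your route treats the large entries as a Poisson cloud and integrates $\partial_z\log(A_0+u\mathsf{m})-\partial_z\log A_0$ against the tail intensity; this is a cleaner probabilistic picture, but it pushes the entire difficulty into the reduction step.

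That reduction is where the gap lies. You assert that the fluctuation of $m_{\mathsf B}(\hat\zeta_{\mathsf e})$ is $\prec (Nt^2)^{-1}$ ``by the local law and $\eta_\ast$-regularity''. Neither the bound nor its justification holds. Lemma \ref{lem: local law for BB^T} only gives $|m_{\mathsf B}-\mathsf{m}_{\mathsf{mp}}^{(t)}|\prec N^{-\epsilon_b}$, which is vastly larger than the target scale $N^{-\alpha/4}$ and says nothing about fluctuations at that order. Moreover the claimed bound is false: $\mathsf B$ still has a heavy truncated tail, and counting entries with $|\mathsf B_{ij}|\in[a,2a]$ (there are $\asymp N^{2-\alpha/2}a^{-\alpha}$ of them, each contributing variance $\asymp a^4$ to $\Tr G$) gives $\mathrm{Var}(m_{\mathsf B})\asymp N^{-\alpha/2-\epsilon_b(4-\alpha)}$, i.e.\ a standard deviation $\asymp N^{-\alpha/4-\epsilon_b(4-\alpha)/2}$, which for $\alpha$ near $2$ is close to $N^{-1/2}$ and far above $N^{-1+4\epsilon_a}$. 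The conclusion you need (smallness relative to $N^{-\alpha/4}$ by the factor $N^{-\epsilon_b(4-\alpha)/2}$) is true, but establishing it is a variance computation for a bounded-support heavy-tailed matrix of exactly the same nature as the CLT you are proving --- at which point the characteristic-function expansion that handles all scales at once is doing the work anyway. Two further points you would have to supply: (i) in your per-column formula the entry $G_{ii}(X^{[k]},\hat\zeta_{\mathsf e})$ sits at a row $i\in\mathcal D_r$, where Proposition \ref{prop: resolvent entry size for X=B+C} only yields $|G_{ii}|\prec t^{-2}$ rather than $G_{ii}\approx\mathsf{m}_{\mathsf{mp}}^{(t)}$, so you must argue that after deleting column $k$ the row $i$ typically carries no other large entry; (ii) the off-diagonal part $\sum_{j\ne j'}x_{jk}x_{j'k}[G_k]_{jj'}$ of the quadratic form is not obviously lower order --- controlling it is precisely the content of Proposition \ref{prop:removing off-diagonal} and cannot be skipped.
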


To prove Theorem \ref{thm:flucation}, we will work on the truncated matrix $\tilde{X} = (\tilde{x}_{ij})$ with $\tilde{x}_{ij} = x_{ij}\mathbf{1}_{\sqrt{N}|x_{ij}| \le N^{\vartheta}}$ and $\vartheta =  1/4+1/\alpha + \epsilon_{\vartheta}$ such that $ N^{-\alpha\epsilon_{\vartheta}}\ll t$ and $\epsilon_{\vartheta}< (3\alpha-5)/(4\alpha) $. It will become clear from the following lemma that the fluctuations of $m_X$ and $m_{\tilde{X}}$ are asymptotically the same.

\vspace{1ex}
\begin{lemma}\label{lem:truncation}
	We have $N^{\alpha/4}t\big(m_X(\hat{\zeta}_{\mathsf{e}}) - m_{\tilde{X}}(\hat{\zeta}_{\mathsf{e}})\big) = \mathfrak{o}_p(1)$.
\end{lemma}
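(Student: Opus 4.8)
The plan is to view $\tilde X$ as $X$ with a small \emph{number} of very large entries deleted, so that $\mathcal{S}(X)$ and $\mathcal{S}(\tilde X)$ differ by a perturbation of small rank — its operator norm may well be large, and this is precisely why the rank, rather than the norm, must be exploited — and then to estimate the resulting change of the Stieltjes transform using that $\hat{\zeta}_{\mathsf{e}}$ sits at distance of order $t^{2}$ to the left of both spectra.

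First I would bound the rank of the perturbation. Since $x_{ij}$ has the law of $y_{ij}\mathbf 1_{|y_{ij}|\ge N^{-1/2-\epsilon_a}}$, the entries of $\Delta\coloneqq X-\tilde X$ are $x_{ij}\mathbf 1_{\sqrt N|x_{ij}|>N^{\vartheta}}$, each present with probability $\lesssim N^{-\alpha\vartheta}$ by Assumption \ref{main assum}(i). A Chernoff bound (or a union bound, in the regime where the expected count is $\mathfrak o(1)$) exactly as in the proof of Lemma \ref{lem:goodpsi} gives $\#\{(i,j):\Delta_{ij}\ne 0\}\prec N^{(2-\alpha\vartheta)_{+}}$. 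Writing $XX^{*}-\tilde X\tilde X^{*}=\Delta\tilde X^{*}+\tilde X\Delta^{*}+\Delta\Delta^{*}$ and using $\vartheta=\tfrac14+\tfrac1\alpha+\epsilon_{\vartheta}$, the perturbation $R\coloneqq\mathcal{S}(X)-\mathcal{S}(\tilde X)$ therefore has $\text{rank}(R)\le 3\,\text{rank}(\Delta)\prec N^{(1-\alpha/4-\alpha\epsilon_{\vartheta})_{+}}$.

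Next I would locate the left edge of $\mathcal{S}(\tilde X)$. Because $\alpha<4$ we have $\vartheta>\tfrac12$, so every $\mathsf B$-entry survives the truncation and $\tilde X=\mathsf B+\mathsf C'$, where $\mathsf C'$ is $\mathsf C$ truncated at level $N^{-1/2+\vartheta}$; all nonzero entries of $\mathsf C'$ exceed $N^{-\epsilon_b}$ in modulus, and on $\Omega_{\Psi}$ the matrix $\mathsf C'$ has at most $N^{1-\epsilon_{\alpha}}$ nonzero columns. Hence removing from $\tilde X$ the rows (or columns) that carry an entry of modulus $>N^{-\epsilon_b}$ leaves a minor of $\mathsf B$, and the matrix-minor argument of Lemma \ref{lem: left edge rigidity XX^T}, applied verbatim with $\tilde X$ in place of $X$, yields $|\lambda_{M}(\mathcal{S}(\tilde X))-(1-t)\lambda_{-}^{\mathsf{mp}}|\prec N^{-2\epsilon_b}$ on $\Omega_{\Psi}$; the same bound for $\lambda_{M}(\mathcal{S}(X))$ is Lemma \ref{lem: left edge rigidity XX^T} itself. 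On the other hand $\bar\zeta_{-,t}=(1-t)\lambda_{-}^{\mathsf{mp}}-\sqrt{c_{N}}\,t^{2}$ (from \eqref{eq:barzetat} and \eqref{081420}), $|\zeta_{\mathsf{e}}-\bar\zeta_{-,t}|=\mathfrak o(t^{2})$ (from \eqref{eq:zetaminusbarzeta} and Lemma \ref{lem:preEst}(iii)), and $N^{-2\epsilon_b}\ll t^{2}=N^{-4\epsilon_a}$ since $\epsilon_b>2\epsilon_a$ by \eqref{081401}. Consequently, with high probability, $\Re\hat{\zeta}_{\mathsf{e}}=\zeta_{\mathsf{e}}$ lies to the left of $\min\{\lambda_{M}(\mathcal{S}(X)),\lambda_{M}(\mathcal{S}(\tilde X))\}$ at distance $\gtrsim t^{2}$.

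It then remains to apply the rank inequality. On the intersection of the preceding high-probability events, $\|F^{\mathcal{S}(X)}-F^{\mathcal{S}(\tilde X)}\|_{\infty}\le \text{rank}(R)/M$ by \cite[Theorem A.44]{BS10}, where $F^{\mathcal{S}(\cdot)}$ denotes the normalized empirical eigenvalue distribution function, and $F^{\mathcal{S}(X)}-F^{\mathcal{S}(\tilde X)}$ is supported in $I\coloneqq[\lambda_{M}(\mathcal{S}(X))\wedge\lambda_{M}(\mathcal{S}(\tilde X)),\ \lambda_{1}(\mathcal{S}(X))\vee\lambda_{1}(\mathcal{S}(\tilde X))]$. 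Integrating by parts, $m_{X}(\hat{\zeta}_{\mathsf{e}})-m_{\tilde X}(\hat{\zeta}_{\mathsf{e}})=\int_{I}(x-\hat{\zeta}_{\mathsf{e}})^{-2}\,(F^{\mathcal{S}(X)}-F^{\mathcal{S}(\tilde X)})(x)\,\mathrm{d}x$; since $\Re\hat{\zeta}_{\mathsf{e}}$ lies at distance $\gtrsim t^{2}$ to the left of $I$, one has $\int_{I}|x-\hat{\zeta}_{\mathsf{e}}|^{-2}\,\mathrm{d}x\lesssim t^{-2}$, and therefore $|m_{X}(\hat{\zeta}_{\mathsf{e}})-m_{\tilde X}(\hat{\zeta}_{\mathsf{e}})|\prec \text{rank}(R)\,M^{-1}t^{-2}\prec (N^{-\alpha/4-\alpha\epsilon_{\vartheta}}+N^{-1})\,t^{-2}$. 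Multiplying by $N^{\alpha/4}t$ gives $(N^{-\alpha\epsilon_{\vartheta}}+N^{\alpha/4-1})\,t^{-1}=\mathfrak o(1)$, because $\alpha<4$ and $N^{-\alpha\epsilon_{\vartheta}}\ll t$ by the choice of $\epsilon_{\vartheta}$; on the complementary $\mathfrak o(1)$-probability event the left side is deterministically $\le 2N^{\alpha/4}(\Im\hat{\zeta}_{\mathsf{e}})^{-1}=\mathcal O(N^{\alpha/4+100K_{\zeta}})$, so $N^{\alpha/4}t(m_{X}(\hat{\zeta}_{\mathsf{e}})-m_{\tilde X}(\hat{\zeta}_{\mathsf{e}}))=\mathfrak o_{p}(1)$, as claimed. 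The substantive point is the use of $\text{rank}(R)$ in place of $\|R\|$; the only somewhat delicate input is pinning the left edge of $\mathcal{S}(\tilde X)$ at distance $\sim t^{2}$ from $\hat{\zeta}_{\mathsf{e}}$, which however is a direct reprise of the minor argument already carried out for $X$.
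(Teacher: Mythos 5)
Your argument is correct and follows essentially the same route as the paper's (one-line) proof: a concentration bound for the number of entries exceeding the truncation level $N^{\vartheta}$ controls the rank of $\mathcal{S}(X)-\mathcal{S}(\tilde X)$, the rank inequality converts this into a bound on $m_X-m_{\tilde X}$ with a factor $\mathrm{dist}(\hat{\zeta}_{\mathsf{e}},\mathrm{spec})^{-1}\lesssim t^{-2}$ coming from Lemma \ref{lem:preEst}(i), and the exponents close because $N^{-\alpha\epsilon_\vartheta}\ll t$ and $\alpha<4$. Your only additions — the Chernoff bound in place of Bennett's inequality, and the matrix-minor argument to pin $\lambda_M(\mathcal{S}(\tilde X))$ at distance $\gtrsim t^2$ from $\hat{\zeta}_{\mathsf{e}}$ (a point the paper leaves implicit) — are faithful to the paper's toolkit and fill a genuine gap in its one-line justification.
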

\begin{proof}
This lemma simply follows from the rank inequality and Bennett's inequality together with Lemma \ref{lem:preEst} (i).
\end{proof}

\begin{proof}[Proof of Theorem \ref{thm:flucation}]
By Lemma \ref{lem:truncation}, it is enough to consider the convergence (in distribution) of 
$
	\mathcal{M}_N(\tilde{X}) \coloneqq  N^{\alpha/4}t\big(m_{\tilde{X}}(\hat{\zeta}_{\mathsf{e}}) - \E m_{\tilde{X}}(\hat{\zeta}_{\mathsf{e}}) \big).
$
We will use the Martingale approach. To this end, we define $\mathcal{F}_k$ as the sigma-algebra generated by the first $k$ columns of $\tilde{X}$. Denoting conditional expectation w.r.t. $\mathcal{F}_k$ by $\E_k$, 
we obtain the following martingale difference decomposition of $\mathcal{M}_N(\tilde{X})$
\begin{align*}
\mathcal{M}_N(\tilde{X})= \sum_{k=1}^N N^{\alpha/4}t(\E_k - \E_{k-1}) \big(m_{\tilde{X}}(\hat{\zeta}_{\mathsf{e}}) - m_{\tilde{X}^{(k)}}(\hat{\zeta}_{\mathsf{e}}) \big).
\end{align*}
Our aim is to show that $\mathcal{M}_N(\tilde{X})$ converges in distribution to a Gaussian distribution $\mathcal{N}(0,\sigma_m^2)$ via the martingale CLT.

\begin{theorem}[Martingale CLT, Theorem A.3 of \cite{BM}] \label{Martingale CLT}
	Let $(\mathcal{F}_k)_{k \ge 0}$ be a filtration such that $\mathcal{F}_0 = \{\emptyset, \Omega \}$ and let $(\mathcal{W}_k)_{k\ge 0}$ be a square-integrable complex-valued martingale starting at zero w.r.t.  this filtration. For $k \ge  1$, we define the random variables $Y_k \coloneqq \mathcal{W}_{k} - \mathcal{W}_{k-1}$, $v_k \coloneqq \E_k[|Y_k|^2]$, $\tau_k \coloneqq \E_k [Y_k^2]$, and we also define $v(N) \coloneqq \sum_{k \ge 1}v_k$, $\tau(N) \coloneqq \sum_{k\ge 1} \tau_k$, $\sum_{k \ge1}\E [ |Y_k^2| \mathbf{1}_{|Y_k|\ge \varepsilon}]$.
Suppose that for some constants $v \ge 0$, $\tau \in \mathbb{C}$, and for each $\varepsilon > 0$, $v(N) \overset{\mathbb{P}}{\to} v$, $\tau(N) \overset{\mathbb{P}}{\to} \tau$, $L(\varepsilon,N) \to 0$.
Then, the martingale $\mathcal{W}_{N}$ converges in distribution to a centered complex Gaussian variable $\mathcal{Z}$ such that $\E (|\mathcal{Z}|^2) = v$ and $\E (\mathcal{Z}^2) = \tau$ as $N\to\infty$.
\end{theorem}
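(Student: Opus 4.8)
The plan is to deduce the complex-valued statement from a one-parameter family of real scalar martingale central limit theorems --- one for each $\lambda\in\mathbb{C}$ --- which I would then establish by McLeish's characteristic-function method. Fix $\lambda\in\mathbb{C}$ and set $\xi_N\coloneqq\Re(\bar\lambda\,\mathcal{W}_N)=\sum_{k=1}^N X_{N,k}$, where $X_{N,k}\coloneqq\Re(\bar\lambda\,Y_k)$. Since $\mathcal{W}_k,\mathcal{W}_{k-1}$ are $\mathcal{F}_k$-measurable and $\E[Y_k\mid\mathcal{F}_{k-1}]=0$, the family $\{X_{N,k}\}_{1\le k\le N}$ is a real martingale difference array for $(\mathcal{F}_k)$; moreover, $Y_k$ being $\mathcal{F}_k$-measurable gives $v_k=|Y_k|^2$ and $\tau_k=Y_k^2$. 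The polarization identity
\begin{align*}
X_{N,k}^2=\tfrac14\big(\bar\lambda\,Y_k+\lambda\,\overline{Y_k}\big)^2=\tfrac12\big(|\lambda|^2|Y_k|^2+\Re(\bar\lambda^2 Y_k^2)\big)
\end{align*}
then shows $\sum_{k=1}^N X_{N,k}^2=\tfrac12\big(|\lambda|^2 v(N)+\Re(\bar\lambda^2\tau(N))\big)$, so the hypotheses $v(N)\overset{\mathbb{P}}{\to}v$ and $\tau(N)\overset{\mathbb{P}}{\to}\tau$ give $\sum_k X_{N,k}^2\overset{\mathbb{P}}{\to}s_\lambda^2\coloneqq\tfrac12\big(|\lambda|^2 v+\Re(\bar\lambda^2\tau)\big)$, which is automatically $\ge 0$. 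Since $|X_{N,k}|\le|\lambda|\,|Y_k|$, the Lindeberg hypothesis yields $\sum_k\E\big[X_{N,k}^2\mathbf{1}_{|X_{N,k}|\ge\varepsilon}\big]\le|\lambda|^2 L(\varepsilon/|\lambda|,N)\to0$ for all $\varepsilon>0$, and hence, by a union bound and Chebyshev, $\max_{1\le k\le N}|X_{N,k}|\overset{\mathbb{P}}{\to}0$.

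Next I would prove $\xi_N\Rightarrow\mathcal{N}(0,s_\lambda^2)$ by McLeish's argument. The core is that, writing $T_N(t)\coloneqq\prod_{k=1}^N(1+\mathrm{i}tX_{N,k})$, successive conditioning on $\mathcal{F}_{N-1},\mathcal{F}_{N-2},\dots$ together with $\E[X_{N,k}\mid\mathcal{F}_{k-1}]=0$ gives $\E[T_N(t)]=1$, while the elementary identity $\mathrm{e}^{\mathrm{i}x}=(1+\mathrm{i}x)\exp\!\big(-\tfrac12 x^2+r(x)\big)$ with $|r(x)|\le C|x|^3$ for $|x|\le\tfrac12$ gives, on the event $\{\max_k|tX_{N,k}|\le\tfrac12\}$,
\begin{align*}
\mathrm{e}^{\mathrm{i}t\xi_N}&=T_N(t)\exp\Big(-\tfrac12 t^2\textstyle\sum_k X_{N,k}^2+\sum_k r(tX_{N,k})\Big),\\
\Big|\textstyle\sum_k r(tX_{N,k})\Big|&\le C|t|^3\big(\max_k|X_{N,k}|\big)\textstyle\sum_k X_{N,k}^2.
\end{align*}
Because $\sum_k X_{N,k}^2\to s_\lambda^2$, $\max_k|X_{N,k}|\to0$, and the complementary event has vanishing probability, one concludes $\E[\mathrm{e}^{\mathrm{i}t\xi_N}]\to\mathrm{e}^{-t^2 s_\lambda^2/2}$ for each $t\in\mathbb{R}$, whence $\xi_N\Rightarrow\mathcal{N}(0,s_\lambda^2)$ by L\'{e}vy's continuity theorem. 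To make the expectations in this step legitimate under only the stated hypotheses, one first replaces $X_{N,k}$ by $X_{N,k}\mathbf{1}_{|X_{N,k}|\le\varepsilon_N}$ for a slowly vanishing $\varepsilon_N$ (recentering to restore the martingale-difference property; the Lindeberg bound shows $\xi_N$ changes by $o_{\mathbb{P}}(1)$), and then localizes onto the stopping time $\nu_N\coloneqq\inf\{k:\sum_{j\le k}\E[X_{N,j}^2\mid\mathcal{F}_{j-1}]>s_\lambda^2+\delta\}$, on which the relevant quadratic variations --- hence $\E[|T_N(t)|^2]$ --- are controlled; the localization is lossless because $\mathbb{P}(\nu_N\le N)\to0$.

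For the final assembly, write $\lambda=a+\mathrm{i}b$, so that $\xi_N=a\,\Re\mathcal{W}_N+b\,\Im\mathcal{W}_N$ ranges over all linear combinations of $(\Re\mathcal{W}_N,\Im\mathcal{W}_N)$ as $\lambda$ varies. By the Cram\'{e}r--Wold device, $(\Re\mathcal{W}_N,\Im\mathcal{W}_N)$ converges in distribution to a centered bivariate Gaussian $(A,B)$, and matching $s_{a+\mathrm{i}b}^2=a^2\E[A^2]+2ab\,\E[AB]+b^2\E[B^2]$ with the formula for $s_\lambda^2$ forces $\E[A^2]=\tfrac12(v+\Re\tau)$, $\E[B^2]=\tfrac12(v-\Re\tau)$, $\E[AB]=\tfrac12\Im\tau$. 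Setting $\mathcal{Z}\coloneqq A+\mathrm{i}B$ therefore gives $\mathcal{W}_N\Rightarrow\mathcal{Z}$ with $\E|\mathcal{Z}|^2=\E[A^2]+\E[B^2]=v$ and $\E[\mathcal{Z}^2]=\E[A^2]-\E[B^2]+2\mathrm{i}\,\E[AB]=\tau$, as claimed.

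I expect the main obstacle to be not conceptual but the integrability bookkeeping inside McLeish's step: the hypotheses supply $\sum_k X_{N,k}^2\to s_\lambda^2$ only in probability, with no a priori $L^1$ control, so one genuinely needs the truncation-at-a-vanishing-level together with the stopping-time localization sketched above in order to make $\E[T_N(t)]=1$ and $\E[|T_N(t)|^2]=O(1)$ meaningful on a high-probability event --- and it is precisely here, not merely for negligibility, that the full Lindeberg condition $L(\varepsilon,N)\to0$ is used. This truncation argument is entirely standard; alternatively, once the reduction in the first paragraph is in place, the scalar CLT may be quoted verbatim from classical martingale limit theory. If the intended convention were instead $v_k=\E[|Y_k|^2\mid\mathcal{F}_{k-1}]$, $\tau_k=\E[Y_k^2\mid\mathcal{F}_{k-1}]$, one would simply insert the routine step $\sum_k\big(X_{N,k}^2-\E[X_{N,k}^2\mid\mathcal{F}_{k-1}]\big)\overset{\mathbb{P}}{\to}0$ (a martingale with variance bounded via the same Lindeberg estimate) before invoking the identical argument.
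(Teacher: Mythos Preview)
The paper does not supply its own proof of this statement: it is quoted verbatim as Theorem~A.3 of \cite{BM} and used as a black-box tool in the proof of Theorem~\ref{thm:flucation}. There is therefore nothing in the present paper to compare your proposal against.

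That said, your argument is a standard and correct route to the complex-valued martingale CLT: reduce to the real scalar case via Cram\'er--Wold, apply McLeish's characteristic-function method, and handle the integrability bookkeeping by the usual truncation-plus-stopping-time localization. Your observation that, as literally stated (with $\E_k$ denoting conditioning on $\mathcal{F}_k$ and $Y_k$ being $\mathcal{F}_k$-measurable), one has $v_k=|Y_k|^2$ and $\tau_k=Y_k^2$ is correct, and your remark covering the alternative convention $\E_{k-1}$ is prudent since that is the form in which the result is typically stated and proved in the martingale-CLT literature.
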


We want to apply Theorem \ref{Martingale CLT} with setting $\mathcal{W}_N = \mathcal{M}_N(\tilde{X})$.
Using the resolvent identity,
\begin{align*}
	 \mathcal{M}_N(\tilde{X}) = \sum_{k=1}^N Y_k(\hat{\zeta}_{\mathsf{e}}) \coloneqq  \sum_{k=1}^N \frac{t}{N^{1- \alpha/4}}(\E_k - \E_{k-1}) \frac{\tilde{x}_k^\top (G(\tilde{X}^{(k)}, \hat{\zeta}_{\mathsf{e}}))^2 \tilde{x}_k}{1 + \tilde{x}_k^\top G(\tilde{X}^{(k)}, \hat{\zeta}_{\mathsf{e}})\tilde{x}_k}.
\end{align*}
First note that $|Y_k(\hat{\zeta}_{\mathsf{e}}) | \lesssim N^{-1+\alpha/4}t^{-1}$ with high probability.
We also have the deterministic upper bound for $Y_k(\hat{\zeta}_{\mathsf{e}})$ since $\hat{\zeta}_{\mathsf{e}}$ possesses effective imaginary part. Combining these two facts, we can verify that the $L(\varepsilon,N)$ goes to 0.

In order to conclude the proof via Theorem \ref{Martingale CLT}, we need to check convergences of $v(N)$ and $\tau(N)$. This follows from Propositions \ref{prop:removing off-diagonal} and \ref{prop:computaionofkernel} below.
\end{proof}


\begin{proposition}\label{prop:removing off-diagonal}
	Let 
\begin{align*}
	\tilde{Y}_k(\zeta) \coloneqq \frac{t}{N^{1- \alpha/4}}(\E_k - \E_{k-1}) f_k(\zeta)  \coloneqq \frac{t}{N^{1- \alpha/4}}(\E_k - \E_{k-1})  \frac{\tilde{x}_k^\top (G(\tilde{X}^{(k)}, \zeta))^2_{\mathrm{diag}} \tilde{x}_k}{1 + \tilde{x}_k^\top (G(\tilde{X}^{(k)}, \zeta))_{\mathrm{diag}}\tilde{x}_k}.
\end{align*}
Then there exists some constant $\tau$, such that for any $\zeta, \zeta' \in \Xi(\tau) = \{ \xi \in \mathbb{C} : | \xi - \hat{\zeta}_{\mathsf{e}}| \le \tau t^2, |\Im \xi| \ge N^{-100} \}$, the summation $\sum_{k=1}^N \E_{k-1}[Y_k(\zeta)Y_k(\zeta')] -   \E_{k-1}[\tilde{Y}_k(\zeta)\tilde{Y}_k(\zeta')]$
converges in probability to $0$.
\end{proposition}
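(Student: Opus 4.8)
The plan is to split $Y_k(\zeta)=\tilde Y_k(\zeta)+R_k(\zeta)$, where
\[
R_k(\zeta)\coloneqq\frac{t}{N^{1-\alpha/4}}(\E_k-\E_{k-1})\bigl(g_k(\zeta)-f_k(\zeta)\bigr),
\qquad
g_k(\zeta)\coloneqq\frac{\tilde x_k^{\top}(G(\tilde X^{(k)},\zeta))^{2}\tilde x_k}{1+\tilde x_k^{\top}G(\tilde X^{(k)},\zeta)\tilde x_k},
\]
so that $Y_k(\zeta)Y_k(\zeta')-\tilde Y_k(\zeta)\tilde Y_k(\zeta')=R_k(\zeta)\tilde Y_k(\zeta')+\tilde Y_k(\zeta)R_k(\zeta')+R_k(\zeta)R_k(\zeta')$. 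Applying the conditional Cauchy--Schwarz inequality and then Cauchy--Schwarz for the sum over $k$, the asserted convergence follows once one proves, for every $\zeta\in\Xi(\tau)$,
\[
\sum_{k=1}^{N}\E_{k-1}\bigl|R_k(\zeta)\bigr|^{2}\ \xrightarrow{\ \P\ }\ 0
\qquad\text{and}\qquad
\sum_{k=1}^{N}\E_{k-1}\bigl|\tilde Y_k(\zeta)\bigr|^{2}=\mathcal O_{p}(1).
\]
The second estimate is a consequence of Proposition~\ref{prop:computaionofkernel} (whose proof does not use the present proposition), because $\E_{k-1}|\tilde Y_k(\zeta)|^{2}=\E_{k-1}[\tilde Y_k(\zeta)\tilde Y_k(\bar\zeta)]$ with $\bar\zeta\in\Xi(2\tau)$ (since $\hat\zeta_{\mathsf e}$ has imaginary part $N^{-100K_\zeta}\ll t^{2}$); combined with the first estimate it also gives $\sum_k\E_{k-1}|Y_k(\zeta)|^{2}=\mathcal O_{p}(1)$. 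Hence the whole problem reduces to the first estimate.

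To prove it, I would use the martingale contraction $\E_{k-1}|(\E_k-\E_{k-1})Z|^{2}\le\E_{k-1}|Z|^{2}$ and the fact that the columns of $\tilde X$ are i.i.d.\ (with $\tilde X^{(k)}$ independent of $\tilde x_k$) to obtain
\[
\E\Bigl[\sum_{k=1}^{N}\E_{k-1}\bigl|R_k(\zeta)\bigr|^{2}\Bigr]
=\sum_{k=1}^{N}\E\bigl|R_k(\zeta)\bigr|^{2}
\le\frac{t^{2}}{N^{1-\alpha/2}}\,\E\bigl|g_1(\zeta)-f_1(\zeta)\bigr|^{2},
\]
so by Markov's inequality it suffices to show $\E|g_1(\zeta)-f_1(\zeta)|^{2}=\mathfrak o(N^{1-\alpha/2}/t^{2})$. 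Writing $G=G(\tilde X^{(1)},\zeta)$, $P=\tilde x_1^{\top}G^{2}\tilde x_1$, $Q=\tilde x_1^{\top}G\tilde x_1$, $P_d=\sum_i(G^{2})_{ii}\tilde x_{i1}^{2}$, $Q_d=\sum_i G_{ii}\tilde x_{i1}^{2}$, an elementary rearrangement gives
\[
g_1-f_1=\frac{P-P_d}{1+Q}-\frac{P_d\,(Q-Q_d)}{(1+Q)(1+Q_d)},\qquad
P-P_d=\sum_{i\ne j}(G^{2})_{ij}\tilde x_{i1}\tilde x_{j1},\qquad
Q-Q_d=\sum_{i\ne j}G_{ij}\tilde x_{i1}\tilde x_{j1}.
\]

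Next I would work on a good event $\Omega_{\mathsf g}$, of probability $\ge1-N^{-D}$ for any fixed $D$, on which: $\Psi$ is good; $\mathcal S(\tilde X)$ and $\mathcal S(\tilde X^{(1)})$ are $\eta_{\ast}$-regular (inherited from Proposition~\ref{prop:etastarRegular} by the truncation/interlacing argument of Lemmas~\ref{lem: local law for BB^T}--\ref{lem: left edge rigidity XX^T} together with the rank-one bound $\lambda_M(\mathcal S(\tilde X^{(1)}))\le\lambda_M(\mathcal S(\tilde X))$); the local law of Proposition~\ref{prop: resolvent entry size for X=B+C} holds for $G(\tilde X^{(1)},\cdot)$ and $G(\tilde X^{\top},\cdot)$ on the disk $\{|\xi-\zeta|\le\tau t^{2}\}$; and $c_N/2\le\sum_i\tilde x_{i1}^{2}\le2c_N$. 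Since $\zeta\in\Xi(\tau)$ forces $\Re\zeta\le\zeta_{\mathsf e}+\tau t^{2}\le(1-t)\lambda_{-}^{\mathsf{mp}}-(\sqrt{c_N}-2\tau)t^{2}$ by Lemmas~\ref{lem:mmpcalcualtion} and~\ref{lem:preEst}(iii), choosing $\tau$ small makes $\zeta$ lie at distance $\sim t^{2}$ to the \emph{left} of the spectrum of $\mathcal S(\tilde X^{(1)})$ on $\Omega_{\mathsf g}$ (using also the rigidity in Lemma~\ref{lem: left edge rigidity XX^T}); hence $\|G\|\lesssim t^{-2}$, and, via the Schur complement identity $1+Q=-\bigl(\zeta\,G_{11}(\tilde X^{\top},\zeta)\bigr)^{-1}$ together with the local law, $|1+Q|\gtrsim1$, $|1+Q_d|\gtrsim1$, and $|(G^{2})_{ii}|=|\partial_\zeta G_{ii}|\prec t^{-2}$, so $|P_d|\prec t^{-2}$. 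Thus $|g_1-f_1|\lesssim|P-P_d|+t^{-2}|Q-Q_d|$ on $\Omega_{\mathsf g}$. Conditioning on $\tilde X^{(1)}$ and using that the entries of $\tilde x_1$ are i.i.d., symmetric, centred with $\E\tilde x_{11}^{2}\le N^{-1}$ (and $G$ complex symmetric),
\[
\E\bigl[|P-P_d|^{2}\mid\tilde X^{(1)}\bigr]\le\frac{C}{N^{2}}\sum_{i,j}|(G^{2})_{ij}|^{2}=\frac{C}{N^{2}}\sum_{a=1}^{M}\frac{1}{|\lambda_a(\mathcal S(\tilde X^{(1)}))-\zeta|^{4}},
\]
and similarly $\E[|Q-Q_d|^{2}\mid\tilde X^{(1)}]\le CN^{-2}\sum_a|\lambda_a(\mathcal S(\tilde X^{(1)}))-\zeta|^{-2}$; the decisive point is that these conditional second moments involve only $\E\tilde x_{11}^{2}\le N^{-1}$ and never the much larger, heavy-tailed $\E\tilde x_{11}^{4}$, precisely because $P-P_d$ and $Q-Q_d$ are purely off-diagonal. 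On $\Omega_{\mathsf g}$, Lemma~\ref{lem:esthighoderi} (with $k=4$, resp.\ $k=2$, and $\Re\zeta$ at distance $\sim t^{2}$ to the left of the edge) bounds these sums by $\lesssim Nt^{-5}$ and $\lesssim Nt^{-1}$, whence $\E[|g_1-f_1|^{2}\mathbf 1_{\Omega_{\mathsf g}}]\lesssim t^{-5}/N$; on $\Omega_{\mathsf g}^{c}$ the crude deterministic bound $|g_1-f_1|\le N^{C}$ (valid since $|\Im\zeta|\ge N^{-100}$ and $|\tilde x_{i1}|\le N^{\vartheta-1/2}$) times $\P(\Omega_{\mathsf g}^{c})\le N^{-D}$ is negligible for $D$ large. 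Therefore $\E|g_1-f_1|^{2}\lesssim t^{-5}/N$ and $\sum_k\E|R_k(\zeta)|^{2}\lesssim t^{-3}N^{-2+\alpha/2}$, which tends to $0$: indeed $t=N^{-c\epsilon_a}$ for a fixed $c$, and by the choice of $\epsilon_a$ in \eqref{081401} one has $t^{3}\gg N^{-(4-\alpha)/2}$ --- here the assumption $\alpha<4$ enters essentially. This proves the first estimate, and hence the proposition.

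I expect the main obstacle to be the construction of $\Omega_{\mathsf g}$ and the a priori resolvent estimates on it: transferring $\eta_{\ast}$-regularity and the entrywise local law from $X$ to the truncated minors $\tilde X$ and $\tilde X^{(1)}$, and --- most importantly --- verifying that every $\zeta\in\Xi(\tau)$ stays at a mesoscopic distance of order $t^{2}$ to the left of the spectra of $\mathcal S(\tilde X)$ and $\mathcal S(\tilde X^{(1)})$, which is what makes $\|G\|\lesssim t^{-2}$, $|1+Q|\gtrsim1$, and the bound on $(G^{2})_{ii}$ available. Once those inputs are in place the quadratic-form estimates are routine second-moment computations, and by design the heavy-tailed fourth moments do not enter them. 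A minor bookkeeping point: the part of $\Omega_{\mathsf g}$ controlling $\sum_i\tilde x_{i1}^{2}$ depends on $\tilde x_1$ rather than only on $\tilde X^{(1)}$, so it must be used to bound the $\tilde x_1$-dependent factor $|P_d|$ and then dropped before conditioning on $\tilde X^{(1)}$.
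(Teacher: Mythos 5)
Your overall strategy is sound and genuinely different from the paper's. You split $Y_k=\tilde Y_k+R_k$ and kill the remainder in $L^2$ by a direct second-moment computation on the off-diagonal quadratic forms $P-P_d$ and $Q-Q_d$, which requires \emph{lower} bounds on $|1+Q|$, $|1+Q_d|$ and an upper bound on $P_d/(1+Q_d)$. The paper instead passes to $F_k=\log|1+\tilde x_k^{\top}G\tilde x_k|^2$ via the Cauchy integral representation of $f_k=\partial_\zeta\log(1+Q)$, and controls $\varepsilon_k=F_k-\tilde F_k$ through Jensen's inequality and the identity $|1+\eta_kJ_{k,\mathrm{diag}}||1-\eta_kJ_k|=1$, decomposing the covariance difference into six terms; the logarithm is there precisely so that no quantitative lower bound on $1+Q$ is ever needed (only $\Re Q\ge 0$) and so that everything is bounded by $\log N$. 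Your route is more elementary, and the denominators are indeed harmless here because $\Re\zeta$ sits at distance $\sim t^2$ to the \emph{left} of the spectrum, whence $\Re Q,\Re Q_d\ge 0$ and $|1+Q|,|1+Q_d|\ge 1$ deterministically on that event (this positivity argument is more robust than the Schur-complement-plus-local-law justification you cite, which only yields $|1+Q|\succ t^2$ when the relevant column index lies in $\mathcal D_c$). Citing Proposition \ref{prop:computaionofkernel} for $\sum_k\E_{k-1}|\tilde Y_k|^2=\mathcal O_p(1)$ is not circular, though strictly speaking you need the error in that proposition to be uniform in $k$ (it is, by its proof).

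There is, however, one step that fails as written: the inclusion of $\{c_N/2\le\sum_i\tilde x_{i1}^2\le 2c_N\}$ in a good event of probability $\ge 1-N^{-D}$. The truncation threshold is $N^{\vartheta}$ with $\vartheta=1/4+1/\alpha+\epsilon_\vartheta$, so a single entry can have $\tilde x_{i1}^2$ as large as $N^{2/\alpha-1/2+2\epsilon_\vartheta}\gg 1$ for $\alpha<4$, and this happens for some $i$ with probability of order $N^{1-\alpha/2}$ --- polynomially, not superpolynomially, small. Consequently $\P(\Omega_{\mathsf g}^c)\gtrsim N^{1-\alpha/2}$, and the exceptional-set contribution $N^{2C}\P(\Omega_{\mathsf g}^c)$ to $\E|g_1-f_1|^2$ is of order $N^{2C+1-\alpha/2}$, which swamps the target $\mathfrak o(N^{1-\alpha/2}t^{-2})$. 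The fix is to never use $\sum_i\tilde x_{i1}^2$ at all: since $\mathcal S(\tilde X^{(1)})-\Re\zeta\ge ct^2$ on the spectral-gap event, one has $|(G^2)_{ii}|\le\sum_a|u_a(i)|^2|\lambda_a-\zeta|^{-2}\lesssim t^{-2}\,\Re G_{ii}$, hence $|P_d|\lesssim t^{-2}\,\Re Q_d\le t^{-2}|1+Q_d|$ and $|P_d/(1+Q_d)|\lesssim t^{-2}$ deterministically there; with this replacement the bound $|g_1-f_1|\lesssim|P-P_d|+t^{-2}|Q-Q_d|$ holds on an event of probability $1-N^{-D}$ depending only on $\tilde X^{(1)}$ and $\Psi$, and the rest of your computation (the conditional second moments via Lemma \ref{lem:esthighoderi} and the final power counting $t^{-3}N^{\alpha/2-2}\to 0$) goes through.
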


\begin{proof} The proof is similar to the counterpart in \cite{BM}; see the Appendix \ref{3891} for details. 
\end{proof}

\begin{proposition}\label{prop:computaionofkernel}
For any $k \in [N]$, 
	there exists some constant $\tau$, such that for any $z, z' \in  \{ \zeta \in \mathbb{C} : | \zeta - \hat{\zeta}_{\mathsf{e}}| \le \tau t^2, |\Im \zeta| \ge N^{-100} \}$,
\begin{align*} 
	\frac{N^{-1 + \alpha/2}t^2\E_{k-1}\big( (\E_k - \E_{k-1})f_k(z)(\E_k - \E_{k-1})f_k(z') \big)}{\mathcal{K}(z,z')} \overset{\mathbb{P}}{\rightarrow} 1,
\end{align*}
as $N\to \infty$. The kernel $\mathcal{K}(z,z')$ is defined as
\begin{align*}
	\mathcal{K}(z,z') &\coloneqq \mathsf{c}N^{1-\alpha/2}t^2 c_N\int_0^\infty \int_0^\infty  \partial_z \partial_{z'} \Big\{  \frac{e^{-s -s' - sc_N\mathsf{m}_{\mathsf{mp}}^{(t)}(z) - s'c_N\mathsf{m}_{\mathsf{mp}}^{(t)}(z') }}{ss'}\\
		&\quad\times \Big(\big(s \mathsf{m}_{\mathsf{mp}}^{(t)}(z)+s' \mathsf{m}_{\mathsf{mp}}^{(t)}(z')\big)^{\alpha/2} - \big(s \mathsf{m}_{\mathsf{mp}}^{(t)}(z)\big)^{\alpha/2}-\big(s' \mathsf{m}_{\mathsf{mp}}^{(t)}(z')\big)^{\alpha/2} \Big) \Big\}  \mathrm{d}s \mathrm{d}s'.
\end{align*}
\end{proposition}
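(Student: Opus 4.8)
The plan is to compute the conditional covariance on the left-hand side by freezing the minor $\tilde X^{(k)}$, isolating the fluctuation coming from the single heavy-tailed column $\tilde x_k$ via a Frullani-type (subordinator) representation, and then evaluating the resulting Laplace transforms with the tail asymptotics of $\Theta$; this follows the route of \cite{BM}, adapted to the covariance model and to the mesoscopic base point $\hat{\zeta}_{\mathsf{e}}$. \textbf{Reduction.} Writing $g_i(\zeta):=G_{ii}(\tilde X^{(k)},\zeta)$ and using $(G(\tilde X^{(k)},\zeta)^2)_{ii}=\partial_\zeta g_i(\zeta)$, one has $f_k(\zeta)=\partial_\zeta\log\big(1+\sum_{i}g_i(\zeta)\tilde x_{ik}^2\big)$. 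On $\Xi(\tau)$ the quantity $1+\sum_i g_i(\zeta)\tilde x_{ik}^2$ has positive real part with high probability — by the a priori bounds in Lemma \ref{lem:hpbandvb} together with Lemma \ref{existuniq}(iv) — so the identity $\log(1+u)=\int_0^\infty s^{-1}\big(e^{-s}-e^{-s(1+u)}\big)\,\mathrm{d}s$ applies and linearizes the dependence on $\tilde x_k$. Hence $\E_{k-1}\big[(\E_k-\E_{k-1})f_k(z)\,(\E_k-\E_{k-1})f_k(z')\big]$ equals $\partial_z\partial_{z'}$ of a double integral over $(s,s')$ of $e^{-s-s'}(ss')^{-1}$ times the conditional covariance of $e^{-s\sum_i g_i(z)\tilde x_{ik}^2}$ and $e^{-s'\sum_i g_i(z')\tilde x_{ik}^2}$; analyticity on $\{|\Im\zeta|\ge N^{-100}\}$ pushes the $z$- and $z'$-derivatives outside, and the interchange of $\E$ with the $s$-integral is justified after cutting $s$ near $0$ (integrand $\mathcal{O}(s)$) and near $\infty$ (the factor $e^{-s}$ dominates, using the deterministic bound $|\tilde x_{ik}|\le N^{-1/2+\vartheta}$).

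\textbf{Heavy-tailed evaluation.} On $\Omega_\Psi$, with $\zeta\in\mathsf{D}_\zeta$ by Lemma \ref{lem: domain of parameter}, Proposition \ref{prop: resolvent entry size for X=B+C} gives $g_i(\zeta)=\mathsf{m}_{\mathsf{mp}}^{(t)}(\zeta)+O_{\prec}(N^{-c})$ for all but at most $N^{1-\epsilon_\alpha}$ rows (those in $\mathcal{T}_r$), and $|g_i(\zeta)|\prec t^{-2}$ for the remaining rows in $\mathcal{D}_r$, the latter being negligible after normalization because $\epsilon_\alpha>2\alpha\epsilon_a$. So, conditionally on $\tilde X^{(k)}$, both $\E_k$ (which only averages the future columns, and these enter solely through the now essentially deterministic $g_i$) and $\E_{k-1}$ (which also averages $\tilde x_k$) reduce to Laplace transforms of $\|\tilde x_k\|^2=\sum_i\tilde x_{ik}^2$, which factorize over the i.i.d.\ entries. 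Writing $\varphi(\lambda):=\E[e^{-\lambda\tilde x_{11}^2}]$, a Tauberian computation from the tail of $\Theta$ in Assumption \ref{main assum}(i) and the truncation level $\vartheta$ yields, uniformly for $\lambda$ in the relevant range, $\log\varphi(\lambda)=-\lambda/N+\mathsf{c}\,\kappa_\alpha N^{-\alpha/2}\lambda^{\alpha/2}+\mathcal{O}(N^{-2}\lambda^2)+\dots$ with an explicit constant $\kappa_\alpha$ (the normalization $\P(\Theta>s)\sim\mathsf{c}\,\Gamma(1-\alpha/2)^{-1}s^{-\alpha}$ is designed to keep this constant clean), and therefore $\varphi(\lambda)^M=e^{-c_N\lambda}\big(1+\mathsf{c}\,\kappa_\alpha c_N N^{1-\alpha/2}\lambda^{\alpha/2}+\mathfrak{o}(N^{1-\alpha/2})\big)$, using $\tfrac1N\sum_i g_i\to c_N\mathsf{m}_{\mathsf{mp}}^{(t)}(\zeta)$ and $\tfrac1N\sum_i g_i^{\alpha/2}\to c_N(\mathsf{m}_{\mathsf{mp}}^{(t)}(\zeta))^{\alpha/2}$. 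Plugging $\lambda\in\{s\mathsf{m}_{\mathsf{mp}}^{(t)}(z)+s'\mathsf{m}_{\mathsf{mp}}^{(t)}(z'),\ s\mathsf{m}_{\mathsf{mp}}^{(t)}(z),\ s'\mathsf{m}_{\mathsf{mp}}^{(t)}(z')\}$ into $\varphi(\lambda)^M\approx\E_{\tilde x_k}[e^{-\lambda\|\tilde x_k\|^2}]$, the covariance $\varphi(s\mathsf{m}_{\mathsf{mp}}^{(t)}(z)+s'\mathsf{m}_{\mathsf{mp}}^{(t)}(z'))^M-\varphi(s\mathsf{m}_{\mathsf{mp}}^{(t)}(z))^M\varphi(s'\mathsf{m}_{\mathsf{mp}}^{(t)}(z'))^M$ has its leading $e^{-c_N(\cdot)}\!\cdot 1$ parts cancel, leaving exactly $e^{-c_N(s\mathsf{m}_{\mathsf{mp}}^{(t)}(z)+s'\mathsf{m}_{\mathsf{mp}}^{(t)}(z'))}$ times $(s\mathsf{m}_{\mathsf{mp}}^{(t)}(z)+s'\mathsf{m}_{\mathsf{mp}}^{(t)}(z'))^{\alpha/2}-(s\mathsf{m}_{\mathsf{mp}}^{(t)}(z))^{\alpha/2}-(s'\mathsf{m}_{\mathsf{mp}}^{(t)}(z'))^{\alpha/2}$ up to relative error $\mathfrak{o}(1)$ (the cross term is $\mathcal{O}(N^{2-\alpha})=\mathfrak{o}(N^{1-\alpha/2})$). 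Integrating back against $e^{-s-s'}(ss')^{-1}$ and applying $\partial_z\partial_{z'}$ recovers $\mathcal{K}(z,z')$ in the stated normalization; Lemma \ref{lem:truncation} then lets one pass from $\tilde X$ back to $X$.

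\textbf{Main obstacle.} The delicate step is controlling the error in approximating $g_i(\zeta)$ by $\mathsf{m}_{\mathsf{mp}}^{(t)}(\zeta)$ inside $\varphi(\cdot)^M$: each error $g_i-\mathsf{m}_{\mathsf{mp}}^{(t)}$ is only $O_{\prec}(N^{-c})$ but there are $\sim N$ of them, they are weighted by the heavy-tailed $\tilde x_{ik}^2$, and they lie in the exponent of an $M$-th power, so one must show their accumulated effect stays $\mathfrak{o}(N^{1-\alpha/2})$ relative to the main term, uniformly in $(s,s')$. This forces a separate crude treatment of the large-$s$ regime — where $\lambda=s\mathsf{m}_{\mathsf{mp}}^{(t)}$ leaves the range of validity of the expansion of $\varphi$ — exploiting the truncation exponent $\vartheta$ and the decay $e^{-s}$, plus a by-hand bound on the contribution of the exceptional rows $i\in\mathcal{D}_r$ from the goodness of $\Psi$. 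A second subtlety is that $\E_k$ genuinely averages the future columns of $\tilde X$, which perturb $G(\tilde X^{(k)},\zeta)$; showing that this averaging is inert to leading order needs the quantitative local law to hold jointly with the heavy-tailed large-deviation estimates, and this is where the martingale structure and the heavy tails interact most delicately. The passage from the true martingale increment $(\E_k-\E_{k-1})(m_{\tilde X}-m_{\tilde X^{(k)}})$ to $(\E_k-\E_{k-1})f_k$, that is, the removal of the off-diagonal Green-function entries, is supplied by Proposition \ref{prop:removing off-diagonal} and used here as a black box.
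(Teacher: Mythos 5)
Your proposal is correct and follows essentially the same route as the paper: a Laplace-transform (subordinator) representation of $f_k$, factorization of $\E_{\tilde x_k}[e^{-s\sum_j g_j\tilde x_{jk}^2}]$ over the i.i.d.\ entries, expansion of the characteristic function of $|x_{ij}|^2$ via the tail assumption, and replacement of the diagonal resolvent entries by $\mathsf{m}_{\mathsf{mp}}^{(t)}$ with a separate crude bound on the $\mathcal{D}_r$ rows. The one point you flag but leave vague — that $\E_k$ averages the future columns — is handled in the paper by introducing an independent copy $\hat X$ of the columns beyond $k$ and the identity $\E_{k-1}\big((\E_k-\E_{k-1})f_k\,(\E_k-\E_{k-1})f_k'\big)=\E_k\E_{\tilde x_k}(f_k\hat f_k')-\E_k\E_{\tilde x_k}(f_k)\,\E_k\E_{\tilde x_k}(\hat f_k')$, after which your computation applies verbatim.
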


Before giving the proof of Proposition \ref{prop:computaionofkernel}, let us introduce the parameter $\sigma_N \coloneqq \sqrt{ N\E \tilde{x}_{ij}^2 }$ and Lemma \ref{lem:truncationprop} below. Note that
\begin{align}\label{eq:sigma1minust}
	\E \big(N\tilde{x}_{ij}^2\mathbf{1}_{\sqrt{N}x_{ij} > N^{\vartheta}}\big) = \int_{N^{2\vartheta}}^\infty \mathbb{P}(|\sqrt{N}x_{ij}|^2 > x) \mathrm{d}x \sim N^{\vartheta(2-\alpha)},
\end{align}
which gives $\sigma_N^2 - (1-t) = \mathcal{O}(N^{\vartheta(2-\alpha)})$. The following lemma collects some useful properties of $\tilde{x}_{ij}$ and the expansion for the characteristic function of $x_{ij}$.
\begin{lemma}\label{lem:truncationprop}
 Then there exists constant $C > 0$, such that
\begin{itemize}
	\item[$({i})$] $\tilde{x}_{ij}$'s are i.i.d.~centered, with variance $\sigma_N^2 /N$,  third moment bound $N^{3/2} \E [ |\tilde{x}_{ij}|^3] \le CN^{\vartheta(3-\alpha)_+}$, and fourth moment  bound $N^{2} \E [ |\tilde{x}_{ij}|^4] \le CN^{\vartheta(4-\alpha)}$,
	\item[$({ii})$] for any $\lambda \in \mathbb{C}$ such that $\Im \lambda \le 0$, 
	  \begin{align*}
	  	{\phi}_N(\lambda) \coloneqq \E \big(e^{-\mathrm{i}\lambda|{x}_{ij}|^2}\big) = 1-\frac{\mathrm{i}(1-t)\lambda}{N} + c\frac{(\mathrm{i}\lambda)^{\frac{\alpha}{2}}}{N^{\frac{\alpha}{2}}} +	\varepsilon_N(\lambda), \quad \text{and} \quad \varepsilon_N(\lambda) =\mathcal{O}\Big(\frac{|\lambda|^{(\alpha+ \varrho)/ 2}}{N^{(\alpha+ \varrho)/2}}\vee \frac{|\lambda| ^{2}}{N^{2}} \Big).
	  \end{align*}
\end{itemize}
\end{lemma}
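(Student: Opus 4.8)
The plan is to start by making the law of the entries of $X=\mathsf{B}+\mathsf{C}$ completely explicit. For each $(i,j)$ exactly one of $\mathsf{A}_{ij},\mathsf{B}_{ij},\mathsf{C}_{ij}$ is nonzero, and the three regions $(-N^{-1/2-\epsilon_a},N^{-1/2-\epsilon_a})$, $\{N^{-1/2-\epsilon_a}\le|\cdot|<N^{-\epsilon_b}\}$ and $\{|\cdot|\ge N^{-\epsilon_b}\}$ used to define $a,b,c$ partition $\mathbb{R}$ up to a null set; a short computation with the conditional laws of $b_{ij},c_{ij}$ and with $\P[\psi_{ij}=1],\P[\chi_{ij}=1]$ then shows that $x_{ij}\stackrel{d}{=}y_{ij}\mathbf{1}_{|y_{ij}|\ge N^{-1/2-\epsilon_a}}$, equivalently $\sqrt{N}\,x_{ij}\stackrel{d}{=}\Theta\,\mathbf{1}_{|\Theta|\ge N^{-\epsilon_a}}$ and $\sqrt{N}\,\tilde{x}_{ij}\stackrel{d}{=}\Theta\,\mathbf{1}_{N^{-\epsilon_a}\le|\Theta|\le N^{\vartheta}}$. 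Both truncation windows are symmetric, so $\E\tilde{x}_{ij}=0$ by the symmetry of $\Theta$; the $\tilde{x}_{ij}$ are i.i.d.\ because they are built from disjoint $(i,j)$'s out of independent copies of the underlying variables, and their variance equals $\E\tilde{x}_{ij}^2=\sigma_N^2/N$ by the very definition of $\sigma_N$. One also records here the identity $N\,\E|x_{ij}|^2=\E[\Theta^2\mathbf{1}_{|\Theta|\ge N^{-\epsilon_a}}]=1-t$, since $t=N\,\E|\mathsf{A}_{ij}|^2=\E[\Theta^2\mathbf{1}_{|\Theta|<N^{-\epsilon_a}}]$ is exactly the second-moment mass removed by $\mathsf{A}$; this produces the factor $(1-t)$ in part (ii).

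For the moment bounds in part (i) the plan is a routine layer-cake estimate based on the tail bound $\P(|\Theta|>s)\lesssim s^{-\alpha}$ from Assumption \ref{main assum}(i). For any exponent $k>\alpha$ and any $T\ge1$ one has $\E[\,|\Theta|^k\mathbf{1}_{|\Theta|\le T}\,]=\int_0^{T}k\,s^{k-1}\P(|\Theta|>s)\,ds-T^{k}\P(|\Theta|>T)\lesssim T^{\,k-\alpha}$, whereas for $k<\alpha$ the moment is simply $O(1)$. Applying this with $T=N^{\vartheta}$ to $N^{3/2}\E|\tilde{x}_{ij}|^3\le\E[|\Theta|^3\mathbf{1}_{|\Theta|\le N^{\vartheta}}]$ gives the bound $N^{\vartheta(3-\alpha)_+}$ (the positive part covering both cases $\alpha<3$ and $\alpha>3$), and to $N^{2}\E|\tilde{x}_{ij}|^4\le\E[\Theta^4\mathbf{1}_{|\Theta|\le N^{\vartheta}}]$ gives $N^{\vartheta(4-\alpha)}$ since $\alpha<4$.

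Part (ii) is the standard Lévy-type expansion of the characteristic function of $|x_{ij}|^2=Z/N$, where $Z\coloneqq\Theta^2\mathbf{1}_{|\Theta|\ge N^{-\epsilon_a}}$. Writing $\phi_N(\lambda)=\E e^{-\mathrm{i}(\lambda/N)Z}=1-\tfrac{\mathrm{i}\lambda}{N}\E Z+\E\big[e^{-\mathrm{i}(\lambda/N)Z}-1+\tfrac{\mathrm{i}\lambda}{N}Z\big]$, the first correction is exactly $-\mathrm{i}(1-t)\lambda/N$ by the consistency identity above. For the remainder one uses the twice-integrated Taylor identity $e^{-\mathrm{i}wz}-1+\mathrm{i}wz=-w^2\int_0^z(z-u)e^{-\mathrm{i}wu}\,du$ together with Fubini, obtaining $\E[e^{-\mathrm{i}wZ}-1+\mathrm{i}wZ]=-w^2\int_0^\infty e^{-\mathrm{i}wu}H(u)\,du$ with $H(u)\coloneqq\int_u^\infty\P(Z>z)\,dz$. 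Since $\P(Z>z)=\P(\Theta^2>z)$ for $z>N^{-2\epsilon_a}$, Assumption \ref{main assum}(i) (squared, using the symmetry of $\Theta$) gives $H(u)=\tfrac{\kappa}{\alpha/2-1}u^{1-\alpha/2}+O\big(u^{\,1-\alpha/2-\varrho'}\big)$ for large $u$, with $\kappa$ the tail constant of $\Theta^2$. Feeding the leading part into the elementary Gamma integral $\int_0^\infty e^{-\mathrm{i}wu}u^{1-\alpha/2}\,du=\Gamma(2-\alpha/2)(\mathrm{i}w)^{\alpha/2-2}$ (valid since $\Re(\mathrm{i}w)\ge0$ when $\Im\lambda\le0$), using $\Gamma(2-\alpha/2)=-(\alpha/2-1)\Gamma(1-\alpha/2)$ and $-w^2(\mathrm{i}w)^{\alpha/2-2}=(\mathrm{i}w)^{\alpha/2}$, and then setting $w=\lambda/N$, produces the term $c\,(\mathrm{i}\lambda)^{\alpha/2}/N^{\alpha/2}$ with $c$ explicitly determined by $\mathsf{c}$ and $\alpha$. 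The error $\varepsilon_N(\lambda)$ is controlled in two regimes: for $|\lambda|\le N$ the sub-leading piece of $H$ and the trivial small-$u$ part contribute $O\big(|w|^{\alpha/2+\varrho'}\big)=O\big(|\lambda|^{(\alpha+\varrho)/2}/N^{(\alpha+\varrho)/2}\big)$ for a suitable $\varrho'=\varrho'(\varrho)$, while for $|\lambda|>N$ one bounds every displayed term crudely, using $|\phi_N|\le1$ and $\alpha/2<2$, by $O(|\lambda|^2/N^2)$.

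The only genuinely delicate point is the error bookkeeping in part (ii): one must simultaneously keep track of the cutoff at $N^{-2\epsilon_a}$ below which $\P(Z>z)$ and $\P(\Theta^2>z)$ differ, of the fact that the large-$u$ tail of $H$ is non-integrable (which is precisely why $\E[e^{-\mathrm{i}wZ}-1+\mathrm{i}wZ]$ is of order $w^{\alpha/2}$ and not $w^2$), and of the exact exponent coming from the $s^{-(\alpha+\varrho)}$ correction in Assumption \ref{main assum}(i), so as to land on exactly the two-term form stated for $\varepsilon_N(\lambda)$. Each individual estimate is elementary; only the assembly requires care.
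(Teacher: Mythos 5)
Your argument is correct and follows essentially the same route as the paper: identify the law of $x_{ij}$ as that of $y_{ij}\mathbf{1}_{|y_{ij}|\ge N^{-1/2-\epsilon_a}}$ so that $N\E x_{ij}^2=1-t$, extract the linear term of $\phi_N$ exactly from this identity, and obtain the $(\mathrm{i}\lambda)^{\alpha/2}$ term from the power-law tail via a Gamma-function integral, with the moment bounds in (i) following from the standard layer-cake estimate. The only cosmetic difference is that the paper integrates by parts once, working directly with $F^c(u)=\P(Nx_{ij}^2>u)$ and $\int_0^\infty F^c=1-t$, whereas you integrate by parts twice via $H(u)=\int_u^\infty\P(Z>z)\,dz$; both reduce to the same conditionally convergent Gamma integral and the same error bookkeeping.
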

\begin{proof}
The proof of $(\mathrm{i})$ is elementary. 
To prove $(\mathrm{ii})$, we observe
\begin{equation*}
	1 - \phi_N(\lambda) = \int_0^\infty \big(\exp(-\mathrm{i}\lambda u/N) - 1 \big)\mathrm{d} F^c(u) = \frac{\mathrm{i}\lambda}{N} \int_0^\infty \exp(-\mathrm{i}\lambda u/N) F^c(u) \mathrm{d}u,
\end{equation*}
where $F$ be the distribution function of $Nx_{ij}^2$ and let $F^c = 1-F$.
Since $\int_0^\infty F^c(u) \mathrm{d}u = 1-t$, we notice
\begin{equation*}
	1 - \phi_N(\lambda) = \frac{\mathrm{i}\lambda(1-t)}{N} + \frac{\mathrm{i}\lambda}{N}\int_0^\infty \big(\exp(-\mathrm{i}\lambda u/N) - 1\big) F^c(u) \mathrm{d}u.
\end{equation*}
The estimate $(\mathrm{ii})$ can be obtained using the tail density assumption on $\sqrt{N}y_{ij}$ (cf. Assumption \ref{main assum} (i)).
\end{proof}

\begin{proof}[Proof of Proposition \ref{prop:computaionofkernel}] 
Let $\hat{f}_k$ be defined as ${f}_k$, but with the matrix $\tilde{X}$ replaced by a matrix $\hat{X}$. The columns $\hat{X}_i$ of $\hat{X}$ are the same as those of $\tilde{X}$ if $i \le k$, but are independent random vectors with the same distribution as the columns of $\tilde{X}$ if $i > k$. It is still valid to use the notation $\E_k$ since $\tilde{X}$ and $\hat{X}$ share the same first $k$ columns.
By the following elementary identity
	\begin{align}
	&\E_{k-1}\big( (\E_k - \E_{k-1})(f_k(z))(\E_k - \E_{k-1})(f_k(z')) \big) \notag\\&= \E_k\big( \E_{\tilde{x}_k}(f_k(z)\hat{f}_k(z') ) \big) - \big(\E_k \E_{\tilde{x}_k} f_k(z)\big)\big( \E_k \E_{\tilde{x}_k}\hat{f}_k(z') \big), \label{lem:martingaleexpansion}
\end{align}
 it suffices to study the approximation for $\E_{\tilde{x}_k}f_k(z)$ and $\E_{\tilde{x}_k}f_k(z)\hat{f}_k(z')$. In the sequel, we write $f_k = f_k(z)$, $\hat{f}_k' = \hat{f}_k(z')$, $G_k = G(\tilde{X}^{(k)}, z)$ and $G'_k = G(\tilde{X}^{(k)}, z')$  for simplicity. By a minor process argument, for any $D>0$, there exists constant $C_k > 0$ such that $|\lambda_{M}(\mathcal{S}(\tilde{X}^{(k)}))- \hat{\zeta}_{\mathsf{e}}| \ge C_kt^2$, 
with probability at least $1 - N^{-D}$.
This implies that there exists some constant $C_k >0$ such that for any arbitrary large $D > 0$,
	$$
		\mathbb{P}( \tilde{\Omega}_k= \{ \lambda_{M}(\mathcal{S}(\tilde{X}^{(k)})) - \bar{\zeta}_{-,t} \ge C_kt^2 \}  ) \ge 1 - N^{-D}.
	$$
	Then it is readily seen that $\Re [G(\tilde{X}^{(k)}, z)]_{jj}\cdot \mathbf{1}_{\tilde{\Omega}_k} \ge 0$ for any $z \in \{|z - \bar{\zeta}_{-,t}| \le C_kt^2/10, |\Im z| \ge N^{-100} \}$. Since $\tilde{\Omega}_k$ is independent of $\tilde{x}_k$, we can write
	$
		\E_{\tilde{x}_k} (f_k) = \E_{\tilde{x}_k} (f_k) \mathbf{1}_{\tilde{\Omega}_k}+\E_{\tilde{x}_k} (f_k) \mathbf{1}_{\tilde{\Omega}_k^c}.
	$
	Using the facts that $|\tilde{x}_{jk}| \le N^{1/\alpha+1/4+\epsilon_\vartheta}$ and $|[G_k]_{jj}| \le |\Im z|^{-1} \le N^{101}$, we have for  some large constant $K > 0$ such that
	$
		|\E_{\tilde{x}_k} (f_k) \mathbf{1}_{\tilde{\Omega}_k^c}| \le N^{K} \mathbf{1}_{\tilde{\Omega}_k^c}.
	$
	
	Next, we will mainly focus on the estimation for $\E_{\tilde{x}_k} (f_k) \mathbf{1}_{\tilde{\Omega}_k}$. In the sequel, we omit the indicate function $\mathbf{1}_{\tilde{\Omega}_k}$ from the display for simplicity, and keep in mind that all the estimates are done on the event $\tilde{\Omega}_k$.
	Using the identity that for $w$ with $\Re w > 0$, $w^{-1} = \int_0^\infty e^{-sw} \mathrm{d}s$ ,
	we have
	\begin{align*}
		\E_{\tilde{x}_k} f_k &= \E_{\tilde{x}_k} \Big(\int_0^\infty \sum_{j} \tilde{x}_{jk}^2 [G_k^2]_{jj} e^{-s\big(1 + \sum_{j} \tilde{x}_{jk}^2[G_k]_{jj}\big)} \mathrm{d}s \Big)  
		 =- \int_0^\infty \frac{e^{-s}}{s}   \partial_z \Big\{  \E_{\tilde{x}_k} \Big( e^{-s\sum_{j} \tilde{x}_{jk}^2[G_k]_{jj}}\Big) \Big\} \mathrm{d}s.
	\end{align*}
 Recall $\tilde{\phi}_N$ and $\phi_N$ in Lemma \ref{lem:truncationprop}. We have
	\begin{align*}
		&\E_{\tilde{x}_k} f_k = - \int_0^\infty \frac{e^{-s}}{s}   \partial_z \Big\{   \prod_{j} {\phi}_N\big( -\mathrm{i}s [G_k]_{jj} \big)\Big\} \mathrm{d}s  + \mathsf{Diff},
	\end{align*}
	where
	$
		\mathsf{Diff} \coloneqq \int_0^\infty \frac{e^{-s}}{s}   \partial_z \Big\{   \prod_{j} {\phi}_N\big( -\mathrm{i}s [G_k]_{jj} \big) -\prod_{j} \tilde{\phi}_N\big( -\mathrm{i}s [G_k]_{jj} \big) \Big\} \mathrm{d}s.
	$
	Note by the definition of $\tilde{x}_{jk}$'s for any $j \in [N]$, the following estimate holds uniformly for all $\lambda$ with $\Im \lambda \le 0$,
	\begin{align*}
		\big| \phi_N(\lambda) - \tilde{\phi}_N(\lambda)\big| &= \Big| \E \Big[ \big(e^{-\mathrm{i}\lambda|x_{ij}|^2} - 1\big) \cdot \mathbf{1}_{\sqrt{N}|x_{ij}| > N^{\vartheta}} \Big] \Big| \le 2\mathbb{P}\big( \sqrt{N}|x_{ij}| > N^{\vartheta} \big)
		\lesssim N^{-\alpha\vartheta}.
	\end{align*}
	Therefore, by a Cauchy integral argument with contour radius equals to $ct^2$ for some sufficiently small $c > 0$, we have for sufficiently large $K$,
	\begin{align*}
		\Big|\int_{N^{-K}}^\infty \frac{e^{-s}}{s}   \partial_z \Big\{   \prod_{j} {\phi}_N\big( -\mathrm{i}s [G_k]_{jj} \big) -\prod_{j} \tilde{\phi}_N\big( -\mathrm{i}s [G_k]_{jj} \big) \Big\} \mathrm{d}s \Big|\lesssim  t^{-2}N^{-\alpha\vartheta} \int_{N^{-K}}^\infty \frac{e^{-s}}{s}  \mathrm{d}s \lesssim N^{1-\alpha/2-\epsilon}.
	\end{align*}
	With the prescribe $K$, we also have
	\begin{align*}
		\Big|\int^{N^{-K}}_0 \frac{e^{-s}}{s}   \partial_z \Big\{   \prod_{j} \tilde{\phi}_N\big( -\mathrm{i}s [G_k]_{jj} \big) \Big\} \mathrm{d}s \Big| = \Big|\E_{\tilde{x}_k} \Big(\int_0^{N^{-K}} \sum_{j} \tilde{x}_{jk}^2 [G_k^2]_{jj} e^{-s\big(1 + \sum_{j} \tilde{x}_{jk}^2[G_k]_{jj}\big)} \mathrm{d}s \Big)\Big|  \lesssim N^{-K/2},
	\end{align*}
	and similar estimate holds if we replace $\tilde{\phi}_N$ by $\phi_N$.  Combining the above two displays, we can obtain that
	\begin{align*}
				\E_{\tilde{x}_k} f_k = - \int_0^\infty \frac{e^{-s}}{s}   \partial_z \Big\{   \prod_{j} \Big(1 + \frac{1-t }{N}u_j(z,s) \Big) \Big\} \mathrm{d}s + \mathcal{O}_\prec(N^{1-\alpha/2-\epsilon}),
	\end{align*}
	where
	\begin{align*}
		u_j(z,s) &= \frac{N}{1-t}\Big( {\phi}\big( -\mathrm{i}s [G_k]_{jj} \big) - 1 \Big)= -s[G_k]_{jj} + \mathsf{c}\frac{(s[G_k]_{jj})^{\frac{\alpha}{2}}}{N^{\frac{\alpha-2}{2}}(1-t)} + \frac{N}{1-t}\varepsilon_N(-\mathrm{i}s[G_k]_{jj}).
	\end{align*}
We introduce the approximation $\mathsf{K}_1(z,s)$ for the integrand as follows:
	\begin{align*}
		\mathsf{K}_1(z,s) := \frac{e^{-s-s(1-t)\Tr G_k/N}}{s}\Big(1 + \frac{\mathsf{c}}{N^{\alpha/2}} \sum_{j=1}^M \big(s [G_k]_{jj} \big)^{\alpha/2} \Big).
	\end{align*}
	Then our goal is to show on the event $\tilde{\Omega}_k$
	\begin{align}
		\int_0^\infty \partial_z \delta(z,s) \mathrm{d}s  \lesssim  N^{1-\alpha/2-\epsilon},
	\label{eq:limiterrorest}\end{align}
	where $
		\delta(z,s) = \frac{e^{-s}}{s}   \prod_{j} \big(1 + \frac{1-t }{N}u_j(z,s) \big)  - \mathsf{K}(z,s),
	$ and $\epsilon > 0$ is a small constant.
	By the Cauchy integral formula, we have $\Big|\int_0^\infty \partial_z \delta(z,s) \mathrm{d}s \Big| \lesssim t^{-2}\int_0^\infty | \delta(z_s,s) |\mathrm{d}s$, 
	where $z_s$ is the  maximizer of $ |\delta(z,s)|$ on the contour $\{z': |z' - z| = C_kt^2/50 \}$. To estimate the RHS of this inequality, we divide it into two parts,
	\begin{align*}
		\frac{1}{t^2}\int_0^\infty | \delta(z_s,s) |\mathrm{d}s = \frac{1}{t^2}\int_0^{N^{\varsigma}} | \delta(z_s,s) |\mathrm{d}s + \frac{1}{t^2}\int_{N^{\varsigma}}^\infty | \delta(z_s,s) |\mathrm{d}s = I_1 + I_2,
	\end{align*}
	with $\varsigma$ being chosen later . Using the fact that $[G_k]_{jj} \lesssim t^{-2}$ on the event $\tilde{\Omega}_k$, we can obtain that 
	\begin{align*}
		I_2 \lesssim \frac{1}{t^{2+\alpha}} \int_{N^{\varsigma}}^\infty s^{\alpha/2 - 1} e^{-s} \mathrm{d}s \le e^{-N^{\varsigma}/3 }.
	\end{align*}
	For $I_1$, we further decompose it into three parts,
	\begin{align*}
		I_1 &= \frac{1}{t^2}\int_0^{N^{\varsigma}} \Big| \frac{e^{-s}}{s}\Big(   \prod_{j} \Big( 1 + \frac{1-t}{N}u_j(z_s,s)\Big)  - e^{\sigma_N^2/N \sum_j u_j(z_s,s)} \Big) \Big|\mathrm{d}s \\
		&\quad +\frac{1}{t^2}\int_0^{N^{\varsigma}} \Big| \frac{e^{-s}}{s}\Big(  e^{(1-t)/N \sum_j u_j(z_s,s)} - e^{-s(1-t) \Tr G_k / N}\Big(1 +\sum_j\mathsf{c}\frac{(s[G_k]_{jj})^{\frac{\alpha}{2}}}{N^{\frac{\alpha}{2}}} + \sum_j\varepsilon_N(-\mathrm{i}s[G_k]_{jj})  \Big) \Big) \Big|\mathrm{d}s\\
		&\quad +\frac{1}{t^2}\int_0^{N^{\varsigma}} \Big| \frac{e^{-s}}{s}\Big(e^{-s(1-t) \Tr G_k / N} \sum_j\varepsilon_N(-\mathrm{i}s[G_k]_{jj})  \Big) \Big|\mathrm{d}s = I_{11} +I_{12} +I_{13} .
	\end{align*}
	Notice that on the event $\tilde{\Omega}_k$,
	$
		M_s = \max_j |u_j(z_s,s)| \sigma_N^2 \lesssim st^{-2},
	$
	and $\Re u_j(z_s,s) = N(\Re \phi_N(-\mathrm{i}s[G_k]_{jj}) - 1) \le 0$.
	Then using \cite[Lemma 4.5]{BM}, we have on the event $\tilde{\Omega}_k$, 
	\begin{align*}
		I_{11} \le \frac{1}{t^2} \int_0^{N^{\varsigma}} \frac{e^{-s}}{s} \cdot \frac{s^2}{Nt^4} e^{s^2/(Nt^4) + \sum_{j}\Re ((1-t)u_j(z_s,s) )/N} \mathrm{d}s \lesssim \frac{1}{Nt^6} \int_0^{N^{\varsigma}} e^{-s}s e^{s^2/(Nt^4)} \mathrm{d}s \lesssim \frac{1}{Nt^6}.
	\end{align*}
By choosing $\varsigma < 1/3$ (say), we can obtain that
$
		I_{11} \lesssim N^{-1}t^{-6}.
$
	Applying the simple inequality that $|e^{x} - (1+x)|\le 2|x|^2 $ for $|x| \le 1/2$, we have
	\begin{align*}
		I_{12} &\lesssim \frac{1}{t^2}\int_0^{N^{\varsigma}} \frac{e^{-s}}{s} \Big|\sum_j\mathsf{c}\frac{(s[G_k]_{jj})^{\frac{\alpha}{2}}}{N^{\frac{\alpha}{2}}} + \sum_j\varepsilon_N(-\mathrm{i}s[G_k]_{jj})\Big|^2 \mathrm{d}s \\
		&\lesssim \frac{1}{N^{\alpha-2}t^{2+2\alpha}} \int_0^{N^{\varsigma}}  s^{\alpha-1} \mathrm{d}s  \lesssim N^{\varsigma \alpha - \alpha+2} t^{-2-2\alpha} \lesssim N^{-3(\alpha-2)/5},
	\end{align*}
	where in the last step, we chose $\varsigma < (\alpha-2)/(4\alpha)$. Finally, for $I_{13}$, we can use Lemma \ref{lem:truncationprop} (ii) to obtain that,
	$
		I_{13} \lesssim  N^{-(\alpha-2)\vartheta}t^{-2-2\alpha} \lesssim N^{-3(\alpha-2)/4}.
	$
	Now we may conclude the proof of (\ref{eq:limiterrorest}) by combining the above estimates and possibly adjusting the constants. This gives
	\begin{align*}
		\E_{\tilde{x}_k} (f_k) = - \int_0^\infty   \partial_z \mathsf{K}_1(z,s)  \mathrm{d}s + O_\prec(N^{1-\alpha/2-\epsilon}) .
	\end{align*}
	Similarly, we can obtain that
	\begin{align*}
		&\E_{\tilde{x}_k}(f_k \hat{f}_k') =\int_0^\infty \int_0^\infty  \partial_z \partial_{z'} \mathsf{K}_2(z,z',s,s') \mathrm{d}s \mathrm{d}s' + O_\prec(N^{1-\alpha/2-\epsilon}),
	\end{align*}
	where
	\begin{align*}
		 \mathsf{K}_2(z,z',s,s') \coloneqq \frac{e^{-s-s'-s(1-t)\Tr G_k/N-s(1-t)\Tr G'_k/N}}{ss'}\Big(1 + \frac{\mathsf{c}}{N^{\alpha/2}} \sum_{j=1}^M \big(s [G_k]_{jj}+s' [G'_k]_{jj} \big)^{\alpha/2} \Big).
	\end{align*}
	Notice that the estimate in Proposition \ref{prop: resolvent entry size for X=B+C} can be obtained for our $G(\tilde{X}^{(k)},z)$ as well in the same manner. Suppose that $\max\{|\mathcal{D}_r|, |\mathcal{D}_c|\}\leq N^{1-\epsilon_d}$ for some $\epsilon_d$. Notice that $\epsilon_d\geq \epsilon_\alpha$ by definition. Hence, the claim now follows by (i) employing (\ref{lem:martingaleexpansion}), then substituting $\sigma_N^2 [G_k]_{jj}(z)$ with ${\mathsf{m}}_{\mathsf{mp}}(z/\sigma_N^2)$ for $j \in \mathcal{T}_r$, and utilizing the bound $[G_k]_{jj} \prec 1/t$ for $j \in \mathcal{D}_r$ with the fact $t \gg N^{-\epsilon_d/4}$; (ii) considering the estimates $\sigma_N^2 - (1-t) =\mathcal{O}(N^{\vartheta(2-\alpha)})$ (refer to Eqn. (\ref{eq:sigma1minust})), $\partial_z {\mathsf{m}}_{\mathsf{mp}}(z/\sigma_N^2) \sim t^{-1}$, and $\partial^2_z {\mathsf{m}}_{\mathsf{mp}}(z/\sigma_N^2) \sim t^{-3}$ for $z$ within the specified domain. This enables us to further replace ${\mathsf{m}}_{\mathsf{mp}}(z/\sigma_N^2)$ and ${\mathsf{m}}_{\mathsf{mp}}(z/\sigma_N^2)$ with $\mathsf{m}_{\mathsf{mp}}^{(t)}(z)$ and $\mathsf{m}_{\mathsf{mp}}^{(t)}(z')$, respectively.
\end{proof}

\subsection{Proof of Proposition \ref{prop:lambdashiftexpans}}
Let us first define
\begin{align*}
	&\mathfrak{p}(z)  := \mathsf{c}N^{1-\alpha/2} c_N \int_0^\infty e^{-s-sc_N  \mathsf{m}_{\mathsf{mp}}^{(t)}(z) }\big( s  \mathsf{m}_{\mathsf{mp}}^{(t)}(z)\big)^{\alpha/2} \mathrm{d}s, \\
	&m_{\mathsf{shift}}(z)  \coloneqq \frac{ \mathrm{i}(\frac{ z}{1-t} -c_N + 1) \mathfrak{p}(z)}{2c_N z \sqrt{(\frac{ z}{1-t} - \lambda_{-}^{\mathsf{mp}})(\lambda_{+}^{\mathsf{mp}} - \frac{ z}{1-t})}   }.
\end{align*}
Then we have the following proposition concerning the expansion of $\E m_X(z)$.
\begin{proposition} 
There exists some sufficiently small constant $\tau > 0$, such that for any $z \in \{\zeta : |\zeta - \bar{\zeta}_{-,t}| \le \tau t^2, |\Im \zeta| \ge N^{-100} \}$, we have $m_{\mathsf{shift}}(z) = \mathcal{O}(t^{-1}N^{1-\alpha/2})$ and
	\begin{align}
		\E m_{X}(z)  = \mathsf{m}_{\mathsf{mp}}^{(t)}(z) + m_{\mathsf{shift}}(z) - \frac{\mathfrak{p}(z)}{2c_Nz}+ \mathcal{O}(N^{1-\alpha/2-\epsilon(4-\alpha)(\alpha-2)/50}).
	\label{eq:c1inprop}\end{align}
	Furthermore, for any $z \in \{\zeta : |\zeta - \bar{\zeta}_{-,t}| \ll t^2, |\Im \zeta| \ge N^{-100} \}$,
	\begin{align}
		&tm_{\mathsf{shift}}(z)   = \frac{ \mathsf{c}N^{1-\alpha/2}  \int_0^\infty e^{-s-sc_N \mathsf{m}_{\mathsf{mp}}(\lambda_{-}^{\mathsf{mp}})  }\big( s \mathsf{m}_{\mathsf{mp}}(\lambda_{-}^{\mathsf{mp}}) \big)^{\alpha/2} \mathrm{d}s}{2\sqrt{c_N}(1-\sqrt{c_N}) } + \mathfrak{o}(N^{1-\alpha/2}).
	\label{eq:c2inprop}\end{align}
\label{prop:expectationexpan}\end{proposition}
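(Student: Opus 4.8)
The plan is to prove (\ref{eq:c1inprop}) by deriving an approximate self-consistent equation for $\E m_X(z)$ via a Schur-complement expansion in one column of $X$, then solving it perturbatively around $\mathsf{m}_{\mathsf{mp}}^{(t)}$; then (\ref{eq:c2inprop}) will follow by evaluating the explicit formula for $m_{\mathsf{shift}}$ near $\bar\zeta_{-,t}$. Write $\mathcal{G}_X(z)\coloneqq(X^*X-z)^{-1}$, let $x_1$ be the first column of $X$ and $X^{(1)}$ the matrix with that column deleted. By column-exchangeability of $X$ and the Schur complement formula, $\E\big(c_N m_X(z)-(1-c_N)/z\big)=\E(\mathcal{G}_X(z))_{11}=-z^{-1}\E\big[(1+x_1^*G(X^{(1)},z)x_1)^{-1}\big]$. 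Since $|z-\bar\zeta_{-,t}|\le\tau t^2$ and, by Lemma \ref{lem:preEst}(i),(iii), on $\Omega_\Psi$ the point $\bar\zeta_{-,t}$ lies at distance $\sim t^2$ below $\lambda_M(\mathcal{S}(X))$ (and similarly for $\mathcal{S}(X^{(1)})$), we have $\|G(X,z)\|\lesssim t^{-2}$, $\Re(G(X^{(1)},z))_{ii}\ge 0$ w.h.p., and Proposition \ref{prop: resolvent entry size for X=B+C} gives $(G(X^{(1)},z))_{ii}=\mathsf{m}_{\mathsf{mp}}^{(t)}(z)+\mathcal{O}_\prec(N^{-c})$ for $i\in\mathcal{T}_r$, while the at most $N^{1-\epsilon_\alpha}$ indices $i\in\mathcal{D}_r$ satisfy only $(G(X^{(1)},z))_{ii}\prec t^{-2}$. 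We also record $\E m_X(z)=\mathsf{m}_{\mathsf{mp}}^{(t)}(z)+\mathcal{O}_\prec(N^{-\beta})$, from Lemma \ref{lem:preEst}(ii).

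The self-consistent equation is obtained by reusing the computation in the proof of Proposition \ref{prop:computaionofkernel}. After passing to the truncated matrix $\tilde X$ (Lemma \ref{lem:truncation}), one replaces $x_1^*G(X^{(1)},z)x_1$ by its diagonal part $\sum_i |x_{i1}|^2 (G(X^{(1)},z))_{ii}$ (off-diagonal contribution negligible by the heavy-tailed large-deviation estimates of Proposition \ref{prop:removing off-diagonal}, with $|\Im z|\ge N^{-100}$ handling the bad event), uses $w^{-1}=\int_0^\infty e^{-sw}\mathrm{d}s$ on the event where $\Re(1+x_1^*G(X^{(1)},z)x_1)>0$, integrates over $x_1$ to get $\int_0^\infty e^{-s}\prod_i \phi_N(-\mathrm{i}s(G(X^{(1)},z))_{ii})\,\mathrm{d}s$, inserts $\phi_N(\lambda)=1-\mathrm{i}(1-t)\lambda/N+\mathsf{c}(\mathrm{i}\lambda)^{\alpha/2}/N^{\alpha/2}+\varepsilon_N(\lambda)$ from Lemma \ref{lem:truncationprop}(ii), and expands the product. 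Approximating $N^{-1}\Tr G(X^{(1)},z)$ by $c_N\E m_X(z)$ and $N^{-\alpha/2}\sum_i (G(X^{(1)},z))_{ii}^{\alpha/2}$ by $c_N N^{1-\alpha/2}(\mathsf{m}_{\mathsf{mp}}^{(t)}(z))^{\alpha/2}$ (the exceptional indices contributing $\mathcal{O}(N^{1-\alpha/2-\epsilon_\alpha}t^{-\alpha})=\mathfrak{o}(N^{1-\alpha/2})$ since $t\gg N^{-\epsilon_\alpha/\alpha}$, and the fluctuation of $m_{X^{(1)}}$ absorbed by the variance bound of Lemma \ref{lem:hpbandvb}), and using $\underline{\mathsf{m}}_{\mathsf{mp}}^{(t)}(z)=-z^{-1}(1+(1-t)c_N\mathsf{m}_{\mathsf{mp}}^{(t)}(z))^{-1}$, this yields, with $\delta\coloneqq\E m_X(z)-\mathsf{m}_{\mathsf{mp}}^{(t)}(z)$, the relation $c_N\delta=-z^{-1}\big[(1+(1-t)c_N\E m_X(z))^{-1}-(1+(1-t)c_N\mathsf{m}_{\mathsf{mp}}^{(t)}(z))^{-1}\big]-\mathfrak{p}(z)/z+(\text{error})$, where the error collects the diagonal reduction, the $\varepsilon_N$-term, the exceptional indices, the truncation, $G(X^{(1)},z)$ versus $G(X,z)$, and the replacement of $(1-t)$ by $1$ inside $\mathfrak{p}$; tracking constants through $\vartheta,\epsilon_\alpha,\beta$ and the size $t=N^{-2\epsilon_a}$, it is $\mathcal{O}(N^{1-\alpha/2-\epsilon(4-\alpha)(\alpha-2)/50})$.

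Linearizing this relation in $\delta$ (the quadratic remainder controlled by $\delta\prec N^{-\beta}$) gives the linear equation $c_N\delta\big(1-z(1-t)(\underline{\mathsf{m}}_{\mathsf{mp}}^{(t)}(z))^2\big)=-\mathfrak{p}(z)/z+(\text{error})$. By the MP equation (\ref{eq:MPSTfeq}) and (\ref{081420}), the stability factor equals $\mathrm{i}(1-t)\underline{\mathsf{m}}_{\mathsf{mp}}^{(t)}(z)\sqrt{(z/(1-t)-\lambda_-^{\mathsf{mp}})(\lambda_+^{\mathsf{mp}}-z/(1-t))}$ up to sign, and the $1/\underline{\mathsf{m}}_{\mathsf{mp}}^{(t)}(z)$ it produces splits, again via the quadratic that $\underline{\mathsf{m}}_{\mathsf{mp}}^{(t)}$ satisfies, into a constant part and a $\sqrt{\cdot}$-part; accordingly $\delta$ splits into a $\big((z/(1-t)-\lambda_-^{\mathsf{mp}})(\lambda_+^{\mathsf{mp}}-z/(1-t))\big)^{-1/2}$-singular piece, which is exactly $m_{\mathsf{shift}}(z)$, and a regular piece $-\mathfrak{p}(z)/(2c_Nz)$. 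This proves (\ref{eq:c1inprop}); moreover the square root is $\sim t$ near $\bar\zeta_{-,t}$, so $m_{\mathsf{shift}}(z)=\mathcal{O}(t^{-1}N^{1-\alpha/2})$ since $\mathfrak{p}(z)=\mathcal{O}(N^{1-\alpha/2})$. For (\ref{eq:c2inprop}), substitute $\bar\zeta_{-,t}=(1-t)\lambda_-^{\mathsf{mp}}-\sqrt{c_N}t^2$ (obtained as in the proof of Lemma \ref{lem:mmpcalcualtion}): for $|z-\bar\zeta_{-,t}|\ll t^2$, $z/(1-t)-\lambda_-^{\mathsf{mp}}\to-\sqrt{c_N}t^2$, hence $\sqrt{(z/(1-t)-\lambda_-^{\mathsf{mp}})(\lambda_+^{\mathsf{mp}}-z/(1-t))}\sim\mathrm{i}\cdot 2\sqrt{c_N}\,t$ (using $\lambda_+^{\mathsf{mp}}-\lambda_-^{\mathsf{mp}}=4\sqrt{c_N}$), $z/(1-t)-c_N+1\to 2(1-\sqrt{c_N})$, $z\to(1-\sqrt{c_N})^2$, and $\mathsf{m}_{\mathsf{mp}}^{(t)}(z)\to\mathsf{m}_{\mathsf{mp}}(\lambda_-^{\mathsf{mp}})$; multiplying $m_{\mathsf{shift}}$ by $t$ cancels the $t$ in the denominator, and the elementary simplification gives the stated constant.

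The main obstacle is the error analysis in the second step: one must pass from high-probability to expectation-level control of the diagonal reduction and of the $\varepsilon_N$-contribution while only having resolvent norms of size $t^{-2}$ and concentration of $m_X$ at the weak rate $N^{-\beta}$ with $\beta=(\alpha-2)/24$, and then combine all error sources so that their sum beats $N^{1-\alpha/2}$ by exactly the power $\epsilon(4-\alpha)(\alpha-2)/50$ — in particular the crude replacement of $(1-t)$ by $1$ inside $\mathfrak{p}$ alone costs $\mathcal{O}(tN^{1-\alpha/2})$, so the admissible error exponent is necessarily tied to the size $t=N^{-2\epsilon_a}$. A secondary technical point is the algebraic identity expressing the stability factor through the explicit MP Stieltjes transform, which is what makes the perhaps-unexpected split $\delta=m_{\mathsf{shift}}(z)-\mathfrak{p}(z)/(2c_Nz)$ come out with precisely the constants in the statement.
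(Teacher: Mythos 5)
Your proposal is correct and follows essentially the same route as the paper: a one-column Schur-complement expansion, reduction to the diagonal quadratic form (with the off-diagonal part killed by symmetry in expectation and bounded in second moment by $\mathcal{O}(N^{-1}t^{-4})$), the $w^{-1}=\int_0^\infty e^{-sw}\,\mathrm{d}s$ identity combined with the characteristic-function expansion already carried out in the proof of Proposition \ref{prop:computaionofkernel}, and then a perturbative solution of the resulting self-consistent equation around $\mathsf{m}_{\mathsf{mp}}^{(t)}$, with the same limit computation at $\bar{\zeta}_{-,t}=(1-t)\lambda_-^{\mathsf{mp}}-\sqrt{c_N}t^2$ yielding \eqref{eq:c2inprop}. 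The only cosmetic difference is that the paper solves the perturbed quadratic equation explicitly and compares with \eqref{eq:MPSTfeq}, whereas you linearize in $\delta$ and invert the stability factor — these are equivalent and produce the same split into $m_{\mathsf{shift}}(z)$ and $-\mathfrak{p}(z)/(2c_Nz)$.
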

\begin{proof}
By the resolvent expansion, we have for any $z \in \{\zeta : |\zeta - \bar{\zeta}_{-,t}| \le \tau t^2, |\Im \zeta| \ge N^{-100} \}$,
\begin{align}
	\big[ G(X^\top, z)\big]_{ii} = -\big(z + zx_{i}^\top G(X^{(i)}, z) x_i \big)^{-1}.
\label{eq:resoexpan}\end{align}
Let $Q = Q_\mathsf{diag} + Q_\mathsf{off}$ with $Q_\mathsf{diag} := \sum_{j=1}^M x_{ji}^2 \big[G(X^{(i)}, z)\big]_{jj}$ and $Q_\mathsf{off} := \sum_{\ell\neq k}  x_{ki} x_{\ell i}  \big[G(X^{(i)}, z)\big]_{k\ell}$.
Then, we can rewrite (\ref{eq:resoexpan}) as:
\begin{align}
	\big[ G(X^\top, z)\big]_{ii} 
	&=-\frac{1}{z(1 + Q_\mathsf{diag})} +  \frac{Q_\mathsf{off}}{z(1 + Q_\mathsf{diag})^2} -  \frac{Q_\mathsf{off}^2}{z(1 + Q_\mathsf{diag})^2(1 + Q )}.
\label{eq:resodecompo}\end{align}
Taking expectation at both sides gives
\begin{align*}
	\E \big[ G(X^\top, z)\big]_{ii} = -\frac{1}{z} \E \Big[\frac{1}{1 + Q_\mathsf{diag}}   \Big] - \frac{1}{z} \E \Big[\frac{Q_\mathsf{off}^2}{(1 + Q_\mathsf{diag})^2(1 + Q )}  \Big] = I_1 + I_2,
\end{align*}
where the second term at the right hand side of (\ref{eq:resodecompo}) vanished due to symmetry. 
Notice that when $\Psi^{(i)}$ is good, we have w.h.p. that
\begin{align*}
		|\lambda_{M}(\mathcal{S}({X}^{(i)}))- z| &= |(1-t)\lambda_{-}^{\mathsf{mp}} - \bar{\zeta}_{-,t}| -|\lambda_{M}(\mathcal{S}({X}^{(i)})) - (1-t)\lambda_{-}^{\mathsf{mp}}| - |\bar{\zeta}_{-,t} - z| \notag\\
		&\ge  \sqrt{c_N}t^2 -  \sqrt{c_N}t^2/4  - \tau t^2 \ge \sqrt{c_N}t^2/2,
	\end{align*} 
where in the last step we used the fact that $|\lambda_{M}(\mathcal{S}({X}^{(i)})) - (1-t)\lambda_{-}^{\mathsf{mp}}| \le N^{-\epsilon_b}$ w.h.p., and we also chose $\tau < \sqrt{c_N}t^2/4$. This together with the fact that $\Psi^{(i)}$ is good w.h.p. gives $\mathbb{P}\big( \Omega_i = \big\{ |\lambda_{M}(\mathcal{S}({X}^{(i)})) - z| \ge \sqrt{c_N}t^2/2 \big\}  \big) \ge 1 - N^{-D}$.
	Notice that $ \Re Q_\mathsf{diag} \ge 0$ and $\Re Q \ge 0$ hold on $\Omega_i$.
Then for $I_2$, with the smallness of $\mathbb{P}(\Omega_i^c)$,  we have $I_2 = \E [ {Q_\mathsf{off}^2\mathbf{1}_{\Omega_i}}/[(1 + Q_\mathsf{diag})^2(1 + Q )] ] + \mathcal{O}(N^{-D})$.
	We then bound $I_2$ as 
	\begin{align*}
		|I_2| \le \E |Q_\mathsf{off}|^2 \mathbf{1}_{\Omega_i} +\mathcal{O}(N^{-D}) 
		 = 2N^{-2} \E  \Big[\Tr  G(X^{(i)}, z) \overline{G(X^{(i)}, z)} \mathbf{1}_{\Omega_i} \Big] +\mathcal{O}(N^{-1}) = \mathcal{O}_\prec(t^{-4}N^{-1}).
	\end{align*}
	Next, we estimate $I_1$. 
	Due to the smallness of $\mathbb{P}(\Omega_i^c)$, we only have to do the estimation on the event $\Omega_i$. Specially, we have $I_1 = - \E [\mathbf{1}_{\Omega_i}/(z + zQ_\mathsf{diag})] + 	\mathcal{O}(N^{-D})$.
Notice that $\Re  Q_\mathsf{diag} \ge 0$ on the event $\Omega_i$. Using the identity that for $w$ with $\Re w > 0$, $w^{-1} = \int_0^\infty e^{-sw} \mathrm{d}s$ and setting $w = 1 +  Q_\mathsf{diag}$, we have
\begin{align*}
	I_1 &= -\frac{1}{z} \E \Big[ \int_0^{\infty} e^{-s(1 +  Q_\mathsf{diag})} \mathrm{d}s  \cdot \mathbf{1}_{\Omega_i}   \Big] + 	\mathcal{O}(N^{-D})
	=-\frac{1}{z} \E \Big(\E_{x_i} \Big[ \int_0^{\infty} e^{-s(1 +  Q_\mathsf{diag})} \mathrm{d}s   \Big]  \cdot \mathbf{1}_{\Omega_i} \Big)+ 	\mathcal{O}(N^{-D})\\
	&=-\frac{1}{z} \E \Big( \int_0^{\infty} e^{-s} \prod_{j} \phi_N\big(-\mathrm{i}s \big[G(X^{(i)}, z)\big]_{jj}\big) \mathrm{d}s    \cdot \mathbf{1}_{\Omega_i} \Big)+ 	\mathcal{O}(N^{-D}).
\end{align*}	
Then we may proceed as the estimation in the proof of Proposition \ref{prop:computaionofkernel} to obtain that
\begin{align*}
	I_1 = -\frac{1}{z}\E \Big[\frac{1}{1 + (1-t)\Tr G(X^{(i)},z) / N } \cdot \mathbf{1}_{\Omega_i}   \Big] -\frac{ \mathfrak{p}(z)}{z} + \mathcal{O}(N^{-3(\alpha-2)/5}).
\end{align*} 
Further using the  $\mathcal{O}_\prec(t^{-4}N^{-1})$ bound for $\mathrm{Var}(M^{-1}\Tr G(X^{(i)},z))$ and the fact $\Tr G(X^{(i)},z) - \Tr G(X,z) \prec t^{-4}$, we arrive at
\begin{align*}
	I_1 = -\frac{1}{z}\Big( \frac{1}{1 + (1-t) \E \Tr G(X,z) / N }\Big)  -\frac{ \mathfrak{p}(z)}{z} + \mathcal{O}(N^{-3(\alpha-2)/5}) + \mathcal{O}_\prec(t^{-4}N^{-1}).
\end{align*}
Collecting the estimates for $I_1$ and $I_2$, and then summing over $i$, we have
\begin{align*}
	N^{-1}\E \Tr  G(X^\top, z) = -\frac{1}{z} \Big(\frac{1}{1 + (1-t) \E \Tr G(X,z) / N } \Big)  -\frac{ \mathfrak{p}(z)}{z} + \mathcal{O}(N^{-3(\alpha-2)/5}). 
\end{align*}
Using the simple equation $\Tr  G(X, z) - \Tr  G(X^\top, z) = (N-M)/z$, the above equation can be rewritten as:
\begin{align}
c_N\E m_X(z)  	= -\frac{1}{z} \Big(\frac{1}{1 + (1-t)c_N\E m_X(z) } \Big)  -\frac{ \mathfrak{p}(z)+1-c_N}{z} +\mathcal{O}(N^{-3(\alpha-2)/5}). 
\label{eq:quadraticequa}\end{align}
Notice that for $z = \bar{\zeta}_{-,t} + \mathrm{i}N^{-100K_\zeta}$, we have
$
	(\frac{ z}{1-t} + c_N - 1)^2-\frac{ 4c_Nz}{1-t} = \frac{c_Nt^2(2-t)^2}{(1-t)^2} + \mathcal{O}(N^{-90K_\zeta}).
$
Then by continuity, we may choose $\tau$ sufficiently small such that for any $z \in \{\zeta : |\zeta - \bar{\zeta}_{-,t}| \le \tau t^2, |\Im \zeta| \ge N^{-100K_\zeta} \}$, we have $(\frac{ z}{1-t} + c_N - 1)^2-\frac{ 4c_Nz}{1-t}  \sim t^2$. 
Having this bound, we may solve the quadric equation (\ref{eq:quadraticequa}) and then compare it with (\ref{eq:MPSTfeq})	 to obtain that
\begin{align*}
	\E m_X(z)=\mathsf{m}_{\mathsf{mp}}^{(t)}(z) + m_{\mathsf{shift}}(z) - \frac{\mathfrak{p}(z)}{2c_Nz}+ \mathcal{O}(N^{-11(\alpha-2)/20}),
\end{align*}
which proves (\ref{eq:c1inprop}).
Using the fact that $\mathsf{m}_{\mathsf{mp}}^{(t)}(\bar{\zeta}_{-,t} + \mathrm{i}N^{-100K_\zeta}) - \mathsf{m}_{\mathsf{mp}}(\lambda_{-}^{\mathsf{mp}}) \le t $, we may further derive that
\begin{align*}
	t m_{\mathsf{shift}}(\bar{\zeta}_{-,t} + \mathrm{i}N^{-100}) =\frac{ \mathsf{c}N^{1-\alpha/2}  \int_0^\infty e^{-s-sc_N \mathsf{m}_{\mathsf{mp}}(\lambda_{-}^{\mathsf{mp}})  }\big( s \mathsf{m}_{\mathsf{mp}}(\lambda_{-}^{\mathsf{mp}}) \big)^{\alpha/2} \mathrm{d}s}{2\sqrt{c_N}(1-\sqrt{c_N}) } +  \mathcal{O}(tN^{1-\alpha/2}).
\end{align*}
This together with the crude bound $\mathsf{m}_{\mathsf{mp}}^{(t)}(\bar{\zeta}_{-,t} + \mathrm{i}N^{-100K_\zeta}) - \mathsf{m}_{\mathsf{mp}}^{(t)}(\bar{\zeta}_{-,t}) = \mathcal{O}(N^{-90K_\zeta})$ proves (\ref{eq:c2inprop}), which completes the proof of Proposition \ref{prop:expectationexpan}.
\end{proof}

The following corollary is a direct consequence of Proposition \ref{prop:expectationexpan}.
\begin{corollary} 
Let $\tau$ be chosen as in Proposition \ref{prop:expectationexpan}. Then for any $z \in \{\zeta : |\zeta - \bar{\zeta}_{-,t}| \le \tau t^2/2, |\Im \zeta| \ge N^{-100} \}$, we have
	$
		\E m^{(k)}_{X}(z) - (\mathsf{m}_{\mathsf{mp}}^{(t)}(z))^{(k)} = \mathcal{O}(t^{-(2k+1)}N^{1-\alpha/2}), 
	$
\label{coro:EmminusmXbound}\end{corollary}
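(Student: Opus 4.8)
The plan is to derive the bound on $\E m_X^{(k)}(z) - (\mathsf{m}_{\mathsf{mp}}^{(t)}(z))^{(k)}$ directly from the expansion in Proposition \ref{prop:expectationexpan} via the Cauchy integral formula. Specifically, from \eqref{eq:c1inprop} we know that for all $z$ in the disk $\{|\zeta - \bar{\zeta}_{-,t}| \le \tau t^2\}$ intersected with $\{|\Im \zeta| \ge N^{-100}\}$, the function $\E m_X(z) - \mathsf{m}_{\mathsf{mp}}^{(t)}(z)$ equals $m_{\mathsf{shift}}(z) - \mathfrak{p}(z)/(2c_N z) + \mathcal{O}(N^{1-\alpha/2-\epsilon(4-\alpha)(\alpha-2)/50})$. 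The first step is to show each of the two explicit terms $m_{\mathsf{shift}}(z)$ and $\mathfrak{p}(z)/(2c_N z)$ is itself $\mathcal{O}(t^{-1}N^{1-\alpha/2})$ on this disk, together with analytic control of their derivatives; for $\mathfrak{p}(z)$ this follows from differentiating under the integral sign using $\mathsf{m}_{\mathsf{mp}}^{(t)}(z) \sim t^{-1/2}$ and $(\mathsf{m}_{\mathsf{mp}}^{(t)})^{(j)}(z) \sim t^{-(2j-1)/2}$ (consequences of \eqref{081420} and the fact that $\bar\zeta_{-,t}$ sits at mesoscopic distance $\sim t^2$ from the left edge of the rescaled MP law), and for $m_{\mathsf{shift}}$ from its explicit formula together with $\sqrt{(z/(1-t) - \lambda_-^{\mathsf{mp}})(\lambda_+^{\mathsf{mp}} - z/(1-t))} \sim t$.

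Next, restrict to the smaller disk $\{|\zeta - \bar\zeta_{-,t}| \le \tau t^2/2\}$ and, for a given such $z$, apply the Cauchy integral formula along the contour $\omega = \{\zeta' : |\zeta' - z| = \tau t^2/4\}$, which stays inside the larger disk where \eqref{eq:c1inprop} is valid. One subtlety is that this contour may cross the real axis, where $\E m_X$ need not be defined deterministically; however, since we are taking the expectation and since the good-$\Psi$ event plus the minor argument gives $|\lambda_M(\mathcal{S}(X)) - z| \gtrsim t^2$ with overwhelming probability (as already exploited in the proof of Lemma \ref{lem:hpbandvb}, cf. \eqref{eq:zmlsxk} and \eqref{081501}), one can handle this exactly as in \eqref{081501}: cut off the portion of the contour with $|\Im \zeta'| < N^{-100}$ at negligible cost. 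Writing $k! (2\pi \mathrm{i})^{-1} \oint_\omega (\E m_X(\zeta') - \mathsf{m}_{\mathsf{mp}}^{(t)}(\zeta'))(\zeta' - z)^{-(k+1)}\,\mathrm{d}\zeta'$ and bounding the numerator by $\mathcal{O}(t^{-1}N^{1-\alpha/2})$ while $|\zeta' - z|^{-(k+1)} = (\tau t^2/4)^{-(k+1)}$ and the contour length is $\sim t^2$, we obtain $\E m_X^{(k)}(z) - (\mathsf{m}_{\mathsf{mp}}^{(t)}(z))^{(k)} = \mathcal{O}(t^{-1}N^{1-\alpha/2} \cdot t^{-2(k+1)} \cdot t^2) = \mathcal{O}(t^{-(2k+1)}N^{1-\alpha/2})$, which is the claim.

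There is no deep obstacle here; the corollary is essentially a consequence of Cauchy's estimate applied to Proposition \ref{prop:expectationexpan}. The only point requiring care is the contour-cutoff argument near the real axis, but since this has already been carried out in detail for $\E m_X^{(k)}$ in the proof of Lemma \ref{lem:hpbandvb} (see the display chain culminating in \eqref{081501}), one can simply refer to that computation. The mildly nontrivial bookkeeping step is verifying that the explicit correction terms $m_{\mathsf{shift}}(z)$ and $\mathfrak{p}(z)/(2c_N z)$ and all their relevant derivatives are genuinely of order $t^{-1}N^{1-\alpha/2}$ — but this is exactly the estimate $m_{\mathsf{shift}}(z) = \mathcal{O}(t^{-1}N^{1-\alpha/2})$ already asserted in Proposition \ref{prop:expectationexpan}, extended to derivatives by one more application of Cauchy's formula on a slightly larger disk, so it costs nothing new.
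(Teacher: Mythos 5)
Your proof is correct and follows exactly the route the paper intends (the paper's own proof is the one-line remark "the claim follows from Proposition \ref{prop:expectationexpan} with Cauchy integral"): bound $\E m_X - \mathsf{m}_{\mathsf{mp}}^{(t)}$ by $\mathcal{O}(t^{-1}N^{1-\alpha/2})$ on a contour of radius $\sim t^2$, handle the crossing of the real axis by the same cutoff already used in \eqref{081501}, and let Cauchy's estimate supply the extra factor $t^{-2k}$. One harmless slip: $\mathsf{m}_{\mathsf{mp}}^{(t)}(z)$ is $\mathcal{O}(1)$ near $\bar{\zeta}_{-,t}$ (cf.\ Lemma \ref{lem:mmpcalcualtion}(i)), not $\sim t^{-1/2}$, and $(\mathsf{m}_{\mathsf{mp}}^{(t)})^{(j)}\sim t^{-(2j-1)}$ rather than $t^{-(2j-1)/2}$ — but since the Cauchy estimate only needs the sup bound on the undifferentiated error over the contour, none of these derivative asymptotics are actually load-bearing and the argument stands.
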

\begin{proof}
	The claim follows from Proposition \ref{prop:expectationexpan} with  Cauchy integral. We omit further details. 
\end{proof}

\begin{proof}[Proof of Proposition \ref{prop:lambdashiftexpans}] 
Replacing $\E[ m_X(\hat{\zeta}_{\mathsf{e}})]$ by $\mathsf{m}_{\mathsf{mp}}^{(t)}(\hat{\zeta}_{\mathsf{e}})$ in the expression of $\lambda_{\mathsf{shift}} $, we can obtain
\begin{align}
		\lambda_{\mathsf{shift}}  &= \bar{\Phi}_t({\zeta}_{\mathsf{e}})  + \big(2c_Nt\lambda_{-}^{\mathsf{mp}} + O(t^2)\big) \cdot \big( \mathsf{m}_{\mathsf{mp}}^{(t)}(\hat{\zeta}_{\mathsf{e}}) - \E[ m_X(\hat{\zeta}_{\mathsf{e}})]  \big) + \mathcal{O}(|\zeta_{\mathsf{e}} - \hat{\zeta}_{\mathsf{e}}|).
\label{eq:lshiftexpan}\end{align}
Expanding $\bar{\Phi}_t({\zeta}_{\mathsf{e}})$ around $\bar{\zeta}_{-,t}$ and using the fact that $\bar{\Phi}'_t(\bar{\zeta}_{-,t}) = 0$, we have that there exists $\tilde{\zeta} \in [\bar{\zeta}_{-,t}, {\zeta}_{\mathsf{e}}]$ such that
$
	\bar{\Phi}_t({\zeta}_{\mathsf{e}}) = \bar{\Phi}_t(\bar{\zeta}_{-,t}) + \bar{\Phi}''_t(\tilde{\zeta})({\zeta}_{\mathsf{e}} - \bar{\zeta}_{-,t})^2 = \lambda_{-}^{\mathsf{mp}}+\bar{\Phi}''_t(\tilde{\zeta})({\zeta}_{\mathsf{e}} - \bar{\zeta}_{-,t})^2.
$
Substituting this expansion back into (\ref{eq:lshiftexpan}), and using the bound in  Corollary \ref{coro:EmminusmXbound}, (\ref{eq:lshiftexpan}) becomes
\begin{align*}
	\lambda_{\mathsf{shift}} = \lambda_{-}^{\mathsf{mp}} + 2c_Nt\lambda_{-}^{\mathsf{mp}}\big( \mathsf{m}_{\mathsf{mp}}^{(t)}(\hat{\zeta}_{\mathsf{e}}) - \E[ m_X(\hat{\zeta}_{\mathsf{e}})]  \big) + \bar{\Phi}''_t(\tilde{\zeta})({\zeta}_{\mathsf{e}} - \bar{\zeta}_{-,t})^2+  \mathfrak{o}(N^{1-\alpha/2}).
\end{align*}
Note by considering that $\tilde{\zeta} - (1-t)\lambda_{-}^{\mathsf{mp}} \sim t^{2}$, it can be easily verified that $\bar{\Phi}''_t(\tilde{\zeta}) \sim t^{-2}$. 

By employing Corollary \ref{coro:EmminusmXbound} along with the variance bounds for $m_X^{(k)}(\hat{\zeta}_{\mathsf{e}})$ in Lemma \ref{lem:hpbandvb}, we can conclude that
\begin{align*}
	\bar{\Delta}^{(k)}_m(\bar{\zeta}_{-,t}) \coloneqq m_X^{(k)}(\bar{\zeta}_{-,t}) - (\mathsf{m}_{\mathsf{mp}}^{(t)}(\bar{\zeta}_{-,t} ))^{(k)}  = \mathcal{O}_p( N^{-1/2+\epsilon/2}t^{-2-k} + N^{1-\alpha/2} t^{-2k-1} ).
\end{align*}
With the above probabilistic bounds in place, we may now proceed to follow the expansion detailed in the proof of  Lemma \ref{lem:hpbandvb}, but this time substitute $\zeta_{\mathsf{e}}$ with $\bar{\zeta}_{-,t}$ and $\E (m_X^{(k)}(\hat{\zeta}_{\mathsf{e}}))$ with $\bar{m}^{(k)}_X(\bar{\zeta}_{-,t})$ (cf. (\ref{eq:zetaequation})-(\ref{eq:selfineVarm})). It becomes evident that the $\mathsf{ZOT}_\zeta$ therein vanishes due to the fact that $\bar{\Phi}_t'(\bar{\zeta}_{-,t}) = 0$. This eventually leads to
	$
		\bar{\Delta}_\zeta \coloneqq \zeta_{-,t} - \bar{\zeta}_{-,t} = \mathcal{O}_p (N^{-1/2+\epsilon/2}  + N^{1-\alpha/2}   ).
	$
	Therefore, with ${\Delta}_\zeta = \mathcal{O}_p(N^{-1/2+\epsilon/2}t^6)$, we have
	$
		\bar{\Phi}''_t(\tilde{\zeta})({\zeta}_\mathsf{e} - \bar{\zeta}_{-,t})^2 \sim t^{-2}( \bar{\Delta}_\zeta - {\Delta}_\zeta)^2 = \mathfrak{o}(N^{1-\alpha/2}).
	$
	Consequently, we arrive at
	\begin{align*}
		\lambda_{\mathsf{shift}} = \lambda_{-}^{\mathsf{mp}} + 2c_Nt\lambda_{-}^{\mathsf{mp}}\big( \mathsf{m}_{\mathsf{mp}}^{(t)}(\hat{\zeta}_{\mathsf{e}}) - \E m_X(\hat{\zeta}_{\mathsf{e}})  \big) +  \mathfrak{o}(N^{1-\alpha/2}).
	\end{align*}
	Recalling from (\ref{eq:zetaminusbarzeta}) that $\bar{\zeta}_{-,t} - {\zeta}_{\mathsf{e}} \prec N^{-\beta/2}t^2$, we can deduce that $\bar{\zeta}_{-,t} - \hat{\zeta}_{\mathsf{e}} \prec N^{-\beta/2}t^2$. The claim now follows by (\ref{eq:c2inprop}) in Proposition \ref{prop:expectationexpan} and the fact $\mathsf{m}_{\mathsf{mp}}(\lambda_{-}^{\mathsf{mp}}) = (\sqrt{c_N}-c_N)^{-1}$.
	\end{proof}


\section{Beyond Gaussian divisible model}\label{s.general}
In this section, we present three Green function function comparison results, as we mentioned in the Section \ref{sec1}. Their proofs will be postponed  to the next section.  Recall the notations in (\ref{081537}).
\subsection{Entry-wise bound}
We first introduce the following shorthand notation: for any $a,b\in [M]$ and $u,v \in [N]$,\\
\begin{equation*}
 \mathfrak{X}_{ab} = \mathfrak{X}_{ab}(\Psi) \coloneqq 
\begin{cases}
1 & \text{if } a \text{  or  } b \in \mathcal{T}_r, 
\\
t^2& \text{if } a \in \mathcal{D}_r, b \in \mathcal{D}_r
\end{cases} ,\quad 
\mathfrak{Y}_{uv} = \mathfrak{Y}_{uv}(\Psi) \coloneqq 
\begin{cases}
1 & \text{if } u  \text{  or  }  v\in \mathcal{T}_c,
\\
t^2& \text{if } u \in \mathcal{D}_c, v \in \mathcal{D}_c 
\end{cases},
\end{equation*}
\begin{equation*}
 \mathfrak{Z}_{au} = \mathfrak{Z}_{au}(\Psi) \coloneqq 
\begin{cases}
1 & \text{if } a \in \mathcal{T}_r \text{ or } u \in \mathcal{T}_c
\\
t^2& \text{if } a \in \mathcal{D}_r, u \in \mathcal{D}_c
\end{cases}.
\end{equation*}

\begin{proposition}[Entry-wise bound]
Recall $\mathsf{D}(\varepsilon_1,\varepsilon_2,\varepsilon_3)$ defined in (\ref{eq:domainD}). Let $\mathsf{D}_{\leq}=\{z=E+\ii\eta\in \mathsf{D}(\varepsilon_1,\varepsilon_2,\varepsilon_3): \eta\leq N^{-\varepsilon}\}$. Set $10\epsilon_a \le  \varepsilon_1 \le \epsilon_b/500$, and set $\varepsilon_2, \varepsilon_3$ sufficiently small, and  $3\epsilon_1<\varepsilon\leq \epsilon_b/100$. Suppose that $\Psi$ is good.  Let $\mathbb{P}_\Psi$ be the probability conditioned on the event that the $(\psi_{ij})$ matrix is a given $\Psi$. Suppose that $\Psi$ is good (cf. (\ref{081533})). Then for each $\delta > 0$ and $D > 0$, there exists a large constant $C >0$ such that  
	\begin{align*}
		\mathbb{P}_\Psi \Big( \sup_{0\le \gamma \le 1} \sup_{z \in \mathsf{D}_{\leq} }\sup_{a, b \in [M] }  &| \mathfrak{X}_{ab}[G^\gamma(z)]_{ab}| \vee \sup_{0\le \gamma \le 1} \sup_{z \in \mathsf{D}_{\leq} }\sup_{u, v \in [N]}  |\mathfrak{Y}_{uv}[\mathcal{G}^\gamma(z)]_{uv}| \\
		&\vee \sup_{0\le \gamma \le 1} \sup_{z \in \mathsf{D}_{\leq} }\sup_{a \in [M],u \in [N]}  | \mathfrak{Z}_{au}[G^\gamma(z)Y^\gamma]_{au}| \ge N^{\delta} \Big) \le CN^{-D},
	\end{align*}
\label{prop:locallawentries}\end{proposition}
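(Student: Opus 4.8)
The plan is to prove Proposition \ref{prop:locallawentries} in two stages: first establish the bound at the Gaussian divisible endpoint $\gamma=0$, then propagate it along the interpolation $\gamma\mapsto Y^\gamma$ by a Green function comparison argument. For the base case $\gamma=0$, the matrix is exactly $V_t$ (up to the identification $Y^0 = t^{1/2}W + \mathsf{B}+\mathsf{C}$), so the desired estimates are precisely the content of Theorem \ref{thm: resolvent entry size V_t}, valid on the larger domain $\mathsf{D}$ and hence on $\mathsf{D}_{\le}$; here the weights $\mathfrak{X}_{ab},\mathfrak{Y}_{uv},\mathfrak{Z}_{au}$ encode exactly the dichotomy ``entry dominated by $1$ if an index lies in the complement $\mathcal{T}_r$ or $\mathcal{T}_c$, and by $t^{-2}$ if both indices are in the heavy part $\mathcal{D}_r,\mathcal{D}_c$.'' So the first step is simply to invoke Theorem \ref{thm: resolvent entry size V_t} together with a standard $\prec$-to-high-probability union bound over a fine net of $\mathsf{D}_{\le}$, using the Lipschitz continuity of the resolvents in $z$ (with Lipschitz constant polynomial in $N$ since $\eta\ge N^{-2/3-\varepsilon_2}$).

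The second and main stage is to upgrade from $\gamma=0$ to all $\gamma\in[0,1]$ uniformly. I would do this by a moment comparison of the form \eqref{071112}: for fixed indices and fixed $z\in\mathsf{D}_{\le}$, show $|\mathbb{E}_\Psi|\mathfrak{X}_{ab}G^\gamma_{ab}(z)|^{2k} - \mathbb{E}_\Psi|\mathfrak{X}_{ab}G^0_{ab}(z)|^{2k}| \le N^{-\delta'}$ for some $\delta'>0$ and every fixed $k$, and likewise for the $\mathcal{G}$ and $GY$ families. The mechanism is the usual Lindeberg/telescoping swap: replace the entries of $\mathsf{A}$ one at a time by the matching Gaussian increments $\sqrt{t}(W)_{ij}$ (conditionally on $\psi_{ij}=0$), and Taylor-expand the resolvent functional in the single entry being swapped. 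The first two moments of $\mathsf{A}_{ij}$ and $\sqrt{t}W_{ij}$ match by the choice $t=N\mathbb{E}|\mathsf{A}_{ij}|^2$; the third moment vanishes by the symmetry assumption on $\Theta$; so the leading error per swap comes from the fourth order term, which carries a factor $\mathbb{E}|\mathsf{A}_{ij}|^4 \lesssim N^{-1-2\epsilon_a(\alpha - 2) + o(1)}$ (using $|a_{ij}|\le N^{-1/2-\epsilon_a}$ and the tail), and this beats the $N^2$ entries being swapped. Throughout the expansion one uses the a priori bounds from the base case as the ``input'' on the partially-swapped matrices — this is the self-improving structure: one runs the swap on the good event where all intermediate resolvents already satisfy the claimed bounds, and controls the bad event deterministically using $\|\mathcal{S}(\cdot)\|\le N^{C}$ (Remark \ref{081821}) together with $\eta\ge N^{-2/3-\varepsilon_2}$, so that every Green function entry is at most $N^{C'}$ and the tiny-probability contribution is negligible. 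Once \eqref{071112}-type bounds hold entrywise, Markov's inequality and a union bound over indices, over a polynomial net in $(\gamma,z)$, and over a slowly growing sequence of $k$, yield the claimed uniform high-probability estimate; continuity in $\gamma$ and $z$ fills the net.

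The delicate point — and the main obstacle — is the bookkeeping of which weight applies to which entry during the swap. When we differentiate a functional of $G^\gamma$ in an entry $(\mathsf{A})_{ij}$ with $i\in\mathcal{D}_r$, the resolvent expansion produces products of Green function entries some of whose indices are forced into $\mathcal{D}_r$ (hence only controlled by $t^{-2}$, not $1$), and one must check that the accumulated powers of $t^{-2}$ are always dominated by the gain from $\mathbb{E}|\mathsf{A}_{ij}|^4$ and the smallness of $|\mathcal{D}_r|,|\mathcal{D}_c|\le N^{1-\epsilon_\alpha}$ (goodness of $\Psi$). Concretely, a term touching $r$ ``heavy'' indices costs $t^{-2r} = N^{2r\epsilon_a \cdot(\text{const})}$ but the swap gains $N^{-1-c}$ and each free heavy summation index is restricted to a set of size $N^{1-\epsilon_\alpha}$ rather than $N$; the inequality \eqref{081401} relating $\epsilon_a,\epsilon_b,\epsilon_\alpha$ is exactly what makes this margin positive. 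I would organize this via a graphical/stopping-rule argument as in \cite{ALY, KY17}: classify the monomials appearing in the fourth-order Taylor term by their index-type profile, bound each using the current ($\gamma$-dependent) a priori estimates, and verify the exponent inequality in each case. The $[G^\gamma Y^\gamma]_{au}$ family requires a little extra care because swapping $(\mathsf{A})_{ij}$ also changes $Y^\gamma$ explicitly (not just through $G^\gamma$), producing an additional ``direct'' term $[G^\gamma]_{a i}\delta_{ju}$-type contribution; this is handled by the same expansion bookkeeping and is in fact better behaved since it carries one fewer resolvent. Everything else — the net construction, the passage from moments to probability, the reduction of $\mathsf{D}(\varepsilon_1,\varepsilon_2,\varepsilon_3)$ to its sub-domain $\mathsf{D}_{\le}$, and the conditioning on $\Psi$ — is routine.
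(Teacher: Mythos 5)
Your overall architecture coincides with the paper's: the $\gamma=0$ bounds from Theorem \ref{thm: resolvent entry size V_t} serve as the anchor, and a Lindeberg-type Green function comparison in $\gamma$ (second-moment matching from $t=N\E|\mathsf{A}_{ij}|^2$, vanishing odd moments by symmetry, and the $\mathcal{D}_r/\mathcal{D}_c$ weight bookkeeping you describe) transports them to all $\gamma\in[0,1]$; this is exactly the role of Theorem \ref{thm:comparison1} in the paper, and your discussion of the exponent counting for heavy indices matches the paper's use of \eqref{081401} and the goodness of $\Psi$.

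There is, however, one genuine gap: you do not break the circularity in the comparison step. Every term of the swap/Taylor expansion must be estimated using the a priori entrywise bounds on the \emph{partially interpolated} matrices, i.e.\ on $G^{\gamma}$ for all intermediate $\gamma$ --- which is precisely the event whose probability the proposition asserts is close to one. Your remedy is to ``control the bad event deterministically \ldots so that the tiny-probability contribution is negligible,'' but the deterministic bound on the bad event only gives a factor $N^{C'}$, so its contribution to the compared moments is $N^{C'}\cdot Q_0$ with $Q_0$ the failure probability of the very estimate being proven; declaring $Q_0$ tiny assumes the conclusion. The paper closes this loop with an ingredient your proposal omits: a monotonicity/bootstrap argument in the spectral parameter (Lemma \ref{lem:monotonlema}, which rests on the deterministic inequality $\Gamma(E+\ii\eta/L)\le L\,\Gamma(E+\ii\eta)$ of Lemma \ref{lem:continuitylemma}). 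One first notes that for $\eta\gtrsim N^{-\varepsilon/2}$ the claimed bounds hold deterministically, so the failure probability vanishes there; combining Theorem \ref{thm:comparison1} with Markov's inequality then shows that the failure probability at $z=E+\ii\eta$ is controlled by the failure probability at $z'$ with $\eta'=N^{\varepsilon/6}\eta$ (up to $CN^{-D}$), and iterating this $O(1/\varepsilon)$ times descends from the trivial large-$\eta$ regime to all of $\mathsf{D}_{\le}$. Without this (or an equivalent continuity/stopping-rule device in $\eta$), the self-referential structure of your second stage does not close.
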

The proof of Proposition \ref{prop:locallawentries} follows a similar approach to the one demonstrated in \cite[Proposition 3.17]{ALY}. It relies on the entry-wise bounds for the Green functions of $Y^0$ as provided in Theorem \ref{thm: resolvent entry size V_t}, which serve as an input for the subsequent comparison theorem. We defer the proof to Section \ref{1729}.

\begin{theorem}\label{thm:comparison1}
	 Let $F : \mathbb{R} \to \mathbb{R}$ be a function such that 
	\begin{align*}
		\sup_{0 \le \mu \le d } F^{(\mu)}(x) \le (|x| + 1)^{C_0}, \qquad \sup_{\substack{0 \le \mu \le d \\ |x| \le 2N^2 }} F^{(\mu)}(x) \le N^{C_0},
	\end{align*}
	for some real number $C_0, d > 0$. For any $0-1$ matrix $\Psi$ and complex number $z$, we define for any $a,b\in [M]$ and $u,v \in [N]$,
	\begin{align*}
		&\mathfrak{I}_{0,ab} = \mathfrak{I}_{0,ab} (\Psi,z) \coloneqq \max_{0 \le \mu \le d} \sup_{ 0 \le \gamma \le 1} \E_\Psi \big( \big|  F^{(\mu)}( \mathfrak{X}_{ab}\Im [G^{\gamma}(z)]_{ab}) \big|\big),\\
		&\mathfrak{I}_{1,uv} = \mathfrak{I}_{1,uv} (\Psi,z) \coloneqq \max_{0 \le \mu \le d} \sup_{ 0 \le \gamma \le 1} \E_\Psi \big( \big|  F^{(\mu)}(\mathfrak{Y}_{uv}\Im [\mathcal{G}^{\gamma}(z)]_{uv}) \big|\big),\\
		&\mathfrak{I}_{2,au} = \mathfrak{I}_{2,au} (\Psi,z) \coloneqq\max_{0 \le \mu \le d} \sup_{ 0 \le \gamma \le 1} \E_\Psi \big( \big|  F^{(\mu)}(\mathfrak{Z}_{au}\Im [G^{\gamma}(z)Y^\gamma ]_{au}) \big|\big),
	\end{align*}
	and $\Omega = \Omega_0 \cap \Omega_1 \cap \Omega_2\cap \Omega_w$, $Q_0 = Q_0(\varepsilon,z)\coloneqq 1 -\mathbb{P}_\Psi \big( \Omega \big)$ with 
	\begin{align*}
		&\Omega_0 = \Omega_0(\varepsilon, z) \coloneqq \Big\{\sup_{\substack{a , b \in [M]\\ 0 \le \gamma \le 1}} |\mathfrak{X}_{ab}[G^{\gamma}(z)]_{ab}| \le N^{\varepsilon} \Big\}, \Omega_1= \Omega_1(\varepsilon, z) \coloneqq \Big\{\sup_{\substack{u, v \in [N] \\ 0 \le \gamma \le 1}} |\mathfrak{Y}_{uv}[\mathcal{G}^{\gamma}(z)]_{uv}| \le N^{\varepsilon} \Big\}, \\
		&\Omega_2 = \Omega_2(\varepsilon, z) \coloneqq \Big\{\sup_{\substack{a\in [M],u\in[N] \\ 0 \le \gamma \le 1}} |\mathfrak{Z}_{au}[G^{\gamma}(z)Y^\gamma ]_{au}| \le N^{\varepsilon} \Big\}, \Omega_w = \Omega_w(\varepsilon)  \coloneqq \Big\{ \sup_{i \in [M], j \in [N]} |w_{ij}| \le N^{-1/2+\varepsilon}t \Big\}.
	\end{align*}
	Suppose that $\Psi$ is good. There exist sufficiently small  positive constants $\varepsilon \leq \epsilon_b/100$ and $\omega$,  and a large constant $C >0$ such that for 
	\begin{align*}
		(\#_1,\#_2,\#_3) \in \{&(\mathfrak{X}_{ab}\Im [G^\gamma(z)]_{ab}, \;\mathfrak{X}_{ab}\Im [G^0(z)]_{ab}, \;\mathfrak{I}_{0,ab}   ), \\
		&(\mathfrak{Y}_{uv}\Im [\mathcal{G}^\gamma(z)]_{uv},\;\mathfrak{Y}_{uv}\Im [\mathcal{G}^0(z)]_{uv},\;\mathfrak{I}_{1,uv}) ,\\
		 &(\mathfrak{Z}_{au}\Im [G^{\gamma}(z)Y^\gamma]_{au},\; \mathfrak{Z}_{au}\Im [G^{0}(z)Y^0]_{au}, \;\mathfrak{I}_{2,au})\},
	\end{align*}
	we have
	\begin{align}
		\sup_{0 \le \gamma \le 1} \big| \E_\Psi \big(F(\#_1)  \big)  &- \E_\Psi \big(F(\#_2)  \big)  \big|< CN^{-\omega}(\#_3 + 1) + CQ_0N^{C+C_0},\label{eq:comparisonG}
	\end{align}
	for any $a,b\in [M]$ and $u,v \in [N]$. The same estimates hold if $\Im$'s are replaced by $\Re$'s.
\end{theorem}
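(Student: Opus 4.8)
The plan is to establish (\ref{eq:comparisonG}) by a continuous Lindeberg-type interpolation in $\gamma$, following the scheme of \cite{ALY} adapted to the rectangular/left-edge setting. Write $\#_1=\#_1(\gamma)$ for one of the three weighted observables; the three cases, and the $\Re$-analogue, are treated identically. Since $\E_\Psi F(\#_1)-\E_\Psi F(\#_2)=\int_0^\gamma\frac{\mathrm{d}}{\mathrm{d}s}\E_\Psi F(\#_1(s))\,\mathrm{d}s$, it suffices to bound $|\frac{\mathrm{d}}{\mathrm{d}s}\E_\Psi F(\#_1(s))|$ uniformly in $s\in[0,1]$. Because $Y^s$ depends on $s$ only through the scalar coefficients of $\mathsf{A}$ and $W$, the chain rule gives $\frac{\mathrm{d}}{\mathrm{d}s}\E_\Psi F(\#_1(s))=\sum_{i\in[M],j\in[N]}\E_\Psi\big[\partial_{Y^s_{ij}}F(\#_1(s))\cdot\big(\mathsf{A}_{ij}-\frac{st^{1/2}}{(1-s^2)^{1/2}}w_{ij}\big)\big]$. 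For each $(i,j)$ I would freeze the $\mathsf{A}_{ij},w_{ij}$-dependence --- replacing the $(i,j)$ entry of $Y^s$ by $X_{ij}$ to obtain $Y^{s,(ij)}$, with associated resolvents $G^{s,(ij)}$ etc.\ and frozen observable $\#_1^{(ij)}(s)$ --- and Taylor expand $\partial_{Y^s_{ij}}F(\#_1(s))$ in $h_{ij}\coloneqq s\mathsf{A}_{ij}+(t(1-s^2))^{1/2}w_{ij}$ around the frozen point. Multiplying by $\mathsf{A}_{ij}-\frac{st^{1/2}}{(1-s^2)^{1/2}}w_{ij}$ and taking $\E_\Psi$ produces, term by term, a moment of $(\mathsf{A}_{ij},w_{ij})$ times a frozen derivative that is independent of them. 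The order-zero moment vanishes by the symmetry of $\mathsf{A}_{ij}$ and $w_{ij}$; the order-one moment equals $s(\E[\mathsf{A}_{ij}^2\mid\Psi]-t/N)$, which by the choice $t=N\E|\mathsf{A}_{ij}|^2$ is zero up to a negligible $\psi$-conditioning correction (see below); the order-two moment again vanishes by symmetry; and the leading error is the order-three moment, of size $\lesssim\E|\mathsf{A}_{ij}|^4+t^2\E|w_{ij}|^4\lesssim N^{-2-4\epsilon_a}$, using $|a_{ij}|\le N^{-1/2-\epsilon_a}$ and $t\sim N^{-2\epsilon_a}$.

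The key step will then be to control, on the event $\Omega$, the frozen derivatives $\partial^p_{Y^s_{ij}}F(\#_1^{(ij)}(s))$: by the chain rule these are sums of products of $F^{(\mu)}(\#_1^{(ij)}(s))$ with $\mu\le d$ and derivatives in $Y^s_{ij}$ of the weighted resolvent entry, each of the latter producing products of resolvent entries each carrying an index equal to $i$ or to $j$. I would verify --- exactly as in the proof of \cite[Proposition 3.17]{ALY}, now in the rectangular/left-edge setting --- that the weights $\mathfrak{X},\mathfrak{Y},\mathfrak{Z}$ (each equal to $t^2$ precisely when all its free indices lie in $\mathcal{D}_r$, resp.\ $\mathcal{D}_c$) together with the bounds defining $\Omega_0,\Omega_1,\Omega_2$ force these factors to be $\le N^{C\varepsilon}$ when $i\in\mathcal{T}_r$ (resp.\ $j\in\mathcal{T}_c$), and $\le N^{C\varepsilon}t^{-k}$ for a bounded $k$ otherwise. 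Since $\Psi$ is good, $|\mathcal{D}_r|,|\mathcal{D}_c|\le N^{1-\epsilon_\alpha}$, so there are at most $N^2$ entries of the first type and $\lesssim N^{2-\epsilon_\alpha}$ of the second, with $t^{-k}=N^{Ck\epsilon_a}$; summing the order-three contribution over $(i,j)$ --- and using a rank-one perturbation estimate (valid on $\Omega_w$, where $|h_{ij}|\lesssim N^{-1/2+\varepsilon}$) to replace $\E_\Psi|F^{(\mu)}(\#_1^{(ij)}(s))|$ by the corresponding $\mathfrak{I}$-quantity $\#_3$ up to negligible error --- gives a bound $\lesssim N^{-\omega}(\#_3+1)$ for a suitable $\omega=\omega(\epsilon_a,\epsilon_\alpha,\varepsilon)>0$, provided $\varepsilon$ is small and the expansion is carried far enough that its remainder (involving $h_{ij}^{d-1}$ and $F^{(\le d)}$, hence negligible after summation once $d$ is large) is controlled. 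The $\psi$-conditioning corrections are of the same nature and smaller: the order-one moment $s(\E[\mathsf{A}_{ij}^2\mid\Psi]-t/N)$ is $\lesssim\frac{t}{N}\P(\psi_{ij}=1)=\frac{t}{N}N^{-\rho}$ with $\rho=\alpha(1/2-\epsilon_b)>1$ by (\ref{081401}); and, since $W$ is present along the whole interpolation including the $\mathcal{D}_r$--$\mathcal{D}_c$ positions where $\mathsf{A}$ vanishes, there is an extra order-one term $\lesssim\frac{t}{N}$ on each of those $\le N^{1-\epsilon_\alpha}$ entries. Summing these with the weighted-resolvent bounds yields contributions $\lesssim N^{-\omega'}(\#_3+1)$, which are absorbed.

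Off $\Omega$ I would use only crude bounds: for $z$ in the relevant domain (in particular $\eta\ge N^{-2/3-\varepsilon_2}$, so $\|G^\gamma(z)\|,\|\mathcal{G}^\gamma(z)\|\le\eta^{-1}\le N^{2}$) one has $|\#_1|,|\#_2|\le 2N^2$ and hence $|F^{(\mu)}(\#_1)|,|F^{(\mu)}(\#_2)|\le N^{C_0}$, so carrying out the same expansion without restricting to $\Omega$ and bounding everything by these crude estimates gives a contribution from $\Omega^c$ to $|\E_\Psi F(\#_1)-\E_\Psi F(\#_2)|$ of size $\lesssim Q_0N^{C+C_0}$. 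The $\Re$-version of every step is identical. I expect the main obstacle to be the key step of the second paragraph: keeping track, for every differentiation pattern $p\le d$, of which resolvent entries appearing in $\partial^p_{Y^s_{ij}}(\mathfrak{X}_{ab}\Im[G^\gamma(z)]_{ab})$ (and its two analogues) become ``bad'', of order $t^{-2}$, depending on whether $i\in\mathcal{D}_r$ and $j\in\mathcal{D}_c$, and checking that the prefactors $\mathfrak{X},\mathfrak{Y},\mathfrak{Z}$ compensate them up to a factor $N^{C\varepsilon}$ --- this graph-distance-type bookkeeping is the edge/rectangular adaptation of the bulk Wigner estimates of \cite{ALY}, and it is where the precise relations among $\epsilon_a,\epsilon_b,\epsilon_\alpha$ in (\ref{081401}) and (\ref{081533}) are used.
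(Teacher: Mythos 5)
Your overall architecture (interpolation in $\gamma$, entry-by-entry Taylor expansion around a frozen matrix, moment matching at orders $0$--$2$, and a crude deterministic bound off $\Omega$ to produce the $Q_0N^{C+C_0}$ term) is the paper's strategy. But there is a genuine gap in the moment-matching step, caused by your choice of freezing point. You freeze the $(i,j)$ entry at $X_{ij}=\mathsf{B}_{ij}+\mathsf{C}_{ij}$ and claim the frozen derivative is ``independent of $(\mathsf{A}_{ij},w_{ij})$''. It is not: conditionally on $\Psi$ with $\psi_{ij}=0$, one has $\mathsf{A}_{ij}=(1-\chi_{ij})a_{ij}$ and $\mathsf{B}_{ij}=\chi_{ij}b_{ij}$, so the frozen matrix still contains $\chi_{ij}$, which also appears in your expansion variable $h_{ij}$ and in the multiplier $\mathsf{A}_{ij}-\frac{st^{1/2}}{(1-s^2)^{1/2}}w_{ij}$. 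Consequently your order-one term does \emph{not} equal $s(\E[\mathsf{A}_{ij}^2\mid\Psi]-t/N)$ times a frozen quantity; conditioning on $\chi_{ij}$ shows it equals $\frac{st}{N}\P(\chi_{ij}=1)\big(D(0)-\E D(b_{ij})\big)$ for the relevant frozen second derivative $D$, a quantity that is nonzero and whose control requires a further expansion in $b_{ij}$ using $\E b_{ij}=\E b_{ij}^3=0$ and $\E b_{ij}^2\lesssim N^{-1}$ --- none of which appears in your proposal. The paper avoids this entirely by expanding around the \emph{zero} entry, so that $d_{ij}=\gamma(1-\chi_{ij})a_{ij}+\chi_{ij}b_{ij}+(1-\gamma^2)^{1/2}t^{1/2}w_{ij}$ carries all of the $(i,j)$-randomness, the frozen derivatives $f^{(k)}_{(ij)}(0)$ genuinely factorize, and the algebraic identities $\chi_{ij}(1-\chi_{ij})=0$, $\mathsf{A}_{ij}\mathsf{B}_{ij}=0$ kill the dangerous cross moments (this is precisely how (\ref{eq:2ndmomentdiff}) enters at $k=1$ and why $\mathsf{A}_{ij}^2\mathsf{B}_{ij}^2$ drops out at $k=3$).

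A second, related error is quantitative: you assert that the ``extra order-one term $\lesssim t/N$'' from entries where $\mathsf{A}$ vanishes but $W$ persists occurs only on the $\le N^{1-\epsilon_\alpha}$ positions of $\mathcal{D}_r\times\mathcal{D}_c$. In fact $\mathsf{A}_{ij}=0$ whenever $\chi_{ij}=1$, which under Assumption \ref{main assum} happens with probability $1-\mathcal{O}(N^{-\epsilon_a})$, i.e.\ on essentially all $N^2$ entries. On each such entry Gaussian integration by parts produces $-\frac{st}{N}\E[D(\cdot)]$, and summing these naively gives a contribution of order $tN^{1+C\varepsilon}$, which is enormous; it is only cancelled (to leading order) by the $\{\chi_{ij}=0\}$ contribution carrying $s\,\P(\chi_{ij}=0)\E[a_{ij}^2]\approx st/N$. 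Your proposal never invokes this cancellation, so as written the first-order estimate fails by a factor of about $N$. Both problems are repaired by adopting the paper's decomposition: split on $\psi_{ij}$, treat $\psi_{ij}=1$ by Gaussian integration by parts with the $N^{1-\epsilon_\alpha}$ count, and for $\psi_{ij}=0$ expand the full entry $d_{ij}$ around $0$ and compute the moments $\E_\Psi[(d_{ij})^k\mathcal{E}_{ij}]$ by total expectation over $\chi_{ij}$.
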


\subsection{Average local law}
In this section, we write $m^{\gamma}(z) = m_{Y^{\gamma}}(z)$, $G^{\gamma}(z) = G(Y^\gamma,z)$, and $\bar{G}^{\gamma}(z) = G(Y^\gamma,\bar{z})$ for simplicity. Let $z_t := \lambda_{-,t} + E + \mathrm{i}\eta$. Then we have the following theorem. 
\begin{theorem}\label{thm:comparison2} 
	Suppose that $\Psi$ is good. Let us define $z_t \coloneqq \lambda_{-,t} + E + \mathrm{i}\eta$. We assume that $\eta\in [N^{-\frac23-\epsilon}, N^{-\frac23}]$, $E\in [-N^{-\varepsilon_1}, N^{-\frac23+\epsilon}]$ for a sufficiently small $\epsilon>0$. Then there exists a constant $\delta_0 >0$ such that for all integer $p \ge 3$, 
	\begin{align*}
		&\sup_{0 \le \gamma \le 1}\E_\Psi\big( \big|N\eta \big(\Im m^\gamma(z_t) - \Im \tilde{m}^0(z_t)\big) \big|^{2p} \big) \leq (1+\mathfrak{o}(1)) \E_\Psi\big( \big|N\eta \big(\Im m^0(z_t) - \Im \tilde{m}^0(z_t)\big) \big|^{2p} \big) + N^{-\delta_0 p}, 
	\end{align*}
	where $\tilde{m}^0(z) = m_{X+t^{1/2}\tilde{W}}(z)$. Here $\tilde{W}$ is an i.i.d.~copy of $W$ and it is also independent of $X$. Further, the same estimate holds if $\Im$'s are replaced by $\Re$'s. 
\end{theorem}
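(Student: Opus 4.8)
The plan is to prove Theorem~\ref{thm:comparison2} by a continuity (Lindeberg-type) argument in the interpolation parameter $\gamma$, controlling $\mathrm{d}/\mathrm{d}\gamma$ of the relevant moment functional, exactly as sketched in the Introduction around (\ref{071201}). Conditioning on $\Psi$ good, we work with $Y^\gamma = \gamma\mathsf{A}+t^{1/2}(1-\gamma^2)^{1/2}W+\mathsf{B}+\mathsf{C}$; since $\tilde m^0$ does not depend on $\gamma$, it suffices to bound
\begin{align*}
\frac{\mathrm{d}}{\mathrm{d}\gamma}\,\E_\Psi\Big(\big|N\eta(\Im m^\gamma(z_t)-\Im\tilde m^0(z_t))\big|^{2p}\Big).
\end{align*}
Writing $\partial_\gamma Y^\gamma = \mathsf{A} - \tfrac{\gamma}{\sqrt{1-\gamma^2}}t^{1/2}W$, the $\gamma$-derivative of $m^\gamma = M^{-1}\Tr G^\gamma$ produces, via $\partial G^\gamma = -G^\gamma(\partial_\gamma Y^\gamma (Y^\gamma)^* + Y^\gamma(\partial_\gamma Y^\gamma)^*)G^\gamma$, a sum over entries $(i,u)$ of terms weighted by $\mathsf{A}_{iu}$ and by $w_{iu}$. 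For the Gaussian ($w$) part we integrate by parts (Gaussian integration by parts / Stein), which gains an extra factor $N^{-1}$ per contraction; for the $\mathsf{A}$-part, which on $\psi_{iu}=0$ has a third moment matching $\sqrt tW$ up to an error, we Taylor-expand $F$ in the single entry $\mathsf{A}_{iu}$ to high order and use the moment matching: the first- and second-order terms either vanish or cancel against the Gaussian contribution, the third-order terms cancel by the matching of $\E|\mathsf{A}_{iu}|^3$ with $t^{3/2}\E|w_{iu}|^3$ up to a small error, and the remaining terms are bounded using $\E|\mathsf{A}_{iu}|^k \lesssim N^{-1-\epsilon_b(k-2)+O(\epsilon_a)}$ together with the a priori entrywise bounds. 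The crucial a priori inputs are Proposition~\ref{prop:locallawentries} (entrywise bounds $|\mathfrak X_{ab}[G^\gamma]_{ab}|\prec1$, $|\mathfrak Y_{uv}[\mathcal G^\gamma]_{uv}|\prec1$, $|\mathfrak Z_{au}[G^\gamma Y^\gamma]_{au}|\prec N^{-\epsilon_b/2}$, uniform in $\gamma$) and the rigidity/local-law estimate (\ref{eq:averagelocallawGDM}), which gives $\Im m^0(z_t)\prec 1/(N\eta)$ and hence provides the $\mathfrak o(1)$-type gain needed to absorb error terms into the target RHS.

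The steps, in order, are: (1) fix $\Psi$ good, set up the interpolation and reduce to bounding $\partial_\gamma$ of the $2p$-th moment; (2) carry out Gaussian integration by parts in the $w$-variables, collecting the terms of order $N^{-1}$ and $N^{-3/2}$ in the usual resolvent-expansion bookkeeping, and express them through the three families of Green-function observables in (\ref{071111}); (3) perform the cumulant/Taylor expansion in the $\mathsf{A}$-variables conditioned on $\{\psi_{iu}=0\}$ (on $\psi_{iu}=1$ the entry is in $\mathsf{C}$ and contributes to a different $\gamma$-independent piece, so no derivative appears), match moments up to third order against the Gaussian terms, and check that the leading surviving terms cancel; (4) bound the residual terms using $t\sim N^{-2\epsilon_a}$, the moment bounds on $\mathsf A_{iu}$, the a priori entrywise estimates of Proposition~\ref{prop:locallawentries}, and the fact that the ``$t^{-2}$'' entries occur only on the small index set $\mathcal D_r\times\mathcal D_c$ of size $\le N^{1-\epsilon_\alpha}\times N^{1-\epsilon_\alpha}$, so that their total contribution is still negligible; (5) integrate over $\gamma\in[0,1]$ and take the full expectation via the law of total expectation, using Lemma~\ref{lem:goodpsi} to discard the complement of $\Omega_\Psi$. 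Replacing $\Im$ by $\Re$ is identical. Finally one invokes (\ref{eq:averagelocallawGDM}) (and its $\tilde m^0$ analogue) to conclude the stated inequality with $z_t$ in the prescribed window, noting that the random parameter $\lambda_{-,t}$ must be tracked through its derivatives with respect to the matrix entries, whose sizes are controlled by differentiating the subordination equation (\ref{eq:zeta}).

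The main obstacle, and the place where genuine work beyond \cite{ALY} is required, is the randomness of the spectral parameter $z_t = \lambda_{-,t}+E+\mathrm i\eta$: unlike the deterministic bulk parameter in \cite{ALY}, here $\lambda_{-,t}$ depends on $X = \mathsf B+\mathsf C$ (hence on the very entries we are expanding in), so $\partial_\gamma$ and the entrywise Taylor expansions generate additional terms involving $\partial_{\mathsf A_{iu}}\lambda_{-,t}$ and higher derivatives. Estimating these requires differentiating the subordination relations (\ref{eq:zeta})--(\ref{eq:defoflambdaminust}) and using Lemma~\ref{lem:preEst} to see that $\lambda_M(\mathcal S(X))-\zeta_{-,t}\sim t^2$, so that one stays a mesoscopic distance $t^2\sim N^{-4\epsilon_a}$ away from the singularity; the derivatives of $\lambda_{-,t}$ in the matrix entries are then shown to be small enough (powers of $t^{-1}$ times $N^{-1}$-type factors) that the extra terms are absorbed. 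A secondary technical point is bookkeeping the ``bad'' index set: one must verify that every term containing a $t^{-2}$-type Green-function entry is multiplied by enough smallness (from moment bounds, from $|\mathcal D_r|,|\mathcal D_c|\le N^{1-\epsilon_\alpha}$, and from the choice (\ref{081401}) of $\epsilon_a,\epsilon_b$) to beat the $t^{-2}$ blow-up, which is exactly why the constraints $10\epsilon_a\le\varepsilon_1\le\epsilon_b/500$ and $t\gg\sqrt{\eta_\ast}$ are imposed.
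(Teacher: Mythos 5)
Your overall route is the paper's: a Lindeberg interpolation in $\gamma$, Gaussian integration by parts for the $W$-part, Taylor expansion with second-moment matching (odd moments vanish by symmetry) for the $\mathsf A$-part, the a priori entrywise bounds of Proposition~\ref{prop:locallawentries} with the $t^{-2}$-entries confined to $\mathcal D_r\times\mathcal D_c$, and — the genuinely new ingredient — control of $\partial^k_{\mathsf B_{ij}}\lambda_{-,t}$ via the subordination equation (this is Lemma~\ref{lem:lambdaderibound} in the paper). So the skeleton is right.

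Two points need repair. First, your mechanism for producing the $(1+\mathfrak o(1))$ prefactor is not the one that actually closes the argument. The local law $\Im m^0(z_t)\prec 1/(N\eta)$ cannot by itself absorb the error terms: the expansion of $\partial_\gamma\E_\Psi|N\eta(\Im m^\gamma-\Im\tilde m^0)|^{2p}$ naturally produces lower moments $F_{2p-1},F_{2p-2},\dots$ multiplied by coefficients that are only small (powers of $t$, $N^{-\epsilon_b}$, $N^{-\epsilon_\alpha}$), and one must first apply Young's inequality to convert $\E[\mathcal O_\prec(N^{-\epsilon})\,F_{2p-k}]$ into $(\log N)^{-c}\,\E F_{2p}+N^{-\delta p}$, and then integrate over $\gamma$, take the supremum, and rearrange the resulting self-consistent inequality $\sup_\gamma\E F_{2p}^\gamma\le \E F_{2p}^0+(\log N)^{-c}\sup_\gamma\E F_{2p}^\gamma+N^{-\delta p}$ (cf. (\ref{eq:integralest1})--(\ref{eq:integralest2})). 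Without this self-improving step the proof does not close; note also that a local law for $m^1$ is a \emph{consequence} of this theorem (via Theorem~\ref{rigidity}), so it cannot be used as input. Second, your parenthetical that on $\psi_{ij}=1$ the entry is ``$\gamma$-independent, so no derivative appears'' is false: there $[Y^\gamma]_{ij}=c_{ij}+(1-\gamma^2)^{1/2}t^{1/2}w_{ij}$ still carries the Gaussian part, and its $\gamma$-derivative gives the term $(J_2)_{ij}\mathbf 1_{\psi_{ij}=1}$, which must be treated by Gaussian integration by parts and killed using $\sum_{i,j}\mathbf 1_{\psi_{ij}=1}\le N^{1-\epsilon_\alpha}$ together with $t\gg N^{-\epsilon_\alpha/6}$. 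A smaller omission: $\tilde m^0(z_t)$ is not inert under the entry replacements, because $z_t=\lambda_{-,t}+E+\mathrm i\eta$ depends on $\mathsf B_{ij}$; the Taylor expansion must therefore be carried out jointly in the entry of $Y^\gamma$, the entry of $\tilde Y^0$, and the $\mathsf B_{ij}$-dependence of $\lambda_{-,t}$ (the three-variable $F_{2p}(\lambda,\tilde\lambda,\beta)$ bookkeeping in the paper), which your write-up only gestures at.
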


The above comparison inequality  directly leads to the following theorem, which is crucial for the rigidity estimate for the $\lambda_M(\mathcal{S}(Y))$, serving as a key component in proving the universality result.

\begin{theorem}[Rigidity estimate]\label{rigidity}
Suppose $\Omega_\Psi$ holds. Then, with high probability,
\begin{equation*}
	|\lambda_{M}( \mathcal{S}(Y)) - \lambda_{-,t} | \le N^{-2/3+\epsilon}.
\end{equation*}
\end{theorem}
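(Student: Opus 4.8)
The plan is to deduce Theorem~\ref{rigidity} from the comparison inequality in Theorem~\ref{thm:comparison2} together with the precise a~priori control available for the Gaussian divisible model. First I would record the ``reference'' estimate for the $\gamma=0$ end of the interpolation: conditioning on $X$ and applying the extension of \cite{DY2, DY} from the right edge to the left edge together with Proposition~\ref{prop:etastarRegular} (so that $\mathcal{S}(X)$ is $\eta_\ast$-regular on $\Omega_\Psi$), one has the averaged local law \eqref{eq:averagelocallawGDM}, and in particular $|\Im m^0(z_t)-\Im m_t(z_t)|\prec 1/(N\eta)$ for $E\ge 0$ and the slightly weaker but still $\ll 1/(N\eta)$-type bound for $E<0$; the same holds verbatim for the independent copy $\tilde m^0$. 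Combined with the rigidity $|\lambda_M(\mathcal{S}(V_t))-\lambda_{-,t}|\prec N^{-2/3}$ for the Gaussian divisible model (a technical input behind Theorem~\ref{thm:081601}), this pins down $\Im m^0(z_t)-\Im\tilde m^0(z_t)$ at size $O_\prec(1/(N\eta))$, so the right-hand side of the inequality in Theorem~\ref{thm:comparison2} is $O_\prec(1)+N^{-\delta_0 p}$.

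Next I would run the comparison. Fix $p$ large (depending on how much probability decay we want, via Markov), apply Theorem~\ref{thm:comparison2} with this $p$ at each $z_t=\lambda_{-,t}+E+\mathrm i\eta$ in the relevant range, and conclude $\E_\Psi |N\eta(\Im m^\gamma(z_t)-\Im\tilde m^0(z_t))|^{2p}\le N^{\epsilon p}$ uniformly in $\gamma\in[0,1]$, hence in particular at $\gamma=1$, i.e.\ for $Y$. By a standard union bound over a fine ($N^{-10}$-net, say) grid of $E$ and $\eta$ in the domain $\eta\in[N^{-2/3-\epsilon},N^{-2/3}]$, $E\in[-N^{-\varepsilon_1},N^{-2/3+\epsilon}]$, together with the deterministic Lipschitz continuity of $z\mapsto m^1(z)$ on scales $\gg N^{-C}$ (controlled crudely via $\|\mathcal{S}(Y)\|\le N^{C}$ w.h.p.\ and $|\partial_z m^1|\le \eta^{-2}$), this upgrades to a bound valid simultaneously for all such $z_t$ with high probability: $|\Im m^1(z_t)-\Im\tilde m^0(z_t)|\prec 1/(N\eta)$, and therefore, combining with the reference estimate for $\tilde m^0$, $|\Im m^1(z_t)-\Im m_t(z_t)|\prec 1/(N\eta)$ when $E>-N^{-2/3+\varepsilon}$ and $|\Im m^1(z_t)-\Im m_t(z_t)|\ll 1/(N\eta)$ with high probability when $E\le -N^{-2/3+\varepsilon}$. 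Doing the same with $\Im$ replaced by $\Re$ (the ``same estimate holds'' clause of Theorem~\ref{thm:comparison2}) gives the analogous control on $\Re m^1-\Re m_t$, hence on $m^1-m_t$ itself.

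The final step is to convert these resolvent bounds into the eigenvalue-counting statement. Using the square-root behaviour of $\rho_t$ near $\lambda_{-,t}$ from Theorem~\ref{thm:srbofmxt} (so $\Im m_t(\lambda_{-,t}-E+\mathrm i\eta)\asymp \eta/\sqrt{E+\eta}$ for $E>0$, which is $\ll 1/(N\eta)$ precisely when $E\gg N^{-2/3+\varepsilon'}$), the bound $\Im m^1\ll 1/(N\eta)$ at $E=-N^{-2/3+\epsilon}$, $\eta=N^{-2/3-\epsilon}$ forces $N\eta\,\Im m^1(z_t)=\sum_i \eta^2/((\lambda_i-\mathrm{Re}\,z_t)^2+\eta^2)\to 0$, which is incompatible with the presence of any eigenvalue of $\mathcal{S}(Y)$ in an $\eta$-neighbourhood of $\lambda_{-,t}-N^{-2/3+\epsilon}$; hence no eigenvalue lies below $\lambda_{-,t}-N^{-2/3+\epsilon}$. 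For the matching lower bound on $\lambda_M(\mathcal{S}(Y))$, one argues that $\Im m^1\asymp\Im m_t\asymp\sqrt{E+\eta}\gg 1/(N\eta)$ at $E=+N^{-2/3+\epsilon}$ on the correct side of the edge would be violated if there were a spectral gap of size $\ge N^{-2/3+\epsilon}$ just above $\lambda_{-,t}$; more precisely, the standard argument (cf.\ \cite{DY2, DY}) comparing $\Im m^1$ with $\Im m_t$ at $E\in[0,N^{-2/3+\epsilon}]$, $\eta=N^{-2/3-\epsilon}$ shows $N\eta\,\Im m^1\gtrsim 1$, forcing an eigenvalue within $N^{-2/3+\epsilon}$ of $\lambda_{-,t}$. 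Together these give $|\lambda_M(\mathcal{S}(Y))-\lambda_{-,t}|\le N^{-2/3+\epsilon}$ with high probability on $\Omega_\Psi$.

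The main obstacle is the first and third steps rather than the comparison bookkeeping: on one hand, carefully extending the averaged local law \eqref{eq:averagelocallawGDM} and the $\gamma=0$ rigidity to the \emph{left} edge of the deformed rectangular model while only assuming $\eta_\ast$-regularity with the rather weak $\eta_\ast=N^{-\epsilon_b}$ (rather than the optimal $\eta_\ast$), and on the other hand making the ``resolvent bound $\Rightarrow$ no eigenvalue'' deduction quantitative and uniform down to $\eta\sim N^{-2/3-\epsilon}$, where one is exactly at the borderline scale and must be careful that the error terms in the comparison ($N^{-\delta_0 p}$ after dividing by $(N\eta)^{2p}$) genuinely beat the main term for the chosen $p$. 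I would handle the uniformity by the net-plus-Lipschitz argument above and absorb the $E<0$ subtlety into the weaker (but sufficient) ``$\ll$'' statement, which is all that the no-eigenvalue argument needs.
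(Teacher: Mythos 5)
Your overall strategy is the same as the paper's: use the averaged local law \eqref{eq:averagelocallawGDM} and rigidity for the Gaussian divisible model as the $\gamma=0$ input, feed it into Theorem~\ref{thm:comparison2} via Markov plus a net argument to get $|m^1(z_t)-m_{t}(z_t)|\prec 1/(N\eta)$ (with the improved $\ll 1/(N\eta)$ bound for $E\le -N^{-2/3+\varepsilon}$), and then run the standard \cite{HLY}-type conversion from resolvent bounds to eigenvalue location. The moment/net/Lipschitz bookkeeping and the square-root-density argument forcing an eigenvalue within $N^{-2/3+\epsilon}$ above $\lambda_{-,t}$ are all as in the paper.

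There is, however, one concrete missing ingredient. Your exclusion argument ("no eigenvalue lies below $\lambda_{-,t}-N^{-2/3+\epsilon}$") only works on the window where the comparison and the local laws are available, namely $E\in[-N^{-\varepsilon_1},N^{-2/3+\epsilon}]$: controlling $\Im m^1$ at points of this window rules out eigenvalues in that window, but says nothing about an eigenvalue of $\mathcal{S}(Y)$ sitting below $\lambda_{-,t}-N^{-\varepsilon_1}$, where $z_t$ leaves the domain of Theorem~\ref{thm:comparison2}. The paper closes this hole with an a priori crude localization, Eq.~(\ref{0818334}): $|\lambda_M(\mathcal{S}(Y))-\lambda_{-,t}|\prec N^{-2\epsilon_b}$, obtained by repeating the matrix minor/interlacing argument of Lemma~\ref{lem: left edge rigidity XX^T} for $Y$ (removing the rows/columns carrying $\mathsf{C}$, so that $Y$ reduces to the bounded-support matrix $\mathsf{A}+\mathsf{B}$) together with $|\lambda_M(\mathcal{S}(V_t))-\lambda_{-,t}|\prec N^{-2/3+\epsilon}$ to relate $\lambda_{-,t}$ to $\lambda_-^{\mathsf{mp}}$. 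Since $2\epsilon_b>\varepsilon_1$, this confines $\lambda_M(\mathcal{S}(Y))$ to the window where your sweep applies. You should add this step; without it the "hence no eigenvalue lies below" deduction is a non sequitur. With that addition your argument matches the paper's proof.
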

\begin{proof}
By Markov's inequality, Theorem \ref{thm:comparison2} and the following local law for $m^0$
	\begin{align}
	|m^0(\lambda_{-,t} +E +\mathrm{i}\eta)-m_{t}(\lambda_{-,t} +E +\mathrm{i}\eta)|\prec \left\{ 
	\begin{array}{ll}
	\frac{1}{N\eta}, &E\geq 0, \\ \\
	\frac{1}{N(|E|+\eta)}+\frac{1}{(N\eta)^2\sqrt{|E|+\eta}}, & E\leq 0,
	\end{array}
	\right. \label{0818200}
	\end{align} 
	we can obtain (\ref{0818200}) with $m^0$ replaced by $m^1$ and further  for the case $E\leq 0$ the following 
\begin{align}
	\Im m^{1}(\lambda_{-,t} +E +\mathrm{i}\eta ) - \Im m_{t}(\lambda_{-,t} +E +\mathrm{i}\eta ) \prec \frac{1}{N(|E|+\eta)}+ \frac{1}{(N\eta)^2\sqrt{|E|+\eta}} + \frac{1}{N^{1+\delta_0/2}\eta}. \label{0818333}
\end{align}
We remark here that the local law in (\ref{0818200}) has been proved in \cite{DY2} around the right edge for the deformed rectangular matrices, under the assumption that the original rectangular matrices satisfy the $\eta_\ast$-regularity. The argument can be adapted to our model, but around the left edge, again with the $\eta_\ast$-regularity as the input. The derivation is almost the same, and thus we do not reproduce it here. 

Further, similarly to Lemma \ref{lem: left edge rigidity XX^T}, we can prove
$
|\lambda_M(\mathcal{S}(V_t))-\lambda_{-}^{\mathsf{mp}}|\prec N^{-2\epsilon_b} $ and $ |\lambda_M(\mathcal{S}(Y))-\lambda_{-}^{\mathsf{mp}}|\prec N^{-2\epsilon_b}. 
$
By (\ref{0818200}), and the crude lower bound on $\lambda_M(\mathcal{S}(V_t))$ implied by \cite{Tik16}, we also have
$
|\lambda_M(\mathcal{S}(V_t))-\lambda_{-,t}|\prec N^{-\frac23+\epsilon}. 
$
Hence, we have 
\begin{align}
|\lambda_M(\mathcal{S}(Y))-\lambda_{-,t}|\prec N^{-2\epsilon_b}.  \label{0818334}
\end{align}

With the aid of the $m^1$ analogue of  (\ref{0818200}),  (\ref{0818333}) and  (\ref{0818334}), the remaining reasoning is routine and thus we omit it; see the proof of Theorem 1.4 in \cite{HLY}, for instance.
\end{proof}

\subsection{Green function comparison for edge universality}

\begin{theorem}[Green function comparison]\label{Green function comparison thm}
	Let $F: \mathbb{R} \to \mathbb{R}$ be a function whose derivatives satisfy
	\begin{align*}
		\max_x |F^{\alpha}(x)|(|x|+1)^{-C_1} \le C_1, \quad  \alpha = 1,\cdots,d
	\end{align*}
	for some constant $C_1 > 0$ and sufficiently large integer $d > 0$. Let $\Psi$ be good. Then there exist $\epsilon_0 > 0$, $N_0 \in \mathbb{N}$ and $\delta_1 > 0$ depending  on $\epsilon_a$ such that for any $\epsilon < \epsilon_0$, $N \ge N_0$ and real numbers $E, E_1$ and $E_2$ satisfying $|E|, \;|E_1|, \;|E_2|\; \le N^{-2/3+\epsilon}$, $\eta_0=N^{-2/3-\epsilon}$, we have
	\begin{align}\label{eq:GFcomparison 2}
		\Big| \E_{\Psi} \Big[ F\Big( N\int_{E_1}^{E_2} \Im m^1(\lambda_{-,t} + y+\mathrm{i}\eta_0) \,\mathrm{d}y \Big) \Big] - \E_{\Psi} \Big[ F\Big( N\int_{E_1}^{E_2} \Im m^0(\lambda_{-,t} + y+\mathrm{i}\eta_0) \,\mathrm{d}y \Big) \Big]  \Big|\le C N^{-\delta_{1}},
	\end{align}
	for some constant $C>0$, and in the case $\alpha=8/3$, (\ref{eq:GFcomparison 2}) holds with $\lambda_{-,t}$ replaced by $\lambda_{\mathsf{shift}}$.
\end{theorem}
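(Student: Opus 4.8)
The plan is to run a Green function comparison along the interpolation $Y^\gamma=\gamma\mathsf{A}+t^{1/2}(1-\gamma^2)^{1/2}W+\mathsf{B}+\mathsf{C}$ from (\ref{081537}), so that $Y^0=V_t$ and $Y^1=Y$, and to bound the $\gamma$-derivative of $\E_\Psi\big[F\big(\Theta^\gamma\big)\big]$, where $\Theta^\gamma\coloneqq N\int_{E_1}^{E_2}\Im m^\gamma(\lambda_{-,t}+y+\mathrm{i}\eta_0)\,\mathrm{d}y$. A convenient simplification, compared with the general discussion in Section~\ref{sec1}, is that the centering $\lambda_{-,t}$ and the parameter $\eta_0$ depend only on $X=\mathsf{B}+\mathsf{C}$, which is \emph{fixed} along the interpolation; hence $\lambda_{-,t}$ is constant in $\gamma$ and, in contrast to the other two comparisons in Section~\ref{sec1}, no derivative of the random centering with respect to the resampled entries enters here. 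It is therefore convenient to condition on all the randomness except the $a$- and $w$-variables (equivalently, to freeze $\Psi$, the $\chi$-variables and the $b,c$-variables), so that $X$, $\lambda_{-,t}$, $m_t$ and $\rho_t$ become deterministic and the $\eta_\ast$-regularity of $\mathcal{S}(X)$ holds with high probability on $\Omega_\Psi$; the claimed bound for $\E_\Psi$ then follows by the law of total expectation.

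\textbf{Differentiation and moment matching.} Using $\frac{\mathrm{d}}{\mathrm{d}\gamma}m^\gamma(z)=-\frac{2}{M}\Tr\big[G^\gamma(z)\,\dot Y^\gamma (Y^\gamma)^{*}G^\gamma(z)\big]$ with $\dot Y^\gamma=\mathsf{A}-t^{1/2}\gamma(1-\gamma^2)^{-1/2}W$, I would split $\dot Y^\gamma$ into its Gaussian piece ($W$, handled by Gaussian integration by parts) and its bounded piece ($\mathsf{A}$, handled by an entrywise cumulant/Taylor expansion since $|\mathsf{A}_{ij}|\le N^{-1/2-\epsilon_a}$), in the style of \cite{PY,ALY}. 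The crucial input is that, conditionally on $\psi_{ij}=0$, the law of $t^{1/2}w_{ij}$ matches that of $\mathsf{A}_{ij}$ through the third moment: the odd moments vanish by the symmetry assumed in Assumption~\ref{main assum}(i), the second moments agree because $t=N\E|\mathsf{A}_{ij}|^2$ up to a negligible correction from the rare event $\psi_{ij}=1$ (on which $\mathsf{A}_{ij}=0$), and on entries with $\psi_{ij}\chi_{ij}\neq0$ one has $\mathsf{A}_{ij}=0$ and the $W$-entry is simply removed. Consequently the resolvent expansions of $\E_\Psi[F(\Theta^\gamma)]$ around the entry-removed matrices cancel through third order, and only fourth- and higher-order terms survive in $\frac{\mathrm{d}}{\mathrm{d}\gamma}\E_\Psi[F(\Theta^\gamma)]$.

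\textbf{Power counting.} The a priori bounds of Proposition~\ref{prop:locallawentries}, valid uniformly in $0\le\gamma\le1$ and in $z$ in the relevant domain, control all the Green function factors that appear: $[G^\gamma]_{ab}$, $[\mathcal{G}^\gamma]_{uv}$ and $[G^\gamma Y^\gamma]_{au}$ are $\prec1$ except for an $\mathcal{O}(t^{-2})$ blow-up confined to the at most $N^{1-\epsilon_\alpha}$ rows/columns of $\mathcal{D}_r,\mathcal{D}_c$ (recall $\mathfrak{X},\mathfrak{Y},\mathfrak{Z}$), and on $\Omega_w$ (as in Theorem~\ref{thm:comparison1}) the Gaussian entries satisfy $|w_{ij}|\le N^{-1/2+\varepsilon}t$. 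Combining the fourth-moment bounds $\E|\mathsf{A}_{ij}|^4\lesssim N^{-2-4\epsilon_a}$ and $\E\big[(t^{1/2}w_{ij})^4\big]=3t^2N^{-2}\lesssim N^{-2-4\epsilon_a}$ with the edge-scale gain of the resolvent expansion at $\eta_0=N^{-2/3-\epsilon}$ and the smoothing from the $y$-integral over a window of width $N^{-2/3+\epsilon}$ --- exactly as in the light-tailed edge comparisons of \cite{DY2,PY} --- and summing over the $\lesssim N^2$ pairs $(i,j)$, one obtains a total of size $N^{-\delta_1}$ for a small $\delta_1>0$, provided $\varepsilon$ and $\epsilon_a$ are chosen sufficiently small relative to $\epsilon_b$ and $\epsilon_\alpha$; the contribution of the $\mathcal{O}(t^{-2})$ entries stays subleading precisely because $\epsilon_\alpha\gg\epsilon_a$, so the $N^{-\epsilon_\alpha}$-fraction of bad rows/columns more than absorbs the $t^{-2}\sim N^{2\epsilon_a}$ loss. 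The rigidity $|\lambda_M(\mathcal{S}(Y^a))-\lambda_{-,t}|\prec N^{-2/3+\epsilon}$ for $a=0,1$ --- immediate for $a=0$ and supplied by Theorem~\ref{rigidity} for $a=1$ --- guarantees that $\Theta^a$ genuinely counts the eigenvalues in the correct window, which is what lets this comparison be upgraded to edge universality in Section~\ref{s.general}.

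\textbf{The case $\alpha=8/3$ and the main obstacle.} For $\alpha=8/3$ one centers instead at the deterministic $\lambda_{\mathsf{shift}}$; by Theorem~\ref{thm: lambda -,t asymp} the difference $\lambda_{-,t}-\lambda_{\mathsf{shift}}$ is $\mathcal{O}_p(N^{-\alpha/4})=\mathcal{O}_p(N^{-2/3})$, which lies within the admissible $N^{-2/3+\epsilon}$ window, so the same expansion and power counting apply verbatim. The main difficulty I anticipate is not the Lindeberg bookkeeping itself but carrying out the multi-resolvent expansion of $\frac{\mathrm{d}}{\mathrm{d}\gamma}\E_\Psi[F(\Theta^\gamma)]$ uniformly in $\gamma$ while keeping precise track of which indices fall into $\mathcal{D}_r,\mathcal{D}_c$ (so that the $t^{-2}$ factors are accounted for correctly) and while handling the explicit coupling of $\mathsf{A}$ with the $\psi,\chi$-variables; in short, reproducing the Erd\H{o}s--Yau--Yin-type edge power counting in this inhomogeneous, conditionally-defined setting --- as is largely done in \cite{ALY} --- is where the real work lies.
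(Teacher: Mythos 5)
Your overall strategy (interpolating along $Y^\gamma$, Gaussian integration by parts for the $W$-part, Taylor/cumulant expansion for the $\mathsf{A}$-part, second-moment matching via $t=N\E|\mathsf{A}_{ij}|^2$, and power counting with the entrywise bounds of Proposition~\ref{prop:locallawentries}) is exactly the route the paper takes. However, there is a genuine gap in your treatment of the $\chi$-coupling, and it sits precisely at the point you flag as "where the real work lies."

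You propose to condition on $\Psi$, the $\chi$-variables and the $b,c$-variables so that $X$, and hence $\lambda_{-,t}$, becomes deterministic, and you conclude that "no derivative of the random centering with respect to the resampled entries enters here." This cannot work as stated: the moment matching between $\mathsf{A}_{ij}$ and $t^{1/2}w_{ij}$ holds only \emph{after averaging over} $\chi_{ij}$, since $t=N\E\big[(1-\psi_{ij})^2(1-\chi_{ij})^2a_{ij}^2\big]$. Conditionally on $\chi_{ij}=1$ (which is the overwhelmingly likely event, as $\P(\chi_{ij}=0)=\mathcal{O}(N^{-\epsilon_a})$) one has $\mathsf{A}_{ij}=0$ while $\E[t w_{ij}^2]=t/N$, so the second moments of the two interpolation endpoints differ by $t/N$ on essentially all $N^2$ entries, and the leading (second-order) term of the Lindeberg expansion no longer cancels; summed over $(i,j)$ this contribution is far too large. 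The paper's resolution is to keep $\chi_{ij}$ random, apply the law of total expectation inside the expansion (cf.\ (\ref{eq:condtiononChi})), and cancel $\E_\Psi[\gamma a_{ij}^2]\,\P(\chi_{ij}=0)$ against $\E_\Psi[\gamma t w_{ij}^2]$ via (\ref{eq:2ndmomentdiff}). But then the Green-function coefficients multiplying these two moments are, respectively, a conditional expectation given $\chi_{ij}=0$ (which forces $X_{ij}=0$) and an unconditional one (where $X_{ij}=\chi_{ij}b_{ij}$), and since $\lambda_{-,t}$ is a function of $X_{ij}$, the spectral parameter $z_t=\lambda_{-,t}+E+\mathrm{i}\eta_0$ itself differs between the two. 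Closing the argument therefore requires bounding $\E_\Psi[\mathsf{F}(z_t(\chi_{ij}b_{ij}))]-\E_\Psi[\mathsf{F}(z_t(0))]$ by a Taylor expansion in the $\mathsf{B}_{ij}$-entry, which is exactly where the high-probability and deterministic bounds on $\partial^k\lambda_{-,t}/\partial\mathsf{B}_{ij}^k$ from Lemma~\ref{lem:lambdaderibound} enter (see (\ref{eq:FchibF0})). So the derivative of the random centering with respect to the resampled entries is unavoidable in this comparison too; your proposal is missing this ingredient. (Your remark on the $\alpha=8/3$ case is consistent with the paper: with the deterministic centering $\lambda_{\mathsf{shift}}$ this complication disappears and the proof is simpler, not merely "the same.")
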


Employing the above comparison inequality along with the rigidity estimate int Theorem \ref{rigidity}, we can deduce the following universality result around the random edge $\lambda_{-,t}$ {(and deterministic edge $\lambda_{\mathsf{shift}}$ if $\alpha = 8/3$)}, whose proof will be stated in the Appendix \ref{4216}. 
\begin{corollary}\label{cor.081801}
For all $s \in \mathbb{R}$, we have 
\begin{align}
	\lim_{N\to \infty}\mathbb{P} \Big(N^{2/3}(\lambda_M(\mathcal{S}(V_t)) -\lambda_{-,t} ) \le s \Big) = \lim_{N\to \infty} \mathbb{P} \Big(N^{2/3}(\lambda_M(\mathcal{S}(Y)) -\lambda_{-,t} ) \le s \Big).
\label{eq:convergeindistribution1}\end{align}
Moreover, if $\alpha = 8/3$, we have
\begin{align}
	\lim_{N\to \infty}\mathbb{P} \Big(N^{2/3}(\lambda_M(\mathcal{S}(V_t)) -\lambda_{\mathsf{shift}} ) \le s \Big) =\lim_{N\to \infty}\mathbb{P} \Big(N^{2/3}(\lambda_M(\mathcal{S}(Y)) -\lambda_{\mathsf{shift}} ) \le s \Big).
\label{eq:convergeindistribution2}\end{align}
\end{corollary}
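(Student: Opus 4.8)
The plan is to derive Corollary \ref{cor.081801} by combining three ingredients already established for the Gaussian divisible model and the comparison machinery: the edge universality of $\lambda_M(\mathcal{S}(V_t))$ around $\lambda_{-,t}$ (Theorem \ref{thm:081601}), the Green function comparison inequality (Theorem \ref{Green function comparison thm}), and the rigidity estimates for both $\lambda_M(\mathcal{S}(V_t))$ and $\lambda_M(\mathcal{S}(Y))$ around $\lambda_{-,t}$ (Theorems \ref{rigidity} and the corresponding estimate for $V_t$ that is an input to Theorem \ref{thm:081601}). The standard route, following \cite{PY, EYY12}, is to express the probability $\mathbb{P}(N^{2/3}(\lambda_M(\mathcal{S}(H)) - \lambda_{-,t}) \le s)$ for $H \in \{V_t, Y\}$ in terms of a smoothed functional of the imaginary part of the Green function, and then apply the comparison.

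First I would fix $s \in \mathbb{R}$ and choose mesoscopic parameters $E_1 = \lambda_{-,t} - N^{-2/3+\epsilon}$ (or a fixed cutoff below the edge that is still inside the regular region), $E_2 = \lambda_{-,t} + N^{-2/3}s$, and $\eta_0 = N^{-2/3-\epsilon}$. The quantity $N\int_{E_1}^{E_2} \Im m^a(\lambda_{-,t}+y+\mathrm{i}\eta_0)\,\mathrm{d}y$ is, up to an error controlled by the rigidity estimate, a smoothed count of the number of eigenvalues of $\mathcal{S}(Y^a)$ in the window $[E_1, E_2]$. Concretely, I would introduce a smooth cutoff function $K$ approximating $\mathbf{1}_{[E_1,E_2]}$ at scale $\eta_0$, write the eigenvalue count as $\mathrm{Tr}\, K(\mathcal{S}(Y^a))$, and use the Helffer–Sjöstrand-type identity together with $\eta_0 \ll N^{-2/3}$ to show $\mathrm{Tr}\, K(\mathcal{S}(Y^a))$ is well-approximated by the Green-function integral; the error terms are bounded using the rigidity estimate $|\lambda_M(\mathcal{S}(Y^a)) - \lambda_{-,t}| \prec N^{-2/3}$ (Theorem \ref{rigidity} for $a=1$ and its $V_t$-analogue for $a=0$) and the fact that no other eigenvalue is within distance $N^{-2/3+\epsilon}$ of the window (again rigidity plus the square-root edge behaviour in Theorem \ref{thm:srbofmxt}). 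This gives, for suitable smooth bounded $F$ approximating an indicator of "the count is zero",
\begin{align*}
\mathbb{P}\big(N^{2/3}(\lambda_M(\mathcal{S}(Y^a)) - \lambda_{-,t}) \le s\big) = \mathbb{E}_\Psi\Big[F\Big(N\int_{E_1}^{E_2}\Im m^a(\lambda_{-,t}+y+\mathrm{i}\eta_0)\,\mathrm{d}y\Big)\Big] + \mathfrak{o}(1),
\end{align*}
uniformly over $\Psi$ good, where the implicit error goes to zero after first taking $N\to\infty$ and then removing the smoothing of $F$ and $K$.

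Next, Theorem \ref{Green function comparison thm} gives directly that the right-hand side for $a=1$ and $a=0$ differ by at most $CN^{-\delta_1}$, so
\begin{align*}
\lim_{N\to\infty}\mathbb{E}_\Psi\Big[F\Big(N\int_{E_1}^{E_2}\Im m^1(\lambda_{-,t}+y+\mathrm{i}\eta_0)\,\mathrm{d}y\Big)\Big] = \lim_{N\to\infty}\mathbb{E}_\Psi\Big[F\Big(N\int_{E_1}^{E_2}\Im m^0(\lambda_{-,t}+y+\mathrm{i}\eta_0)\,\mathrm{d}y\Big)\Big],
\end{align*}
and reversing the reduction for $a=0$ recovers $\mathbb{P}(N^{2/3}(\lambda_M(\mathcal{S}(V_t))-\lambda_{-,t})\le s)$. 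Since $\Psi$ is good with probability $1-\mathfrak{o}(1)$ (Lemma \ref{lem:goodpsi}) and $\lambda_{-,t}$ is $\Psi$-measurable (through $X$), taking $\mathbb{E}$ over $\Psi$ via the law of total expectation yields \eqref{eq:convergeindistribution1}. For the case $\alpha = 8/3$, the identical argument applies verbatim with $\lambda_{-,t}$ replaced by the deterministic $\lambda_{\mathsf{shift}}$, using the second half of Theorem \ref{Green function comparison thm} and the rigidity estimate $|\lambda_M(\mathcal{S}(Y^a)) - \lambda_{\mathsf{shift}}| \prec N^{-2/3}$, which follows from Theorem \ref{rigidity} together with $|\lambda_{-,t} - \lambda_{\mathsf{shift}}| \prec N^{-2/3}$ — a consequence of Theorem \ref{thm: lambda -,t asymp}(i), since at $\alpha = 8/3$ the fluctuation scale $N^{-\alpha/4} = N^{-2/3}$. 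This last point is where care is needed: one must check that $\lambda_{-,t} - \lambda_{\mathsf{shift}} = \mathcal{O}_\prec(N^{-2/3})$ rather than merely $\mathfrak{o}_p(N^{-1/2})$, so that the window of size $N^{-2/3}$ is not shifted out of range; this is exactly the content of the variance bound in Lemma \ref{lem:hpbandvb} ($\mathrm{Var}(\Delta_\zeta \mathbf{1}_{\Omega_\Psi}) \le N^{-1+\epsilon}$, propagating to $\lambda_{-,t}$) combined with the a priori high-probability bound $\Delta_\zeta \prec N^{-\beta/2}t^2$.

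The main obstacle is the bookkeeping in the first reduction step: making the approximation of the sharp eigenvalue-counting indicator by a smooth functional of the Green-function integral quantitative and uniform in $\Psi$, while ensuring the error terms are genuinely $\mathfrak{o}(1)$ and not merely $O(N^{-\delta})$-but-dependent-on-the-smoothing. This requires the standard two-parameter limiting procedure (first $N\to\infty$, then sharpen the test functions) as in \cite{PY}, and the key input making it work is the rigidity estimate at the optimal scale $N^{-2/3}$ together with the lower-edge square-root profile of $\rho_t$; given those, which are already in hand, the argument is routine, and I would only sketch it, referring to \cite{PY, EYY12, HLY} for the details of the limiting argument.
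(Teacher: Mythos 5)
Your proposal is correct and follows essentially the same route as the paper: reduce the event $\{\lambda_M \le \lambda_{-,t}+sN^{-2/3}\}$ to a smoothed eigenvalue count expressed through $N\int\Im m^a$ (the paper uses the Poisson-kernel sandwich of \cite{AY2011} plus a smooth cutoff $K$ of the count, with errors controlled by the rigidity estimates), apply Theorem \ref{Green function comparison thm}, and undo the reduction for $a=0$; the $\alpha=8/3$ case is likewise handled by noting $\lambda_{-,t}-\lambda_{\mathsf{shift}}=\mathcal{O}_p(N^{-2/3})$ from Theorem \ref{thm: lambda -,t asymp}(i), with the only caveat (which the paper also flags) that this bound holds merely in probability, so the quantitative $\mathcal{O}(N^{-\epsilon/9})$ errors degrade to $\mathfrak{o}(1)$. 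The one minor inaccuracy is attributing the $N^{-2/3}$ control of $\lambda_{-,t}-\lambda_{\mathsf{shift}}$ to the crude variance bound of Lemma \ref{lem:hpbandvb} (which only gives $O_p(N^{-1/2})$) rather than to the CLT itself, but your primary citation of Theorem \ref{thm: lambda -,t asymp}(i) is the correct source.
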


Now we can prove our main theorem: Theorem \ref{main.thm}.
\begin{proof}[Proof of Theorem \ref{main.thm}]
The conclusions (i)-(iii) in Theorem \ref{main.thm} follows from (\ref{eq:convergeindistribution1}) in Corollary \ref{cor.081801}  and Theorem \ref{thm: lambda -,t asymp}. To prove the critical case when $\alpha = 8/3$, i.e., (iv), from (\ref{092401}) in Theorem \ref{thm:081601}, it is easy to show that the distribution of $\lambda_{-,t}$ is asymptotically independent of the fluctuation of $\lambda_{M}\big(\mathcal{S}(V_{t}))-\lambda_{-,t}$ since the former is a function of $X$ only. It can be shown by a standard characteristic function argument that for any $s \in \mathbb{R}$,
\begin{equation*}
\lim_{N\to \infty}\mathbb{P} \Big( \gamma_{N}M^{2/3}(\lambda_{M}\big(\mathcal{S}(V_{t}))-\lambda_{\mathsf{shift}}\big) \le s \Big) = \lim_{N\to \infty} \mathbb{P} \Big(M^{2/3}(\mu_M^{\mathsf{GOE}} + 2 + \gamma_{N}\mathcal{X}_{\alpha} ) \le s \Big). 
\end{equation*}
where in the RHS $\mathcal{X}_\alpha$ is independent of $\mathsf{GOE}$. Then further, 
together with the comparison (\ref{eq:convergeindistribution2}) we conclude (iv).  Hence, we complete the proof of Theorem \ref{main.thm}.
\end{proof}

\section{Proofs for the Green function comparisons}\label{s. proof-general}
In this section, we will mainly prove the Green function comparisons stated in the last section. We will show the details for Theorems \ref{thm:comparison1} and \ref{thm:comparison2} only. The proof of Theorem \ref{Green function comparison thm} is similar to Theorem \ref{thm:comparison2}, and thus will only be discussed briefly here and the details are stated in the Appendix \ref{4271}. 
\subsection{Some further notations}
Let us introduce some additional notations.
We denote by $\mathsf{E}_{(ij)}$ the standard basis for $\mathbb{R}^{M \times N}$, i.e., $[\mathsf{E}_{(ij)}]_{ab} \coloneqq \delta_{ia}\delta_{jb}$.
Replacement matrix notation: For any $A \in \mathbb{R}^{M \times N}$, the replacement matrix $A_{(ij)}^{\lambda}= A_{(ij)}(\lambda)\in \mathbb{R}^{M \times N}$ is defined as,
\begin{equation}
  \big[ A_{(ij)}(\lambda)\big]_{ab} :=
\begin{cases}
\lambda & \text{if } (i,j) = (a,b)
\\
 A_{ab}& \text{if } (i,j) \neq (a,b)
\end{cases},\quad a\in [M], \quad b \in [N].
\label{eq:replacementMatrix}\end{equation}
Let $G_{(ij)}^{\gamma,\lambda}(z) := ( \mathcal{S}(Y_{(ij)}^{\gamma,\lambda}) - z)^{-1}$ be the resolvent of $\mathcal{S}(Y_{(ij)}^{\gamma,\lambda})$ with  $Y_{(ij)}^{\gamma,\lambda}  = (Y^\gamma)_{(ij)}({\lambda})$. We define
\begin{align}
	&d_{ij}(\gamma,w_{ij}) := \gamma (1-\chi_{ij})a_{ij} + \chi_{ij} b_{ij} + (1-\gamma^2)^{1/2}t^{1/2}w_{ji}, \notag\\
	&e_{ij}(\gamma,w_{ij}) := c_{ij} + (1-\gamma^2)^{1/2}t^{1/2}w_{ij}, \qquad i \in [M], \quad j \in [N]. 
\label{eq:dijeij}\end{align}
In the sequel, for brevity, we also write $\sum_{i,j} = \sum_{i=1}^M \sum_{j=1}^N$.

\subsection{Proof of Proposition \ref{prop:locallawentries}}\label{1729}
Let us prove Proposition \ref{prop:locallawentries} assuming that Theorem \ref{thm:comparison1} holds. The proof of Theorem \ref{thm:comparison1} is deferred to the next subsection.
For $\delta > 0$ and $z = E + \mathrm{i}\eta \in \mathsf{D}(\varepsilon_1, \varepsilon_2,\varepsilon_3)$ (cf. (\ref{eq:domainD})),  we define
\begin{align*}
&\mathfrak{P}_{0}(\delta,z,\Psi) \coloneqq\mathbb{P}_{\Psi}\Big( \sup_{0 \le \gamma \le 1}\sup_{a , b \in [M]} |z^{1/2}\mathfrak{X}_{ab}[G^{\gamma}(z)]_{ab}| >  N^{\delta}\Big),\\
	&\mathfrak{P}_{1}(\delta,z,\Psi) \coloneqq \mathbb{P}_{\Psi}\Big( \sup_{0 \le \gamma \le 1}\sup_{u , v \in [N]} |z^{1/2}\mathfrak{Y}_{uv}[\mathcal{G}^{\gamma}(z)]_{uv}| > N^{\delta}\Big), \\
	&\mathfrak{P}_{2}(\delta,z,\Psi) \coloneqq \mathbb{P}_{\Psi}\Big( \sup_{0 \le \gamma \le 1}\sup_{a \in [M], u \in [N]} |\mathfrak{Z}_{au}[G^{\gamma}(z)Y^\gamma ]_{au}|  > N^{\delta}\Big).
	\end{align*}

\noindent
The following monotonicity lemma will be a useful tool.
\begin{lemma}\label{lem:monotonlema}
	Suppose that $\Psi$ is good. Fix $\varepsilon$ and $\omega$ as in Theorem \ref{thm:comparison1}. 
	 For all $z = E + \mathrm{i}\eta \in \mathsf{D}(\varepsilon_1, \varepsilon_2,\varepsilon_3) $, we set $z' = E' + \mathrm{i}\eta'$ by 
		\begin{align}\label{eq:E2eta2}
		E' = E +  \frac{ (1-N^{\varepsilon/3})(\sqrt{E^2+\eta^2}-E)}{2}, \quad \eta' = N^{\varepsilon/6}\eta.
	    \end{align}
	 Then for any $\delta > 0$ and $D > 0$, there exists a large constant $C > 0$ such that
	\begin{align}\label{eq:2862}
		\max_{\substack{k \in \{0,1,2\} }}\mathfrak{P}_{k}(\delta,z,\Psi) \le CN^C \max_{\substack{k \in \{0,1,2\}  }}\mathfrak{P}_{k}(\varepsilon/2,z',\Psi) + CN^{-D}.
	\end{align}
\end{lemma}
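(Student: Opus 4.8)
\textbf{Plan for the proof of Lemma \ref{lem:monotonlema}.}
The plan is to exploit a standard \emph{self-improving} (monotonicity/bootstrap) mechanism for the entry-wise bounds, using Theorem \ref{thm:comparison1} as the key comparison input and the larger imaginary part $\eta'$ together with the shifted energy $E'$ as the ``head-start'' on which a priori control is easy. First, I would fix a small grid of size polynomial in $N$ inside $\mathsf{D}(\varepsilon_1,\varepsilon_2,\varepsilon_3)$, so that by a Lipschitz estimate for the Green function entries in $z$ (each entry has derivative bounded by $\eta^{-2}$, and $\gamma$-Lipschitz dependence is similarly polynomial) it suffices to prove \eqref{eq:2862} for a single fixed $z$ on the grid, up to an extra $N^{-D}$ and an adjustment of constants. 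Next I would use the reference point $z'$: because $\eta'=N^{\varepsilon/6}\eta$ is considerably larger while $E'$ is pushed slightly into (or nearer to) the bulk side of the edge, one already has that $\mathcal{S}(Y^\gamma)$ at $z'$ is at a sufficiently large spectral scale that the Gaussian-divisible control of Theorem \ref{thm: resolvent entry size V_t} (at $\gamma=0$) combined with the a priori Lemma-type bounds gives $\mathfrak{P}_k(\varepsilon/2,z',\Psi)$ as the driving quantity.

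The core step is then to transfer the bound from $z'$ to $z$. For this I would apply Theorem \ref{thm:comparison1} with the test function $F$ chosen to be a smooth approximation of the indicator $\mathbf{1}_{[N^{\varepsilon/2},\infty)}$ (or a high power of the relevant entry), so that $\E_\Psi F(\#_1)$ and $\E_\Psi F(\#_2)$ are comparable up to $N^{-\omega}(\#_3+1)+Q_0 N^{C+C_0}$; choosing the exponents and the slack $\varepsilon$ appropriately makes the right-hand side absorbable. This reduces the problem for $Y^\gamma$ at $z$ to the same quantity for $Y^0$ at $z$, i.e.\ to the Gaussian-divisible model, where Theorem \ref{thm: resolvent entry size V_t} applies directly. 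The passage from scale $\eta'$ to scale $\eta$ for the \emph{same} $\gamma$ is handled by the usual resolvent identity / interpolation: $\Im[G^\gamma(z)]_{ab}$ at the smaller $\eta$ is dominated by a constant multiple of the corresponding quantity at $\eta'$ provided the eigenvalue counting function is already under control at scale $\eta'$, which is exactly what the $\eta_\ast$-regularity of $\mathcal{S}(X)$ (Proposition \ref{prop:etastarRegular}) and the rigidity-type inputs guarantee on $\Omega_\Psi$; the factor $N^C$ on the right of \eqref{eq:2862} provides the room needed for this deterministic comparison of scales and for the union bound over the grid and over $\{0,1,2\}$.

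Finally I would assemble: on the event where all three quantities at $z'$ are below $N^{\varepsilon/2}$, the comparison theorem plus the scale-interpolation step force each of the three quantities at $z$ below $N^{\delta}$ with probability at least $1 - CN^{-D}$ (after adjusting $D$), which is precisely \eqref{eq:2862}. The main obstacle I anticipate is controlling $Q_0 = 1-\mathbb{P}_\Psi(\Omega)$ uniformly: one must verify that the ``bad'' event in Theorem \ref{thm:comparison1} has probability smaller than any $N^{-D}$ \emph{before} the bootstrap closes, which is circular unless one runs the argument along a decreasing sequence of scales $\eta = \eta_{\mathrm{init}} > \cdots > \eta_{\mathrm{final}}$, starting from $\eta_{\mathrm{init}}=\varepsilon_3$ where the bounds are classical, and at each step using the bound just obtained at the larger scale to make $Q_0$ negligible at the next. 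Keeping track of the accumulating constants $N^C$ over the $O(\log N)$ (or polynomially many) steps, and ensuring $N^{-\omega}$ beats them, is the delicate bookkeeping; but since $\omega$ is a fixed positive constant and each step only loses a polynomial factor over a bounded-length induction (thanks to the geometric growth $\eta' = N^{\varepsilon/6}\eta$), this closes.
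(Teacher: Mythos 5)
Your overall architecture matches the paper's: apply Theorem \ref{thm:comparison1} with a polynomially growing test function (the paper takes $F_p(x)=|x|^{2p}+1$ and then uses Markov's inequality, rather than a smoothed indicator, but either is workable), absorb the self-referential term $\mathfrak{I}_{p,0}$ using the $N^{-\omega}$ gain, control the $\gamma=0$ moments via Theorem \ref{thm: resolvent entry size V_t}, bound $Q_0$ by $\sum_k\mathfrak{P}_k(\varepsilon,z,\Psi)$ plus a Gaussian tail, and finish with a union bound and an $\epsilon$-net in $\gamma$. However, there is a genuine gap in your scale-transfer step from $\eta'$ down to $\eta$. You propose to dominate $\Im[G^\gamma(z)]_{ab}$ by "a constant multiple" of the corresponding quantity at $z'$ using control of the eigenvalue counting function of $\mathcal{S}(Y^\gamma)$, justified by the $\eta_\ast$-regularity of $\mathcal{S}(X)$ and "rigidity-type inputs." This is circular at this stage of the argument: rigidity for $\mathcal{S}(Y^\gamma)$ (Theorem \ref{rigidity}) is a downstream consequence of the entry-wise bounds you are trying to prove, and $\eta_\ast$-regularity of $\mathcal{S}(X)$ says nothing directly about individual entries $[G^\gamma(z)]_{ab}$ for $\gamma>0$, nor about off-diagonal entries or real parts, for which the spectral-decomposition monotonicity in $\eta$ does not apply entrywise. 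Moreover the claimed loss of only "a constant multiple" is false: the unavoidable loss is the ratio $\eta'/\eta=N^{\varepsilon/6}$, and tracking this factor is exactly why the threshold is allowed to degrade from $N^{\varepsilon/2}$ at $z'$ to $N^{\varepsilon}$ (and then to $N^\delta$ via Markov) at $z$.

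The correct tool, and the one the paper uses, is the purely deterministic monotonicity of the maximal resolvent entry of the linearization (Lemma \ref{lem:continuitylemma}): $\max_{ij}|(\mathcal{L}(Y^\gamma)-z^{1/2})^{-1}_{ij}|\vee 1$ at imaginary part $\tilde\eta/L$ is at most $L$ times its value at $\tilde\eta$, for any matrix and any $L>1$, with no spectral input whatsoever; applied with $L=N^{\varepsilon/6}$ this yields $\max_k\mathfrak{P}_k(\varepsilon,z,\Psi)\le\max_k\mathfrak{P}_k(\varepsilon/2,z',\Psi)$ in one line and removes the circularity. Two smaller corrections: the Gaussian-divisible input from Theorem \ref{thm: resolvent entry size V_t} is needed to bound the $\gamma=0$ moments at the target point $z$ itself (giving the additive $C_2N$ term), not at $z'$; and the multi-scale induction over $\eta_{\mathrm{init}}>\cdots>\eta_{\mathrm{final}}$ that you describe to break the circularity of $Q_0$ is not part of this lemma at all — within the lemma $Q_0$ is simply bounded by the quantities at $z'$ and left on the right-hand side of \eqref{eq:2862}; the iteration over scales is the content of the subsequent Proposition \ref{prop:locallawentries}.
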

\begin{proof}
This is a minor modification of \cite[Lemma 4.3]{ALY}. The proof requires Theorem \ref{thm:comparison1}. For brevity, the detail is provided in the Appendx \ref{4103}.
\end{proof}
With the above lemma, we can prove Proposition \ref{prop:locallawentries}. 

\begin{proof}[Proof of Proposition \ref{prop:locallawentries}]
The proof is similar to the proof of Proposition 3.17 in \cite{ALY}. Let $\varepsilon$ be as in Theorem \ref{thm:comparison1}. It follows from Lemma \ref{lem:monotonlema} that for any $z_0 =\lambda_-^{\mathsf{mp}} + E_0 + \mathrm{i}\eta_0 \in \mathsf{D}(2\varepsilon_1,\varepsilon_2,\varepsilon_3)$ and $\eta_0 \le N^{-\varepsilon}$, we may find $z_1 = \lambda_-^{\mathsf{mp}} + E_1 + \mathrm{i}\eta_1$ defined through  (\ref{eq:E2eta2}) such that for any $\delta > 0$,
\begin{align}
	\max_{\substack{k \in [0:2] }}\mathfrak{P}_{k}(\delta,z_0,\Psi) \le C_1N^{C_1} \max_{\substack{k \in [0:2]  }}\mathfrak{P}_{k}(\varepsilon/2,z_1,\Psi) + C_1N^{-D}.
\label{eq:constep1}\end{align}
Now it suffices to bound $\max_{\substack{k \in [0:2]  }}\mathfrak{P}_{k}(\varepsilon/2,z_1,\Psi)$. Notice that for $\varepsilon > 3\varepsilon_1$
\begin{align*}
			|E_1| \lesssim |E_0| + N^{\varepsilon/3}|\eta_0| \lesssim N^{-2\epsilon_1} + N^{-2/3\varepsilon} \ll N^{-\varepsilon_1}.
\end{align*}
This means that $z_1 \in (\varepsilon_1,\varepsilon_2,\varepsilon_3)$. Applying Lemma \ref{lem:monotonlema} again with $\delta = \varepsilon/2$, we can find $z_2 = \lambda_-^{\mathsf{mp}} +  E_2 + \mathrm{i}\eta_2$ 
\begin{align*}
	\max_{\substack{k \in [0:2] }}\mathfrak{P}_{k}(\varepsilon/2,z_1,\Psi) \le C_2N^{C_2} \max_{\substack{k \in [0:2]  }}\mathfrak{P}_{k}(\varepsilon/2,z_2,\Psi) + C_2N^{-D},
\end{align*}
where $\eta_2 = N^{\varepsilon/6}\eta_1$ and $|E_2| \ll N^{-\varepsilon_1}$. We may now repeat the above procedure until $z_m = \lambda_-^{\mathsf{mp}} + E_m + \mathrm{i}\eta_m$ with $\eta_m \ge KN^{-\varepsilon/2}$ for some sufficiently large $K$. It can be computed that
\begin{align*}
	\eta_m \lesssim N^{-\varepsilon/2}N^{\varepsilon/6} = N^{-\varepsilon/3},\quad \text{and} \quad    |E_m| \lesssim  |E_0| + \sum_{i=1}^{m-1}N^{\varepsilon/3}\eta_i, \quad \eta_i = N^{\varepsilon i/6} \eta_0.
\end{align*}
This implies that $|E_m| \lesssim |E_0| + N^{-\varepsilon/2} \ll N^{-\varepsilon_1}$. Then using the fact that $\max_{\substack{k \in [0:2] }}\mathfrak{P}_{k}(\varepsilon/2,z_m,\Psi) = 0$, we can obtain that
\begin{align*}
	\max_{\substack{k \in [0:2] }}\mathfrak{P}_{k}(\delta,z_0,\Psi) \le C_1N^{C_1} \max_{\substack{k \in [0:2]  }}\mathfrak{P}_{k}(\varepsilon/2,z_1,\Psi) + C_1N^{-D} \le C_mN^{-D}.
\end{align*}
The claim now follows by adjusting constants.
\end{proof}

\subsection{Proof of Theorem \ref{thm:comparison1}}
We need the following elementary resolvent expansion formula.
\begin{lemma}
For any deterministic matrix $A \in \mathbb{R}^{M\times N}$, let its linearisation $\mathcal{L}(A)$ be defined as
	\begin{align}
	\mathcal{L}(A) = \left( 
	\begin{array}{cc}
		0 & A\\
		A^\top & 0 \\
	\end{array}
	\right). \label{081611}
\end{align}
Let $\mathcal{R}(A,z) = (z^{1/2}\mathcal{L}(A)-z)^{-1}$ be the resolvent of $\mathcal{L}(A)$. The Schur complement formula also gives
\begin{align*}
	\mathcal{R}(A,z) =  \left( 
	\begin{array}{cc}
		G(A,z) & z^{-1/2}G(A,z)A\\
		z^{-1/2}A^\top G(A,z) & G(A^\top,z) \\
	\end{array}
	\right).
\end{align*}
Then for any $B = A + \Delta \in \mathbb{R}^{M\times N}$, we have for any integer $s \ge 0$
\begin{align*}
	\mathcal{R}(A,z) = \sum_{j=0}^s \big(\mathcal{R}(B,z)\mathcal{L}(z^{1/2}\Delta) \big)^j\mathcal{R}(B,z) +  \big(\mathcal{R}(B,z)\mathcal{L}(z^{1/2}\Delta)\big)^{s+1}\mathcal{R}(A,z).
\end{align*}
\label{lem:entryperturb}\end{lemma}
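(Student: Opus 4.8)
The final statement to prove is Lemma \ref{lem:entryperturb}, the resolvent expansion formula for the linearization $\mathcal{L}(A)$. Here is how I would approach it.

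\textbf{Plan.} The proof is entirely algebraic and requires no probabilistic input; it is a telescoping identity combined with the standard second-resolvent (Woodbury-type) expansion. First I would verify the Schur complement formula for $\mathcal{R}(A,z)$. Writing $\mathcal{L}(A)$ in the $2\times 2$ block form and computing $(z^{1/2}\mathcal{L}(A)-z)^{-1}$ by the block inverse formula: the $(1,1)$ block is $\big(-z - z^{1/2}A(-z)^{-1}z^{1/2}A^\top\big)^{-1} = \big(-z + AA^\top\big)^{-1} = G(A,z)$, which matches; the off-diagonal and $(2,2)$ blocks follow from the same block inversion, giving $z^{-1/2}G(A,z)A$, $z^{-1/2}A^\top G(A,z)$, and $G(A^\top,z)$ respectively. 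This is a one-line check once the block inverse formula is invoked, and I would present it briefly.

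\textbf{Key step: the expansion.} Set $H_A = z^{1/2}\mathcal{L}(A)$ and $H_B = z^{1/2}\mathcal{L}(B)$, so that $\mathcal{R}(A,z) = (H_A - z)^{-1}$ and $H_B - H_A = z^{1/2}\mathcal{L}(\Delta)$ by linearity of $\mathcal{L}$ in its argument. The resolvent identity gives
\begin{align*}
\mathcal{R}(A,z) = \mathcal{R}(B,z) + \mathcal{R}(B,z)(H_B - H_A)\mathcal{R}(A,z) = \mathcal{R}(B,z) + \mathcal{R}(B,z)\mathcal{L}(z^{1/2}\Delta)\mathcal{R}(A,z),
\end{align*}
where I used $z^{1/2}\mathcal{L}(\Delta) = \mathcal{L}(z^{1/2}\Delta)$. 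Iterating this identity $s+1$ times — i.e. repeatedly substituting the same formula for the $\mathcal{R}(A,z)$ appearing on the right — yields
\begin{align*}
\mathcal{R}(A,z) = \sum_{j=0}^{s}\big(\mathcal{R}(B,z)\mathcal{L}(z^{1/2}\Delta)\big)^j \mathcal{R}(B,z) + \big(\mathcal{R}(B,z)\mathcal{L}(z^{1/2}\Delta)\big)^{s+1}\mathcal{R}(A,z),
\end{align*}
which is exactly the claimed identity. One can make the induction on $s$ explicit: the base case $s=0$ is the displayed resolvent identity, and the inductive step replaces $\mathcal{R}(A,z)$ in the remainder term by its $s=0$ expansion, increasing the sum index by one and the power in the remainder by one.

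\textbf{Main obstacle.} There is essentially no obstacle here — this lemma is a bookkeeping statement. The only points requiring minor care are: (i) confirming that $\mathcal{L}$ is $\mathbb{R}$-linear (hence $z^{1/2}\mathcal{L}(\Delta) = \mathcal{L}(z^{1/2}\Delta)$), which is immediate from the block definition \eqref{081611}; and (ii) ensuring $z^{1/2}$ is interpreted with the branch cut consistent with the rest of the paper so that the Schur complement computation of $\mathcal{R}(A,z)$ is valid (all the matrices involved are finite-dimensional and $z$ has nonzero imaginary part in the applications, so invertibility is not an issue). I would therefore keep the write-up short: state the block-inverse computation for the Schur formula in one display, then give the resolvent-identity iteration in two displays, and conclude by induction on $s$.
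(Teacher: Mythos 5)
Your proposal is correct: the block Schur-complement computation for $\mathcal{R}(A,z)$ and the iterated (or inductive) application of the resolvent identity $\mathcal{R}(A,z)-\mathcal{R}(B,z)=\mathcal{R}(B,z)\,\mathcal{L}(z^{1/2}\Delta)\,\mathcal{R}(A,z)$, using the linearity $z^{1/2}\mathcal{L}(\Delta)=\mathcal{L}(z^{1/2}\Delta)$, is exactly the standard argument; the paper states this lemma without proof precisely because it is this elementary. No gaps.
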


\begin{proof}[Proof of Theorem \ref{thm:comparison1}]
	During the proof, we omit the $z$ dependence and write $d_{ij} = d_{ij}(\gamma,w_{ij})$ and $e_{ij} = e_{ij}(\gamma,w_{ij})$ for simplicity. We only show the proof for $(\#_1,\#_2,\#_3) =(\mathfrak{X}_{ab}\Im [G^\gamma(z)]_{ab}, \;\mathfrak{X}_{ab}\Im [G^0(z)]_{ab}, \;\mathfrak{I}_{0,ab}   )$ with  $a \in \mathcal{T}_r$ or $b \in \mathcal{T}_r$, and the others can be proved similarly. 
	Observing that 
	\begin{align*}
		&\frac{\partial\E_{\Psi}\big( F([\Im G^\gamma]_{ab})\big)}{\partial \gamma} = -\sum_{i,j}\E_\Psi\Big[F^{(1)}(\Im[ G^\gamma]_{ab})
		\Im\big([G^\gamma]_{ib}[ G^\gamma Y^\gamma]_{aj}  \big)
		 \Big( \mathsf{A}_{ij} - \frac{\gamma t^{1/2}w_{ij}}{(1-\gamma^2)^{1/2}} \Big) \Big]\\
		 &  -\sum_{i,j}\E_\Psi\Big[F^{(1)}(\Im[ G^\gamma]_{ab})
		\Im\big([G^\gamma]_{ai}[(Y^\gamma)^{\top} G^\gamma]_{jb}  \big)
		 \Big( \mathsf{A}_{ij} - \frac{\gamma t^{1/2}w_{ij}}{(1-\gamma^2)^{1/2}} \Big) \Big] = -\sum_{i,j} \Big[(I)_{ij} + (II)_{ij}\Big],
	\end{align*}
	and therefore it suffices to show that there exists some constant $C$ such that
	\begin{align}
		\sum_{i,j} \Big[ |(I)_{ij}| + |(II)_{ij}|\Big] \le \frac{C}{(1-\gamma^2)^{1/2}}  \big(N^{-\omega} (\mathfrak{I}_{0,ab} + 1) +Q_0N^{C_0+C}\big).
	\label{eq:altercomparisonG}\end{align}
	We will focus on the estimation for $(I)_{ij}$'s , while the estimates for the $(II)_{ij}$'s can be handled in an identical fanshion. 
 To ease the presentation, we further define the shorthand notation 
\begin{align*}
	f_{(ij)}(\lambda) &= U_{(ij)}(\lambda)V_{(ij)}(\lambda),\\ 	
	 U_{(ij)}(\lambda) = F^{(1)}\big(\Im\big[ G_{(ij)}^{\gamma,\lambda}\big]_{ab}\big),& \quad  V_{(ij)}(\lambda) = \Im\big([G_{(ij)}^{\gamma,\lambda}]_{ib}[ G_{(ij)}^{\gamma,\lambda}Y_{(ij)}^{\gamma,\lambda}]_{aj}  \big) .
\end{align*}
We also define $\tilde{V}_{(ij)}(\lambda) = \Im\big([G_{(ij)}^{\gamma,\lambda}]_{ai}[ (Y_{(ij)}^{\gamma,\lambda})^{\top} G_{(ij)}^{\gamma,\lambda}]_{jb}  \big) $.
Then for any $i \in [M], j \in [N]$, $(I)_{ij}$ can be rewritten as
\begin{align*}
	(I)_{ij} 
		 &= \E_\Psi\Big[ f_{(ij)}([Y^\gamma]_{ij})
		 \Big( \mathsf{A}_{ij} - \frac{\gamma t^{1/2}w_{ij}}{(1-\gamma^2)^{1/2}} \Big)\cdot (\mathbf{1}_{\psi_{ij} = 0} + \mathbf{1}_{\psi_{ij} = 1})  \Big]  \\
		 &= \E_\Psi\Big[ f_{(ij)}(d_{ij})
		 \Big( \mathsf{A}_{ij} - \frac{\gamma t^{1/2}w_{ij}}{(1-\gamma^2)^{1/2}} \Big) \Big] \cdot \mathbf{1}_{\psi_{ij} = 0} + \E_\Psi\Big[ f_{(ij)}(e_{ij})
		 \Big( \mathsf{A}_{ij} - \frac{\gamma t^{1/2}w_{ij}}{(1-\gamma^2)^{1/2}} \Big) \Big] \cdot  \mathbf{1}_{\psi_{ij} = 1} \\
		  &\overset{(\ast)}{=} \E_\Psi\Big[ f_{(ij)}(d_{ij})
		 \Big( \mathsf{A}_{ij} - \frac{\gamma t^{1/2}w_{ij}}{(1-\gamma^2)^{1/2}} \Big)  \Big]\cdot \mathbf{1}_{\psi_{ij} = 0} - \frac{\gamma }{(1-\gamma^2)^{1/2}}t^{1/2} \E_\Psi\Big[ w_{ij}f_{(ij)}(e_{ij})
	 \Big]	\cdot \mathbf{1}_{\psi_{ij} = 1}  \\
		 &= (J_1)_{ij} - (J_2)_{ij},
\end{align*}
where in $(\ast)$, we used the fact that $\mathsf{A}_{ij} = 0$ if $\psi_{ij} = 1$.

 Let us consider $(J_2)_{ij}$ first. Applying Gaussian integration by parts on $w_{ij}$, we have
\begin{align*}
	&|(J_2)_{ij}| = \Big| \frac{\gamma t^{1/2}}{(1-\gamma^2)^{1/2}N} \E_\Psi\Big[\partial_{w_{ij}} f_{(ij)}(e_{ij})
		 \Big] \mathbf{1}_{\psi_{ij} = 1} \Big|  \\
		 &\le \frac{\gamma t^{1/2}}{(1-\gamma^2)^{1/2}N} \E_\Psi\Big[ \big| \partial_{w_{ij}} f_{(ij)}(e_{ij}) \mathbf{1}_{\Omega} \big|\Big] \mathbf{1}_{\psi_{ij} = 1}  + \frac{\gamma t^{1/2}}{(1-\gamma^2)^{1/2}N} \E_\Psi\Big[ \big| \partial_{w_{ij}} f_{(ij)}(e_{ij}) \mathbf{1}_{\Omega^c} \big|\Big] \mathbf{1}_{\psi_{ij} = 1}.
\end{align*}
Notice that
\begin{align}
	\partial_{w_{ij}} f_{(ij)}(e_{ij}) = U_{(ij)}(e_{ij}) \cdot  \partial_{w_{ij}}  V_{(ij)}(e_{ij}) + V_{(ij)}(e_{ij}) \cdot  \partial_{w_{ij}}  U_{(ij)}(e_{ij}),
\label{eq:fijderi}\end{align}
and 
\begin{align}
	&\partial_{w_{ij}}  U_{(ij)}(e_{ij}) = -(1-\gamma^2)^{1/2}t^{1/2}F^{(2)}\big(\Im\big[ G_{(ij)}^{\gamma,e_{ij}}\big]_{ab}\big) \big(  V_{(ij)}(e_{ij})  + \tilde{V}_{(ij)}(e_{ij})\big), \notag\\
	&\partial_{w_{ij}}  V_{(ij)}(e_{ij}) = -(1-\gamma^2)^{1/2} t^{1/2}\Im \Big( \big[G_{(ij)}^{\gamma,e_{ij}}\big]_{ii}\big[(Y_{(ij)}^{\gamma,e_{ij}})^{\top}G_{(ij)}^{\gamma,e_{ij}}\big]_{jb}[ G_{(ij)}^{\gamma,e_{ij}}Y_{(ij)}^{\gamma,e_{ij}}]_{aj} \notag\\
	 &\quad  +   \big[G_{(ij)}^{\gamma,e_{ij}}Y_{(ij)}^{\gamma,e_{ij}}\big]_{ij}\big[G_{(ij)}^{\gamma,e_{ij}}\big]_{ib}[ G_{(ij)}^{\gamma,e_{ij}}Y_{(ij)}^{\gamma,e_{ij}}]_{aj}-  [ G_{(ij)}^{\gamma,e_{ij}}]_{ib}[ G_{(ij)}^{\gamma,e_{ij}}]_{ai} \notag\\
	 &\quad + [ G_{(ij)}^{\gamma,e_{ij}}]_{ib}[ G_{(ij)}^{\gamma,e_{ij}}]_{ai}[(Y_{(ij)}^{\gamma,e_{ij}})^\top G_{(ij)}^{\gamma,e_{ij}}Y_{(ij)}^{\gamma,e_{ij}}]_{jj} + [ G_{(ij)}^{\gamma,e_{ij}}]_{ib}[ G_{(ij)}^{\gamma,e_{ij}}Y_{(ij)}^{\gamma,e_{ij}}]_{aj}[ G_{(ij)}^{\gamma,e_{ij}}Y_{(ij)}^{\gamma,e_{ij}}]_{ij} \Big).
\label{eq:UVijderi}\end{align}

When $\psi_{ij} = 1$, we have $i \in \mathcal{D}_r$ and $j \in \mathcal{D}_c$. Then $\mathbf{1}_{\Omega}\mathbf{1}_{\psi_{ij}=1} |  V_{(ij)}(e_{ij}) | \le N^{2\varepsilon}t^{-2}$ and $\mathbf{1}_{\Omega} \mathbf{1}_{\psi_{ij}=1}|\partial_{w_{ij}}  V_{(ij)}(e_{ij}) | \le N^{3\varepsilon}t^{-7/2}$.
Therefore, we may find a large constant $K_1 > 0$ such that 
\begin{align*}
	|(J_2)_{ij}| &\lesssim \frac{\mathbf{1}_{\psi_{ij} = 1} }{N^{1-3\varepsilon}t^3} \E_\Psi\Big[ \big| F^{(1)}\big(\Im\big[ G_{(ij)}^{\gamma,e_{ij}}\big]_{ab}\big) \big|\Big] + \frac{\mathbf{1}_{\psi_{ij} = 1}}{N^{1-4\varepsilon}t^3} \E_\Psi\Big[ \big| F^{(2)}\big(\Im\big[ G_{(ij)}^{\gamma,e_{ij}}\big]_{ab}\big) \big|\Big]  \\
	 &\quad+ \frac{t^{1/2}\mathbf{1}_{\psi_{ij} = 1}}{N}\E_\Psi\Big[ \big| \partial_{w_{ij}} f_{(ij)}(e_{ij}) \mathbf{1}_{\Omega^c} \big|\Big] \\
	 &\lesssim \frac{\mathbf{1}_{\psi_{ij} = 1}}{N^{1-3\varepsilon}t^3} \E_\Psi\Big[ \big| F^{(1)}\big(\Im\big[ G_{(ij)}^{\gamma,e_{ij}}\big]_{ab}\big) \big|\Big] + \frac{ \mathbf{1}_{\psi_{ij} = 1}}{N^{1-4\varepsilon}t^3} \E_\Psi\Big[ \big| F^{(2)}\big(\Im\big[ G_{(ij)}^{\gamma,e_{ij}}\big]_{ab}\big) \big|\Big]  + N^{K_1} Q_0\mathbf{1}_{\psi_{ij} = 1},
\end{align*}
where in the second step, we used the crude bound that $|\partial_{w_{ij}} f_{(ij)}(e_{ij})| \le N^{K_1}$ for some sufficiently large $K_1$, which can be obtained by the fact that $\Im z > N^{-1}$. By the facts that $\sum_{i,j}\mathbf{1}_{\psi_{ij} = 1} \le N^{1-\epsilon_\alpha}$ {and $t \gg N^{-\epsilon_\alpha/4}$} , we can choose $\varepsilon < \epsilon_\alpha/16$ to obtain that
\begin{align*}
	|(J_2)_{ij}| \lesssim \frac{\mathbf{1}_{\psi_{ij} = 1}}{N^{1-\epsilon_\alpha/2}}\E_\Psi\Big[ \big| F^{(1)}\big(\Im\big[ G_{(ij)}^{\gamma,e_{ij}}\big]_{ab}\big) \big| +\big| F^{(2)}\big(\Im\big[ G_{(ij)}^{\gamma,e_{ij}}\big]_{ab}\big) \big|\Big]  + N^{K_1} Q_0\mathbf{1}_{\psi_{ij} = 1}.
\end{align*}

Next, we consider $(J_1)_{ij}$. Recall that $d_{ij} =\gamma (1-\chi_{ij})a_{ij} + \chi_{ij} b_{ij} + (1-\gamma^2)^{1/2}t^{1/2}w_{ji}$. Applying Taylor expansion on $f(d_{ij})$ around $0$, for an $s_1$ to be chosen later, we have
\begin{align*}
	(J_1)_{ij} &= \sum_{k = 0}^{s_1}  \frac{\mathbf{1}_{\psi_{ij} = 0} }{k!} \E_\Psi\Big[ (d_{ij})^kf^{(k)}_{(ij)}(0)		 \Big( \mathsf{A}_{ij} - \frac{\gamma t^{1/2}w_{ij}}{(1-\gamma^2)^{1/2}} \Big)  \Big] \\
	 &\quad +  \frac{\mathbf{1}_{\psi_{ij} = 0}}{(s_1+1)!}\E_\Psi\Big[ (d_{ij})^{s_1+1}  f^{(s_1+1)}_{(ij)}(\tilde{d} _{ij}) \Big( \mathsf{A}_{ij} - \frac{\gamma t^{1/2}w_{ij}}{(1-\gamma^2)^{1/2}} \Big)  \Big] = \sum_{k = 0}^s(J_1)_{ij,k} + \mathsf{Rem}.
\end{align*}
where $\tilde{d}_{ij} \in [0, d_{ij}]$. Before proceeding to the estimation of $(J_1)_{ij,k}$ and $\mathsf{Rem}$, we first establish perturbation bounds for the entries of the resolvents, which are useful for the estimation of $f^{(k)}_{(ij)}(0)$ and $f^{(k)}_{(ij)}(\tilde{d} _{ij})$.
Using Lemma \ref{lem:entryperturb} and the notation therein, we have for any $\mathfrak{u},\mathfrak{v} \in [M+N]$,
\begin{align*}
	\mathbf{1}_{\Omega} \big[ R( Y_{(ij)}^{\gamma,d_{ij}},z) - R( Y_{(ij)}^{\gamma,0},z) \big]_{\mathfrak{u}\mathfrak{v}} &=\sum_{j=0}^s \mathbf{1}_{\Omega} \big[\big(R( Y_{(ij)}^{\gamma,d_{ij}},z)\mathcal{L}(z^{1/2}d_{ij}\mathsf{E}_{ij}) \big)^jR( Y_{(ij)}^{\gamma,d_{ij}},z) \big]_{\mathfrak{u}\mathfrak{v}} \\
	&\quad+ \mathbf{1}_{\Omega} \big[ \big(R( Y_{(ij)}^{\gamma,d_{ij}},z)\mathcal{L}(z^{1/2}d_{ij}\mathsf{E}_{ij})\big)^{s+1} R( Y_{(ij)}^{\gamma,0},z) \big]_{\mathfrak{u}\mathfrak{v}}.
\end{align*}
Further using the fact that $\mathbf{1}_{\Omega}|d_{ij}| \le N^{-\epsilon_b}$, $\mathbf{1}_{\Omega}|\big[R( Y_{(ij)}^{\gamma,d_{ij}},z)\big]_{\mathfrak{u}\mathfrak{v}}| \le N^{\varepsilon}/t^2$, and the crude bound $\| R( Y_{(ij)}^{\gamma,0},z)\| \le N$ when $\Im z \ge N^{-1}$, we may choose $s$ large enough to obtain that
\begin{align}
	\mathbf{1}_{\Omega} \big| \big[ R( Y_{(ij)}^{\gamma,d_{ij}},z) - R( Y_{(ij)}^{\gamma,0},z) \big]_{\mathfrak{u}\mathfrak{v}}\big| \lesssim \sum_{j=1}^s \Big(\frac{N^{2\varepsilon}}{t^4N^{\epsilon_b}}\Big)^j + \Big(\frac{N^{\varepsilon}}{t^2N^{\epsilon_b}}\Big)^{s+1}N \lesssim 1,
\label{eq:perturbbound}\end{align}
which yields directly a control of $ G_{(ij)}^{\gamma,0}$, $ G_{(ij)}^{\gamma,0}Y_{(ij)}^{\gamma,0}$, and $(Y_{(ij)}^{\gamma,0})^\top G_{(ij)}^{\gamma,0}Y_{(ij)}^{\gamma,0}$ on the event $\Omega$. Here we used the fact that $Y^\top GY$ can be written in terms of $\mathcal{G}$, which can be seen easily by singular value decomposition.  Similar estimates hold if  $ Y_{(ij)}^{\gamma,0}$ is replaced by $ Y_{(ij)}^{\gamma,\tilde{d}_{ij}}$, we omit repetitive details. By taking derivatives repeatedly similar to (\ref{eq:fijderi}) and (\ref{eq:UVijderi}), it can be easily seen that for any  integer $k \ge 0$,
\begin{align}
	f^{(k)}_{(ij)}(d_{ij})\cdot \mathbf{1}_{\psi_{ij} = 0}\cdot \mathbf{1}_{\Omega} \lesssim \frac{N^{(C_0+2k+2)\varepsilon}}{t^{2k+2}}.
\label{eq:fkest}\end{align}
Combining the above estimate with the perturbation bounds in (\ref{eq:perturbbound}), we have for any $x \in [0, d_{ij}]$,
\begin{align}
	f^{(k)}_{(ij)}(x) \cdot \mathbf{1}_{\psi_{ij} = 0}\cdot \mathbf{1}_{\Omega} \lesssim \frac{N^{(C_0+2k+2)\varepsilon}}{t^{2k+2}}.
\label{eq:fkest2}\end{align}

Now we may start to estimate $(J_1)_{ij,k}$ and $\mathsf{Rem}$. Using the above perturbation bounds on the event $\Omega$, we have that there exists some large $K_2 > 0$, such that 
\begin{align}
	|\mathsf{Rem}| &\le  \frac{ \mathbf{1}_{\psi_{ij} = 0} }{(s_1+1)!}\E_\Psi\Big[ \Big|(d_{ij})^{s_1+1}  f^{(s_1+1)}_{(ij)}(\tilde{d} _{ij}) \Big( \mathsf{A}_{ij} - \frac{\gamma t^{1/2}w_{ij}}{(1-\gamma^2)^{1/2}} \Big)  \Big| \cdot\mathbf{1}_{\Omega}\Big]\notag\\
	&\quad +\frac{ \mathbf{1}_{\psi_{ij} = 0}}{(s_1+1)!}\E_\Psi\Big[ \Big|(d_{ij})^{s_1+1}  f^{(s_1+1)}_{(ij)}(\tilde{d} _{ij}) \Big( \mathsf{A}_{ij} - \frac{\gamma t^{1/2}w_{ij}}{(1-\gamma^2)^{1/2}} \Big)  \Big| \cdot\mathbf{1}_{\Omega^c}\Big] \notag\\
	&\lesssim \frac{N^{(C_0+2s_1+4)\varepsilon}\mathbf{1}_{\psi_{ij} = 0}}{t^{2s_1+4}N^{1/2+\epsilon_a+(s_1+1)\epsilon_b}} + N^{K_2}Q_0\mathbf{1}_{\psi_{ij} = 0}. 
\label{eq:RemEst}\end{align}
Therefore,  with the fact that $t \gg N^{-\epsilon_b/8 }$, we may choose $\varepsilon < \epsilon_b/8$ and  $s_1 > C_0/4 + 6/\epsilon_b$ to obtain that 
\begin{align*}
	|\mathsf{Rem}| \lesssim N^{-3} \cdot  \mathbf{1}_{\psi_{ij} = 0}+N^{K_2}Q_0\cdot \mathbf{1}_{\psi_{ij} = 0}.
\end{align*}

We estimate $(J_1)_{ij,k}$ for different $k$ separately. For the case when $k$ is even, it follows from the symmetric condition that $(J_1)_{ij,k} = 0$. Thus we mainly focus on the estimation for $k$ is odd.

\textbf{Case 1: $k \ge 5$.} First note by symmetry condition, we can obtain
\begin{align*}
	|(J_1)_{ij,k}| &\lesssim  \sum_{\substack{u_1 + u_2 \ge 1, u_3 \ge 0  \\ u_1 + u_2 + u_3 = (k + 1)/2}} \E_\Psi\Big[ |\mathsf{A}_{ij}|^{2u_1}|t^{1/2}w_{ij}|^{2u_2}|b_{ij}|^{2u_3} |f^{(k)}_{(ij)}(0)|		 \Big] \mathbf{1}_{\psi_{ij} = 0} \lesssim \frac{t\mathbf{1}_{\psi_{ij} = 0}\E_\Psi[ |f^{(k)}_{(ij)}(0)|		 ]}{N^{2+(k-3)\epsilon_b}} ,
\end{align*}
where in the last step we also used the fact that $\E_{\Psi}(b_{ij}^k) \le N^{-\varepsilon_b(k-2)}\E_{\Psi}(x_{ij}^2) \lesssim N^{-1-\varepsilon_b(k-2)}$ for $k\geq 2$. 
 We need to estimate $f^{(k)}_{(ij)}(0)$ again by Taylor expansion. For an $s_2$ to be chosen later, there exists $\hat{d}_{ij} \in [0, d_{ij}]$ such that
 \begin{align}
 	|(J_1)_{ij,k}|&\lesssim \sum_{\ell = 0}^{s_2}\frac{t\mathbf{1}_{\psi_{ij} = 0} \E_\Psi[ |f^{(k+\ell)}_{(ij)}(d_{ij})|		 ]}{N^{2+(k+\ell-3)\epsilon_b}} + \frac{t\mathbf{1}_{\psi_{ij} = 0}\E_\Psi[ |f^{(k+s_2+1)}_{(ij)}(\hat{d}_{ij})|		 ]}{N^{2+(k+s_2-2)\epsilon_b}} .
\label{eq:J1est1} \end{align}
On the event $\Omega^c$, we may estimate the RHS in the above display as in the last step in (\ref{eq:RemEst}), which gives
\begin{align}
	&\Big(\sum_{\ell = 0}^{s_2}\frac{t \E_\Psi[ |f^{(k+\ell)}_{(ij)}(d_{ij})|	 \mathbf{1}_{\Omega^c}		 ]}{N^{2+(k+\ell-3)\epsilon_b}}+  \frac{t \E_\Psi[ |f^{(k+s_2+1)}_{(ij)}(\hat{d}_{ij})|	\mathbf{1}_{\Omega^c}		 ]}{N^{2+(k+s_2-2)\epsilon_b}}\Big)\mathbf{1}_{\psi_{ij} = 0}\lesssim N^{K_3}Q_0 \mathbf{1}_{\psi_{ij}=0 },
\label{eq:J1est2}\end{align} 
for some large $K_3 > 0$. On the event $\Omega$, we may choose $s_2 > C_0 + 30+4/\epsilon_b $ and $\varepsilon < \epsilon_b/8$ to obtain that
\begin{align}
	&\Big(\sum_{\ell = 0}^{s_2}\frac{t}{N^{2+(k+\ell-3)\epsilon_b}} \E_\Psi\Big[ |f^{(k+\ell)}_{(ij)}(d_{ij})|	\mathbf{1}_{\Omega}		 \Big]+  \frac{t}{N^{2+(k+s_2-2)\epsilon_b}} \E_\Psi\Big[ |f^{(k+s_2+1)}_{(ij)}(\hat{d}_{ij})|	 \mathbf{1}_{\Omega}		 \Big]\Big)\cdot \mathbf{1}_{\psi_{ij} = 0}\notag\\
	&\lesssim \sum_{\ell = 0}^{s_2}\frac{t\mathbf{1}_{\psi_{ij} = 0} }{N^{2+(k+\ell-3)\epsilon_b}} \E_\Psi\Big[ |f^{(k+\ell)}_{(ij)}(d_{ij})|	\mathbf{1}_{\Omega}		 \Big]+ N^{-3}\mathbf{1}_{\psi_{ij} = 0} \notag\\
	&\lesssim \sum_{\ell = 0}^{s_2}\frac{N^{2(k+\ell+1)\varepsilon}\mathbf{1}_{\psi_{ij} = 0}}{N^{2+(k+\ell-3)\epsilon_b}t^{2(k+\ell)+1}} \sum_{m=1}^{k+\ell + 1}\E_\Psi\Big[ |F^{(m)}\big(\Im\big[ G_{(ij)}^{\gamma,d_{ij}}\big]_{ab}\big)|\Big] +  N^{-3} \mathbf{1}_{\psi_{ij} = 0} .
\label{eq:J1est3}\end{align}
Collecting the above estimates and choosing $\varepsilon < \epsilon_b / 100$, we have 
\begin{align*}
	|(J_1)_{ij,k}|\lesssim \frac{ \mathbf{1}_{\psi_{ij} = 0} }{N^{2+\epsilon_b/2}}  \sum_{m=1}^{k+s_2+1} \E_\Psi\Big[ |F^{(m)}\big(\Im\big[ G_{(ij)}^{\gamma,d_{ij}}\big]_{ab}\big)|\Big]+\frac{\mathbf{1}_{\psi_{ij} = 0}}{N^{2+\epsilon_b/2}} + N^{K_3}Q_0\mathbf{1}_{\psi_{ij}=0 }.
\end{align*}

\textbf{Case 2: $k = 3$.} By direct calculation, we have
\begin{align*}
	(J_1)_{ij,3} \asymp \E_\Psi\Big[ \Big(\mathsf{A}_{ij}^4 + t\mathsf{A}_{ij}^2w_{ij}^2 + t^2w_{ij}^4 + \mathsf{A}_{ij}^2\mathsf{B}_{ij}^2 + tw_{ij}^2\mathsf{B}_{ij}^2 \Big) f^{(3)}_{(ij)}(0)		 \Big]\mathbf{1}_{\psi_{ij} = 0}. 
\end{align*}
The term $\mathsf{A}_{ij}^2\mathsf{B}_{ij}^2$ becomes null due to the definitions of $\mathsf{A}_{ij}$ and $\mathsf{B}_{ij}$. Concerning the remaining terms, we only show how to estimate the term involving $tw_{ij}^2\mathsf{B}_{ij}^2$ while the others can be handled similarly. Applying Taylor expansion, and then estimating terms on $\Omega^c$ and $\Omega$ separaterly as in (\ref{eq:J1est1})-(\ref{eq:J1est3}), we have for $s_3 > C_0/4 + \epsilon_b/2+4$ ,
\begin{align*}
	\Big|\E_\Psi \Big[ tw_{ij}^2\mathsf{B}_{ij}^2 f^{(3)}_{(ij)}(0)		 \Big] \mathbf{1}_{\psi_{ij} = 0}\Big|  &\lesssim \sum_{\ell=0}^{s_3} \frac{t\mathbf{1}_{\psi_{ij} = 0} \E_\Psi[|f^{(3+\ell)}_{(ij)}(d_{ij})	|  \mathbf{1}_{\Omega}	 ]}{N^{2+\ell\epsilon_b}}  +  N^{-3}\mathbf{1}_{\psi_{ij} = 0} +  N^{K_5}Q_0\mathbf{1}_{\psi_{ij}=0 },
\end{align*}
for some large $K_5 > 0$. Then it remains to estimate the first term of the RHS of the above inequality. For $\ell \ge 1$, the estimate is similar to (\ref{eq:J1est3}), we omit further details. Here we focus on the non-trivial term when $\ell = 0$. It is straightforward to compute that $f^{(3)}_{(ij)}(d_{ij})$ is the products of $F^{(\ell)}\big(\Im\big[ G_{(ij)}^{\gamma,d_{ij}}\big]_{ab}\big), \ell \in [4]$, and the entries of $ G_{(ij)}^{\gamma,d_{ij}}$, $ G_{(ij)}^{\gamma,d_{ij}}Y_{(ij)}^{\gamma,d_{ij}}$, and $(Y_{(ij)}^{\gamma,d_{ij}})^\top G_{(ij)}^{\gamma,d_{ij}}Y_{(ij)}^{\gamma,d_{ij}}$, where the entries' indices can be $(i,i),(j,j),(i,j), (a,i),(a,j)$, $(i,b),(j,b)$. Therefore, 
\begin{align*}
	 \frac{t}{N^2}  \E_\Psi\Big[ |f^{(3)}_{(ij)}(d_{ij})	| \cdot \mathbf{1}_{\Omega}	 \Big]\cdot \mathbf{1}_{\psi_{ij} = 0}  &\le \frac{t N^{6\varepsilon} }{N^2}\sum_{\ell = 1}^4 \E_\Psi\Big[|F^{(\ell)}\big(\Im\big[ G_{(ij)}^{\gamma,d_{ij}}\big]_{ab}\big)	| \Big]\cdot \mathbf{1}_{\psi_{ij} = 0} \cdot \mathbf{1}_{i \in \mathcal{T}_r, j \in \mathcal{T}_c}  \\
	 &\quad + \frac{N^{6\varepsilon} }{N^2t^7}\sum_{\ell = 1}^4 \E_\Psi\Big[|F^{(\ell)}\big(\Im\big[ G_{(ij)}^{\gamma,d_{ij}}\big]_{ab}\big)	| \Big]\cdot \mathbf{1}_{\psi_{ij} = 0} \cdot (1 - \mathbf{1}_{i \in \mathcal{T}_r, j \in \mathcal{T}_c}).
\end{align*} 
This eventually leads to
\begin{align*}
	|(J_1)_{ij,3}| &\lesssim  \frac{t N^{6\varepsilon} \mathbf{1}_{\psi_{ij} = 0} \mathbf{1}_{i \in \mathcal{T}_r, j \in \mathcal{T}_c}}{N^2}\sum_{\ell = 1}^4 \E_\Psi\Big[|F^{(\ell)}\big(\Im\big[ G_{(ij)}^{\gamma,d_{ij}}\big]_{ab}\big)	| \Big]  \\
	 &\quad + \frac{N^{6\varepsilon}\mathbf{1}_{\psi_{ij} = 0} (1 - \mathbf{1}_{i \in \mathcal{T}_r, j \in \mathcal{T}_c}) }{N^2t^7}\sum_{\ell = 1}^4 \E_\Psi\Big[|F^{(\ell)}\big(\Im\big[ G_{(ij)}^{\gamma,d_{ij}}\big]_{ab}\big)	| \Big]\\
	 &\quad +\frac{\mathbf{1}_{\psi_{ij} = 0}}{N^{2+\epsilon_b/2}}  \sum_{\ell=1}^{s_3+4} \E_\Psi\Big[ |F^{(\ell)}\big(\Im\big[ G_{(ij)}^{\gamma,d_{ij}}\big]_{ab}\big)|\Big] + N^{-3}\mathbf{1}_{\psi_{ij} = 0}+  N^{K_5}Q_0\mathbf{1}_{\psi_{ij}=0 }.
\end{align*}
The second term in the above display can be estimated by the fact that $|\mathcal{D}_r| \vee |\mathcal{D}_c| \le N^{1-\epsilon_d}$ for some $\epsilon_d > 0$. By the fact that $t \gg N^{-\epsilon_d / 20} \vee N^{-\epsilon_b/20}$, we then have
\begin{align*}
	|(J_1)_{ij,3}| &\lesssim  \frac{t^{1/2}  }{N^2}\sum_{\ell = 1}^4 \E_\Psi\Big[|F^{(\ell)}\big(\Im\big[ G_{(ij)}^{\gamma,d_{ij}}\big]_{ab}\big)	| \Big]\cdot \mathbf{1}_{\psi_{ij} = 0} \cdot \mathbf{1}_{i \in \mathcal{T}_r, j \in \mathcal{T}_c}  \\
	 &\quad + \frac{ \mathbf{1}_{\psi_{ij} = 0} \cdot (1 - \mathbf{1}_{i \in \mathcal{T}_r, j \in \mathcal{T}_c}) }{N^{2-\epsilon_d/2}}\sum_{\ell = 1}^4 \E_\Psi\Big[|F^{(\ell)}\big(\Im\big[ G_{(ij)}^{\gamma,d_{ij}}\big]_{ab}\big)	| \Big]\\
	 &\quad +\frac{ \mathbf{1}_{\psi_{ij} = 0}}{N^{2+\epsilon_b/2}}  \sum_{\ell=1}^{s_3+4} \E_\Psi\Big[ |F^{(\ell)}\big(\Im\big[ G_{(ij)}^{\gamma,d_{ij}}\big]_{ab}\big)|\Big] + N^{-3}\cdot \mathbf{1}_{\psi_{ij} = 0}+  N^{K_5}Q_0\cdot \mathbf{1}_{\psi_{ij}=0 }.
\end{align*}
\textbf{Case 3: $k = 1$.} In this case, using the fact that $\E_{\Psi}[b_{ij}] = 0$, we may compute
\begin{align*}
	(J_1)_{ij,1} = \gamma\E_\Psi\Big[ \Big((1-\chi_{ij})^2a^2_{ij} -  tw^2_{ij}\Big) f^{(1)}_{(ij)}(0)		\Big]\cdot \mathbf{1}_{\psi_{ij} = 0}. 
\end{align*}
Recall that $t = N\E (\mathsf{A}_{ij}^2) = N\E\big((1-\psi_{ij})^2(1-\chi_{ij})^2a_{ij}^2\big)$. This gives
\begin{align}
	\big|\E\big((1-\chi_{ij})^2a_{ij}^2 \big) - \E\big( tw_{ij}^2 \big)\big| \lesssim tN^{-1-\alpha/2+\alpha\epsilon_b} .
\label{eq:2ndmomentdiff}\end{align} 
Therefore, following the same procedure as in (\ref{eq:J1est1})-(\ref{eq:J1est3}), we can also obtain that for sufficiently large constant $s_4$,
\begin{align*}
	|(J_1)_{ij,1}| \lesssim \frac{\mathbf{1}_{\psi_{ij} = 0} }{N^{2+\epsilon_b/2}}\sum_{\ell=1}^{s_4} \E_\Psi\Big[ |F^{(\ell)}\big(\Im\big[ G_{(ij)}^{\gamma,d_{ij}}\big]_{ab}\big)|\Big]+ N^{-3}\cdot \mathbf{1}_{\psi_{ij} = 0}+  N^{K_5}Q_0\cdot \mathbf{1}_{\psi_{ij}=0 }.
\end{align*}
By combining the estimates of $(J_1)_{ij,k}$'s with $(J_2)_{ij}$'s, we can conclude that (\ref{eq:altercomparisonG}) holds when we choose $\varepsilon \le \min \{ \epsilon_\alpha, \epsilon_b, \epsilon_d \} / (100)$. 
\end{proof}

\subsection{Proof of Theorem \ref{thm:comparison2}}
Since we need to perform the comparison at a random edge, we begin with some preliminary estimates for the derivatives w.r.t.  the matrix entries of the random edge.
\begin{lemma}[\cite{DY2}, Lemma 5]
	Denote $a_{k,\pm}(t)= \Phi_t(\zeta_{k,\pm}(t))$, $1 \le k \le q$. Then $(a_{k,\pm}(t),\zeta_{k,\pm}(t))$ are real solutions of 
	\begin{align*}
		F_t(z,\zeta) = 0,\quad \text{and}\quad \frac{\partial F_t}{\partial \zeta}(z,\zeta) = 0,
	\end{align*}
	where
	\begin{align*}
		F_t(z,\zeta) = 1 + \frac{t(1-c_N)-\sqrt{t^2(1-c_N)^2 + 4\zeta z}}{2\zeta} -c_Ntm_X(\zeta).
	\end{align*}
\label{lem:Fexpansion}\end{lemma}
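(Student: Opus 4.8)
The plan is to derive Lemma \ref{lem:Fexpansion} from the subordination structure of $\rho_t$ (equations \eqref{eq:fpe}--\eqref{eq:zeta} and the function $\Phi_t$ in \eqref{eq:Phi}) together with the edge characterization in Lemma \ref{lem:Phicharacterization}; equivalently, since our Gaussian divisible model $V_t = X + \sqrt t W$ becomes, after conditioning on $X$, exactly the deformed rectangular model of \cite{DY2}, the statement is a transcription of \cite[Lemma 5]{DY2}, and I would additionally sanity-check that our $\zeta_t$, $\Phi_t$, $F_t$ match their objects.

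First I would establish the algebraic equivalence: on the region $\{\zeta>0:\,1-c_Ntm_X(\zeta)>0\}$, which by Lemma \ref{lem:Phicharacterization} contains every $\zeta_{k,\pm}(t)$, one has $F_t(z,\zeta)=0$ if and only if $z=\Phi_t(\zeta)$. Writing $P\coloneqq 1-c_Ntm_X(\zeta)>0$, the equation $F_t(z,\zeta)=0$ is $2\zeta P + t(1-c_N) = \sqrt{t^2(1-c_N)^2+4\zeta z}$; since $M<N$ gives $1-c_N>0$, the left-hand side is positive, hence coincides with the principal branch, and squaring and dividing by $4\zeta$ yields $z = \zeta P^2 + t(1-c_N)P = \Phi_t(\zeta)$. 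Conversely, if $z=\Phi_t(\zeta)$ then $t^2(1-c_N)^2+4\zeta z = (2\zeta P + t(1-c_N))^2$, and positivity of $2\zeta P + t(1-c_N)$ selects the correct root, so $F_t(z,\zeta)=0$. The same computation, run with $\zeta=\zeta_t(z)$ and the identity $1+c_Ntm_t = (1-c_Ntm_X(\zeta_t))^{-1}$ extracted from \eqref{eq:fpe} (pull out the common factor $1+c_Ntm_t$ and recognize the remaining sum as $m_X(\zeta_t)$), recovers the familiar relation $z=\Phi_t(\zeta_t(z))$.

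Next I would differentiate the identity $F_t(\Phi_t(\zeta),\zeta)\equiv 0$ in $\zeta$ on that region, obtaining $\partial_zF_t\cdot\Phi_t'(\zeta) + \partial_\zeta F_t = 0$. A direct computation gives $\partial_zF_t(z,\zeta) = -\big(t^2(1-c_N)^2+4\zeta z\big)^{-1/2}$, and at $z=a_{k,\pm}(t)>0$, $\zeta=\zeta_{k,\pm}(t)>0$ the radicand is strictly positive, so $\partial_zF_t\neq 0$ there; hence $\partial_\zeta F_t(a_{k,\pm}(t),\zeta_{k,\pm}(t))=0$ if and only if $\Phi_t'(\zeta_{k,\pm}(t))=0$. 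By Lemma \ref{lem:Phicharacterization} the $\zeta_{k,\pm}(t)$ are exactly the real preimages of the positive local extrema of $\Phi_t$, so $\Phi_t'(\zeta_{k,\pm}(t))=0$ and $a_{k,\pm}(t)=\Phi_t(\zeta_{k,\pm}(t))$ is real; combined with the first step, $(a_{k,\pm}(t),\zeta_{k,\pm}(t))$ solves $F_t=0$ and $\partial_\zeta F_t=0$. The only delicate points are the branch and sign bookkeeping in the first step and the non-degeneracy $t^2(1-c_N)^2+4\zeta z\neq 0$ in the second; both are handled by the positivity facts $\zeta_{k,\pm}(t)>0$, $1-c_Ntm_X(\zeta_{k,\pm}(t))>0$, $a_{k,\pm}(t)>0$ supplied by Lemma \ref{lem:Phicharacterization} (and, for the left edge, by $c_N<1$), so I do not expect any obstacle beyond this routine verification.
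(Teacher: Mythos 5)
Your proposal is correct. The paper itself offers no proof of this lemma --- it is imported verbatim as \cite[Lemma 5]{DY2} --- so there is nothing to compare against beyond the cited source; your self-contained verification (solving $F_t(z,\zeta)=0$ for $z$ under the positivity $\zeta>0$, $1-c_Ntm_X(\zeta)>0$, $1-c_N>0$ to get $z=\Phi_t(\zeta)$, then differentiating the identity $F_t(\Phi_t(\zeta),\zeta)\equiv 0$ and using $\partial_z F_t=-(t^2(1-c_N)^2+4\zeta z)^{-1/2}\neq 0$ together with $\Phi_t'(\zeta_{k,\pm}(t))=0$ from Lemma \ref{lem:Phicharacterization}) is exactly the routine computation underlying the cited result, and the branch/sign bookkeeping you flag is handled correctly.
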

Using the lemma above, we can derive bounds for the derivatives of the random edge $\lambda_{-,t}$ w.r.t.  the matrix entries $b_{ij}$.
\begin{lemma}Suppose that $\Psi$ is good. If we view $\lambda_{-,t}$ as a function of $\mathsf{B}_{ij}, i \in [M], j \in [N]$. For any $i \in [M]$ and  $j \in [N]$, write $\lambda_{-,t}(x) = \lambda_{-,t}(\mathsf{B}_{ij} = x)$. Then for any integer $k \ge 1$ and for any $b \in [0,\mathsf{B}_{ij}]$, we have
 \begin{align}
	 	\Big|\frac{\partial^k \lambda_{-,t}}{\partial \mathsf{B}_{ij}^k}(b) \Big|\mathbf{1}_{\psi_{ij}=0}  \prec \frac{1}{Nt^{2k+1}}, \qquad \Big| \frac{\partial^k \zeta_{-,t}}{\partial \mathsf{B}_{ij}^k}(b)\Big|  \mathbf{1}_{\psi_{ij}=0} \prec \frac{1}{Nt^{2k+1}}.
	 \label{eq:hpblambdazetaderi}\end{align}
Further, there exists some constants $C_k > 0$ such that the following deterministic bounds hold,
\begin{align}
	\Big|\frac{\partial \lambda_{-,t}}{\partial \mathsf{B}_{ij}}(b)\Big| \mathbf{1}_{\psi_{ij}=0} \le N^{C_1}, \qquad \Big|\frac{\partial^k \lambda_{-,t}}{\partial \mathsf{B}_{ij}^k}(b)\Big| \mathbf{1}_{\psi_{ij}=0} \cdot \Xi(\lambda_{-,t})  \le N^{C_k},
\label{eq:dblambdaderi}\end{align}
where $\Xi(x)$ is a smooth cut off function which equals $0$ when $x < \lambda_{-}^{\mathsf{mp}}/100$ and $1$ when $x > \lambda_{-}^{\mathsf{mp}}/2$ and $|\Xi^{(n)}(x)| = \mathcal{O}(1)$ for all $n \ge 1$.
\label{lem:lambdaderibound}\end{lemma}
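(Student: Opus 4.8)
The plan is to differentiate implicitly the pair of equations from Lemma~\ref{lem:Fexpansion} that characterises $(\lambda_{-,t},\zeta_{-,t})=(a_{1,-}(t),\zeta_{1,-}(t))$ — namely $F_t(\lambda_{-,t},\zeta_{-,t})=0$ and $\partial_\zeta F_t(\lambda_{-,t},\zeta_{-,t})=0$ — with respect to $\mathsf{B}_{ij}$, and then to run an induction on $k$. On $\{\psi_{ij}=0\}$ one has $X_{ij}=\mathsf{B}_{ij}$, so this is just a one-entry variation of $X$, and the $\mathsf{B}_{ij}$-dependence of $F_t$ enters solely through $m_X(\zeta)$ and its $\zeta$-derivatives; write $\partial^{\circ}_{\mathsf{B}_{ij}}$ for the derivative acting only on this explicit dependence. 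Because $\partial_\zeta F_t$ vanishes at the point, the first equation gives the clean identity
\[
\frac{\partial\lambda_{-,t}}{\partial\mathsf{B}_{ij}}=-\frac{\partial^{\circ}_{\mathsf{B}_{ij}}F_t}{\partial_zF_t}\Big|_{(\lambda_{-,t},\zeta_{-,t})}=-c_Nt\,\frac{\partial^{\circ}_{\mathsf{B}_{ij}}m_X(\zeta_{-,t})}{\partial_zF_t}\,,
\]
while the second equation recovers $\partial_{\mathsf{B}_{ij}}\zeta_{-,t}$ from $\partial_{\mathsf{B}_{ij}}\lambda_{-,t}$, $\partial^{\circ}_{\mathsf{B}_{ij}}\partial_\zeta F_t$ and $\partial_\zeta^2 F_t$. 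Differentiating these two relations $k-1$ more times (Leibniz rule) reduces everything to size bounds for (a) the spectral-variable derivatives $\partial_z^p\partial_\zeta^q F_t$ at $(\lambda_{-,t},\zeta_{-,t})$, and (b) the mixed derivatives $(\partial^{\circ}_{\mathsf{B}_{ij}})^{l}\partial_\zeta^q F_t$, both uniformly for $b\in[0,\mathsf{B}_{ij}]$.

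For (a): the rational-plus-square-root part of $F_t$ is evaluated where $4\zeta_{-,t}\lambda_{-,t}\sim1$ and $t^2(1-c_N)^2$ is negligible, so $\partial_zF_t\sim1$ and all its $\partial_z^p\partial_\zeta^q$-derivatives are $O(1)$, while by Lemma~\ref{lem:preEst}(ii) the $m_X$-part gives $\partial_\zeta^qF_t=O(t\cdot t^{-2q+1})$ for $q\ge1$; in particular $\partial_\zeta^2F_t\sim t^{-2}$, so the Jacobian of $(F_t,\partial_\zeta F_t)$ in $(z,\zeta)$ is invertible with inverse of size $\sim t^2$. For (b), the resolvent derivative formula yields $\partial^{\circ}_{\mathsf{B}_{ij}}m_X^{(q)}(\zeta)=-\tfrac{2(q+1)}{M}\big(G(X,\zeta)^{q+2}X\big)_{ij}$, and each further $\partial^{\circ}_{\mathsf{B}_{ij}}$ adds one more $G(X,\zeta)$ factor; the SVD identity $\|(G(X,\zeta)^pX)_{i\cdot}\|_2^2=\sum_l|U_{il}|^2\,\sigma_l^2(\sigma_l^2-\zeta)^{-2p}$ together with $\|\mathcal S(X)\|\prec1$ and the mesoscopic gap $\lambda_M(\mathcal S(X))-\zeta_{-,t}\sim t^2$ (Lemma~\ref{lem:preEst}(i)) gives $(G(X,\zeta)^pX)_{ij}\prec t^{-2p}$, hence $|(\partial^{\circ}_{\mathsf{B}_{ij}})^{l}m_X^{(q)}(\zeta)|\prec N^{-1}t^{-2(q+l+1)}$ (Proposition~\ref{prop: resolvent entry size for X=B+C} would only improve this, and is not needed). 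Since changing one entry of $X$ by $\le N^{-\epsilon_b}$ is a rank-$\le2$, operator-norm-$O(N^{-\epsilon_b}\|X\|)$ perturbation of $\mathcal S(X)$, the minor argument behind Proposition~\ref{prop:etastarRegular} and the estimates of Lemma~\ref{lem:preEst} all survive uniformly in $b\in[0,\mathsf{B}_{ij}]$, so (a) and (b) hold along the whole segment. Inserting these sizes into the Leibniz expansion of the implicit-function recursion and tracking the powers of $t$ — the dominant term at the $k$-th step is exactly of order $N^{-1}t^{-(2k+1)}$, all others being smaller — gives (\ref{eq:hpblambdazetaderi}).

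The deterministic bounds (\ref{eq:dblambdaderi}) rest on the trivial estimate $\|G(X,\zeta)\|\le(\lambda_M(\mathcal S(X))-\zeta)^{-1}$, valid since $\zeta_{-,t}<\lambda_M(\mathcal S(X))$ (Lemma~\ref{lem:Phicharacterization}). The $k=1$ bound needs no cutoff: the critical-point relation $\partial_\zeta F_t(\lambda_{-,t},\zeta_{-,t})=0$ (equivalently (\ref{eq:zetaequation}) with $X$ replacing $V_t$), combined with $0<c_Ntm_X(\zeta_{-,t})<1$ (Lemma~\ref{lem:Phicharacterization}), $m_X(\zeta_{-,t}),m_X'(\zeta_{-,t})>0$ and $m_X'(\zeta_{-,t})\ge M^{-1}(\lambda_M(\mathcal S(X))-\zeta_{-,t})^{-2}$, forces the deterministic estimates $m_X'(\zeta_{-,t})=O(t^{-2})$ and hence $\lambda_M(\mathcal S(X))-\zeta_{-,t}\ge cN^{-1/2}t$; with $\|\mathcal S(X)\|\le N^{100}$ this controls $\|G(X,\zeta_{-,t})\|$ and therefore $|\partial_{\mathsf{B}_{ij}}\lambda_{-,t}|$ by a fixed power of $N$. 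For $k\ge2$ the recursion also requires inverting the Hessian, i.e.\ a lower bound on $|\partial_\zeta^2F_t(\lambda_{-,t},\zeta_{-,t})|$, which fails to be deterministic at a nearly-degenerate critical point; on $\{\Xi(\lambda_{-,t})=1\}$, i.e.\ $\lambda_{-,t}\ge\lambda_-^{\mathsf{mp}}/2$, the genuine square-root edge structure of $\rho_t$ (Lemma~\ref{lem:Phicharacterization} and Theorem~\ref{thm:srbofmxt}) supplies this lower bound, after which the same recursion with the crude polynomial bounds yields the $N^{C_k}$ estimate.

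I expect the main obstacle to be part (b): establishing the resolvent-entry estimates at the random, mesoscopic parameter $\zeta_{-,t}$ uniformly along the segment $b\in[0,\mathsf{B}_{ij}]$, and organising the Leibniz bookkeeping so that the $t$-powers close up precisely at order $N^{-1}t^{-(2k+1)}$ — the crude SVD bounds are wasteful on "good" indices but essentially tight on the $O(N^{1-\epsilon_\alpha})$ "bad" rows/columns in $\mathcal D_r,\mathcal D_c$, and one must check that nothing is lost there. A secondary difficulty is the deterministic Hessian lower bound on $\{\Xi(\lambda_{-,t})=1\}$ required for the higher-order bounds in (\ref{eq:dblambdaderi}).
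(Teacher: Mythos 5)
Your proposal follows essentially the same route as the paper: implicit differentiation of the critical-point system $F_t=0$, $\partial_\zeta F_t=0$ (the paper handles $\zeta_{-,t}$ via the equivalent equation $\Phi_t'(\zeta_{-,t})=0$), resolvent bounds of the form $[X^\top (G(X,\zeta_{-,t}))^p]_{ji}\prec t^{-2p}$ obtained from the mesoscopic gap $\lambda_M(\mathcal S(X))-\zeta_{-,t}\sim t^2$ of Lemma \ref{lem:preEst}(i), an induction in $k$ whose dominant term closes at $N^{-1}t^{-(2k+1)}$, and deterministic polynomial bounds from the crude gap estimate (your $\lambda_M-\zeta_{-,t}\gtrsim tN^{-1/2}$ versus the paper's $\gtrsim t/M$; both suffice). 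The only substantive weak point is the deterministic Hessian lower bound on the cutoff event: you propose to get it from the square-root behaviour of $\rho_t$ (Theorem \ref{thm:srbofmxt}), but that is a high-probability statement on $\Omega_\Psi$ and cannot yield the \emph{deterministic} bound required in (\ref{eq:dblambdaderi}). The paper instead argues deterministically: $\lambda_{-,t}\ge\lambda_-^{\mathsf{mp}}/100$ forces $|c_Ntm_X(\zeta_{-,t})|\lesssim t^{1/2}$ via Lemma \ref{existuniq}(iv), the Cauchy--Schwarz inequality $(c_Ntm_X'(\zeta_{-,t}))^2\le\tfrac12\,c_Ntm_X(\zeta_{-,t})\cdot c_Ntm_X''(\zeta_{-,t})$ makes the two potentially cancelling terms in $\partial_\zeta^2\Phi_t$ jointly negative, and the surviving term $c_N(1-c_N)t^2m_X''(\zeta_{-,t})\ge 2c_N(1-c_N)t^2/(MN^{6-6\epsilon_b})$ gives the polynomial lower bound. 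You should replace your appeal to Theorem \ref{thm:srbofmxt} by this (or an equivalent) deterministic argument.
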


\begin{remark} Here we remark that in the second estimate of (\ref{eq:dblambdaderi}), we added a cutoff function, in order to get a deterministic bound for the $\lambda_{-,t}$ derivatives, which is needed when we take expectation $\mathbb{E}_\Psi$. Hence, actually, we should work with $\E_\Psi\big( |N\eta \big(\Im m^\gamma(z_t) - \Im \tilde{m}^0(z_t)\big)\Xi(\lambda_{-,t}) |^{2p} \big)$ instead of $\E_\Psi\big( |N\eta \big(\Im m^\gamma(z_t) - \Im \tilde{m}^0(z_t)\big) |^{2p} \big)$ to make sure that all quantities in the expansions have bounded expectations. Adding such a cutoff factor will not complicates the expansions since again by the chain rule it boils down to the $\lambda_{-,t}$ derivatives. Hence, additional technical inputs are not needed for the comparison of the modified quantity. However, in order to ease the presentation, we will state the reasoning for the original quantity and proceed as if all random factors in the expansion have deterministic upper bound. 
\end{remark}

\begin{proof} Let $\psi_{ij}=0$. 
	To emphasis the dependence with $X$, we first note that $F_t(z,\zeta)$ can be rewritten as,
	\begin{align*}
		F_t(z,\zeta,X) = 1 + \frac{t(1-c_N)-\sqrt{t^2(1-c_N)^2 + 4\zeta z}}{2\zeta} - \frac{c_Nt}{M}\Tr G(X,\zeta).
	\end{align*}
	Using Lemma \ref{lem:Fexpansion}, we have
	\begin{align}
		F_t(\lambda_{-,t}, \zeta_{-,t}, X) = 0,\quad \text{and}\quad \frac{\partial F_t}{\partial \zeta}(\lambda_{-,t},\zeta_{-,t},X) = 0.
	\label{eq:Fexpression1}\end{align}
	Then taking derivative of (\ref{eq:Fexpression1}) gives
	\begin{align*}
		\frac{\partial \lambda_{-,t}}{\partial \mathsf{B}_{ij}} \frac{\partial F_t}{\partial z}(\lambda_{-,t},\zeta_{-,t},X)  + \frac{\partial F_t}{\partial x_{ij}}(\lambda_{-,t}, \zeta_{-,t}, X)) = 0.
	\end{align*}
	Therefore, we may solve the above equation to obtain that 
	\begin{align}
		\frac{\partial \lambda_{-,t}}{\partial \mathsf{B}_{ij}}  = \frac{2  c_N t \sqrt{t^2(1-c_N)^2 + 4\lambda_{-,t}\zeta_{-,t}}}{M}\big[ X^\top (G(X,\zeta_{-,t}))^2 \big]_{ji}.
	\label{eq:lambdafoderi}\end{align}
	Notice that
	\begin{align}
		\big|\big[ X^\top (G(X,\zeta_{-,t}))^2 \big]_{ji}\big| &\overset{(\mathrm{i})}{\le} \big|\big[ X^\top (G(X,\zeta_{-,t}))^2X \big]_{jj}\big|^{1/2}  \cdot \big|\big[ (G(X,\zeta_{-,t}))^2 \big]_{ii}\big|^{1/2} \notag\\
		&= \big|\big[ G(X^\top,\zeta_{-,t})\big]_{jj} + \zeta_{-,t}\big[ (G(X^\top,\zeta_{-,t}))^2\big]_{jj}\big|^{1/2}  \cdot \big|\big[ (G(X,\zeta_{-,t}))^2 \big]_{ii}\big|^{1/2} \notag\\
		&\lesssim \big(\|G(X^\top,\zeta_{-,t})\|^{1/2} + |\zeta_{-,t}| \|G(X^\top,\zeta_{-,t})\|\big) \cdot \| G(X,\zeta_{-,t})\| \overset{(\mathrm{ii})}{\prec} t^{-4},
	\label{eq:XG2zetat}\end{align}
	where in $(\mathrm{i})$ we applied Cauchy-Schwarz inequality, and in $(\mathrm{ii})$ we used Lemma \ref{lem:preEst} (i). 
	Therefore, we can obtain that $\partial_{\mathsf{B}_{ij}} \lambda_{-,t}(b)$.

	Next we view $\Phi_t(\zeta)$ as a function of $X$, and write $\Phi_t(\zeta,X) = \Phi_t(\zeta)$. By Lemma \ref{lem:Phicharacterization}, we have 
	$
		\frac{\partial \Phi_t}{\partial \zeta}(\zeta_{-,t},X) = 0.
	$
	Further taking derivative w.r.t $\mathsf{B}_{ij}$ on this equation gives
	\begin{align}
		\frac{\partial^2 \Phi_t}{\partial \zeta^2}(\zeta_{-,t},X)  \frac{\partial \zeta_{-,t}}{\partial \mathsf{B}_{ij}} + \frac{\partial^2 \Phi_t}{\partial \zeta \partial x_{ij}}(\zeta_{-,t},X)=0	.
	\label{eq:zetaderiexpression}\end{align}
	By direct calculation, we have
	\begin{align*}
		&\frac{\partial^2 \Phi_t}{\partial \zeta \partial x_{ij}}(\zeta_{-,t},X) = \frac{4c_Nt}{M}[X^\top (G(X,\zeta_{-,t}))^2 ]_{ji} - \frac{2c_N^2t^2m_{X}(\zeta_{-,t})}{M} [X^\top (G(X,\zeta_{-,t}))^2 ]_{ji}  \\
		&\quad+ \frac{8c_Nt\zeta_{-,t}(1-c_Ntm_{X}(\zeta_{-,t}))}{M} [X^\top (G(X,\zeta_{-,t}))^3 ]_{ji} - \frac{4c_N^2t^2\zeta_{-,t}m_{X}'(\zeta_{-,t})}{M}[X^\top (G(X,\zeta_{-,t}))^2 ]_{ji}\\
		&\quad+ \frac{4c_N(1-c_N)t^2}{M}[X^\top (G(X,\zeta_{-,t}))^2 ]_{ji}.
	\end{align*}
	A similar argument as in (\ref{eq:XG2zetat}) leads to $\big[X^\top (G(X,\zeta_{-,t}))^3 \big]_{ji} \prec t^{-6}$. This together with the fact that
	 $c_Ntm_{X}(\zeta_{-,t}) \prec t^{1/2}$ and $m_{X}'(\zeta_{-,t}) \sim t^{-1}$ gives
	$
	 	\frac{\partial^2 \Phi_t}{\partial \zeta \partial x_{ij}}(\zeta_{-,t},X) \prec {1}/{(Nt^5)}.
	 $
	 We can also compute that
	 \begin{align}
	 	\frac{\partial^2 \Phi_t}{\partial \zeta^2}(\zeta_{-,t},X) &= -2c_Ntm_X''(\zeta_{-,t})\zeta_{-,t}(1-c_Ntm_X(\zeta_{-,t})) - 4c_Ntm_X'(\zeta_{-,t})(1-c_Ntm_X(\zeta_{-,t}))\notag\\
	&\quad +2\zeta_{-,t}(c_Ntm_X'(\zeta_{-,t}))^2-c_N(1-c_N)t^2m_X''(\zeta_{-,t}).
	 \label{eq:2derofPhi}\end{align}
	 Using Lemma \ref{lem:esthighoderi} with the fact $\zeta_{-,t} - \lambda_M(\mathcal{S}(X)) \sim t^2$ w.h.p., we have w.h.p. that 
	$
	 	\frac{\partial^2 \Phi_t}{\partial \zeta^2}(\zeta_{-,t},X) \sim t^2.
	 $
	Combining the above bounds gives 
	$
	 	\partial_{\mathsf{B}_{ij}}  \zeta_{-,t}\prec {1}/{(Nt^3)}.
	$

	 It is worth noting that for any integer $k \ge 2$, the $\partial_{\mathsf{B}_{ij}}^k\lambda_{-,t}$ can be expressed as a function of $\partial_{\mathsf{B}_{ij}}^{\ell}\lambda_{-,t}$ and $\partial_{\mathsf{B}_{ij}}^{\ell}\zeta_{-,t}$, where $\ell$ ranges from $0$ to $k-1$. Similarly, $\partial_{\mathsf{B}_{ij}}^k\zeta_{-,t}$ is solely dependent on $\partial_{\mathsf{B}_{ij}}^{\ell}\zeta_{-,t}$, where $\ell$ ranges from $0$ to $k-1$. By employing the product rule and adopting a similar argument as used in (\ref{eq:XG2zetat}) to bound the Green function entries, we can observe that the order of $\partial_{\mathsf{B}_{ij}}^{\ell}\lambda_{-,t}$ is determined by the term that includes $\partial_{\mathsf{B}_{ij}}^{k-1}[ X^\top (G(X,\zeta_{-,t}))^2 ]_{ji}$. Similarly, the order of $\partial_{\mathsf{B}_{ij}}^{\ell}\zeta_{-,t}$ is determined by the term that includes $\partial_{\mathsf{B}_{ij}}^{k-1}[ X^\top (G(X,\zeta_{-,t}))^3 ]_{ji}$. This allows us to conclude that for any $k \ge 1$
	 \begin{align*}
	 	\frac{\partial^k \lambda_{-,t}}{\partial \mathsf{B}_{ij}^k} \prec \frac{1}{Nt^{2k+1}}, \qquad \frac{\partial^k \zeta_{-,t}}{\partial \mathsf{B}_{ij}^k} \prec \frac{1}{Nt^{2k+1}}.
	 \end{align*}
	The claim now follows by noting that the above bounds still hold when we replace $\mathsf{B}_{ij}$ in $X$ with some other $b \in [0, \mathsf{B}_{ij}]$. The reason behind this is that the replacement matrix still satisfies the $\eta^*$-regularity condition, ensuring that the corresponding $\zeta_{-,t}$ and $\lambda_{M}$ still satisfy Lemma \ref{lem:preEst} (i).

	Next, we prove a deterministic upper bound for $\partial_{\mathsf{B}_{ij}} \lambda_{-,t} $. For notational simplicity, we will only work on the original matrix $X$, and the argument holds for the replacement matrix $X_{(ij)}(b)$. In view of (\ref{eq:lambdafoderi}), it suffices to obtain deterministic upper bounds for $\lambda_{-,t}$, $\zeta_{-,t}$, and $[ X^\top (G(X,\zeta_{-,t}))^2 ]_{ji}$.  We may first apply Cauchy interlacing theorem to obtain an upper bound for $\zeta_{-,t}$ as follows:
\begin{align}
	\zeta_{-,t} \le \lambda_M(\mathcal{S}(X)) \le \lambda_{M - |\mathcal{D}_r|}(\mathcal{S}(\mathsf{B}^{(\mathcal{D}_r)}))\le N^{2-2\epsilon_b},
\label{eq:zetaminustupb}\end{align}
where in the last step we used the fact that the entries of $\mathcal{S}(\mathsf{B}^{(\mathcal{D}_r)})$ are bounded by $N^{-\epsilon_b}$. From (\ref{eq:fpe}), we have
$
	c_Ntm_X(\zeta_{-,t}) =  {c_Ntm_{t}(\lambda_{-,t})} /{(1+c_Ntm_{t}(\lambda_{-,t}))},
$
which gives the deterministic bound $m_{X}(\zeta_{-,t}) \le (c_Nt)^{-1}$. Using this deterministic bound, we have that there exists some constant $C > 0$ such that
\begin{align*}
	\frac{1}{M} \le   \frac{1}{M}\sum_{i = 1}^M \frac{\lambda_M(\mathcal{S}(X)) - \zeta_{-,t}}{\lambda_i(\mathcal{S}(X)) - \zeta_{-,t}} \le  C t^{-1}(\lambda_M(\mathcal{S}(X)) - \zeta_{-,t}).
\end{align*}
This together with the fact that $\lambda_M(\mathcal{S}(X)) \ge \zeta_{-,t}$ (cf. Lemma \ref{lem:preEst} (i)) gives $\lambda_M(\mathcal{S}(X)) - \zeta_{-,t} \ge C^{-1}t/M$.
Therefore, we are able to obtain deterministic bounds for the high order derivatives $m_X^{(k)}(\zeta_{-,t})$ as well as the spectral norm of $G(X,\zeta_{-,t})$.
 We can also obtain that 
\begin{align*}
	|\lambda_{-,t}| = \big|\big[1-c_Nt m_X(\zeta_{-,t})\big]^2\zeta_{-,t} + (1-c_N)t\big[1-c_Ntm_X(\zeta_{-,t})\big]\big| \lesssim N^{2-2\epsilon_b}.
\end{align*}
For the upper bound of $|[ X^\top (G(X,\zeta_{-,t}))^2 ]_{ji}|$, we have
\begin{align*}
	|[ X^\top (G(X,\zeta_{-,t}))^2 ]_{ji}| &\le \big|[ X^\top (G(X,\zeta_{-,t}))^2X ]_{jj}\big|^{1/2}  \cdot \big|[ (G(X,\zeta_{-,t}))^2 \big]_{ii}|^{1/2} \\
	&\le \| (G(X,\zeta_{-,t}))^2X X^\top \|^{1/2} \cdot \| (G(X,\zeta_{-,t}))^2\|^{1/2} \\
	&\le \big( \| G(X,\zeta_{-,t})\|^{1/2} + |\zeta_{-,t}|  \| G(X,\zeta_{-,t})\|\big) \cdot \| G(X,\zeta_{-,t})\| \lesssim N^{2-2\epsilon_b}M^2t^{-2}.
\end{align*}
Collecting the above bounds proves the first bound in (\ref{eq:dblambdaderi}). 

To prove the second bound in (\ref{eq:dblambdaderi}), it suffices to provide a lower bound for $\frac{\partial^2 \Phi_t}{\partial \zeta^2}(\zeta_{-,t},X)$ (cf. (\ref{eq:zetaderiexpression})). When $\lambda_{-,t} \ge \lambda_-^{\mathsf{mp}}/100$, we have
\begin{align*}
	|c_Nt m_X(\zeta_{-,t})| = \Big|\frac{c_Ntm_{t}(\lambda_{-,t})}{1 + c_Ntm_{t}(\lambda_{-,t})}\Big| \le | c_Ntm_{t}(\lambda_{-,t})| \lesssim \frac{t^{1/2}}{|\lambda_{-t}|} \lesssim t^{1/2}. 
\end{align*}
Therefore, using Cauchy-Schwarz inequality, we have
\begin{align*}
	(c_Ntm_X'(\zeta_{-,t}))^2 \le \frac{c_Ntm_X(\zeta_{-,t})   \cdot c_Ntm''_X(\zeta_{-,t})}{2} \ll c_Ntm''_X(\zeta_{-,t}).
\end{align*}
This implies that
$
	-2c_Ntm_X''(\zeta_{-,t})\zeta_{-,t}(1-c_Ntm_X(\zeta_{-,t}))+2\zeta_{-,t}(c_Ntm_X'(\zeta_{-,t}))^2  < 0.
$
Then using (\ref{eq:2derofPhi}), we may lower bound  $\frac{\partial^2 \Phi_t}{\partial \zeta^2}(\zeta_{-,t},X)$  as follows:
\begin{align*}
	\Big| \frac{\partial^2 \Phi_t}{\partial \zeta^2}(\zeta_{-,t},X) \Big| > c_N(1-c_N)t^2m_X''(\zeta_{-,t})\ge \frac{2c_N(1-c_N)t^2}{M(\lambda_{M}(\mathcal{S}(X)) -\zeta_{-,t})^3}  \ge \frac{2c_N(1-c_N)t^2}{MN^{6-6\epsilon_b}} ,
\end{align*}
where in the last step we used (\ref{eq:zetaminustupb}).
\end{proof}

Next, we start the proof of Theorem \ref{thm:comparison2}.

\begin{proof}[Proof of Theorem \ref{thm:comparison2}]

We begin by collecting some notation to simplify the presentation of the proof. Consider $\tilde{w}_{ij}$ as the $(i,j)$-entry of $\tilde{W}$, and define $\tilde{Y}^\gamma$ analogously to $Y^\gamma$, with the substitution of $W$ by $\tilde{W}$. Recall (\ref{eq:dijeij}) and we write $d_{ij} = d_{ij}(\gamma,w_{ij})$, $e_{ij} = e_{ij}(\gamma,w_{ij})$, $\tilde{d}_{ij}= d_{ij}(0,\tilde{w}_{ij})$, and $\tilde{e}_{ij}= \tilde{e}_{ij}(0,\tilde{w}_{ij})$ in the sequel. To emphasize that $\lambda_{-,t}$ is a function of $X$, we introduce the notation $\lambda^{(ij)}_{-,t}(\beta) = \lambda_{-,t}(X_{(ij)}^{\beta})$. Consequently, we define $z^{(ij)}_t(\beta) = \lambda_{-,t}^{(ij)}(\beta) + E + \mathrm{i}\eta$. For simplicity, we use the shorthand notation $G^{\gamma, \lambda, \beta}_{(ij)}$ as $G^{\gamma, \lambda}_{(ij)}\big(z_t^{(ij)}(\beta)\big)$, and we define $\tilde{G}^{\gamma, \lambda, \beta}_{(ij)}$ analogously, replacing $W$ with $\tilde{W}$. 

We will focus on the estimation of $\frac{\partial \E_\Psi( |N\eta (\Im m^\gamma(z_t) - \Im \tilde{m}^0(z_t))|^{2p} )}{\partial \gamma}$. To this end, let us define $f_{\gamma,(ab),(ij)}(\lambda,\beta) =  \Im [G^{\gamma,\lambda,\beta}_{(ij)} ]_{ab}$, $\tilde{f}_{\gamma,(ab),(ij)}(\lambda,\beta) =  \Im [\tilde{G}^{\gamma,\lambda,\beta}_{(ij)}]_{ab}$, $g_{(ij)}(\lambda,\beta) = \eta\Im [\big(G^{\gamma,\lambda,\beta}_{(ij)} \big)^2Y_{(ij)}^{\gamma,\lambda,\beta}]_{ij}$, and $F_p(\lambda,\tilde{\lambda},\beta) =( \eta\sum_{a}f_{\gamma,(aa),(ij)}(\lambda,\beta) - \eta\sum_{a}\tilde{f}_{0,(aa),(ij)}(\tilde{\lambda}_,\beta))^{p}$. Some elementary calculation gives
\begin{align*}
	\frac{\partial \E_\Psi\big( \big|N\eta \big(\Im m^\gamma(z_t) - \Im \tilde{m}^0(z_t)\big) \big|^{2p} \big) }{\partial \gamma} =-2p\sum_{i,j} \Big( (J_1)_{ij} + (J_2)_{ij}\Big),
\end{align*}
where
\begin{align*}
	&(J_1)_{ij} = \E_\Psi\Big[ g_{(ij)}(d_{ij},\chi_{ij}b_{ij})\mathcal{E}_{ij}F_{2p-1}(d_{ij},\tilde{d}_{ij},\chi_{ij}b_{ij}) \Big]\cdot \mathbf{1}_{\psi_{ij} = 0},\\
	& (J_2)_{ij} = - \frac{\gamma t^{1/2}}{(1-\gamma^2)^{1/2}} \E_\Psi\Big[ w_{ij}g_{(ij)}(e_{ij},c_{ij}) F_{2p-1}(e_{ij},\tilde{e}_{ij},c_{ij})\Big]\cdot \mathbf{1}_{\psi_{ij} = 1},\\
	& \mathcal{E}_{ij} = (1-\chi_{ij})a_{ij} - \gamma t^{1/2}(1-\gamma^2)^{-1/2} w_{ij}.
\end{align*}
For $(J_2)_{ij}$, we may apply Gaussian integration by parts to obtain that
\begin{align*}
	(J_2)_{ij} &=- \frac{\gamma t^{1/2}}{(1-\gamma^2)^{1/2}N}\Big(\E_\Psi\Big[  \partial_{w_{ij}}\big\{ g_{(ij)}(e_{ij},c_{ij})\big\}   F_{2p-1}(e_{ij},\tilde{e}_{ij},c_{ij}) \Big]\\
	&\quad +(2p-1) \E_\Psi\Big[ g_{(ij)}(e_{ij},c_{ij})  \partial_{w_{ij}} \big\{ \eta\sum_{a}f_{\gamma,(aa),(ij)}(e_{ij},c_{ij}) \big\} F_{2p-2}(e_{ij},\tilde{e}_{ij},c_{ij}) \Big]\Big) \cdot \mathbf{1}_{\psi_{ij} = 1}.
\end{align*}
Note by directly calculation, we have
\begin{align}
	\partial_{w_{ij}}\big\{ g_{(ij)}(e_{ij},c_{ij})\big\} &=\Big(\Im \big[ \big(G^{\gamma,e_{ij},c_{ij}}_{(ij)} \big)^2 \big]_{ii} - 2\Im \big(\big[  \big(G^{\gamma,e_{ij},c_{ij}}_{(ij)} \big)^2 \big]_{ii} \big[(Y_{(ij)}^{\gamma,e_{ij}})^{\top} G^{\gamma,e_{ij},c_{ij}}_{(ij)}Y_{(ij)}^{\gamma,e_{ij}} \big]_{jj}\big) \notag\\
		&\quad- 2 \Im \big(\big[  \big(G^{\gamma,e_{ij},c_{ij}}_{(ij)} \big)^2  Y_{(ij)}^{\gamma,e_{ij}}\big]_{ij} \big[ G^{\gamma,e_{ij},c_{ij}}_{(ij)}Y_{(ij)}^{\gamma,e_{ij}} \big]_{ij}\big)\Big) (1-\gamma^2)^{1/2}t^{1/2} \eta ,
	\label{eq:Recurderivative}
	\end{align}
and $\partial_{w_{ij}} \big\{\eta\sum_{a}f_{\gamma,(aa),(ij)}(e_{ij},c_{ij}) \big\} = - 2t^{1/2}(1-\gamma^2)^{1/2}g_{(ij)}(e_{ij},c_{ij})$. Using Wald's identity with the fact that $i \in \mathcal{D}_r$ and $j \in \mathcal{D}_c$ when $\psi_{ij} = 1$, we can obtain that
\begin{align}
		|\big[ \big(G^{\gamma,e_{ij},c_{ij}}_{(ij)} \big)^2 \big]_{ii} | \le \sum_{a} | \big[ G^{\gamma,e_{ij},c_{ij}}_{(ij)}\big]_{ai}|^2 = \frac{\Im \big[ G^{\gamma,e_{ij},c_{ij}}_{(ij)}\big]_{ii}}{\eta} \prec t^{-2}\eta^{-1},
	\label{eq:G2est}\end{align}
and
	\begin{align}
		|\big[(G^{\gamma,e_{ij},c_{ij}}_{(ij)})^2Y_{(ij)}^{\gamma,e_{ij}}\big]_{ij}| 		&\le \sum_{a} | \big[ G^{\gamma,e_{ij},c_{ij}}_{(ij)}Y_{(ij)}^{\gamma,e_{ij}}\big]_{aj}|^2 + \sum_{a}  |\big[ G^{\gamma,e_{ij},c_{ij}}_{(ij)} \big]_{ia}|^2 \notag\\
		&\overset{(\mathrm{i})}{=} \big[(Y_{(ij)}^{\gamma,e_{ij}})^{\top} |G^{\gamma,e_{ij},c_{ij}}_{(ij)}|^2 Y_{(ij)}^{\gamma,e_{ij}}  \big]_{jj} +\eta^{-1}\Im \big[G^{\gamma,e_{ij},c_{ij}}_{(ij)}  \big]_{ii}\notag \\
		&\overset{(\mathrm{ii})}{=} \big[(Y_{(ij)}^{\gamma,e_{ij}})^{\top} Y_{(ij)}^{\gamma,e_{ij}} |\mathcal{G}^{\gamma,e_{ij},c_{ij}}_{(ij)}|^2 \big]_{jj} + \eta^{-1}\Im \big[G^{\gamma,e_{ij},c_{ij}}_{(ij)}  \big]_{ii} \notag\\
		&= \big[\bar{\mathcal{G}}^{\gamma,e_{ij},c_{ij}}_{(ij)}  \big]_{jj} + z\big[|\mathcal{G}^{\gamma,e_{ij},c_{ij}}_{(ij)}|^2   \big]_{jj}+ \eta^{-1}\Im \big[G^{\gamma,e_{ij},c_{ij}}_{(ij)}  \big]_{ii}\overset{(\mathrm{iii})}{\prec} t^{-2}\eta^{-1},
	\label{eq:G2Yest}\end{align}
	where in $(\mathrm{i})$ we applied Wald's identity, in $(\mathrm{ii})$ we used the fact that for any $A \in \mathbb{R}^{M\times N}$, $(AA^\top - z)^{-1}A = A^\top(AA^\top - z)^{-1}$ when $z$ does not lie inside the spectrum of $A$, and in $(\mathrm{iii})$ we estimate $\big[|\mathcal{G}^{\gamma,e_{ij},c_{ij}}_{(ij)}|^2 \big]_{jj}$ in a similar way as done in (\ref{eq:G2est}).
	
	 Combining the above estimates with the fact that $\sum_{i,j}\mathbf{1}_{\psi_{ij} = 1} \le N^{1-\epsilon_\alpha}$, we arrive at 
	\begin{align*}
		|(J_2)_{ij}| &\lesssim \frac{\gamma }{(1-\gamma^2)^{1/2}N^{1-\epsilon_\alpha}} \sum_{k=1}^2\E_\Psi\Big[|\mathcal{O}_{\prec}(N^{-\epsilon_\alpha}t^{-3})|\cdot |F_{2p-k}(e_{ij},\tilde{e}_{ij},c_{ij})| \Big]\cdot \mathbf{1}_{\psi_{ij} = 1}.
	\end{align*}
	We may then apply Young's inequality as the following:
\begin{align}
		\E_\Psi\Big[|\mathcal{O}_{\prec}(N^{-\epsilon_\alpha}t^{-3})|\cdot |F_{2p-1}(e_{ij},\tilde{e}_{ij},c_{ij})| \Big] &\overset{(\ast)}{=} \E_\Psi\Big[|\mathcal{O}_{\prec}(N^{-\epsilon_\alpha}t^{-3})|\cdot \frac{|F_{2p-1}(e_{ij},\tilde{e}_{ij},c_{ij})|}{\log N} \Big] \notag\\
		&\overset{(\ast\ast)}{\lesssim}(\log N)^{\frac{2p}{1-2p} } \E_{\Psi} \Big[ F_{2p}(e_{ij},\tilde{e}_{ij},c_{ij}) \Big] +  N^{-\epsilon_\alpha p},
	\label{eq:young}\end{align}
	where in $(\ast)$ we used the definition of stochastic domination, and in ($\ast\ast)$ we used the fact that $t \gg N^{-\epsilon_\alpha/6}$. Similar argument can be applied to the second term involving $F_{2p-2}$, and therefore,
	\begin{align*}
		|(J_2)_{ij}| \lesssim \frac{\gamma }{(1-\gamma^2)^{1/2}N^{1-\epsilon_\alpha}} \Big(  (\log N)^{\frac{2p}{1-2p} }\E_{\Psi} \Big[ F_{2p}(e_{ij},\tilde{e}_{ij},c_{ij}) \Big] +  N^{-\epsilon_\alpha p/2}\Big)\cdot \mathbf{1}_{\psi_{ij} = 1}.
	\end{align*}  
	
	Next, we consider $(J_1)_{ij}$. 	 Observe that by repeatedly taking derivatives w.r.t.  $d_{ij}$, it can be easily seen that $\partial^k_{d_{ij}}\big\{ g_{(ij)}(d_{ij},\chi_{ij}b_{ij})\big\} / \eta$ can be expressed as a linear combination of the imaginary parts of $\mathfrak{A}(a_1,a_2,a_3)\cdot \mathfrak{B}$, where $\mathfrak{A}(a_1,a_2,a_3) = \big(\big[G^{\gamma,d_{ij},\chi_{ij}b_{ij}}_{(ij)}Y_{(ij)}^{\gamma,d_{ij}}\big]_{ij}\big)^{a_1} \cdot \big(\big[G^{\gamma,d_{ij},\chi_{ij}b_{ij}}_{(ij)}\big]_{ii} \big)^{a_2}\cdot \big(\big[(Y_{(ij)}^{\gamma,d_{ij}})^{\top}G^{\gamma,d_{ij},\chi_{ij}b_{ij}}_{(ij)}Y_{(ij)}^{\gamma,d_{ij}}\big]_{jj}\big)^{a_3}$ for any integer $a_1,a_2,a_3 \ge 0$, and  $\mathfrak{B} \in \{\big[(Y_{(ij)}^{\gamma,d_{ij}})^\top(G^{\gamma,d_{ij},\chi_{ij}b_{ij}}_{(ij)})^2Y_{(ij)}^{\gamma,d_{ij}}\big]_{jj}, \big[(G^{\gamma,d_{ij},\chi_{ij}b_{ij}}_{(ij)})^2Y_{(ij)}^{\gamma,d_{ij}}\big]_{ij},\big[(G^{\gamma,d_{ij},\chi_{ij}b_{ij}}_{(ij)})^2\big]_{ii}\}$. The same holds for  $\partial_{d_{ij}}^k \big\{\sum_{a}f_{\gamma,(aa),(ij)}(d_{ij},\chi_{ij}b_{ij}) \big\}$ since
	$\partial_{d_{ij}} \big\{\eta\sum_{a}f_{\gamma,(aa),(ij)}(d_{ij},\chi_{ij}b_{ij}) \big\} = - 2g_{(ij)}(d_{ij},\chi_{ij}b_{ij})$. This together with a similar argument as (\ref{eq:G2est}) and (\ref{eq:G2Yest}) implies that  
	\begin{align}
		|\mathfrak{A}(a_1,a_2,a_3)|\cdot \mathbf{1}_{\psi_{ij} = 0} \prec t^{-2(a_1 +a_2+a_3)},\quad \text{and}\quad |\mathfrak{B}|\cdot \mathbf{1}_{\psi_{ij} = 0}  \prec t^{-2}\eta^{-1}.
	\label{eq:ABEST}\end{align}      
	The estimation of $(J_1)_{ij}$ relies on a careful analysis of expansion. Here, we introduce $g^{(k_1,k_2)}_{(ij)}$ as the mixed $(k_1,k_2)$-th order derivative of $g_{(ij)}(\lambda,\beta)$ w.r.t.  $\lambda$ and $\beta$, and $F_{2p-1}^{(k_1,k_2,k_3)}$ represents the mixed $(k_1,k_2,k_3)$-th order derivative of $F_{2p-1}(\lambda,\tilde{\lambda},\beta)$ w.r.t.  $\lambda$, $\tilde{\lambda}$, and $\beta$. Applying Taylor expansion on $g_{(ij)}(d_{ij},\chi_{ij}b_{ij})$ on the first variable around $0$, we have for an $s_1$ to be chosen later, there exists $\bar{d}_{ij} \in [0,d_{ij}]$ such that,
	\begin{align*}
		(J_1)_{ij} &= \sum_{k=0}^{s_1} \E_\Psi\Big[ \frac{d_{ij}^k g^{(k,0)}_{(ij)}(0,\chi_{ij}b_{ij}) }{k!}  \mathcal{E}_{ij} F_{2p-1}(d_{ij},\tilde{d}_{ij},\chi_{ij}b_{ij}) \Big]\cdot \mathbf{1}_{\psi_{ij} = 0}  \\
		 & + \E_\Psi\Big[\frac{ d_{ij}^{s_1+1}g^{(s_1+1,0)}_{(ij)}(\bar{d}_{ij},\chi_{ij}b_{ij})}{(s_1+1)!}  \mathcal{E}_{ij} F_{2p-1}(d_{ij},\tilde{d}_{ij},\chi_{ij}b_{ij}) \Big]\cdot \mathbf{1}_{\psi_{ij} = 0} = \sum_{k=0}^{s_1} \frac{1}{k!} (J_1)_{ij,k}+ \mathsf{Rem}_1.
	\end{align*}
	By the entries bound in Proposition \ref{prop:locallawentries}, (\ref{eq:G2est}), (\ref{eq:G2Yest}), and the perturbation argument in (\ref{eq:perturbbound}), we may crudely bound the above remainder term as follows:
	\begin{align*}
		|\mathsf{Rem}_1| &\lesssim \frac{\mathbf{1}_{\psi_{ij} = 0}}{N^{(s_1+2)\epsilon_b}} \E_\Psi\Big[ | g^{(s_1+1,0)}_{(ij)}(\bar{d}_{ij},\chi_{ij}b_{ij})| \cdot|F_{2p-1}(d_{ij},\tilde{d}_{ij},\chi_{ij}b_{ij})| \Big] \lesssim \frac{N^{\epsilon}N^{2p}}{N^{(s_1+2)\epsilon_b}t^{2s_1+4}} \lesssim N^{-p},
	\end{align*}
	where in the second inequality, we used the deterministic bound $|F_{2p-1}(d_{ij},\tilde{d}_{ij},\chi_{ij}b_{ij})| \le N^{2p-1}$, and in the last step, we chose $s_1 > 6p/\epsilon_b$ and used the fact that $t \gg N^{-\epsilon_b/8}$. We may apply similar argument to expand the first two variables of $F_{2p-1}(d_{ij},\tilde{d}_{ij},\chi_{ij}b_{ij})$ in $(J_1)_{ij,k}$ to obtain that
	\begin{align*}
		(J_1)_{ij,k} &= \sum_{\ell = 0}^{s_2}\sum_{m=0}^\ell \E_\Psi\Big[ \frac{d_{ij}^{k+m}\tilde{d}_{ij}^{\ell-m} g^{(k,0)}_{(ij)}(0,\chi_{ij}b_{ij}) }{m!(\ell - m)!}  \mathcal{E}_{ij} F_{2p-1}^{(m,\ell-m,0)} (0,0,\chi_{ij}b_{ij}) \Big]\cdot \mathbf{1}_{\psi_{ij} = 0}+\mathcal{O}(N^{-p}) \\
		&= \sum_{\ell = 0}^{s_2} (J_1)_{ij,k\ell} + \mathcal{O}(N^{-p}) .
	\end{align*}
	where $s_2$ is a large integer satisfying $s_2 > 6p/\epsilon_b$. To estimate $(J_1)_{ij,k\ell}$, we start by introducing the notation $t_{ij} = t^{2} \cdot (1 - \mathbf{1}_{i \in \mathcal{T}_r, j \in \mathcal{T}_c}) + \mathbf{1}_{i \in \mathcal{T}_r, j \in \mathcal{T}_c}$ for presentation simplicity. Note by the chain rule, we have for any integer $\ell \ge 0$ and $m \le \ell$, 
	\begin{align}
		F_{2p-1}^{m,\ell-m,0}(0,0,\chi_{ij}b_{ij})  &= \sum_{k=1}^{\ell\wedge(2p-1) } \mathcal{C}_{k,m}^{\chi_{ij}b_{ij}} F_{2p-1-k}(0,0,\chi_{ij}b_{ij}) + \mathcal{C}_{\ell+1,m}^{\chi_{ij}b_{ij}}\mathbf{1}_{\ell \ge (2p-1)},
	\label{eq:deriexpansion}\end{align}
	where for all $k \in [\ell+1]$, $m \in [\ell]$, $\mathcal{C}_{k,m}^{\chi_{ij}b_{ij}}$ are polynomials of the following terms
	\begin{align*}
		&\big[G^{\gamma,0,\chi_{ij}b_{ij}}_{(ij)}Y_{(ij)}^{\gamma,0}\big]_{ij},  \big[G^{\gamma,0,\chi_{ij}b_{ij}}_{(ij)}\big]_{ii},\big[(Y_{(ij)}^{\gamma,0})^{\top}G^{\gamma,0,\chi_{ij}b_{ij}}_{(ij)}Y_{(ij)}^{\gamma,0}\big]_{jj},\big[(G^{\gamma,0,\chi_{ij}b_{ij}}_{(ij)})^2Y_{(ij)}^{\gamma,0}\big]_{ij}, \big[(G^{\gamma,0,\chi_{ij}b_{ij}}_{(ij)})^2\big]_{ii},\\
		&\big[\tilde{G}^{0,0,\chi_{ij}b_{ij}}_{(ij)}\tilde{Y}_{(ij)}^{0,0}\big]_{ij},  \big[\tilde{G}^{0,0,\chi_{ij}b_{ij}}_{(ij)}\big]_{ii}, \big[(\tilde{Y}_{(ij)}^{0,0})^{\top}\tilde{G}^{0,0,\chi_{ij}b_{ij}}_{(ij)}\tilde{Y}_{(ij)}^{0,0}\big]_{jj},\big[(\tilde{G}^{0,0,\chi_{ij}b_{ij}}_{(ij)})^2\tilde{Y}_{(ij)}^{0,0}\big]_{ij},\big[(\tilde{G}^{0,0,\chi_{ij}b_{ij}}_{(ij)})^2\big]_{ii}.
	\end{align*}
	After carrying out a similar derivation as shown in (\ref{eq:Recurderivative})-(\ref{eq:G2Yest}) and employing the perturbation argument described in (\ref{eq:perturbbound}), it can be easily verified that $\mathcal{C}_{k,m}^{\chi_{ij}b_{ij}}\cdot \mathbf{1}_{\psi_{ij}=0} \prec t_{ij}^{-(\ell+1)}$. 
	
	Plugging (\ref{eq:deriexpansion}) into $(J_1)_{ij,k\ell}$, we have
	\begin{align*}
		(J_1)_{ij,k\ell} 
		& = \sum_{n=1}^{\ell\wedge(2p-1) } \sum_{m=0}^\ell\E_\Psi\Big[ \frac{d_{ij}^{k+m}\tilde{d}_{ij}^{\ell-m} \mathcal{E}_{ij}  }{m!(\ell - m)!} g^{(k,0)}_{(ij)}(0,\chi_{ij}b_{ij}) \mathcal{C}_{n,m}^{\chi_{ij}b_{ij}}F_{2p-1-n} (0,0,\chi_{ij}b_{ij}) \Big]\cdot \mathbf{1}_{\psi_{ij} = 0}\\
		& +\sum_{m=0}^\ell \E_\Psi\Big[ \frac{d_{ij}^{k+m}\tilde{d}_{ij}^{\ell-m}\mathcal{E}_{ij} }{m!(\ell - m)!}g^{(k,0)}_{(ij)}(0,\chi_{ij}b_{ij})   \mathcal{C}_{\ell+1,m}^{\chi_{ij}b_{ij}} \mathbf{1}_{\ell \ge (2p-1)}\Big]\cdot \mathbf{1}_{\psi_{ij} = 0} = (\mathsf{T}_1)_{ij,k\ell} + (\mathsf{T}_2)_{ij,k\ell}.
	\end{align*}
	For $(\mathsf{T}_2)_{ij,k\ell}$, we only need to consider the case when $\ell \ge 2p-1$. Using $g^{(k,0)}_{(ij)}(0,\chi_{ij}b_{ij})  \prec t_{ij}^{-(k+1)}$ with the fact that  $t \gg N^{-\epsilon_b/8}$, we can conclude that $|(\mathsf{T}_2)_{ij,k\ell}| \lesssim N^{-\epsilon_b p}$. Next, we focus on the estimation of $(\mathsf{T}_1)_{ij,k\ell}$. 

When $k+\ell$ is even,   we have by the law of total expectation that,
\begin{align}
		(\mathsf{T}_1)_{ij,k\ell} 
		&=\sum_{n=1}^{\ell\wedge(2p-1) } \sum_{m=0}^\ell\E_\Psi\Big[ \frac{(\gamma a_{ij}+(1-\gamma^2)^{1/2} t^{1/2} w_{ij})^{k+m}(t^{1/2}\tilde{w}_{ij})^{\ell-m}  }{m!(\ell - m)!} g^{(k,0)}_{(ij)}(0,0) \notag \\
		&\qquad \qquad \qquad\qquad\times \Big(a_{ij} - \frac{\gamma t^{1/2} w_{ij}}{(1-\gamma^2)^{1/2}} \Big)\mathcal{C}_{n,m}^{0}F_{2p-1-n} (0,0,0)  \Big]\cdot \mathbb{P}(\chi_{ij} = 0) \cdot \mathbf{1}_{\psi_{ij} = 0}\notag\\
		&\quad-\sum_{n=1}^{\ell\wedge(2p-1) } \sum_{m=0}^\ell\E_\Psi\Big[ \frac{\gamma(b_{ij}+(1-\gamma^2)^{1/2}t^{1/2} w_{ij})^{k+m}(b_{ij} + t^{1/2}\tilde{w}_{ij})^{\ell-m} t^{1/2}w_{ij} }{(1-\gamma^2)^{1/2}m!(\ell - m)!}  \notag\\
		&\qquad \qquad \qquad\qquad\times g^{(k,0)}_{(ij)}(0,b_{ij})\mathcal{C}_{n,m}^{b_{ij}}F_{2p-1-n} (0,0,b_{ij})\Big]\cdot \mathbb{P}(\chi_{ij} =1) \cdot \mathbf{1}_{\psi_{ij} = 0}.
	\label{eq:T1excludeeven}\end{align}
From the above equation, one can easily verify that $(\mathsf{T}_1)_{ij,k\ell} = 0$ when $k + \ell$ is even. Therefore, in the rest of the estimation, we consider the case of $k + \ell$ is odd. In this case, we need to  further expand out $\chi_{ij}b_{ij}$ in $\mathcal{C}_{n,m}^{\chi_{ij}b_{ij}}$, $g^{(k,0)}_{(ij)}(0,\chi_{ij}b_{ij})$ and $F_{2p-1-n} (0,0,\chi_{ij}b_{ij})$.

First note by Taylor expansion, for any $s_3 \ge 0$ there exists $b_{ij}^{(1)} \in [0, \chi_{ij}b_{ij}]$ such that
\begin{align}
		g^{(k,0)}_{(ij)}(0,\chi_{ij}b_{ij}) = \sum_{q=0}^{s_3} \frac{(\chi_{ij}b_{ij})^q}{q!} g^{(k,q)}_{(ij)}(0,0) + \frac{(\chi_{ij}b_{ij})^{s_3+1}}{(s_3+1)!}g^{(k,s_3+1)}_{(ij)}(0,b_{ij}^{(1)}).
	\label{eq:gderi1}\end{align}
By Fa\`{a} di Bruno's formula, for $q \ge 1$, $g^{(k,q)}_{(ij)}(\lambda,\beta)$ can be expressed as
	\begin{align}
		g^{(k,q)}_{(ij)}(\lambda,\beta) = \sum_{(u_1,\cdots,u_q)} \frac{q!}{u_1!u_2!\cdots u_q!} \partial_{z}^{u_1+\cdots+u_q}  g^{(k,0)}_{(ij)}(\lambda,\beta) \cdot \prod_{v = 1}^q \Big( \frac{\partial^v \lambda_{-,t}^{(ij)} }{\partial \beta^v} (\beta)\Big)^{u_v},
	\label{eq:gderi2}\end{align}
	where the sum $\sum_{(u_1,\cdots,u_q)}$ is over all $q$-tuples of nonnegative integers $(u_1,\cdots,u_q)$ satisfying $\sum_{i=1}^q iu_i = q$. We may then use (\ref{eq:hpblambdazetaderi}) in Lemma \ref{lem:lambdaderibound} to bound the derivatives of $\lambda^{(ij)}_{-,t}$ and a Cauchy integral argument to bound the derivatives of $ g^{(k,0)}_{(ij)}$ w.r.t $z$, which gives
	\begin{align}
		g^{(k,q)}_{(ij)}(0,0)\cdot \mathbf{1}_{\psi_{ij}=0} \prec \sum_{(u_1,\cdots,u_q)}  \frac{1}{\eta^{u_1+\cdots+u_q}t_{ij}^{k+1}} \prod_{v = 1}^q  \frac{1}{N^{u_v}t^{(2v+1)u_v}} 
		\prec \frac{1}{N\eta t^{3q} t_{ij}^{k+1}},\quad q \ge 1.
	\label{eq:gkqest}\end{align}
	and the same bound holds for $g^{(k,q)}_{(ij)}(0,b_{ij}^{(1)})$. Therefore, by choosing $s_3 > 6p/\epsilon_b$ together with the facts that $\mathcal{C}_{n,m}^{\chi_{ij}b_{ij}} \prec t_{ij}^{-(k+1)}$, $|F_{2p-1-n} (0,0,\chi_{ij}b_{ij})| \lesssim N^{2p-1-n}$, we can obtain that
	\begin{align*}
		(\mathsf{T}_1)_{ij,k\ell} &=  \sum_{n=1}^{\ell\wedge(2p-1) } \sum_{q=0}^{s_3} \sum_{m=0}^\ell\E_\Psi\Big[ \frac{d_{ij}^{k+m}\tilde{d}_{ij}^{\ell-m} (\chi_{ij}b_{ij})^q \mathcal{E}_{ij} }{q!m!(\ell - m)!}g^{(k,q)}_{(ij)}(0,0)\mathcal{C}_{n,m}^{\chi_{ij}b_{ij}}\\
		&\quad \times  F_{2p-1-n} (0,0,\chi_{ij}b_{ij}) \Big]\cdot \mathbf{1}_{\psi_{ij} = 0} + \mathcal{O}(N^{-\epsilon_bp}) =  \sum_{n=1}^{\ell\wedge(2p-1) } \sum_{q=0}^{s_3} (\mathsf{T}_1)_{ij,k\ell,nq} + \mathcal{O}(N^{-\epsilon_bp}).
	\end{align*}
	For $(\mathsf{T}_1)_{ij,k\ell,nq}$, the term $\mathcal{C}_{n,m}^{\chi_{ij}b_{ij}}$ can be expanded in a similar way as done for $g^{(k,0)}_{(ij)}(0,\chi_{ij}b_{ij})$ in (\ref{eq:gderi1}) and (\ref{eq:gderi2}), we omit the details. This leads to
	\begin{align*}
		(\mathsf{T}_1)_{ij,k\ell,nq} &= \sum_{r=0}^{s_4}\sum_{m=0}^\ell \E_\Psi\Big[ \frac{d_{ij}^{k+m}\tilde{d}_{ij}^{\ell-m} (\chi_{ij}b_{ij})^{q+r} \mathcal{E}_{ij}}{r!q!m!(\ell - m)!}g^{(k,q)}_{(ij)}(0,0) \\
		&\quad \times\mathcal{C}^{(r),0}_{n,m}  F_{2p-1-n} (0,0,\chi_{ij}b_{ij}) \Big]\cdot \mathbf{1}_{\psi_{ij} = 0} + \mathcal{O}(N^{-\epsilon_bp}),
	\end{align*} 	
	where $s_4 > 6p/\epsilon_b$ and 
	\begin{align}
		\mathcal{C}^{(r),0}_{n,m}  = \frac{\partial^r\mathcal{C}_{n,m}^\beta}{\partial \beta^r}\Big|_{\beta = 0},\quad \text{and} \quad \mathcal{C}^{(r),0}_{n,m} \prec \frac{1}{N\eta t^{3r}t_{ij}^{\ell +1}},\quad r \ge 1.
	\label{eq:Crnmest}\end{align}
	Next, we deal with $F_{2p-1-n} (0,0,\chi_{ij}b_{ij})$. For any $s \ge 0$, we can compute that
	\begin{align}
		F^{(0,0,s)}_{2p-1-n}(0,0,0) = \sum_{(u_1,\cdots,u_s)} \frac{s!}{u_1!u_2!\cdots u_s!} \partial_{z}^{u_1+\cdots+u_s} F_{2p-1-n} (0,0,0) \cdot \prod_{\mathsf{w} = 1}^s \Big( \frac{\partial^\mathsf{w} \lambda_{-,t}^{(ij)} }{\partial \beta^\mathsf{w}} (0)\Big)^{u_\mathsf{w}},
	\label{eq:F00s}\end{align}
	and for any integer $\vartheta \ge 0$,
\begin{align}
		\partial_{z}^{\vartheta} F_{2p-1-n} (0,0,0)&=\sum_{\substack{(v_1,\cdots,v_\vartheta)\\v_1+\cdots+v_\vartheta \le 2p-1-n}} \frac{\vartheta!}{u_1!v_2!\cdots v_\vartheta!} F_{2p-1-n-(v_1+\cdots+v_\vartheta)} (0,0,0) \notag\\
		&\quad\times \prod_{\mathsf{w} = 1}^\vartheta \Big( \eta \Im\Tr \big(G_{(ij)}^{\gamma,0,0}\big)^\mathsf{w+1} -    \eta\Im \Tr \big(\tilde{G}_{(ij)}^{0,0,0}\big)^\mathsf{w+1}\Big)^{v_\mathsf{w}}.
	\label{eq:F00s1}\end{align}
Combining the above two expression, and using Lemma \ref{lem:lambdaderibound}, we can estimate the remainder term as done for $\mathsf{Rem}_3$, which gives 
	\begin{align}
		(\mathsf{T}_1)_{ij,k\ell,nq} &= \sum_{r=0}^{s_4}\sum_{s=0}^{s_5}\sum_{m=0}^\ell\E_\Psi\Big[ \frac{d_{ij}^{k+m}\tilde{d}_{ij}^{\ell-m} (\chi_{ij}b_{ij})^{q+r+s}\mathcal{E}_{ij} }{s!r!q!m!(\ell - m)!} \Big]\notag\\
		&\quad \times \E_\Psi \Big[g^{(k,q)}_{(ij)}(0,0) \mathcal{C}^{0,(r)}_{n,m}  F_{2p-1-n}^{(0,0,s)} (0,0,0) \Big]\cdot \mathbf{1}_{\psi_{ij} = 0} + \mathcal{O}(N^{-\epsilon_bp}),
	\label{eq:T1mnqest1}\end{align}
	for some large enough integer $s_5$. Here we also used the independency between the random variables. Then it suffices to estimate $(\mathsf{T}_1)_{ij,k\ell,nq}$ in two different cases, $k +\ell = 1$ and $k + \ell \ge 3$ (recall that we only need to consider the case when $k+\ell$ is odd, cf. (\ref{eq:T1excludeeven})).  
	
	\textbf{Case 1:} $k + \ell \ge 3$.
	From (\ref{eq:T1mnqest1}), using the estimates (\ref{eq:gkqest}) and (\ref{eq:Crnmest}), and the fact that $\E (b_{ij}^2) \lesssim N^{-1}$, $\E((1-\chi_{ij})a_{ij}^2) \asymp t\E(w_{ij}^2) = t/N $ , we have
	\begin{align}
	(\mathsf{T}_1)_{ij,k\ell,nq}
		&= \sum_{r=0}^{s_4}\sum_{s=0}^{s_5}\mathcal{O}\Big(\frac{t}{N^{2+(k+\ell+q+r+s-3)\epsilon_b } } \Big)\notag\\
		&\quad \times \E_\Psi \Big[\mathcal{O}_\prec \Big( \frac{1}{ t^{3(q+r)}t_{ij}^{k+\ell+2}} \Big)  F_{2p-1-n}^{(0,0,s)} (0,0,0) \Big]\cdot \mathbf{1}_{\psi_{ij} = 0} + \mathcal{O}(N^{-\epsilon_bp}).
	\label{eq:T1nqstart}\end{align}
Note that we have already derived the expression of $F_{2p-1-n}^{(0,0,s)} (0,0,0) $ in (\ref{eq:F00s}) and (\ref{eq:F00s1}). Then using the following inequality:
	\begin{align}
		&\big| \eta\Im \Tr \big(G_{(ij)}^{\gamma,0,0}\big)^\mathsf{w+1} -    \eta \Im\Tr \big(\tilde{G}_{(ij)}^{0,0,0}\big)^\mathsf{w+1}\big|^{v_\mathsf{w}} \lesssim 	\big| \eta \Im\Tr \big(G_{(ij)}^{\gamma,0,0}\big)^\mathsf{w+1} \big|^{v_\mathsf{w}} + \big|\eta\Im \Tr \big(\tilde{G}_{(ij)}^{0,0,0}\big)^\mathsf{w+1}\big|^{v_\mathsf{w}} \notag \\
		&\le \eta^{-\mathsf{w}v_\mathsf{w}}\big( \big| \eta\Im \Tr G_{(ij)}^{\gamma,0,0}\big|^{v_\mathsf{w}}+\big|\eta\Im \Tr \tilde{G}_{(ij)}^{0,0,0}\big|^{v_\mathsf{w}} \big)
		 \lesssim \eta^{-\mathsf{w}v_\mathsf{w}}\big(|F_{v_\mathsf{w}}(0,0,0)|+\big|\eta \Im\Tr\tilde{G}_{(ij)}^{0,0,0}\big|^{v_\mathsf{w}} \big) ,
	\label{eq:F00s3}\end{align}
	together with Lemma \ref{lem:lambdaderibound} and the fact that $\eta \Im \Tr \tilde{G}_{(ij)}^{0,0,0}\prec N\eta\sqrt{|E|+\eta} \le N^{1-\varepsilon_1/2}\eta$ (this can be done by bounding $\big(\eta \Im \Tr \tilde{G}_{(ij)}^{0,0,0} - \eta \Im \Tr \tilde{G}_{(ij)}^{0,d_{ij},\chi_{ij}b_{ij}})\cdot \mathbf{1}_{\psi_{ij} = 0}$ through Taylor expansion and then using local law for the Gaussian divisible model (cf. (\ref{eq:averagelocallawGDM})) that $\big(\eta \Im \Tr \tilde{G}_{(ij)}^{0,d_{ij},\chi_{ij}b_{ij}}- N\eta \Im m_{t}(z_t)\big)\cdot \mathbf{1}_{\psi_{ij} = 0} \prec 1$ with $\Im m_{t}(z_t) \prec\sqrt{|E| + \eta}$ (cf. (\ref{eq:Immc}))), we can obtain that
\begin{align*}
		|(\mathsf{T}_1)_{ij,k\ell,nq}| &\le \frac{1}{N^2} \sum_{\mathsf{a}=0}^{2p-1-n} \E_\Psi\Big[\mathcal{O}_\prec \Big( \frac{t}{N^{(k+\ell+\mathsf{a} -3)\epsilon_b}t^{3\mathsf{a}} t_{ij}^{(k+\ell+2)}}\Big)|F_{2p-1-n-\mathsf{a}}(0,0,0)|  \Big]\cdot \mathbf{1}_{\psi_{ij} = 0} + \mathcal{O}(N^{-\epsilon_bp}).
	\end{align*}
Substituting this back into $(\mathsf{T}_1)_{ij,k\ell}$ and considering that $t \gg N^{-\epsilon_b / 100} \vee N^{-\epsilon_d/20}$, a straightforward calculation yields that: if $k + \ell \ge 5$,
\begin{align}
		|(\mathsf{T}_1)_{ij,k\ell}| \le \frac{1}{N^2} \sum_{n=1}^{2p-1 }  \E_{\Psi} \Big[ \mathcal{O}_\prec \Big( \frac{1}{N^{(n+1)\epsilon_b/10}}\Big) |F_{2p-1-n} (0,0,0) |\Big]\cdot \mathbf{1}_{\psi_{ij} = 0}+ \mathcal{O}(N^{-\epsilon_b p}),
	\label{eq:T1est}\end{align}
	and if $k+\ell \ge 3$,
	\begin{align}
		|(\mathsf{T}_1)_{ij,k\ell}| &\le \sum_{n=1}^{\ell\wedge(2p-1) } \sum_{\mathsf{a}=0}^{2p-1-n}  \Big( \frac{\mathbf{1}_{\psi_{ij} = 0} (1- \mathbf{1}_{i\in\mathcal{T}_r, j\in \mathcal{T}_c})}{N^{2-\epsilon_d}} \E_\Psi\Big[\mathcal{O}_\prec \Big( \frac{1}{N^{\mathsf{a}\epsilon_b/10 + \epsilon_d/2}}\Big)|F_{2p-1-n-\mathsf{a}}(0,0,0)|  \Big] \notag\\
		&\quad+ \frac{\mathbf{1}_{\psi_{ij} = 0}\mathbf{1}_{i\in\mathcal{T}_r, j\in \mathcal{T}_c}}{N^{2}} \E_\Psi\Big[\mathcal{O}_\prec \Big( \frac{t}{N^{\mathsf{a}\epsilon_b/10}}\Big)|F_{2p-1-n-\mathsf{a}}(0,0,0)|  \Big] \Big)+ \mathcal{O}(N^{-\epsilon_b p}).
	\label{eq:T1est2}\end{align}
	
	Next, we shall replace  $F_{2p-1-n} (0,0,0)\cdot \mathbf{1}_{\psi_{ij} = 0}$ back by $F_{2p-1-n} (d_{ij},\tilde{d}_{ij},\chi_{ij}b_{ij})\cdot \mathbf{1}_{\psi_{ij} = 0}$. Applying Taylor expansion on the third variable and then using (\ref{eq:F00s})-(\ref{eq:F00s3}), we can obtain that
	\begin{align*}
		|F_{2p-1-n}(0,0,0)| \le \sum_{\mathsf{a}=0}^{2p-1-n}\mathcal{O}_\prec\big(N^{-\epsilon_b\mathsf{a}/10} \big) \cdot  |F_{2p-1-n-\mathsf{a}}(0,0,\chi_{ij}b_{ij}) | + \mathcal{O}_\prec(N^{-\epsilon_b p}).
	\end{align*}
	Therefore, we have that (\ref{eq:T1est}) and (\ref{eq:T1est2}) remain valid, with $(0,0,0)$ replaced by $(0,0,\chi_{ij}b_{ij})$. Using Taylor expansion again, for a large enough integer $s_7$, there exists $d_{1,ij} \in [0, d_{ij}], d_{2,ij} \in [0, \tilde{d}_{ij}]$ such that
	\begin{align*}
		F_{2p-1-n} (0,0,\chi_{ij}b_{ij}) &= \sum_{u=0}^{s_7}\sum_{v = 0}^u \frac{(-d_{ij})^v (-\tilde{d}_{ij})^{u-v}}{v! (u-v)!}  \cdot F_{2p-1-n}^{(v,u-v,0)}(d_{ij}, \tilde{d}_{ij}, \chi_{ij}b_{ij})  \notag\\
		&\quad + \sum_{\ell=0}^{s_7 + 1} \frac{(-d_{ij})^v (-\tilde{d}_{ij})^{s_7 + 1-v}}{v! (s_7 + 1 -v)!}  \cdot  F_{2p-1-n}^{(v,s_7 + 1-v,0)}  (d_{1,ij},d_{2,ij}, \chi_{ij}b_{ij}).
	\end{align*}
	Then we may use (\ref{eq:deriexpansion})(with minor modification that replace $(0,0,\chi_{ij}b_{ij})$ by $(d_{ij},\tilde{d}_{ij},\chi_{ij}b_{ij})$) to transform $F_{2p-1-n}^{(v,u-v,0)}(d_{ij}, \tilde{d}_{ij}, \chi_{ij}b_{ij})$ to $F_{2p-1-r}(d_{ij},\tilde{d}_{ij},\chi_{ij}b_{ij})$ for some $r \ge n$. It can also be easily checked that the resulting coefficients of $F_{2p-1-r}$ can be compensated by bounding $|d_{ij}|, |\tilde{d}_{ij}|$ by $ N^{-\epsilon_b}$ (w.h.p). This finally confirms that (\ref{eq:T1est}) and (\ref{eq:T1est2}) still hold when $(0,0,0)$ are replaced by $(d_{ij},\tilde{d}_{ij},\chi_{ij}b_{ij})$.
	
	Therefore, using straightforward power counting and applying Young's inequality as shown in (\ref{eq:young}), we may conclude that when $k+\ell \ge 3$, there exits some constants $K = K(p) > 0$ and $\delta = \delta(\epsilon_a,\epsilon_b,\epsilon_d) > 0$, such that
	\begin{align}
		|(J_1)_{ij,k\ell} | &\lesssim \frac{ \mathbf{1}_{\psi_{ij} = 0}}{N^2} \Big( (\log N)^{-K}\E_{\Psi} \Big[ F_{2p}(d_{ij},\tilde{d}_{ij},\chi_{ij}b_{ij}) \Big] + N^{-\delta p}\Big) \notag\\
		&\quad+  \frac{\mathbf{1}_{\psi_{ij} = 0} (1- \mathbf{1}_{i\in\mathcal{T}_r, j\in \mathcal{T}_c})}{N^{2-\epsilon_d}} \Big( (\log N)^{-K}\E_{\Psi} \Big[ F_{2p}(d_{ij},\tilde{d}_{ij},\chi_{ij}b_{ij}) \Big] + N^{-\delta p}\Big) .
	\label{eq:J1ijklest}\end{align}
		
	\textbf{Case 2:} $k +\ell = 1$.
	Recall from (\ref{eq:T1mnqest1}) that
	\begin{align*}
		(\mathsf{T}_1)_{ij,k\ell,nq} &= \sum_{r=0}^{s_4}\sum_{s=0}^{s_5}\sum_{m=0}^\ell\E_\Psi\Big[ \frac{d_{ij}^{k+m}\tilde{d}_{ij}^{\ell-m} (\chi_{ij}b_{ij})^{q+r+s}\mathcal{E}_{ij} }{s!r!q!m!(\ell - m)!} \Big]\notag\\
		&\quad \times \E_\Psi \Big[g^{(k,q)}_{(ij)}(0,0) \mathcal{C}^{0,(r)}_{n,m}  F_{2p-1-n}^{(0,0,s)} (0,0,0) \Big]\cdot \mathbf{1}_{\psi_{ij} = 0} + \mathcal{O}(N^{-\epsilon_bp}).
	\end{align*}
	
\textbf{Case 2.1:} $q+r+s$ is odd. In this case, we can directly compute that
	\begin{align*}
		&\sum_{m=0}^\ell\E_\Psi\Big[ \frac{d_{ij}^{k+m}\tilde{d}_{ij}^{\ell-m} (\chi_{ij}b_{ij})^{q+r+s}\mathcal{E}_{ij} }{s!r!q!m!(\ell - m)!}\Big] =\E_{\Psi} \Big[ \frac{\gamma((1-\chi_{ij})a^2_{ij}  - tw^2_{ij})(\chi_{ij}b_{ij})^{q+r+s} }{s!r!q!}\Big]=0.
	\end{align*}
	Thus, we have $(\mathsf{T}_1)_{ij,k\ell,nq} = \mathcal{O}(N^{-\epsilon_bp})$ in this case .

\textbf{Case 2.2:} $q+r+s \ge 0$ is even. Using (\ref{eq:2ndmomentdiff}) and the simple facts that $\chi_{ij}(1-\chi_{ij}) = 0$ and $\E(b_{ij}^2) \lesssim N^{-1}$, we have
\begin{align*}
		\sum_{m=0}^\ell\E_\Psi\Big[ \frac{d_{ij}^{k+m}\tilde{d}_{ij}^{\ell-m} (\chi_{ij}b_{ij})^{q+r+s}\mathcal{E}_{ij} }{s!r!q!m!(\ell - m)!} \Big] &= \frac{-\gamma t^{1/2} }{(1-\gamma^2)^{1/2}}\E_\Psi\Big[ \frac{d_{ij}w_{ij} (\chi_{ij}b_{ij})^{q+r+s} }{s!r!q!}  \Big] \\
		&= \frac{t}{N^2}\Big( \mathcal{O}\Big(\frac{\mathbf{1}_{q+r+s \ge 2} }{N^{(q+r+s - 2)\epsilon_b}} \Big) +\mathcal{O}\Big(\frac{\mathbf{1}_{q+r+s=0}}{N^{\epsilon_b}} \Big) \Big) .
	\end{align*}
	Further using (\ref{eq:gkqest}) and (\ref{eq:Crnmest}), we can obtain that 
	\begin{align*}
		(\mathsf{T}_1)_{ij,k\ell,nq}
		&= \sum_{r=0}^{s_4}\sum_{s=0}^{s_5}\frac{t}{N^2}\Big( \mathcal{O}\Big(\frac{\mathbf{1}_{q+r+s \ge 2} }{N^{(q+r+s - 2)\epsilon_b}} \Big) + \mathcal{O}\Big(\frac{\mathbf{1}_{q+r+s=0}}{N^{\epsilon_b}} \Big) \Big)\\
		&\quad\times \E_\Psi \Big[ \mathcal{O}_\prec \Big( \frac{1}{(N\eta \mathbf{1}_{q+r \ge 1} + \mathbf{1}_{q+r =0} ) t^{3(q+r)}t_{ij}^{3}} \Big)  F_{2p-1-n}^{(0,0,s)} (0,0,0) \Big]\mathbf{1}_{\psi_{ij} = 0} + \mathcal{O}(N^{-\epsilon_bp}).
	\end{align*}
	Observing that the above equation has a similar form to (\ref{eq:T1nqstart}), we may proceed in a similar manner as in Case 1 to estimate $(\mathsf{T}_1)_{ij,k\ell,nq}$. We will omit the repetitive details for brevity. Consequently, we can conclude that, by possibly adjusting the constants, (\ref{eq:J1ijklest}) also holds when $k + \ell = 1$.

Combining Case 1, Case 2, and the estimates for $(J_2)_{ij}$'s, we arrive at
\begin{align*}
	\sum_{i,j} |(I)_{ij}|&\lesssim \frac{ \mathbf{1}_{\psi_{ij} = 1}}{N^{1-\epsilon_\alpha}}\sum_{i,j} \Big(  (\log N)^{\frac{2p}{1-2p} }\E_{\Psi} \Big[ F_{2p}(e_{ij},\tilde{e}_{ij},c_{ij}) \Big] +  N^{-\epsilon_\alpha p/2}\Big)\\
	&\quad +\sum_{i,j}\frac{\mathbf{1}_{\psi_{ij} = 0}}{N^2} \Big( (\log N)^{-K}\E_{\Psi} \Big[ F_{2p}(d_{ij},\tilde{d}_{ij},\chi_{ij}b_{ij}) \Big] +  N^{-\delta p}\Big)  \\
	&\quad +\sum_{i,j} \frac{\mathbf{1}_{\psi_{ij} = 0} (1 - \mathbf{1}_{i \in \mathcal{T}_r, j \in \mathcal{T}_c})}{N^{2-\epsilon_d}}\Big((\log N)^{-K} \E_{\Psi} \Big[ F_{2p}(d_{ij},\tilde{d}_{ij},\chi_{ij}b_{ij}) \Big] +  N^{-\delta p} \Big) \\
	&\lesssim (\log N)^{-(K \wedge \frac{2p}{2p-1})} \E_{\Psi} \Big[ \big|N\eta \big(\Im m^\gamma(z) - \Im \tilde{m}^0(z)\big)\big|^{2p}  \Big] + N^{-\tilde{\delta} p},
\end{align*}
where $\tilde{\delta} = \tilde{\delta}(\epsilon_a,\epsilon_b,\epsilon_d) > 0$. Therefore, for any $0 \le  \gamma \le 1$,
\begin{align}
&\E_\Psi\Big( \big|N\eta \big(\Im m^\gamma(z_t) - \Im \tilde{m}^0(z_t)\big) \big|^{2p} \Big) - \E_\Psi\Big( \big|N\eta \big(\Im m^0(z_t) - \Im \tilde{m}^0(z_t)\big) \big|^{2p} \Big)\notag\\
&= \int_0^\gamma \frac{\partial \E\Big( \big|N\eta \big(\Im m^{\gamma'}(z_t) - \Im \tilde{m}^0(z_t)\big) \big|^{2p} \Big) }{\partial \gamma'} \mathrm{d}\gamma'.
\label{eq:integralest1}\end{align}
Taking supremum over $\gamma$, and using the estimates above, we have
\begin{align}
	&\sup_{0\le  \gamma \le 1} \E_\Psi\Big( \big|N\eta \big(\Im m^\gamma(z_t) - \Im \tilde{m}^0(z_t)\big) \big|^{2p} \Big) - \E_\Psi\Big( \big|N\eta \big(\Im m^0(z_t) - \Im \tilde{m}^0(z_t)\big) \big|^{2p} \Big)\notag \\
	&\lesssim (\log N)^{-(K \wedge \frac{2p}{2p-1})} \sup_{0\le  \gamma \le 1}\E_{\Psi} \Big[ \big|N\eta \big(\Im m^\gamma(z_t) - \Im \tilde{m}^0(z_t)\big)\big|^{2p}  \Big] +  N^{-\tilde{\delta} p}.
\label{eq:integralest2}\end{align}
The claim now follows by rearranging the terms.
 
\end{proof}

\subsection{Proof of Theorem \ref{Green function comparison thm}}\label{sec: green function comparison pf}
The proof of Theorem \ref{Green function comparison thm} is essentially the same as Theorem \ref{thm:comparison2}. We outline the proof here while the detailed proof can be found in Appendix \ref{4271}. 

 Using the same notation as in the proof of Theorem \ref{thm:comparison2} and further defining $h_{\gamma,(ij)}(\lambda,\beta) := \eta_0\sum_{a}f_{\gamma,(aa),(ij)}(\lambda,\beta)$ and $\mathsf{H}_{(ij)}(\lambda,\beta):= F' \big(h_{\gamma,(ij)} (\lambda,\beta )  \big) g_{(ij)}(\lambda,\beta ) $. Observe that 
\begin{align*}
		&\frac{\partial \E_\Psi \big(F(N\eta_0\Im m^\gamma(z_t))\big)}{\partial \gamma} 
		= -2\Big(\sum_{i,j} (I_1)_{ij} - (I_2)_{ij}\Big),
	\end{align*}
	where $(I_1)_{ij} = \E_\Psi \big[ \mathsf{A}_{ij}\mathsf{H}_{(ij)}([Y^\gamma]_{ij},X_{ij})\big]$ and $(I_2)_{ij} = \gamma(1-\gamma^2)^{-1/2} t^{1/2}\E_\Psi \big[ w_{ij}\mathsf{H}_{(ij)}([Y^\gamma]_{ij},X_{ij})\big]$. We estimate them by considering the cases $\psi_{ij} = 1$ and $\psi_{ij} = 0$ separately.
For $(I_2)_{ij}$, in both cases, we can estimate it by Gaussian integration by part, which leads to  
\begin{align*}
	(I_2)_{ij} = \frac{\gamma t^{1/2} }{(1-\gamma^2)^{1/2}N} \Big( \E_\Psi\big[\partial_{w_{ij}}\big\{ \mathsf{H}_{(ij)}(d_{ij},\chi_{ij}b_{ij}) \big\}  \big]\cdot \mathbf{1}_{\psi_{ij} = 0} + \E_\Psi\big[\partial_{w_{ij}}\big\{ \mathsf{H}_{(ij)}(e_{ij},c_{ij}) \big\}  \big]\cdot \mathbf{1}_{\psi_{ij} = 1}  \Big).
\end{align*}
The term involving $\mathbf{1}_{\psi_{ij} = 1}$ can be estimated directly by the fact that $t^{1/2}N^{-1} \cdot \sum_{i,j}\mathbf{1}_{\psi_{ij} = 1} \sim t^{1/2}N^{-1}\cdot N^{1-\epsilon_\alpha} = \mathfrak{o}(1)$. Therefore, by the definition of $d_{ij}$, we have
\begin{align}
	(I_2)_{ij} \approx \frac{\gamma t}{N} \E_\Psi\big[\partial_{d_{ij}}\big\{ \mathsf{H}_{(ij)}(d_{ij},\chi_{ij}b_{ij}) \big\}  \big]\cdot \mathbf{1}_{\psi_{ij} = 0}. 
\label{eq:I2comparison3}\end{align}
For $(I_1)_{ij}$, we only need to consider the case $\psi_{ij}=\chi_{ij} = 0$ since $\mathsf{A}_{ij}\mathbf{1}_{\psi_{ij} = 1 \text{ or } \chi_{ij} = 1} = 0$. Using Taylor expansion and the law of total expectation gives
\begin{align*}
	(I_1)_{ij}  \approx  \sum_{k}\frac{1}{k!} \E_\Psi[a_{ij}d_{ij}^k| \chi_{ij} =0]  \cdot \E_\Psi\big[\partial_{d_{ij}}^k\big\{ \mathsf{H}_{(ij)}(d_{ij},\chi_{ij}b_{ij}) \big\} | \chi_{ij} =0  \big]  \cdot \mathbb{P}(\chi_{ij} = 0)  \cdot \mathbf{1}_{\psi_{ij} = 0}. 
\end{align*}
For even values of $k$, it holds that $\E_\Psi[a_{ij}d_{ij}^k| \chi_{ij} =0] = 0$. In the case where $k \ge 3$, we have $\E_\Psi[a_{ij}d_{ij}^k| \chi_{ij} =0] \sim N^{-2-\varepsilon}$ for some small $\varepsilon > 0$, effectively compensating for the size of the summation $\sum_{i,j}$. Consequently, we arrive at 
\begin{align}
	(I_1)_{ij}  \approx  \E_\Psi[\gamma a_{ij}^2] \mathbb{P}(\chi_{ij} = 0)  \cdot \E_\Psi\big[\partial_{d_{ij}} \big\{ \mathsf{H}_{(ij)}(d_{ij},\chi_{ij}b_{ij}) \big\} | \chi_{ij} =0  \big]   \cdot \mathbf{1}_{\psi_{ij} = 0}.
\label{eq:I1comparison3}\end{align}
In view of (\ref{eq:I2comparison3}) and (\ref{eq:I1comparison3}), we can conclude the proof by leveraging the moment matching (\ref{eq:2ndmomentdiff}) and exploiting the smallness of $|\E_\Psi\big[\partial_{d_{ij}}\big\{ \mathsf{H}_{(ij)}(d_{ij},\chi_{ij}b_{ij}) \big\}  \big] -\E_\Psi\big[\partial_{d_{ij}} \big\{ \mathsf{H}_{(ij)}(d_{ij},\chi_{ij}b_{ij}) \big\} | \chi_{ij} =0  \big]|$.

\subsection*{Acknowledgments}
The authors would like to thank Fan Yang for helpful discussion.


\appendix

\section{Remaining proofs for the Gaussian divisible model}

\subsection{Proof of Lemma \ref{lem: domain of parameter}}\label{2416}

%
%

%

\noindent
Consider
\begin{equation}\label{eq: parameter u}
	z = (\lambda_{-}^{\mathsf{mp}} + E) + \mathrm{i}\eta, \quad
	|E|\le N^{-\vareps_{1}}, \quad
    N^{-2/3-\vareps_{2}} \le \eta \le \vareps_{3}.
\end{equation}
Recall that
\begin{equation*}
	V_{t} = \sqrt{t}W + X,
\end{equation*}
where $t=N\mathbb{E}|\mathsf{A}_{ij}|^2$.

%

\noindent
By the eigenvalue rigidity (the left edge analog of \cite[Theorem 2.13]{DY2}),
\begin{equation*}
	|\lambda_{M}(\mathcal{S}(V_{t}))-\lambda_{-,t}|\prec N^{-2/3}.
\end{equation*}
As an analog of Lemma \ref{lem: left edge rigidity XX^T}, 
\begin{equation*}
	|\lambda_{M}(\mathcal{S}(V_{t}))-\lambda_{-}^{\mathsf{mp}}|\prec  N^{-2\eps_{b}}.
\end{equation*}
Thus,
\begin{equation*}
	| \lambda_{-}^{\mathsf{mp}} - \lambda_{-,t} |\prec N^{-2/3} + N^{-2\eps_{b}} \lesssim N^{-2\vareps_{1}}.
\end{equation*}
We write
\begin{equation*}
	z = \{ \lambda_{-,t} + (\lambda_{-}^{\mathsf{mp}}-\lambda_{-,t}) + E \} + \mathrm{i}\eta
	\eqqcolon (\lambda_{-,t}+E') + \mathrm{i}\eta,
\end{equation*}
where $E'\coloneqq E + (\lambda_{-}^{\mathsf{mp}}-\lambda_{-,t})$. Then, with high probability, there exists $\kappa\in\mathbb{R}$ such that
\begin{equation}\label{eq: parameter z}
	z = (\lambda_{-,t} + \kappa) + \mathrm{i}\eta, \quad
	|\kappa|\le 2N^{-\vareps_{1}}, \quad
    N^{-2/3-\vareps_{2}} \le \eta \le \vareps_{3}.
\end{equation}

\noindent
Then, the desired result directly follows from the lemma below. Define $b_{t}\equiv b_{t}(z) \coloneqq 1 + c_{N} t m_{t}(z)$. Then we have $\zeta_{t}(z) \coloneqq z b_{t}^{2} - t b_{t} (1-c_{N})$.
\begin{lemma}
\label{lem: domain of para z}
Let $z$ as in \eqref{eq: parameter z}.
There exist constants $c,C>0$ such that the following holds:
\begin{itemize}
	\item[(i)] For $|\kappa|+\eta\le ct^{2}(\log{N})^{-2C}$,
	\begin{equation*}
		\lambda_{M}(XX^{\mathsf{T}})-\textnormal{Re}\,\zeta_{t}(z) \ge c t^{2}, \quad
		\textnormal{Im}\,\zeta_{t}(z) \ge c t N^{-2/3-\vareps_{2}}.
	\end{equation*}
	\item[(ii)] For $|\kappa|+\eta\ge ct^{2}(\log{N})^{-2C}$,
	\begin{equation*}
		\textnormal{Im}\,\zeta_{t}(z) \ge ct^{2}(\log{N})^{-C}.
	\end{equation*}
\end{itemize}
\end{lemma}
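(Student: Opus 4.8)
\textbf{Proof plan for Lemma~\ref{lem: domain of para z}.}
The starting point is the identity $\zeta_t(z) = zb_t^2 - tb_t(1-c_N)$ with $b_t = 1 + c_Ntm_t(z)$, together with the square root behaviour of $\rho_t$ near $\lambda_{-,t}$ from Theorem~\ref{thm:srbofmxt}. The first step is to extract the precise asymptotics of $b_t(z)$ and $m_t(z)$ on the domain \eqref{eq: parameter z}. By Lemma~\ref{existuniq}(iv) and the $\eta_\ast$-regularity of $\mathcal{S}(X)$ (Proposition~\ref{prop:etastarRegular}), one has $|m_t(z)| \le (c_Nt|z|)^{-1/2}$, so $c_Ntm_t(z) = O(t^{1/2})$ and hence $b_t(z) = 1 + O(t^{1/2})$, which is bounded away from $0$; this already shows $\zeta_t(z) = z(1+O(t^{1/2}))$ up to the lower-order shift $-tb_t(1-c_N) = -t(1-c_N)(1+O(t^{1/2}))$. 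I would then record, using \eqref{eq:Immc} in Theorem~\ref{thm:srbofmxt}, the two-regime estimate $\Im m_t(z) \sim \sqrt{|\kappa|+\eta}$ for $\kappa\ge 0$ and $\Im m_t(z)\sim \eta/\sqrt{|\kappa|+\eta}$ for $\kappa\le 0$, and similarly for $\Re m_t$ via the real part of the self-consistent equation near a soft edge (the standard $\Re m_t(z) - m_t(\lambda_{-,t}) \sim \mathrm{sgn}(\kappa)\sqrt{|\kappa|+\eta}$ or $-\sqrt{|\kappa|}$ behaviour).

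For part (i), in the regime $|\kappa|+\eta \le ct^2(\log N)^{-2C}$, the goal is to compare $\Re\zeta_t(z)$ with $\lambda_M(\mathcal{S}(X))$. Here I would expand $\zeta_t$ around the deterministic edge picture: by Lemma~\ref{lem:Phicharacterization}, $\lambda_{-,t} = \Phi_t(\zeta_{1,-}(t))$ and $\zeta_{1,-}(t) < \lambda_M(\mathcal{S}(X))$, and by Lemma~\ref{lem:preEst}(i), $\lambda_M(\mathcal{S}(X)) - \zeta_{-,t} \sim t^2$ on $\Omega_\Psi$ where $\zeta_{-,t} = \zeta_t(\lambda_{-,t})$. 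Since $|z - \lambda_{-,t}| = |\kappa + \mathrm{i}\eta|$ is much smaller than $t^2$, a first-order Taylor expansion of $\zeta_t$ (whose derivative is controlled because $m_t'(z) \sim t^{-1}$ near the edge by Lemma~\ref{lem:preEst}(ii)-type estimates) gives $\Re\zeta_t(z) = \Re\zeta_{-,t} + O((|\kappa|+\eta)\cdot t^{-1}) = \zeta_{-,t} + O(t(\log N)^{-2C})$, which stays at distance $\gtrsim t^2$ below $\lambda_M(\mathcal{S}(X))$ once $C$ is large enough. For the imaginary part, $\Im\zeta_t(z) = \Im z \cdot |b_t|^2(1+o(1)) + 2\Re z\,\Re b_t\,\Im b_t + \ldots$; the dominant contribution is $\eta|b_t|^2 \sim \eta$ plus $z\cdot c_Nt\,\Im m_t(z)$, and using $\Im m_t(z) \gtrsim \eta/\sqrt{|\kappa|+\eta} \gtrsim \eta/t$ (in the $\kappa\le 0$ regime) or $\Im m_t(z)\gtrsim \eta$ one obtains $\Im\zeta_t(z) \gtrsim \eta \gtrsim t N^{-2/3-\varepsilon_2}$ after absorbing the factor $t$; the $\kappa\ge 0$ regime is easier. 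For part (ii), in the complementary regime $|\kappa|+\eta \ge ct^2(\log N)^{-2C}$, I would again split into $\kappa\ge 0$ and $\kappa\le 0$: if $\kappa\ge 0$ then $\Im m_t(z) \sim \sqrt{\kappa+\eta} \gtrsim t(\log N)^{-C}$, and since $\Im\zeta_t(z) \gtrsim t\,\Im m_t(z)$ (the $z\cdot c_Nt\Im m_t$ term dominates, with a sign check that it does not cancel $\eta|b_t|^2 > 0$), we get $\Im\zeta_t(z) \gtrsim t^2(\log N)^{-C}$; if $\kappa\le 0$ we use $\Im m_t(z) \sim \eta/\sqrt{|\kappa|+\eta}$ and $\Im\zeta_t(z) \gtrsim \eta \gtrsim \Im m_t(z)\sqrt{|\kappa|+\eta} \cdot \ldots$, which again combines with $|\kappa|+\eta \gtrsim t^2(\log N)^{-2C}$ to yield the bound; the detailed bookkeeping of these cases is where most of the work lies.

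The main obstacle I anticipate is the sign analysis in the imaginary-part estimates: $\Im\zeta_t(z)$ is a sum of several terms ($\eta|b_t|^2$, $2\Re(\bar z b_t)\Im b_t$ with various sign patterns, and the linear-in-$t$ correction $-t(1-c_N)\Im b_t$), and one must verify that no cancellation brings it below the claimed lower bound. This is handled by the general principle (Lemma~\ref{existuniq}(i),(iii)) that $\Im\zeta_t(z) > 0$ and $\Im z\,m_t(z) > 0$ for $z\in\mathbb{C}^+$, so all the relevant imaginary parts have a definite sign and add up constructively; one just needs to quantify this using the explicit lower bounds on $\Im m_t$ from Theorem~\ref{thm:srbofmxt}. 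A secondary technical point is ensuring all estimates hold on the high-probability event $\Omega_\Psi$ where Lemma~\ref{lem:preEst}(i) applies, so that $\lambda_M(\mathcal{S}(X)) - \zeta_{-,t} \sim t^2$ can be used; this is automatic since we have already conditioned on $\Psi$ being good throughout this part of the paper.
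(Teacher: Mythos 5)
Your overall architecture matches the paper's: write $\zeta_t(z)=zb_t^2-tb_t(1-c_N)$ with $|b_t|=O(1)$, reduce part (i) to showing $|\zeta_t(z)-\zeta_t(\lambda_{-,t})|\ll t^2$ and then invoke $\lambda_M(\mathcal{S}(X))-\zeta_{-,t}\sim t^2$ from Lemma~\ref{lem:preEst}(i), and read off the imaginary part from $\Im\zeta_t(z)\sim \eta+t\,\Phi(\kappa,\eta)$ with $\Phi$ given by the square-root profile of Theorem~\ref{thm:srbofmxt}. However, there is a genuine gap in how you control $\Re\zeta_t(z)-\zeta_{-,t}$. You propose a first-order Taylor expansion justified by ``$m_t'(z)\sim t^{-1}$ near the edge by Lemma~\ref{lem:preEst}(ii)-type estimates.'' That lemma bounds derivatives of $m_X$ at $\zeta_{-,t}$, which sits at distance $\sim t^2$ from the spectrum of $\mathcal{S}(X)$; the object you need here is $m_t'(z)$ for $z$ at distance $|\kappa|+\eta\le ct^2(\log N)^{-2C}\ll t^2$ from the edge $\lambda_{-,t}$ of $\rho_t$, and at a square-root edge $|m_t'(z)|\sim(|\kappa|+\eta)^{-1/2}$, which is much larger than $t^{-1}$ in this regime. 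Moreover, even taking your stated bound $\Re\zeta_t(z)=\zeta_{-,t}+O\bigl((|\kappa|+\eta)t^{-1}\bigr)=O\bigl(t(\log N)^{-2C}\bigr)$ at face value, it does not close the argument: since $t\sim N^{-2\epsilon_a}$, one has $t(\log N)^{-2C}\gg t^2$ for every fixed $C$, so ``once $C$ is large enough'' cannot rescue the comparison against the $t^2$ gap.

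The paper avoids this by estimating the increment of $m_t$ directly from the Stieltjes integral: writing $m_t(\lambda_{-,t})-m_t(z)=\int\frac{(z-\lambda_{-,t})}{(\lambda-\lambda_{-,t})(\lambda-z)}\rho_t(\lambda)\,d\lambda$ and splitting into the cases $|\kappa|\le 2\eta$, $\kappa>2\eta$, $\kappa<-2\eta$, it obtains $|m_t(z)-m_t(\lambda_{-,t})|\lesssim\sqrt{|\kappa|+\eta}\,\log N$ from the square-root density. This yields $|\zeta_t(z)-\zeta_{-,t}|\lesssim(|\kappa|+\eta)+t\sqrt{|\kappa|+\eta}\,\log N\lesssim t^2(\log N)^{1-C}\ll t^2$, which is the estimate you actually need. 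Your approach can be repaired by integrating the correct derivative bound $|m_t'|\lesssim(|\text{dist to edge}|)^{-1/2}$ along a path, which reproduces the same $t\sqrt{|\kappa|+\eta}$ increment, but as written the step is not valid. The imaginary-part discussion and the case split in part (ii) are consistent with the paper's (equally brief) treatment.
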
 
\begin{proof}
This lemma is essentially a byprduct of Theorem \ref{thm:srbofmxt} through some elementary calculations. Comparing $\zeta_{t}(\lambda_{-,t})$ and $\zeta_{t}(z)$, it boils down to the size of $m_{t}(\lambda_{-,t}) - m_{t}(z)$. We shall rely on the square root behavior of $\rho_{t}$.

\noindent
Case (1) $|\kappa|\le 2\eta$.
Notice that
\begin{equation*}
	|m_{t}(\lambda_{-,t}) - m_{t}(z)|
	\le \int_{\lambda_{-,t}}^{\lambda_{+,t}} \frac{3\eta}{|\lambda - \lambda_{-,t}||\lambda - z|} \rho_{t}(\lambda)d\lambda.
\end{equation*}
By the square-root behavior of $\rho_{t}$ near the left edge,
\begin{equation*}
	\int_{\lambda_{-,t}}^{\lambda_{-,t}+6\eta} \frac{\eta}{|\lambda - \lambda_{-,t}||\lambda - z|} \rho_{t}(\lambda)d\lambda
	\lesssim \int_{\lambda_{-,t}}^{\lambda_{-,t}+6\eta} \frac{\eta}{\eta\sqrt{\lambda - \lambda_{-,t}}} d\lambda
	\lesssim \sqrt{\eta}.
\end{equation*}
If $\lambda\ge \lambda_{-,t}+6\eta$, we have $\lambda-\lambda_{-,t} - 3\eta \ge (\lambda-\lambda_{-,t})/2$. Thus,
\begin{equation*}
	\int_{\lambda_{-,t}+6\eta}^{\lambda_{+,t}} \frac{\eta}{|\lambda - \lambda_{-,t}||\lambda - z|} \rho_{t}(\lambda)d\lambda \lesssim \int_{\lambda_{-,t}+6\eta}^{\lambda_{+,t}} \frac{\eta}{(\lambda-\lambda_{-,t})^{3/2}} d\lambda
	\lesssim \sqrt{\eta}.
\end{equation*}

\noindent
Case (2) $\kappa > 2\eta$. We need to estimate
\begin{equation*}
	\int_{\lambda_{-,t}}^{\lambda_{+,t}} \frac{\kappa}{|\lambda - \lambda_{-,t}||\lambda - z|} \rho_{t}(\lambda)d\lambda.
\end{equation*}
Due to the square-root decay,
\begin{equation*}
	\int_{\lambda_{-,t}}^{\lambda_{-,t}+\eta} \frac{\kappa}{|\lambda - \lambda_{-,t}||\lambda - z|} \rho_{t}(\lambda)d\lambda \lesssim \int_{\lambda_{-,t}}^{\lambda_{-,t}+\eta} \frac{\kappa}{\kappa\sqrt{\lambda - \lambda_{-,t}}} d\lambda
	\lesssim \sqrt{\eta}.
\end{equation*}

\noindent
We also observe
\begin{equation*}
	\int_{\lambda_{-,t}+\eta}^{\lambda_{-,t}+\kappa-\eta} \frac{\kappa}{|\lambda - \lambda_{-,t}||\lambda - z|} \rho_{t}(\lambda)d\lambda
	\lesssim \int_{\eta}^{\kappa-\eta} \frac{\kappa}{\sqrt{x}(\kappa-x)}dx
	\lesssim \sqrt{\kappa} \log(\kappa/\eta).
\end{equation*}

\noindent
If $\lambda\in[\lambda_{-,t}+\kappa-\eta,\lambda_{-,t}+2\kappa]$, we have $\lambda - \lambda_{-,t} \sim \kappa$, which implies
\begin{equation*}
	\int_{\lambda_{-,t}+\kappa-\eta}^{\lambda_{-,t}+2\kappa} \frac{\kappa}{|\lambda - \lambda_{-,t}||\lambda - z|} \rho_{t}(\lambda)d\lambda
	\lesssim \int_{0}^{\kappa}\frac{\sqrt{\kappa}}{\sqrt{x^{2}+\eta^{2}}}dx
	\lesssim \sqrt{\kappa} \log(\kappa/\eta).
\end{equation*}

\noindent
For $\lambda\in[\lambda_{-,t}+2\kappa, \lambda_{+,t}]$, 
\begin{equation*}
	\int_{\lambda_{-,t}+2\kappa}^{\lambda_{+,t}} \frac{\kappa}{|\lambda - \lambda_{-,t}||\lambda - z|} \rho_{t}(\lambda)d\lambda
	\lesssim \sqrt{\kappa}
\end{equation*}

\noindent
Case (3) $\kappa < -2\eta$. By splitting $[\lambda_{-,t},\lambda_{+,t}]$ into $[\lambda_{-,t},\lambda_{-,t}+|\kappa|]$ and $[\lambda_{-,t}+|\kappa|,\lambda_{+,t}]$, we find that 
\begin{equation*}
	\int_{\lambda_{-,t}}^{\lambda_{+,t}} \frac{\kappa}{|\lambda - \lambda_{-,t}||\lambda - z|} \rho_{t}(\lambda)d\lambda 
	\lesssim \sqrt{|\kappa|}.
\end{equation*}

\noindent
Note $|b_{t}(\lambda_{-,t})|=O(1)=|b_{t}(z)|$ due to the fact that $|m_{t}(u)|\lesssim (t|u|)^{-1/2}$. 
Thus,
for $|\kappa|+\eta \le (\log{N})^{-C} t^{2}$,
\begin{equation*}
	|\zeta_{t}(z)-\zeta_{t}(\lambda_{-,t})| \ll t^{2}.
\end{equation*}
By Lemma \ref{lem: left edge rigidity XX^T} and Lemma \ref{lem:preEst},
\begin{equation*}
	(1-t)\lambda_{-}^{\mathsf{mp}}-\text{Re}\,\zeta_{t}(z) = \big( (1-t)\lambda_{-}^{\mathsf{mp}} - \lambda_{M}(\mathcal{S}(X)) \big) + (\lambda_{M}(\mathcal{S}(X)) - \zeta_{t}(\lambda_{-,t})) + \text{Re}\,[\zeta_{t}(\lambda_{-,t}))-\zeta_{t}(z)]  \sim t^{2}.
\end{equation*}

\noindent
Next, we consider the imaginary part of $\zeta_{t}(z)$.
Setting
\begin{equation*}
	\Phi(\kappa,\eta) =
	\begin{cases}
		\sqrt{\kappa + \eta}, & \kappa \ge 0, \\
		\frac{\eta}{\sqrt{|\kappa| + \eta}}, & \kappa < 0,
	\end{cases}
\end{equation*}
we have $\text{Im}\,\zeta_{t}(z) \sim \eta + t\Phi(\kappa,\eta),$
which gives the desired estimates on the imaginary part of $\zeta_{t}(z)$.\\
\end{proof}


\subsection{Proof of Proposition \ref{prop: resolvent entry size for X=B+C}}\label{pf: resolvent entry size for X=B+C}
We estimate the size of $G_{ij}(X, \zeta)$ only. We can bound $G_{ij}(X^{\top}, \zeta)$ in a similar way.
Define $H\coloneqq X/\sqrt{1-t}$ and denote $\omega\coloneqq\zeta/(1-t)$. It is enough to find a constant $c=c(\eps_a,\eps_{\alpha},\eps_{b})$ such that
\begin{equation*}
	|G_{ij}(H, \omega)-\delta_{ij}\mathsf{m}_{\mathsf{mp}}(\omega)| \prec N^{-c}\mathbf{1}_{i,j \in \mathcal{T}_r} + t^{-2}(1-\mathbf{1}_{i,j \in \mathcal{T}_r}).
\end{equation*}
This can be proved by a minor modication of \cite[Section 6]{PY}.
In light of Lemma \ref{lem: left edge rigidity XX^T}, the following two lemmas are  trivial.
We may use the rigidiy estimate, Lemma \ref{lem: left edge rigidity XX^T}, to get Lemma \ref{lem: crude bound for resolvent entry} below.

\begin{lemma}[Crude bound using the imaginary part]\label{lem:799}
Consider $\omega=E+\mathrm{i}\eta\in\mathbb{C}_{+}$.
If $\eta>C$,
\begin{equation*}
	|G_{ij}(H,\omega)| \le C^{-1}.
\end{equation*}
\end{lemma}

\begin{lemma}[Crude bound on the domain $\mathsf{D}_\zeta$]\label{lem: crude bound for resolvent entry}
Let $\mathsf{D}_\zeta=\mathsf{D}_\zeta(c_{0},C_{0})$ be as in Eq.~\eqref{eq: domain of zeta}. Let $\zeta\in\mathsf{D}_\zeta$. 
Denote $\omega=\zeta/(1-t)$. Then with high probability,
\begin{equation*}
	|G_{ij}(H,\omega)| \lesssim (\log N)^{C_{0}} t^{-2}.
\end{equation*}
\end{lemma}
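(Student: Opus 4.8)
The plan is to reduce the statement to a lower bound on the distance from $\omega$ to the spectrum of $\mathcal{S}(H)$, and then to treat separately the two pieces $\mathsf{D}_{1}$ and $\mathsf{D}_{2}$ that make up $\mathsf{D}_\zeta$ in \eqref{eq: domain of zeta}. Since $\mathcal{S}(H)=\mathcal{S}(X)/(1-t)$ is self-adjoint, one has the deterministic identity $\|G(H,\omega)\|=\mathrm{dist}\big(\omega,\mathrm{spec}(\mathcal{S}(H))\big)^{-1}$, so the entrywise bound $|G_{ij}(H,\omega)|\le\|G(H,\omega)\|$ reduces everything to showing that, with high probability and uniformly in $\omega=\zeta/(1-t)$ with $\zeta\in\mathsf{D}_\zeta$, one has $\mathrm{dist}(\omega,\mathrm{spec}(\mathcal{S}(H)))\gtrsim(\log N)^{-C_{0}}t^{2}$.

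On $\mathsf{D}_{2}$ this is immediate and purely deterministic: if $\zeta=E+\mathrm{i}\eta$ with $\eta\ge c_{0}(\log N)^{-C_{0}}t^{2}$, then $\mathrm{Im}\,\omega=\eta/(1-t)\ge\eta$, so $\mathrm{dist}(\omega,\mathrm{spec}(\mathcal{S}(H)))\ge\mathrm{Im}\,\omega\ge c_{0}(\log N)^{-C_{0}}t^{2}$, which gives $|G_{ij}(H,\omega)|\lesssim(\log N)^{C_{0}}t^{-2}$.

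On $\mathsf{D}_{1}$ we invoke the left-edge rigidity already established. For $\zeta=E+\mathrm{i}\eta$ with $E\le(1-t)\lambda_{-}^{\mathsf{mp}}-c_{0}t^{2}$, the real part $E'\coloneqq E/(1-t)$ of $\omega$ satisfies $E'\le\lambda_{-}^{\mathsf{mp}}-c_{0}t^{2}$. By Lemma \ref{lem: left edge rigidity XX^T}, with high probability $\lambda_{M}(\mathcal{S}(X))=(1-t)\lambda_{-}^{\mathsf{mp}}+\mathcal{O}(N^{-2\epsilon_{b}})$, hence $\lambda_{M}(\mathcal{S}(H))=\lambda_{-}^{\mathsf{mp}}+\mathcal{O}(N^{-2\epsilon_{b}})$; since $t=N\mathbb{E}|\mathsf{A}_{ij}|^{2}\sim N^{-2\epsilon_{a}}$ and, by the choice \eqref{081401}, $\epsilon_{a}\ll\epsilon_{b}$, we have $N^{-2\epsilon_{b}}\ll t^{2}$, and therefore $\lambda_{M}(\mathcal{S}(H))\ge\lambda_{-}^{\mathsf{mp}}-c_{0}t^{2}/2$ with high probability. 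Because $E'$ lies below the entire spectrum of $\mathcal{S}(H)$, this yields $\mathrm{dist}(\omega,\mathrm{spec}(\mathcal{S}(H)))\ge\lambda_{M}(\mathcal{S}(H))-E'\ge c_{0}t^{2}/2$ with high probability, completing the bound on $\mathsf{D}_{1}$. (The large-$\eta$ part of $\mathsf{D}_{1}$ is already absorbed into this estimate, but could alternatively be handled by Lemma \ref{lem:799}.) The same argument applied to $H^{\top}$ — using $\mathrm{spec}(\mathcal{S}(H^{\top}))=\mathrm{spec}(\mathcal{S}(H))\cup\{0\}$ together with $|\omega|\sim\lambda_{-}^{\mathsf{mp}}\gtrsim 1\gg t^{2}$, so that the extra eigenvalue $0$ is far from $\omega$ — gives the analogous crude bound for the rows indexed over $[N]$.

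I do not expect a real obstacle: this is a soft, crude estimate whose only content is the scale separation $t^{2}\gg N^{-2\epsilon_{b}}$, which is guaranteed by \eqref{081401} and $t\sim N^{-2\epsilon_{a}}$, combined with the already-proven left-edge rigidity of $\mathcal{S}(X)$ in Lemma \ref{lem: left edge rigidity XX^T}. The only point requiring a little care in writing the details is uniformity over $\omega\in\mathsf{D}_\zeta$, which is clear because the rigidity event is a single event not depending on $\omega$ while all the remaining manipulations are deterministic.
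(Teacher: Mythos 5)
Your proof is correct and is exactly the argument the paper intends: the paper dismisses this lemma as a trivial consequence of the rigidity estimate in Lemma \ref{lem: left edge rigidity XX^T}, and your reduction to $\mathrm{dist}(\omega,\mathrm{spec}(\mathcal{S}(H)))\gtrsim(\log N)^{-C_0}t^2$ via the scale separation $N^{-2\epsilon_b}\ll t^2$ on $\mathsf{D}_1$ and the trivial imaginary-part bound on $\mathsf{D}_2$ is precisely that. The only caveat is your parenthetical about $H^\top$: the claim $|\omega|\sim\lambda_-^{\mathsf{mp}}$ is not guaranteed by the definition of $\mathsf{D}_1$ alone (which has no lower bound on $E$), but that aside lies outside the stated lemma and does not affect its proof.
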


\noindent
Let us write $H=(h_{ij})$. By Schur complement,
\begin{equation}\label{eq: resol id 0}
	G_{ii}(H,\omega) = -\frac{1}{\omega + \frac{\omega}{N}\sum_{k=1}^{N}G_{kk}((H^{(i)})^{\top},\omega) + Z_{i}}
\end{equation}
where we denote by $H^{(i)}$ the matrix obtained from $H$ by removing $i$-th row and
\begin{equation*}
	Z_{i} \coloneqq \omega\sum_{1\le k,l\le N} h_{ik}h_{il} G_{kl}((H^{(i)})^{\top},\omega) - \frac{\omega}{N}\sum_{k=1}^{N}G_{kk}((H^{(i)})^{\top},\omega).
\end{equation*}
We define $\Lambda_{d}(\omega)$, $\Lambda_{o}(\omega)$ and $\Lambda(\omega)$ by
\begin{equation*}
	\Lambda_{d}(\omega) = \max_{i\in\mathcal{T}_{r}}|G_{ii}(H,\omega)-\mathsf{m}_{\mathsf{mp}}(\omega)|, \;\;
	\Lambda_{o}(\omega) = \max_{\substack{i\neq j \\ i,j\in \mathcal{T}_{r}}}|G_{ij}(H,\omega)|, \;\;
	\Lambda(\omega) = |m_H(\omega)-\mathsf{m}_{\mathsf{mp}}(\omega)|.
\end{equation*}
For $\omega=E+\mathrm{i}\eta$, we define
\begin{equation*}
	\Phi \equiv \Phi(\omega) \coloneqq \sqrt{\frac{\Im \mathsf{m}_{\mathsf{mp}}(\omega)+\Lambda(\omega)}{N\eta}} + t^{-2}N^{-\eps_{\alpha}/2} + t^{-2}N^{-\eps_{b}}.
\end{equation*}
Define the events $\Omega(\omega,K)$, $\mathbf{B}(\omega)$ and $\Gamma(\omega,K)$ for $K>0$ by
\begin{equation*}
	\Omega(\omega,K) \coloneqq \Big\{ \max\Big(\Lambda_{o}(\omega),\max_{i\in\mathcal{T}_{r}}|G_{ii}(H,\omega)-m_H(\omega)|,\max_{i\in\mathcal{T}_{r}}|Z_{i}(\omega)|\Big)\ge K\Phi \Big\},
\end{equation*}
\begin{equation*}
	\mathbf{B}(\omega) \coloneqq \{ \Lambda_{o}(\omega) + \Lambda_{d}(\omega) > (\log N)^{-1} \}, \quad
	\Gamma(\omega,K) \coloneqq \Omega^{c}(\omega,K) \cup \mathbf{B}(\omega).
\end{equation*}
We also introduce the logarithmic factor $\varphi \equiv \varphi_{N} \coloneqq (\log N)^{\log\log N}.$
\begin{lemma}\label{lem:858}
Suppose $\Psi$ is good. Recall $\omega\equiv\omega(\zeta)=\zeta/(1-t)$. There exist a constant $C>0$ such that the event
\begin{equation*}
		\bigcap_{\zeta\in\mathsf{D}_{\zeta}}\Gamma(\omega,\varphi^{C})
\end{equation*}
holds with high probability.
\end{lemma}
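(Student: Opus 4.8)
\textbf{Proof proposal for Lemma \ref{lem:858}.}
The plan is to follow the bootstrap/continuity argument of \cite[Section 6]{PY}, carried over to the left edge of $\mathcal S(H)$ with $H=X/\sqrt{1-t}$ and $\omega=\zeta/(1-t)$. The key structural input is the self-consistent equation \eqref{eq: resol id 0} for $G_{ii}(H,\omega)$, together with the crude a priori bounds of Lemmas \ref{lem:799} and \ref{lem: crude bound for resolvent entry}, and the rigidity estimate Lemma \ref{lem: left edge rigidity XX^T}, which guarantees that on $\mathsf D_\zeta$ we stay a mesoscopic distance $\sim t^2$ away from $\lambda_M(\mathcal S(X))$. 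First I would establish a large-deviation control of the fluctuation term $Z_i(\omega)$ restricted to $i\in\mathcal T_r$: since row $i$ of $X$ has only entries bounded by $N^{-\epsilon_b}$ (because $i\in\mathcal T_r$ means $\psi_{ij}=0$ for all $j$, so $X_{ij}=\mathsf B_{ij}$), the heavy-tailed large deviation inequalities (the analogue used in Proposition \ref{prop: resolvent entry size for X=B+C}, cf. \cite{Amol}) give $|Z_i(\omega)|\prec \sqrt{(\Im m_H(\omega)+\Lambda(\omega))/(N\eta)}+t^{-2}N^{-\epsilon_\alpha/2}+t^{-2}N^{-\epsilon_b}=\Phi(\omega)$ on the event $\mathbf B(\omega)^c$, where the first term comes from the martingale/concentration part and the latter two from the truncation and the sparsity of the bad rows. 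The same concentration handles $\Lambda_o(\omega)$ and $\max_{i\in\mathcal T_r}|G_{ii}-m_H|$, so that $\mathbb P(\Omega(\omega,\varphi^C)\cap\mathbf B(\omega)^c)\le N^{-D}$ for each fixed $\omega$.

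Next I would run the continuity (bootstrap) in $\eta$: starting from large $\eta>C$ where Lemma \ref{lem:799} forces $\mathbf B(\omega)^c$ to hold deterministically, one uses the stability of the MP self-consistent equation \eqref{eq:MPSTfeq} at the left soft edge — valid because $\omega$ is at distance $\gtrsim t^2(\log N)^{-C}$ from the edge on $\mathsf D_\zeta$, so the stability factor is only polynomially degenerate in $t$ — to upgrade $\Omega^c(\omega,\varphi^C)$ into the improved bound $\Lambda_d(\omega)+\Lambda_o(\omega)\lesssim \varphi^C\Phi(\omega)\ll(\log N)^{-1}$, hence $\mathbf B(\omega)^c$ propagates as $\eta$ decreases. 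A standard lattice-plus-union-bound argument (the resolvent entries are Lipschitz in $\omega$ with polynomial constant since $\eta\ge ct N^{-2/3-\varepsilon_2}\gg N^{-1}$ on $\mathsf D_\zeta$, cf. Lemma \ref{lem: crude bound for resolvent entry}) then turns the pointwise estimates into the claimed bound holding simultaneously for all $\zeta\in\mathsf D_\zeta$ with high probability. Throughout, on the event $\mathbf B(\omega)$ (which the bootstrap shows cannot happen once $\eta$ is small) one keeps only the crude bound $\Lambda\lesssim(\log N)^{C_0}t^{-2}$ from Lemma \ref{lem: crude bound for resolvent entry}, which is why $\Gamma(\omega,\varphi^C)=\Omega^c(\omega,\varphi^C)\cup\mathbf B(\omega)$ is the natural event to track.

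The main obstacle I expect is controlling the self-consistent equation \eqref{eq: resol id 0} in the presence of the heavy tails \emph{and} the bad rows $\mathcal D_r$: the term $\frac{\omega}{N}\sum_k G_{kk}((H^{(i)})^\top,\omega)$ mixes contributions from $k\in\mathcal T_c$ (well-behaved, $\sim \underline{\mathsf m}_{\mathsf{mp}}$) with $k\in\mathcal D_c$ (only bounded by $t^{-2}$), and one must verify that the latter contribute $O(|\mathcal D_c|/N)\cdot t^{-2}=O(N^{-\epsilon_\alpha}t^{-2})$, which is absorbed because $t\gg N^{-\epsilon_\alpha/8}$ by our choice \eqref{081401}. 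A secondary technical point is that the heavy-tailed large deviation bound for $Z_i$ produces an error $t^{-2}N^{-\epsilon_\alpha/2}$ rather than the $N^{-1/2}$ one has in the light-tailed case, and one must check this is still $\ll(\log N)^{-1}$ after multiplication by $\varphi^C$ and after the stability analysis — again guaranteed by $t\gg N^{-\epsilon_\alpha/8}$. Since all these estimates are minor modifications of arguments already in \cite{PY, Amol} once the $\eta_\ast$-regularity and the mesoscopic separation from the edge are in hand, I would present the proof briefly and refer to those works for the routine parts.
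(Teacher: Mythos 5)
Your proposal is correct and follows essentially the same route as the paper's proof: a lattice/union-bound reduction to a fixed $\omega$, followed by the heavy-tailed large deviation estimates of \cite{Amol} applied on the event $\mathbf{B}(\omega)^{c}$ to $Z_{i}$, $\Lambda_{o}$ and the diagonal comparison $G_{ii}-G_{jj}$ for $i,j\in\mathcal{T}_{r}$ (using that rows in $\mathcal{T}_{r}$ consist only of $\mathsf{B}$-entries bounded by $N^{-\epsilon_b}$), with the $\mathcal{D}_{r},\mathcal{D}_{c}$ contributions absorbed into the $t^{-2}N^{-\eps_{\alpha}/2}$ term of $\Phi$ exactly as you indicate. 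The one remark is that the bootstrap in $\eta$ you describe is not part of this lemma: since $\Gamma(\omega,K)=\Omega^{c}(\omega,K)\cup\mathbf{B}(\omega)$ already admits $\mathbf{B}(\omega)$ as an alternative, the statement is a pure large-deviation claim conditional on $\mathbf{B}^{c}$, and the continuity argument that eventually rules out $\mathbf{B}$ is deferred to Lemma \ref{lem: lem 6.12 PY14} and the concluding proposition.
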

\begin{proof}
By a standard lattice argument, it is enough to show that $\Gamma(\omega,\varphi^{C})$ holds with with high probability for any $\omega=\omega(\zeta)$ with $\zeta\in\mathsf{D}_\zeta$.
Fix $\omega=\omega(\zeta)$ with $\zeta\in\mathsf{D}_\zeta$. We define
\begin{align*}
	\Omega_{o}(\omega,K) &\coloneqq \big\{ \Lambda_{o}(\omega)\ge K\Phi(\omega) \big\}, \\
	\Omega_{d}(\omega,K) &\coloneqq \Big\{ \max_{i\in\mathcal{T}_{r}}|G_{ii}(H,\omega)-m_H(\omega)|\ge K\Phi(\omega) \Big\}, \\
	\Omega_{Z}(\omega,K) &\coloneqq \Big\{ \max_{i\in\mathcal{T}_{r}}|Z_{i}| \ge K\Phi(\omega) \Big\}.
\end{align*}
Since $\Omega=\Omega_{o}\cup\Omega_{d}\cup\Omega_{Z}$, it is sufficient to show $\Omega_{o}^{c}\cup\mathbf{B}$, $\Omega_{d}^{c}\cup\mathbf{B}$ and $\Omega_{Z}^{c}\cup\mathbf{B}$ hold with high probability respectively.

\noindent
(1) Consider the event $\Omega_{o}^{c}\cup\mathbf{B}$.
Fix $i\neq j$ with $i,j\in\mathcal{T}_{r}$.
On the event $\mathbf{B}^{c}$, we have $|G_{ii}(H,\zeta)| \sim 1$. Then, by the resolvent identity,
\begin{equation}\label{eq: resol id 1}
	G_{jj}(H^{(i)},\omega) = G_{jj}(H,\omega) - \frac{G_{ji}(H,\omega)G_{ij}(H,\omega)}{G_{ii}(H,\omega)},
\end{equation}
it follows that $G_{jj}(H^{(i)},\omega)\sim 1$ on $\mathbf{B}^{c}$.
Thus, we can get 
\begin{equation*}
	\Lambda_{o}(\omega) \lesssim \max_{\substack{i\neq j\\i,j\in\mathcal{T}_{r}}}\left|\sum_{1\le k,l\le N} h_{ik}h_{jl} G_{kl}((H^{(ij)})^{\top},\omega)\right|,
\end{equation*}
where we denote by $H^{(ij)}$ the matrix obtained from $H$ by removing $i$-th and $j$-th rows.
Since $i,j\in\mathcal{T}_{r}$, applying the large deviation estimate \cite[Corollary 25]{Amol}, the following estimate holds with high probability:
\begin{equation*}
	\bigg|\sum_{1\le k,l\le N} h_{ik}h_{jl} G_{kl}((H^{(ij)})^{\top},\omega)\bigg| \le \varphi^{C}\left(N^{-\eps_{b}}\max_{k,l}|G_{kl}((H^{(ij)})^{\top},\omega)| + \frac{1}{N}\bigg(\sum_{k,l}|G_{kl}((H^{(ij)})^{\top},\omega)|^{2}\bigg)^{1/2} \right).
\end{equation*}
Note that
\begin{equation}\label{eq: resol id 2}
	\sum_{k,l}|G_{kl}((H^{(ij)})^{\top},\omega)|^{2} = \frac{\sum_{k} \Im G_{kk}((H^{(ij)})^{\top},\omega)}{\eta},
\end{equation}
and
\begin{equation}\label{eq: resol id 3}
	\sum_{k}G_{kk}((H^{(ij)})^{\top},\omega) - \sum_{\ell}G_{\ell\ell}(H^{(ij)},\omega) = \frac{O(N)}{\omega}.
\end{equation}
Using \eqref{eq: resol id 1}, \eqref{eq: resol id 2} and \eqref{eq: resol id 3}, together with Lemma \ref{lem: crude bound for resolvent entry},
we conclude that on the event $\mathbf{B}^{c}$, with high probability, for some constant $C>0$ large enough,
\begin{equation*}
	\Lambda_{o}(\omega) \le \varphi^{C} \paren{t^{-2}N^{-\eps_{b}}+\sqrt{ \frac{\Im \mathsf{m}_{\mathsf{mp}}+\Lambda+\Lambda_{o}^{2} + t^{-4}N^{-\eps_{\alpha}} }{N\eta} + \frac{1}{N} }},
\end{equation*}
with high probability for some constant $C>0$ large enough.
The event $\Omega_{o}^{c}\cap\mathbf{B}^{c}$ holds with high probability.

\noindent
(2) We claim that $\Omega_{Z}^{c}\cup\mathbf{B}$ holds with high probability. In fact, the claim directly follows from the large deviation estimate \cite[Corollary 25]{Amol} repeating the same argument we used above; 
on the event $\mathbf{B}^{c}$, for $i\in\mathcal{T}_{r}$, we have $|Z_{i}| \le \varphi^{C} \Phi$ with high probability for some constant $C>0$.

\noindent
(3) We shall prove $\Omega_{d}^{c}\cup\mathbf{B}$ holds with high probability. For $i\in\mathcal{T}_{r}$,
\begin{equation*}
	G_{ii}(H,\omega) - m_H(\omega)
	\le \max_{j\in\mathcal{T}_{r}}|G_{ii}(H,\omega)-G_{jj}(H,\omega)| + \varphi^{C} t^{-2}N^{-\eps_{\alpha}},
\end{equation*}
where we use Lemma \ref{lem: crude bound for resolvent entry} to bound $G_{jj}$ with $j\notin\mathcal{T}_{r}$.
For $i,j\in\mathcal{T}_{r}$ with $i\neq j$, on the event $\mathbf{B}^{c}$, with high probability, we can find that
\begin{align*}
	|G_{ii}(H,\omega)-G_{jj}(H,\omega)|&\leq \bigg|\frac{1}{\omega + \frac{\omega}{N}\sum_{k=1}^{N}G_{kk}((H^{(i)})^{\top},\omega) + Z_{i}} - \frac{1}{\omega + \frac{\omega}{N}\sum_{k=1}^{N}G_{kk}((H^{(j)})^{\top},\omega) + Z_{j}} \bigg| \\
	&\lesssim \max_{i\in\mathcal{T}_{r}}|Z_{i}| + \Lambda_{o}^{2} + t^{-4}N^{-\eps_{\alpha}}
\end{align*}
where we use 
\begin{equation}\label{eq: resol id 4}
	\sum_{k}G_{kk}((H^{(i)})^{\top},\omega) - \sum_{\ell}G_{\ell\ell}(H^{(i)},\omega) = \frac{M-N+1}{\omega}
\end{equation}
and the estimates we have shown above. The desired result follows.
\end{proof}

\begin{corollary}\label{cor:964}
Suppose $\Psi$ is good. Let $C'>0$ be a constant. There exist a constant $C>0$ such that the event $\Omega^{c}(E+\mathrm{i}\eta,\varphi^{C})$ holds with high probability.
\end{corollary}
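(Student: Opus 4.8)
The plan is to upgrade the dichotomy of Lemma \ref{lem:858} by ruling out the exceptional event $\mathbf{B}(\omega)$: on the high-probability event $\bigcap_{\zeta\in\mathsf{D}_{\zeta}}\Gamma(\omega,\varphi^{C})$ one wants to show that one is always in $\Omega^{c}(\omega,\varphi^{C})$ rather than in $\mathbf{B}(\omega)$. This is the usual bootstrap in the imaginary part, following \cite[Section 6]{PY}, but with the heavy-tailed large deviation bound from \cite{Amol} in place of its light-tailed analogue. Throughout, $\omega=\omega(\zeta)=\zeta/(1-t)$ and $C=C(C',\alpha,\dots)$ is chosen large.

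First I would establish the base case at large scale: for $\zeta\in\mathsf{D}_{\zeta}$ with $\Im\omega\sim\varepsilon_{3}$, Lemma \ref{lem:799} together with a crude resolvent expansion gives $G_{ii}(H,\omega)=\mathsf{m}_{\mathsf{mp}}(\omega)+\mathfrak{o}((\log N)^{-1})$ for all $i$ and $\Lambda_{o}(\omega)=\mathfrak{o}((\log N)^{-1})$, so that $\mathbf{B}(\omega)^{c}$ holds deterministically there. The heart of the argument is then the self-improving step. On $\Gamma(\omega,\varphi^{C})\cap\mathbf{B}(\omega)^{c}$ one has $|G_{ii}(H,\omega)|\sim1$ for $i\in\mathcal{T}_{r}$, so \eqref{eq: resol id 0} together with the bounds $\Lambda_{o}(\omega)$, $\max_{i\in\mathcal{T}_{r}}|G_{ii}-m_{H}|$, $\max_{i\in\mathcal{T}_{r}}|Z_{i}|\le\varphi^{C}\Phi(\omega)$ yields a perturbed version of the self-consistent equation \eqref{eq:MPSTfeq} for $m_{H}(\omega)$ after averaging the diagonal over $i\in\mathcal{T}_{r}$; the $\mathcal{D}_{r}$-diagonal entries are absorbed using $|\mathcal{D}_{r}|\le N^{1-\epsilon_{\alpha}}$ and the crude bound $|G_{jj}|\lesssim\varphi^{C}t^{-2}$ from Lemma \ref{lem: crude bound for resolvent entry}, which costs only $\varphi^{C}t^{-2}N^{-\epsilon_{\alpha}}$. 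Using the stability of \eqref{eq:MPSTfeq}, which is quantitatively controlled on $\mathsf{D}_{\zeta}$ because $\omega$ keeps a distance $\gtrsim t^{2}$ from the left edge of $\rho^{\mathsf{mp}}$ (cf.\ Lemma \ref{lem: domain of para z}), one concludes first $\Lambda(\omega)\le\varphi^{C}\Phi(\omega)$ (up to a loss of a fixed power of $t^{-1}$ from the edge stability) and then, feeding this back into the $\mathcal{T}_{r}$-Schur complements, $\Lambda_{d}(\omega)+\Lambda_{o}(\omega)\le\varphi^{2C}\Phi(\omega)$. Since $N\eta$ is at least a small positive power of $N$ on $\mathsf{D}_{\zeta}$ and the heavy-tailed terms satisfy $t^{-2}N^{-\epsilon_{\alpha}/2}+t^{-2}N^{-\epsilon_{b}}\ll(\log N)^{-1}$ by \eqref{081401} and \eqref{081533}, this improved bound is $\ll(\log N)^{-1}$, strictly below the threshold defining $\mathbf{B}(\omega)$.

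With these two inputs the continuity (bootstrap) argument closes: on each vertical segment of $\omega(\mathsf{D}_{\zeta})$ the maps $\eta\mapsto\Lambda_{d}(\omega),\Lambda_{o}(\omega)$ are continuous, and on $\bigcap_{\zeta\in\mathsf{D}_{\zeta}}\Gamma(\omega,\varphi^{C})$ the dichotomy ``$\Lambda_{d}+\Lambda_{o}\le\varphi^{2C}\Phi$ or $\Lambda_{d}+\Lambda_{o}>(\log N)^{-1}$'' holds with a genuine gap; hence $\{\eta:\mathbf{B}(\omega)^{c}\text{ holds}\}$ is both open and relatively closed, and nonempty by the base case, so it is the whole segment. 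A lattice argument plus Lipschitz continuity of the resolvent (permissible since $\Im\omega$ is never too small on $\mathsf{D}_{\zeta}$) passes this from a net of parameters, where Lemma \ref{lem:858} applies, to all of $\mathsf{D}_{\zeta}$. As $\Gamma=\Omega^{c}\cup\mathbf{B}$ and $\mathbf{B}$ is now excluded, $\Omega^{c}(E+\mathrm{i}\eta,\varphi^{C})$ holds with high probability.

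The step I expect to be the main obstacle is the self-improving estimate: one must make the perturbed self-consistent equation and its stability analysis uniform over $\mathsf{D}_{\zeta}$ while simultaneously tracking the three distinct scales — the regularisation $t\sim N^{-2\epsilon_{a}}$, the edge distance $t^{2}$, and the tail corrections $N^{-\epsilon_{\alpha}},N^{-\epsilon_{b}}$ — so that the self-improved error genuinely beats $(\log N)^{-1}$ even after the loss incurred from the degraded edge stability. This is precisely where the hierarchy of small constants fixed in \eqref{081401} and \eqref{081533} enters in an essential way.
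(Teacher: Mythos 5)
The statement you are asked to prove is only the base case of the bootstrap: the otherwise-unused constant $C'$ in the statement is the fixed, order-one value of $\eta$, and the corollary asserts that at $\eta=C'$ the good event $\Omega^{c}$ holds with high probability \emph{unconditionally}, i.e.\ without the escape clause $\mathbf{B}(\omega)$ appearing in Lemma \ref{lem:858}. This is exactly the input $\bigcap_{\zeta\in\mathsf{D}_{\zeta},\,\eta=C'}\mathbf{B}^{c}(\omega)$ needed to seed the continuity argument of Lemma \ref{lem: lem 6.12 PY14}. Your proposal instead proves the global statement over all of $\mathsf{D}_{\zeta}$ by running the entire bootstrap; that is the content of the proposition which follows this corollary in the paper and which uses the corollary as one of its inputs. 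Within your own architecture, the corollary is precisely your ``base case at large scale'', and that is the one step you do not actually prove.

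Concretely, the claim that ``Lemma \ref{lem:799} together with a crude resolvent expansion gives $G_{ii}(H,\omega)=\mathsf{m}_{\mathsf{mp}}(\omega)+\mathfrak{o}((\log N)^{-1})$ for all $i$ and $\Lambda_{o}(\omega)=\mathfrak{o}((\log N)^{-1})$, so that $\mathbf{B}(\omega)^{c}$ holds deterministically'' is not correct as stated. Deterministically one only has $|G_{ij}|\le \eta^{-1}=1/C'=\mathcal{O}(1)$; making $\Lambda_{o}$ and $\max_{i\in\mathcal{T}_{r}}|Z_{i}|$ of size $\varphi^{C}\Phi$ requires the heavy-tailed large deviation estimate \cite[Corollary 25]{Amol}, which is probabilistic, and it is precisely the deterministic bound of Lemma \ref{lem:799} (applied to the minors $H^{(i)}$, $H^{(ij)}$) that substitutes for the a priori assumption $\mathbf{B}^{c}$ on which Lemma \ref{lem:858} relies at small $\eta$ --- this substitution is the entire content of the corollary. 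Moreover $G_{ii}$ need not be close to $\mathsf{m}_{\mathsf{mp}}$ for $i\in\mathcal{D}_{r}$, and for the diagonal comparison on $\mathcal{T}_{r}$ one must still control $\frac1N\sum_{k}G_{kk}((H^{(i)})^{\top},\omega)-\frac1N\sum_{k}G_{kk}((H^{(j)})^{\top},\omega)$, which the paper handles by Cauchy interlacing (giving $\lesssim (N\eta)^{-1}$); no ingredient playing this role appears in your proposal. Once these two points are supplied, $\Omega^{c}(E+\mathrm{i}C',\varphi^{C})$ follows directly, and the self-improvement and continuity steps that occupy most of your write-up are not needed for this corollary.
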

\begin{proof}
Recall the argument we used in the proof of the previous lemma. Using the large deviation estimate \cite[Corollary 25]{Amol} with Lemma \ref{lem:799}, it is straightforward that $\Omega_{o}^{c}$ and $\Omega_{Z}^{c}$ hold with high probability. For $\Omega_{d}^{c}$, the desired result follows from the consequence of Cauchy's interlacing theorem, that is,
\begin{equation*}
	\frac{1}{N}\sum_{k=1}^{N}G_{kk}((H^{(i)})^{\top},\omega) - \frac{1}{N}\sum_{k=1}^{N}G_{kk}((H^{(j)})^{\top},\omega) \lesssim \frac{1}{N\eta}.
\end{equation*}
\end{proof}

\noindent
Let us introduce the deviance function $D(u(\omega),\omega)$ by setting
\begin{equation*}
	D(u(\omega),\omega) \coloneqq  \paren{\frac{1}{u(\omega)} + c_{N} \omega u(\omega)} - \paren{\frac{1}{\mathsf{m}_{\mathsf{mp}}(\omega)} + c_{N} \omega \mathsf{m}_{\mathsf{mp}}(\omega)}.
\end{equation*}

\begin{lemma}\label{lem:981}
On the event $\Gamma(\omega,\varphi^{C})$,
\begin{equation*}
	|D(m_H(\omega),\omega)|\le O(\varphi^{2C}\Phi^{2}) + \infty\indic_{\mathbf{B}(\omega)}.
\end{equation*}
\end{lemma}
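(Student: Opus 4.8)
The plan is to derive the bound on $|D(m_H(\omega),\omega)|$ from the Schur complement expansion \eqref{eq: resol id 0} together with the control on the random error terms $Z_i$ and the off-diagonal entries that is available on $\Gamma(\omega,\varphi^C)$. First I would recall that on $\mathbf{B}(\omega)^c$ one has $\Lambda_d(\omega)+\Lambda_o(\omega)\le (\log N)^{-1}$, so every diagonal entry $G_{ii}(H,\omega)$ with $i\in\mathcal{T}_r$ is comparable to $\mathsf{m}_{\mathsf{mp}}(\omega)\sim 1$, and in particular $m_H(\omega)\sim 1$; the case where $\mathbf{B}(\omega)$ occurs is trivially covered by the $\infty\indic_{\mathbf{B}(\omega)}$ term, so from now on we work on $\Omega^c(\omega,\varphi^C)\cap \mathbf{B}(\omega)^c$.

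Next I would average \eqref{eq: resol id 0} over $i\in\mathcal{T}_r$. Using \eqref{eq: resol id 3}--\eqref{eq: resol id 4} (Cauchy interlacing) to replace $N^{-1}\sum_k G_{kk}((H^{(i)})^\top,\omega)$ by $\underline{m}_H(\omega)=c_N m_H(\omega)-(1-c_N)/\omega$ up to an $O(1/(N\eta))$ error, and absorbing the diagonal entries with indices outside $\mathcal{T}_r$ (there are at most $N^{1-\epsilon_\alpha}$ of them, each of size $\lesssim \varphi^C t^{-2}$ by Lemma \ref{lem: crude bound for resolvent entry}, contributing $O(\varphi^C t^{-2}N^{-\epsilon_\alpha})$ to $m_H$), I get
\begin{equation*}
	m_H(\omega) = \frac{1}{|\mathcal{T}_r|}\sum_{i\in\mathcal{T}_r} \frac{-1}{\omega + \omega\,\underline{m}_H(\omega) + Z_i + \mathcal{E}_i},
\end{equation*}
where the aggregate error satisfies $\max_i(|Z_i|+|\mathcal{E}_i|)\lesssim \varphi^C\Phi$ on $\Omega^c(\omega,\varphi^C)$ (here I fold the $1/(N\eta)$ and $t^{-2}N^{-\epsilon_\alpha}$ terms into the definition of $\Phi$, which already contains $t^{-2}N^{-\epsilon_\alpha/2}$ and $\sqrt{\Im\mathsf{m}_{\mathsf{mp}}/(N\eta)}$). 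Then I expand each summand around $-1/(\omega+\omega\underline{m}_H(\omega))$: since $|\omega + \omega\underline{m}_H(\omega)|$ is bounded below by a constant on $\mathbf{B}(\omega)^c$ (because $m_H\sim 1$ and one checks $1/m_H + c_N\omega m_H$ is bounded, equivalently $m_H$ is not near a zero of the denominator), the first-order terms in $Z_i+\mathcal{E}_i$ average to something of size $\lesssim \varphi^C\Phi$ and, after re-centering, to $O(\varphi^{2C}\Phi^2)$ — this is the standard fluctuation-averaging gain, but here it suffices to keep the cruder bound and note that the \emph{linear} term is itself controlled because $\E$-type cancellation is not needed: one simply bounds $|m_H(\omega) + 1/(\omega+\omega\underline{m}_H(\omega))|\lesssim \varphi^{2C}\Phi^2 + \varphi^C\Phi\cdot(\text{something small})$. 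Actually the clean route is: write $m_H = -1/(\omega+\omega\underline m_H) + R$ with $|R|\lesssim \varphi^{2C}\Phi^2$ using $|Z_i+\mathcal E_i|^2\lesssim \varphi^{2C}\Phi^2$ and the bounded-denominator fact, which rearranges to the self-consistent equation $1/m_H + c_N\omega m_H - (1-c_N)/\omega = \omega + O(\varphi^{2C}\Phi^2)$... and comparing with the exact MP identity $1/\mathsf{m}_{\mathsf{mp}} + c_N\omega\mathsf{m}_{\mathsf{mp}} - (1-c_N)/\omega = -\omega$ gives precisely $D(m_H(\omega),\omega)=O(\varphi^{2C}\Phi^2)$.

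The main obstacle I anticipate is bookkeeping the error terms carefully enough to land exactly at order $\Phi^2$ rather than $\Phi$: the naive expansion only gives $O(\Phi)$, and one needs the partial cancellation that comes from expanding $1/(a+\epsilon)= 1/a - \epsilon/a^2 + O(\epsilon^2/a^3)$ and observing that the $\epsilon/a^2$ term, when summed, is dominated by $\Lambda_o$-type quantities already included in $\Omega(\omega,K)$, hence is $\lesssim \varphi^C\Phi\cdot\varphi^C\Phi=\varphi^{2C}\Phi^2$. This is exactly the mechanism in \cite{PY, Amol} (and \cite{EY}), so I would cite the relevant large deviation and self-consistent-equation estimates there and reproduce only the parts that differ because of the $\mathcal{T}_r/\mathcal{D}_r$ split and the heavy-tailed error scale $t^{-2}N^{-\epsilon_\alpha/2}$. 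A secondary, minor point is verifying the lower bound on $|\omega + \omega\underline m_H(\omega)|$ on $\mathbf{B}(\omega)^c$; this follows from $m_H\sim\mathsf{m}_{\mathsf{mp}}$ and the known stability of the MP equation in the region $\mathsf{D}_\zeta$ (where $\Im\zeta_t$, hence $\Im\omega$, is bounded below by $t^2(\log N)^{-C}$, keeping us a mesoscopic distance from the spectral edge), which is precisely the content of Lemma \ref{lem: domain of parameter}.
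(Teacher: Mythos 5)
Your overall route is the same as the paper's: Schur complement for $G_{ii}(H,\omega)$, replacement of the minor traces by $m_H$ via \eqref{eq: resol id 1} and \eqref{eq: resol id 4}, averaging over $i\in\mathcal{T}_r$ with the $\mathcal{D}_r$ indices absorbed using Lemma \ref{lem: crude bound for resolvent entry}, and comparison with the MP fixed-point identity to isolate $D(m_H(\omega),\omega)$. The trivial disposal of the event $\mathbf{B}(\omega)$ and the lower bound on the denominator on $\mathbf{B}^c(\omega)$ are also handled as in the paper.

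The one place where you diverge, and where your argument does not hold up as written, is the mechanism for the quadratic gain. You expand $-1/(a+\epsilon_i)$ around $-1/a$ and claim the averaged linear term $\frac{1}{|\mathcal{T}_r|}\sum_i \epsilon_i/a^2$ is $O(\varphi^{2C}\Phi^2)$ because it is ``dominated by $\Lambda_o$-type quantities already included in $\Omega(\omega,K)$.'' This is not correct: the event $\Omega(\omega,K)$ only controls $\max_{i\in\mathcal{T}_r}|Z_i|\le \varphi^C\Phi$, which bounds the average of the $Z_i$ at order $\Phi$, not $\Phi^2$; upgrading the average to $\Phi^2$ would require a genuine fluctuation-averaging argument (high-moment decoupling as in \cite{PY}), which you invoke only in passing and do not carry out. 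The paper obtains its quadratic gain for the \emph{diagonal-fluctuation} part by a purely algebraic device you do not use: it writes $m_H^{-1}-G_{ii}^{-1}=\frac{G_{ii}-m_H}{m_H^2}-\frac{(G_{ii}-m_H)^2}{m_H^3}+O((G_{ii}-m_H)^3)$ and exploits the exact identity $\sum_{i\in[M]}(G_{ii}-m_H)=0$, so that after averaging only terms quadratic in $\Lambda_d$ (hence $O(\varphi^{2C}\Phi^2)$) survive from that source. The residual $\frac{1}{|\mathcal{T}_r|}\sum_i Z_i$ term is a priori only $O(\varphi^C\Phi)$ in either treatment; note, however, that in the subsequent application (the final proposition, via Lemma \ref{lem: lem 6.12 PY14}) the input $\mathfrak{d}(\omega)$ is taken of order $\varphi^C\Phi$, so only the first-power bound on $|D|$ is actually consumed downstream. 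You should either restrict your claim to $O(\varphi^C\Phi)$ for the linear term, or reproduce the paper's cancellation identity and supply a separate argument for $\langle Z_i\rangle$ if the full $\Phi^2$ statement is wanted.
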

\begin{proof}
Recall that $(\mathsf{m}_{\mathsf{mp}})^{-1}(\omega)=-\omega+(1-c_{N})-\omega c_{N}\mathsf{m}_{\mathsf{mp}}$.
Using \eqref{eq: resol id 0}, \eqref{eq: resol id 1} and \eqref{eq: resol id 4}, on the event $\Omega^{c}\cap\mathbf{B}^{c}$, we have
\begin{equation*}
	G_{ii}^{-1}(H,\omega) = (\mathsf{m}_{\mathsf{mp}})^{-1}(\omega) + \omega c_{N} (\mathsf{m}_{\mathsf{mp}}(\omega)-m_H(\omega)) - Z_{i} + O(\varphi^{2C}\Phi^{2}+t^{-4}N^{-\eps_{\alpha}}+N^{-1}),
\end{equation*}
so it follows that
\begin{equation*}
	m^{-1}_H(\omega) - G_{ii}^{-1}(H,\omega) = D(m_H(\omega),\omega) + Z_{i} + O(\varphi^{2C}\Phi^{2}+t^{-4}N^{-\eps_{\alpha}}+N^{-1}).
\end{equation*}
Averaging over $i\in\mathcal{T}_r$ yields
\begin{align*}
\frac{1}{|\mathcal{T}_r|}\sum_{i \in \mathcal{T}_r} (m^{-1}_H(\omega) - G_{ii}^{-1}(H,\omega)) = D(m_H(\omega),\omega) + \frac{1}{|\mathcal{T}_r|}\sum_{i \in \mathcal{T}_r}Z_{i} + O(\varphi^{2C}\Phi^{2}+t^{-4}N^{-\eps_{\alpha}}+N^{-1}).
\end{align*}
Since $\sum_{i}G_{ii}(H,\omega) - m_{H}(\omega) = 0$ and
\begin{equation*}
	m^{-1}_H(\omega) - G_{ii}^{-1}(H,\omega) = \frac{G_{ii}(H,\omega)-m_{H}(\omega)}{m_{H}^{2}(\omega)} - \frac{\big(G_{ii}(H,\omega)-m_{H}(\omega)\big)^{2}}{m_{H}^{3}(\omega)} + O\Big( \frac{\big(G_{ii}(H,\omega)-m_{H}(\omega)\big)^{3}}{m_{H}^{4}(\omega)} \Big),
\end{equation*}
we obtain that $|D(m_H(\omega),\omega)| \le O(\varphi^{2C}\Phi^{2})$ on the event $\Omega^{c}\cap\mathbf{B}^{c}$.
\end{proof}

\begin{lemma}\label{lem: lem 6.12 PY14} 
Recall $\omega\equiv\omega(\zeta)=\zeta/(1-t)$ and write $\omega=E+\mathrm{i}\eta$. Let $C,C'>0$ be constants. Consider an event $A$ such that
\begin{equation*}
	A \subset \bigcap_{\zeta\in\mathsf{D}_{\zeta}}\Gamma(\omega,\varphi^{C}) \cap \bigcap_{\zeta\in\mathsf{D}_{\zeta},\eta=C'} \mathbf{B}^{c}(\omega).
\end{equation*}
Suppose that in $A$, for $\omega=\omega(\zeta)$ with $\zeta\in\mathsf{D}_{\zeta}$,
\begin{equation*}
	|D(m_H(\omega),\omega)| \le \mathfrak{d}(\omega) + \infty\indic_{B(\omega)},
\end{equation*}
where $\mathfrak{d}:\mathbb{C}\mapsto\mathbb{R}_{+}$ is a continuous function such that $\mathfrak{d}(E+\mathrm{i}\eta)$ is decreasing in $\eta$ and $|\mathfrak{d}(z)|\le(\log{N})^{-8}$.

Then, for all $\omega\equiv\omega(\zeta)$ with $\zeta\in\mathsf{D}_{\zeta}$, we have
\begin{equation}\label{eq:1044}
	|m_H(\omega)-\mathsf{m}_{\mathsf{mp}}(\omega)| \lesssim \log{N} \frac{\mathfrak{d}(\zeta)}{\sqrt{|E-\lambda_{-}^{\mathsf{mp}}|+\eta+\mathfrak{d}(\zeta)}} \quad \text{in $A$, }
\end{equation}
and
\begin{equation}\label{eq:1048}
	A \subset \bigcap_{\zeta\in\mathsf{D}_{\zeta}} \mathbf{B}^{c}(\zeta).
\end{equation}
\end{lemma}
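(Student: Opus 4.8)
\textbf{Proof plan for Lemma \ref{lem: lem 6.12 PY14}.}

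The plan is to run the standard continuity (self-improving bootstrap) argument in the variable $\eta$, descending from the scale $\eta = C'$, where the deviation estimate is under control by the assumed inclusion $A\subset\bigcap_{\zeta\in\mathsf{D}_\zeta,\eta=C'}\mathbf{B}^c(\omega)$, down to the smallest scale $\eta = C'N^{-2/3-\varepsilon_2}$ admissible in $\mathsf{D}_\zeta$. First I would record the elementary stability property of the deviance function $D(u,\omega)$ near $u=\mathsf{m}_{\mathsf{mp}}(\omega)$: a Taylor expansion gives $D(u,\omega) = (c_N\omega - \mathsf{m}_{\mathsf{mp}}^{-2}(\omega))(u-\mathsf{m}_{\mathsf{mp}}(\omega)) + O(|u-\mathsf{m}_{\mathsf{mp}}(\omega)|^2)$, and using the MP fixed-point equation \eqref{eq:MPSTfeq} together with the explicit form \eqref{081420} one checks that the linear coefficient is comparable to $\sqrt{|E-\lambda_-^{\mathsf{mp}}|+\eta}$ at the left edge (this is where the $\sqrt{\cdot}$ square-root behaviour of the MP law enters, exactly as in \cite{PY}). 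Combining this with $|D(m_H(\omega),\omega)|\le\mathfrak{d}(\omega)$ on $A$ and solving the resulting quadratic inequality for $\Lambda(\omega) = |m_H(\omega)-\mathsf{m}_{\mathsf{mp}}(\omega)|$ yields the dichotomy: either $\Lambda(\omega)\lesssim \mathfrak{d}(\zeta)/\sqrt{|E-\lambda_-^{\mathsf{mp}}|+\eta+\mathfrak{d}(\zeta)}$ (the "good" branch), or $\Lambda(\omega)\gtrsim\sqrt{|E-\lambda_-^{\mathsf{mp}}|+\eta}$ (the "bad" branch). The bound $|\mathfrak{d}|\le(\log N)^{-8}$ ensures the two branches are genuinely separated by a gap.

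Next I would set up the continuity argument to rule out the bad branch for all $\zeta\in\mathsf{D}_\zeta$. The key structural input is that $\Lambda(\omega)$ is Lipschitz in $\omega$ with a polynomial-in-$N$ constant (since $\Im\omega\ge cN^{-2/3-\varepsilon_2}$ gives deterministic control $\|G(H,\omega)\|\le C N^{2/3+\varepsilon_2}$), while the two branches in the dichotomy are separated on a scale that is only polynomially small; hence along a fine mesh of $\eta$-values (spacing $N^{-K}$ for large $K$) the branch cannot jump. At the initial scale $\eta=C'$, Lemma \ref{lem:799} forces $\Lambda$ to be small, so we are on the good branch; the event $\mathbf{B}^c(\omega)$ there is part of $A$. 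One then decreases $\eta$ along the mesh: as long as $\mathbf{B}(\omega)$ does not occur (which is guaranteed on $A$ through $\Gamma(\omega,\varphi^C)\subset\Omega^c(\omega,\varphi^C)\cup\mathbf{B}(\omega)$, so that off $\mathbf{B}$ Lemma \ref{lem:981} applies and $|D(m_H(\omega),\omega)|\le O(\varphi^{2C}\Phi^2)\le\mathfrak{d}(\omega)$ holds with the $\mathfrak{d}$ of the hypothesis — here one uses $\Phi^2 \lesssim \mathfrak{d}$, which must be checked from the definition of $\Phi$ and the assumed smallness of $\mathfrak{d}$), the dichotomy is available, and continuity keeps us on the good branch; being on the good branch in turn forces $\Lambda_d(\omega)+\Lambda_o(\omega)\le(\log N)^{-1}$, i.e. $\mathbf{B}^c(\omega)$, closing the loop. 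Propagating this down to the bottom of $\mathsf{D}_\zeta$ establishes both \eqref{eq:1044} and \eqref{eq:1048}.

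The main obstacle I anticipate is bookkeeping the off-diagonal and $Z_i$ fluctuations consistently with the self-consistent loop: one needs $\Phi(\omega)$, which itself contains $\Lambda(\omega)$ under a square root, to be controlled by $\mathfrak{d}$ so that the error $O(\varphi^{2C}\Phi^2)$ in Lemma \ref{lem:981} can be absorbed into the hypothesized $\mathfrak{d}(\omega)$; this requires feeding back the good-branch bound on $\Lambda$ into $\Phi$ and verifying the resulting inequality closes (the argument of \cite[Section 6]{PY} does exactly this, and the additional heavy-tail error terms $t^{-2}N^{-\varepsilon_\alpha/2}+t^{-2}N^{-\varepsilon_b}$ in $\Phi$ are harmless because $t\gg N^{-\epsilon_b/8}$, $\varepsilon_\alpha>2\epsilon_b$, so they are much smaller than $(\log N)^{-8}$). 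The secondary subtlety is making the "continuity in $\eta$" step rigorous at mesoscopic scales near the edge: one must check the Lipschitz constant times mesh spacing stays below the branch gap uniformly, and that the bottom scale $\eta\sim N^{-2/3-\varepsilon_2}$ is still reached before the gap closes — this is where the precise exponents in the definition of $\mathsf{D}_\zeta$ (\eqref{eq: domain of zeta}) matter. Apart from these points, the remaining steps are routine adaptations of the PY continuity argument from the right edge to the left edge, and I would simply cite \cite{PY} for the details not affected by the heavy-tail modifications.
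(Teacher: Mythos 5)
Your plan follows essentially the same route as the paper's own proof, which likewise derives the two-branch dichotomy from the quadratic stability of $D(\cdot,\omega)$ (deferring the details to \cite[Lemma 6.12, Eqs.~(6.45)--(6.46)]{PY}) and then runs a continuity-in-$\eta$ bootstrap starting from the scale $\eta=C'$, where $\mathbf{B}^{c}$ is part of $A$, to select the good branch and propagate $\mathbf{B}^{c}$ downwards; the paper phrases the bootstrap via the set $I_{E}$ of admissible $\eta$ and a contradiction at $\inf I_{E}$ rather than a discrete mesh, but the mechanism is identical. The points you flag (closing the self-consistent loop between $\Phi$, $\mathfrak{d}$ and $\mathbf{B}^{c}$, and the harmlessness of the heavy-tail error terms) are exactly the ones handled in the surrounding lemmas, so there is no gap.
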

\begin{proof}
We follow the proof of \cite[Lemma 6.12]{PY}.
Denote $\omega=\omega(\zeta)=E+\mathrm{i}\eta$ with $\zeta\in\mathsf{D}_{\zeta}$. For each $E$, we define
\begin{equation*}
	I_{E} \coloneqq \{ \eta: \Lambda_{o}(E+\mathrm{i}\eta')+\Lambda_{d}(E+\mathrm{i}\eta')\le(\log{N})^{-1} \text{ for all $\eta'\ge\eta$ such that $(1-t)\cdot(E+\mathrm{i}\eta')\in\mathsf{D}_\zeta$} \}.
\end{equation*}
Let $m_{1}$ and $m_{2}$ be two solutions of equation $D(m(\omega),\omega)=\mathfrak{d}(\omega)$. On $\mathbf{B}^{c}(\omega)$, by assumption, we have
\begin{equation*}
	|D(m_H(\omega),\omega)|\le\mathfrak{d}(\omega).
\end{equation*}
Then, the estimate \eqref{eq:1044} immediately follows from the argument around \cite[Eq.~(6.45)--Eq.~(6.46)]{PY}. 

Next, we will prove the second statement \eqref{eq:1048}. Due to the case $\eta=C'$, we know $I_{E}\neq\emptyset$ on $A$. Let us argue by contradiction.
Define
\begin{equation*}
	\mathcal{D}_{E} = \{\eta: \omega=E+\mathrm{i}\eta, (1-t)\cdot\omega\in\mathsf{D}_\zeta\}.
\end{equation*}
Assume $I_{E}\neq\mathcal{D}_{E}$. Let $\eta_{0}=\inf I_{E}$. For $\omega_{0}=E+\mathrm{i}\eta_{0}$, we have 
$\Lambda_{o}(\omega_{0})+\Lambda_{d}(\omega_{0}) = (\log{N})^{-1}$. 
It also follows
\begin{multline*}
	\Lambda(\omega_{0}) \le \Big|\frac{1}{N}\sum_{i\in\mathcal{T}_{r}} \big( G_{ii}(H,\omega_{0})-\mathsf{m}_{\mathsf{mp}}(\omega_{0}) \big)\Big| + \Big|\frac{1}{N}\sum_{i\notin\mathcal{T}_{r}} \big(G_{ii}(H,\omega_{0})-\mathsf{m}_{\mathsf{mp}}(\omega_{0}) \big)\Big|  \\ 
	\le (\log{N})^{-1} + \varphi^{C}t^{-2}N^{-\eps_{\alpha}} \lesssim (\log{N})^{-1}.
\end{multline*}
By the first statement we already proved, on the event $A$, we obtain
\begin{equation*}
	\Lambda(\omega_{0}) \lesssim (\log{N})^{-3}.
\end{equation*}
Since $\Lambda_{o}(\omega_{0})+\Lambda_{d}(\omega_{0}) = (\log{N})^{-1}$, we have $A\subset \mathbf{B}^{c}(\omega_{0})$ and thus, by the assumption for $A$,
we conclude that $\Lambda_{o}(\omega_{0})+\Lambda_{d}(\omega_{0})\ll (\log{N})^{-1}$ on the event $A$, which makes a contradiction.

\end{proof}

\begin{proposition}
Recall $\omega\equiv\omega(\zeta)=\zeta/(1-t)$ and write $\omega=E+\mathrm{i}\eta$. There exist a constant $C>0$ such that the following event holds with high probability:
\begin{equation*}
	\bigcap_{\zeta\in\mathsf{D}_{\zeta}} \{ \Lambda_{o}(\omega) + \Lambda_{d}(\omega) \le \varphi^{C}( t^{-2}(N\eta)^{-1/2} + t^{-3}N^{-\eps_{\alpha}/2} + t^{-3}N^{-\eps_{b}}) \}.
\end{equation*}
\end{proposition}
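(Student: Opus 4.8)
The plan is to run a standard bootstrapping (self-improving estimate) argument for the entrywise local law of $\mathcal{S}(H)$ on the domain $\mathsf{D}_\zeta$, following \cite[Section 6]{PY} but incorporating the heavy-tailed large deviation estimates and the row/column truncation structure encoded in $\mathcal{T}_r$. All the structural pieces are already in place: Lemma~\ref{lem:858} gives the stability event $\bigcap_\zeta \Gamma(\omega,\varphi^C)$ with high probability, Lemma~\ref{lem:981} converts this into a quadratic self-consistent inequality $|D(m_H(\omega),\omega)| \lesssim \varphi^{2C}\Phi^2 + \infty\mathbf{1}_{\mathbf{B}(\omega)}$, and Lemma~\ref{lem: lem 6.12 PY14} turns a decreasing deterministic bound on $|D(m_H(\omega),\omega)|$ into a bound on $\Lambda(\omega)=|m_H(\omega)-\mathsf{m}_{\mathsf{mp}}(\omega)|$ together with the bootstrapped conclusion $A\subset\bigcap_\zeta \mathbf{B}^c(\omega)$.

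First I would fix a generic $\zeta\in\mathsf{D}_\zeta$, write $\omega=\omega(\zeta)=\zeta/(1-t)$, and work on the high-probability event $A$ on which $\bigcap_{\zeta\in\mathsf{D}_\zeta}\Gamma(\omega,\varphi^C)$ holds (from Lemma~\ref{lem:858}) and also $\mathbf{B}^c(\omega)$ holds for $\eta = C'$ large (this is the base case of the continuity argument, handled as in Corollary~\ref{cor:964} together with Lemma~\ref{lem:799}). On $A$, Lemma~\ref{lem:981} gives $|D(m_H(\omega),\omega)| \le C\varphi^{2C}\Phi(\omega)^2 + \infty\mathbf{1}_{\mathbf{B}(\omega)}$ where, by definition of $\Phi$,
\begin{align*}
\Phi(\omega)^2 \lesssim \frac{\Im\mathsf{m}_{\mathsf{mp}}(\omega)+\Lambda(\omega)}{N\eta} + t^{-4}N^{-\eps_\alpha} + t^{-4}N^{-\eps_b}.
\end{align*}
One then runs the standard continuity/dichotomy along decreasing $\eta$: set $\mathfrak{d}(\omega)$ to be a deterministic decreasing-in-$\eta$ majorant of the right-hand side and apply Lemma~\ref{lem: lem 6.12 PY14}, which simultaneously yields $\bigcap_{\zeta}\mathbf{B}^c(\omega)$ (so the $\infty\mathbf{1}_{\mathbf{B}}$ term never triggers) and the bound $\Lambda(\omega)\lesssim \log N\cdot \mathfrak{d}(\zeta)/\sqrt{|E-\lambda_-^{\mathsf{mp}}|+\eta+\mathfrak{d}(\zeta)}$. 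Feeding this $\Lambda$-bound back into $\Phi^2$ closes the loop and gives, after solving the resulting quadratic inequality for $\Lambda_o+\Lambda_d$ exactly as in \cite[Eqns.~(6.45)--(6.46)]{PY},
\begin{align*}
\Lambda_o(\omega)+\Lambda_d(\omega) \le \varphi^C\Big( t^{-2}(N\eta)^{-1/2} + t^{-3}N^{-\eps_\alpha/2} + t^{-3}N^{-\eps_b}\Big),
\end{align*}
where the extra power $t^{-1}$ on the last two error terms (compared to the naive $t^{-2}N^{-\eps_\alpha/2}$ etc.\ appearing in $\Phi$) comes from the stability factor $1/|D'|\sim t^{-1}$ near the edge, which is legitimate because the $\eta_\ast$-regularity (Proposition~\ref{prop:etastarRegular}) keeps $\zeta\in\mathsf{D}_\zeta$ a mesoscopic distance $\sim t^2$ away from the edge of $\mathcal{S}(X)$. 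A standard $N^{-O(1)}$-net argument over $\mathsf{D}_\zeta$, using the Lipschitz continuity of all the quantities in $\omega$ with the crude deterministic bound $\|G\|\le N$ from $\Im\omega\gtrsim t^2(\log N)^{-C}$, upgrades the pointwise statement to the claimed uniform one.

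The main obstacle is making the $t$-dependence in the error terms tight and consistent throughout the bootstrap. Unlike the light-tailed setting of \cite{PY}, here every resolvent entry off the "good" index set $\mathcal{T}_r$ is only controlled by $t^{-2}$ (via Lemma~\ref{lem: crude bound for resolvent entry}), and the heavy-tailed large deviation bound \cite[Corollary 25]{Amol} produces an $N^{-\eps_b}$ gain on the worst entry plus an $N^{-1/2}(\sum|G_{kl}|^2)^{1/2}$ fluctuation term; one must track carefully how these combine with the $t^{-1}$ stability factor and verify that the choices $t\sim N^{-2\eps_a}$, $\eta_\ast=N^{-\eps_b}$ and the constraints in (\ref{081401}), (\ref{081533}) indeed force all error exponents negative and the logarithmic factor $\varphi^C$ absorbable. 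A secondary, more bookkeeping-level point is checking that removing one or two rows (to pass to $H^{(i)}$, $H^{(ij)}$ in Lemmas~\ref{lem:858} and \ref{lem:981}) does not spoil goodness of $\Psi$ nor the $\eta_\ast$-regularity — this is immediate since $\epsilon_\alpha > 2\epsilon_b$ gives plenty of room — and that the identities (\ref{eq: resol id 3}), (\ref{eq: resol id 4}) relating $\Tr G(H^{(i)},\cdot)$ to $\Tr G(H,\cdot)$ only cost $O(1/(N\eta))$, which is already subsumed in $\Phi$. Once these estimates are assembled, the proposition follows verbatim from the $\mathbf{B}^c$-bootstrap of Lemma~\ref{lem: lem 6.12 PY14}.
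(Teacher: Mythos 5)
Your proposal is correct and follows essentially the same route as the paper: define the high-probability event $A$ from Lemma~\ref{lem:858} and Corollary~\ref{cor:964}, feed the self-consistent bound of Lemma~\ref{lem:981} (with $\Phi$ controlled via the crude bound of Lemma~\ref{lem: crude bound for resolvent entry}) into the bootstrap Lemma~\ref{lem: lem 6.12 PY14}, and extract the extra $t^{-1}$ from the stability factor $1/\sqrt{|E-\lambda_-^{\mathsf{mp}}|+\eta}\lesssim t^{-1}(\log N)^{C}$ on $\mathsf{D}_\zeta$. The only cosmetic difference is that the paper obtains uniformity directly from the already-uniform Lemmas~\ref{lem:858} and~\ref{lem: lem 6.12 PY14} rather than via a separate net argument at the end.
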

\begin{proof}
Consider the event
\begin{equation*}
	A_{0} = \bigcap_{\zeta\in\mathsf{D}_{\zeta}} \Gamma(\omega,\varphi^{C}).
\end{equation*}
Also we set (for some constant $C'>1$ and $\omega=E+\mathrm{i}\eta$)
\begin{equation*}
	A = A_{0} \cap \bigcap_{\zeta\in\mathsf{D}_{\zeta}, \eta=C'} \mathbf{B}^{c}(\omega).
\end{equation*}
By Lemma \ref{lem:858} and Corollary \ref{cor:964}, the event $A$ holds with high probability.
Using Lemma \ref{lem: crude bound for resolvent entry}, we observe that for $\omega=\omega(\zeta)$ with $\zeta\in\mathsf{D}_{\zeta}$,
$$\Phi(\omega)\lesssim \varphi t^{-1}(N\eta)^{-1/2} + t^{-2}N^{-\eps_{\alpha}/2} + t^{-2}N^{-\eps_{b}}.$$
Let us set
$$\mathfrak{d}(\omega)=\varphi^{C}\big( t^{-1}(N\eta)^{-1/2}+ t^{-2}N^{-\eps_{\alpha}/2} + t^{-2}N^{-\eps_{b}} \big).$$
On the event $A$, for $\omega=\omega(\zeta)$ with $\zeta\in\mathsf{D}_{\zeta}$, by Lemma \ref{lem:981} and Lemma \ref{lem: lem 6.12 PY14},
\begin{equation*}
	\Lambda(\omega) \lesssim \frac{\mathfrak{d}(\omega)}{\sqrt{|E-\lambda_{-}^{\mathsf{mp}}|+\eta}}.
\end{equation*}
Also, by Lemma \ref{lem: lem 6.12 PY14},
\begin{equation*}
	A \subset \bigcap_{\zeta\in\mathsf{D}_{\zeta}}\mathbf{B}^{c}(\omega),
\end{equation*}
which means the event $A$ is contained in $\Omega^{c}(\omega,\varphi^{C})$ for any $\omega=\omega(\zeta)$ with $\zeta\in\mathsf{D}_{\zeta}$.
The bound for $\Lambda_{d}$ is given by $\max_{k\in \mathcal{T}_r}|G_{kk}(H,\omega)-m_H| + \Lambda$.
\end{proof}


\subsection{Proof of Theorem \ref{thm: resolvent entry size V_t}}\label{sec: resolvent estimates V_t}

Recall $b_{t} = 1+c_{N}t m_{t}$ and $\zeta_{t}=\zeta_{t}(z)=zb_{t}^{2}-tb_{t}(1-c_{N})$. We also set
\begin{equation*}
	\underline{m}_{t} = c_{N}m_{t} - \frac{1-c_{N}}{z}, \qquad \underline{\mathsf{m}}_{\mathsf{mp}}^{(t)}(\zeta)=c_{N}\mathsf{m}_{\mathsf{mp}}^{(t)}(\zeta)-\frac{1-c_{N}}{\zeta}. 
\end{equation*}

\noindent
Let us state a left edge analog of \cite[Theorem 2.7]{DY2}.

\begin{theorem}\label{thm: local law Gaussian divisible model}
Suppose that the assumptions in Theorem \ref{thm: resolvent entry size V_t} hold.
Then,
\begin{equation*}
	| G_{ij}(V_{t}, z) - b_{t} G_{ij}(X, \zeta_{t}(z)) | \prec t^{-3}\paren{\sqrt{\frac{\imag\,m_{t}}{N\eta}}+\frac{1}{N\eta}} + \frac{t^{-7/2}}{N^{1/2}},
\end{equation*}
and
\begin{equation*}
	| G_{ij}(V_{t}^{\top}, z) - (1+t\underline{m}_{t}) G_{ij}(X^{\top}, \zeta_{t}(z)) | \prec t^{-3}\paren{\sqrt{\frac{\imag\,m_{t}}{N\eta}}+\frac{1}{N\eta}} + \frac{t^{-7/2}}{N^{1/2}},
\end{equation*}
uniformly in $z\in\mathsf{D}(\varepsilon_1, \varepsilon_2, \varepsilon_3)$. 
In addition,
\begin{equation*}
	| (G(V_{t},z)V_{t})_{ij} - (G(X,\zeta_{t}(z))X)_{ij} | \prec t^{-3}\paren{\sqrt{\frac{\imag\,m_{t}}{N\eta}}+\frac{1}{N\eta}} + \frac{t^{-7/2}}{N^{1/2}},
\end{equation*}
and
\begin{equation*}
	| (V_{t}^{\top}G(V_{t},z))_{ij}- (X^{\top}G(X,\zeta_{t}(z)))_{ij} | \prec t^{-3}\paren{\sqrt{\frac{\imag\,m_{t}}{N\eta}}+\frac{1}{N\eta}} + \frac{t^{-7/2}}{N^{1/2}},
\end{equation*}
uniformly in $z\in\mathsf{D}(\varepsilon_1, \varepsilon_2, \varepsilon_3)$.\\
\end{theorem}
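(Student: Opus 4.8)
\textbf{Proof proposal for Theorem \ref{thm: local law Gaussian divisible model}.} The plan is to transfer the deformed-rectangular-matrix local law of \cite[Theorem 2.7]{DY2}, which is stated at the right edge, to the left edge of $\mathcal{S}(V_t)$. Since $V_t = X + \sqrt{t}W$ with $W$ Gaussian and $X$ conditioned to be a fixed matrix satisfying the $\eta_\ast$-regularity (Proposition \ref{prop:etastarRegular}), the entire analysis of \cite{DY2} applies \emph{verbatim} once one checks that the only structural input used there — the square-root behaviour of $\rho_t$ near its edge and the control of $\zeta_t(z)$ on the relevant domain — is available at the left edge. The first is exactly Theorem \ref{thm:srbofmxt}, and the behaviour of the subordination function $\zeta_t(z)$ on $\mathsf{D}(\varepsilon_1,\varepsilon_2,\varepsilon_3)$ is Lemma \ref{lem: domain of parameter} (proved in Appendix \ref{2416}): $\zeta_t(z)$ stays in $\mathsf{D}_\zeta$, hence at distance $\gtrsim t^2$ from $\lambda_M(\mathcal{S}(X))$, with imaginary part bounded below appropriately. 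So the first step is simply to record these two facts and observe that they are the left-edge replacements of the right-edge hypotheses in \cite{DY2}.

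Second, I would run the self-consistent comparison argument of \cite{DY2} for the resolvent $G(V_t,z)$ conditionally on $X$. The key identity is the matrix Dyson/subordination equation: one shows that $G(V_t,z)$ is well-approximated by $b_t\,G(X,\zeta_t(z))$ where $b_t = 1 + c_N t\,m_t(z)$, by expanding in the Gaussian entries of $W$ (Gaussian integration by parts / cumulant expansion), using the $\eta_\ast$-regularity of $\mathcal{S}(X)$ together with Proposition \ref{prop: resolvent entry size for X=B+C} to control the entries of $G(X,\zeta_t(z))$, and iterating a fluctuation-averaging bound. Because $\zeta_t(z)$ is a mesoscopic distance $\sim t^2$ from the edge of the $\mathcal{S}(X)$ spectrum, all the stability factors in the self-consistent equation are of size $\sim t^2$ (cf. the quantity $\varpi \sim t^2 + \eta$ appearing in the Introduction), which is exactly what produces the $t^{-3}$ and $t^{-7/2}$ prefactors in the claimed bounds: each inversion of the linearized stability operator costs a factor $t^{-2}$, and the elementary error terms carry $\sqrt{\Im m_t/(N\eta)} + 1/(N\eta)$ (the standard local-law size) and $N^{-1/2}$ (from the Gaussian large-deviation / isotropic estimate). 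The analogous statements for $G(V_t^\top,z)$, $G(V_t,z)V_t$ and $V_t^\top G(V_t,z)$ follow from the same expansion together with the Schur-complement/linearization identities in Lemma \ref{lem:entryperturb}, noting that $G(V_t^\top,z)$ is governed by $1 + t\underline{m}_t$ rather than $b_t$, where $\underline{m}_t = c_N m_t - (1-c_N)/z$, exactly as in the rectangular setting.

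Third, I would assemble the pieces: combining $|G_{ij}(V_t,z) - b_t G_{ij}(X,\zeta_t(z))| \prec t^{-3}(\sqrt{\Im m_t/(N\eta)} + 1/(N\eta)) + t^{-7/2}N^{-1/2}$ with the bound $|G_{ij}(X,\zeta_t(z))| \prec N^{-c}\mathbf{1}_{i,j\in\mathcal{T}_r} + t^{-2}(1 - \mathbf{1}_{i,j\in\mathcal{T}_r})$ from Proposition \ref{prop: resolvent entry size for X=B+C}, together with $|b_t| \sim 1$ and $|m_t(z)| \le (c_N t|z|)^{-1/2}$ from Lemma \ref{existuniq}(iv), and the fact that $t \sim N^{-2\epsilon_a}$ is sufficiently close to $1$, yields Theorem \ref{thm: resolvent entry size V_t} — this is the "Proof of Theorem \ref{thm: resolvent entry size V_t}" deferred to Appendix \ref{sec: resolvent estimates V_t}. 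The uniformity in $z \in \mathsf{D}(\varepsilon_1,\varepsilon_2,\varepsilon_3)$ is obtained by a standard net argument plus Lipschitz continuity of the resolvent in $z$ on the relevant scale.

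\textbf{Main obstacle.} The genuine difficulty is not the Gaussian expansion itself (which is by now routine) but the \emph{deterioration of stability at the edge}: at the left edge the self-consistent equation for $m_t$ has a square-root singularity, so the linearized stability operator is only invertible with norm $\sim t^{-2}$, and one must verify carefully — reproducing the bootstrap in \cite{DY2} with the left-edge square-root profile of Theorem \ref{thm:srbofmxt} — that this weak stability is still enough to close the local law, and in particular that the random parameter $\zeta_t(z)$ never wanders close enough to $\lambda_M(\mathcal{S}(X))$ to destroy it. This is precisely where Lemma \ref{lem: domain of parameter} is essential, and tracking the $t$-powers through the continuity/bootstrap argument so as to land on the stated $t^{-3}$ and $t^{-7/2}$ prefactors (rather than something worse) is the part that requires care.
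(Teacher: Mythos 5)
Your high-level route is the paper's: condition on $X$, invoke the $\eta_\ast$-regularity, the left-edge square-root profile of Theorem \ref{thm:srbofmxt} and the domain control of $\zeta_t(z)$ in Lemma \ref{lem: domain of parameter}, and then rerun \cite{DY2} at the left edge, with the $t^{-3}$ and $t^{-7/2}$ prefactors coming from the stability scale $\varpi\sim t^2$. However, you miss the two steps that actually carry the paper's argument. First, the proof does not run a cumulant expansion on $G(V_t,z)$ in the original basis: it uses the orthogonal invariance of the Gaussian part to replace $X$ by the diagonal matrix of its singular values, so that the claim becomes an \emph{isotropic} local law for a diagonal-deformation model; the entrywise version is then obtained by Schur complements and Gaussian large-deviation bounds (using only $\min_i|\lambda_i(\mathcal{S}(X))-\zeta_t(z)|\gtrsim t^2$), and the isotropic version by the polynomialization method of \cite{BELKA}. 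Relatedly, Proposition \ref{prop: resolvent entry size for X=B+C} plays no role in the proof of this theorem — it is only needed afterwards, exactly as in your third step, to convert the comparison into Theorem \ref{thm: resolvent entry size V_t}; invoking it inside the local-law proof is a misattribution.

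Second, and more seriously, your claim that the analysis of \cite{DY2} applies ``verbatim'' once the edge hypotheses are switched is false as stated: \cite{DY2} assumes the eigenvalues of the deterministic part are $O(1)$ and proves an anisotropic law for $(\Pi^x)^{-1}[\mathcal{R}-\Pi^x](\Pi^x)^{-1}$. Under the heavy-tail assumption here, $\|\mathcal{S}(X)\|$ can be polynomially large in $N$, so the $(\Pi^x)^{-1}$ normalization blows up and that statement cannot be imported. The paper must reformulate the target as the unnormalized bound $|u^\top[\mathcal{R}(z)-\Pi^x(z)]v|\prec t^{-3}\Psi(z)+t^{-7/2}N^{-1/2}$ and adapt the proof accordingly. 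This is a concrete obstruction that your proposal does not notice, and without addressing it the transfer from \cite{DY2} does not go through.
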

\begin{proof}
	Roughly speaking, the conclusion is a left edge analog of \cite[Theorem 2.7]{DY2}. The proof is nearly the same, and thus we only highlight some differences. We first record the notations from \cite[Section B of Supplement]{DY2}. Due to the rotationally invariant property of Gaussian matrix, we have
	\begin{align}
		V_t = X + \sqrt{t}W \overset{d}{=} O_1\tilde{V}_tO_2^\top, \quad \tilde{V}_t := \tilde{X} + \sqrt{t}W,
	\label{eq:orthinva}\end{align} 
	where $\tilde{X}$ is a diagonal matrix with diagonal entries being $\lambda_i(\mathcal{S}(X))^{1/2}, i \in [M]$. Recall the notations in Lemma \ref{lem:entryperturb}, and we briefly write $\mathcal{R}(z) = \mathcal{R}(\tilde{V}_t,z)$ in this proof. By (\ref{eq:orthinva}), to prove an entrywise local law for $\mathcal{R}({V}_t,z)$, it suffices to prove an anisotropic local law for the resolvent $\mathcal{R}(z)$. We further define the asymptotic limit of $\mathcal{R}(z)$ as 
	\begin{align*}
		\Pi^x(z):=\left[\begin{array}{cc}
		\frac{-\left(1+c_N t m_{t}\right)}{z\left(1+c_N t m_{t}\right)\left(1+t \underline{m}_{t}\right)-\tilde{X} \tilde{X}^{\top}} & \frac{-z^{-1 / 2}}{z\left(1+c_N t m_{t}\right)\left(1+t \underline{m}_{t}\right)-\tilde{X} \tilde{X}^{\top}} \tilde{X} \\
		\tilde{X}^{\top} \frac{-z^{-1 / 2}}{z\left(1+c_N t m_{t}\right)\left(1+t \underline{m}_{t}\right)-\tilde{X} \tilde{X}^{\top}} & \frac{-\left(1+t \underline{m}_{t}\right)}{z\left(1+c_N t m_{t}\right)\left(1+t \underline{m}_{t}\right)-\tilde{X}^{\top} \tilde{X}}
\end{array}\right] .
	\end{align*}
	We define the index sets
	\begin{align*}
		\mathcal{I}_1 := \{1,\cdots, M \}, \quad \mathcal{I}_2 := \{M+1,\cdots,M+N \}, \quad \mathcal{I} := \mathcal{I}_1 \cup \mathcal{I}_2.
	\end{align*}
	In the sequel, we use the Latin letter $i,j \in \mathcal{I}_1$, Greek letters $\mu,\nu \in \mathcal{I}_2$, $\mathfrak{a},\mathfrak{b} \in \mathcal{I}$. For an $\mathcal{I} \times \mathcal{I}$ matrix $A$ and  $i,j \in \mathcal{I}_1$, we define the $2 \times 2$ minor as 
	\begin{align*}
		{A}_{[i j]}:=\left(\begin{array}{cc}
{A}_{i j} & {A}_{i \bar{j}} \\
{A}_{\bar{i} j} & {A}_{\bar{i} \bar{j}}
\end{array}\right),
	\end{align*}
	where $\bar{i} := i + M \in \mathcal{I}_2$. Moreover, for $\mathfrak{a} \in \mathcal{I} \setminus \{i,\bar{i} \}$, we denote
	\begin{align*}
		{A}_{[i] \mathfrak{a}}=\left(\begin{array}{c}
{A}_{i \mathfrak{a}} \\
{A}_{\bar{i} \mathfrak{a}}
\end{array}\right), \quad {A}_{\mathfrak{a}[i]}=\left({A}_{\mathfrak{a} i}, {A}_{\mathfrak{a} \bar{i}} .\right)
	\end{align*}
Let the error parameter $\Psi(z)$ be defined as follows,
\begin{align*}
	\Psi(z) := \sqrt{\frac{\Im m_{t}}{N\eta}}+\frac{1}{N\eta}.
\end{align*}

Instead of proving \cite[Eq.~(B.68) in Supplement]{DY2}, which aims at bounding $u^\top (\Pi^x(z))^{-1}[R(z) - \Pi^x(z)](\Pi^x(z))^{-1} v $ for any deterministic unit vector $u, v \in \mathbb{R}^{M+N}$, we shall prove
	\begin{align}
		|u^\top [\mathcal{R(}z) - \Pi^x(z)] v | \prec t^{-3}\Psi(z) + \frac{t^{-7/2}}{N^{1/2}}.
	\label{eq:isotrop1}\end{align}
	We remark here that in \cite{DY2}, it is assumed that all $\lambda_i(\mathcal{S}(X))$'s are $O(1)$. Under this assumption, adding $(\Pi^x(z))^{-1}$ is harmless. However, in our case, $\lambda_i(\mathcal{S}(X))$ could diverge with $N$. Then, adding the $(\Pi^x(z))^{-1}$ factor which will blow up along with big $\lambda_i(\mathcal{S}(X))$, will complicate the proof of the anisotropic law. On the other hand, (\ref{eq:isotrop1}) is what we need anyway. Hence, we get rid of the $(\Pi^x(z))^{-1}$ and adapt the proof in \cite{DY2} to our estimate (\ref{eq:isotrop1}). Without the $(\Pi^x(z))^{-1}$ factor, the $\mathcal{R}(z)$ and $\Pi^x(z)$ entries are well controlled, and the remaining proof is nearly the same as \cite{DY2}. 
	
We shall first prove an entrywise version of (\ref{eq:isotrop1}): for any $\mathfrak{a}, \mathfrak{b} \in \mathcal{I}$,
	\begin{align}
		| [\mathcal{R}(z) - \Pi^x(z)]_{\mathfrak{a} \mathfrak{b}}| \prec t^{-3}\Psi(z) + \frac{t^{-7/2}}{N^{1/2}}.
	\label{eq:isotrop2}\end{align}
	The derivation of (\ref{eq:isotrop2}) follows the same procedure as the proof of \cite[Eq.~(B.69) in Supplement]{DY2}. This proof primarily relies on Schur complement, the large deviation of quadratic forms of Gaussian vector, and the fact that $\min_{i}|\lambda_i(\mathcal{S}(X)) - \zeta_t(z)| \gtrsim t^2$. 
	
	Then, for general $u,v$, analogous to \cite[Eq.~(B. 72) in Supplement]{DY2}, we have
	\begin{align*}
		|u^\top [\mathcal{R}(z) - \Pi^x(z)] v | &\prec t^{-3}\Psi(z) + \frac{t^{-7/2}}{N^{1/2}} + \Big|\sum_{i \neq j} u_{[i]}^\top \mathcal{R}_{[ij]} u_{[j]}     \Big| \\
		&\quad+ \Big| \sum_{\mu \neq \nu \ge 2M+1} u_{\mu}^\top \mathcal{R}_{\mu \nu} u_{\nu}  \Big| + 2 \Big|\sum_{i \in \mathcal{I}_1, \mu \ge 2M+1 } u_{[i]}^\top \mathcal{R}_{[i]\mu} u_{\mu} \Big|. 
	\end{align*}
	Therefore, it suffices to prove the following high moment bounds, for any $a \in \mathbb{N}$,
	\begin{align*}
		&\E \Big|\sum_{i \neq j} u_{[i]}^\top \mathcal{R}_{[ij]} u_{[j]}     \Big|^{2a} \prec \Big(t^{-3}\Psi(z) + \frac{t^{-7/2}}{N^{1/2}}\Big)^{2a}, \\
		&\E \Big| \sum_{\mu \neq \nu \ge 2M+1} u_{\mu}^\top \mathcal{R}_{\mu \nu} u_{\nu}  \Big|^{2a} \prec \Big(t^{-3}\Psi(z) + \frac{t^{-7/2}}{N^{1/2}}\Big)^{2a}, \\
		&\E  \Big|\sum_{i \in \mathcal{I}_1, \mu \ge 2M+1 } u_{[i]}^\top \mathcal{R}_{[i]\mu} u_{\mu} \Big|^{2a} \prec \Big(t^{-3}\Psi(z) + \frac{t^{-7/2}}{N^{1/2}}\Big)^{2a}.
	\end{align*}
	The above estimates are proven using a polynomialization method outlined in \cite[Section 5]{BELKA}, with input from the entrywise estimates (\ref{eq:isotrop2}) and resolvent expansion (cf. \cite[Lemma B.2 in Supplement]{DY2}). We omit the details.      
\end{proof}
\begin{remark}
Actually, the estimates in Theorem \ref{thm: local law Gaussian divisible model} hold uniformly in $z$ such that
\begin{equation}\label{eq:1184}
	\lambda_{-,t}-\vartheta^{-1}t^{2} \le \Re{z} \le \lambda_{-,t} + \vartheta^{-1},\quad
	\Im{z}\cdot\Big(t+\big(|\Re{z}-\lambda_{-,t}|+\Im{z}\big)^{1/2}\Big)\ge N^{-1+\vartheta},\quad \Im{z}\le \vartheta^{-1},
\end{equation}
for any $\vartheta>0$. We can observe that every $z\in\mathsf{D}(\varepsilon_1, \varepsilon_2, \varepsilon_3)$ satisfies \eqref{eq:1184} if $\eps_{a}, \varepsilon_1, \varepsilon_2$ and $\vartheta$ are sufficiently small. 
Also note that $b_{t}=\mathcal{O}(1)$ and $1+t\underline{m}_{t}=\mathcal{O}(1)$ in the domain $\mathsf{D}(\varepsilon_1, \varepsilon_2, \varepsilon_3)$.
\end{remark}

By Theorem \ref{thm: local law Gaussian divisible model} and Lemma \ref{lem: domain of parameter}, it is enough to analyze $G(X,\zeta)$ and $G(X^{\top},\zeta)$ with $\zeta\in\mathsf{D}_\zeta$ in order to get the desired result. This was be done in Proposition \ref{prop: resolvent entry size for X=B+C}. Together with Proposition \ref{prop: resolvent product X} and Corollary \ref{cor: resolvent entry size for X=B+C, off-diag} below, we complete the proof of Theorem \ref{thm: resolvent entry size V_t}.

\begin{proposition}\label{prop: resolvent product X}
Suppose that the assumptions in Proposition \ref{prop: resolvent entry size for X=B+C} hold.  The following estimates hold with respect to the probability measure $\prob_{\Psi}$.
\begin{itemize}
	\item[(i)] If $i\in\mathcal{T}_{r}$, we have
	\begin{equation*}
		|[G(X, \zeta)X]_{ij}| \prec N^{-\eps_{b}/2}.
	\end{equation*}
	\item[(ii)] If $j\in\mathcal{T}_{c}$, we have
	\begin{equation*}
		|[G(X, \zeta)X]_{ij}| \prec N^{-\eps_{b}/2}.
	\end{equation*}
	\item[(iii)] Otherwise, we have the crude  bound
	\begin{equation*}
		|[G(X, \zeta)X]_{ij}|\leq \lVert G(X, \zeta)X \rVert \lesssim t^{-2}.
	\end{equation*}
\end{itemize}
\end{proposition}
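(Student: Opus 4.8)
\textbf{Proof proposal for Proposition \ref{prop: resolvent product X}.}

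The plan is to derive the bound on $[G(X,\zeta)X]_{ij}$ by reducing it, via an isotropic-type Cauchy--Schwarz estimate, to the diagonal resolvent bounds already established in Proposition \ref{prop: resolvent entry size for X=B+C}. The starting observation is the algebraic identity
\begin{align*}
[G(X,\zeta)X]_{ij} = [X^{\top}G(X^{\top},\zeta)]_{ji},
\end{align*}
together with the two-sided bound
\begin{align*}
\big| [G(X,\zeta)X]_{ij} \big|^2 \le [G(X,\zeta)XX^{\top}\overline{G(X,\zeta)}]_{ii}\,[\mathbf e_j^{\top}\mathbf e_j]
= [G(X,\zeta)XX^{\top}\overline{G(X,\zeta)}]_{ii},
\end{align*}
where the last step used $XX^{\top}= \mathcal S(X)$ and $XX^{\top}\overline{G} = (\overline\zeta + \mathcal S(X)\cdot 0)\cdots$; more precisely, using $\mathcal S(X)G(X,\zeta) = I + \zeta G(X,\zeta)$, one gets
\begin{align*}
G(X,\zeta)XX^{\top}\overline{G(X,\zeta)} = G(X,\zeta)\overline{G(X,\zeta)} + \zeta\, G(X,\zeta)\overline{G(X,\zeta)}\,\overline{G(X,\zeta)}^{-1}\cdots
\end{align*}
so it is cleaner to write $G(X,\zeta)XX^{\top}\overline{G} = \overline{G} + \zeta G\overline{G}$, and then $[G\overline{G}]_{ii} = (\Im\zeta)^{-1}\Im[G(X,\zeta)]_{ii}$ by the spectral theorem. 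Hence
\begin{align*}
\big| [G(X,\zeta)X]_{ij} \big|^2 \le \big| [G(X^{\top},\zeta)]_{ii} \big|\cdot 0 + \Big(1 + \frac{|\zeta|}{\Im\zeta}\Big)\Im[G(X,\zeta)]_{ii},
\end{align*}
so everything is controlled by $\Im[G(X,\zeta)]_{ii}$ (and an analogous identity on the transpose side controls it by $\Im[G(X^{\top},\zeta)]_{jj}$). The point is that the off-diagonal entry $[GX]_{ij}$ is bounded by a \emph{diagonal} resolvent entry, for which Proposition \ref{prop: resolvent entry size for X=B+C} gives, for $i \in \mathcal T_r$, $[G(X,\zeta)]_{ii} = \mathsf m^{(t)}_{\mathsf{mp}}(\zeta) + O_\prec(N^{-c})$, and for $i \in \mathcal D_r$, $[G(X,\zeta)]_{ii} = O_\prec(t^{-2})$; similarly on the transpose side with $\underline{\mathsf m}^{(t)}_{\mathsf{mp}}$.

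First I would make the Cauchy--Schwarz step precise on the event where $\zeta\in\mathsf D_\zeta$: writing $\mathbf u = G(X,\zeta)^{*}\mathbf e_i$ and $\mathbf v = X^{\top}\mathbf e_j/\cdots$ — actually the clean route is $|[GX]_{ij}|^2 \le \|G^{*}\mathbf e_i\|^2\,\|X X^{\top}\cdots\|$, so I would instead use $[GX]_{ij} = \langle \mathbf e_i, GX\mathbf e_j\rangle$ and bound $|[GX]_{ij}|^2 \le \langle \mathbf e_i, GXX^{\top}\overline G\mathbf e_i\rangle\,\langle \mathbf e_j,\mathbf e_j\rangle$, using positivity of $GXX^{\top}\overline G = GG^{*}(\cdots)$. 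Then I would substitute the identity $GXX^{\top}\overline G = \overline G + \zeta GG^{*}$ and use $[GG^{*}]_{ii} = (\Im\zeta)^{-1}\Im[G]_{ii}$. For (i), when $i\in\mathcal T_r$: both $|[G(X,\zeta)]_{ii}|$ and $\Im[G(X,\zeta)]_{ii}$ are of order $|\Im\mathsf m^{(t)}_{\mathsf{mp}}(\zeta)| + N^{-c}$; combined with the bound $\Im\zeta \gtrsim$ (something) from the geometry of $\mathsf D_\zeta$ (Lemma \ref{lem: domain of parameter}) and $|\zeta|/\Im\zeta$ controlled there, one gets $|[GX]_{ij}|^2 \prec t^{?}N^{-c} \prec N^{-\epsilon_b}$ after choosing constants, i.e. $|[GX]_{ij}|\prec N^{-\epsilon_b/2}$. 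Here I will need to check that $\Im\mathsf m^{(t)}_{\mathsf{mp}}(\zeta)$ is itself small enough on $\mathsf D_\zeta$ — since $\zeta$ is at distance $\gtrsim t^2$ from the left edge $(1-t)\lambda_-^{\mathsf{mp}}$ and has imaginary part at most $\epsilon_3$, the square-root behavior of the MP density gives $\Im\mathsf m^{(t)}_{\mathsf{mp}}(\zeta) = O(\sqrt{t^2}) = O(t)$ in the relevant regime, which is more than enough. For (ii), the symmetric argument using $[G(X,\zeta)X]_{ij}=[X G(X^{\top},\zeta)]_{ij}$ and the transpose bound $|[G(X^{\top},\zeta)]_{jj}|\prec N^{-c}$ for $j\in\mathcal T_c$ works identically. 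For (iii), the crude bound $\|G(X,\zeta)X\| = \|XG(X^{\top},\zeta)\| \le \|G(X,\zeta)\|^{1/2}\|G(X,\zeta)XX^{\top}\|^{1/2} \le \|G\|^{1/2}(1+|\zeta|\|G\|)^{1/2} \lesssim t^{-2}$ follows from $\mathrm{dist}(\zeta,\mathrm{spec}(\mathcal S(X)))\gtrsim t^2$ (Lemma \ref{lem:preEst}(i) and the definition of $\mathsf D_\zeta$) plus $|\zeta| = O(1)$.

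The main obstacle I anticipate is bookkeeping the interplay of the small parameters: one must verify that the $N^{-c}$ gain from Proposition \ref{prop: resolvent entry size for X=B+C} survives multiplication by powers of $t^{-1}$ coming from $(\Im\zeta)^{-1}$ and $|\zeta|/\Im\zeta$, and still beats $N^{-\epsilon_b/2}$. This is where the precise lower bounds on $\Im\zeta$ from $\mathsf D_\zeta$ (the $\eta \ge ct^2(\log N)^{-C}$ branch versus the $\eta \ge ctN^{-2/3-\epsilon_2}$ branch) matter, and one has to treat the two branches of $\mathsf D_\zeta$ separately, using in the first branch that $\Im\mathsf m^{(t)}_{\mathsf{mp}}(\zeta)$ and $\Im[G]_{ii}$ pick up an extra factor $\eta/t$ near the edge so that $\Im[G]_{ii}/\Im\zeta$ stays bounded. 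The constant $c = c(\epsilon_a,\epsilon_\alpha,\epsilon_b)$ produced in Proposition \ref{prop: resolvent entry size for X=B+C} is small but positive and, since $t \gg N^{-\epsilon_a}$ with $\epsilon_a$ chosen far smaller than $\epsilon_b$ and $c$, the net exponent is a strictly negative multiple of a small constant; after renaming constants this yields the stated $N^{-\epsilon_b/2}$. None of the individual steps is hard, but the proposition is essentially an exercise in propagating the earlier diagonal estimate through one Cauchy--Schwarz, so I would keep the writeup short and refer to Proposition \ref{prop: resolvent entry size for X=B+C} and Lemma \ref{lem: domain of parameter} for all the quantitative inputs.
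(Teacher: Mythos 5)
Your part (iii) is fine, but the Cauchy--Schwarz reduction you use for (i) and (ii) cannot produce the claimed smallness, so there is a genuine gap. Writing $A=G(X,\zeta)X$, your inequality $|A_{ij}|^2\le [AA^*]_{ii}$ bounds a single entry by the $\ell^2$-norm of the entire $i$-th row, and the identity $AA^*=\overline{G}+\zeta\,G\overline{G}$ with the Ward identity $[G\overline G]_{ii}=\Im[G]_{ii}/\Im\zeta$ gives
\begin{equation*}
|[G(X,\zeta)X]_{ij}|^2\;\le\;|[G(X,\zeta)]_{ii}|+|\zeta|\,\frac{\Im[G(X,\zeta)]_{ii}}{\Im\zeta}.
\end{equation*}
The first term is of order $|\mathsf{m}_{\mathsf{mp}}^{(t)}(\zeta)|\sim(\sqrt{c_N}-c_N)^{-1}$, an order-one constant (you appear to drop it via the spurious ``$\cdot\,0$''), and the second is even worse: for $\zeta\in\mathsf{D}_1$ one has $\Im\mathsf{m}_{\mathsf{mp}}^{(t)}(\zeta)\sim\eta/t$ so that $|\zeta|\Im[G]_{ii}/\Im\zeta\sim t^{-1}\gg1$, and the $N^{-c}$ error from Proposition \ref{prop: resolvent entry size for X=B+C} contributes $N^{-c}/\eta$, which can be as large as $N^{-c}t^{-1}N^{2/3+\varepsilon_2}$. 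So this route yields at best $O_\prec(1)$ (really $O_\prec(t^{-1/2})$), never $N^{-\epsilon_b/2}$; this is unavoidable because the full row sum $\sum_k|[GX]_{ik}|^2$ genuinely is of order one even when each entry is $N^{-\epsilon_b/2}$.

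The actual source of the gain is probabilistic, not deterministic, and it is the one place where the hypothesis $i\in\mathcal{T}_r$ is used in an essential way: all entries of the $i$-th row of $X$ are $\mathsf{B}$-entries, hence i.i.d., bounded by $N^{-\epsilon_b}$, and independent of the minor $X^{(i)}$. The paper writes $[G(X,\zeta)X]_{ij}=x_{(i)}G(X^{\top},\zeta)e_j$, uses Sherman--Morrison to replace $G(X^{\top},\zeta)$ by $G((X^{(i)})^{\top},\zeta)$ (which is independent of $x_{(i)}$), and then applies the large deviation estimate for linear and quadratic forms in heavy-tailed bounded variables to $\sum_k x_{ik}[G((X^{(i)})^{\top},\zeta)(X^{(i)})^{\top}]_{kj}$. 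The resulting error is $N^{-\epsilon_b}\max_k|G_{kj}|+(\Im G_{jj}/(N\eta))^{1/2}$ times a bounded prefactor, and both summands are $\ll N^{-\epsilon_b/2}$ after inserting the entry bounds of Proposition \ref{prop: resolvent entry size for X=B+C} and the lower bounds on $\Im\zeta$ from Lemma \ref{lem: domain of parameter}. In short: you need concentration of the row against the resolvent of its own minor, not Cauchy--Schwarz against the whole row.
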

\begin{proof}
Using Proposition \ref{prop: resolvent entry size for X=B+C}, it follows from Proposition \ref{prop: bound the form GX} below.
\end{proof}

With the above bounds, we can further improve the bound of the off-diagonal Green function entries when $i$ or $j$ is typical index.

\begin{corollary}\label{cor: resolvent entry size for X=B+C, off-diag}
Suppose that the assumptions in Proposition \ref{prop: resolvent entry size for X=B+C} hold. The following estimates hold with respect to the probability measure $\prob_{\Psi}$.
\begin{itemize}
	\item[(i)] If $i\neq j$ and $i\in\mathcal{T}_{r}$ (or $j\in\mathcal{T}_{r}$), there exists a constant $\delta=\delta(\eps_a,\eps_{\alpha},\eps_{b})>0$ such that
	\begin{equation*}
		|G_{ij}(X, \zeta)| \prec N^{-\delta}.
	\end{equation*}
	\item[(ii)] If $i\neq j$ and $i\in\mathcal{T}_{c}$ (or $j\in\mathcal{T}_{c}$), there exists  a constant $\delta=\delta(\eps_a,\eps_{\alpha},\eps_{b})>0$ such that
	\begin{equation*}
		|G_{ij}(X^{\top}, \zeta)| \prec N^{-\delta}.
	\end{equation*}
\end{itemize}
\end{corollary}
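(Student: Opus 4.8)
\textbf{Proof proposal for Corollary \ref{cor: resolvent entry size for X=B+C, off-diag}.}

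The plan is to bootstrap the improved off-diagonal bound from the already-established diagonal control in Proposition \ref{prop: resolvent entry size for X=B+C} together with the product bounds in Proposition \ref{prop: resolvent product X}, via a Schur complement / resolvent identity argument; both parts (i) and (ii) are symmetric, so it suffices to treat (i), say with $i \in \mathcal{T}_r$ and $i \neq j$. Write $H = X/\sqrt{1-t}$ and $\omega = \zeta/(1-t)$ as in the proof of Proposition \ref{prop: resolvent entry size for X=B+C}, so that it is enough to bound $G_{ij}(H,\omega)$. First I would recall the standard identity
\begin{align*}
G_{ij}(H,\omega) = -G_{ii}(H,\omega)\,\big[ h_i^\top G^{(i)}(H^\top,\omega) \big]_j \cdot \text{(appropriate normalization)},
\end{align*}
more precisely the expansion $G_{ij}(H,\omega) = G_{ii}(H,\omega)\,[ H^{(i)} G(H^{(i)\top},\omega) ]$-type formula obtained by Schur complement with respect to row $i$, where $H^{(i)}$ denotes $H$ with the $i$-th row removed and $h_i$ is the $i$-th row of $H$. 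Since $i \in \mathcal{T}_r$, the entries of $h_i$ are precisely the $b_{ik}$'s (bounded by $N^{-\epsilon_b}$), so $h_i$ is a light-tailed vector, and the key quantity to estimate is the bilinear form $\sum_{k,l} h_{ik} h_{il} [G(H^{(i)\top},\omega)]_{kl}$-style expression appearing in the off-diagonal formula, evaluated against the fixed index $j$.

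The main steps, in order: (1) From Proposition \ref{prop: resolvent entry size for X=B+C} applied to the minor $H^{(i)}$ (the minor still corresponds to a good $\Psi^{(i)}$ with essentially the same parameters, and $\mathcal{T}_r, \mathcal{T}_c$ change by at most one index), deduce $|G_{ii}(H,\omega)| \sim 1$ and the diagonal/off-diagonal control for $G(H^{(i)},\omega)$ and $G(H^{(i)\top},\omega)$; the crude bound $\|G\| \lesssim t^{-2}$ is also available. (2) Write $G_{ij}(H,\omega) = -G_{ii}(H,\omega) \big[ h_i^\top G(H^{(i)\top},\omega) \big]_{\text{column } j}$ up to the correct scalar, and recognize that $[h_i^\top G(H^{(i)\top},\omega)]_j$ is a linear form $\sum_k h_{ik} [G(H^{(i)\top},\omega)]_{kj}$ in the independent light-tailed entries $h_{ik}$, conditionally on $H^{(i)}$. (3) Apply the large deviation estimate for linear forms of random variables with bounded support (the same \cite[Corollary 25]{Amol} used in the proof of Proposition \ref{prop: resolvent entry size for X=B+C}, or the standard \cite{erdHos2013averaging}-type bound), which gives, with high probability,
\begin{align*}
\Big| \sum_k h_{ik} [G(H^{(i)\top},\omega)]_{kj} \Big| \prec N^{-\epsilon_b} \max_k |[G(H^{(i)\top},\omega)]_{kj}| + \frac{1}{\sqrt{N}}\Big( \sum_k |[G(H^{(i)\top},\omega)]_{kj}|^2 \Big)^{1/2}.
\end{align*}
(4) Bound the two terms on the right: the first is $\lesssim N^{-\epsilon_b} t^{-2} \prec N^{-\delta}$ using $t \gg N^{-c}$ and the choice of $\epsilon_b$; the second is controlled via the Ward-type identity $\sum_k |[G(H^{(i)\top},\omega)]_{kj}|^2 = [G(H^{(i)\top},\omega)]$-related diagonal entry divided by $\Im\omega$ — but note $\Im\omega$ can be as small as $\sim t^2/(\log N)^C$, so this naive Ward bound alone gives only $t^{-2}$, which is not enough. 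Here is where I would instead use Proposition \ref{prop: resolvent product X}: the relevant column sum is exactly of the form $[G(H^{(i)\top},\omega) H^{(i)\top}]_{\cdot}$-type after recombining, and since $i \in \mathcal{T}_r$, part (i) of Proposition \ref{prop: resolvent product X} gives $|[G(X,\zeta)X]_{ij}| \prec N^{-\epsilon_b/2}$, which is the genuinely small input that forces the off-diagonal entry down below $N^{-\delta}$ for an explicit $\delta = \delta(\epsilon_a,\epsilon_\alpha,\epsilon_b) > 0$.

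The main obstacle I anticipate is step (4): the naive Ward identity is too weak because we are at a mesoscopic distance $\sim t^2$ from the edge rather than in the bulk, so one cannot simply divide by $\Im\omega$ and win. The resolution is to route the off-diagonal estimate through the product $G(X,\zeta)X$ rather than through $G$ alone — i.e. to express $G_{ij}$, via $G_{ij} = \zeta^{-1/2}[\text{something involving } G X \text{ and } GX^\top]$ coming from the linearization $\mathcal{R}(X,\zeta)$ of Lemma \ref{lem:entryperturb} — and then invoke Proposition \ref{prop: resolvent product X} (i) or (ii) directly. A secondary technical point is ensuring the minor $H^{(i)}$ genuinely satisfies the hypotheses of Propositions \ref{prop: resolvent entry size for X=B+C} and \ref{prop: resolvent product X}: one must check that removing a single row keeps $\Psi^{(i)}$ good and keeps $\zeta \in \mathsf{D}_\zeta$ (up to harmless constant adjustments), which follows from interlacing and the stability of the $\eta_\ast$-regularity under rank-one perturbation, exactly as used elsewhere in the paper. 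With these two inputs in hand, the argument is a routine Schur complement plus large-deviation computation, and one obtains the stated $N^{-\delta}$ bound.
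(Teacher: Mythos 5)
Your proposal follows essentially the same route as the paper's proof: expand $G_{ij}(X,\zeta)$ by Schur complement in the $i$-th row, absorb the $j$-th row of $X$ into the product $[G((X^{(i)})^{\top},\zeta)(X^{(i)})^{\top}]_{kj}$, and apply the large-deviation estimate of \cite[Corollary 25]{Amol} to the resulting linear form in the bounded-support entries $x_{ik}$, $i\in\mathcal{T}_r$, with the coefficients controlled via Proposition \ref{prop: bound the form GX}\,(iii). The one correction worth recording is that your announced "main obstacle" is not one: the $\ell^2$ term $N^{-1/2}\|v\|_2\lesssim N^{-1/2}t^{-2}$ is already negligible because $t^{-1}=N^{2\epsilon_a}$ with $\epsilon_a$ tiny, the decisive smallness coming (exactly as in your first term) from the $N^{-\epsilon_b}$ prefactor of the large-deviation bound times the crude operator-norm bound $\|G((X^{(i)})^{\top},\zeta)(X^{(i)})^{\top}\|\lesssim t^{-2}$, so the detour through the entrywise bounds of Proposition \ref{prop: resolvent product X} is unnecessary (and those bounds are in any case only $t^{-2}$ when the column index lies in $\mathcal{D}_c$).
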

\begin{proof}[Proof of Corollary \ref{cor: resolvent entry size for X=B+C, off-diag}]
We shall give the proof only for the case $i\neq j$ and $i\in\mathcal{T}_{r}$. The other cases can be proved in the same way.
Assume $i\neq j$ and $i\in\mathcal{T}_{r}$, observe that
\begin{equation*}
	|G_{ij}(X,\zeta)| =  |G_{ii}(X,\zeta)| \cdot \left|\sum_{k,l}x_{ik}G_{kl}((X^{(i)})^{\top},\zeta)x_{jl}\right|,
\end{equation*}
where we denote by $X^{(i)}$ the matrix obtained from $X$ by removing $i$-th row.
Note that
\begin{equation*}
	\sum_{l}G_{kl}((X^{(i)})^{\top},\zeta)x_{jl} = [G((X^{(i)})^{\top},\zeta)(X^{(i)})^{\top}]_{kj}.
\end{equation*}
Since $i\in\mathcal{T}_{r}$, we apply the large deviation estimates in \cite[Corollary 25]{Amol} to bound
\begin{equation*}
	\left|\sum_{k}x_{ik} [G((X^{(i)})^{\top},\zeta)(X^{(i)})^{\top}]_{kj} \right|,
\end{equation*}
where we also use Proposition \ref{prop: bound the form GX} below to get a high probability bound for $\lVert G((X^{(i)})^{\top},\zeta)(X^{(i)})^{\top} \rVert$.
\end{proof}

\begin{proposition}\label{prop: bound the form GX}
Let $\zeta=E+\mathrm{i}\eta\in\mathbb{C}_{+}$.
\begin{itemize}
	\item[(i)] If $i\in\mathcal{T}_{r}$, we have
	\begin{multline*}
		|[G(X,\zeta)X]_{ij}|
	\prec \paren{N^{-\eps_{b}}\max_{k}|G_{kj}((X^{(i)})^{\top},\zeta)| + \paren{\frac{\imag\,G_{jj}((X^{(i)})^{\top},\zeta)}{N\eta}}^{1/2}} \\
	\times \paren{ 1 + |\zeta|\cdot|G_{ii}(X,\zeta)|\cdot \paren{N^{-\eps_{b}}\max_{k,l}|G_{kl}((X^{(i)})^{\top},\zeta)| + \paren{\frac{\sum_{k}\imag\,G_{kk}((X^{(i)})^{\top},\zeta)}{N^{2}\eta}}^{1/2}} },
	\end{multline*}
	where we denote by $X^{(i)}$ the matrix obtained from $X$ by removing $i$-th row.
	\item[(ii)] If $j\in\mathcal{T}_{c}$, we have
	\begin{multline*}
		|[G(X,\zeta)X]_{ij}| \prec \paren{ N^{-\eps_{b}}\max_{k}|G_{ik}(X^{[j]},\zeta)|  + \paren{\frac{\imag\,G_{ii}(X^{[j]},\zeta)}{N\eta}}^{1/2} } \\
	\times \paren{ 1 + |\zeta|\cdot|G_{jj}(X^{\top},\zeta)|\cdot \paren{N^{-\eps_{b}}\max_{k,l}|G_{kl}(X^{[j]},\zeta)| + \paren{\frac{\sum_{k}\imag\,G_{kk}(X^{[j]},\zeta)}{N^{2}\eta}}^{1/2}} },
	\end{multline*}
	where we denote by $X^{[j]}$ the matrix obtained from $X$ by removing $j$-th column.
	\item[(iii)] Let $X = UDV$ be a singular value decomposition of $X$ where
	\begin{equation*}
		\text{diag}(D)=(d_{1},d_{2}, \cdots, d_{p})\equiv\Big(\sqrt{\lambda_{1}(\mathcal{S}(X))},\sqrt{\lambda_{2}(\mathcal{S}(X))},\cdots,\sqrt{\lambda_{M}(\mathcal{S}(X))}\Big).
	\end{equation*}
	(Here we also assume $M<N$ without loss of generality.)
	Then,
	\begin{equation*}
		\lVert G(X,\zeta)X \rVert \le \max_{1\le i\le p} \left| \frac{d_{i}}{d_{i}^{2}-\zeta} \right|.
	\end{equation*}
\end{itemize}
\end{proposition}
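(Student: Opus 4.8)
The statement to prove is Proposition~\ref{prop: bound the form GX}, which gives three bounds on the quantity $[G(X,\zeta)X]_{ij}$ and on $\|G(X,\zeta)X\|$.

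\medskip

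\textbf{Proof plan for Proposition~\ref{prop: bound the form GX}.}

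The plan is to treat part (iii) first, since it is a deterministic statement that follows immediately from the singular value decomposition, and then to derive parts (i) and (ii) by a Schur complement / minor argument combined with the large deviation estimate for heavy-tailed quadratic forms in \cite[Corollary 25]{Amol}.

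\medskip

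\emph{Part (iii).} Writing $X = UDV$ with $U \in \mathbb{R}^{M\times M}$, $V\in \mathbb{R}^{N\times N}$ orthogonal and $D\in \mathbb{R}^{M\times N}$ the rectangular diagonal matrix of singular values, I would simply compute $G(X,\zeta)X = (XX^\top - \zeta)^{-1} X = U (DD^\top - \zeta)^{-1} D V$. Since $(DD^\top - \zeta)^{-1}D$ is again rectangular diagonal with entries $d_i/(d_i^2 - \zeta)$, its operator norm is $\max_{i} |d_i/(d_i^2-\zeta)|$, and multiplication by the orthogonal matrices $U$ and $V$ does not change the operator norm. This is the whole argument; no estimate is needed beyond the unitary invariance of $\|\cdot\|$.

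\medskip

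\emph{Parts (i) and (ii).} These two are mirror images of each other (transpose/swap rows and columns), so I would only write out (i) in detail and remark that (ii) follows by applying (i) to $X^\top$. For (i), fix $i \in \mathcal{T}_r$ and expand $[G(X,\zeta)X]_{ij}$ by the Schur complement with respect to the $i$-th row of $X$. This gives a factorization in which $G_{ii}(X,\zeta)$ appears as a scalar prefactor and the remaining sum is a bilinear form $\sum_{k,l} x_{ik} [\,\cdot\,]_{kl}$ in the $i$-th row $(x_{ik})_k$ of $X$, where the middle matrix is built from the resolvent $G((X^{(i)})^\top,\zeta)$ of the minor with the $i$-th row deleted, and also from $[G((X^{(i)})^\top,\zeta)(X^{(i)})^\top]_{kj}$; one has to be careful to separate the ``$G X$ against a column'' piece (which is where the factor $|\zeta|\,|G_{ii}|$ in the second bracket comes from, via the identity $GX X^\top = I + \zeta G$ applied to the appropriate minor). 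Because $i \in \mathcal{T}_r$, the entries $x_{ik}$, $k\in[N]$, are independent of the minor and are all bounded by $N^{-\epsilon_b}$ in absolute value (they are $b$-variables), so the vector $(x_{ik})_k$ satisfies the hypotheses of the heavy-tailed large deviation bound \cite[Corollary 25]{Amol}. Applying that bound to each of the two bilinear forms produces exactly the two factors appearing in the claimed estimate: the ``sup of entries times $N^{-\epsilon_b}$'' term and the ``$\ell^2$-average'' term, where in the latter I would use the Ward-type identity $\sum_{k,l}|G_{kl}((X^{(i)})^\top,\zeta)|^2 = \eta^{-1}\sum_k \Im G_{kk}((X^{(i)})^\top,\zeta)$ (and its single-index analogue $\sum_l |G_{jl}|^2 = \eta^{-1}\Im G_{jj}$) to rewrite the variance of the quadratic form. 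Assembling these gives the stated inequality.

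\medskip

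The main obstacle I anticipate is bookkeeping rather than any deep difficulty: one must carefully carry out the Schur complement expansion so that all resolvent entries that appear are entries of the minor $G((X^{(i)})^\top,\zeta)$ (or $G(X^{[j]},\zeta)$ in part (ii)) and not of the full $G$, apart from the single explicit prefactor $G_{ii}(X,\zeta)$; and one has to correctly identify which part of the expansion contributes the plain ``$1$'' in the second bracket and which contributes the $|\zeta|\,|G_{ii}|\,(\cdots)$ term --- this is the $GXX^\top = I + \zeta G$ step. A secondary point is that \cite[Corollary 25]{Amol} has an implicit high-probability form (because of the heavy tails of the $b$-variables, conditioning is needed so that the $b_{ik}$ are genuinely bounded, which is why $i\in\mathcal{T}_r$ is essential), so all the estimates are in the $\prec$ sense uniformly in the minor; I would state this once and then suppress it.
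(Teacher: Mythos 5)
Your proposal follows essentially the same route as the paper: part (iii) by the SVD (which the paper omits as elementary), and parts (i)--(ii) by a rank-one Sherman--Morrison/Schur-complement expansion in the $i$-th row (resp.\ $j$-th column), with the resulting linear and quadratic forms in the bounded entries $x_{ik}$ controlled by \cite[Corollary 25]{Amol} together with the Ward identity. One small correction to your bookkeeping: the prefactor $|\zeta|\,|G_{ii}(X,\zeta)|$ does not come from $GXX^{\top}=I+\zeta G$ but from the Schur-complement identity $G_{ii}(X,\zeta)^{-1}=-\zeta\bigl(1+x_{(i)}G((X^{(i)})^{\top},\zeta)x_{(i)}^{\top}\bigr)$, which turns the Sherman--Morrison denominator into that prefactor, after which the first bracket is the linear form $\sum_{k}x_{ik}G_{kj}((X^{(i)})^{\top},\zeta)$ and the second involves the quadratic form $x_{(i)}G((X^{(i)})^{\top},\zeta)x_{(i)}^{\top}$.
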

\begin{proof}
(i) Assume $i\in\mathcal{T}_{r}$. Note that $G(X,\zeta)X = X G(X^{\top},\zeta)$. Let $x_{(i)}$ be the $i$-th row of $X$. See that
\begin{equation*}
	X^{\top}X - \zeta = (X^{(i)})^{\top}X^{(i)} - \zeta + x_{(i)}^{\top}x_{(i)}.
\end{equation*}
By the Sherman-Morrison formula, 
\begin{equation*}
	G(X^{\top},\zeta) = G((X^{(i)})^{\top},\zeta) - \frac{G((X^{(i)})^{\top},\zeta)x_{(i)}^{\top}x_{(i)}G((X^{(i)})^{\top},\zeta)}{1+x_{(i)}G((X^{(i)})^{\top},\zeta)x_{(i)}^{\top}}.
\end{equation*}
Since $\big(G_{ii}(X,\zeta)\big)^{-1}=-\zeta \big(1+x_{(i)}G((X^{(i)})^{\top},\zeta)x_{(i)}^{\top}\big)$,
\begin{equation*}
	G(X^{\top},\zeta) = G((X^{(i)})^{\top},\zeta) + (\zeta G_{ii}(X,\zeta)) \cdot G((X^{(i)})^{\top},\zeta)x_{(i)}^{\top}x_{(i)}G((X^{(i)})^{\top},\zeta).
\end{equation*}
We write $[X G(X^{\top},\zeta)]_{ij}=x_{(i)}G(X^{\top},\zeta)e_{j}$. Then,
\begin{equation*}
	x_{(i)}G(X^{\top},\zeta)e_{j} = x_{(i)}G((X^{(i)})^{\top},\zeta)e_{j} 
	+ (\zeta G_{ii}(X,\zeta)) \cdot (x_{(i)}G((X^{(i)})^{\top},\zeta)x_{(i)}^{\top}) \cdot (x_{(i)}G((X^{(i)})^{\top},\zeta)e_{j}).
\end{equation*}
Since $i\in\mathcal{T}_{r}$, by the large deviation estimate \cite[Corollary 25]{Amol}, the desired result follows.

\noindent
(ii) Assume $j\in\mathcal{T}_{c}$.
Let $x_{[j]}$ be $j$-th column of $X$. See that
\begin{equation*}
	[G(X,\zeta)X]_{ij} = e_{i}^{\top}G(X,\zeta) x_{[j]}.
\end{equation*}
By the Sherman-Morrison formula,
\begin{equation*}
	G(X,\zeta) = G(X^{[j]},\zeta) + (\zeta G_{jj}(X^{\top},\zeta)) \cdot G(X^{[j]},\zeta)x_{[j]}x_{[j]}^{\top}G(X^{[j]},\zeta),
\end{equation*}
where we denote by $X^{[j]}$ the matrix obtained from $X$ by removing $j$-th column. Then,
\begin{equation*}
	e_{i}^{\top}G(X,\zeta) x_{[j]} = e_{i}^{\top}G(X^{[j]},\zeta)x_{[j]}
	+ (\zeta G_{jj}(X^{\top},\zeta)) \cdot (e_{i}^{\top}G(X^{[j]},\zeta)x_{[j]}) \cdot x_{[j]}^{\top}G(X^{[j]},\zeta)x_{[j]}).
\end{equation*}
Using $j\in\mathcal{T}_{c}$, we get the desired result using the large deviation estimate \cite[Corollary 25]{Amol}.

\noindent
(iii) This is elementary, and thus we omit the details. 
\end{proof}


\subsection{Remark on Theorem \ref{thm:081601}}\label{sec:081601}
Theorem \ref{thm:081601} is a version of \cite[Theorem V.3]{DY} with respect to the left edge. The required modification would be straightforward. Let us summarize the main idea of \cite{DY} as follows.
Let $\mathsf{B}_{i}$ $(i=1,\cdots,M)$ be independent standard Brownian motions.
We fix two time scales:
\begin{equation}\label{eq: two time scale}
	t_{0} = N^{-\frac{1}{3}+\phi_{0}},\quad t_{1}=N^{-\frac{1}{3}+\phi_{1}},
\end{equation}
where $\phi_{0}\in (\frac{1}{3} - \frac{\epsilon_b}{2},\frac{1}{3})$ and $0<\phi_{1}<\frac{\phi_{0}}{100}$.

For time $t\ge 0$, we define the process $\{\lambda_{i}(t):1\le i\le M\}$ as the unique strong solution to the following system of SDEs:
\begin{equation*}
	{\rm d}\lambda_{i} = 2\lambda_{i}^{1/2}\frac{{\rm d}\mathsf{B}_{i}}{\sqrt{N}} + \paren{\frac{1}{N}\sum_{j\neq i}\frac{\lambda_{i}+\lambda_{j}}{\lambda_{i}-\lambda_{j}}}{\rm d}t, \quad
	1\le i\le M,
\end{equation*}
with initial data $\lambda_{i}(0)=\lambda_{i}(\gamma_{w}\mathcal{S}(V_{t_{0}}))$ where $\gamma_{w}$ is chosen to match the edge eigenvalue gaps of $\mathcal{S}(V_{t_{0}})$ with those of Wigner matrices. Recall the convention: $\lambda_{1}\ge \lambda_{2}\ge \cdots\ge\lambda_{M}$.

Note that the process $\{\lambda_{i}(t)\}$ has the same joint distribution as the eigenvalues of the matrix
\begin{equation*}
	\gamma_{w}\mathcal{S}(V_{t_{0}+\frac{t}{\gamma_{w}}}) = (\gamma_{w}^{1/2}X+(\gamma_{w}t_{0}+t)^{1/2}W)(\gamma_{w}^{1/2}X+(\gamma_{w}t_{0}+t)^{1/2}W)^{\top}.
\end{equation*}
Denote by $\rho_{\lambda,t}$ the asymptotic spectral distribution of $\mathcal{S}(V_{t_{0}+\frac{t}{\gamma_{w}}})$ (in terms of the rectangular free convolution actually). Let $E_{\lambda}(t)$ be the left edge of $\rho_{\lambda,t}$.
Now we introduce a deforemd Wishart matrix $\mathcal{U}\mathcal{U}^{\top}$.
Define $\mathcal{U}\coloneqq\Sigma^{1/2}\mathcal{X}$ where $\mathcal{X}$ is a $M\times N$ real Gaussian matrix (mean zero and variance $N^{-1}$) and $\Sigma=\text{diag}(\sigma_{1},\cdots,\sigma_{M})$ is a diagonal population matrix. 
Let $\rho_{\mu,0}$ be the asymptotic spectral distribution of $\mathcal{U}\mathcal{U}^{\top}$ (given by the multiplicative free convolution of the MP law and the ESD of $\Sigma$).
We choose the diagonal population covariance matrix $\Sigma$ such that $\rho_{\mu,0}$ matches $\rho_{\lambda,0}$ near the left edge $E_{\lambda}(0)$ (square-root behavior).
We write $\mu_{i}(0)\coloneqq\mu_{i}(\mathcal{U}\mathcal{U}^{\top})$.
Next, define the process $\{\mu_{i}(t):1\le i\le M\}$ through the rectangular DBM with initial data $\{\mu_{i}(0)\}$.
We can show that the edge eigenvalues of $\{\mu_{i}(t)\}$ are governed by the Tracy-Widom law. We denote by $\rho_{\mu,t}$ the rectangular free convolution of $\rho_{\mu,0}$ with the Marchenko-Pastur (MP) law at time $t$. Let $E_{\mu}(t)$ be the left edge of $\rho_{\mu,t}$. We remark that $E_{\lambda}(0)=E_{\mu}(0)$. Then, in order to get Theorem \ref{thm:081601}, it is enough to show
\begin{align*}
	\big| \big(\lambda_{M}(t_{1})-E_{\lambda}(t_{1})\big) - \big(\mu_{M}(t_{1})-E_{\mu}(t_{1})\big) \big| \prec N^{-2/3-\delta},
\end{align*}
for $\delta>0$ sufficiently small. The proof of the above estimate relies on the local equilibrium mechanism of the rectangle DBM, which does not have any difference between the left edge or the right edge of the spectrum, given $\eta_\ast$-regularities of the initial states. Hence, we omit  the remaining argument, and refer to \cite{DY} for details. 


\subsection{Proof of Lemma \ref{lem:preEst}}\label{3793}

We shall prove Lemma \ref{lem:preEst} in this section.

\begin{proof}[Proof of Lemma \ref{lem:preEst} (i)]
The proof is similar to that in \cite{DY2}, we provide proof here completeness.
The statement $\zeta_{-,t} - \lambda_M(\mathcal{S}(X))  \le 0$ follows directly from Lemma \ref{lem:Phicharacterization}. For the other estimate, by Lemma \ref{lem:Phicharacterization},
we know that $\Phi_t(\zeta_{-,t})$ is the only local extrema of $\Phi_t(\zeta)$ on the interval $(0,\lambda_M(\mathcal{S}(X)) )$. Hence we have $\Phi'_t(\zeta_{-,t})=0$, which gives the equation
\begin{align*} 
(1-c_Nt m_{X}(\zeta_{-,t}))^2 - 2 c_Nt m_{X}'(\zeta_{-,t}) \cdot \zeta_{-,t} \left( 1-c_Nt m_{X}(\zeta_{-,t})\right) - c_N(1-c_N)t^2m_{X}'(\zeta_{-,t})=0.
\end{align*}
Rearranging the terms, we can get 
\begin{align}\label{eq:mx0p}
	 c_Nt m_{X}'(\zeta_{-,t}) = \frac{(1-c_Nt m_{X}(\zeta_{-,t}))^2}{2\zeta_{-,t} \left( 1-c_Nt m_{X}(\zeta_{-,t})\right)+(1-c_N)t}.
\end{align}
By Lemma \ref{existuniq}~(iv) 
and Eq.~\eqref{eq:fpe}, 
we have on $\Omega_\Psi$ that  
\begin{align}\label{eq:estmxt2}
		c_Nt m_{X}(\zeta_{-,t})=\mathcal{O}(t^{1/2}).
\end{align} 
Plugging the above bound back to (\ref{eq:mx0p}), we can get $m'_{X}(\zeta_{-,t}) \sim t^{-1}$. This together with Lemma \ref{lem:esthighoderi} gives $\sqrt{\lambda_M(\mathcal{S}(X)) - \zeta_{-,t}} \sim t$.
\end{proof}


\begin{proof}[Proof of Lemma \ref{lem:preEst} (ii)]
Since $\mathcal{S}(X)$ is $\eta_\ast$-regular in the sense of Definition \ref{def:etastar}, 
the estimates for $|m_X^{(k)}(\zeta)|$ on the event $\Omega_\Psi$ is an immediate consequence of Lemmas \ref{lem:esthighoderi} and Lemma \ref{lem:preEst} (i).

We prove the estimate for $|m_X(z) - \mathsf{m}_{\mathsf{mp}}^{(t)}(z)|$ as follows. Recall that $\beta = (\alpha-2)/24$.
First, we establish the convergence of Stieltjes transform of a truncated matrix model using the result in  \cite{HLS}. To this end, let us define $\bar{X} = (\bar{x}_{ij}) :=  (x_{ij}\mathbf{1}_{x_{ij} < N^{-\beta}})$ and $\bar{t} := 1- N\E |\bar{x}_{ij}|^2$. It is easy to show that $|\bar{t}-t|=\mathfrak{o}(N^{-1})$, and thus we have $|\mathsf{m}_{\mathsf{mp}}^{(t)}(z_1)-\mathsf{m}_{\mathsf{mp}}^{(\bar{t})}(z_1)|\leq (N\eta_1)^{-1}$.  Then 
it follows from \cite[Theorem 2.7]{HLS} that for any $z_1$ such that $|z_1 - \zeta_{-,t}| \le \tau t^2$ and $\eta_1 \equiv \Im z_1 > N^{-1+\delta}$ with $1>\delta > 0$ to be chosen later, 
\begin{align}
		m_{\bar{X}}(z_1) - \mathsf{m}_{\mathsf{mp}}^{(t)}(z_1) \prec \frac{1}{N^{\beta}} + \frac{1}{N\eta_1}, \label{081410}
\end{align}
We remark here that the local law proved in \cite[Theorem 2.7]{HLS} is for deterministic $z$. But it is easy to show that the local law holds uniformly in $z$ in the mentioned domain in \cite[Theorem 2.7]{HLS}, with high probability, by a simple continuity argument. Hence, as long as $z_1$ fall in this domain with high probability, even though $z_1$ might be random, we still have (\ref{081410}). Using the facts $|\lambda_{M}(\mathcal{S}(X))-(1-t)\lambda_{-}^{\mathsf{mp}}|\lesssim N^{-\epsilon_b}$ and $\lambda_M(\mathcal{S}(X)) - \zeta_{-,t} \sim t^2$ with high probability (cf. Lemmas \ref{lem: left edge rigidity XX^T} and \ref{lem:preEst} (i)), we have for $\tau$ small enough, 
\begin{align*}
		|z_1 - (1-t)\lambda_{-}^{\mathsf{mp}}| \ge |\zeta_{-,t} - \lambda_{M}(\mathcal{S}(X))| - |\lambda_{M}(\mathcal{S}(X)) - (1-t)\lambda_{-}^{\mathsf{mp}}| - |z_1 - \zeta_{-,t}| \gtrsim t^2,
\end{align*}
which gives $|(\mathsf{m}_{\mathsf{mp}}^{(t)})'(z_1)| \lesssim t^{-4}$ with high probability. Also, we have $|m'_{\bar{X}}(z_1)|\lesssim t^{-4}$ with high probability,  by the choice of $z_1$, Eq. (\ref{eq: left edge rigidity BB^T}), and Lemma \ref{lem:preEst} (i). Therefore, for any $z_2$ satisfying $\Re z_2 = \Re z_1$ and $\eta_2 = \Im z_2 < N^{-1+\delta}$, we have
\begin{align}
		|m_{\bar{X}}(z_2) - \mathsf{m}_{\mathsf{mp}}^{(t)}(z_2)| {\lesssim}   |m_{\bar{X}}(z_1) - \mathsf{m}_{\mathsf{mp}}^{(t)}(z_1)| + t^{-4} |z_1 - z_2| {\prec}  \frac{1}{N^{\beta}} + \frac{1}{N^{1/2}} + \frac{1}{t^4N^{1/2}} \lesssim \frac{1}{N^{\beta}},\label{081411}
\end{align}
where in the first step we used the fact $|z_i - \zeta_{-,t}| \le  \tau t^2, i = 1,2$, and in the second step we chose $\delta = 1/2$. 
	
Next, we use the rank inequality to compare $m_{\bar{X}}(z)$ with $m_X(z)$. Notice that
\begin{align*}
		m_{\bar{X}}(z) - m_X(z) 
		 \le \frac{2}{N} \mathrm{Rank}(\bar{X} - X)\cdot(\|(\mathcal{S}(\bar{X}) -z)^{-1}  \| +\|(\mathcal{S}(X) -z)^{-1}  \| )\prec \frac{\mathrm{Rank}(\bar{X} - X)}{Nt^2}.
\end{align*}

A similar argument as in the proof of Lemma \ref{lem:goodpsi} 
shows that,
\begin{align*}
		\mathrm{Rank}(\bar{X} - X) \prec N^{1 - (\alpha-2-2\alpha\beta )/4}.
\end{align*} 
Therefore, we can obtain $m_{\bar{X}}(z) - m_X(z) \prec N^{- (\alpha-2-2\alpha\beta )/4 }t^{-2}$. Together with the estimate in (\ref{081411}), we have
\begin{align*}
		m_X(z) - \mathsf{m}_{\mathsf{mp}}^{(t)}(z) \prec \frac{1}{N^{(\alpha-2-2\alpha\beta )/4 }t^2} + \frac{1}{N^{\beta}} .
\end{align*}
The claim now follows by the fact $t \gg N^{(2-\alpha)/16}$ in light of Eq.~\eqref{081401}. 
\end{proof}


\begin{proof}[Proof of Lemma \ref{lem:preEst} (iii)]
	Repeating the proof of \cite[Lemma A.2]{DY2}, we can obtain
	\begin{align*}
		|\bar{\zeta}_{-,t} -  \zeta_{-,t}| \lesssim t^3 |m_X'(\zeta_{-,t}) - (\mathsf{m}_{\mathsf{mp}}^{(t)})'(\zeta_{-,t}) |.
	\end{align*}
	By the Cauchy integral formula, we have
	\begin{align}\label{eq:cauchyintegral}
		|m_X'(\zeta_{-,t}) - (\mathsf{m}_{\mathsf{mp}}^{(t)})'(\zeta_{-,t})| \lesssim \oint_{\omega} \frac{|m_X(a) - \mathsf{m}_{\mathsf{mp}}^{(t)}(a)|}{|a-\zeta_{-,t}|^2} \mathrm{d}a,
	\end{align}
	where $\omega \equiv \{a : |a-\zeta_{-,t}| = \tau t^2\}$ for some small $\tau$. Therefore, we have by Lemma \ref{lem:preEst} (ii), 
	\begin{align*}
		|\bar{\zeta}_{-,t} -  \zeta_{-,t}| \lesssim t \sup_{a \in \omega} |m_X(a) - \mathsf{m}_{\mathsf{mp}}^{(t)}(a)| \prec t N^{-\beta},
	\end{align*}
	proving the claim.
\end{proof}


\subsection{Proof of Proposition \ref{prop:removing off-diagonal}}
\label{3891}

In this section, we shall give the proof of Proposition \ref{prop:removing off-diagonal}. 
\begin{proof}[Proof of Proposition \ref{prop:removing off-diagonal}]
By a minor process argument, we have with probability at least $1 - N^{-D}$ for arbitrary large $D$, there exists constant $C_k > 0$, such that
\begin{align}\label{eq:hpeforlb}
	 &	|\lambda_{M}(\mathcal{S}(\tilde{X}^{(k)}))- \hat{\zeta}_{\mathsf{e}}|= \Big|(1-t)\lambda_{-}^{\mathsf{mp}} - \bar{\zeta}_{-,t} +\lambda_{M}(\mathcal{S}(\tilde{X}^{(k)})) - (1-t)\lambda_{-}^{\mathsf{mp}} + \mathrm{i}N^{-100K} + \bar{\zeta}_{-,t}-\hat{\zeta}_{\mathsf{e}} \Big|\notag\\
		&\ge  \sqrt{c_N}t^2 - |\lambda_{M}(\mathcal{S}(\tilde{X}^{(k)})) - (1-t)\lambda_{-}^{\mathsf{mp}}| - |\bar{\zeta}_{-,t}-\hat{\zeta}_{\mathsf{e}}| - N^{-100K} \ge C_kt^2.
\end{align}
Here in the last step, we used Eq.~\eqref{eq:zetaminusbarzeta} 
and the fact that  $|\lambda_{M}(\mathcal{S}(\tilde{X}^{(k)}))- (1-t)\lambda_-^{\mathsf{mp}}|  \prec N^{-\epsilon_b}$. Therefore, for any $k \in [N]$, we can define the event
	$
		\Omega_k \equiv \{ \lambda_{M}(\mathcal{S}(\tilde{X}^{(k)})) - \bar{\zeta}_{-,t} \ge C_kt^2 \}
	$ with  $\mathbb{P}(\Omega_k) \ge 1 - N^{-D}$ for arbitrary large $D$. 
	
%
	Choosing $\tau \le \min_k C_k/2$. For any $\zeta$ satisfying $| \zeta - \hat{\zeta}_{\mathsf{e}}| \le \tau t^2$, we define 
	\begin{align*}
		&F_k(\zeta) := \log |1 + \tilde{x}_k^\top (G(\tilde{X}^{(k)}, \zeta))\tilde{x}_k|^2,\quad \tilde{F}_k(\zeta) := \log |1 + \tilde{x}_k^\top (G(\tilde{X}^{(k)}, \zeta))_{\mathrm{diag}}\tilde{x}_k|^2.
	\end{align*}
	Since $|\lambda_M(\mathcal{S}(\tilde{X}^{(k)})) - \zeta| = |\lambda_{M}(\mathcal{S}(\tilde{X}^{(k)}))- \hat{\zeta}_{\mathsf{e}}| - |\zeta - \hat{\zeta}_{\mathsf{e}}| \ge  C_kt^2/2 > 0$ on  $\Omega_k$, we can obtain that $\Re (\tilde{x}_k^\top (G(\tilde{X}^{(k)}, \zeta))\tilde{x}_k) \vee  \Re (\tilde{x}_k^\top (G(\tilde{X}^{(k)}, \zeta))_{\mathrm{diag}}\tilde{x}_k) \ge 0$. Hence, the functions $F_k(\zeta)$, $\tilde{F}_k(\zeta)$ are well defined on the event $\Omega_k$.  For any  $\zeta\in \Xi(\tau)$, using Cauchy integral formula with a cutoff of the contour chosen carefully, 
	we can express $Y_k \equiv Y_k(\zeta)$ as
	\begin{align*}
		Y_k = \frac{t}{2\pi \mathrm{i}N^{1- \alpha/4}}(\E_k - \E_{k-1}) \oint_{\omega\cap \gamma} \frac{F_k(z)}{(z-\zeta)^2}\mathrm{d}z + \mathsf{err}_k(\zeta) =: I_k(\zeta) + \mathsf{err}_k(\zeta),
	\end{align*} 
	with the contour $\omega \equiv \{z\in \mathbb{C} : |z - \zeta| = \tau t^2 / 10 \}$ and   $\gamma \equiv \{ z \in \mathbb{C}: |\Im z| \ge N^{-100}  \}$, and $\mathsf{err}_k$ collects all the tiny error terms which will not affect our further analysis.  	
Similarly, we can define $\tilde{I}_k(\zeta)$ and $\tilde{\mathsf{err}}_k(\zeta)$ for $\tilde{Y}_k$ in the same manner as shown above.
	 Therefore,
	\begin{align*}
		\E_{k-1}(Y_kY_k') - \E_{k-1}(\tilde{Y}_k\tilde{Y}_k') = \E_{k-1}((I_k(\zeta)I_k(\zeta') ) - \E_{k-1}((\tilde{I}_k(\zeta)\tilde{I}_k(\zeta') ) + \mathsf{HOT},
	\end{align*}
	where $\mathsf{HOT}$ collects terms containing $\mathsf{err}_k(\zeta)$ or $\tilde{\mathsf{err}}_k(\zeta)$, which are irrelevant in our analysis. 
For the leading term, since $F_k(z)$, $\tilde{F}_k(z)$,$\tilde{F}_k(z)$,$\tilde{F}_k(z')$ are uniformly bounded on $z \in 
\omega\cap \gamma$ and $z' \in \omega' \cap \gamma$, we may commute the conditional expectation and the integral to obtain
\begin{align}
	\E_{k-1}((I_k(\zeta)I_k(\zeta') )-\E_{k-1}((\tilde{I}_k(\zeta)\tilde{I}_k(\zeta') ) = -\frac{t^2}{4\pi^2N^{2- \alpha/2}} \oint_{\omega \cap \gamma} \oint_{\omega' \cap \gamma} \frac{\varphi_k(z,z') -\tilde{\varphi}_k(z,z')  }{(z-\zeta)^2(z'-\zeta')^2}\mathrm{d}z'\mathrm{d}z   ,
\label{eq:CIforphi}\end{align}
where
\begin{align*}
	\varphi_k(z,z') \coloneqq \E_{k-1} \big((\E_k - \E_{k-1}) F_k(z) (\E_k - \E_{k-1}) F_k(z')\big)\notag\\
	\tilde{\varphi}_k(z,z') \coloneqq \E_{k-1} \big((\E_k - \E_{k-1}) \tilde{F}_k(z) (\E_k - \E_{k-1}) \tilde{F}_k(z')\big),
\end{align*} and $\omega' \coloneqq \{z\in \mathbb{C} : |z - \zeta'| =  at^2  \}$  with a small constant $a$. 

In view of (\ref{eq:CIforphi}), it suffices to prove that uniformly on $z\in 
\omega\cap \gamma$ and $z' \in \omega' \cap \gamma$, $\varphi_k - \tilde{\varphi_k} \equiv \varphi_k(z,z') - \tilde{\varphi}_k(z,z') \ll   t^2N^{1-\alpha/2}$. In the sequel, we write $F_k = F_k(z)$, $\tilde{F}_k = \tilde{F}_k(z)$, $F_k' =F_k(z')$, and $\tilde{F}_k' = \tilde{F}_k(z')$
for simplicity. Let 
$$\eta_k =\eta_k(z) := \tilde{x}_k^\top (G(X^{(k)}, z))\tilde{x}_k - \tilde{x}_k^\top (G(\tilde{X}^{(k)}, z))_{\mathrm{diag}}\tilde{x}_k = \sum_{i \neq j} [G(\tilde{X}^{(k)}, z)]_{ij} \tilde{x}_{ik}\tilde{x}_{jk},$$
and
$$	\varepsilon_k = \varepsilon_k(z) := F_k - \tilde{F}_k = \log |1 + \eta_k (1+ \tilde{x}_k^\top (G(\tilde{X}^{(k)}, z))_{\mathrm{diag}}\tilde{x}_k)^{-1} |^2.$$
We also write $\eta_k' \equiv \eta_k(z')$ and $\varepsilon_k' \equiv \varepsilon_k(z')$. Using the following elementary identity,
\begin{align*}
	\E_{k-1}\big( (\E_k - \E_{k-1})(A)(\E_k - \E_{k-1})(B) \big) = \E_{k-1}\big( \E_k(A) \E_k(B) \big) - \E_{k-1}(A) \E_{k-1}(B),
\end{align*}
 we may rewrite $\varphi_k$ and $\tilde{\varphi}_k$ as 
\begin{align*}
	\varphi_k = \E_{k-1}\big( \E_k(F_k) \E_k(F_k') \big) - \E_{k-1}(F_k) \E_{k-1}(F_k'), 
\notag\\
\tilde{\varphi}_k = \E_{k-1}\big( \E_k(\tilde{F}_k) \E_k(\tilde{F}_k') \big) - \E_{k-1}(\tilde{F}_k) \E_{k-1}(\tilde{F}_k').
\end{align*}
Therefore, let $E_{\tilde{x}_k}$ denote the expectation with respect to the randomness of $k$-th column of $\tilde{X}$, we have by the definitions of $\varepsilon_k$, $\varepsilon_k'$,
\begin{align*}
	\varphi_k  - \tilde{\varphi}_k &= \E_{\tilde{x}_k}\big( \E_k(\tilde{F}_k)\E_k(\varepsilon_k') \big) +\E_{\tilde{x}_k}\big( \E_k(\tilde{F}_k')\E_k(\varepsilon_k) \big) + \E_{\tilde{x}_k}\big( \E_k({\varepsilon}_k)\E_k(\varepsilon_k') \big) \\
	&\quad -\E_k \E_{\tilde{x}_k}(\tilde{F}_k)\E_k \E_{\tilde{x}_k}({\varepsilon}_k')-\E_k \E_{\tilde{x}_k}(\tilde{F}_k')\E_k \E_{x_k}({\varepsilon}_k)-\E_k \E_{\tilde{x}_k}({\varepsilon}_k')\E_k \E_{\tilde{x}_k}({\varepsilon}_k) \\
	&\equiv T_1 + T_2 + T_3 + T_4 + T_5 + T_6.
\end{align*}
Before bounding $T_i$'s, $1 \le i \le 6$, we introduce some shorthand notation for simplicity. Let
\begin{align*}
	J_k = J_k(z) := \frac{1}{1 + \tilde{x}_k^\top G(\tilde{X}^{(k)}, z)\tilde{x}_k},\quad J_{k,\mathrm{diag}} = J_{k,\mathrm{diag}}(z) := \frac{1}{1 + \tilde{x}_k^\top (G(\tilde{X}^{(k)}, z))_{\mathrm{diag}}\tilde{x}_k},
\end{align*} 
and $J_k' = J_k(z')$, $J_{k,\mathrm{diag}}' = J_{k,\mathrm{diag}}(z')$. Further set
\begin{align*}
	J_{k,\Tr} \coloneqq \frac{1}{1 + \frac{\sigma_N^2}{N}\Tr  G(\tilde{X}^{(k)}, z)},\qquad \mathcal{E} \coloneqq \tilde{x}_k^\top (G(\tilde{X}^{(k)}, z))_{\mathrm{diag}}\tilde{x}_k - \frac{\sigma_N^2}{N}\Tr  G(\tilde{X}^{(k)}, z). 
\end{align*}
 This gives $J_{k,\mathrm{diag}} = J_{k,\Tr} - \mathcal{E} J_{k,\Tr}J_{k,\mathrm{diag}}$. We may now establish an upper bound for $\E_{\tilde{x}_k}(\varepsilon_k)$ as follows:
\begin{align*}
	\E_{\tilde{x}_k}(\varepsilon_k) &= \E_{\tilde{x}_k} \log |1 +\eta_k J_{k,\mathrm{diag}}|^2 \overset{(\mathrm{i})} \le \log \E_{\tilde{x}_k}  |1 +\eta_k J_{k,\mathrm{diag}}|^2\\
	&= \log \E_{\tilde{x}_k}(1+2\Re (\eta_kJ_{k,\Tr}-\eta_kEJ_{k,\Tr}J_{k,\mathrm{diag}}) + |\eta_k J_{k,\mathrm{diag}}|^2 )\\
	&\overset{(\mathrm{ii})}{\le} \log \big(1 + \mathcal{O}(\E_{\tilde{x}_k}(|\eta_k||\mathcal{E}|)) + \mathcal{O}(\E_{\tilde{x}_k}(|\eta_k|^2)) \big),
\end{align*} 
where, in $(\mathrm{i})$, Jensen's inequality is applied, and in $(\mathrm{ii})$, we used the fact that $J_{k,\Tr}$ and $J_{k,\mathrm{diag}}$ are uniformly bounded for $\zeta \in \Xi$ on the event $\Omega_k$. Similarly, using the identity $|1 +\eta_k J_{k,\mathrm{diag}}||1 - \eta_k J_k| = 1$, we have
\begin{align*}
	\E_{\tilde{x}_k}(-\varepsilon_k) &= \E_{\tilde{x}_k} \log |1 -\eta_k J_{k}|^2 = \E_{\tilde{x}_k} \log |1 -\eta_k J_{k,\Tr} -\eta_k(\eta_k+\mathcal{E})J_{k,\mathrm{diag}}J_k |^2 \\
	&\le \log\big(1 + \mathcal{O}(\E_{\tilde{x}_k}(|\eta_k||\mathcal{E}|)) + \mathcal{O}(\E_{\tilde{x}_k}(|\eta_k|^2)) \big).
\end{align*}
By the Cauchy-Schwarz inequality and Lemma \ref{lem:ldforheavytail} and Lemma \ref{lem:Exketak},
\begin{align*}
	\E_{\tilde{x}_k}(|\eta_k||\mathcal{E}|) \le \sqrt{\E_{\tilde{x}_k}(|\eta_k|^2) \cdot \E_{\tilde{x}_k}(|\mathcal{E}|^2)  } \lesssim N^{-1/2}t^{-2}N^{\vartheta(2-\alpha/2)-1/2}\|G(\tilde{X}^{(k)}, z)  \|
\end{align*}
Since $\vartheta = 1/4+1/\alpha+\epsilon_{\vartheta} > 1/4+1/\alpha$, and recall that $\|G(\tilde{X}^{(k)}, z)  \| \le |\lambda_1(\mathcal{S}(\tilde{X}^{(k)})) - z |^{-1} \lesssim t^{-2}$ on $\Omega_k$, the above bound can be further simplified as
\begin{align*}
	\E_{\tilde{x}_k}(|\eta_k||\mathcal{E}|) \lesssim N^{1-\alpha/2} \cdot  N^{2/\alpha + 3\alpha/8+ \epsilon_{\vartheta}(4-\alpha)/2 - 2}t^{-4}.
\end{align*}
By the facts $\epsilon_{\vartheta} < (3\alpha - 5)/(4\alpha)$ and $t \gg N^{(\alpha - 4) / 48}$, it can be verified that $\E_{\tilde{x}_k}(|\eta_k||\mathcal{E}|) \ll t^2N^{1-\alpha/2}$. Therefore,  we can conclude that  $|\E_{\tilde{x}_k} (\varepsilon_k) | \ll t^2N^{1-\alpha/2}$. This shows $|T_6| \ll t^2N^{1-\alpha/2}$. Together with the crude bound 
$
	\tilde{F}_k \le \log |1 + N^{2\vartheta}\|G(\tilde{X}^{(k)}, z)  \| |^2 \lesssim \log N
$, we have $|T_4|, |T_5| \ll t^2N^{1-\alpha/2}.$ 

For $|T_3|$, by Cauchy-Schwarz inequality, it suffices to give a bound on $\E_{\tilde{x}_k}\big( |\E_k(\varepsilon_k)|^2 \big)$. By Jensen's inequality, 
\begin{align*}
	\E_{\tilde{x}_k}\big( |\E_k(\varepsilon_k)|^2 \big) \le \E_k  \E_{\tilde{x}_k}(|\varepsilon_k|^2).
\end{align*}
Using again the identity $|1 +\eta_k J_{k,\mathrm{diag}}||1 - \eta_k J_k| = 1$,
\begin{align*}
	&|\log|1 + \eta_k J_{k,\mathrm{diag}}|^2| = \mathbf{1}_{\{|1 + \eta_k J_{k,\mathrm{diag}}| \geq  1 \}} \log|1 + \eta_k J_{k,\mathrm{diag}}|^2 + \mathbf{1}_{\{|1 - \eta_k J_{k}| > 1 \}} \log|1 - \eta_k J_{k}|^2 \\
	& = \mathbf{1}_{\{|1 + \eta_k J_{k,\mathrm{diag}}| > 1 \}} \log(1 +2\Re (\eta_k J_{k,\mathrm{diag}}) +  |\eta_k J_{k,\mathrm{diag}}|^2) + \mathbf{1}_{\{|1 - \eta_k J_{k}| > 1 \}} \log(1 - 2\Re (\eta_k J_{k})  + |\eta_k J_{k}|^2) \\
	& \le \mathbf{1}_{\{|1 + \eta_k J_{k,\mathrm{diag}}| > 1 \}} \big( 2\Re (\eta_k J_{k,\mathrm{diag}}) +  |\eta_k J_{k,\mathrm{diag}}|^2 \big) + \mathbf{1}_{\{|1 - \eta_k J_{k}| > 1 \}} \big(- 2\Re (\eta_k J_{k})  + |\eta_k J_{k}|^2 \big).
\end{align*}
Therefore, with the fact that $|\eta_k J_{k,\mathrm{diag}}| \le N^C$ for some $C > 0$, 
\begin{align*}
	\E_{\tilde{x}_k}|\log|1 + \eta_k J_{k,\mathrm{diag}}|^2|^2 \lesssim \log N \cdot  \E_{\tilde{x}_k} \log|1 + \eta_k J_{k,\mathrm{diag}}|^2 \lesssim \log N\cdot \E_{\tilde{x}_k}(|\eta_k|^2) \lesssim N^{-1}t^{-5},
\end{align*}
which gives $|T_3| \ll t^2N^{1-\alpha/2}$ by the fact $t \gg N^{-2/7+\alpha/14}$. 

To evaluate $|T_2|$, we start by expressing it as follows:
\begin{align*}
T_2 &= \E_{\tilde{x}_k}\big( \E_k(\varepsilon_k) \E_k (\log |1 + N^{-1}\sigma_N^2\Tr G(\tilde{X}^{(k)}, z') + \mathcal{E}|^2) \big) \\
&= \E_{\tilde{x}_k}\big( \E_k(\varepsilon_k) \E_k (\log |1 + N^{-1}\sigma_N^2\Tr G(\tilde{X}^{(k)}, z')|^2) \big) + \E_{\tilde{x}_k}\big( \E_k(\varepsilon_k) \E_k (\log |1 + \mathcal{E}J_{k,\Tr}|^2) \big).
\end{align*}
 First, we use the fact that $\log |1 + N^{-1}\sigma_N^2\Tr G(\tilde{X}^{(k)}, z)|^2$ is independent of $\tilde{x}_k$ and that $\E_{\tilde{x}_k}(\varepsilon_k) = 0$ to obtain the inequality
$$ T_2 \lesssim \E_{\tilde{x}_k}\big( |\E_k(\varepsilon_k)| \cdot \E_k |\mathcal{E}| \big). $$
Next, we apply the Cauchy-Schwarz inequality to obtain
$$ T_2 \le \sqrt{\E_{\tilde{x}_k} \big(|\E_k(\varepsilon_k)|^2\big) \E_{\tilde{x}_k} \big( |\E_k( |\mathcal{E}|)|^2\big) }. $$
Finally, by Jensen's inequality, we have
$$ T_2 \le \sqrt{\E_k\E_{\tilde{x}_k} \big(|\varepsilon_k|^2\big) \E_k\E_{\tilde{x}_k} \big( |\mathcal{E}|^2\big) } \le N^{1-\alpha/2} \cdot  N^{2/\alpha + 3\alpha/8+ \epsilon_{\vartheta}(4-\alpha)/2 - 2}t^{-9/2}. $$
The bound $|T_2| \ll t^2N^{1-\alpha/2}$ follows by the facts $\epsilon_{\vartheta} < (3\alpha - 5)/(4\alpha)$ and $t \gg N^{(\alpha - 4) / 56}$. The same bound holds for $|T_1|$. Therefore, we can obtain that for any $z \in 
\omega\cap \gamma$ and $z' \in \omega' \cap \gamma$, $|\varphi_k - \tilde{\varphi}_k| \ll t^2N^{1-\alpha/2}$, which conludes the proof.
\end{proof}

\begin{lemma}[\cite{BM}, Lemma 4.1]\label{lem:ldforheavytail}
Let $a \equiv (a_1,\cdots, a_N)^\top$ be a column vector whose entries are i.i.d.~centered and satisfy $({ii})$ and $({iii})$ in Lemma \ref{lem:truncationprop}. 
Then for deterministic matrix $G$, the random variables
\begin{align*}
	X \equiv \sum_{i\neq j} G_{ij}a_ia_j, \quad E \equiv \sum_{i} G_{ii}a_i^2 - \frac{1}{N}\Tr G
\end{align*}
satisfy
\begin{align*}
	\E |X|^2 \le 2N^{-1}\|G \|^2, \quad \E |E|^2 \le 10C(\| G\|^2 + 1)N^{\vartheta(4-\alpha)-1}.
\end{align*}
\end{lemma}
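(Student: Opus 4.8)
The final statement, Lemma~\ref{lem:ldforheavytail}, is a standard $L^2$ bound for quadratic forms in independent, nearly standardized coordinates, so the plan is to prove both inequalities by direct second–moment expansions, feeding in the moment data supplied by Lemma~\ref{lem:truncationprop}. I would record first that the entries $a_i$ are independent, centered, have variance of order $N^{-1}$ (concretely $\E a_i^2=\sigma_N^2/N$ with $\sigma_N^2=1+\mathcal{O}(N^{\vartheta(2-\alpha)})$), and fourth moment $\E a_i^{4}=\mathcal{O}(N^{\vartheta(4-\alpha)-2})$; odd moments need not vanish, but this is harmless because whenever three of the four indices in a fourth mixed moment collide, one isolated factor $\E a_i=0$ kills the term.

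For $X=\sum_{i\neq j}G_{ij}a_ia_j$ I would expand $\E X^{2}=\sum_{i\neq j}\sum_{k\neq l}G_{ij}G_{kl}\,\E[a_ia_ja_ka_l]$ and note that the only surviving configurations are $i=k,j=l$ and $i=l,j=k$, so that
\[
\E|X|^{2}=\Big(\frac{\sigma_N^2}{N}\Big)^{2}\Big(\sum_{i\neq j}|G_{ij}|^{2}+\sum_{i\neq j}G_{ij}G_{ji}\Big)\le\frac{2\sigma_N^4}{N^{2}}\sum_{i,j}|G_{ij}|^{2},
\]
where the cross sum is absorbed into the first by Cauchy--Schwarz. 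Since $\sum_{i,j}|G_{ij}|^{2}=\Tr(GG^{*})\le N\|G\|^{2}$ and $\sigma_N^{2}\le 1$, this already gives $\E|X|^{2}\le 2N^{-1}\|G\|^{2}$.

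For $E=\sum_iG_{ii}a_i^{2}-\tfrac1N\Tr G$ I would first isolate the deterministic bias, writing $E=\sum_iG_{ii}(a_i^{2}-\E a_i^{2})+\tfrac{\sigma_N^{2}-1}{N}\Tr G$. The random part is a sum of independent centered variables, so cross terms vanish and
\[
\E\Big|\sum_iG_{ii}(a_i^{2}-\E a_i^{2})\Big|^{2}=\sum_i|G_{ii}|^{2}\,\mathrm{Var}(a_i^{2})\le\sum_i|G_{ii}|^{2}\,\E a_i^{4}\le N\|G\|^{2}\cdot CN^{\vartheta(4-\alpha)-2},
\]
using the crude bound $|G_{ii}|\le\|G\|$; this is precisely of the advertised order $\|G\|^{2}N^{\vartheta(4-\alpha)-1}$. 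The bias term is bounded deterministically by $|\sigma_N^{2}-1|\,\|G\|\lesssim N^{\vartheta(2-\alpha)}\|G\|$.

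The only step requiring a moment of care — and the closest thing to an obstacle — is verifying that this bias is negligible against the target error: squaring it gives $N^{2\vartheta(2-\alpha)}\|G\|^{2}$, and since $\vartheta=\tfrac14+\tfrac1\alpha+\epsilon_\vartheta>\tfrac1\alpha$ forces $\alpha\vartheta>1$, one has $2\vartheta(2-\alpha)=\vartheta(4-\alpha)-\alpha\vartheta<\vartheta(4-\alpha)-1$, so the bias contribution is dominated. Summing the two pieces and enlarging the constant yields $\E|E|^{2}\le 10C(\|G\|^{2}+1)N^{\vartheta(4-\alpha)-1}$, which completes the plan. (If the lemma's convention is instead that $\E a_i^{2}=1/N$ exactly, then $E$ is already centered, the bias term is absent, and only the random-part computation is needed.)
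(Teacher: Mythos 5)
The paper gives no proof of this lemma at all (it is imported verbatim as \cite{BM}, Lemma~4.1, and the hypotheses are even mis-referenced: Lemma~\ref{lem:truncationprop} has no item (iii)), so your self-contained second-moment computation is the natural thing to supply, and it is the standard argument. Your bound on $\E|X|^2$ is correct and complete: for independent centered entries the only surviving index configurations in $\sum_{i\neq j}\sum_{k\neq l}G_{ij}\overline{G_{kl}}\,\E[a_ia_ja_ka_l]$ are $\{i,j\}=\{k,l\}$, and $\sum_{i,j}|G_{ij}|^2=\Tr(GG^*)\le N\|G\|^2$ together with $\sigma_N^2\le 1$ gives $2N^{-1}\|G\|^2$. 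The fluctuation part of $\E|E|^2$ is likewise correct: independence kills the cross terms and $\sum_i|G_{ii}|^2\E a_i^4\le C\|G\|^2N^{\vartheta(4-\alpha)-1}$ by the fourth-moment bound of Lemma~\ref{lem:truncationprop}(i).

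The one genuine problem is your treatment of the bias term. You assert $\sigma_N^2=1+\mathcal{O}(N^{\vartheta(2-\alpha)})$, but the paper's own computation \eqref{eq:sigma1minust} gives $\sigma_N^2=(1-t)+\mathcal{O}(N^{\vartheta(2-\alpha)})$, because the entries of $X=\mathsf{B}+\mathsf{C}$ carry variance $(1-t)/N$ rather than $1/N$ (the $\mathsf{A}$-part has been removed). Hence $|\sigma_N^2-1|\sim t\sim N^{-2\epsilon_a}$, the bias $\bigl|\tfrac{\sigma_N^2-1}{N}\Tr G\bigr|$ is of order $t\|G\|$, and its square $t^2\|G\|^2$ is \emph{not} dominated by $N^{\vartheta(4-\alpha)-1}\|G\|^2$ (for $\alpha$ near $4$ the latter is close to $N^{-1}\|G\|^2$). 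So with the literal centering $\frac1N\Tr G$ and the variance convention of Lemma~\ref{lem:truncationprop}, the stated inequality for $E$ fails, and your exponent comparison $2\vartheta(2-\alpha)<\vartheta(4-\alpha)-1$ rests on the wrong premise. The correct resolution is exactly your final parenthetical: the lemma must be read with $E$ centered at $\sum_iG_{ii}\E a_i^2=\frac{\sigma_N^2}{N}\Tr G$, which is how it is actually invoked in the proof of Proposition~\ref{prop:removing off-diagonal} (there $\mathcal{E}$ is defined with $\frac{\sigma_N^2}{N}\Tr G$, consistent with the original setting of \cite{BM} where the variance is $1/N$ up to $\mathcal{O}(N^{\vartheta(2-\alpha)})$). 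You should promote that remark from a parenthesis to the main argument and drop the erroneous bias estimate; with that adjustment the proof is complete.
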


\noindent
The following lemma is a directly consequence of Lemma \ref{lem:ldforheavytail}.

\begin{lemma}\label{lem:Exketak}
 Fix $C > 0$. For  any $\zeta \in \{ \xi \in \mathbb{C} : | \xi - \bar{\zeta}_{-,t}| \le C t^2 \}$, we have there exist constant $\tau =  \tau(C)$ such that $\E_{\tilde{x}_k}(|\eta_k|^2) \le \tau^{-2} N^{-1}t^{-4}$ on the event $\Omega_k = \{ \lambda_{1}(\mathcal{S}(\tilde{X}^{(k)}))- \bar{\zeta}_{-,t} \ge \tau t^2 \}$.
\end{lemma}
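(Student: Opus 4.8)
The statement to establish is Lemma~\ref{lem:Exketak}, which asserts an $L^2$ bound on the off-diagonal quadratic form $\eta_k = \tilde x_k^\top (G(\tilde X^{(k)},\zeta))\tilde x_k - \tilde x_k^\top (G(\tilde X^{(k)},\zeta))_{\mathrm{diag}}\tilde x_k = \sum_{i\neq j}[G(\tilde X^{(k)},\zeta)]_{ij}\tilde x_{ik}\tilde x_{jk}$, valid on the high-probability event $\Omega_k$ that $\zeta$ stays away from the spectrum of $\mathcal S(\tilde X^{(k)})$ by a distance of order $t^2$. The plan is to combine the large deviation estimate for off-diagonal quadratic forms in heavy-tailed vectors (Lemma~\ref{lem:ldforheavytail}) with a deterministic operator-norm bound on the resolvent on the event $\Omega_k$.

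First I would fix the constant: choose $\tau = \tau(C)$ sufficiently small, say $\tau \le C/2$ (or smaller if the later estimates demand it), and set $\Omega_k = \{\lambda_1(\mathcal S(\tilde X^{(k)})) - \bar\zeta_{-,t} \ge \tau t^2\}$. Here I note that since $\zeta$ satisfies $|\zeta - \bar\zeta_{-,t}|\le Ct^2$, on the event $\Omega_k$ we have $|\lambda_1(\mathcal S(\tilde X^{(k)})) - \zeta| \ge \lambda_1(\mathcal S(\tilde X^{(k)})) - \bar\zeta_{-,t} - |\zeta - \bar\zeta_{-,t}| \ge \tau t^2 - Ct^2$, which is of order $t^2$ provided $\tau$ is chosen appropriately relative to $C$ — more precisely one wants $\zeta$ to lie to the left of $\lambda_1(\mathcal S(\tilde X^{(k)}))$, and one also needs $\zeta$ bounded away from $\lambda_M(\mathcal S(\tilde X^{(k)}))$, which on $\Omega_k$ follows from the $\eta_\ast$-regularity and the minor process argument already used in the proof of Proposition~\ref{prop:removing off-diagonal} (cf. \eqref{eq:hpeforlb}). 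Consequently $\|G(\tilde X^{(k)},\zeta)\| = \dist(\zeta, \mathrm{spec}(\mathcal S(\tilde X^{(k)})))^{-1} \lesssim \tau^{-1} t^{-2}$ deterministically on $\Omega_k$.

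Second, I would apply Lemma~\ref{lem:ldforheavytail} with $G = G(\tilde X^{(k)},\zeta)$ (treated as deterministic, since we condition on $\tilde X^{(k)}$ and take expectation $\E_{\tilde x_k}$ only over the independent column $\tilde x_k$, whose entries are i.i.d.\ centered and satisfy properties $(\mathrm{ii})$ and $(\mathrm{iii})$ of Lemma~\ref{lem:truncationprop}). This gives $\E_{\tilde x_k}|\eta_k|^2 \le 2 N^{-1}\|G(\tilde X^{(k)},\zeta)\|^2$ on $\Omega_k$. Combining with the deterministic resolvent norm bound from the previous step yields $\E_{\tilde x_k}|\eta_k|^2 \le 2 N^{-1}\cdot \tau^{-2}\tau^2 \cdot (\text{const})\, t^{-4} \lesssim \tau^{-2} N^{-1} t^{-4}$, possibly after relabelling the constant absorbed into $\tau$. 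This completes the proof.

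I do not expect a serious obstacle here: the lemma is a short corollary, and both inputs (the heavy-tailed large deviation bound and the distance-to-spectrum control on $\Omega_k$) are already in place. The only point requiring a little care is making the dependence of constants explicit — ensuring that the $\tau$ appearing in the definition of $\Omega_k$ and the $\tau$ appearing in the final bound can be taken to be the same constant (or one a fixed multiple of the other), which amounts to tracking how the $Ct^2$ shift interacts with the $\tau t^2$ gap; this is routine and is handled exactly as in the analogous step of Proposition~\ref{prop:removing off-diagonal}.
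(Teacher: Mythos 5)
Your proposal is correct and is exactly the paper's (unwritten) argument: the paper simply declares the lemma "a direct consequence of Lemma \ref{lem:ldforheavytail}", i.e.\ condition on $\tilde X^{(k)}$, apply the off-diagonal bound $\E|X|^2\le 2N^{-1}\|G\|^2$, and use the deterministic resolvent bound $\|G(\tilde X^{(k)},\zeta)\|\lesssim t^{-2}$ coming from the spectral gap on $\Omega_k$. One small correction on constants: since $|\zeta-\bar\zeta_{-,t}|\le Ct^2$ while the gap is $\tau t^2$, you need $\tau$ \emph{large} relative to $C$ (e.g.\ $\tau\ge 2C$) so that $\dist(\zeta,\mathrm{spec})\ge(\tau-C)t^2>0$ — your initial choice $\tau\le C/2$ goes the wrong way, though you correctly flag the issue afterwards.
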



\section{Remaining proofs for the general model}

\subsection{Proof of Lemma \ref{lem:monotonlema}}
\label{4103}

We need the following lemma on the monotonicity of the Green function to the linearization of $\mathcal{S}(Y^\gamma)$.

\begin{lemma}[\cite{bauerschmidt2017local}, Lemma 2.1]\label{lem:continuitylemma}
	For deterministic matrix $A \in \mathbb{R}^{M\times N}$, let $\mathcal{L}(A)$ be defined as in Eq.~\eqref{081611} 
	Further define $\Gamma(z) \coloneqq \max_{i, j \in [M+N]}[(\mathcal{L}(A)-z)^{-1}]_{ij}\vee 1$.
	We have for any $L > 1$ and $z \in \mathbb{C}^+$, we have $\Gamma(E+\mathrm{i}\eta/L) \le L\Gamma(E+\mathrm{i}\eta)$.
\end{lemma}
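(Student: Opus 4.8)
The plan is to obtain the bound directly from the resolvent identity, the Ward identity, and a short self-improving argument; no spectral decomposition beyond these basic facts is needed. Write $H=\mathcal{L}(A)$, which is a real symmetric matrix in view of \eqref{081611}, so $G(z)=(H-z)^{-1}$ is well defined for every $z\in\mathbb{C}^+$ and obeys the Ward identity $\sum_{k}|G_{ik}(z)|^2=\eta^{-1}\Im G_{ii}(z)$ for $z=E+\mathrm{i}\eta$. Fix $L>1$ and $z=E+\mathrm{i}\eta\in\mathbb{C}^+$, and abbreviate $z'=E+\mathrm{i}\eta/L$, $\Gamma=\Gamma(z)$, $\Gamma'=\Gamma(z')$; note $z'\in\mathbb{C}^+$ as well.

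First I would write the resolvent identity $G(z')=G(z)+(z'-z)G(z')G(z)$ in entries, which gives
\[
G_{ij}(z') = G_{ij}(z) - \mathrm{i}\eta\Big(1-\tfrac1L\Big)\sum_{k}G_{ik}(z')G_{kj}(z),\qquad i,j\in[M+N],
\]
and then bound the off-diagonal sum by Cauchy--Schwarz followed by the Ward identity at the two scales $\eta/L$ and $\eta$:
\[
\Big|\sum_{k}G_{ik}(z')G_{kj}(z)\Big| \le \Big(\tfrac{L\,\Im G_{ii}(z')}{\eta}\Big)^{1/2}\Big(\tfrac{\Im G_{jj}(z)}{\eta}\Big)^{1/2} \le \frac{\sqrt{L\,\Gamma'\,\Gamma}}{\eta},
\]
where I used $\Im G_{ii}(z')\le|G_{ii}(z')|\le\Gamma'$ and $\Im G_{jj}(z)\le|G_{jj}(z)|\le\Gamma$. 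Substituting this back and maximizing over $i,j$ yields the scalar inequality $\Gamma'\le \Gamma+\tfrac{L-1}{\sqrt L}\sqrt{\Gamma'\,\Gamma}$ (in the degenerate case $\Gamma'=1$ the assertion $\Gamma'\le L\Gamma$ is immediate, since $\Gamma\ge1$ and $L>1$).

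The last step is to close the inequality: setting $x:=\sqrt{\Gamma'/\Gamma}$ it becomes $x^2-\tfrac{L-1}{\sqrt L}x-1\le0$, and a direct computation shows its positive root equals
\[
\frac12\Big(\tfrac{L-1}{\sqrt L}+\sqrt{\tfrac{(L-1)^2}{L}+4}\Big)=\frac{(L-1)+(L+1)}{2\sqrt L}=\sqrt L ,
\]
so $x\le\sqrt L$, i.e.\ $\Gamma'\le L\Gamma$, which is the claim. I do not anticipate a genuine obstacle here, since every ingredient is elementary; the one point that deserves attention is the self-improving structure — the Ward identity at scale $\eta/L$ reintroduces $\Gamma'$ on the right-hand side — together with the small miracle that the resulting quadratic has root exactly $\sqrt L$, so that the constant in the statement is sharp and no iteration is required.
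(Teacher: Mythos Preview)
Your proof is correct. The paper does not supply its own argument for this lemma; it simply quotes Lemma~2.1 of \cite{bauerschmidt2017local}, and your resolvent-identity/Ward-identity/self-improving-quadratic derivation is exactly the standard route (and indeed the one used in that reference), including the observation that the positive root of $x^2-\tfrac{L-1}{\sqrt L}x-1$ is precisely $\sqrt L$.
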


Recall that for any $\delta > 0$, $z = E + \mathrm{i}\eta \in \mathsf{D}$,
\begin{align*}
&\mathfrak{P}_{0}(\delta,z,\Psi) =\mathbb{P}_{\Psi}\Big( \sup_{\substack{a , b \in [M] \\ 0 \le \gamma \le 1}} |z^{1/2}\mathfrak{X}_{ab}[G^{\gamma}(z)]_{ab}| >  N^{\delta}\Big),\\
	&\mathfrak{P}_{1}(\delta,z,\Psi) = \mathbb{P}_{\Psi}\Big( \sup_{\substack{u , v \in [N] \\ 0 \le \gamma \le 1}} |z^{1/2}\mathfrak{Y}_{uv}[\mathcal{G}^{\gamma}(z)]_{uv}| > N^{\delta}\Big), \\
	&\mathfrak{P}_{2}(\delta,z,\Psi) = \mathbb{P}_{\Psi}\Big( \sup_{\substack{a \in [M], u \in [N] \\ 0 \le \gamma \le 1}} |\mathfrak{Z}_{au}[G^{\gamma}(z)Y^\gamma ]_{au}|  > N^{\delta}\Big).
	\end{align*}

\noindent
Now let us give the proof of Lemma \ref{lem:monotonlema}.

\begin{proof}[Proof of Lemma \ref{lem:monotonlema}]
Let $p$ be any sufficiently large (but fixed) integer, and $F_p(x) := |x|^{2p}+ 1$. It can be easily verified that there exists a constant $C_p$, only depends on $p$ such that $|F^{(a)}_p(x)| \le C_p F_p(x)$, for all $x \in \mathbb{R}$ and $a \in \mathbb{Z}^+$. Recall Theorem \ref{thm:comparison1}, and we will focus on the case when $(\#_1,\#_2,\#_3) =(\mathfrak{X}_{ab}\Im [G^\gamma(z)]_{ab}, \;\mathfrak{X}_{ab}\Im [G^0(z)]_{ab}, \;\mathfrak{I}_{0,ab}   )$ therein.
	Applying Theorem \ref{thm:comparison1} with $F(x) = F_p(x)$, we have for any $a, b \in [M]$, there exists constant $C_1 > 0$ such that,
	\begin{align*}
		&\E_\Psi \big(F_p( \mathfrak{X}_{ab}\Im [G^\gamma(z)]_{ab})  \big)  - \E_\Psi \big(F_p(\mathfrak{X}_{ab}\Im [G^0(z)]_{ab})  \big)  < C_1N^{-\omega}(\mathfrak{I}_{p,0} + 1) + C_1Q_0N^{C_1}, 
	\end{align*}
	where
	$\mathfrak{I}_{p,0} \equiv \sup_{{i , j \in [M], 0 \le \gamma \le 1}} \E_\Psi \big( \big|  F_p( \mathfrak{X}_{ij}\Im [G^{\gamma}(z)]_{ij}) \big|\big).
	$
	Taking supremum over $a,b \in [M]$ and $0 \le \gamma \le 1$ yields
	\begin{align*}
		&(1 - C_1N^{-\omega}) \mathfrak{I}_{p,0} \le \max_{i,j \in [M]}\E_\Psi \big(F_p(\mathfrak{X}_{ij}\Im [G^0(z)]_{ij})  \big)  + C_1N^{-\omega} + 3C_1 N^{C_1}\max_{\substack{k \in [0:2]}}\mathfrak{P}_{k}(\varepsilon,z,\Psi).
	\end{align*}	
	Applying Lemma \ref{lem:continuitylemma} on $\mathcal{R}(Y^\gamma, z) = z^{-1/2}(\mathcal{L}(Y^\gamma) - z^{1/2})^{-1}$ with $z^{1/2} = \tilde{E} + \mathrm{i}\tilde{\eta}$, we have,
	\begin{align*}
		\max_{i,j\in[M+N]} |z^{1/2}[\mathcal{R}(Y^\gamma, z)]_{ij}| \vee 1 \le L \Big(\max_{i,j\in[M+N]} |(z')^{1/2}[\mathcal{R}(Y^\gamma, z')]_{ij}|\vee 1 \Big),
	\end{align*} 
	for any $L > 0$ and $z' \in \mathbb{C}^+$ satisfies $(z')^{1/2} = \tilde{E} +  \mathrm{i}L\tilde{\eta}$. Let $L \equiv N^{\varepsilon/6}$ and thus $(z')^{1/2} \equiv \tilde{E} +  \mathrm{i}N^{\varepsilon/6}\tilde{\eta}$, to obtain
	\begin{align*}
		&\max_{i,j\in[M]} |z^{1/2} [G^\gamma(z)]_{ij}| \vee 1,\; \max_{i,j\in[N]} |z^{1/2} [\mathcal{G}^\gamma(z)]_{ij}| \vee 1,\;  \max_{i \in [M],j\in[N]} | [G^\gamma(z)Y^\gamma]_{ij}|\le \mathfrak{S},
	\end{align*}
	where
	\begin{align*}
		\mathfrak{S}\equiv N^{\varepsilon/6} \Big(\max_{i,j\in[M]} |(z')^{1/2} [G^\gamma(z')]_{ij}| \vee \max_{i,j\in[N]} |(z')^{1/2} [\mathcal{G}^\gamma(z')]_{ij}|\vee \max_{i\in[M],j\in [N]} | [G^\gamma(z')Y^\gamma]_{ij}| \vee 1\Big).
	\end{align*}
	This implies that
	\begin{align}
				\max_{\substack{k \in [0:2] }}\mathfrak{P}_{k}(\varepsilon,z,\Psi) \le \max_{\substack{k \in [0:2] }}\mathfrak{P}_{k}(\varepsilon/2,z',\Psi).
	\label{eq:continuity}\end{align}	
	
	 For any  $z_0  =  E_0 + \mathrm{i}\eta_0 \in \mathsf{D}(\varepsilon_1,\varepsilon_2,\varepsilon_3)$, we have $\E_\Psi \big(F_p(\mathfrak{X}_{ij} \Im [G^0(z_0)]_{ij})  \big) \lesssim N$ (cf. Theorem \ref{thm: resolvent entry size V_t}). Then there exists some large constant $C_2 > 0$ such that
	\begin{align*}
		\mathfrak{I}_{p,0} \le C_2N + C_2 N^{C_2} \max_{\substack{k \in [0:2] }}\mathfrak{P}_{k}(\varepsilon,z_0,\Psi).
	\end{align*}
Using (\ref{eq:continuity}) by setting $z \equiv z_0$, we have for $z_1 = E_1 + \mathrm{i}\eta_1$ where $(E_1,\eta_1)$ are defined through (\ref{eq:E2eta2}),
	\begin{align*}
		\mathfrak{I}_{p,0}\le C_2N + C_2 N^{C_2} \max_{\substack{k \in [0:2] }}\mathfrak{P}_{k}(\varepsilon/2,z_1,\Psi).
	\end{align*}
	For any $a,b \in [M]$, and $ 0\le \gamma \le 1$, applying Markov's inequality with the fact that $p\delta > D + 100$, we have that there exists some large constant $C_3 > 0$ such that 
\begin{align*}
	\mathbb{P}_{\Psi}\Big( |z_0^{1/2}\mathfrak{X}_{ab} [\Im G^{\gamma}(z_0)]_{ab}| >  N^{\delta}\Big) &\le \frac{  |z_0|^{p/2} \E_\Psi \big( \big|  F_p(\mathfrak{X}_{ab}\Im [G^{\gamma}(z_0)]_{ab}) \big|\big)}{N^{p\delta}} \le  \frac{|z_1|^{p/2}\mathfrak{I}_{p,0}}{N^{p\delta}} \\
	&\le C_3N^{-D-90} + C_3N^{C_2}\max_{\substack{k \in [0:2] }}\mathfrak{P}_{k}(\varepsilon/2,z_1,\Psi),
\end{align*}
where in the last step we used the fact that $|z_0|$ is bounded. Similar bound holds when $\Im$ is replaced by $\Re$,  we omit the details.
	Now we may apply union bounds on $i , j \in [M]$ and an $\epsilon$-net argument on $\gamma$ with the following deterministic bounds
	\begin{align*}
		\bigg|\frac{\partial [G^{\gamma}(z)]_{ab} }{\partial \gamma}\bigg| \lesssim \frac{\|A \| + \gamma\| t^{1/2}W \|}{\eta^2}, 
	\end{align*}
	$\eta > N^{-1}$, $\|A \|\le N^{1/2}$ and $\mathbb{P}(\|t^{1/2}W \| > 2) < N^{-D}$, to obtain that
	\begin{align*}
		\mathfrak{P}_{0}(\delta,z_0,\Psi) = &\mathbb{P}_{\Psi}\Big( \sup_{\substack{a,b \in [M] \\ 0 \le \gamma \le 1}} |z_0^{1/2}\mathfrak{X}_{ab}[G^{\gamma}(z_0)]_{ab}| >  N^{\delta}\Big) \\
		&\le C_4N^{-D-50} + C_4N^{C_4}\max_{\substack{k \in [0:2] }}\mathfrak{P}_{k}(\varepsilon/2,z_1,\Psi),
	\end{align*}
	for some large constant $C_4 > 0$. Repeating the above procedure for all  $\mathfrak{P}_{k}(\delta,\eta,\Psi), k = 1,2$ proves the claim. 
\end{proof}

\subsection{Proof of Corollary \ref{cor.081801}}
\label{4216}
We prove this corollary using a similar argument as in [Section 4, \cite{PY}] or [Section 4, \cite{HLY}]. The key inputs are the rigidity estimate in Theorem \ref{rigidity} and the Green function comparison in Theorem \ref{Green function comparison thm}.
\begin{proof}[Proof of Corollary \ref{cor.081801}]
	Let us first define for any $E$,
	\begin{align*}
		\mathcal{N}(E) := \big|\{ i: \lambda_{i}(\mathcal{S}(Y)) \le \lambda_{-,t}+ E \} \big|.
	\end{align*}
	For any $\epsilon > 0$, we take $\ell = N^{-2/3-\epsilon/3}$ and $\eta = N^{-2/3 - \epsilon}$. Recall from  Theorem \ref{rigidity} that $\lambda_M(\mathcal{S}(Y)) \ge   \lambda_{-,t} - N^{-2/3+\epsilon}$ holds  with high probability. We further define 
	\begin{align*}
		&{\chi}_E(x) := \mathbf{1}_{[-N^{-2/3+\epsilon}, E]}(x - \lambda_{-,t}),\\
		&\theta_\eta(x) := \frac{\eta}{\pi (x^2 + \eta^2)} = \frac{1}{\pi} \im \frac{1}{x-\mathrm{i}\eta}. 
	\end{align*}
	Then following the same arguments as in [Lemma 2.7, \cite{AY2011}], we can obtain that for $|E| \le N^{-2/3+\epsilon}$, the following holds with high probability:
	\begin{align*}
		\Tr (\chi_{E-\ell} \ast \theta_{\eta})(\mathcal{S}(Y)) - N^{-\epsilon/9} \le \mathcal{N}(E)  \le \Tr (\chi_{E+\ell} \ast \theta_{\eta})(\mathcal{S}(Y)) + N^{-\epsilon/9}.
	\end{align*}
	Let $K(x) : \mathbb{R} \to [0,1]$ be a smooth monotonic increasing function such that 
	\begin{align*}
		K(x) = 1 \quad \text{if} \quad x \ge 2/3, \quad  K(x) = 0 \quad \text{if} \quad x \le 1/3.
	\end{align*} 
	Therefore, we have with high probability that
	\begin{align*}
		K( \Tr (\chi_{E-\ell} \ast \theta_{\eta})(\mathcal{S}(Y))) + \mathcal{O}(N^{-\epsilon/9}) &\le K(\mathcal{N}(E) ) = \mathbf{1}_{\mathcal{N}(E)  \ge 1} \\
		& \le K( \Tr (\chi_{E+\ell} \ast \theta_{\eta})(\mathcal{S}(Y))) + \mathcal{O}(N^{-\epsilon/9}).
	\end{align*}
	Taking expectation on the above inequality, we have for $|s| \le  N^{\epsilon}/2$ that
	\begin{align}
		&\E \Bigg[K \bigg(\Im \bigg[ \frac{N}{\pi}\int_{-N^{-2/3+\epsilon}}^{sN^{-2/3}-\ell} m^1(\lambda_{-,t}+y+\mathrm{i}\eta) \bigg]\mathrm{d}y  \bigg)  \Bigg] + \mathcal{O}(N^{-\epsilon/9})\notag\\
		&\le \mathbb{P} \Big(N^{2/3}(\lambda_M(\mathcal{S}(Y)) -\lambda_{-,t} ) \le s \Big) = \E \Big[  \mathbf{1}_{\mathcal{N}(sN^{-2/3})  \ge 1}  \Big]\notag\\
		&\le \E \Bigg[K \bigg(\Im \bigg[ \frac{N}{\pi}\int_{-N^{-2/3+\epsilon}}^{sN^{-2/3}+\ell} m^1(\lambda_{-,t}+y+\mathrm{i}\eta) \bigg]\mathrm{d}y  \bigg)  \Bigg] + \mathcal{O}(N^{-\epsilon/9}).
	\label{eq:probality1}\end{align}
	Similarly, repeating the above arguments with $\mathcal{S}(Y)$ replaced by $\mathcal{S}(V_t)$, we can also have
	\begin{align}
		&\E \Bigg[K \bigg(\Im \bigg[ \frac{N}{\pi}\int_{-N^{-2/3+\epsilon}}^{sN^{-2/3}-\ell} m^0(\lambda_{-,t}+y+\mathrm{i}\eta) \bigg]\mathrm{d}y  \bigg)  \Bigg] + \mathcal{O}(N^{-\epsilon/9})\notag\\
		&\le \mathbb{P} \Big(N^{2/3}(\lambda_M(\mathcal{S}(V_t)) -\lambda_{-,t} ) \le s \Big)\notag\\
		&\le \E \Bigg[K \bigg(\Im \bigg[ \frac{N}{\pi}\int_{-N^{-2/3+\epsilon}}^{sN^{-2/3}+\ell} m^0(\lambda_{-,t}+y+\mathrm{i}\eta) \bigg]\mathrm{d}y  \bigg)  \Bigg] + \mathcal{O}(N^{-\epsilon/9}).
	\label{eq:probality2}\end{align}
	Note that the conditional expectation $\E_\Psi$ in (\ref{eq:GFcomparison 2}) can be replaced by $\E$ using the law of total expectation together with the fact that $\Omega_\Psi$ holds with high probability. Therefore, we can combine (\ref{eq:probality1}) and (\ref{eq:probality2}) with (\ref{eq:GFcomparison 2}) to obtain that
	\begin{align*}
	\mathbb{P} \Big(N^{2/3}(\lambda_M(\mathcal{S}(V_t)) -\lambda_{-,t} ) \le s - 2\ell N^{-2/3} \Big)+ \mathcal{O}(N^{-\epsilon/9})	\le \mathbb{P} \Big(N^{2/3}(\lambda_M(\mathcal{S}(Y)) -\lambda_{-,t} ) \le s \Big)\\
		\le\mathbb{P} \Big(N^{2/3}(\lambda_M(\mathcal{S}(V_t)) -\lambda_{-,t} ) \le s + 2\ell N^{-2/3} \Big)+ \mathcal{O}(N^{-\epsilon/9}).
	\end{align*}  
	Now (\ref{eq:convergeindistribution1}) follows by the fact that $\ell N^{-2/3} \ll 1$. For (\ref{eq:convergeindistribution2}), we first note by Theorem \ref{thm: lambda -,t asymp} that
	\begin{align}
		|\lambda_{-,t} - \lambda_{\mathsf{shift}}| \le N^{-2/3+\epsilon}
	\label{eq:lambdainprobalityest}\end{align}
	holds in probability. This together with Theorem \ref{rigidity} implies that
	\begin{align*}
		|\lambda_{M}(\mathcal{S}(Y)) - \lambda_{\mathsf{shift}}| \le N^{-2/3+\epsilon}
	\end{align*}
	also holds in probability. Then we may proceed similar to the proof of (\ref{eq:convergeindistribution1}), but with all high probability estimates replaced by in probability estimates. It's worth noting that during the derivation of (\ref{eq:probality1}) and (\ref{eq:probality2}), the error term $\mathcal{O}(N^{-\epsilon/9})$ will become $\mathfrak{o}(1)$ because we lack an polynomial bound for the failure probability of (\ref{eq:lambdainprobalityest}). Finally, we can conclude the proof of (\ref{eq:convergeindistribution2}) by using Theorem \ref{Green function comparison thm}.
\end{proof}

\subsection{Proof of Theorem \ref{Green function comparison thm}}
\label{4271}

\begin{proof}
	To ease presentation, we show the proof of the following comparison instead: for any $|E| \le N^{-2/3+\epsilon}$,
	\begin{align}
	\Big|\mathbb{E}_\Psi \Big(F(N\eta_0\Im m^1(\lambda_{-,t}+E+\mathrm{i}\eta_0))\Big)-\mathbb{E}_\Psi \Big(F(N\eta_0\Im m^0(\lambda_{-,t}+E+\mathrm{i}\eta_0))\Big)\Big|\leq CN^{-\delta_1}. \label{eq:GFcomparison 1}
	\end{align}
	The proof of (\ref{eq:GFcomparison 2}) is similar, and thus we omit it. Using the same notation as in the proof of Theorem \ref{thm:comparison2} and further defining $h_{\gamma,(ij)}(\lambda,\beta) \equiv \eta_0\sum_{a}f_{\gamma,(aa),(ij)}(\lambda,,\beta)$, we have
	\begin{align*}
		&\frac{\partial \E_\Psi \big(F(N\eta_0\Im m^\gamma(z_t))\big)}{\partial \gamma} 
		= -2\Big(\sum_{i,j} (I_1)_{ij} - (I_2)_{ij}\Big),
	\end{align*}
	with
	\begin{align*}
		&(I_1)_{ij} \equiv \E_\Psi\bigg[A_{ij}F'\Big(h_{\gamma,(ij)}\big([Y^\gamma]_{ij},X_{ij}\big)  \Big) g_{(ij)}\big([Y^\gamma]_{ij},X_{ij}\big) \bigg],\\
		&(I_2)_{ij} \equiv \frac{\gamma t^{1/2} }{(1-\gamma^2)^{1/2}}\E_\Psi\bigg[w_{ij} F'\Big(h_{\gamma,(ij)}\big([Y^\gamma]_{ij},X_{ij}\big)  \Big) g_{(ij)}\big([Y^\gamma]_{ij},X_{ij}\big)  \bigg].
	\end{align*}
	We first consider the estimation for $(I_1)_{ij}$. Notice that $(I_1)_{ij}$ can be further decomposed as
	\begin{align*}
		(I_1)_{ij} = (I_1)_{ij}\cdot \mathbf{1}_{\psi_{ij} = 0} + (I_1)_{ij}\cdot \mathbf{1}_{\psi_{ij} = 1} = (I_1)_{ij}\cdot \mathbf{1}_{\psi_{ij} = 0},
	\end{align*}
	where in the last step we used the fact that $A_{ij}\cdot \mathbf{1}_{\psi_{ij} = 1} = 0$. Therefore, we only need to consider the case when $\psi_{ij} = 0$, and $(I_1)_{ij}$ can be rewritten as
	\begin{align*}
		(I_1)_{ij} = \E_\Psi\bigg[(1-\chi_{ij})a_{ij}F'\Big(h_{\gamma,(ij)}(d_{ij},\chi_{ij}b_{ij})  \Big) g_{(ij)}(d_{ij},\chi_{ij}b_{ij}) \bigg]\cdot \mathbf{1}_{\psi_{ij} = 0}.
	\end{align*} 
	By Taylor expansion, for an $s_1 > 0$ to be chosen later, there exists $\tilde{d}_{ij} \in [0,d_{ij}]$ such that,
	\begin{align*}
		(I_1)_{ij} &= \sum_{k_1=0}^{s_1}\frac{1}{k_1!} \E_\Psi\bigg[(1-\chi_{ij})a_{ij}d_{ij}^{k_1} g_{(ij)}^{(k_1,0)}(0,\chi_{ij}b_{ij}) F'\big(h_{\gamma,(ij)}(d_{ij},\chi_{ij}b_{ij})  \big)\bigg]\cdot \mathbf{1}_{\psi_{ij} = 0} \\
		&\quad+ \frac{1}{(s_1+1)!} \E_\Psi\bigg[(1-\chi_{ij})a_{ij}d_{ij}^{s_1+1} g_{(ij)}^{(s_1+1,0)}(\tilde{d}_{ij},\chi_{ij}b_{ij}) F'\big(h_{\gamma,(ij)}(d_{ij},\chi_{ij}b_{ij})  \big)\bigg]\cdot \mathbf{1}_{\psi_{ij} = 0} \\
		&\equiv \sum_{k_1=0}^{s_1}(I_1)_{ij,k_1} + \mathsf{Rem}_1.
	\end{align*}
	Using (\ref{eq:Recurderivative})-(\ref{eq:G2Yest}), ,the perturbation argument as in (\ref{eq:perturbbound}), and the fact that $\Im m^\gamma(z_t) \prec 1$, we have for any (small)$\epsilon > 0$ and (large)$D > 0$,
	\begin{align*}
		\mathbb{P}_\Psi\bigg(\Omega_{\epsilon,1}:= \Big\{ \big| g_{(ij)}^{(s_1+ 1,0)}(\tilde{d}_{ij},\chi_{ij}b_{ij}) F'\big(h_{\gamma,(ij)}(d_{ij},\chi_{ij}b_{ij})  \big) \big|\cdot \mathbf{1}_{\psi_{ij} = 0} < t^{-s_1-2}N^{\epsilon} \Big\}  \bigg) \ge 1 - N^{-D}.
	\end{align*}
	Further, by the Gaussianity of $w_{ij}$, we have
	\begin{align*}
		\mathbb{P}_\Psi\bigg(\Omega_{\epsilon,2}:=\Big\{ \max_{i \in [M],j \in [N]} |t^{1/2}w_{ij}| < N^{-1/2+\epsilon} \Big\}  \bigg) \ge 1 - N^{-D}.
	\end{align*}
	Let $\Omega_{\epsilon} := \Omega_{\epsilon,1}\cap \Omega_{\epsilon,2}$. Then
	\begin{align}
		|\mathsf{Rem}_1| &\lesssim \E_\Psi\bigg[|(1-\chi_{ij})a_{ij}d_{ij}^{s_1+ 1}| \cdot \big| g_{(ij)}^{(s_1+ 1,0)}(\tilde{d}_{ij},\chi_{ij}b_{ij}) F'\big(h_{\gamma,(ij)}(d_{ij},\chi_{ij}b_{ij})  \big)\big|\cdot \mathbf{1}_{\Omega_\epsilon} \bigg]\cdot \mathbf{1}_{\psi_{ij} = 0}\notag \\
		&\quad +\E_\Psi\bigg[|(1-\chi_{ij})a_{ij}d_{ij}^{s_1+1}| \cdot \big| g_{(ij)}^{(s_1+ 1,0)}(\tilde{d}_{ij},\chi_{ij}b_{ij}) F'\big(h_{\gamma,(ij)}(d_{ij},\chi_{ij}b_{ij})  \big)\big|\cdot \mathbf{1}_{\Omega^c_\epsilon} \bigg]\cdot \mathbf{1}_{\psi_{ij} = 0} \notag\\
		&\overset{(\mathrm{i})}{\lesssim} \E_\Psi\bigg[|(1-\chi_{ij})a_{ij}d_{ij}^{s_1+1}| \cdot \big| g_{(ij)}^{(s_1+ 1,0)}(\tilde{d}_{ij},\chi_{ij}b_{ij}) F'\big(h_{\gamma,(ij)}(d_{ij},\chi_{ij}b_{ij})  \big)\big|\cdot \mathbf{1}_{\Omega_\epsilon} \bigg]\cdot \mathbf{1}_{\psi_{ij} = 0} \notag\\
		&\quad+ N^{-D+ C_1+2(s_1+3)}\notag\\
		&\overset{(\mathrm{ii})}{\lesssim} \frac{N^{\epsilon}}{N^{1/2+\epsilon_b(s_1+1)}t^{s_1+2}} ,
	\label{eq:estimateRem1}\end{align}
	where in $(\mathrm{i})$ we used the deterministic bound $\big| g_{(ij)}^{(s_1+ 1,0)}(\tilde{d}_{ij},\chi_{ij}b_{ij}) F'\big(h_{\gamma,(ij)}(d_{ij},\chi_{ij}b_{ij})  \big)\big| \le N^{C_1 + 2(s_1 + 3)}$ when $\eta \ge N^{-2}$, and $(\mathrm{ii})$ is a consequence of the definition of $\Omega_\epsilon$. Choosing $s_1$ sufficiently large, i.e., $s_1 > 4/\epsilon_b$, and $t \gg N^{-\epsilon_b/2}$ we can obtain
	\begin{align*}
		|\mathsf{Rem}_1| \lesssim N^{-5/2}.
	\end{align*}
	For $(I_1)_{ij,k_1}$, we need to further expand $F'\big( h_{\gamma,(ij)}(d_{ij})  \big)$ as follows:
	\begin{align*}
		F'\big( h_{\gamma,(ij)}(d_{ij},\chi_{ij}b_{ij})  \big) = \sum_{k=0}^{s_2} \frac{d_{ij}^k}{k!} \frac{\partial^k F'}{\partial d_{ij}^k}\big( h_{\gamma,(ij)}(0,\chi_{ij}b_{ij})  \big)  + \frac{d_{ij}^{s_2+1}}{(s_2+1)!} \frac{\partial^k F'}{\partial d_{ij}^k}\big( h_{\gamma,(ij)}(\hat{d}_{ij},\chi_{ij}b_{ij})  \big),
	\end{align*}
	where $s_2$ is a positive integer to be chosen later, and $\hat{d}_{ij} \in [0,d_{ij}]$. Then $(I_1)_{ij,k_1}$ can be rewritten as,
	\begin{align*}
		&(I_1)_{ij,k_1} =  \sum_{k_2 = 0}^{s_2} \frac{1}{k_1!k_2!}\E_\Psi\bigg[(1-\chi_{ij})a_{ij}d_{ij}^{k_1+k_2} g_{(ij)}^{(k_1,0)}(0,\chi_{ij}b_{ij}) \frac{\partial^{k_2} F'}{\partial d_{ij}^{k_2}}\big( h_{\gamma,(ij)}(0,\chi_{ij}b_{ij})  \big)\bigg]\cdot \mathbf{1}_{\psi_{ij} = 0} \\
		&+ \frac{1}{k_1!(s_2+1)!}\E_\Psi\bigg[(1-\chi_{ij})a_{ij}d_{ij}^{k_1+s_2+1} g_{(ij)}^{(k_1,0)}(0,\chi_{ij}b_{ij}) \frac{\partial^{s_2+1} F'}{\partial d_{ij}^{s_2+1}}\big( h_{\gamma,(ij)}(\hat{d}_{ij},\chi_{ij}b_{ij})  \big)\bigg]\cdot \mathbf{1}_{\psi_{ij} = 0}\\
		&\equiv  \sum_{k_2 = 0}^{s_2}(I_1)_{ij,k_1k_2} + \mathsf{Rem}_2.
	\end{align*} 
	By Fa\`{a} di Bruno's formula, we have for any integer $n > 0$,
	\begin{align}
		\frac{\partial^n F'}{\partial d_{ij}^n}\big( h_{\gamma,(ij)}(d_{ij},\chi_{ij}b_{ij})  \big) &= \sum_{(m_1,\cdots,m_n)} \frac{n!}{m_1!m_2!\cdot m_n!}\cdot F^{(m_1+\cdots+m_n+1)}\big( h_{\gamma,(ij)}(d_{ij},\chi_{ij}b_{ij})  \big)\notag\\
		&\qquad\qquad\qquad\times \prod_{\ell=1}^n\bigg(\frac{h^{(\ell)}_{\gamma,(ij)}(d_{ij},\chi_{ij}b_{ij})}{\ell!} \bigg)^{m_\ell}
	\label{eq:FdBfomula}\end{align}
	Considering  (\ref{eq:FdBfomula}), (\ref{eq:Recurderivative})-(\ref{eq:G2Yest}), and using the perturbation argument as described in (\ref{eq:perturbbound}), we arrive at the following result:
		\begin{align}
		\frac{\partial^{s_2+1} F'}{\partial d_{ij}^{s_2+1}}\big( h_{\gamma,(ij)}(\hat{d}_{ij},\chi_{ij}b_{ij})  \big) \prec \prod_{\ell=1}^nt^{-(\ell+ 1)m_\ell} \le t^{-2n}.
	\label{eq:Fpbound}\end{align}
	Moreover, taking into account the fact that $g_{(ij)}^{(k_1)}(0) \prec t^{-(k_1+1)}$, we can deduce that:
	\begin{align*}
		|\mathsf{Rem}_2| \lesssim \frac{N^\epsilon}{N^{1/2+\epsilon_b(k_1+s_2+1)}t^{k_1+2(s_2+1)}} \lesssim N^{-5/2},
	\end{align*}
	where, for the final step, we have chosen $s_2 \ge 4/\epsilon_b$ and $t \gg N^{-\epsilon_b/4}$. 
	Next, we estimate $(I_1)_{ij,k_1k_2}$ in different cases. 

	\textbf{Case 1:} $k_1+k_2$ is even. By the law of total expectation, 
	\begin{align}
		&(I_1)_{ij,k_1k_2}\notag\\
		 &= \frac{\mathbf{1}_{\psi_{ij} = 0}}{k_1!k_2!}\sum_{n=0}^1 \E_\Psi\bigg[(1-\chi_{ij})a_{ij}d_{ij}^{k_1+k_2} g_{(ij)}^{(k_1,0)}(0,\chi_{ij}b_{ij}) \frac{\partial^{k_2} F'}{\partial d_{ij}^{k_2}}\big( h_{\gamma,(ij)}(0,\chi_{ij}b_{ij})  \big)\bigg| \chi_{ij} = n \bigg] \mathbb{P}(\chi_{ij} = n) \notag\\
		&= \frac{\mathbf{1}_{\psi_{ij} = 0} }{k_1!k_2!}\E_\Psi\bigg[a_{ij}d_{ij}^{k_1+k_2} g_{(ij)}^{(k_1,0)}(0,0) \frac{\partial^{k_2} F'}{\partial d_{ij}^{k_2}}\big( h_{\gamma,(ij)}(0,0)  \big)\bigg| \chi_{ij} = 0 \bigg]\mathbb{P}(\chi_{ij} = 0) \notag\\
		&=\frac{ \mathbf{1}_{\psi_{ij} = 0}= 0 }{k_1!k_2!} \E_\Psi\bigg[a_{ij}d_{ij}^{k_1+k_2}  \bigg| \chi_{ij} = 0\bigg]  \E_\Psi\bigg[g_{(ij)}^{(k_1,0)}(0,0) \frac{\partial^{k_2} F'}{\partial d_{ij}^{k_2}}\big( h_{\gamma,(ij)}(0,0)  \big)\bigg]\mathbb{P}(\chi_{ij} = 0),
	\label{eq:condtiononChi}\end{align}
	where the last step follows from the symmetry condition.

	\textbf{Case 2:} $k_1+k_2$ is odd and $k_1 + k_2 \ge 5$. Similar to (\ref{eq:condtiononChi}), we have
	\begin{align*}
		|(I_1)_{ij,k_1k_2}| 
		&\lesssim\bigg|\E_\Psi\bigg[a_{ij}d_{ij}^{k_1+k_2}  \bigg| \chi_{ij} = 0\bigg]\bigg|  \E_\Psi\bigg[|g_{(ij)}^{(k_1,0)}(0,0)| \bigg|\frac{\partial^{k_2} F'}{\partial d_{ij}^{k_2}}\big( h_{\gamma,(ij)}(0,0)  \big)\bigg| \bigg| \chi_{ij} = 0 \bigg]\mathbb{P}(\chi_{ij} = 0) \mathbf{1}_{\psi_{ij} = 0} \\
		&\lesssim \frac{1}{N^{2+2\epsilon_a + (k_1+k_2 - 3)\epsilon_b}}\E_\Psi\bigg[|g_{(ij)}^{(k_1,0)}(0,\chi_{ij}b_{ij})| \bigg|\frac{\partial^{k_2} F'}{\partial d_{ij}^{k_2}}\big( h_{\gamma,(ij)}(0,\chi_{ij}b_{ij})  \big)\bigg| \bigg]\mathbf{1}_{\psi_{ij} = 0,\chi_{ij} = 0}. 
	\end{align*}
	We may again obtain the bound $|g_{(ij)}^{(k_1)}(0,\chi_{ij}b_{ij})|\cdot \mathbf{1}_{\psi_{ij} = 0,\chi_{ij} = 0} \prec t^{-(k_1+1)}$ by  (\ref{eq:Recurderivative})-(\ref{eq:G2Yest}), and the perturbation argument as described in (\ref{eq:perturbbound}). Using (i)equation (\ref{eq:FdBfomula}) with $d_{ij}$ replaced by $0$, and (ii)the following rank inequality,
	\begin{align}
		&|h_{\gamma,(ij)}(0,\chi_{ij}b_{ij}) - h_{\gamma,(ij)}(d_{ij},\chi_{ij}b_{ij})|\cdot\mathbf{1}_{\psi_{ij} = 0,\chi_{ij} = 0} \le 2\eta_0 \big( \| G^{\gamma,d_{ij}}_{(ij)}(z_t) \| + \| G^{\gamma,0}_{(ij)}(z_t) \|\big)\mathbf{1}_{\psi_{ij} = 0,\chi_{ij} = 0}\le 2,
	\label{eq:rankinequalityforh}\end{align}
	with the fact that $h_{\gamma,(ij)}(d_{ij},\chi_{ij}b_{ij})\cdot\mathbf{1}_{\psi_{ij} = 0,\chi_{ij} = 0} \prec 1$, we can obtain that
	\begin{align}
		\bigg|\frac{\partial^{k_2} F'}{\partial d_{ij}^{k_2}}\big( h_{\gamma,(ij)}(0,\chi_{ij}b_{ij})  \big)\bigg|\cdot\mathbf{1}_{\psi_{ij} = 0,\chi_{ij} = 0} \prec  t^{-2k_2}.
	\label{eq:FFdijbound}\end{align}
	Combining the above estimates and choosing $t \gg N^{-\epsilon_b/8}$, we arrive at
	\begin{align*}
		|(I_1)_{ij,k_1k_2}| \lesssim \frac{N^{\epsilon}}{N^{2+2\epsilon_a + (k_1+k_2 - 3)\epsilon_b} t^{k_1+1+2k_2}} \lesssim \frac{1}{N^{2+2\epsilon_a}}.
	\end{align*}

	\textbf{Case 3:} $k_1+k_2 = 3$. The estimation in this case is similar to Case 2 above, but we need to use the bound $g_{(ij)}^{(k_1,0)}(0,\chi_{ij}b_{ij}) \prec 1$ when $i \in \mathcal{T}_r$ and $j \in \mathcal{T}_c$. Recall that $|\mathcal{D}_r| \vee |\mathcal{D}_c| \le N^{1-\epsilon_d}$. Then we have
	\begin{align*}
		|(I_1)_{ij,k_1k_2}| 
		&\lesssim \frac{N^\epsilon}{N^{2+2\epsilon_a }} \cdot \mathbf{1}_{\psi_{ij} = 0}\cdot \mathbf{1}_{i \in \mathcal{T}_r, j \in \mathcal{T}_c} + \frac{1}{N^{2-\epsilon_d}} \cdot\frac{N^\epsilon}{N^{2\epsilon_a +\epsilon_d}t^{k_1+1+2k_2}} \cdot \mathbf{1}_{\psi_{ij} = 0}\cdot (1 - \mathbf{1}_{i \in \mathcal{T}_r, j \in \mathcal{T}_c}) \\
		&\lesssim \frac{1}{N^{2+\epsilon_a }} \cdot \mathbf{1}_{\psi_{ij} = 0}\cdot \mathbf{1}_{i \in \mathcal{T}_r, j \in \mathcal{T}_c} + \frac{1}{N^{2-\epsilon_d+\epsilon_a}} \cdot \mathbf{1}_{\psi_{ij} = 0}\cdot (1 - \mathbf{1}_{i \in \mathcal{T}_r, j \in \mathcal{T}_c}),
	\end{align*}
	where in the last step, we used the fact $t \gg N^{-\epsilon_d/8}$.

	\textbf{Case 4:} $k_1+k_2 = 1$. In this case, using (\ref{eq:condtiononChi}) we may compute that
	\begin{align*}
		(I_1)_{ij,k_1k_2} = \E_{\Psi}\big[\gamma a_{ij}^2 \big]\cdot \E_\Psi\bigg[g_{(ij)}^{(k_1,0)}(0,0) \frac{\partial^{k_2} F'}{\partial d_{ij}^{k_2}}\big( h_{\gamma,(ij)}(0,0)  \big)\bigg| \chi_{ij} = 0 \bigg]\cdot \mathbb{P}(\chi_{ij} = 0)  \cdot \mathbf{1}_{\psi_{ij} = 0}.
	\end{align*}
	We note that there will be corresponding terms in $(I_2)_{ij}$, and these terms will cancel out with the ones described above.
	
	Combining the estimates in the above cases, we can obtain that there exists some constant $\delta_1 = \delta_1(\epsilon_a)$ such that 
	\begin{align}
		\sum_{i,j}(I_1)_{ij} &= \sum_{i,j}\sum_{\substack{k_1, k_2 \ge 0, \\k_1 + k_2 = 1}} \E_{\Psi}\big[\gamma a_{ij}^2 \big]\cdot \E_\Psi\bigg[g_{(ij)}^{(k_1,0)}(0,0) \frac{\partial^{k_2} F'}{\partial d_{ij}^{k_2}}\big( h_{\gamma,(ij)}(0,0)  \big)\bigg]\notag\\
		&\qquad\qquad\qquad\qquad\qquad\qquad\times \mathbb{P}(\chi_{ij} = 0)  \cdot \mathbf{1}_{\psi_{ij} = 0} + \mathcal{O}(N^{-\delta_1}).
	\label{eq:compREI1}\end{align}
	
	Next, we consider the estimation for $(I_2)_{ij}$. When $\psi_{ij} = 1$, we can apply Gaussian integration by parts to obtain that
	\begin{align*}
		|(I_2)_{ij}\cdot\mathbf{1}_{\psi_{ij} =1}| \lesssim \frac{t^{1/2}}{N}\E_\Psi\bigg[ \Big|\partial_{w_{ij}} \big\{ g_{(ij)}(e_{ij},c_{ij})  F'\big( h_{\gamma,(ij)}(e_{ij},c_{ij})  \big) \big\} \Big|\bigg]		\cdot\mathbf{1}_{\psi_{ij} =1} \lesssim \frac{N^{\epsilon}}{Nt}\cdot\mathbf{1}_{\psi_{ij} =1},
	\end{align*} 
	where the last step follows from (\ref{eq:Recurderivative})-(\ref{eq:G2Yest}). 
	The estimation for $(I_2)_{ij} \cdot \mathbf{1}_{\psi_{ij} = 0}$ is similar to those of $(I_1)_{ij}$, we omit repetitive details. In summary, with the independence between $z_t$ and $w_{ij}$, we have by possibly adjusting $\delta_1$, 
	\begin{align}
		&\sum_{i,j}(I_2)_{ij} =  \sum_{i,j}(I_2)_{ij}\cdot \mathbf{1}_{\psi_{ij} = 0}+ \sum_{i,j}(I_2)_{ij}\cdot \mathbf{1}_{\psi_{ij} = 1} \notag\\
		 &=\sum_{i,j}\sum_{\substack{k_1, k_2 \ge 0, \\k_1 + k_2 = 1}} \E_{\Psi}\big[\gamma tw_{ij}^2 \big]\E_\Psi\bigg[g_{(ij)}^{(k_1,0)}(0,\chi_{ij}b_{ij}) \frac{\partial^{k_2} F'}{\partial d_{ij}^{k_2}}\big( h_{\gamma,(ij)}(0,\chi_{ij}b_{ij})  \big)\bigg] \cdot \mathbf{1}_{\psi_{ij} = 0} + \mathcal{O}(N^{-\delta_1}).
	\label{eq:compREI2}\end{align}
	Note by (\ref{eq:2ndmomentdiff}) and the choices of $\epsilon_a$ and $\epsilon_b$, we have
	\begin{align*}
		\E_{\Psi}\big[\gamma a_{ij}^2 \big] \mathbb{P}(\chi_{ij} = 0) - \E_{\Psi}\big[\gamma tw_{ij}^2 \big] = \mathcal{O}\bigg( \frac{t}{N^{2+2\epsilon_b}} \bigg).
	\end{align*}
	This together with the $t$ dependent bounds for $g_{(ij)}^{(k_1,0)}$ and $\partial^{k_2}F'/(\partial d_{ij}^{k_2})$ implies that it suffices to bound the following quantity:
	\begin{align*}
		\mathsf{G} := \bigg( \E_\Psi\bigg[g_{(ij)}^{(k_1,0)}(0,\chi_{ij}b_{ij}) \frac{\partial^{k_2} F'}{\partial d_{ij}^{k_2}}\big( h_{\gamma,(ij)}(0,\chi_{ij}b_{ij})  \big)\bigg] -\E_\Psi\bigg[g_{(ij)}^{(k_1,0)}(0,0) \frac{\partial^{k_2} F'}{\partial d_{ij}^{k_2}}\big( h_{\gamma,(ij)}(0,0)  \big) \bigg]\bigg)\cdot \mathbf{1}_{\psi_{ij} = 0}
	\end{align*}
		To provide a more precise distinction between (\ref{eq:compREI1}) and (\ref{eq:compREI2}), we let $$\mathsf{F}_{k_1,k_2}(z^{(ij)}_t(\beta)) := g_{(ij)}^{(k_1)}(0,\beta) \frac{\partial^{k_2} F'}{\partial d_{ij}^{k_2}}\big( h_{\gamma,(ij)}(0,\beta)  \big).$$ Therefore, 
	\begin{align*}
		\mathsf{G} =\bigg(\E_\Psi\bigg[\mathsf{F}_{k_1,k_2}\big(z_t(\chi_{ij}b_{ij})\big)\bigg] -\E_\Psi\bigg[ \mathsf{F}_{k_1,k_2}\big(z_t(0)\big) \bigg] \bigg)\cdot \mathbf{1}_{\psi_{ij} = 0}. 
	\end{align*}
	We may apply Taylor expansion to obtain that
	\begin{align}
		&\bigg(\E_\Psi\bigg[\mathsf{F}_{k_1,k_2}\big(z_t(\chi_{ij}b_{ij})\big)\bigg] -\E_\Psi\bigg[ \mathsf{F}_{k_1,k_2}\big(z_t(0)\big) \bigg] \bigg)\cdot \mathbf{1}_{\psi_{ij} = 0}\notag\\
		&= \E_\Psi\bigg[\chi_{ij}^2b_{ij}^2 \mathsf{F}'_{k_1,k_2}\big(z_t(b)\big)\cdot \frac{\partial^2 \lambda_{-,t}}{\partial B_{ij}^2}(b)  \bigg]\cdot \mathbf{1}_{\psi_{ij} = 0}+ \E_\Psi\bigg[\chi_{ij}^2b_{ij}^2 \mathsf{F}''_{k_1,k_2}\big(z_t(b)\big)\cdot \bigg(\frac{\partial \lambda_{-,t}}{\partial B_{ij}}(b)\bigg)^2  \bigg]\cdot \mathbf{1}_{\psi_{ij} = 0},
	\label{eq:FchibF0}\end{align}
	with $b \in [0,B_{ij}]$. Here the first oder term disappeared due to symmetry. 
	To bound the above terms we need to first verify that $z_t(b)$ still lies inside $\mathsf{D}$ (w.h.p). This can be done by noting that for the replacement matrix $X_{(ij)}(b)$ which replace the $B_{ij}$ by $b$ in $X$ still satisfies the $\eta^*$- regularity. Therefore by Weyl's inequality,
	\begin{align}
		|\lambda_{-,t}(\chi_{ij}b_{ij}) - \lambda_{-,t}(b)| &\prec   |\lambda_{-,t}(\chi_{ij}b_{ij})  - \lambda_M(\mathcal{S}(X))| +  |\lambda_M(\mathcal{S}(X)) - \lambda_M(\mathcal{S}(X_{(ij)}(b)))| \notag\\ 
		&+ |\lambda_M(\mathcal{S}(X_{(ij)}(b))) -\lambda_{-,t}(b)|  \prec N^{-2/3} +  N^{-\epsilon_b} + N^{-2/3} \prec  N^{-\epsilon_b}.
	\label{eq:lambdab}\end{align}	
	Applying the perturbation argument as in (\ref{eq:perturbbound}) to relate $g_{(ij)}^{(k_1)}(0,b)$ back to $g_{(ij)}^{(k_1)}(d_{ij},b)$, and then using (\ref{eq:lambdab}) to verify that $z^{(ij)}_t(b) \in \mathsf{D}$, we can see that the bound $g_{(ij)}^{(k_1)}(0,b) \prec t^{-(k_1+1)}$ still holds. Similarly, we can also obtain $h^{(k_2)}_{\gamma,(ij)}(0, b) \prec t^{-k_2}$ for $k_2 \ge 1$. For the case when $k_2 = 0$, we may use (\ref{eq:rankinequalityforh}) and the fact that $N\eta_0\Im m^\gamma(z^{(ij)}_t(b)) \prec 1$ to conclude that $h_{\gamma,(ij)}(0, b) \prec 1$. Combining the above bounds with a Cauchy integral argument, we have
	\begin{align*}
		\mathsf{F}'_{k_1,k_2}\big(z_t(b)\big) \prec \frac{1}{\eta_0t^2},\quad  \mathsf{F}''_{k_1,k_2}\big(z_t(b)\big) \prec \frac{1}{\eta_0^2t^2}.
	\end{align*}
	Further using Lemma \ref{lem:lambdaderibound}, we have for arbitrary (small)$\epsilon > 0$ and (large)$D > 0$, 
	\begin{align*}
		\mathbb{P} \bigg( \Omega := \Big\{ \Big|\mathsf{F}'_{k_1,k_2}\big(z_t(b)\big)\cdot \frac{\partial^2 \lambda_{-,t}}{\partial B_{ij}^2}(b)\Big| \le \frac{N^{\epsilon}}{N\eta_0t^7}  \Big\} \bigcap \Big\{ \Big|\mathsf{F}''_{k_1,k_2}\big(z_t(b)\big)\cdot \bigg(\frac{\partial \lambda_{-,t}}{\partial B_{ij}}(b)\bigg)^2\Big| \le \frac{N^{\epsilon}}{N^2\eta_0^2t^8}  \Big\} \bigg) \ge 1 -N^{-D}.
	\end{align*}
	Since 
	\begin{align*}
	&\chi_{ij}^2b_{ij}^2 \Big( \mathsf{F}'_{k_1,k_2}\big(z_t(b)\big)\cdot \frac{\partial^2 \lambda_{-,t}}{\partial B_{ij}^2}(b)+\mathsf{F}''_{k_1,k_2}\big(z_t(b)\big)\cdot \bigg(\frac{\partial \lambda_{-,t}}{\partial B_{ij}}(b)\bigg)^2 \Big)\cdot \mathbf{1}_{\psi_{ij} = 0}\\
		&=\Big(\mathsf{F}_{k_1,k_2}\big(z_t(\chi_{ij}b_{ij})\big) - \mathsf{F}_{k_1,k_2}\big(z_t(0)\big)  \Big)\cdot \mathbf{1}_{\psi_{ij} = 0} - \Big(\chi_{ij}b_{ij}F'_{k_1,k_2}(z_t(0))\cdot \frac{\partial \lambda_{-,t}}{\partial B_{ij}}(0) \Big)\cdot \mathbf{1}_{\psi_{ij} = 0}, 
	\end{align*} 
	the deterministic upper bound for the left hand side of the above equation follows from (\ref{eq:dblambdaderi}) in Lemma \ref{lem:lambdaderibound} and the fact that $\Im z_t \ge N^{-1}$. Then we may follow the steps as in (\ref{eq:estimateRem1}) to obtain that
	\begin{align*}
		\E_\Psi\bigg[\chi_{ij}^2b_{ij}^2 \Big( \mathsf{F}'_{k_1,k_2}\big(z_t(b)\big)\cdot \frac{\partial^2 \lambda_{-,t}}{\partial B_{ij}^2}(b)+\mathsf{F}''_{k_1,k_2}\big(z_t(b)\big)\cdot \bigg(\frac{\partial \lambda_{-,t}}{\partial B_{ij}}(b)\bigg)^2 \Big)\bigg] \cdot \mathbf{1}_{\psi_{ij} = 0} \lesssim \frac{N^{\epsilon}}{N^2\eta_0t^7}
	\end{align*}
	Therefore, with the fact that $\E_{\Psi}[\gamma a_{ij}^2]\mathbb{P}(\chi_{ij} = 0) \sim t\E_{\Psi}[\gamma w_{ij}^2] = \gamma	t/N$, we have by possibly adjusting $\delta_1$,
	\begin{align*}
		\Big|\sum_{i,j}(I_1)_{ij} - (I_2)_{ij}\Big| =\sum_{i,j} \frac{\gamma t}{N}\sum_{\substack{k_1, k_2 \ge 0, \\k_1 + k_2 = 1}} \Big|\E_\Psi\Big[\mathsf{F}_{k_1,k_2}\big(z_t(\chi_{ij}b_{ij})\big)\Big] -\E_\Psi\Big[ \mathsf{F}_{k_1,k_2}\big(z_t(0)\big) \Big] \Big|\mathbf{1}_{\psi_{ij} = 0}+ \mathcal{O}(N^{-\delta_1}) = \mathcal{O}(N^{-\delta_1}).
	\end{align*}
	This together with the arguments as in (\ref{eq:integralest1})-(\ref{eq:integralest2}) completes the proof of (\ref{eq:GFcomparison 1}). 
The proof for the case $\alpha = 8/3$ closely parallels, and is in fact simpler, primarily due to the absence of randomness in $\lambda_{\mathsf{shift}}$. Thus we omit the details. This concludes the proof.
\end{proof}


\bibliographystyle{plain}
\normalem

\end{document}